\let\oldtocsection=\tocsection
\renewcommand{\tocsection}[2]{\hspace{0em}\oldtocsection{#1}{#2}}
\theoremstyle{plain}
\newtheorem{Thm}[equation]{Theorem}
\newtheorem*{Thm*}{Theorem}
\newtheorem{Cor}[equation]{Corollary}
\newtheorem{Prop}[equation]{Proposition}
\newtheorem{Lem}[equation]{Lemma}
\theoremstyle{definition}
\newtheorem*{Rmk}{Remark}
\newtheorem{Def}[equation]{Definition}
\numberwithin{equation}{section}
\newcommand{\C}{\mathbb{C}}
\newcommand{\F}{\mathbb{F}}
\newcommand{\Q}{\mathbb{Q}}
\newcommand{\R}{\mathbb{R}}
\newcommand{\Z}{\mathbb{Z}}
\newcommand{\GL}{\operatorname{GL}}
\newcommand{\SL}{\operatorname{SL}}
\newcommand{\Sp}{\operatorname{Sp}}
\newcommand{\pr}{\operatorname{pr}}
\newcommand{\xt}{\tilde{x}}
\newcommand{\yt}{\tilde{y}}
\newcommand{\zt}{\tilde{z}}
\newcommand{\wt}{\tilde{w}}
\newcommand{\htt}{\tilde{h}}
\newcommand{\xa}{x_{\alpha}}
\newcommand{\wa}{w_{\alpha}}
\newcommand{\ha}{h_{\alpha}}
\newcommand{\xb}{x_{\beta}}
\newcommand{\wb}{w_{\beta}}
\newcommand{\hb}{h_{\beta}}
\newcommand{\xta}{\tilde{x}_{\alpha}}
\newcommand{\wta}{\tilde{w}_{\alpha}}
\newcommand{\hta}{\tilde{h}_{\alpha}}
\newcommand{\xtb}{\tilde{x}_{\beta}}
\newcommand{\wtb}{\tilde{w}_{\beta}}
\newcommand{\htb}{\tilde{h}_{\beta}}
\newcommand{\alphac}{{\check{\alpha}}}
\newcommand{\betac}{{\check{\beta}}}
\newcommand{\Bt}{\widetilde{B}}
\newcommand{\Mt}{\widetilde{M}}
\newcommand{\Nt}{\widetilde{N}}
\newcommand{\Gt}{\widetilde{G}}
\newcommand{\Ht}{\widetilde{H}}
\newcommand{\It}{\widetilde{I}}
\newcommand{\Kt}{\widetilde{K}}
\newcommand{\Tt}{\widetilde{T}}
\newcommand{\Xt}{\widetilde{X}}
\newcommand{\Yt}{\widetilde{Y}}
\newcommand{\Wt}{\widetilde{W}}
\newcommand{\Omegat}{\widetilde{\Omega}}
\newcommand{\Gammat}{\widetilde{\Gamma}}
\newcommand{\Gammaft}{\widetilde{\mathsf{\Gamma}}}
\newcommand{\gt}{\tilde{g}}
\newcommand{\kt}{\tilde{k}}
\newcommand{\ttt}{\tilde{t}}
\newcommand{\pit}{\tilde{\pi}}
\newcommand{\Ind}{\operatorname{Ind}}
\newcommand{\ind}{\operatorname{ind}}
\newcommand{\Irr}{\operatorname{Irr}}
\newcommand{\supp}{\operatorname{supp}}
\newcommand{\Span}{\operatorname{span}}
\newcommand{\End}{\operatorname{End}}
\newcommand{\Hom}{\operatorname{Hom}}
\newcommand{\Aut}{\operatorname{Aut}}
\newcommand{\Int}{\operatorname{Int}}
\newcommand{\Tr}{\operatorname{Tr}}
\newcommand{\aff}{\mathrm{aff}}
\newcommand{\af}{\mathrm{af}}
\newcommand{\ea}{\mathrm{ea}}
\newcommand{\s}{\mathbf{s}}
\newcommand{\Ocal}{\mathcal{O}}
\newcommand{\Wcal}{\mathcal{W}}
\newcommand{\Hcal}{\mathcal{H}}
\newcommand{\Bf}{\mathsf{B}}
\newcommand{\Hf}{\mathsf{H}}
\newcommand{\If}{\mathsf{I}}
\newcommand{\Kf}{\mathsf{K}}
\newcommand{\Nf}{\mathsf{N}}
\newcommand{\Tf}{\mathsf{T}}
\newcommand{\Uf}{\mathsf{U}}
\newcommand{\Gammaf}{\mathsf{\Gamma}}
\newcommand{\Hft}{\widetilde{\mathsf{H}}}
\newcommand{\Ift}{\widetilde{\mathsf{I}}}
\newcommand{\Kft}{\widetilde{\mathsf{K}}}
\newcommand{\Bft}{\widetilde{\mathsf{B}}}
\newcommand{\Pft}{\widetilde{\mathsf{P}}}
\newcommand{\Tft}{\widetilde{\mathsf{T}}}
\newcommand{\la}{\langle}
\newcommand{\ra}{\rangle}
\newcommand{\ie}{{\it i.e.}\ }
\newcommand{\st}{\,:\,}
\newcommand{\qand}{{\quad\text{and}\quad}}
\newcommand{\one}{\mathbf{1}}
\title[$K$-types for wild double covers]{Minimal depth $K$-types for wild double covers\\
and Shimura correspondences}
\author{Edmund Karasiewicz}
\author{Shuichiro Takeda}
\subjclass[2020]{11F70, 22E50}
\keywords{Bernstein Components; Hecke Algebra; $p$-adic groups; Metaplectic Group.}
\begin{document}

\begin{abstract}
	We construct some Iwahori types, in the sense of Bushnell-Kutzko, for the double cover of an almost simple simply-laced simply-connected Chevalley group $\widetilde{G}$ over any $2$-adic field. These types capture the covering group analog of the Bernstein block of unramified principal series. 
    
    We also prove that the associated Hecke algebra essentially admits an Iwahori-Matsumoto (IM) presentation. The complete presentation is obtained for types $A_{r}$, $D_{2r+1}$, $E_{6}$, $E_{7}$; for the other types, some technical obstacles remain. Those Hecke algebras with the complete IM presentation are isomorphic to Iwahori-Hecke algebras of explicit linear Chevalley groups, giving rise to Shimura correspondences.

    Along the way, we show that the Iwahori type extends to a hyperspecial maximal compact subgroup $\widetilde{K}\subseteq \widetilde{G}$. This extension has minimal depth among the genuine $\widetilde{K}$-representations and allows us to construct a finite Shimura correspondence, generalizing a result of Savin.
\end{abstract}

\maketitle

\setcounter{tocdepth}{1}
\tableofcontents


\section{Introduction}


Our objective in this work is to address some fundamental problems in the representation theory of wild double covers of $2$-adic groups. Our main results include: 
\begin{enumerate}[(1)]
	\item\label{IntroListKStructure} basic structure theory of the hyperspecial maximal compact subgroups of the wild cover;
	\item\label{IntroListITypes} a construction of genuine irreducible Iwahori types that capture the covering group analog of the Bernstein block of unramified principal series;
        \item\label{IntroListIMPres} an Iwahori-Matsumoto presentation for the Hecke algebra associated to the Iwahori types, which implies a Shimura correspondence with an explicit linear group;
        \item\label{IntroListKType} an extension of the Iwahori type to a hyperspecial maximal compact subgroup, which induces a ``finite Shimura correspondence''.
\end{enumerate}
Now we describe the results more precisely. In what follows, for a group $H$, we write $\one=\one_{H}$ for the trivial representation of $H$.

Let $G$ be a reductive group over a non-archimedean local field $F$ of characteristic 0. Let $T\subseteq G$ be a maximal split torus, and $I\subseteq G$ an Iwahori subgroup intersecting $T$ nontrivially. The Iwahori-Hecke algebra $\Hcal(G, I)=C_0^\infty(I\backslash G\slash I)$ plays an important role in the representation theory of $G$. In particular, the Borel-Casselman theorem \cite[Theorem 4.10]{B76} implies that the category of $\Hcal(G, I)$-modules is equivalent to the $(T,\one)$ Bernstein block of unramified principal series representations of $G$, and no other Bernstein block admits Iwahori fixed vectors. In other words, the pair $(I, \one)$ is a type of $G$ in the sense of Bushnell-Kutzko \cite[(4.1) Definition]{BK98} and this type characterizes the $(T,\one)$-block. Furthermore, the Iwahori Hecke algebra $\Hcal(G, I)$ has a very explicit set of generators and relations \cite{IM}, which is known as the Iwahori-Matsumoto presentation.

Assume now that $G$ is a simply-connected Chevalley group over $F$. By the work of Steinberg, Moore, and Matsumoto (see \cite{GGW18} for the history), when $F$ contains $\mu_{n}$, the full group of $n$-th roots of unity, $G$ has an (essentially) unique nontrivial topological central extension $\Gt$, fitting into an exact sequence
\begin{equation}\label{CExtIntro}
    1\longrightarrow\mu_n\longrightarrow\Gt\stackrel{\pr}{\longrightarrow} G\longrightarrow 1.
\end{equation}
For any $H\subseteq G$ we write $\Ht=\pr^{-1}(H)$. A representation of $\Ht$ is called $\epsilon$-genuine if $\mu_{n}$ acts through a faithful character $\epsilon:\mu_{n}\rightarrow \mathbb{C}^{\times}$. We fix this character throughout and use it to identify $\mu_{n}$ with the $n$-th roots of unity in $\mathbb{C}$.

The maximal torus $\Tt\subseteq \Gt$ admits a distinguished genuine representation $\rho$. (See \cite{S04} and \cite[Section 6, Theorems 6.6, 6.8]{GG18}.) We note that such a $\rho$ is not unique, but they are finite in number. When $\rho$ is minimally ramified, the $(\Tt,\rho)$-block of $\Gt$ is a natural analog of the $(T,\one)$-block of $G$. Thus we may ask: is there an analog of the Borel-Casselman theorem for the $(\Tt,\rho)$-block of $\Gt$? (see Section \ref{TtReps} for the precise set of representations we consider.) The pair $(\It, \one)$ does not serve the purpose because $\one_{\It}$ is not genuine, and hence can only detect representations of $\Gt$ that factor through $G$. 

When $\mathrm{GCD}(p,n)=1$, the cover $\Gt$ is called a tame cover. In this case, Savin \cite{S04} established this analog of the Borel-Casselman Theorem when $G$ is simply-laced. The main consequence of the tame assumption for this work is that a hyperspecial maximal compact subgroup $K\subseteq G$ splits the sequence \eqref{CExtIntro}. To highlight the significance of this splitting we make a brief digression. 

Here we are not making any assumption on $\mathrm{GCD}(n,p)$. Suppose the subgroup $H\subset G$ splits into $\Gt$, and let $\s :H\rightarrow \Gt$ be a splitting. Then the map $\mu_{n}\times H\rightarrow \Ht$ defined by $(\zeta,h)\mapsto \zeta\s(h)$ defines a group isomorphism. Thus, there is a transparent correspondence (depending on the splitting) between the representations of $H$ and the $\epsilon$-genuine representations of $\Ht$. Specifically, if $\pi$ is a representation of $H$, then $\epsilon\otimes \pi$ is an $\epsilon$-genuine representation of $\Ht\cong \mu_{n}\times H$.

Now we return to the tame case. Let $\s:K\rightarrow \Gt$ be the unique splitting of $K$. Hence $I\subseteq K$ splits to $\Gt$ by restricting the splitting $\s$. When $\rho$ is unramified with respect to $\mathbf{s}$, a natural candidate for the Iwahori type describing the $(\Tt,\rho)$-block is the pair $(\mu_{n}\times \s(I),\epsilon \otimes \one_{\s(I)})$; the associated Hecke algebra $\Hcal_{\epsilon}(\Gt, \s(I))$ consists of compactly supported smooth functions on $\Gt$ that are $\s(I)$-biinvariant and $\epsilon$-genuine. Using this pair $(\mu_{n}\times \s(I),\epsilon \otimes \one_{\s(I)})$, Savin proves that the $(\Tt,\rho)$-block is equivalent to $\Hcal_{\epsilon}(\Gt, \s(I))$-mod giving an analog of the Borel-Casselman theorem. Savin also shows that $\Hcal_{\epsilon}(\Gt, \s(I))$ admits a Bernstein presentation. This allows Savin to identify $\Hcal_{\epsilon}(\Gt, \s(I))$ with the Iwahori Hecke algebra of an explicit linear group $G^{\prime}$ giving a correspondence between certain representations of $\Gt$ and $G^{\prime}$, called a Shimura correspondence.

When $\mathrm{GCD}(n,p)\neq 1$, we say $\Gt$ is wild. In this case, the Iwahori subgroup $I$ does not split. So, neither does $K$. Thus there is no obvious candidate for a type to describe the $(\Tt,\rho)$-block. A few special cases have been worked out. Loke-Savin \cite{LS10a} resolve the case of the $2$-fold cover of $\SL_{2}$ over $\mathbb{Q}_{2}$. This was generalized by Wood \cite{Wood} to the $2$-fold cover of $\Sp_{2r}$ also over $\mathbb{Q}_{2}$. Takeda-Wood \cite{Takeda_Wood} further extend the work of Wood to include any $2$-adic field. Karasiewicz \cite{Karasiewicz} resolves the case of $2$-fold covers of almost simple simply-laced simply-connected Chevalley groups over $\mathbb{Q}_{2}$. We note that \cite{Wood,LS10a,Karasiewicz} all construct a type on a proper subgroup of the Iwahori. In all of these cases the construction of the type is somewhat ad hoc. The work of Takeda-Wood \cite{Takeda_Wood} constructs a type on the Iwahori. Their construction is conceptually satisfying as it uses the $K$-types of the Weil representation. Unfortunately, the special nature of the Weil representation makes the possibility of generalization unclear. 

In this paper, our main theorem is an analog of the Borel-Casselman theorem for the nontrivial $2$-fold cover of an almost simple simply-laced simply-connected Chevalley group over an \textit{arbitrary} $2$-adic field $F$. This generalizes the work of Karasiewicz \cite{Karasiewicz} to an arbitrary $2$-adic field. The main novelty of our work is how we address the technical issues arising from the ramification of $F/\mathbb{Q}_{2}$. We will say more about this after stating our results.

To state our main theorem, we must first describe some of our other results on the structure and representation theory of the compact groups $\Tt_{0}:=\Tt\cap \Kt$ and $\Kt$. These results are interesting in their own right. From now on, unless otherwise stated, we assume that $F$ is a $2$-adic field and $G$ is the $F$-points of an almost simple simply-connected simply-laced Chevalley group of rank at least $2$, and $\Gt$ is the unique nontrivial topological $2$-fold cover of $G$. So, $\Gt$ is a wild cover.

We begin with $\Tt_{0}$. Here our main result is an explicit construction of a family of Weyl invariant genuine irreducible $\Tt_{0}$-representations (Theorem \ref{T:PseudoT0}). These $\Tt_{0}$-representations are called pseudo-spherical representations. They are not unique, but are finite in number, and our construction produces all of them. We also prove that a pseudospherical representation $\tau$ is a type for a genuine distinguished $\Tt$-representation $\rho$ (Proposition \ref{TTypes}). Our explicit construction is arithmetic in nature and requires special properties of the $2$-adic quadratic Hilbert symbol.

Now we turn to the structure theory of $\Kt$. For any smooth irreducible genuine $\Kt$-representation $\pi$, the subgroup $\pr(\ker\pi)$ must split into $\Kt$ and the image of this splitting (which is just $\ker\pi$) must be normal in $\Kt$. Our main theorem on the structure of $\Kt$ identifies the maximal principal congruence subgroup of $K$ that splits into $\Kt$ with normal image.

\begin{Thm}[Propositions \ref{Gamma12eSplitting}, \ref{P:SplitNormIm}]
    Let $e$ be the ramification index of $F/\mathbb{Q}_{2}$. Let $\Gamma(k)\subset K$ be the $k$-th principal congruence subgroup of $K$. Then
    \begin{enumerate}
        \item There exists a splitting $\s_{\Gamma(2e)}:\Gamma(2e)\rightarrow \Gt$ such that its image $\s_{\Gamma(2e)}(\Gamma(2e))$ is normal in $\Kt$;
        \item if $\Gamma(2e-1)$ splits into $\Kt$, then the image of this splitting is not normal in $\Kt$, nor $\It$.
    \end{enumerate}
\end{Thm}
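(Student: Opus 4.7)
My plan is to handle the two parts separately, both resting on explicit control of the Matsumoto--Steinberg $2$-cocycle on $\Kt$ in terms of the $2$-adic Hilbert symbol $(\cdot,\cdot)$ and the filtration $U^{(n)} := 1 + \varpi^n \OF$ of $\OF^\times$. The arithmetic input I plan to exploit is the conductor estimate: since $U^{(2e+1)} \subseteq (F^\times)^2$, one has $(s,t) = 1$ whenever $s \in U^{(n)}$, $t \in U^{(m)}$ with $n + m \geq 2e + 1$; in particular $(U^{(2e)}, U^{(2e)}) = 1$, while $U^{(2e-1)} \not\subseteq (F^\times)^2$.

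For part (1), I would first fix an Iwahori-type factorization: every $g \in \Gamma(2e)$ can be written as
\[
g \;=\; \prod_{\alpha \in \Phi^-} \xa(r^-_\alpha)\,\cdot\, h \,\cdot\, \prod_{\alpha \in \Phi^+} \xa(r^+_\alpha),
\]
with $r^\pm_\alpha \in \varpi^{2e}\OF$ and $h \in T \cap \Gamma(2e)$. On each $U_\alpha$ the Steinberg section $\xa \mapsto \xta$ is already a homomorphism; the nontrivial piece is the torus. I would construct a section on $T \cap \Gamma(2e)$ by showing that the restriction of the Matsumoto cocycle, which on the image of $h_{\alpha_i}$ is given by Hilbert symbols, is a coboundary on $(U^{(2e)})^r$ — in fact trivial by the conductor estimate above. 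Well-definedness of the assembled $\s_{\Gamma(2e)}$ then reduces to verifying the Chevalley/Steinberg relations on generators of $\Gamma(2e)$; the only possibly nontrivial $\mu_2$-twists come from commutators $[\hta(s),\xta(r)]$ with $s \in U^{(2e)}$ and $r \in \varpi^{2e}\OF$, and from root-root commutators with both root parameters in $\varpi^{2e}\OF$. All of these are controlled by symbols of the form $(s,t)$ with $s,t \in U^{(2e)}$, which vanish.

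For the normality statement in part (1), since $\Gamma(2e)$ is normal in $K$, for each $\kt \in \Kt$ and $g \in \Gamma(2e)$ the element
\[
\chi_\kt(g) \;:=\; \kt \,\s_{\Gamma(2e)}(g)\, \kt^{-1} \cdot \s_{\Gamma(2e)}(\kt g \kt^{-1})^{-1}
\]
lies in $\mu_2$ and defines a character $\chi_\kt: \Gamma(2e) \to \mu_2$. Normality of the image is equivalent to $\chi_\kt \equiv 1$ for all $\kt$, and I would verify this on a Chevalley generating set $\{\hta(u),\xta(x) : u \in \OF^\times,\, x \in \OF\}$ of $\Kt$ by direct cocycle computations, once more pushing the possible obstructions into symbols that vanish at the threshold $2e$.

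For part (2), I would argue by contradiction. Suppose $\s: \Gamma(2e-1) \to \Gt$ is any splitting with image normal in $\Kt$ (or in $\It$); then the analogous character $\chi_\kt$ must be trivial for every $\kt$. I would exhibit a concrete obstruction: choose a non-square unit $u \in \OF^\times$, a root $\alpha$, and set $g = \xa(r)$ for a suitable $r \in \varpi^{2e-1}\OF^\times$ to be determined; take $\kt = \hta(u) \in \It$. The Matsumoto relations compute $\kt \xta(r) \kt^{-1}$ in $\Gt$ up to a $\mu_2$-factor of the form $(u, w)$ with $w \in U^{(2e-1)}$ depending on $r$. Because $U^{(2e-1)} \not\subseteq (F^\times)^2$ and $u$ is a non-square, $r$ can be chosen so that $(u,w) = -1$, contradicting $\chi_\kt \equiv 1$. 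Since $\kt \in \It$, the same witness rules out normality in $\It$.

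The main obstacle will be the bookkeeping in part (1): verifying the cocycle vanishings uniformly across the torus, the positive and negative root subgroups, and all mixed commutators, and packaging them into a single coherent homomorphism on $\Gamma(2e)$. The Hilbert-symbol input is classical, but its interplay with the Iwahori decomposition and the Matsumoto cocycle relations — especially the fact that certain symbols which are trivial on $U^{(2e)}$ reappear nontrivially on $U^{(2e-1)}$, giving precisely the sharp threshold — requires careful case analysis by root type.
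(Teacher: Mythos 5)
Your part (2) is where the argument genuinely breaks. In the Steinberg presentation of $\Gt$, conjugating a root element by a torus element carries no $\mu_2$-twist at all: $\hta(u)\,\xt_{\beta}(r)\,\hta(u)^{-1}=\xt_{\beta}(u^{\la \beta,\alphac\ra}r)$ exactly, because the cover splits canonically over each root subgroup and torus conjugation preserves that splitting. So with $\kt=\hta(u)$ and $g=x_{\alpha}(r)$, your character $\chi_{\kt}(g)$ is identically $1$ no matter how $r\in\varpi^{2e-1}\Ocal$ is chosen; the factor $(u,w)$ you invoke never appears. There is a second defect in the setup: when $g\neq kgk^{-1}$ (as for your unipotent witness), the quantity $\chi_{\kt}(g)$ changes when the splitting $\s$ is replaced by $\s\cdot\chi$ for a character $\chi:\Gamma(2e-1)\to\mu_2$, so triviality for one splitting says nothing about an arbitrary one. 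Both problems vanish if the obstruction is taken inside the torus, which is what the paper does: for elements of $K$ that commute in $G$, the commutator of arbitrary lifts is a well-defined element of $\mu_2$, independent of any splitting, and by \eqref{MR4} the lifts of $h_{\alpha}(u)$ and $h_{\beta}(t)$ commute only up to $(u,t^{\la\beta,\alpha\ra})$. Taking adjacent simple roots (so $\la\beta,\alpha\ra=-1$), $t\in 1+\varpi^{2e-1}\Ocal$ with $t\notin R$ (possible because $1+\varpi^{2e-1}\Ocal\not\subseteq R$ — this is the sharpness of the level $2e$), and a unit $u$ with $(u,t)=-1$, one gets the sign $-1$; since $h_{\beta}(t)\in\Gamma(2e-1)$ and $\hta(u)$ lies over $\Tt_0\subseteq\It\subseteq\Kt$, no splitting can have normal image in $\It$ or $\Kt$. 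This is exactly what the paper means by "comparing \eqref{R3}, \eqref{R4} with \eqref{MR3}, \eqref{MR4}".

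Your part (1) follows the same route as the paper's Proposition \ref{Gamma12eSplitting} (Iwahori factorization and the section $(\s^{-}\times\s_{T}\times\s^{+})\circ\mu_{\Gamma}^{-1}$), but your inventory of possible $\mu_2$-twists omits the only place they actually occur: rewriting $\xt_{\alpha}(t)\,\xt_{-\alpha}(v)$ in Iwahori order for the \emph{same} root, which produces the torus factor $\htt_{\alpha}(1+tv)$ together with the symbol $(1+tv,-t)$ (Proposition \ref{PMRootSwap}); the torus--unipotent and distinct-root commutators you list carry no signs. For the splitting itself this is harmless, since with both parameters in $\varpi^{2e}\Ocal$ the unit $1+tv$ is a square. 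But for normality in $\Kt$ (and in the paper's stronger statement for $\Gamma_{1}(2e)$) one conjugates by $\xt_{\alpha}(a)$ with $a$ an arbitrary unit, and the resulting symbols are $(1+av,-a)$ with $1+av\in 1+\varpi^{2e}\Ocal$ and $-a$ an arbitrary unit; your conductor estimate (both entries principal units of complementary levels) does not cover this case. What is needed is precisely $1+\varpi^{2e}\Ocal\subseteq R$, i.e., triviality of the symbol against \emph{all} units (Proposition \ref{P:property_Hilber_symbol_not_in_Appendix}, used via Corollary \ref{PMRootSwapCor}); it does follow from your estimate together with $\Ocal^{\times}=\Ocal^{\times 2}(1+\varpi\Ocal)$, but that step must be made explicit, and it is the central arithmetic fact of the paper rather than a footnote.
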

This result is also arithmetic in nature and requires special properties of the $2$-adic quadratic Hilbert symbol. We make a few more comments on this theorem.

The notion of depth introduced by Moy-Prasad \cite{MP94,MP96} has been generalized to tame covers by Howard-Weissman \cite{HW09}. The main point is that in a tame topological central extension any pro-$p$ subgroup must split uniquely \cite[Proposition 2.3]{HW09}. The situation is more complicated for wild covers. As the previous theorem demonstrates, a genuine irreducible representation of $\Kt$ cannot have a large kernel; among the principal congruence subgroups, $\Gamma(2e)$ is maximal such that its image under a splitting can be contained in the kernel of a genuine $\Kt$-representation. While we will not define a notion of depth for $\Kt$ or $\Gt$, the genuine irreducible $\Kt$-representations that are trivial on the image of a splitting of $\Gamma(2e)$ can be called the minimal depth representations of $\Kt$. 

With some handle on the structure of $\Kt$ we can turn to representation theory. Ideally we would like to establish some correspondence between representations of $\Kt$ and $K$, as we had in the tame case. Since both $I$ and $K$ do not split into $\Gt$ there is no obvious correspondence between, say, $K$-representations and genuine $\Kt$-representations. However, we are able to construct a correspondence between the depth $0$ representations of $K$ and certain representations of $\Kt$ (Theorem \ref{FiniteShimCor}). This generalizes a result of Savin \cite{S12} for the unramified $F/\mathbb{Q}_{2}$, where the correspondence is called a finite Shimura correspondence. This correspondence is constructed using a very special representation of $\Kt$, which we now describe.

Let $\tau$ be a pseudo-spherical representation of $\Tt_{0}$. In fact, $\tau$ remains irreducible when restricted to $\Tt_{1}=\Tt\cap \Gammat(1)$. Let $\sigma=\sigma_{\tau}$ be the $\Gammat(1)$-representation constructed using the parahoric induction functor of Dat \cite[Section 2.6]{D09}. From general properties of parahoric induction the representation $\sigma$ is an irreducible genuine $\Gammat(1)$-representation that is trivial on $\s_{\Gamma(2e)}(\Gamma(2e))$. A more in-depth study of $\sigma$ reveals that this representation actually extends to $\Kt$.

\begin{Thm}[Theorem \ref{KRep}]\label{P:sigma_extends_to_K_Intro}
    The irreducible genuine $\Gammat(1)$-representation $\sigma$ constructed by parahoric induction from a pseudo-spherical representation of $\Tt_{1}$ extends uniquely to a representation of $\Kt$. In particular, the extension of $\sigma$ is a minimal depth representation of $\Kt$.
\end{Thm}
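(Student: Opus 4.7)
The plan is to construct the extension by running parahoric induction at the level of $\Kt$ rather than $\Gammat(1)$ — starting from the full pseudo-spherical datum $(\Tt_{0},\tau)$ rather than from its restriction $(\Tt_{1},\tau|_{\Tt_{1}})$ — and then to deduce uniqueness via Clifford theory. The two key inputs are the Weyl-invariance of $\tau$ furnished by Theorem~\ref{T:PseudoT0} and the fact that $\Kt/\Gammat(1)$ is a perfect finite group.

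For uniqueness, first note that $\mu_{2}\subseteq\Gammat(1)$, so $\Kt/\Gammat(1)\cong K/\Gamma(1)$, which is the finite simply-connected simply-laced Chevalley group over $k_{F}$; under the rank $\geq 2$ hypothesis this group is perfect in all the cases treated. Any two extensions $\pi_{1},\pi_{2}$ of $\sigma$ to $\Kt$ therefore differ by a character of $\Kt/\Gammat(1)$ by standard Clifford theory applied to the normal inclusion $\Gammat(1)\triangleleft\Kt$; since the character group is trivial, $\pi_{1}\cong\pi_{2}$.

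For existence, the strategy is to combine $\tau$ on $\Tt_{0}$ with the trivial character of the pro-$p$ radical of $\Kt$. Because $\tau$ is Weyl-invariant, the resulting datum is compatible with the conjugation action of $\Nt\cap\Kt$; using a Bruhat-type decomposition of $\Kt$ in terms of $\Gammat(1)$, $\Tt_{0}$, and representatives of the Weyl group, one assembles these ingredients into the genuine $\Kt$-representation $\tilde\sigma$. Functoriality of the underlying parahoric induction along $\Gammat(1)\subseteq\Kt$ ensures $\tilde\sigma|_{\Gammat(1)}\cong\sigma$. The minimal depth assertion is then immediate: $\tilde\sigma|_{\Gammat(1)}=\sigma$ is trivial on $\s_{\Gamma(2e)}(\Gamma(2e))$ by the construction of $\sigma$, so $\tilde\sigma$ is trivial there as well.

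The main obstacle is the existence step. Phrased cohomologically, one must verify that the class in $H^{2}(\Kt/\Gammat(1),\C^{\times})$ obstructing the extension of the $\Kt$-invariant representation $\sigma$ vanishes. The Schur multipliers of the simply-connected simply-laced finite Chevalley groups are trivial in essentially all the relevant cases, so one expects this to hold; making it rigorous requires a careful computation combining the explicit formulas for $\tau$ from Theorem~\ref{T:PseudoT0} with the normality of the splitting $\s_{\Gamma(2e)}(\Gamma(2e))$ from Proposition~\ref{Gamma12eSplitting}, in order to pin down a specific $\Kt$-intertwiner extending the $\Gammat(1)$-module structure on $\sigma$.
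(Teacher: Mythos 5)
Your uniqueness argument is fine and matches the paper's (perfectness kills the character group by which two extensions could differ), and your cohomological framing of existence is the right generic strategy: the paper likewise establishes $\Kt$-invariance of $\sigma$ on $\Gammat(1)$ from Weyl-invariance and then invokes the extension criterion of Curtis--Reinhold \cite[(11.45)]{CR81} together with Steinberg's theorem that the relevant finite Chevalley groups in characteristic $2$ are centrally closed. But as written your existence step has a genuine gap, and you essentially concede it yourself. The ``assembly'' of $\tilde\sigma$ from $\tau$, Weyl representatives, and the trivial character of a pro-$p$ radical via a Bruhat-type decomposition is not a construction: checking that such data glue into a representation of $\Kt$ is exactly what the $H^{2}$-obstruction measures, so this paragraph cannot substitute for the vanishing argument, which you then leave as ``a careful computation.''

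More concretely, your phrase ``trivial in essentially all the relevant cases'' hides the cases where the theorem still has to be proved. The Schur multiplier of $\Kt/\Gammat(1)\cong G_{\kappa}$ is \emph{not} trivial for $\SL_{3}(\F_{2})$, $\SL_{3}(\F_{4})$, $\SL_{4}(\F_{2})$ and $D_{4}(\F_{2})$, and these occur for the groups and residue fields allowed here (e.g.\ $G=\SL_{3}$ or $\SL_{4}$ over a $2$-adic field with residue field $\F_{2}$), so the cohomological argument simply does not apply there and no amount of computation with $\tau$ and the splitting of $\Gamma(2e)$ will make the obstruction group vanish. The paper handles these cases by a different device: embed $G$ into the group $G'$ of the same type and rank one larger, use the branching result for pseudo-spherical representations (Proposition \ref{PseudoSphericalBranch}) to find $\tau'$ on $\Tt_{1}'$ containing $\tau$ with multiplicity one, show $\Hom_{\Gammaft(1)}(\sigma,{\sigma'}^{\Nf})\cong\C$, and then verify that the $\sigma$-isotypic line's image inside ${\sigma'}^{\Nf}$ is stable under all of $\Kft$, which realizes the desired extension by restriction from the higher-rank group. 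Without an argument of this kind (or some other treatment of the four exceptional groups), your proof establishes the theorem only when the residue field is large enough that $G_{\kappa}$ avoids the exceptional list.
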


We also write $\sigma$ for the extension of $\sigma$ to $\Kt$. The minimal depth $\Kt$-representation $\sigma$ actually gives us substantial control over an interesting family of genuine representations of $\Kt$ via a finite Shimura correspondence with the depth $0$ representations of $K$. To state this precisely we introduce some additional notation. 

Let $\kappa$ be the residue field of $F$, and $G_\kappa$ the $\kappa$-points of $G$. For a totally disconnected topological group $H$ let $\mathcal{M}(H)$ be the category of smooth representations of $H$. Let $\mathcal{M}_{(\Gammat(1),\sigma)}(\Kt)$ be the full subcategory of $\mathcal{M}(\Kt)$ consisting of smooth representations of $\Kt$ such that their restriction to $\Gammat(1)$ is $\sigma$-isotypic. Note that any representation of $G_{\kappa}$ inflates to a non-genuine representation of $\Kt$ via the isomorphism $G_{\kappa}\cong K/\Gamma(1)$ induced by the reduction mod $p$ map. Now we can state the finite Shimura correspondence.

\begin{Thm}[Theorem \ref{FiniteShimCor}]
The functor $\mathcal{M}(G_{\kappa})\rightarrow \mathcal{M}_{(\Gammat(1),\sigma)}(\Kt)$ defined by $\pi\mapsto \pi\otimes\sigma$ defines an equivalence of categories. 
\end{Thm}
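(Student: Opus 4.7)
The plan is to construct a quasi-inverse functor $F\colon \mathcal{M}_{(\Gammat(1),\sigma)}(\Kt)\to \mathcal{M}(G_{\kappa})$ to $\pi\mapsto\pi\otimes\sigma$ and to check that the unit and counit of the resulting adjunction are natural isomorphisms. Two inputs from earlier in the excerpt make this Morita-style argument run: (i) $\sigma$ is irreducible as a $\Gammat(1)$-representation, so Schur's lemma applies; and (ii) $\sigma$ extends to a representation of $\Kt$ by Theorem \ref{P:sigma_extends_to_K_Intro}. Only (ii) is a nontrivial ingredient.

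For the quasi-inverse I take $F(V):=\Hom_{\Gammat(1)}(\sigma,V)$ for $V\in \mathcal{M}_{(\Gammat(1),\sigma)}(\Kt)$. Using that $\Gammat(1)\trianglelefteq \Kt$ (since $\Gamma(1)$ is normal in $K$) and the fixed $\Kt$-extension of $\sigma$, I equip $F(V)$ with a $\Kt$-action by $(g\cdot f)(s):=g\cdot f(g^{-1}\cdot s)$. A short calculation using normality shows that $g\cdot f$ is again $\Gammat(1)$-equivariant, and specialising to $g\in\Gammat(1)$ shows that $\Gammat(1)$ acts trivially on $F(V)$; in particular the two $\mu_{2}$-actions coming from $V$ and from $\sigma^{-1}$ cancel, so $F(V)$ is non-genuine and descends to a representation of $\Kt/\Gammat(1)\cong K/\Gamma(1)\cong G_{\kappa}$.

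Next I would verify that the counit $c_{V}\colon F(V)\otimes\sigma\to V$, $f\otimes s\mapsto f(s)$, is a $\Kt$-equivariant isomorphism. Equivariance is immediate from the definitions of the $\Kt$-actions on $\sigma$ and on $F(V)$; as a map of $\Gammat(1)$-representations it is the canonical isotypic decomposition map, which is bijective because $V$ is $\sigma$-isotypic by hypothesis and $\sigma$ is $\Gammat(1)$-irreducible. For the unit $u_{\pi}\colon \pi\to F(\pi\otimes\sigma)$, Schur's lemma identifies $\Hom_{\Gammat(1)}(\sigma,\pi\otimes\sigma)\cong \pi$ canonically, and the induced $G_{\kappa}$-action unwinds, using the same $\Kt$-extension of $\sigma$ in both slots, to the original action on $\pi$. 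Naturality of $u$ and $c$ is then routine.

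The main obstacle, and essentially the only nonformal step, is the use of Theorem \ref{P:sigma_extends_to_K_Intro}. Without a chosen $\Kt$-extension of $\sigma$ one only gets a \emph{projective} action of $\Kt/\Gammat(1)\cong G_{\kappa}$ on $F(V)$: for $g\in \Kt\setminus \Gammat(1)$ the intertwiner between the $\Gammat(1)$-representations $\sigma$ and its conjugate $\sigma^{g}$ is only determined up to a scalar by Schur's lemma, so the formula for $g\cdot f$ is well defined only up to $\mathbb{C}^{\times}$. The extension trivialises this obstruction $2$-cocycle, and at the same time produces the cancellation of central characters that makes $F(V)$ non-genuine. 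This is precisely where the minimal depth $\Kt$-representation $\sigma$ bridges the genuine and non-genuine worlds; once it is in hand, the remaining verifications are standard.
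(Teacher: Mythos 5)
Your proposal is correct and follows essentially the same route as the paper: the paper's quasi-inverse is exactly your $F(V)=\Hom_{\Gammat(1)}(\sigma,V)$ with the conjugation action (made possible by the extension of $\sigma$ to $\Kt$), its essential-surjectivity step is your counit (the evaluation map $f\otimes s\mapsto f(s)$ on $\sigma$-isotypic modules), and its fully-faithfulness lemma (tensor–Hom adjunction plus Schur for the $\Gammat(1)$-irreducible, finite-dimensional $\sigma$) is the statement that your unit is an isomorphism. Your closing remark correctly isolates the extension theorem as the only nonformal input, which is also how the paper organizes the argument.
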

As mentioned earlier, this theorem generalizes Savin \cite[Theorem 4.1]{S12}, which assumes $F$ is unramified over $\Q_2$.

The finite Shimura correspondence provides a construction of a large family of minimal depth representations of $\Kt$ and may be valuable for the construction of minimal depth supercuspidal representations of $\Gt$. It also behaves well with respect to induction and thus can be used to reduce the study of certain Hecke algebras on $\Kt$ to related Hecke algebras on $K$ (Theorem \ref{T:finite_Hecke_algebra_isom}).

Now we can state our main theorem on the analog of the Borel-Casselman Theorem for $\Gt$.

\begin{Thm}[Theorem \ref{IType}]
Let $\tau$ be a pseudo-spherical $\Tt_{0}$-representation and let $\rho$ be a distinguished genuine $\Tt$-representation containing $\tau$. Let $\sigma=\sigma_{\tau}$ be the $\It$-representation parahorically induced from $\tau$.

The pair $(\It, \sigma)$ is a type in the sense of Bushnell-Kutzko \cite[(4.1) Definition]{BK98}. Furthermore, the category of $\Hcal(G, \It; \sigma)$-modules is equivalent to the $(\Tt, \rho)$ Bernstein block of $\Gt$, and no other Bernstein block contains representations with $(\It,\sigma)$-isotypic vectors.
\end{Thm}

As in the linear case, we expect the Hecke algebra $\Hcal(\Gt, \It; \sigma)$ to admit an Iwahori-Matsumoto (IM) presentation \cite{IM}, which essentially describes the algebra as a deformation of the group algebra of an extended affine Weyl group. We mostly prove this, but fall short of the complete IM presentation for certain Cartan types where technical obstructions appear. We briefly indicate the extent of our results here, and refer the reader to the body of the paper for the precise statements of our results. 

First we note that in all Cartan types we can determine a weak IM presentation for $\Hcal(\Gt, \It; \sigma)$. This includes identifying an unnormalized basis for $\Hcal(\Gt, \It; \sigma)$ consisting of invertible elements, identifying a set of generators for the algebra, braid relations and quadratic relations up to nonzero scalar for these generators, and a complete IM presentation for elements supported in $\Kt$ (so in particular the basis elements supported in $\Kt$ can be normalized properly). 

Now we will describe our results for the specific Cartan types. We can obtain the complete IM presentation when $G$ has Cartan type $A_r,D_{2r+1},E_{6},E_{7}$ (Theorem \ref{IMPres}). In type $D_{2r}$ we obtain an IM presentation only up to a $2$-cocycle on the length $0$ elements. In type $E_{8}$ we will not be able to obtain the proper normalization of the one generator corresponding to the simple affine reflection.

We note that when $F/\mathbb{Q}_{2}$ is unramified the techniques of Karasiewicz \cite{Karasiewicz} can be adapted to the present setting to derive the complete IM presentation for all Cartan types. We have opted not to pursue this because we believe that the technical obstructions in types $D_{2r}$ and $E_{8}$ for any $2$-adic field $F$ will naturally be resolved as we refine and expand the techniques used in this paper. For example, the difficulty with $E_{8}$ stems from the fact that its affine Dynkin diagram only has a single special vertex, while all of the other types we consider have at least two. We expect this to be resolved as follows. It seems plausible that one can extend the finite Shimura correspondence to other points in the Bruhat-Tits building of $G$. We expect such a finite Shimura correspondence to encode an isomorphism of Hecke algebras, as we see in this paper for the finite Shimura correspondence at a special vertex (Theorem \ref{T:finite_Hecke_algebra_isom}). This isomorphism of Hecke algebras when applied to a non-special vertex in type $E_{8}$ should determine the proper normalization for the last generator completing the IM presentation in type $E_{8}$.

One of the consequences of the IM presentation is that there is an explicit linear Chevalley group $G^{\prime}$ and Iwahori subgroup $I^{\prime}\subseteq G^{\prime}$ such that $\Hcal(G', I')\cong\Hcal(\Gt, \It; \sigma)$. This Hecke algebra isomorphism gives a Shimura correspondence of representations.

\begin{Thm}[Corollary \ref{C:local_Shimura_Correspondence}]
    For $G$ of Cartan type $A_{r},D_{2r+1},E_{6},E_{7}$, there is an equivalence of categories between the $(I',\one)$-block of $G'$ and the $(\It, \sigma)$-block of $\Gt$.
\end{Thm}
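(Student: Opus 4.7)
The plan is to chain together three equivalences of categories: the type-theoretic equivalence on the metaplectic side, an isomorphism of Hecke algebras coming from matching Iwahori--Matsumoto presentations, and the classical Borel--Casselman equivalence on the linear side.

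First, I would invoke Theorem \ref{IType} to conclude that the functor of taking $(\It,\sigma)$-isotypic vectors (or equivalently $\Hom_{\It}(\sigma,-)$) induces an equivalence of categories between the $(\Tt,\rho)$ Bernstein block of $\Gt$ and the category of right $\Hcal(\Gt, \It; \sigma)$-modules. This is precisely the content of the Bushnell--Kutzko formalism once one knows that $(\It,\sigma)$ is a type describing this block, and it is available in all Cartan types we treat.

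Next, I would use Theorem \ref{IMPres}, which supplies a complete Iwahori--Matsumoto presentation of $\Hcal(\Gt, \It; \sigma)$ in types $A_r$, $D_{2r+1}$, $E_6$, $E_7$. The generators, braid relations, and quadratic relations determine an extended affine Hecke algebra attached to a root system with explicit quadratic parameters. I would then identify an almost simple simply-connected linear Chevalley group $G'$ (the one already singled out in the body of the paper through the discussion preceding this corollary) whose classical Iwahori--Hecke algebra $\Hcal(G', I')$ has the same IM presentation: the finite Weyl group and extended affine structure match, and the quadratic parameters attached to each simple affine reflection agree. This yields a $\C$-algebra isomorphism $\Hcal(G', I')\cong \Hcal(\Gt,\It;\sigma)$. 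The fact that the normalizations of the generators are fixed (\emph{complete} IM presentation, as opposed to the weaker one in types $D_{2r}$ and $E_8$) is exactly what is needed here, and this is the step that fails for the excluded Cartan types.

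Finally, I would apply the classical Borel--Casselman theorem \cite[Theorem 4.10]{B76} on the linear side: the functor of taking Iwahori-fixed vectors gives an equivalence between the $(I',\one)$ Bernstein block of $G'$ and $\Hcal(G', I')$-mod. Composing the three equivalences
\[
\mathcal{M}_{(I',\one)}(G')\ \simeq\ \Hcal(G', I')\text{-mod}\ \simeq\ \Hcal(\Gt, \It; \sigma)\text{-mod}\ \simeq\ \mathcal{M}_{(\Tt,\rho)}(\Gt)
\]
produces the desired Shimura correspondence. The only substantive step is the middle one: everything else is formal once the IM presentations are matched, so the entire difficulty has already been absorbed into Theorem \ref{IMPres} and the identification of $G'$ there. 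In particular, no further arithmetic input beyond what has been developed in the preceding sections is needed for the corollary itself.
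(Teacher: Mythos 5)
Your proposal follows essentially the same route as the paper: Theorem \ref{IType} (the type-theoretic equivalence with $\Hcal(\Gt,\It;\sigma)$-mod), the complete IM presentation of Theorem \ref{IMPres} yielding the Hecke algebra isomorphism of Corollary \ref{HeckeShimCorr}, and Borel--Casselman on the linear side. The one correction: the group $G'$ is \emph{not} simply-connected --- the paper takes $G'=G/Z_2$, whose fundamental group is $\Yt/2Y$, precisely so that the length-zero subgroup $\Omegat\cong\Yt/2Y$ appearing in the presentation $H_{\ea}=\C[\Omegat]\otimes H_{\af}$ matches $W_{\ea}(G')/W_{\af}(G')$; for a simply-connected $G'$ the Iwahori--Hecke algebra has no such length-zero part, so the middle isomorphism would fail in types $A_{2r+1}$, $D_{2r+1}$, $E_7$. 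With $G'=G/Z_2$ your argument is exactly the paper's.
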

We note that this result will also hold for the other Cartan types, once the the remaining gaps in the IM presentation are filled. 

Another consequence of the IM presentation is that the subalgebra $\Hcal(\Gt,\Kt,\sigma)\subset \Hcal(\Gt,\It,\sigma)$ satisfies a Satake isomorphism (Corollary \ref{C:Satake}). 


\quad

Next we highlight several of the ideas and tools that we develop to overcome the technical issues raised by the ramification of $F/\mathbb{Q}_{2}$. The first technical issue is that the structure of the $2$-fold cover $\Gt$ over $F$ is closely connected with the quadratic Hilbert symbol of $F$. When $F$ has odd residual characteristic the quadratic Hilbert symbol is well understood and even obeys a relatively simple formula. 
When $F$ has even residual characteristic the quadratic Hilbert symbol is more complicated and its properties seem to be less well-known; the complexity is directly connected to the ramification index. So, to get a handle on the structure of $\Gt$ and its subgroups we must first establish some general properties of the quadratic Hilbert symbol for an arbitrary $2$-adic field. The most important result we prove is an explicit description of the radical of the Hilbert symbol restricted to $\Ocal^{\times}$. We expect that this and the other results we prove about the Hilbert symbol are already known, but we have not been able to locate them in the literature, so we record the proofs in Appendix \ref{S:Appendix_Hilbert_symbol}.

These results on the Hilbert symbol are used repeatedly throughout our paper and are part of the foundation of all of our results. To highlight just two, the construction of the pseudo-spherical representations of $\Tt_{0}$ and the structure theory of $\Kt$ depend crucially on the the results we prove about the Hilbert symbol.

With a pseudo-spherical $\Tt_{0}$-representation $\tau$ that is a type of the $\Tt$-representation $\rho$, we can begin to try to construct a type for for the Bernstein block of $\Gt$ attached to $(\Tt,\rho)$. Here we want to highlight two points. 

The first point is about Bushnell-Kutzko's notion of $G$-covers, which essentially suggests a candidate type for $G$ given a type $\tau$ for $\Tt$. We encountered some difficulty when we applied this theory directly to $\tau$. The theory suggests a candidate type $(\It_{2e},\sigma_{2e})$, where $\It_{2e}\subset \It$ is an open compact subgroup and $\sigma_{2e}$ an irreducible representation of $\It_{2e}$. The trouble is that $\It_{2e}$ is quite small compared to $\It$, even when $e=1$. This makes questions about the support of the associated Hecke algebra difficult to investigate. This seems somewhat similar to the situation in Roche \cite{R98}. In this work Roche investigates types for ramified principal series, so in particular is working in arbitrary depth, using Bushnell-Kutkzo's theory of $G$-covers. Roche's candidate types are on small open compact subgroups. He exerts considerable effort to determine the support of the Hecke algebras and must impose constraints on the residual characteristic, which includes $p=2$ in all Cartan types (though he mentions a way to remove these restrictions in type $A_r$). (See \cite[pg. 370-379]{R98}.)

In our situation we are seeing high-depth phenomenon from the ramification of $F/\mathbb{Q}_{2}$, but since we are exclusively working with $2$-adic fields we must deal with residual characteristic $2$. It is not clear if Roche's techniques adapt to our setting, so we took a different path. To minimize the support difficulties we set out to construct an Iwahori type. This is the approach in Takeda-Wood \cite{Takeda_Wood}, but there the Iwahori type is constructed using the Weil representation. In our situation we construct a candidate  Iwahori type $(\It,\sigma)$ using a pseudo-spherical $\Tt_{0}$-representation $\tau$ and the parahoric induction of Dat \cite{D09}. Now we might try to prove that $\sigma$ is a type using the theory of $G$-covers. Unfortunately the $\It$-representation $\sigma$ does not fit into this theory. However, we are able to adapt the theory to our setting. This seems to suggest that the theory of $G$-covers can be extended by incorporating parahoric induction. (See Lemmas \ref{LemSurj}, \ref{LemInj}.)

The second point is that the statement of our results in terms of Iwahori types of $\Gt$ is misleading, from the standpoint of our arguments. Our arguments crucially use two different models for the Hecke algebra $\Hcal(\Gt, \It; \sigma)$; the other involving a smaller open compact subgroup $\It_{2}\subset \It$ and an irreducible representation $\sigma_{2}$, also constructed from $\tau$ via parahoric induction. In fact, $\sigma\cong \Ind_{\It_{2}}^{\It}\sigma_{2}$ (Proposition \ref{IndSig2}) and so $\Hcal(\Gt, \It; \sigma)\cong \Hcal(\Gt, \It_{2}; \sigma_{2})$, from general theory. Roughly, the advantage of the $\It$-model appears when bounding the support of the Hecke algebra, this allows us to avoid the tedious support calculations in Karasiewicz \cite{Karasiewicz}; the advantage of the $\It_{2}$-model is that it is more compatible with the underlying affine Weyl group structure of $\Gt$ than $\It$ is. We will briefly elaborate on this affine Weyl group comment.

Let $T\subset G$ be a maximal split torus with cocharacter lattice $Y$. Let $\Phi$ be the root system of $(G,T)$. Let $\mathfrak{A}\subset \mathbb{R}\otimes Y$ be an alcove containing $0$ in its closure. The stabilizer of $\mathfrak{A}$ in $G$ is an Iwahori subgroup, say $I$. Now we turn to $\Gt$. The root datum for the Langlands dual group of $\Gt$ suggests that the relevant root system for $\Gt$ is $\frac{1}{2}\Phi$. Thus in place of the alcove $\mathfrak{A}$ we should consider $2\mathfrak{A}$, and the stabilizer of $2\mathfrak{A}$ in $G$ is the group we have been calling $I_{2}$.

\quad

To close this introduction we will summarize the content of each section. In \S \ref{S:Prelim} we set up notation and recall some well known results. We note that \S \ref{SS:HilbSymb} includes the fundamental properties of the Hilbert symbol that we use repeatedly. We expect these results are known, but we include the proofs in Appendix \ref{S:Appendix_Hilbert_symbol}.

In \S \ref{S:2Cover} we recall the presentation of $\Gt$ and prove some basic results on the modified cocharacter lattice $\Yt\subset Y$ and the action of the Weyl group on it.

\S \ref{SecTORep} - \ref{S:IwahoriType} are the core of our work. In \S \ref{SecTORep} we construct the pseudo-spherical representations of $\Tt_{0}$, the maximal compact subgroup of $\Tt$. While $\Tt_{0}$ is a $2$-step nilpotent group with a Stone-von Neumann Theorem, for use in the rest of the paper we need a fairly explicit construction of these representations. To achieve this we describe a factorization of $\Tt_{0}$, heavily using the properties of the Hilbert symbol from \S \ref{SS:HilbSymb}. We note that we construct all of the pseudo-spherical representations of $\Tt_{0}$, show that they are Weyl group invariant (Theorem \ref{T:PseudoT0}), and describe how they branch when restricted to certain subtori of rank one less (Proposition \ref{PseudoSphericalBranch}).

In \S \ref{S:PSKType} we investigate the structure and representation theory of a hyperspecial maximal compact subgroup $\Kt\subset\Gt$ and other open compact subgroups. In \S \ref{SSecSplittings} we examine the structure of $\Kt$. Our main theorem identifies the maximal principal congruence subgroup of $K$ that splits into $\Gt$ with normal image in $\Kt$ (Propositions \ref{Gamma12eSplitting}, \ref{P:SplitNormIm}). In \S \ref{SS:ParaInd} we review the parahoric induction and restriction functors of Dat \cite{D09} and their extension by Crisp-Meir-Onn \cite{CMO19}, and records basic properties of these functors, some from \cite{D09,CMO19} and some new (e.g. Proposition \ref{ResGamma1}). In \S \ref{IwahoriType} we construct a candidate Iwahori type $\sigma$ and several related representations on finite index subgroups of $\It$. The main result in this subsection is Theorem \ref{IRep}, where we show how these representations relate to one another and establish Weyl-invariance, where applicable.

In \S \ref{S:KType} we prove that the $\It$-representation $\sigma$ extends to $\Kt$ (Theorem \ref{KRep}). This is a crucial result. We use it later to construct the finite Shimura correspondence (Theorem \ref{FiniteShimCor}), which itself essentially determines the structure of the subalgebra of $\Hcal(\Gt,\It,\sigma)$ supported on $\Kt$.

In \S \ref{SigProps} we introduce $I_{2}$ and $\sigma_{2}$, mentioned above, and collect various properties of $\sigma$ and $\sigma_{2}$. These properties play a role in our investigation of the Hecke algebras in \S \ref{S:Hecke}.

In \S \ref{SS:finite_Shimura} we prove the finite Shimura correspondence in Theorem \ref{FiniteShimCor}. This result is interesting in its own right, but it also figures into our study of $\Hcal(\Gt,\It,\sigma)$. We return to describe other important properties of this correspondence in \S \ref{FinShimHecke}, after introducing the Hecke algebra.

In \S \ref{SS:WhitInvar} we collect a couple simple corollaries on Whittaker invariants of the representations of $\Kt$ that arise in the finite Shimura correspondence. These results are not used elsewhere in the paper, but they should be important for the study of the Gelfand-Graev representation of $\Gt$. 

In \S \ref{S:Hecke} we begin our study of the Hecke algebra $\Hcal=\Hcal(\Gt,\It,\sigma)$. After reviewing basic properties of Hecke algebras in \S \ref{SS:HeckeReview}, we return in \S \ref{FinShimHecke} to the finite Shimura correspondence. We prove that this correspondence preserves induction in a certain sense and use this to connect the finite Shimura correspondence to Hecke algebras. Consequently, we show that a family of Hecke algebras on $\Kt$ are isomorphic to Hecke algebras studied by Howlett-Lehrer \cite{HL80}.

In \S \ref{HandH2} we prove that the Hecke algebra $\Hcal$ constructed from $\sigma$ and the Hecke algebra $\Hcal_{2}=\Hcal(\Gt,\It_{2},\sigma_{2})$ constructed from $\sigma_{2}$ are in fact isomorphic (Theorem \ref{HIsoH2}). This proves to be quite important since each model offers complementary advantages. 

Over the next six subsections we study the structure of $\Hcal\cong \Hcal_{2}$. In \S \ref{SS:MultOne} we prove in Theorem \ref{HMult1} that every Iwahori double coset supports a space of functions of dimension at most one in $\Hcal$. For this theorem the $\Hcal$-model is quite convenient.

We prove in \S \ref{SuppAfSimpRel} and \ref{SuppLen0}, using the $\Hcal_{2}$-model, that the Hecke algebra supports functions on double cosets represented by certain elements in the extended affine Weyl group. Specifically the simple reflections (Proposition \ref{affSupp}) and the length $0$ elements (Proposition \ref{Len0Supp}).

In \S \ref{SS:QuadRel} we prove the quadratic relations in the Hecke algebra. For the simple reflections represented by elements of $\Kt$ we can prove the exact quadratic relation (Proposition \ref{QuadBraidSimpLinReflect}) using the finite Shimura correspondence, and so in particular, this uses the $\Hcal$-model. For the simple affine reflection we can use the $\Hcal_{2}$-model to prove a weak quadratic relation (Proposition \ref{QuadSimpAff}). Outside of type $E_{8}$ we use an automorphism to upgrade this weak quadratic relation to the expected quadratic relation. We also use the $\Hcal_{2}$-model to prove that the length $0$ elements satisfy a quadratic relation.

Next we turn to the Braid relations in \S \ref{SS:BraidRel}. Using an auxiliary linear Hecke algebra (introduced in \S \ref{SS:LinearHecke}) we can prove that $\Hcal_{2}$ has a Braid relation up to a $2$-cocycle (Proposition \ref{Prop:GenBraid}). The rest of this section is devoted to showing when this $2$-cocycle is trivial. We prove that the $2$-cocycle is trivial in types $A_{r},D_{2r+1},E_{6},E_{7}$, and it is expected to be trivial in general. 


Finally in \S \ref{SS:IMPres} we combine our results from the previous subsections to show that $\Hcal\cong\Hcal_{2}$ has an IM presentation (Theorem \ref{IMPres}). This is complete in types $A_{r},D_{2r+1},E_{6},E_{7}$, but it is expected for all Caratn types. In type $D_{2r}$ the last step is to prove that the $2$-cocycle on the length $0$ elements is trivial; in type $E_{8}$ the last step is to prove that the simple affine reflection satisfies the expected quadratic relation. We end \S \ref{SS:IMPres} with a few comments on $D_{2r}$, and $E_{8}$.

In \S \ref{S:IwahoriType} we prove our main Theorem \ref{TypesThm} that the pair $(\It,\sigma)$ is a type in the sense of Bushnell-Kutzko \cite[(4.1) Definition]{BK98}, and this type captures the Bernstein block associated to $(\Tt,\rho)$. This result has no restrictions on the (irreducible) Cartan type. Given our previous results on the structure of the Hecke algebra $\Hcal$ this argument essentially follows Bushnell-Kutzko \cite{BK98}. However, as mentioned above, the pair $(\It,\sigma)$ does not fit into Bushnell-Kutzko's theory of $G$-covers. So, in this section we also show how the notion of $G$-cover extends to our setting.

This section ends with Corollary \ref{C:local_Shimura_Correspondence}. This states that for the Cartan types for which we know that $\Hcal$ admits a complete IM presentation there is a linear group $G^{\prime}$ with an Iwahori subgroup $I^{\prime}\subset G^{\prime}$ such that $\Hcal\cong \Hcal(G^{\prime},I^{\prime})$. This isomorphism of Hecke algebras implies that the $(\Tt,\rho)$-Bernstein block of $\Gt$ is equivalent to the block of unramified principal series of $G^{\prime}$, giving a Shimura correspondence of representations.

We end with Appendix \ref{S:Appendix_Hilbert_symbol}. It contains proofs of the important properties of the Hilbert symbol that we have been unable to locate in the literature.

\quad

\begin{center}{\bf Notation and assumptions}\end{center}

Throughout the paper, we let $F$ be a finite extension of $\Q_2$, with ring of integers $\Ocal$ and a chosen uniformizer $\varpi$, and let $\kappa=\Ocal\slash\varpi\Ocal$ be the residue field. Let $e, f\in\Z_{\geq 1}$ be the ramification index and the inertia degree of $F\slash\Q_2$, respectively, so that
\[
[F:\Q_2]=ef,\quad 2\Ocal=\varpi^e\Ocal\qand \kappa=\Ocal\slash\varpi\Ocal\cong \F_{q},
\]
where $q=2^f$.

Let $G$ be any group and $H\subseteq G$ a subgroup. Let $x\in G$. We set
\[
^xH=xHx^{-1}.
\]
For a representation $\pi$ of $H$, we define
\[
^x\pi
\]
to be the representation of ${^xH}$ defined by
\[
^x\pi(h)=\pi(x^{-1}hx),\quad (h\in {^xH}).
\]
Note that $x^{-1}hx\in H$ because $h\in{^xH}=xHx^{-1}$, and hence this is well-defined. More generally, for another group $H'$ and a group isomorphism $\phi:H\to H'$, we define 
\[
^\phi\pi
\]
to be the representation of $H'$ defined by
\begin{equation}\label{E:definition_phi_pi}
^\phi\pi(h')=\pi(\phi^{-1}(h')),\quad(h'\in H').
\end{equation}
We let
\[
\Int(x):H\longrightarrow xHx^{-1},\quad h\mapsto xhx^{-1}.
\]
Then $^x\pi={^{\Int(x)}}\pi$. We denote by $\pi^H$ the space of $H$-invariant vectors in $\pi$.

\quad

\begin{center}{\bf Acknowledgments}\end{center}
We would like to thank Gus Lehrer for pointing us to the work of Uri Onn, from which we learned about Dat \cite{D09} and Crisp-Meir-Onn \cite{CMO19}. We also thank Wee Teck Gan for numerous discussions throughout the preparation of this work. The second author was partially supported by the Sumitomo Foundation Fiscal 2024 Grant for Basic Science Research Projects J230603025, and  JSPS KAKENHI grant number 24K06648. Also part of the research was done while both authors were attending the conference ``The 25th Autumn Workshop on Number Theory" at Hokkaido University in fall 2024, and while the second author was visiting the National University of Singapore in Septembers of 2024 and 2025. We would like to thank their hospitality.

\quad\\


\section{Preliminaries}\label{S:Prelim}


In this section, we first establish basic properties of the quadratic Hilbert symbol over our $2$-adic field $F$. After that, we will review the basic theory of Chevalley groups.

\subsection{$2$-adic Hilbert symbols}\label{SS:HilbSymb}
Let
\[
(-,-):F^\times\times F^\times\longrightarrow \{\pm 1\}
\]
be the quadratic Hilbert symbol of the $2$-adic field $F$. (For the general properties of the Hilbert symbol we refer the reader to \cite[Chapter V \S 3]{Neukirch}; for the more specialized results used in this paper see Appendix \ref{S:Appendix_Hilbert_symbol}.) Throughout, we often use
\[
(x, -x)=(x, 1-x)=1\qand (x, x)=(x, -1)
\]
for all $x\in F^\times$ such that the symbols are defined.

To investigate the structure of a double cover of a Chevalley group over a $2$-adic field we need to analyze the Hilbert symbol restricted to $\Ocal^\times\times\Ocal^\times$. Define
\begin{equation}\label{E:integral_radical_def}
R:=\{a\in\Ocal^\times\st \text{$(a, -)$ is trivial on $\Ocal^\times$}\},
\end{equation}
which we call the {\it integral radical} of the Hilbert symbol. 

In the next proposition we collect some properties of the quadratic $2$-adic Hilbert symbol.
\begin{Prop}\label{P:property_Hilber_symbol_not_in_Appendix}$ $
	\begin{enumerate}
		\item $(1+\varpi^{2e}\Ocal)\Ocal^{\times 2}= R$ and $1+\varpi^{2e+1}\Ocal\subseteq \Ocal^{\times 2}$. \label{ORad}
		\item $(1+\varpi^{2e}\Ocal)\slash((1+\varpi^{2e}\Ocal)\cap\Ocal^{\times 2})\cong R\slash\Ocal^{\times 2}\cong\Z\slash 2\Z$.
		\item $[\Ocal^{\times}:R]=2^{ef}$.
	\end{enumerate}
\end{Prop}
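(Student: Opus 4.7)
The plan is to prove $R = (1 + \varpi^{2e}\Ocal)\Ocal^{\times 2}$ by squeezing: a Hensel-style substitution will show $(1 + \varpi^{2e}\Ocal)\Ocal^{\times 2} \subseteq R$ and that this subgroup has index exactly $2$ over $\Ocal^{\times 2}$; the non-degeneracy of the full Hilbert symbol on $F^\times/F^{\times 2}$ will then give the matching upper bound $|R/\Ocal^{\times 2}| \leq 2$. Parts (1) and (2) will follow from the resulting equality, and (3) will be immediate from the standard count $|\Ocal^\times/\Ocal^{\times 2}| = 2^{ef+1}$.

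The heart of the argument is the substitution $\beta := (\sqrt{a} - 1)/\varpi^e$. For $a = 1 + \varpi^{2e}u \in 1 + \varpi^{2e}\Ocal$, this $\beta$ satisfies $f(X) = X^2 + \lambda X - u \in \Ocal[X]$ with $\lambda := 2/\varpi^e$ a unit. Its reduction $\bar f \in \kappa[X]$ is separable because $\bar\lambda \neq 0$, so by Hensel either $\bar f$ has a root in $\kappa$ (and then $a$ is a square in $F$), or $\bar f$ is irreducible, in which case $F(\sqrt{a})/F$ is the \emph{unramified} quadratic extension of $F$. The unramified case gives surjective norm on units, so $(a, b) = 1$ for all $b \in \Ocal^\times$; either way $1 + \varpi^{2e}\Ocal \subseteq R$. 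The same substitution applied to $a = 1 + \varpi^{2e+1}u$ produces $\bar f(X) = X(X + \bar\lambda)$, whose root $0$ lifts to $\Ocal$ by Hensel, yielding $1 + \varpi^{2e+1}\Ocal \subseteq \Ocal^{\times 2}$ and completing the second assertion of (1).

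To pin down the index, a valuation computation will identify $(1 + \varpi^{2e}\Ocal) \cap \Ocal^{\times 2} = (1 + \varpi^e\Ocal)^2$, and the induced squaring map $(1 + \varpi^e\Ocal)/(1 + \varpi^{e+1}\Ocal) \to (1 + \varpi^{2e}\Ocal)/(1 + \varpi^{2e+1}\Ocal)$ identifies on $\kappa$ with $\bar w \mapsto \bar\lambda \bar w + \bar w^2$; after the rescaling $\bar w = \bar\lambda \bar z$ this becomes $\bar\lambda^2$ times the Artin--Schreier map $z \mapsto z + z^2$, with image of size $|\kappa|/2$. Combined with $(1 + \varpi^{e+1}\Ocal)^2 = 1 + \varpi^{2e+1}\Ocal$ (a byproduct of the Hensel argument in the previous paragraph) this gives $[(1 + \varpi^{2e}\Ocal):(1 + \varpi^e\Ocal)^2] = 2$, proving the first isomorphism of (2). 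For the matching upper bound, decompose $F^\times/F^{\times 2} \cong \Ocal^\times/\Ocal^{\times 2} \oplus \F_2 \cdot (\varpi \bmod F^{\times 2})$; any $\bar a \in R/\Ocal^{\times 2}$ with $(a, \varpi) = 1$ lies in the full radical of the Hilbert symbol, hence is trivial, so $\bar a \mapsto (a, \varpi)$ embeds $R/\Ocal^{\times 2}$ into $\{\pm 1\}$. The two bounds will force $|R/\Ocal^{\times 2}| = 2$ and the equality $R = (1 + \varpi^{2e}\Ocal)\Ocal^{\times 2}$, and (3) will then be immediate. The main obstacle is isolating the threshold $2e$, which is pinned rigidly by the requirement that $\lambda = 2/\varpi^e$ be a unit: this is precisely what makes the reduction of the $\beta$-equation separable and hence forces $F(\sqrt{a})/F$ to be unramified, and any other choice of exponent would break the argument.
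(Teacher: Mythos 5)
Your proposal is correct, and all three parts do follow from it; the overall architecture is the same squeeze the paper runs in Appendix \ref{S:Appendix_Hilbert_symbol}: a lower bound $(1+\varpi^{2e}\Ocal)\Ocal^{\times 2}\subseteq R$ of index exactly $2$ over $\Ocal^{\times 2}$, matched against the upper bound $|R/\Ocal^{\times 2}|\leq 2$ from non-degeneracy of the symbol on $F^{\times}/F^{\times 2}$ (your pairing-against-$\varpi$ embedding of $R/\Ocal^{\times 2}$ into $\{\pm 1\}$ is a concrete rephrasing of the paper's annihilator computation in Lemma \ref{DimRad}), with the index-$2$ statement coming from the Artin--Schreier map on the residue field exactly as in the paper's Lemma \ref{L:2-adic_arithmetic}; your identification $(1+\varpi^{2e}\Ocal)\cap\Ocal^{\times 2}=(1+\varpi^{e}\Ocal)^{2}$ and the byproduct $(1+\varpi^{e+1}\Ocal)^{2}=1+\varpi^{2e+1}\Ocal$ both check out. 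Where you genuinely diverge is the proof of the lower bound $1+\varpi^{2e}\Ocal\subseteq R$ together with $1+\varpi^{2e+1}\Ocal\subseteq\Ocal^{\times 2}$: you obtain both from the single substitution $\sqrt{a}=1+\varpi^{e}\beta$, Hensel's lemma applied to $X^{2}+\lambda X-u$ with $\lambda=2/\varpi^{e}$ a unit, and the dichotomy that $a$ is either a square or generates the unramified quadratic extension, where the norm is surjective on units. The paper instead proves $U_{2e}\subseteq R$ by a bare-hands Hilbert-symbol manipulation (exploiting $(x,1-x)=1$ and an auxiliary square $1-4v\in U_{2e}\cap\Ocal^{\times 2}$, then reducing a general unit via residue-field square roots) and deduces $U_{2e+1}\subseteq\Ocal^{\times 2}$ from the exponential isomorphism quoted from Neukirch. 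Your route is more structural: it makes the mechanism ``$(a,-)$ trivial on units $\Leftrightarrow$ $F(\sqrt{a})/F$ unramified or split'' visible and explains the threshold $2e$ as precisely the point where the reduced equation becomes separable, at the cost of invoking surjectivity of the norm on units for unramified extensions; the paper's computation is more elementary and self-contained but more opaque about why $2e$ is the right exponent.
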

\begin{proof}
See Appendix \ref{S:Appendix_Hilbert_symbol}.
\end{proof}

The following corollary will be crucially used in this paper.
\begin{Cor}\label{C:Hilbert_symbol_unique_element_integral_radical}
Let $u\in (1+\varpi^{2e}\Ocal)\smallsetminus\Ocal^{\times 2}$, which exists by the above proposition. Then for all odd $k\in\Z$ and $a\in\Ocal^\times$, we have
\[
(u, a\varpi^k)=-1.
\]
\end{Cor}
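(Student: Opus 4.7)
The plan is to use bilinearity of the Hilbert symbol to reduce the computation to two simpler evaluations, namely $(u,a)$ and $(u,\varpi)$, and then exploit that $u$ lies in the integral radical $R$ but not in $\Ocal^{\times 2}$.

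First I would expand
\[
(u, a\varpi^k) = (u,a)\,(u,\varpi)^k.
\]
Since $u \in 1+\varpi^{2e}\Ocal \subseteq R$ by the definition of $u$ and part \eqref{ORad} of Proposition \ref{P:property_Hilber_symbol_not_in_Appendix}, the defining property \eqref{E:integral_radical_def} of $R$ gives $(u,a)=1$ for every $a\in\Ocal^\times$. Also, since $k$ is odd, $(u,\varpi)^k = (u,\varpi)$. Hence the identity reduces to
\[
(u, a\varpi^k) = (u,\varpi),
\]
and the whole statement becomes the single claim $(u,\varpi) = -1$.

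To prove $(u,\varpi) = -1$, I would argue by contradiction: suppose $(u,\varpi)=1$. Combined with $(u,a)=1$ for all $a\in\Ocal^\times$, bilinearity and the decomposition $F^\times = \varpi^{\Z}\times\Ocal^\times$ then imply that $(u,x)=1$ for every $x\in F^\times$. By non-degeneracy of the quadratic Hilbert symbol, this forces $u\in F^{\times 2}$, say $u=b^2$ for some $b\in F^\times$. But $u\in\Ocal^\times$ has valuation $0$, so $v(b)=0$ and therefore $b\in\Ocal^\times$, giving $u\in\Ocal^{\times 2}$, contradicting the choice of $u\in (1+\varpi^{2e}\Ocal)\smallsetminus\Ocal^{\times 2}$.

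There is no serious obstacle here: the argument is a clean two-line deduction from the proposition together with the non-degeneracy of the Hilbert symbol on $F^\times/F^{\times 2}$. The only thing to be a little careful about is invoking non-degeneracy in the right form, i.e.\ that the radical of the Hilbert symbol on $F^\times$ is exactly $F^{\times 2}$, and then checking that a square root of a unit must itself be a unit, which is immediate from valuations.
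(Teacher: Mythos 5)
Your proposal is correct and follows essentially the same route as the paper: reduce to $(u,\varpi)$ using $u\in R$ and $k$ odd, then observe that $(u,\varpi)=1$ would make $(u,-)$ trivial on all of $F^\times$ (via the decomposition $x=a\varpi^n$), forcing $u\in\Ocal^{\times 2}$ by non-degeneracy, a contradiction. The only difference is that you spell out the valuation step showing a square root of a unit is a unit, which the paper leaves implicit.
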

\begin{proof}
Since $u\in R$, we know $(a, u)=1$. So $(u, a\varpi^k)=(u, \varpi)$ because $k$ is odd. Since every element in $F^\times$ is written as $a\varpi^n$ for some $a\in \Ocal^\times$ and $n\in\Z$, if $(u, \varpi)=1$, then we would have $(u, x)=1$ for all $x\in F^\times$, which would imply $u\in \Ocal^{\times 2}$.
\end{proof}

The Hilbert symbol restricted to units descends to a non-degenerate bilinear form
\[
\Ocal^\times\slash R\times\Ocal^\times\slash R\longrightarrow\{\pm 1\}.
\]
Since $\Ocal^{\times 2}\subseteq R$, we can consider $\Ocal^\times\slash R$ as an $\F_2$ vector space with a nondegenerate symmetric bilinear form. (We note that this form may not arise from a quadratic form on $\Ocal^\times\slash R$.) For later purposes, we need to decompose this symmetric bilinear form as follows.

\begin{Prop}\label{P:orthogonal_decomposition_of_Hilbert}
Let $S\subseteq \Ocal^\times\slash R$ be the set of all isotropic vectors, namely
\[
S=\{u\in\Ocal^\times\slash R\st (u, u)=1\}.
\]
Then there exists a subspace $D\subseteq \Ocal^\times\slash R$ with $\dim_{\F_2}D\leq 2$ such that
\[
\Ocal^\times\slash R=D\oplus D^\perp\qand D^\perp\subseteq S.
\]
To be more explicit, we have the following.
\begin{enumerate}[(i)]
\item If $-1\in R$, then $D=0$ and $S=\Ocal^\times\slash R$.\label{H1}
\item If $(-1, -1)=-1$, then $D=\la-1\ra$ and $\la-1\ra^{\perp}=S$. \label{H2}
\item If $(-1, -1)=1$ and $-1\notin R$, then $D$ is diagonalizable and 2-dimensional with $-1\in D$ and $D^\perp\subsetneq S$. (Here, we have $D^\perp\subsetneq S$ because $-1\in S$ but $-1\notin D^\perp$.) \label{H3}
\end{enumerate}
\end{Prop}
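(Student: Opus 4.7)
The plan hinges on one simple structural observation: in characteristic $2$, the map
\[
Q\colon \Ocal^\times/R\longrightarrow \F_2,\qquad v\mapsto (v,v)
\]
(identifying $\{\pm1\}$ with $\F_2$ additively) is $\F_2$-\emph{linear}. Indeed, from $(v+w,v+w)=(v,v)+2(v,w)+(w,w)$ and $2(v,w)=0$ in $\F_2$ we get $Q(v+w)=Q(v)+Q(w)$. Moreover, the identity $(x,x)=(x,-1)$ gives $Q(v)=(v,-1)$, so $Q$ is exactly pairing with $-1$. Two consequences I will use repeatedly: the set $S=\ker Q$ is a \emph{subspace}, of codimension at most $1$; and $Q\equiv 0$ if and only if $-1\in R$ (using nondegeneracy of the Hilbert symbol on $\Ocal^\times/R$ together with $Q(v)=(v,-1)$).

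With this in hand the three cases follow straightforwardly. For (\ref{H1}), if $-1\in R$ then $Q\equiv 0$, so $S=\Ocal^\times/R$ and $D=0$ works trivially. For (\ref{H2}), the hypothesis $(-1,-1)=-1$ means $Q(-1)=1\in\F_2$, so $Q\not\equiv 0$ and $S=\ker Q$ has codimension $1$; set $D=\la -1\ra$. Then $D$ is $1$-dimensional and nondegenerate because $Q(-1)\neq 0$, hence $\Ocal^\times/R=D\oplus D^\perp$, and $D^\perp=\{v:(v,-1)=1\}=\ker Q=S$.

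For (\ref{H3}), the assumption $(-1,-1)=1$ gives $-1\in S$, while $-1\notin R$ gives $Q\not\equiv 0$. Pick any $b\in\Ocal^\times/R$ with $Q(b)=1$, i.e.\ $(b,b)=-1$. I will take $D=\la b,-b\ra$ and check everything: linear independence of $b,-b$ (equivalently $-1\neq 0$ in $\Ocal^\times/R$) gives $\dim D=2$; the identities $(b,-b)=(b,-1)(b,b)=(-1)(-1)=1$ and $(-b,-b)=(b,b)(b,-1)^{2}(-1,-1)=-1$ show that $\{b,-b\}$ is an \emph{orthogonal} basis of anisotropic vectors, so the restriction of the form to $D$ is diagonal with Gram matrix $\mathrm{diag}(-1,-1)$, hence nondegenerate and diagonalizable. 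Since $-1 = b\cdot(-b)\cdot b^{-2}$ in multiplicative notation (equivalently $-1=b+(-b)$ additively), we have $-1\in D$. Nondegeneracy of the form on $D$ then gives the orthogonal decomposition $\Ocal^\times/R=D\oplus D^\perp$, and for any $v\in D^\perp$ we have $Q(v)=(v,-1)=1$ (additively $0$) because $-1\in D$, so $D^\perp\subseteq S$; finally $-1\in S\setminus D^\perp$ (it lies in $D$ and $D\cap D^\perp=0$), giving the strict inclusion $D^\perp\subsetneq S$.

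The only mildly subtle point is diagonalizability in case (\ref{H3}): naively taking $D=\la -1,a\ra$ for a generic $a$ with $(-1,a)=-1$ produces the Gram matrix $\bigl(\begin{smallmatrix}0&1\\1&\ast\end{smallmatrix}\bigr)$, which over $\F_2$ can be the hyperbolic plane and hence \emph{not} diagonalizable. The trick is to replace the basis by $\{b,-b\}$ for a vector $b$ with $(b,b)=-1$; such a $b$ exists precisely because $Q\not\equiv 0$, and the identity $(b,-b)=1$ (which holds for every $b$) is what automatically produces the orthogonal diagonal basis. This is the one place where the structure of the form in characteristic $2$ must be handled with care.
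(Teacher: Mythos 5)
Your proof is correct and follows essentially the same route as the paper's: case (i) and (ii) are handled identically, and in case (iii) your vector $b$ with $(b,b)=-1$ and the subspace $D=\la b,-b\ra$ are exactly the paper's $u_0$ and $\la u_0,-u_0\ra$, with the existence of $b$ coming from $-1\notin R$ in both arguments. The only cosmetic difference is that you make explicit the linearity of $v\mapsto(v,v)=(v,-1)$, which the paper uses implicitly.
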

\begin{proof}
Assume $-1\in R$. Then for all $u\in\Ocal^\times$, we have $(u, u)=(u, -1)=1$. Hence every vector in $\Ocal^\times\slash R$ is isotropic. 

Next assume $(-1, -1)=-1$. Then the 1-dimensional subspace $\la-1\ra$ is non-degenerate. So we have the orthogonal sum decomposition $\Ocal^\times\slash R=\la-1\ra\oplus \la-1\ra^\perp$. (See \cite[Proposition 1.1.1]{Kitaoka} for the existence of orthogonal sum decomposition.) Since $(u, u)=(u, -1)$ for all $u\in\Ocal^\times$, we know that $u\in S$ if and only if $u\in \la-1\ra^\perp$.

Finally, assume $(-1, -1)=1$ and $-1\notin R$. Since $-1\notin R$, there exists $u_0\in\Ocal^\times$ such that $(-1, u_0)=-1$, so $(u_0, u_0)=-1$. One can then check that $(-u_0, -u_0)=-1$. Since $u_0\neq -u_0$, we have the 2-dimensional subspace $D:=\la u_0, -u_0\ra$ generated by $u_0$ and $-u_0$. (Written multiplicatively, $D=\{\pm 1, \pm u_0\}$.) Certainly this space is non-degenerate and contains $-1$. We then have the orthogonal sum decomposition $\Ocal^\times\slash R=D\oplus D^\perp$. Now, let $u\in D^\perp$. Since $-1\in D$, we have $(u, -1)=1$, so $(u, u)=1$, namely $u\in S$. 
\end{proof}

\begin{Rmk}
All the three cases in Proposition \ref{P:orthogonal_decomposition_of_Hilbert} can occur. Case \eqref{H1} occurs whenever $\sqrt{-1}\in F$; Case \eqref{H2} occurs whenever $[F:\Q_{2}]$ is odd because then $(-1, -1)=(-1, N_{F\slash\Q_2}(-1))_{\Q_2}=(-1, -1)_{\Q_2}=-1$, where $(-, -)_{\Q_2}$ is the Hilbert symbol for $\Q_2$; Case \eqref{H3} occurs whenever $\sqrt{-1}\notin F$ and $[F:\Q_{2}]$ is even, because then $(-1, -1)=(-1, N_{F\slash\Q_2}(-1))_{\Q_2}=1$ and $-1=1-2\in 1+\varpi^e\Ocal^\times$ so that $-1\notin R$.
\end{Rmk}

Let $D$ be as in the above proposition, and let $\{u_1,\dots,u_k\}$ be an orthogonal basis for $D$, where actually $k=0, 1, 2$. Note that we have
\begin{equation*}
(u_i, u_j)=\begin{cases}1&\text{if $i\neq j$};\\
-1&\text{if $i=j$}.\end{cases}
\end{equation*}
Also since $D^\perp\subseteq S$, it admits a symplectic basis (\cite[Theorem 1.2.1]{Kitaoka}); namely there exists a basis $\{e_1,\dots, e_{\ell}, f_1,\dots, f_{\ell}\}$ of $D^\perp$ such that
\[
(e_i, f_i)=\begin{cases}1&\text{if $i\neq j$};\\
-1&\text{if $i=j$},\end{cases}
\qand
(e_i, e_j)=(f_i, f_j)=1\; \text{for all $i, j$}.
\]
We fix these basis vectors once and for all, so that we have
\[
\Ocal^\times\slash R=D\oplus D^\perp=\Span_{\F_2}\{u_1,\dots,u_k\}\oplus\Span_{\F_2}\{e_1,\dots,e_\ell, f_1,\dots,f_\ell\}.
\]

\subsection{Chevalley groups}\label{SS:ChevGroup}

In this subsection we review the basic structure theory of Chevalley groups and establish our notation. In particular we recall a presentation of such groups.

Let $G$ be the group of $F$-rational points of an almost simple simply-connected Chevalley group. Let $T\subseteq G$ be a maximal split torus and let $\Phi=\Phi(G,T)$ be the irreducible root system associated with the pair $(G,T)$. Let $W=N_G(T)\slash T$ be the Weyl group. Let $X=X^*(T)$ and $Y=X_*(T)$ be the group of rational characters and cocharacters, respectively, so that $(X, \Phi, Y, \Phi^\vee)$ is the root datum for $(G, T)$, where $\Phi^\vee$ is the set of coroots as usual. Note that we have the bijection
\[
\Phi\xrightarrow{\;\sim\;}\Phi^\vee,\quad \alpha\mapsto \alphac.
\]
Let
\[
\la-,-\ra:X\times Y\longrightarrow\Z
\]
be the canonical pairing. It is $W$-invariant and $\la\alpha, \alphac\ra=2$ for all $\alpha\in\Phi$. It should be mentioned that if $\Phi$ is simply-laced we have
\[
\la \alpha, \betac\ra=\la \beta, \alphac\ra
\]
for all $\alpha, \beta\in\Phi$. Since $G$ is simply-connected, we have $Y=\Z\Phi^{\vee}$. So, the identification of $\Phi$ with $\Phi^\vee$ gives rise to a $W$-invariant inner product on $X$, which we denote by the same symbol
\[
\la-,-\ra:X\times X\longrightarrow\Z.
\]
Although the symbol $\la-,-\ra$ has two meanings, the correct one will be clear from context.


Let $B=B^{+}\subseteq G$ be a Borel subgroup containing $T$ with unipotent radical $U=U^{+}$. Let $B^{-}\supset U^{-}$ be the opposite Borel and its unipotent radical. The Borel subgroup $B$ determines a set of simple roots
\[
\Delta=\{\alpha_1,\dots,\alpha_r\}\subseteq \Phi,
\]
which determines positive roots $\Phi^+\subseteq\Phi$. We order the simple roots by Bourbaki's convention. Let
\[
\alpha_0=\text{lowest root with respect to $\Delta$}.
\]


Now we recall a presentation for $G$. The group $G$ is generated by the symbols of the form $x_{\alpha}(t)$, namely
\[
G=\la \xa(t)\st \alpha\in\Phi, t\in F\ra.
\] 
For $t\in F^\times$, define
\begin{align*}
\wa(t)&:=\xa(t)x_{-\alpha}(-t^{-1})\xa(t);\\
\ha(t)&:=w_{\alpha}(t)\wa(-1).
\end{align*}
Then the following is a complete set of relations for the generators $\xa(t)$, when $rk(G)\geq 2$:
\begin{align}
\tag{R1}\xa(t)\xa(u)&=\xa(t+u);\label{R1}\\
\tag{R2}[\xa(t), \xb(u)]&=\prod_{i\alpha+j\beta\in\Phi}x_{i\alpha+j\beta}(c_{ij}t^iu^j),\quad(\alpha+\beta\neq 0);\label{R2}\\
\tag{R3}\ha(t)\ha(u)&=\ha(tu).\label{R3}
\end{align}
where the $c_{ij}\in \{\pm1\}$ are independent of $t$ and $u$ but dependent on $\alpha$ and $\beta$ and the ordering of the roots. Note that, if $i\alpha+j\beta\notin\Phi$, then we interpret $x_{i\alpha+j\beta}(c_{ij}t^iu^j)$ as 1. If $\Phi$ is simply-laced, \eqref{R2} simplifies to 
\begin{equation}
\tag{R2$^{\prime}$}[\xa(t), \xb(u)]=\begin{cases}1&\text{if $\alpha+\beta\notin\Phi$};\\
x_{\alpha+\beta}(ctu)&\text{if $\alpha+\beta\in\Phi$},
\end{cases}
\quad (\alpha+\beta\neq 0),
\end{equation}
where $c=c(\alpha, \beta)\in\{\pm 1\}$. Note also that $\wa(t)^{-1}=\wa(-t)$.

The following relations follow from \eqref{R1}, \eqref{R2}, \eqref{R3}:
\begin{align*}
\tag{R4}\ha(t)\hb(u)&=\hb(u)\ha(t);\label{R4}\\
\tag{R5}\wa(t)\hb(u)\wa(-t)&=h_{w_\alpha(\beta)}(u);\label{R5}.\\
\tag{R6}\wa(t)\wb(u)\wa(-t)&=w_{w_\alpha(\beta)}(ct^{-\la\alpha, \beta\ra}u)\label{R6},
\end{align*}
where $c=c(\alpha, \beta)\in\{\pm 1\}$.

\subsection{Some notation for subgroups}
Let us establish some notation for subgroups of $G$.

The maximal torus $T$ is generated by the $\ha(t)$'s, namely
\[
T=\la\ha(t)\st t\in F^\times, \alpha\in\Phi\ra.
\]
For $\alpha\in \Phi$ and a subgroup $A\subseteq F^{\times}$, let 
\[
T_{\alpha, A}:=\{h_{\alpha}(a)\st a\in A\},
\]
and
\[
T_A:=\la h_{\alpha}(a)\st a\in A, \alpha\in\Phi\ra,
\]
both of which are subgroups of T. In particular, we often use
\begin{equation}\label{E:def_of_T_R}
T_R:=\la h_{\alpha}(a)\st a\in R, \alpha\in\Phi\ra,
\end{equation}
where $R$ is the integral radical defined in \eqref{E:integral_radical_def}. We also set
\begin{equation}\label{E:def_of_T_0}
T_0=T_{\Ocal^{\times}}=\la h_{\alpha}(a)\st a\in \Ocal^{\times}, \alpha\in\Phi\ra.
\end{equation}
We write
\[
T_{\alpha, k}:=T_{\alpha, 1+\varpi^k\Ocal}=\{h_{\alpha}(a)\st a\in 1+\varpi^k\Ocal\}
\]
and
\begin{equation}\label{E:def_of_T_k}
T_{k}:=T_{1+\varpi^k\Ocal}=\la h_{\alpha}(a)\st a\in 1+\varpi^k\Ocal, \alpha\in\Phi\ra.
\end{equation}
Note that every element $t\in T_{A}$ is uniquely written as $t=\prod_{1\leq i\leq r}h_{\alpha_{i}}(a_{i})$ for some $a_i\in A$ by using only the simple roots.

Next, for any subgroup $A\subseteq F$ and any $\alpha\in\Phi$, we set 
\[
U_{\alpha}(A)=\la x_{\alpha}(a)\st a\in A\ra
\] 
or more generally for any subset $S\subseteq \Phi$, we set
\[
U_{S}(A)=\la x_{\alpha}(a)\st a\in A, \alpha\in S\ra.
\] 
In particular, we set
\[
U^{+}(A)=U_{\Phi^{+}}(A)\qand U^{-}(A)=U_{\Phi^{-}}(A).
\]
Also we set
\begin{equation}\label{E:notation_U_alpha_j}
U_{\alpha, j}:=U_{\alpha}(\varpi^j\Ocal),\quad U_j^{\pm}:=U_{\Phi^{\pm}}(\varpi^j\Ocal)\qand U^{\pm}=U^{\pm}(F),
\end{equation}
where $j\in\Z$. We often omit the superscript $^+$, so in particular
\begin{equation}\label{E:notation_U_j}
U_j=U_j^+=U_{\Phi^{+}}(\varpi^j\Ocal).
\end{equation}

Set
\[
K:=\la x_{\alpha}(a)\st a\in\Ocal, \alpha\in\Phi\ra,
\]
namely the group generated by the subgroups $U_{\alpha,0}$ for all $\alpha\in\Phi$. This group $K$ is a hyperspecial maximal compact subgroup of $G$. Note that $T_{0}\subseteq K$. Also set
\[
I:=\la U_0, T_0, U_1^-\ra,
\]
namely the group generated by $U_0, T_0, U_1^-$, which is an Iwahori subgroup of $G$.

\subsection{Affine Weyl group of $G$}\label{SS:affine_Weyl_linear_group}
In this subsection, we review the basic theory of the affine Weyl group of the split reductive group $G$ over $F$. (In this subsection, $G$ is not necessarily our Chevalley group but any connected almost simple split reductive group over $F$.)

Let
\[
W_{\ea}:=N_G(T)\slash T_0
\]
be the extended affine Weyl group. The Iwahori-Bruhat decomposition tells that there is a natural bijection
\begin{equation}\label{E:Iwahori-Bruhat_decomp_reductive}
W_{\ea}\xrightarrow{\;\sim\;} I\backslash G\slash I,\quad w\mapsto IwI.
\end{equation}
Note that $W_{\ea}$ acts faithfully on $Y\otimes \R$ as affine transformations via the Tits homomorphism \cite[page 32]{T79}. 

For each $\alpha\in\Phi$ and $k\in\Z$, let
\[
H_{\alpha, k}:=\{v\in Y\otimes \R\st \la\alpha, v\ra=k\}
\]
be the affine hyperplane, where $\la-,-\ra$ is the ($\R$-valued pairing) on $(X\otimes\R)\times (Y\otimes \R)$ naturally induced from the canonical pairing on $X\times Y$. We then define
\[
w_{\alpha, k}: Y\otimes \R\longrightarrow Y\otimes \R
\]
by
\[
w_{\alpha, k}(v)=v-(\la\alpha, v\ra-k)\alphac,
\]
which is the affine reflection fixing $H_{\alpha, k}$. We let the affine Weyl group
\[
W_{\af}:=\la w_{\alpha, k}\st \alpha\in\Phi, k\in\Z\ra
\]
be the group of generated by the affine reflections $w_{\alpha, k}$. We will write $w_{\alpha}=w_{\alpha,0}$. Let
\[
W=\la w_{\alpha}\ra=\{w\in W_{\af}\st w(0)=0\}.
\]
We call this subgroup the finite Weyl group. It is isomorphic to the Weyl group of $(G,T)$.

With the choice of a set of simple roots $\Delta\subset \Phi$, the affine Weyl group $W_{\af}$ can be realized as a Coxeter group with generators $S_{\af,\Delta}=\{w_{\alpha}|\alpha\in \Delta\}\cup\{w_{\alpha_{0},-1}\}$, where we recall that $\alpha_0$ is the lowest root relative to $\Delta$. Similarly, $W$ is a Coxeter group with generating set $S_{\Delta}=\{w_{\alpha}|\alpha\in \Delta\}$. Note that the generators in $S_{\af,\Delta}$ are the reflections through the walls of the alcove 
\[
\mathfrak{A}:=\{v\in Y\otimes \mathbb{R}\st\alpha(v)>0 \text{ for all }\alpha\in \Delta\text{ and }\alpha_{0}(v)>-1\}.
\]
One can define the length function
\[
\ell=\ell_{\Delta}:W_{\af}\longrightarrow\Z_{\geq 0}
\]
by setting $\ell(w)$ to be the minimum number of generators needed to express $w$. This naturally extends the length function of the finite Weyl group $W$. 

The map $W\ltimes \mathbb{Z}[\Phi^{\vee}]\to W_{\af}$, defined by $(w, y)\mapsto (v\mapsto w(v)+y)$, is a group isomorphism
\[
W_{\af}\cong W\ltimes \Z[\Phi^{\vee}].
\]
Similarly, the group $W_{\ea}$ is generated by $W_{\af}$ and translations by elements of $Y$, and one has
\[
W_{\ea}\cong W\ltimes Y.
\]

Let $\Omega\subset W_{\ea}$ be the subgroup that preserves $\mathfrak{A}$. The action of $\Omega$ on $\mathfrak{A}$ permutes the supporting hyperplanes of $\mathfrak{A}$ and thus induces an automorphism on the affine Dynkin diagram. Thus $\Omega$ permutes the generators of $W_{\af}$ from which it follows that $\Omega$ normalizes $W_{\af}$. It is also known that $W_{\ea}$ is generated by $\Omega$ and $W_{\af}$, and $\Omega\cap W_{\af}=\{1\}$, and thus 
\[
W_{\ea}\cong \Omega\ltimes W_{\af}.
\]
With this decomposition we can extend the length function $\ell$ on $W_{\af}$ to a length function on $W_{\ea}$ by defining $\ell(\omega w)=\ell(w)$, where $w\in W_{\af}$ and $\omega\in\Omega$. Finally, the group $\Omega$ can be shown to be isomorphic to the finite abelian group $Y/\mathbb{Z}[\Phi^{\vee}]$.


\section{Double covers}\label{S:2Cover}

It is known that the Chevalley group $G$ has a unique nontrivial topological $\mu_2$-extension $\Gt$, which fits into an exact sequence
\begin{equation}\label{CExt}
	1\longrightarrow\mu_2\longrightarrow\Gt\stackrel{\pr}{\longrightarrow} G\longrightarrow 1.
\end{equation}
In this section, we will establish various important properties of this $\mu_2$-extension $\Gt$. Although we assume that everything in this section is essentially known to experts, we will give a self-contained exposition and fix our notation. Also from this section on, we assume that $G$ is the almost simple simply-connected Chevalley group associated with a simply-laced root system $\Phi$, unless otherwise stated. Also we exclude the groups of type  $A_1$ and $D_2$. Hence $G$ is a Chevalley group of type $A_r (r\geq 2)$, $D_r (r\geq 3)$, $E_6, E_7$ or $E_8$.


\subsection{Steinberg relations}


In terms of generators and relations, $\Gt$ is given by
\[
\Gt=\la \epsilon\xta(t)\st t\in F, \alpha\in\Phi, \epsilon\in\mu_2\ra
\]
subject to the relations
\begin{align}
\tag{MR1}\xta(t)\xta(u)&=\xta(t+u);\label{MR1}\\
\tag{MR2}[\xta(t), \xtb(u)]&=\prod_{i\alpha+j\beta\in\Phi}\tilde{x}_{i\alpha+j\beta}(c_{ij}t^iu^j),\quad(\alpha+\beta\neq 0);\label{MR2}\\
\tag{MR3}\hta(t)\hta(u)&=(t, u)\hta(tu),\label{MR3}
\end{align}
where $c_{ij}$ is the same as linear case. Again, if $\Phi$ is simply-laced, we have
\begin{equation}\label{E:MR2'}
\tag{MR2$'$}[\xta(t), \xtb(u)]=\begin{cases}1&\text{if $\alpha+\beta\notin\Phi$};\\
\xt_{\alpha+\beta}(ctu)&\text{if $\alpha+\beta\in\Phi$},
\end{cases}
\quad (\alpha+\beta\neq 0),
\end{equation}
where $c\in\{\pm 1\}$. As before, we set
\begin{align*}
\wta(t)&:=\xta(t)\xt_{-\alpha}(-t^{-1})\xta(t);\\
\hta(t)&:=\wt_{\alpha}(t)\wta(-1).
\end{align*}
Let us note that, under the projection $\Gt\xrightarrow{\,\pr\,} G$, the images of the elements $\xta(t), \wta(t)$ and $\hta(t)$ are, respectively, $\xa(t), \wa(t)$ and $\ha(t)$.

Many of the important relations holding for these symbols are summarized in \cite[p.24]{Matsumoto}. In particular, we need to mention the following:
\begin{align}
\tag{MR4}\label{MR4}\hta(t)\htb(u)&=(u, t^{\la\beta,\alpha\ra})\htb(u)\hta(t);\\
\tag{MR5}\label{MR5}\wta(t)\htb(u)\wta(-t)&=\htt_{w_\alpha(\beta)}(c t^{-\la\alpha, \beta\ra}u)\htt_{w_\alpha(\beta)}(c t^{-\la\alpha, \beta\ra})^{-1}\\
\notag&=(u, c t^{-\la\beta,\alpha\ra})\htt_{w_\alpha(\beta)}(u);\\
\tag{MR6}\label{MR6}\wta(t)\wtb(u)\wta(-t)&=\wt_{w_\alpha(\beta)}(ct^{-\la\alpha, \beta\ra}u)
\end{align}
where $c=c(\alpha, \beta)\in\{\pm 1\}$ is as in \eqref{R6}, which is denoted by $\eta$ in \cite[(c), p.24]{Matsumoto}. As a special case of \eqref{MR5}, we have
\begin{gather}\tag{MR5'}\label{MR5'}
\begin{aligned}
\wta(1)\htb(u)\wta(-1)&=\htb(u)\hta(u),\quad (\alpha+\beta\in\Phi);\\
\wta(1)\hta(u)\wta(-1)&=\hta(u^{-1}).
\end{aligned}
\end{gather}

%

We often use the following.

\begin{Prop}\label{PMRootSwap}
Let $\alpha\in \Phi$. Let $t,u\in F^{\times}$ be such that $1+tu\neq 0$. Then
\begin{equation*}
\xt_{\alpha}\,(t)\xt_{-\alpha}(u)=(1+tu,-t)\,\xt_{-\alpha}(\frac{u}{1+tu})\,\htt_{\alpha}(1+tu)\,\xt_{\alpha}(\frac{t}{1+tu}),
\end{equation*}
where we note
\[
(1+tu, -t)=(1+tu, u).
\]
\end{Prop}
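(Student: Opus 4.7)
The plan is to reduce the asserted identity, which lives entirely in the rank-one subgroup of $\Gt$ generated by $\xta(F)$ and $\xt_{-\alpha}(F)$, to a Hilbert-symbol bookkeeping computation inside a double cover of $\SL_{2}(F)$. First I would project via $\pr$: both sides map to the same element of $G$, namely $\xa(t)\,x_{-\alpha}(u)$, by the classical Bruhat-decomposition identity
\[
\xa(t)\,x_{-\alpha}(u) \;=\; x_{-\alpha}(u/s)\,\ha(s)\,\xa(t/s), \qquad s:=1+tu,
\]
which is a direct matrix computation in $\SL_{2}(F)$. Consequently the two sides of the covering-group identity differ by a scalar $\eta(t,u)\in\mu_{2}$, and the remaining task is to show $\eta(t,u)=(s,-t)$.

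Next I would derive the Bruhat-like expansion
\[
\xt_{-\alpha}(v) \;=\; \xta(v^{-1})\,\hta(-v^{-1})\,\wta(1)\,\xta(v^{-1}), \qquad v\in F^{\times},
\]
from the defining relation $\wta(v^{-1})=\xta(v^{-1})\xt_{-\alpha}(-v)\xta(v^{-1})$ combined with $\wta(t)=\hta(t)\wta(1)$, which is immediate from $\hta(t):=\wta(t)\wta(-1)$ and $\wta(-1)=\wta(1)^{-1}$ (after swapping $v$ for $-v$). Applying this expansion to $\xt_{-\alpha}(u)$ on the left-hand side and to $\xt_{-\alpha}(u/s)$ on the right, using (MR1) to combine the two initial $\xta$-factors on the left via $t+u^{-1}=s/u$, and cancelling a common initial $\xta(s/u)$, the asserted identity reduces to identifying the scalar $\eta(t,u)$ in
\[
\hta(-u^{-1})\,\wta(1)\,\xta(u^{-1}) \;=\; \eta(t,u)\cdot \hta(-s/u)\,\wta(1)\,\xta(s/u)\,\hta(s)\,\xta(t/s).
\]

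From here the proof becomes a pure Hilbert-symbol computation. I would split $\hta(-s/u)=(-u^{-1},s)\,\hta(-u^{-1})\hta(s)$ via (MR3); commute $\hta(s)$ past $\wta(1)$ using (MR5$'$); push the resulting $\hta(s^{-1})$ past $\xta(s/u)$ via the conjugation rule $\hta(r)\xta(a)\hta(r)^{-1}=\xta(r^{2}a)$ (obtained by writing $\hta(r)=\wta(r)\wta(-1)$ and applying the standard Steinberg conjugation $\wta(t)\xt_{\beta}(u)\wta(-t)=\xt_{w_{\alpha}(\beta)}(c\,t^{-\la\beta,\alpha\ra}u)$ twice, with the structure-constant identity $c(\alpha,\alpha)c(\alpha,-\alpha)=1$); and finally combine the two surviving $\hta$-factors via (MR3) once more. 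Collecting the Hilbert-symbol factors thrown off at each step and simplifying via $(x,-x)=1$, $(x,1-x)=1$, and bimultiplicativity should yield $\eta(t,u)=(s,-t)$. The equality $(s,-t)=(s,u)$ in the remark then follows at once from $(s,-t)(s,u)=(s,-tu)=(s,1-s)=1$, and the boundary cases $t=0$ or $u=0$ are trivial (since then $s=1$ and $(1,-t)=1$). The hard part is the cocycle bookkeeping itself: one must verify that the accumulated product of Hilbert symbols collapses to the \emph{specific} symbol $(s,-t)$, which will require deploying the identities $(x,-x)=1$ and $(x,1-x)=1$ at exactly the right moments.
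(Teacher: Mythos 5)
Your proposal is correct and is in substance the paper's own approach: the paper simply declares the identity an elementary exercise in the Steinberg relations (plus the observation $(1+tu,-t)=(1+tu,u)$), and your Bruhat-expansion of $\xt_{-\alpha}(v)$ together with the reduction to a central sign $\eta(t,u)\in\mu_2$ is a valid way to carry that exercise out. The bookkeeping you flagged as unverified does close: writing $s=1+tu$, after cancelling $\xta(s/u)$, splitting $\hta(-s/u)=(-u^{-1},s)\hta(-u^{-1})\hta(s)$, commuting $\hta(s)$ past $\wta(1)$, and pushing $\hta(s^{-1})$ through $\xta(s/u)$, the accumulated symbols are $(-u^{-1},s)\,(s^{-1},s)=(s,-u)(s,-1)=(s,u)=(s,-t)$, so $\eta(t,u)=(s,-t)$ as claimed.
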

\begin{proof}
This is an elementary exercise using the Steinberg relations. Also note that $(1+tu, -t)=(1+tu, 1-(1+tu))(1+tu, -t)=(1+tu, u)$.
\end{proof}

The following corollary is immediate. 

\begin{Cor}\label{PMRootSwapCor}
Let $\alpha\in \Phi$. Let $t\in\mathcal{O}$ and $u\in \varpi^{2e}\mathcal{O}$. Then
\begin{equation*}
		\xt_{\alpha}(t)\,\xt_{-\alpha}(u)=\xt_{-\alpha}(\frac{u}{1+tu})\,\htt_{\alpha}(1+tu)\,\xt_{\alpha}(\frac{t}{1+tu}).
\end{equation*}
\end{Cor}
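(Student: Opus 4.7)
The claim is a direct specialization of Proposition \ref{PMRootSwap}. The only thing to verify is that the scalar $(1+tu,-t)$ appearing in that proposition equals $1$ under the stronger integrality hypotheses $t\in\mathcal{O}$ and $u\in\varpi^{2e}\mathcal{O}$. The plan is therefore: (i) dispense with the degenerate cases where Proposition \ref{PMRootSwap} does not literally apply, and (ii) analyze the Hilbert symbol using Proposition \ref{P:property_Hilber_symbol_not_in_Appendix}(\ref{ORad}).

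First I would handle $t=0$ and $u=0$ separately. In each of these cases, both sides of the identity collapse to the same expression once one observes that $\xt_\beta(0)=1$ (from \eqref{MR1} with $t=u=0$) and that $\htt_\alpha(1)=1$ (from \eqref{MR3} with $t=u=1$, giving $\htt_\alpha(1)\htt_\alpha(1)=\htt_\alpha(1)$).

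Now assume $t,u\in F^\times$. Since $t\in\mathcal{O}$ and $u\in\varpi^{2e}\mathcal{O}$, we have $tu\in\varpi^{2e}\mathcal{O}$, and hence $1+tu\in 1+\varpi^{2e}\mathcal{O}\subseteq\mathcal{O}^\times$; in particular $1+tu\neq 0$, so Proposition \ref{PMRootSwap} applies. It remains to show $(1+tu,-t)=1$, and I would split according to whether $t$ is a unit:
\begin{itemize}
\item If $t\in\mathcal{O}^\times$, then $-t\in\mathcal{O}^\times$. By Proposition \ref{P:property_Hilber_symbol_not_in_Appendix}(\ref{ORad}), $1+\varpi^{2e}\mathcal{O}\subseteq R$, and the defining property \eqref{E:integral_radical_def} of $R$ yields $(1+tu,-t)=1$.
\item If $t\in\varpi\mathcal{O}$, then $tu\in\varpi^{2e+1}\mathcal{O}$, so $1+tu\in 1+\varpi^{2e+1}\mathcal{O}\subseteq\mathcal{O}^{\times 2}$, again by Proposition \ref{P:property_Hilber_symbol_not_in_Appendix}(\ref{ORad}). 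Squares lie in the kernel of the Hilbert symbol in either variable, so once more $(1+tu,-t)=1$.
\end{itemize}

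There is really no serious obstacle here; the corollary is a repackaging of Proposition \ref{PMRootSwap} using the two structural facts about $R$ and $1+\varpi^{2e+1}\mathcal{O}$ recorded in Proposition \ref{P:property_Hilber_symbol_not_in_Appendix}(\ref{ORad}). The only mild subtlety to flag is that $-t$ need not be a unit, which is why one has to carry out the case split above and exploit the stronger conclusion $1+tu\in\mathcal{O}^{\times 2}$ when $t$ is not a unit; this is precisely the reason the exponent $2e$ (rather than $2e-1$) appears in the hypothesis and explains why this corollary should be viewed as the natural integral refinement of Proposition \ref{PMRootSwap}.
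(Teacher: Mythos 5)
Your proof is correct and follows essentially the same route as the paper: apply Proposition \ref{PMRootSwap} and kill the Hilbert symbol $(1+tu,-t)$ by the same case split, using $1+tu\in U_{2e}\subseteq R$ when $t\in\mathcal{O}^{\times}$ and $1+tu\in U_{2e+1}\subseteq\mathcal{O}^{\times 2}$ when $t\in\varpi\mathcal{O}$. The only difference is your explicit (and harmless) treatment of the degenerate cases $t=0$ or $u=0$, which the paper leaves implicit.
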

\begin{proof}
If $t\in\mathcal{O}^{\times}$, then $(1+tu,-t)=1$ because $1+tu\in U_{2e}\subseteq R$, where we recall $R$ is the integral radical.  If $t\in\varpi \mathcal{O}$, then $(1+tu,-t)=1$ because $1+tu\in U_{2e+1}\subseteq\Ocal^{\times 2}$.
\end{proof}


For any subgroup $H\subseteq G$ let 
\[
\widetilde{H}:=\pr^{-1}(H).
\]
We say that $H$ splits sequence \eqref{CExt} or splits to $\Gt$ if there exists a group homomorphism $\s:H\rightarrow \widetilde{G}$ such that $\pr\circ \s=id_{H}$. In this case, $H\times \mu_{2}\cong \widetilde{H}$ via the map $(h,\pm1)\mapsto \pm\s(h)$. 

From the Steinberg relations \eqref{MR1} and \eqref{MR2} there is a group homomorphism 
\begin{equation}\label{E:section_for_unipotent_radical}
\s^{\pm}:U^{\pm}(F)\longrightarrow \Gt \text{\quad such that\quad} \s^{\pm}(x_{\alpha}(t))=\xt_{\alpha}(t)
\end{equation}
for all $\alpha\in\Phi^{\pm}$ and $t\in F$. This defines a splitting for $U^{\pm}$.

\subsection{The lattice $\Yt$}


In this subsection, we will introduce a certain sublattice $\Yt$ of $Y$ and explicitly describe the quotient $\Yt\slash 2Y$. We will then show how the lattice $\Yt$ is used to describe the center $Z(\Tt)$ of the covering torus $\Tt$. The description of the quotient $\Yt\slash 2Y$ (Proposition \ref{tildYmod2Y}) is essentially \cite[Proposition 4.2]{S04}. For our construction of pseudo-spherical representations, however, it will be convenient for us to reprove this result.

Recall that $Y=X_*(T)$ is the group of cocharacters (\ie cocharacter lattice). Since our group $G$ is simply-connected, we have $Y=\Z[\Phi^\vee]$. Hence each $y\in Y$ is uniquely written as 
\[
y=\sum_{i}c_i\alphac_i.
\] 
For each $t\in F^\times$, we set
\[
y(t)=h_{\alpha_1}(t^{c_1})\cdots h_{\alpha_n}(t^{c_n})\in T.
\]
Then the maximal torus $T$ is generated by $y(t)$'s. Indeed, we have the isomorphism
\[
Y\otimes_{\Z} F^\times\longrightarrow T,\quad y\otimes t\mapsto y(t).
\]

Let us consider the analog for the cover $\Tt$. For each $y=\sum_{i}c_i\alphac_i$ and $t\in F^\times$, set
\[
\yt(t)=\htt_{\alpha_1}(t^{c_1})\cdots \htt_{\alpha_n}(t^{c_n})\in \Tt,
\]
where the order of multiplication is the order of $\Delta^\vee$. However, since $\Tt$ is not commutative due to (MR3) and (MR4), this product is not independent of the order of multiplication. We can, however, choose a sublattice of $Y$ so that the product is independent of the ordering as follows.

Let
\begin{equation}\label{E:definition_of_Yt}
\Yt=\{y\in Y\st \la \alpha, y\ra\in 2\Z\;\text{for all $\alpha \in \Phi$}\}.
\end{equation}
Note that
\[
2Y\subseteq \Yt\subseteq Y.
\]
We will prove that the product of $\yt(t)$ is independent of the ordering of $y\in\Yt$. For this purpose, we explicitly derive the conditions the coefficients $c_i$'s of $y=\sum_ic_i\alphac_i\in \Yt$ have to satisfy modulo $2\Z$, namely the ``parity conditions" to be satisfied by the $c_i$'s, as follows.

\begin{Lem}\label{L:parity_of_adjacent_root}
Let $y=\sum_ic_i\alphac_i\in \Yt$ and fix a simple root $\alpha_k$. Then we have
\[
\sum_{\text{$\alpha_i$ is adjacent to $\alpha_k$}}c_i\in 2\Z.
\]
Note that the number of $c_i$'s in the sum is at most $3$. 

Further, if $\alpha_k$ is adjacent to a root which has no other adjacent root than $\alpha_k$ itself (\ie $\alpha_k$ is adjacent to an ``endpoint" in the Dynkin diagram), then $c_k\in 2\Z$.
\end{Lem}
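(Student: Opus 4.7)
The plan is to unfold the defining condition of $\Yt$ at the simple root $\alpha_k$ and use the very restrictive Cartan pairing of a simply-laced root system.

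First I would use simply-lacedness to record that for simple roots
\[
\la \alpha_k, \alphac_i\ra =
\begin{cases}
2 & \text{if } i=k,\\
-1 & \text{if $\alpha_i$ is adjacent to $\alpha_k$},\\
0 & \text{otherwise.}
\end{cases}
\]
Then, for $y=\sum_i c_i\alphac_i\in\Yt$, the defining condition $\la\alpha_k,y\ra\in 2\Z$ becomes
\[
\la \alpha_k,y\ra \;=\; 2c_k \;-\; \sum_{\alpha_i \text{ adj } \alpha_k} c_i \;\in\; 2\Z,
\]
from which the first assertion follows immediately: $\sum_{\alpha_i \text{ adj } \alpha_k} c_i\in 2\Z$. (The bound of at most three adjacent roots is just the fact that in an ADE Dynkin diagram every vertex has valence at most three.)

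For the second assertion, suppose $\alpha_k$ is adjacent to some simple root $\alpha_j$ whose only neighbor in the Dynkin diagram is $\alpha_k$. Apply the first part with $k$ replaced by $j$: the sum $\sum_{\alpha_i\text{ adj }\alpha_j} c_i$ has only one term, namely $c_k$, and must lie in $2\Z$. Hence $c_k\in 2\Z$, as required.

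No obstacles here; the whole argument is a one-line computation using the ADE Cartan matrix. The only thing to be mindful of is that the simply-laced hypothesis is essential (without it, entries $-2$ or $-3$ could appear, and the parity conclusion would have to be stated differently), but this is already part of the standing assumption on $\Phi$.
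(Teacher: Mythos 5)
Your proof is correct and is essentially the same as the paper's: both unfold the defining condition $\la\alpha_k,y\ra\in 2\Z$ of $\Yt$ using the ADE Cartan pairing to get the parity of the adjacency sum, and both obtain the second claim by evaluating the same condition at the endpoint neighbor. Nothing to change.
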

\begin{proof}
Since 
\[
\la \alpha_k,  y\ra=2c_k-\sum_{\text{$\alpha_i$ is adjacent to $\alpha_k$}}c_i\in 2\Z,
\]
the first part of the lemma follows.

Assume $\alpha_k$ is adjacent to an endpoint $\alpha_\ell$. Then
\[
\la \alpha_\ell, y\ra=2c_\ell-c_k\in 2\Z,
\]
which gives $c_k\in 2\Z$. The lemma follows.
\end{proof}

Now, given $y=\sum_ic_i\alphac_i\in \Yt$, we describe the parity of each $c_i$ by using the Dynkin diagram as follows. If $c_i$ is even, we denote the corresponding node for $\alpha_i$ in the Dynkin diagram by a black dot, and if it is odd, we denote it by a white dot. For example, assume our $\Phi$ is the $E_7$ root system. If $y=\sum_{i=1}^6c_i\alphac_i$ is such that $c_1, c_3, c_4$ and $c_6$ are even and the others are odd, then one can readily see that $y\in\Yt$. We denote this $y$ modulo $2Y$ by 
\[
\pgfkeys{/Dynkin diagram, edge-length=1cm, root-radius=.1cm}
\dynkin[labels={\alpha_1,\alpha_2,\alpha_3,\alpha_4,\alpha_5,\alpha_6,\alpha_7}]  E{*o**o*o}\rlap{\, .}
\]
Furthermore, one can see that this is the only non-zero element in $\Yt\slash 2Y$ for $E_7$ by using Lemma \ref{L:parity_of_adjacent_root} above, and hence $\Yt\slash 2Y=\Z\slash 2\Z$.

With this convention, we have
\begin{Prop}\label{tildYmod2Y}
Depending on the type of the Dynkin diagram, we have
\[
\Yt\slash 2Y=\begin{cases}\Z\slash 2\Z\times \Z\slash 2\Z&\text{Type $D_{2r}$};\\
\Z\slash 2\Z&\text{Type $A_{2r}, D_{2r+1}$ and $E_7$};\\
1&\text{otherwise}.
\end{cases}
\]
The results are summarized in Table \ref{T:table}, where the nontrivial elements in $\Yt\slash 2Y$ are described by Dynkin diagrams with white dots indicating odd and black dots even.
\end{Prop}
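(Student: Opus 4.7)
The plan is to recast the problem as computing the kernel of a small matrix over $\mathbb{F}_{2}$ and then to work through the simply-laced Dynkin types one by one.

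Writing $y=\sum_i c_i \alphac_i\in Y$, the defining condition $\la\alpha,y\ra\in 2\Z$ for all $\alpha\in\Phi$ reduces (because the simple roots generate $\Phi$) to the statement of Lemma~\ref{L:parity_of_adjacent_root}, which says it is equivalent to the system
\[
\sum_{i\sim k} c_i \equiv 0 \pmod{2}, \qquad k=1,\dots,r,
\]
with $i\sim k$ meaning that $\alpha_i$ is a neighbor of $\alpha_k$ in the Dynkin diagram. Thus $\Yt/2Y$ is precisely the kernel of the $\mathbb{F}_{2}$-linear map whose matrix is the adjacency matrix of the simply-laced Dynkin diagram (equivalently, the Cartan matrix reduced mod $2$). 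The task is to compute this kernel type by type.

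For type $A_r$ I would propagate inward from the endpoints. The condition at node $1$ forces $c_2\equiv 0$; the condition at node $3$ then forces $c_4\equiv 0$; iterating, every even-indexed coefficient is killed. The remaining conditions at even-indexed nodes read $c_{i-1}+c_{i+1}\equiv 0$, so the odd-indexed coefficients are all equal. The condition at the far endpoint is then either a non-trivial constraint on the common value or is automatically satisfied, depending on the parity of $r$, distinguishing the two cases. For type $D_r$ the same propagation handles the tail, and the two leaves together with the trivalent node contribute the conditions $c_{r-2}\equiv 0$ (from each leaf) and $c_{r-3}+c_{r-1}+c_r\equiv 0$. Combined with the tail analysis this produces a kernel of dimension $1$ when $r$ is odd and $2$ when $r$ is even.

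The exceptional types $E_6,E_7,E_8$ are small enough to treat by direct inspection. In each case the second part of Lemma~\ref{L:parity_of_adjacent_root} already forces every coefficient at a node adjacent to an endpoint to vanish; continuing the propagation, for $E_6$ and $E_8$ every coefficient is forced to $0$, while for $E_7$ the only non-trivial solution is the one depicted in the running example immediately preceding the statement. One then reads off, in each case, the representative recorded in Table~\ref{T:table}. I do not expect a substantive obstacle here: the argument is elementary $\mathbb{F}_{2}$-linear algebra, and the only care required is uniform bookkeeping of the endpoint-propagation for $A_r$ and $D_r$ across the parity of $r$.
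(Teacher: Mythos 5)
Your proposal is correct and is essentially the paper's own argument: the paper's proof just says the result ``can be verified case by case using Lemma~\ref{L:parity_of_adjacent_root}'' and omits exactly the details you supply, namely the reformulation of $\Yt/2Y$ as the kernel of the Dynkin adjacency matrix over $\F_2$ and the endpoint-propagation through each type, which reproduces the entries (and the explicit nontrivial classes) of Table~\ref{T:table}. The only point worth recording is that your parity analysis in type $A$ places the nontrivial class in $A_{2r+1}$, in agreement with Table~\ref{T:table} (and with the $2$-torsion of the center $\Z/(r+1)\Z$), so the label $A_{2r}$ in the displayed case distinction of the Proposition should be read as $A_{2r+1}$.
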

\begin{proof}
This can be verified case by case using Lemma \ref{L:parity_of_adjacent_root}. Since the verification is elementary, we omit the details.
\end{proof}

{

\begin{table}[h]
\caption{Lattice $\Yt$}\label{T:table}
\centering
\renewcommand{\arraystretch}{1.5}
\begin{tabular}{cccl}
\hline
Type & $\Yt\slash 2Y$ & $\left|Y\slash \Yt\right|$ &non-trivial elements in $\Yt\slash 2Y$ \\\hline\hline
$A_{2r+1}$ & $\Z\slash 2\Z$ & $2^{2r}$ & \dynkin A{o*o*.*o}\\
$A_{2r}$ & 1 & $2^{2r}$ &\\\hline
$D_{2r+1}$  & $\Z\slash 2\Z$ & $2^{2r}$ & \dynkin D{****.**oo}\\
$D_{2r}$ & $\Z\slash 2\Z\times \Z\slash 2\Z$  & $2^{2r-2}$ & \dynkin D{o*o*.o**o}\;\dynkin D{o*o*.o*o*}\;\dynkin D{****.**oo}\\\hline
 $E_6$ & 1 & $2^6$ & \\\hline
 $E_7$ & $\Z\slash 2\Z$ & $2^6$ &\dynkin E{*o**o*o} \\\hline
 $E_8$ & 1 & $2^8$ &\\\hline
\end{tabular}
\end{table}

}

As a small corollary, let us mention
\begin{Cor}\label{C:parity_of_adjacent_root3}
Let $y=\sum_ic_i\alphac_i\in \Yt$. If $\alpha_i$ and $\alpha_j$ are adjacent to each other, then $c_ic_j\in 2\Z$.
\end{Cor}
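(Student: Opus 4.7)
The plan is to deduce the corollary from Proposition~\ref{tildYmod2Y} and Table~\ref{T:table} by direct inspection. The key observation is that whether $c_ic_j\in 2\Z$ depends only on the parities of $c_i$ and $c_j$, and thus only on the image $\bar y\in\Yt/2Y$. Hence it suffices, for each irreducible simply-laced Dynkin type, to verify that the (at most three) non-trivial elements of $\Yt/2Y$ enumerated in Table~\ref{T:table} have the property that no two adjacent nodes of the Dynkin diagram are simultaneously white.

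For types $A_{2r}$, $E_6$, $E_8$ there is nothing to check since $\Yt/2Y=1$. For the remaining types the verification reduces to a short visual inspection. In type $A_{2r+1}$ the unique non-trivial pattern is strictly alternating along the single chain, so adjacent nodes have opposite colors. In type $D_{2r+1}$ the two white vertices sit on the two ``ear'' nodes of the fork, which share only the common neighbor $\alpha_{n-2}$ and hence are not adjacent to each other. In type $D_{2r}$ the three listed patterns each place their white vertices so that no two adjacent nodes are simultaneously white: they combine alternation along the chain with a compatible choice on the three fork vertices. In type $E_7$ the pattern has white vertices at $\alpha_2,\alpha_5,\alpha_7$; in the Bourbaki labeling $\alpha_2$ is the pendant attached to the black $\alpha_4$, while on the chain the pair $\alpha_5,\alpha_7$ is separated by the black $\alpha_6$.

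No step is substantively difficult once Proposition~\ref{tildYmod2Y} is available — the argument is essentially a finite combinatorial check. A more conceptual reformulation is available: Lemma~\ref{L:parity_of_adjacent_root} shows that the parity vector $\bar c\in\F_2^r$ lies in the kernel of the adjacency matrix of the Dynkin diagram over $\F_2$, and the corollary asserts that every element of this kernel is supported on an independent set of vertices. A uniform proof of this combinatorial fact for the simply-laced Dynkin diagrams still seems to require the same case split, so I would not try to avoid the inspection; it is the one mild obstacle, but it is resolved by directly reading off the table.
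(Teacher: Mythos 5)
Your proposal is correct and matches the paper's argument: the paper's proof of Corollary~\ref{C:parity_of_adjacent_root3} is precisely to read the parity patterns off the Dynkin diagrams in Table~\ref{T:table} (i.e.\ Proposition~\ref{tildYmod2Y}), and your case-by-case inspection just spells out the details the paper leaves implicit.
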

\begin{proof}
This can be read off from the Dynkin diagrams in Table \ref{T:table}.

%
\end{proof}

This corollary implies
\begin{Prop}\label{ZTbiject}
Let $y=\sum_ic_i\alphac_i\in\Yt$ and $t_i\in F^\times$. The product
\[
\prod_{i}\htt_{\alpha_i}(t_i^{c_i})
\]
is independent of the ordering.
\end{Prop}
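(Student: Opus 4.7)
The plan is to reduce the claim to pairwise swaps of adjacent factors and use relation \eqref{MR4} together with Corollary \ref{C:parity_of_adjacent_root3}. Since the symmetric group on the (at most finitely many nonzero) indices is generated by transpositions of adjacent positions, it suffices to show that swapping two consecutive factors $\htt_{\alpha_i}(t_i^{c_i})$ and $\htt_{\alpha_j}(t_j^{c_j})$ does not change the value of the product.

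First, I would apply \eqref{MR4} to a single such swap:
\[
\htt_{\alpha_i}(t_i^{c_i})\,\htt_{\alpha_j}(t_j^{c_j})
= \bigl(t_j^{c_j},\, t_i^{c_i\la\alpha_j,\alpha_i\ra}\bigr)\,\htt_{\alpha_j}(t_j^{c_j})\,\htt_{\alpha_i}(t_i^{c_i}).
\]
By bilinearity of the Hilbert symbol, the scalar factor equals $(t_j, t_i)^{c_i c_j\la\alpha_j,\alpha_i\ra}$. Next, I would use the hypothesis that $\Phi$ is simply-laced: for $i\neq j$ one has $\la\alpha_j,\alpha_i\ra\in\{0,-1\}$, with value $-1$ precisely when $\alpha_i$ and $\alpha_j$ are adjacent in the Dynkin diagram, and $0$ otherwise. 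Hence the scalar is trivially $1$ whenever $\alpha_i,\alpha_j$ are non-adjacent.

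The only remaining case is when $\alpha_i,\alpha_j$ are adjacent, in which case the scalar becomes $(t_j, t_i)^{-c_i c_j}$. Here I invoke Corollary \ref{C:parity_of_adjacent_root3}: since $y=\sum_k c_k\alphac_k\in\Yt$ and $\alpha_i,\alpha_j$ are adjacent, we have $c_i c_j\in 2\Z$. Because the Hilbert symbol takes values in $\{\pm 1\}$, an even exponent makes the factor trivial. Thus every transposition is trivial, so the product is independent of the ordering.

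No step looks like a serious obstacle here; the main content is the observation that the parity constraint defining $\Yt$ is exactly what is needed to cancel the Hilbert-symbol obstruction in \eqref{MR4} in the simply-laced setting. In other words, $\Yt$ is essentially characterized as the largest sublattice of $Y$ on which the map $y\mapsto \yt(t)$ behaves as a commuting product in $\Tt$.
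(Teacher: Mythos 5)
Your proof is correct and follows essentially the same route as the paper: reduce to a single swap, apply \eqref{MR4}, and use Corollary \ref{C:parity_of_adjacent_root3} to make the Hilbert-symbol factor $(t_i,t_j)^{c_ic_j}$ trivial for adjacent simple roots (the non-adjacent case being immediate since the pairing vanishes in the simply-laced setting). Nothing essential differs from the paper's argument.
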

\begin{proof}
We have only to show that if $\alpha_i$ and $\alpha_j$ are adjacent then
\[
\htt_{\alpha_i}(t_i^{c_i})\htt_{\alpha_j}(t_j^{c_j})=\htt_{\alpha_j}(t_j^{c_j})\htt_{\alpha_i}(t_i^{c_i}).
\]
But
\begin{align*}
\htt_{\alpha_i}(t_i^{c_i})\htt_{\alpha_j}(t_j^{c_j})&=(t_i^{c_i}, t_j^{-c_j})\,\htt_{\alpha_j}(t_j^{c_j})\htt_{\alpha_i}(t_i^{c_i})\quad\text{by (MR4)}\\
&=(t_i, t_j)^{c_ic_j}\htt_{\alpha_j}(t_j^{c_j})\htt_{\alpha_i}(t_i^{c_i}),
\end{align*}
and $(t_i, t_j)^{c_ic_j}=1$ by the above corollary.
\end{proof}

In particular, for each $y=\sum_ic_i\alphac_i\in\Yt$ and $t\in F^\times$, we can define
\[
\yt(t):=\prod_{i}\htt_{\alpha_i}(t^{c_i}),
\]
which is independent of the ordering by Proposition \ref{ZTbiject} with $t=t_i$ for all $i$.

\subsection{The Weyl group action on $\Yt$}


The Weyl group $W$ acts on $\Yt$ because the inner product $\la-,-\ra$ is $W$-invariant. Also $2Y$ is $W$-invariant. Hence the action of $W$ on $\Yt$ descends to $\Yt\slash 2Y$. But this action of $W$ on $\Yt\slash 2Y$ is trivial; namely we have
\begin{Lem}\label{L:Yt_is_Weyl_invariant_mod_2Y}
If $y\in \Yt$, then $w y=y$ in $\Yt\slash 2Y$ for all $w\in W$.
\end{Lem}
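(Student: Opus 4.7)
The plan is to reduce to the case of simple reflections and then use the definition of $\Yt$ directly. The Weyl group $W$ is generated by the simple reflections $w_\alpha$ for $\alpha \in \Delta$, so it suffices to show that $w_\alpha y \equiv y \pmod{2Y}$ for every simple root $\alpha$ and every $y \in \Yt$.

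For a simple reflection $w_\alpha$ acting on $Y$, the standard formula gives
\[
w_\alpha(y) = y - \la \alpha, y\ra\, \alphac.
\]
Since $y \in \Yt$, the definition \eqref{E:definition_of_Yt} yields $\la \alpha, y\ra \in 2\Z$. Because $G$ is simply-connected we have $\alphac \in Y$, so
\[
\la \alpha, y\ra\, \alphac \in 2\Z \cdot Y = 2Y.
\]
Therefore $w_\alpha(y) - y \in 2Y$, i.e.\ $w_\alpha y = y$ in $\Yt\slash 2Y$.

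Since every $w \in W$ is a product of simple reflections $w_{\alpha_{i_1}} \cdots w_{\alpha_{i_k}}$, an immediate induction on $k$ (using that each factor fixes the class mod $2Y$) gives $wy \equiv y \pmod{2Y}$. This completes the proof; no serious obstacle is expected, as the whole statement is just the elementary observation that the reflection formula changes $y$ by an element of $\la\alpha, y\ra\alphac \in 2Y$ whenever $y \in \Yt$.
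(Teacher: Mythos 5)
Your proof is correct and follows essentially the same route as the paper: reduce to simple reflections and observe that $w_\alpha y - y = -\la\alpha, y\ra\alphac \in 2Y$ because $\la\alpha, y\ra$ is even by the definition of $\Yt$. The induction over a reduced word is the same implicit step the paper handles by noting beforehand that $\Yt$ and $2Y$ are $W$-invariant, so the action descends to $\Yt/2Y$.
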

\begin{proof}
Let $\alpha\in\Delta$ be fixed. We have to show $w_\alpha y=y\pmod{2Y}$. Note that
\begin{equation*}
    w_{\alpha}y = y-\langle\alpha,y\rangle\alphac.
\end{equation*}
Now the result follows because $y\in \Yt$.

\end{proof}


\subsection{The group $\Wcal$}

For each $\alpha\in\Phi$, let
\[
\wta:=\wta(1).
\]
We then define
\begin{equation}\label{E:definition_of_Wcal}
\Wcal:=\la \wta\st\alpha\in\Phi\ra,
\end{equation}
namely the group generated by the $\wta$'s.

Note that
\[
\wta(1)^{-1}=\wta(-1).
\]
Since $\wta(-1)^{2}=\hta(-1)$ and $\hta(-1)^{-1}=(-1,-1)\hta(-1)$ we have
\begin{equation}\label{E:square_of_w}
\wta(1)^2=(-1,-1)\cdot\hta(-1).
\end{equation}
Hence each $\wta$ has order 4 or 8, depending on $(-1,-1)=1$ or $-1$ because $\hta(-1)^2=(-1,-1)$. The following should be mentioned : if $\alpha+\beta$ is a root, then
\begin{gather}\label{E:Savin_relation}
\begin{aligned}
\wta(1)\wtb(1)\wta(1)&=\wtb(1)\wta(1)\wtb(1)\quad\text{(braid relation)};\\
\wta(1)\htb(t)\wta(-1)&=\htb(t)\hta(t),
\end{aligned}
\end{gather}
where the second one is actually \eqref{MR5'} listed earlier. (See \cite[Th\'eor\`eme 6]{Matsumoto} and \cite[Lemma 3.2]{S04}.) Set
\[
\Tt(\Z):=\la\hta(-1)\st\alpha\in\Delta\ra.
\]
Then we have the exact sequence
\begin{equation}\label{E:exact_sequence_of_Weyl_group}
1\longrightarrow \Tt(\Z)\longrightarrow\Wcal\longrightarrow W\longrightarrow 1,
\end{equation}
where $W$ is the Weyl group of $G$ and the surjection sends each $\wta(1)$ to the corresponding Weyl group element.

\subsection{Affine Weyl group of $\Gt$}\label{SS:affine_Weyl_covering_group}

In this section, we modify the theory of affine Weyl groups of the linear group $G$ described in \S \ref{SS:affine_Weyl_linear_group} to that of the covering group $\Gt$.

First, note that the root datum $(X, \Phi, Y, \Phi^\vee)$ is isomorphic to the root datum
\[
(\frac{1}{2}X, \frac{1}{2}\Phi, 2Y, 2\Phi^\vee).
\]
We also have a modified root datum of $\Gt$ \cite{Mc12,W14}
\[
(\Xt, \frac{1}{2}\Phi, \Yt, 2\Phi^\vee),
\]
where $\Yt$ is as before and $\Xt=\Hom(\Yt,\mathbb{Z})$. This modified datum is that of the linear group
\[
G':=G\slash Z_2,
\]
where $Z_2$ is the 2-torsion of the center of $G$. Note that
\[
Z_2\cong \Yt\slash \Z[2\Phi^{\vee}].
\]

Let $\Wt_{\af}$ be the affine Weyl group associated with the root system $\frac{1}{2}\Phi$, which acts on $Y\otimes \mathbb{R}$. Fix a subset of simple roots $\Delta\subseteq \Phi$. Then $\frac{1}{2}\Delta $ is a subset of simple roots for $\frac{1}{2}\Phi$. Let
\begin{equation}\label{E:w_aff_definition}
    w_{\af}:=w_{\alpha_0, -2}=w_{\frac{1}{2}\alpha_{0},-1}.
\end{equation}
One can then see that
\[
\Wt_{\af}=\la w_{\af}, w_{\alpha_1}, \dots, w_{\alpha_n}\ra.
\]

Also let $\Wt_{\ea}$ be the extended affine Weyl group associated with $G'$. We then have
\[
\Wt_{\ea}\cong W\ltimes \Yt\qand \Wt_{\af}\cong W\ltimes 2Y,
\]
so that 
\[
1\longrightarrow \Wt_{\af}\longrightarrow \Wt_{\ea}\longrightarrow \Yt\slash 2Y\longrightarrow 1.
\]
(Note that in \cite{Karasiewicz}, our $\Wt_{\ea}$ is denoted by $\Wt_{\aff}$, and our $\Wt_{\af}$ by $\widehat{W}_{\aff}$.) 

Let 
\[
\tilde{\ell}=\tilde{\ell}_{\frac{1}{2}\Delta}:\Wt_{\ea}\cong W\ltimes \Yt\longrightarrow\Z_{\geq 0},
\]
be the length function, which can be also computed by the formula of \cite[Proposition 1.23]{IM}; namely
\[
\tilde{\ell}(w, y)=\sum_{\alpha\in\frac{1}{2}\Phi^+\cap w\frac{1}{2}\Phi^+}|\la\alpha, y\ra|+\sum_{\alpha\in\frac{1}{2}\Phi^-\cap w\frac{1}{2}\Phi^-}|\la\alpha, y\ra+1|,
\]
where $(w, y)\in W\ltimes\Yt$. Let $\Omegat$ be the set of length 0 elements. Then 
\[
\Omegat\cong \Yt\slash 2Y;
\]
indeed, we have
\[
\tilde{\ell}:\Omegat\ltimes \Wt_{\af}\longrightarrow\Z_{\geq 0},\quad \tilde{\ell}(w, \varepsilon)=\tilde{\ell}(w),
\]
for $(w, \varepsilon)\in \Omegat\ltimes \Wt_{\af}$. Note that $\Omegat$ is the subgroup that preserves the alcove $2\mathfrak{A}$, where $\mathfrak{A}$ is defined relative to $\Delta\subseteq\Phi$, as in \S \ref{SS:affine_Weyl_linear_group}.

Since $G$ is simply-connected and 
\[
\It\backslash\Gt\slash \It = I\backslash G\slash I,
\]
the Iwahori-Bruhat decomposition \eqref{E:Iwahori-Bruhat_decomp_reductive} implies
\[
W_{\ea}=W_{\af}\cong \It\backslash\Gt\slash \It.
\]
As we will see, however, the Iwahori Hecke algebra we will be considering is not supported on all of $W_{\af}$, but only on $\Wt_{\ea}$. This is precisely the rationale for introducing the groups $\Wt_{\af}$ and $\Wt_{\ea}$. 


\section{Pseudo-spherical representations of $\Tt_{0}$}\label{SecTORep}


Our goal in this section is to construct the pseudo-spherical representations of $\Tt_{0}$ (Definition \ref{Def:PSTRep}) and show that they are $W$-invariant and types of $\Tt$ in the sense of Bushnell-Kutzko \cite[(4.1) Definition]{BK98}.


\subsection{Structure of $\Tt_{0}$}


Let us recall from \eqref{E:def_of_T_0} and \eqref{E:def_of_T_R} that we have defined
\[
T_0=T_{\Ocal}=\{h_{\alpha}(a)\st a\in \Ocal, \alpha\in\Phi\}
\qand
T_R=\{h_{\alpha}(a)\st a\in R, \alpha\in\Phi\},
\]
where $R$ is the integral radical as in \eqref{E:integral_radical_def}. We let $\Tt_0$ be the preimage of $T_0$ in $\Tt$, which is the maximal compact subgroup of $\Tt$.  

Let 
\[
\s_{T_{R}}:T_{R}\longrightarrow \Tt,\quad \prod_{i}h_{\alpha_{i}}(u_{i})\mapsto \prod_{i}\htt_{\alpha_{i}}(u_{i}),
\]
where the product on the right-hand side is independent of the ordering because $u_i\in R$ for all $i$. By comparing the relations \eqref{R3} and \eqref{R4} with \eqref{MR3} and \eqref{MR4}, one can readily see that this map is a homomorphism (\ie it splits the sequence \eqref{CExt}), and moreover its image $\s_{T_R}(T_R)$ is normal in $\Tt_0$, so that we can view $T_R$ as a subgroup of $\Tt_0$.

In the rest of this subsection, we will explicitly describe the structure of the finite quotient $\Tt_0\slash T_R$. As it turns out, the quotient $\Tt_0\slash T_R$ admits a factorization in which each factor has a simple structure, and moreover each factor is preserved by the action of the Weyl group. To show the existence of the factorization, we will use the structure of the quotient $\Ocal^\times\slash R$ viewed as a symmetric bilinear form over $\F_2$ given by the Hilbert symbol, which is mainly developed in Proposition \ref{P:orthogonal_decomposition_of_Hilbert}. We will construct a pseudo-spherical representation of $\Tt_{0}$ by constructing a Weyl-invariant representation for each of the factors in this factorization.

Now, recall from Proposition \ref{P:orthogonal_decomposition_of_Hilbert} that we have the orthogonal sum decomposition
\[
\Ocal^\times\slash R=D\oplus D^\perp,
\]
where $\dim_{\F_2}D\leq 2$ and $D^\perp$ consists of isotropic vectors. Also recall we have chosen an orthogonal set $\{u_1,\dots, u_k\}$ of $D$ and a symplectic basis $\{e_1,\dots, e_\ell,f_1,\dots,f_\ell\}$ of $D^\perp$, so that we have
\[
\Ocal^\times\slash R=D\oplus D^\perp=\Span_{\F_2}\{u_1,\dots,u_k\}\oplus\Span_{\F_2}\{e_1,\dots,e_\ell, f_1,\dots,f_\ell\}.
\]

For each $u_i\in \{u_1,\dots, u_k\}$ let us define
\[
\Mt_i:=\la \hta(u_i)\st\alpha\in\Phi\ra\subseteq \Tt_{0}\slash T_{R},
\]
namely the group generated by the $\hta(u_i)$'s viewed in $\Tt_{0}\slash T_{R}$. Note that by \eqref{MR3} we have
\[
\hta(u_i)^2=(u_i, u_i)\hta(u_i^2)=(u_i,u_i)=-1,
\]
because $\hta(u_i^2)=1$ in $\Tt_{0}\slash T_{R}$. Hence $\mu_2\subseteq\Mt_i$. 

For each $\{e_i, f_i\}\subseteq \{e_1,\dots, e_\ell,f_1,\dots,f_\ell\}$, let us define
\[
N_i:=\la \hta(e_i),\hta(f_i)\st\alpha\in\Phi\ra\subseteq \Tt_{0}\slash T_{R},
\]
namely the group generated by the $\hta(e_i)$'s and $\hta(f_i)$'s viewed in $\Tt_{0}\slash T_{R}$. Note that
\[
\hta(e_i)\htb(f_i)=(f_i, e_i^{\la\beta, \alpha\ra})\, \htb(f_i)\hta(e_i)
\]
by \eqref{MR4}. Hence if we choose $\alpha$ and $\beta$ so that $\la\alpha, \beta\ra=-1$ (such $\alpha$ and $\beta$ exist because $G$ is not of type $A_1$ or $D_2$, and $\Phi$ is simply-laced), we have $\hta(e_i)\htb(f_i)=-\htb(f_i)\hta(e_i)$. Hence $\mu_2\subseteq\Nt_i$.

Now, let us define
\[
\Mt:=\Mt_1\cdots\Mt_k\qand
\widetilde{N}:=\widetilde{N}_{1}\cdots \widetilde{N}_{\ell}.
\]
By the orthogonality of the basis vectors $u_i$'s, $e_i$'s and $f_i$'s and relations \eqref{MR3} and \eqref{MR4}, we know that the $\Mt_i$'s and $\Nt_i$'s mutually commute. Hence $\Mt$ is the group generated by all the elements of the form $\hta(u_i)$ viewed in $\Tt_{0}\slash T_{R}$ and $\Nt$ is the one generated by all the elements of the form $\hta(e_i)$ and $\hta(f_i)$ viewed in $\Tt_{0}\slash T_{R}$. We then have
\begin{equation}\label{E:exact_sequence_Mt}
1\longrightarrow(\underbrace{\mu_2\times\cdots\times\mu_2}_{\text{$k+\ell$-times}})^{\Delta}\longrightarrow\Mt_1\times\cdots\times\Mt_k\times \Nt_1\times\cdots\times\Nt_{\ell}\longrightarrow\Mt\cdot \Nt\longrightarrow 1,
\end{equation}
where
\[
(\mu_2\times\cdots\times\mu_2)^{\Delta}=\{(\epsilon_1,\dots,\epsilon_{k+\ell})\st \epsilon_1\cdots\epsilon_{k+\ell}=1\}\subseteq \mu_2\times\cdots\times\mu_2.
\]
Further, the group $\Mt\cdot\Nt$ is the group generated by all the elements $\hta(u_i)$, $\hta(e_i)$ and $\hta(f_i)$, which is the image of $\Tt_{0}$ in $\Tt_{0}\slash T_{R}$. Hence we have
\begin{equation}\label{E:exact_sequence_Tt}
1\longrightarrow T_{R}\longrightarrow\Tt_{0}\longrightarrow \Mt\cdot \widetilde{N}\longrightarrow 1.
\end{equation}



 
The groups $T_{R}$, $\Mt_i$ and $\Nt_i$ are all invariant under the action of $\Wcal$ as follows.
\begin{Lem}\label{L:action_of_W_on_T(O)}
Let $\wt\in\Wcal$.  Then we have
\begin{enumerate}[(a)]
\item $\wt T_{R}\wt^{-1}=T_{R}$, so that the conjugation action of $\Wcal$ on $\Tt_{0}$ descends to $\Mt\cdot\Nt=\Tt_{0}\slash T_{R}$;
\item $\wt\Mt_i\wt^{-1}=\Mt_i$ and $\wt\Nt_i\wt^{-1}=\Nt_i$.
\end{enumerate}
\end{Lem}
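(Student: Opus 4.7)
The plan is to reduce to the generators $\wta(1)$ of $\Wcal$ and then apply the commutation relation (MR5). Since $\Wcal = \la \wta(1) : \alpha \in \Phi \ra$ by \eqref{E:definition_of_Wcal} and $\wta(1)^{-1} = \wta(-1) \in \Wcal$, it suffices to show that conjugation by each generator $\wta(1)$ sends $T_R$, $\Mt_i$, and $\Nt_i$ into themselves; the reverse inclusions will follow by symmetry. The essential computation in both parts will be (MR5) specialized at $t=1$:
\begin{equation*}
\wta(1)\,\htb(u)\,\wta(-1) \;=\; (u, c)\,\htt_{w_\alpha(\beta)}(u), \qquad c = c(\alpha, \beta) \in \{\pm 1\},
\end{equation*}
with $w_\alpha(\beta) \in \Phi$. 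The factor $\htt_{w_\alpha(\beta)}(u)$ will belong to the intended subgroup in each case, so the whole question will reduce to bookkeeping the scalar $(u, c) \in \{\pm 1\}$.

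For part (a), I would take $u \in R$, in which case $(u, c) = 1$ because $c \in \Ocal^\times$ and $u$ lies in the integral radical. Hence $\wta(1)\,\htb(u)\,\wta(-1) = \htt_{w_\alpha(\beta)}(u)$, which lies in $T_R$ because $\s_{T_R}$ sends $h_{w_\alpha(\beta)}(u) \mapsto \htt_{w_\alpha(\beta)}(u)$. It follows that $\wta(1)\,T_R\,\wta(-1) \subseteq T_R$, and thus the conjugation action of $\Wcal$ descends to $\Tt_0 / T_R = \Mt\cdot\Nt$.

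For part (b), the elements $\htt_{w_\alpha(\beta)}(u_i)$ and $\htt_{w_\alpha(\beta)}(e_i),\,\htt_{w_\alpha(\beta)}(f_i)$ lie in $\Mt_i$ and $\Nt_i$ respectively, by the very definition of those groups (the generators range over all $\alpha \in \Phi$). For $\Mt_i$, the scalar $(u_i, c) \in \mu_2$ is harmless because $-1 = (u_i, u_i) = \hta(u_i)^2 \in \Mt_i$ in $\Tt_0/T_R$. For $\Nt_i$, I plan to argue that the scalar is in fact $1$: in every case of Proposition \ref{P:orthogonal_decomposition_of_Hilbert} one has $-1 \in R \cdot D$ modulo squares (in case (iii), $-1 \equiv u_0 \cdot (-u_0) \pmod{\Ocal^{\times 2}}$), and since $e_i, f_i \in D^\perp$ and the definition of $R$ forces $(e_i, r) = (f_i, r) = 1$ for all $r \in R$, we conclude $(e_i, -1) = (f_i, -1) = 1$, hence $(e_i, c) = (f_i, c) = 1$.

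The main nuisance will be the $\pm 1$ scalar from (MR5). It vanishes trivially for $T_R$ by the radical property; it is absorbed into $\mu_2 \subseteq \Mt_i$ automatically; and for $\Nt_i$ its triviality is exactly what the orthogonal decomposition of $\Ocal^\times / R$ in Proposition \ref{P:orthogonal_decomposition_of_Hilbert} was designed to guarantee.
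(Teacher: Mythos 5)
Your proof is correct and follows essentially the same route as the paper's: reduce to the generators $\wta(1)$ of $\Wcal$, apply \eqref{MR5} at $t=1$ to get $(t,c)\,\htt_{w_\alpha(\beta)}(t)$, kill the sign by the radical property for part (a), and absorb it into the subgroup for part (b). Your additional argument that the scalar is exactly $1$ for $\Nt_i$ is correct but unnecessary, since $\mu_2\subseteq\Nt_i$ already absorbs it, exactly as you argue for $\Mt_i$ (this is how the paper handles both cases).
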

\begin{proof}
We may assume that $\wt$ is of the form $\wta:=\wt_{\alpha}(1)$ for some $\alpha\in\Phi$. For each generator $\htb(t)$ with $t\in\Ocal^\times$, we have
\begin{align*}
\wta\htb(t)\wta^{-1}&=\wta(1)\htb(t)\wta(-1)\\
&=(t, c)\cdot\htt_{w_{\alpha}(\beta)}(t),
\end{align*}
where $c=c(\alpha, \beta)\in\{\pm 1\}$. Hence, if $t\in R$, then $(t, c)=1$, so $\wt T_{R}\wt^{-1}=T_{R}$. This proves (a). If $\htb(t)$ is a generator of $\Mt_i$ or $\Nt_i$, then $\wta\htb(t)\wta^{-1}=(t, c)\cdot\htt_{\gamma}(t)$ is also in $\Mt_i$ or $\Nt_i$ because $\htt_{\gamma}(t)$ is also a generator of $\Mt_i$ or $\Nt_i$. This proves (b).
\end{proof}

The lemma implies that for each $\wt\in\Wcal$ we have the isomorphisms $\Int(\wt):\Mt_i\to\Mt_i$ and $\Int(\wt):\Nt_i\to\Nt_i$ given by conjugation, which gives rise to the following commutative diagram
\begin{equation}\label{D:commutativity_of_Weyl_action}
\begin{tikzcd}
\Mt_1\times\cdots\times\Mt_k\times\Nt_1\cdots\times\Nt_\ell \ar[d, "\Int(\wt)"']\ar[r]&\Mt\cdot\Nt\ar[d, "\Int(\wt)"]\\
\Mt_1\times\cdots\times\Mt_k\times\Nt_1\cdots\times\Nt_\ell\ar[r]&\Mt\cdot\Nt
\rlap{\, ,}
\end{tikzcd}
\end{equation}
where the horizontal arrows are the surjections of \eqref{E:exact_sequence_Mt} and the vertical arrows are the conjugation by $\wt$.


\subsection{Definition}


Let us now define what a pseudo-spherical representation is:
\begin{Def}\label{Def:PSTRep}
	A {\it pseudo-spherical representation} of $\Tt_{0}$ is an irreducible genuine representation of $\Tt_{0}$ which is trivial on $T_{R}$. 
\end{Def}

\begin{Rmk}This terminology is inspired by \cite{ABPTV,LS10a,LS10b}.\end{Rmk}

Note that the pseudo-spherical representations of $\Tt_{0}$ remain irreducible when restricted to the first congruence subgroup.

\begin{Lem}\label{RedtoU1}
	The natural inclusion $\Tt_{1}\subseteq \Tt_{0}$ descends to a group isomorphism 
	\begin{equation*}
		\Tt_{1}/T_{R}\cap \Tt_{1}\xrightarrow{\;\sim\;}\Tt_{0}/T_{R}.
	\end{equation*}
	Consequently, if $\tau$ is a pseudo-spherical representation of $\Tt_{0}$, then $\tau$ remains irreducible when restricted to $\Tt_{1}$.
\end{Lem}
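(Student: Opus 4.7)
The plan is to prove the displayed isomorphism via the second isomorphism theorem: we need $\Tt_1 \cap T_R$ to be the kernel of $\Tt_1 \hookrightarrow \Tt_0 \twoheadrightarrow \Tt_0/T_R$ (which is automatic) and, more substantively, that $\Tt_0 = \Tt_1 \cdot T_R$. Note that $\Tt_1 \cdot T_R$ is a genuine subgroup because $T_R$ (or rather its image under $\s_{T_R}$) is normal in $\Tt_0$, as recorded just after the definition of $\s_{T_R}$. Surjectivity then passes to the quotient, yielding the claimed isomorphism.

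The core arithmetic input is the identity
\[
\Ocal^\times = U_1 \cdot R,
\]
where $U_1 = 1+\varpi\Ocal$. Indeed, $\Ocal^\times/U_1 \cong \kappa^\times$ has odd order $2^f-1$, so every class is a square; combined with $\Ocal^{\times 2}\subseteq R$ (Proposition~\ref{P:property_Hilber_symbol_not_in_Appendix}), every $u\in\Ocal^\times$ can be written as $u = v r$ with $v\in U_1$ and $r\in R$. This is the step that genuinely uses the characteristic-$2$ nature of $\kappa$; I expect this factorization to be the only non-formal ingredient.

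Given the factorization, each generator $\htt_\alpha(u)$ of $\Tt_0$ (with $u\in\Ocal^\times$ and $\alpha\in\Phi$) can be handled with \eqref{MR3}:
\[
\htt_\alpha(u) = \htt_\alpha(vr) = (v,r)\,\htt_\alpha(v)\,\htt_\alpha(r),
\]
where $\htt_\alpha(v)\in\Tt_1$ since $v\in U_1$, and $\htt_\alpha(r)\in\s_{T_R}(T_R)\subseteq T_R$. Because $\mu_2\subseteq\Tt_1$ (it projects to $1\in T_1$), the scalar $(v,r)$ also lies in $\Tt_1$. Hence $\htt_\alpha(u)\in\Tt_1\cdot T_R$. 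Since $\Tt_1\cdot T_R$ is a subgroup containing all generators of $\Tt_0$, it equals $\Tt_0$.

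For the consequence, let $\tau$ be a pseudo-spherical representation of $\Tt_0$. By definition $\tau$ is trivial on $T_R$, so it descends to a (still irreducible) representation of $\Tt_0/T_R$. Pulling back along the isomorphism $\Tt_1/(\Tt_1\cap T_R) \xrightarrow{\sim} \Tt_0/T_R$ exhibits $\tau|_{\Tt_1}$ as the inflation of an irreducible representation, so it is irreducible.
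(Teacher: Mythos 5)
Your proof is correct and follows essentially the same route as the paper: the paper's argument is exactly the factorization $\Ocal^\times=\Ocal^{\times 2}(1+\varpi\Ocal)\subseteq R\,U_1$ (Lemma \ref{L:square_root_exits_in_redidue_field}), which gives $U_1/(R\cap U_1)\xrightarrow{\sim}\Ocal^\times/R$ and hence the statement. Your additional bookkeeping on the cover — writing $\htt_\alpha(u)=(v,r)\,\htt_\alpha(v)\,\htt_\alpha(r)$ via \eqref{MR3} and using normality of $\s_{T_R}(T_R)$ together with $\mu_2\subseteq\Tt_1$ — is just an explicit version of the step the paper leaves implicit with ``the result follows.''
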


\begin{proof}
	Recall that $\Ocal^{\times}=\Ocal^{\times 2}(1+\varpi\Ocal)$ by Lemma \ref{L:square_root_exits_in_redidue_field}. Thus the natural inclusion $1+\varpi\Ocal\subseteq\Ocal^{\times}$ descends to an isomorphism $(1+\varpi\Ocal)/R\cap (1+\varpi\Ocal)\xrightarrow{\sim}\Ocal^{\times}/R$. The result follows.
\end{proof}


\subsection{A general construction}


With the short exact sequence \eqref{E:exact_sequence_Tt} in mind, to construct a pseudo-spherical representation of $\Tt_{0}$ we have to construct a genuine irreducible representation for each $\Mt_i$ and $\Nt_i$. For this purpose, let us start with the following general lemma.
\begin{Lem}\label{L:rep_of_2-step_unipotent_group}
Let $H\subseteq G$ be a compact abelian subgroup, and $\Ht$ its preimage in $\Gt$, so that we have
\[
1\longrightarrow\mu_2\longrightarrow\Ht\longrightarrow H\longrightarrow 1.
\]
Let $\Irr_{gen}(\Ht)$ be the set of (equivalence classes of) finite dimensional irreducible genuine representations of $\Ht$ and $\Irr_{gen}(Z(\Ht))$ the set of genuine characters on the center $Z(\Ht)$. Then there is a bijection
\[
\Irr_{gen}(Z(\Ht))\xrightarrow{\;\sim\;}\Irr_{gen}(\Ht),\quad \chi\mapsto \pi(\chi),
\]
where $\pi(\chi)$ is the unique constituent of the isotypic representation $\Ind_{Z(\Ht)}^{\Ht}\chi$. 

Moreover, we have
\[
\Ind_{Z(\Ht)}^{\Ht}\chi=n\pi(\chi),
\]
where
\[
n=\dim\pi(\chi)=\left|\Ht\slash Z(\Ht)\right|^{\frac{1}{2}}.
\]
\end{Lem}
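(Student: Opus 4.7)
The plan is to follow the classical Stone--von Neumann recipe for the two-step nilpotent group $\widetilde{H}$. First, observe that because $H$ is abelian, for any $\htt_1,\htt_2\in\Ht$ lifting $h_1,h_2\in H$, the commutator $[\htt_1,\htt_2]$ projects to $1$ in $H$ and therefore lies in $\mu_2$. Since $\mu_2$ is central, this commutator is independent of the choice of lifts and is bimultiplicative; one checks it is alternating. Hence we obtain an alternating bilinear form
\[
c:H\times H\longrightarrow\mu_2,\quad c(h_1,h_2)=[\htt_1,\htt_2].
\]
By construction, $Z(\Ht)$ is exactly the preimage of $\rad(c):=\{h\in H\st c(h,H)=1\}$, and so $\Ht/Z(\Ht)\cong H/\rad(c)$ carries a nondegenerate alternating $\F_2$-valued form. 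In our intended applications $H/\rad(c)$ is finite; we may therefore write $|\Ht/Z(\Ht)|=2^{2m}$ and the target dimension is $n=2^m$.

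Second, I would construct $\pi(\chi)$ by polarization. Choose a maximal isotropic subspace $\overline{L}\subseteq H/\rad(c)$ with respect to $\overline{c}$, and let $L\subseteq\Ht$ be its preimage. The isotropy of $\overline{L}$ implies $L$ is abelian, and $[L:Z(\Ht)]=2^m=[\Ht:L]$. Extend the genuine character $\chi$ of $Z(\Ht)$ to a character $\chit$ of $L$ (possible since $L/Z(\Ht)$ is a finite abelian group) and set
\[
\pi(\chi):=\Ind_L^{\Ht}\chit.
\]
Then $\dim\pi(\chi)=[\Ht:L]=n$, matching the dimension claim.

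Third, I would verify irreducibility and well-definedness via Mackey. Using the decomposition
\[
\End_{\Ht}(\pi(\chi))\cong\bigoplus_{g\in L\backslash\Ht/L}\Hom_{L\cap {^gL}}(\chit,{^g\chit}),
\]
the maximality of $\overline{L}$ as an isotropic subspace forces every $g\notin L$ to satisfy $c(g,\ell)\neq 1$ for some $\ell\in L$, whence $\chit$ and ${^g\chit}$ differ on the central element $[\gt,\ell]\in\mu_2$ for any lift $\gt$; thus only the double coset of the identity contributes, giving $\End_{\Ht}(\pi(\chi))=\C$. A similar calculation shows $\pi(\chi)$ is independent, up to isomorphism, of both the polarization $\overline{L}$ and the extension $\chit$.

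Finally, the bijection and the multiplicity formula follow from a Frobenius-reciprocity/dimension count. Any finite-dimensional irreducible genuine $\pi\in\Irr_{gen}(\Ht)$ admits a central character $\chi\in\Irr_{gen}(Z(\Ht))$ since $Z(\Ht)$ is abelian, and then by transitivity of induction and the first step $\pi\hookrightarrow\Ind_{Z(\Ht)}^{\Ht}\chi$ forces $\pi\cong\pi(\chi)$. Conversely every $\pi(\chi)$ has central character $\chi$, proving the map $\chi\mapsto\pi(\chi)$ is a bijection. Since every irreducible constituent of $\Ind_{Z(\Ht)}^{\Ht}\chi$ has central character $\chi$ and is therefore isomorphic to $\pi(\chi)$, and since $\dim\Ind_{Z(\Ht)}^{\Ht}\chi=[\Ht:Z(\Ht)]=n^2=n\cdot\dim\pi(\chi)$, the isotypic decomposition yields $\Ind_{Z(\Ht)}^{\Ht}\chi=n\,\pi(\chi)$. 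The main obstacle is the Mackey step that verifies irreducibility of $\Ind_L^{\Ht}\chit$; this is the heart of the Stone--von Neumann argument, and relies essentially on the maximality of the isotropic subspace together with the fact that $c$ descends to a nondegenerate form on $H/\rad(c)$.
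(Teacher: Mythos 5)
Your proof is correct, but it takes a genuinely different route from the paper's. The paper disposes of this lemma by citation: it invokes \cite[Proposition 2.2, p.706]{ABPTV}, remarking only that the argument there carries over and that compactness of $\Ht$ is what guarantees complete reducibility of $\Ind_{Z(\Ht)}^{\Ht}\chi$. You instead give a self-contained Stone--von Neumann argument: the commutator pairing descends to a nondegenerate alternating form on $\Ht/Z(\Ht)\cong H/\rad(c)$ (an $\F_2$-space, since squares land in the radical), you polarize, extend $\chi$ to the preimage $L$ of a Lagrangian, induce, and use Mackey theory plus maximality of the isotropic to get irreducibility. This buys an explicit model of $\pi(\chi)$ and the dimension formula at no extra cost, in exchange for length. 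Three points to tighten: (i) finiteness of $\Ht/Z(\Ht)$ should be made an explicit hypothesis --- the formula for $n$ presupposes it, and in the paper's applications it holds because the commutator is a Hilbert symbol factoring through the finite group $\Ocal^{\times}/R$; (ii) the ``similar calculation'' you wave at is exactly what saves your last paragraph from circularity: by transitivity, $\Ind_{Z(\Ht)}^{\Ht}\chi$ is the direct sum of $\Ind_{L}^{\Ht}$ of all $[L:Z(\Ht)]$ extensions of $\chi$, and the map $\Ht/L\to\Hom(L/Z(\Ht),\mu_2)$ induced by the pairing is injective by maximality of the Lagrangian, hence bijective by counting, so all these inductions are $\Ht$-conjugate and isomorphic to $\pi(\chi)$; spelling this out replaces the appeal to an ``isotypic decomposition''; (iii) wherever semisimplicity is used (e.g.\ decomposing $\Ind_{Z(\Ht)}^{L}\chi$ into characters of the compact abelian group $L$), note that compactness of $\Ht$ is doing the work, which is precisely the point the paper flags when adapting \cite{ABPTV}.
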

\begin{proof}
This is essentially \cite[Proposition 2.2, p.706]{ABPTV} except that the group $H$ there is an abelian Lie group with $H\slash H^\circ$ finite. Their proof works for our setting. But, to show $\Ind_{Z(\Ht)}^{\Ht}\chi$ is completely reducible, one has to use that $\Ht$ is compact. The details are left to the reader.
\end{proof}

Also if $\Wcal$ acts on $\Ht$ through automorphisms, then $\Wcal$ also acts on $Z(\Ht)$. Then for each $\wt\in\Wcal$ we have
\[
^{\wt}\left(\Ind_{Z(\Ht)}^{\Ht}\chi\right)=\Ind_{Z(\Ht)}^{\Ht}({^{\wt}\chi}).
\]
Hence the bijection of the lemma restricts to the bijection
\begin{equation}\label{E:bijection_Weyl_invariant_rep}
(\Irr_{gen}Z(\Ht))^{\Wcal}\xrightarrow{\;\sim\;}(\Irr_{gen}\Ht)^{\Wcal},
\end{equation}
where $(\Irr_{gen}Z(\Ht))^{\Wcal}$ is the set of all $\Wcal$-invariant genuine characters on $Z(\Ht)$ and $(\Irr_{gen}\Ht)^{\Wcal}$ is the set of (equivalence classes of) $\Wcal$-invariant genuine irreducible representations of $\Ht$.


\subsection{Representations of $\Mt_i$}


Let us construct (finite dimensional) irreducible genuine representations of $\Mt_i$ and show that they are all $\Wcal$-invariant. For this purpose, we describe the center $Z(\Mt_i)$ concretely as follows.
\begin{Lem}
We have a bijection
\[
\mu_2\times \Yt\slash 2Y\longrightarrow Z(\Mt_i)
\]
given by
\[
(\epsilon,\, \sum_{\alpha\in\Delta}c_\alpha\alphac)\mapsto \epsilon\prod_{\alpha}\hta(u_i^{c_\alpha}),
\]
where the product is independent of the ordering. (Note that this does not have to be a group isomorphism.)
\end{Lem}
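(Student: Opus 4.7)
The plan is to verify that the map is well-defined, lands in the center, is injective, and has the correct cardinality; everything is handled most cleanly by passing to the linear quotient $T_0\slash T_R$ and then using (MR4) to control the commutator pairing.

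First I would establish well-definedness. For $y=\sum_{\alpha\in\Delta}c_\alpha\alphac\in\Yt$, Proposition \ref{ZTbiject} ensures that $\prod_\alpha \hta(u_i^{c_\alpha})$ is independent of the ordering in $\Tt_0$, and this descends to $\Mt_i\subseteq\Tt_0\slash T_R$. Since $u_i^2\in\Ocal^{\times 2}\subseteq R$, we have $\hta(u_i^2)=1$ in $\Mt_i$, and a short computation with (MR3) shows that translating $y$ by any element of $2Y$ does not alter the image, so the map indeed factors through $\Yt\slash 2Y$. Next, for centrality, since $\Mt_i$ is generated by the $\htb(u_i)$ with $\beta\in\Phi$, it suffices to commute $\prod_\alpha \hta(u_i^{c_\alpha})$ past each such generator. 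Iterating (MR4) yields
\[
\prod_\alpha \hta(u_i^{c_\alpha})\cdot\htb(u_i) \;=\; (u_i,u_i)^{\sum_\alpha c_\alpha\la\beta,\alpha\ra}\,\htb(u_i)\prod_\alpha \hta(u_i^{c_\alpha}) \;=\; (-1)^{\la\beta,y\ra}\,\htb(u_i)\prod_\alpha \hta(u_i^{c_\alpha}),
\]
which is trivial exactly because the defining condition $\la\beta,y\ra\in 2\Z$ for $y\in\Yt$ kills the exponent.

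For injectivity and cardinality, I would project to the linear quotient $T_0\slash T_R\cong Y\otimes_{\Z}\Ocal^\times\slash R$, under which $\prod_\alpha h_\alpha(u_i^{c_\alpha})$ corresponds to $y\otimes\bar u_i$. Because $\bar u_i$ is part of an $\F_2$-basis of $\Ocal^\times\slash R$, the assignment $y\mapsto y\otimes\bar u_i$ identifies $Y\slash 2Y$ with the image $\bar M_i$ of $\Mt_i$; in particular the map $\mu_2\times\Yt\slash 2Y\to\Mt_i$ is injective and $|\bar M_i|=2^{\rank\Phi}$. Applying the same (MR4) computation to two arbitrary products now identifies the commutator pairing on $\bar M_i\cong Y\slash 2Y$ with the mod $2$ reduction of the $W$-invariant form $\la\cdot,\cdot\ra$ on $Y$; its radical is, by definition, precisely $\Yt\slash 2Y$. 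Consequently $|Z(\Mt_i)|=2\cdot|\Yt\slash 2Y|$, matching the cardinality of the left-hand side and, together with injectivity, forcing the bijection.

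The main obstacle is the final step of passing from the generator-wise commutator identity to a clean statement about the radical of the induced bilinear form on $\bar M_i$; once this reformulation is in place, the equality of cardinalities is immediate. All the remaining work is routine bookkeeping using (MR3), (MR4), the splitting of $T_R$, and the Hilbert symbol identity $(u_i,u_i)=-1$ coming from $u_i\notin R$.
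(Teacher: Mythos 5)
Your argument is correct, and it reaches the same two pillars as the paper (well-definedness modulo $2Y$ via (MR3), centrality via the aggregated (MR4) sign $(u_i,u_i)^{\la\beta,y\ra}$), but it packages the "onto the center" half differently. The paper proves surjectivity directly: it writes an arbitrary element of $\Mt_i$ as $\epsilon\prod_j\htt_{\alpha_j}(u_i^{d_j})$ over the \emph{simple} coroots, commutes it with a generator $\htb(u_i)$, and reads off from $(u_i,u_i)=-1$ that the exponent vector must lie in $\Yt$; injectivity is then handled by passing to $Z(\Mt_i)\slash\mu_2$. You instead project to the linear quotient $T_0\slash T_R\cong Y\otimes_{\Z}(\Ocal^\times\slash R)$, identify the image of $\Mt_i$ with $Y\slash 2Y$ via $y\mapsto y\otimes \bar u_i$ (this is where $u_i\notin R$ enters), note that the commutator map of $\Mt_i$ takes values in the central $\mu_2$ and hence descends to a pairing $(-1)^{\la y,y'\ra}$ on $Y\slash 2Y$ whose radical is $\Yt\slash 2Y$ by the very definition of $\Yt$, and conclude $|Z(\Mt_i)|=2\,|\Yt\slash 2Y|$, so injectivity plus a cardinality count forces the bijection. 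What your route buys is that you never need the (true but sign-laden) fact that every element of $\Mt_i$ can be rewritten as $\epsilon$ times a product over simple coroots; the tensor description of the abelian quotient does that bookkeeping for you. What it costs is that you must justify the descent of the commutator pairing to the abelianized image and the identification $T_0\slash T_R\cong Y\otimes(\Ocal^\times\slash R)$ (both routine, via right-exactness of the tensor product and $[\Mt_i,\Mt_i]\subseteq\mu_2$), steps you currently wave at rather than state.

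One small correction: $(u_i,u_i)=-1$ does not follow from $u_i\notin R$ alone (the symplectic basis vectors $e_j,f_j$ lie outside $R$ but are isotropic); it holds because $u_i$ is by construction one of the orthogonal basis vectors of the anisotropic summand $D$ in Proposition \ref{P:orthogonal_decomposition_of_Hilbert}, exactly as the paper records when it notes $\hta(u_i)^2=(u_i,u_i)=-1$. Since that property is part of the setup of $\Mt_i$, this is a misattributed justification rather than a gap, but it matters: if $(u_i,u_i)$ were $+1$ your commutator pairing would be trivial and the cardinality count would fail.
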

\begin{proof}
In this proof, we omit the subscript $i$ for $\Mt_i$ and $u_i$, so we write $\Mt=\Mt_i$ and $u=u_i$. Also we write $\sum_{\alpha\in\Delta}c_\alpha\alphac=\sum_{i}c_i\alphac_i$, so that $\prod_{\alpha}\hta(u_i^{c_\alpha})$ is $\prod_{i}\htt_{\alpha_i}(u^{c_i})$.

Note that since $\htt_{\alpha_i}(u^2)=1$, the map is independent of the representative $\sum_{i}c_i\alphac_i$.

Let us show that $\epsilon\prod_{i}\hta(u^{c_i})$ is indeed in $Z(\Mt)$. Let $\htt_\beta(u)\in\Mt$, where $\beta\in\Phi$. Then for each $\alphac_i$ we have
\[
\htt_{\alpha_i}(u^{c_i})\htt_\beta(u)=(u, u^{\la \beta, \alphac_i\ra})\htt_\beta(u)\htt_{\alpha_i}(u^{c_i})=\htt_\beta(u)\htt_{\alpha_i}(u^{c_i})
\]
by \eqref{MR4}, where we used $\la \beta, \alphac_i\ra\in 2\Z$. Hence
\[
\left(\prod_{i}\htt_{\alpha_i}(u^{c_i})\right)\htt_\beta(u)=\htt_\beta(u)\left(\prod_{i}\htt_{\alpha_i}(u^{c_i})\right),
\]
so $\epsilon\prod_{i}\hta(u^{c_i})$ is indeed in $Z(\Mt)$.

To show that the map is surjective, note that each element in $\Mt$ is (not necessarily uniquely) written as
\[
\epsilon\prod_i\htt(u^{d_i})
\]
for some $\sum_id_i\alphac_i\in Y$. If this element is in the center $Z(\Mt)$, it has to commute with all the $\htt_\beta(u)$. But by the same computation as above, we have
\[
\prod_i\htt(u^{d_i})\cdot\htt_\beta(u)=(u, u^{\la\beta,\; \sum_id_i\alphac_i\ra})\,\htt_\beta(u) \cdot \prod_i\htt(u^{d_i}). 
\]
Hence we must have $\sum_id_i\alphac_i\in \Yt$ because $(u, u)=-1$. Hence the given map is surjective.

To show it is injective, compose the map with $Z(\Mt)\slash\mu_2$. This is a homomorphism whose kernel is $\mu_2$. This shows that the given map is injective.
\end{proof}

Once we can describe the center $Z(\Mt_i)$, we can show
\begin{Lem}
The group $\Wcal$ acts trivially on $Z(\Mt_i)$.
\end{Lem}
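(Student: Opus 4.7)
My plan is to reduce to checking the action on generators of $\Wcal$ and then to carry out a direct computation inside the Heisenberg-like group $\Mt_i$. By the exact sequence~\eqref{E:exact_sequence_of_Weyl_group}, $\Wcal$ is generated by $\Tt(\Z) = \la\hta(-1) : \alpha\in\Delta\ra$ together with the lifts $\wta := \wta(1)$ for $\alpha\in\Delta$. Because $\mu_2\subseteq Z(\Mt_i)$ is automatically fixed by conjugation, and the previous lemma identifies $Z(\Mt_i)$ with $\mu_2\times\Yt/2Y$, it suffices to show that every such generator $\wt$ conjugates $z_y := \prod_j\htt_{\alpha_j}(u_i^{c_j})$ to itself, for all $y = \sum_j c_j\alphac_j\in\Yt$.

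For $\wt = \hta(-1)$, applying~\eqref{MR4} factor by factor gives $\hta(-1)\,z_y\,\hta(-1)^{-1} = (u_i,-1)^{\la\alpha,y\ra}\,z_y$, and the exponent is even by the defining condition on $\Yt$, so this sign is $1$.

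The substantive case is $\wt = \wta$. I would abbreviate $v_\beta := \htt_\beta(u_i)$, so that in $\Mt_i$ one has $v_\beta^2 = (u_i,u_i) = -1$ and $v_\gamma v_\beta = (-1)^{\la\beta,\gamma\ra}v_\beta v_\gamma$, making $\Mt_i$ a Heisenberg-like $\F_2$-group. Then~\eqref{MR5'}, together with the observation that $\wta$ commutes with $v_{\alpha_j}$ whenever $\la\alpha,\alpha_j\ra = 0$ (because the corresponding root subgroups commute), yields
\[
\wta\,v_{\alpha_j}^{c_j}\,\wta^{-1} = \begin{cases} v_{\alpha_j}^{c_j} & \text{if } \alpha_j = \alpha \text{ or } \la\alpha,\alpha_j\ra = 0,\\ (v_{\alpha_j}v_\alpha)^{c_j} & \text{if } \alpha+\alpha_j\in\Phi. \end{cases}
\]
Let $N$ denote the number of neighbors $\alpha_j$ of $\alpha$ (with $\alpha_j\neq\alpha$) having $c_j$ odd; by Lemma~\ref{L:parity_of_adjacent_root}, $N$ is even. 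Using the order-independence of $z_y$ from Proposition~\ref{ZTbiject}, one collects the $N$ extra copies of $v_\alpha$ introduced by the conjugation. Moving them all to the right past the corresponding $v_{\alpha_j}$'s costs a sign $(-1)^{N(N-1)/2}$ (one $-1$ per anticommutation, totalling the number of inversions $\binom{N}{2}$), while the resulting power $v_\alpha^N$ simplifies to $(-1)^{N/2}$. Since $N$ is even, $N(N-1)/2\equiv N/2\pmod 2$, so the two signs cancel and one concludes $\wta z_y\wta^{-1} = z_y$.

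The main obstacle is exactly this sign tracking: naively the conjugation appears to introduce a nontrivial sign $(-1)^{N/2}$ (which is already $-1$ when $N = 2$, e.g.\ for $y = \alphac_2+\alphac_5+\alphac_7$ in type $E_7$ under $\wt_{\alpha_4}$). It is the Heisenberg braiding in $\Mt_i$ that contributes a second, independent $(-1)^{N/2}$, and only the evenness of $N$ --- equivalent to the condition $y\in\Yt$ --- ensures that the two parities match so that the action on $Z(\Mt_i)$ is trivial.
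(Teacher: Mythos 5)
Your proof is correct and takes essentially the same route as the paper: reduce to the generators $\wta$, $\alpha\in\Delta$ (together with $\Tt(\Z)$), conjugate using \eqref{MR5'}, \eqref{MR4} inside $\Mt_i$, and use the parity conditions defining $\Yt$ (Lemma \ref{L:parity_of_adjacent_root}) to see that the extra factors cancel. The only difference is organizational: where the paper argues case-by-case on the number of neighbors of $\alpha_j$ (leaving the trivalent case to the reader), you track signs uniformly via the parity of $N$, and your bookkeeping checks out --- each odd adjacent coefficient contributes exactly one extra $v_\alpha$ (even ones contribute only a central sign on both sides), and $(-1)^{\binom{N}{2}+N/2}=1$ precisely because $N$ is even.
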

\begin{proof}
In this proof, we omit the subscript $i$ for $\Mt_i$ and $u_i$, so we write $\Mt=\Mt_i$ and $u=u_i$. Also we write $\sum_{\alpha\in\Delta}c_\alpha\alphac=\sum_{i}c_i\alphac_i$. 

To show the lemma, it suffices to show that for each simple root $\alpha_j$ we have
\[
\wt_{\alpha_j}(1) \prod_{i}\htt_{\alpha_i}(u^{c_i}) \wt_{\alpha_j}(-1)=\prod_{i}\htt_{\alpha_i}(u^{c_i}).
\]
To work this out, let us recall from \eqref{MR5} and \eqref{MR5'} that we have
\[
\wt_{\alpha}(1)\htt_{\beta}(t)\wt_{\alpha}(-1)=\begin{cases}\htt_{\alpha}(t^{-1})&\text{if $\beta=\alpha$};\\
\htt_{\beta}(t)\htt_{\alpha}(t)&\text{if $\la\alpha,\beta\ra=-1$};\\
\htt_{\beta}(t)&\text{if $\la\alpha, \beta\ra=0$},
\end{cases}
\]
for all $t\in F^\times$. Also noting $\htt_\alpha(u^2)=1\pmod{T_{R}}$, we always have
\[
\wt_{\alpha}(1)\htt_{\alpha}(u^c)\wt_{\alpha}(-1)=\htt_{\alpha}(u^c)
\]
for any $c\in\Z$ and any $\alpha\in\Phi$. Also note that if $c=d\pmod{2}$ then $\htt_{\alpha}(u^c)=\htt_{\alpha}(u^d)\mod{T_R}$.

Assume that $\alpha_j$ is adjacent to exactly one simple root $\alpha_k$, namely $\alpha_j$ is an endpoint of the Dynkin diagram. We then know from Lemma \ref{L:parity_of_adjacent_root} that $c_k\in 2\Z$. We can then write
\begin{align*}
&\wt_{\alpha_j}(1) \prod_{i}\htt_{\alpha_i}(u^{c_i}) \wt_{\alpha_j}(-1)\\
&=\left(\wt_{\alpha_j}(1)\htt_{\alpha_k}(u^{c_k})\wt_{\alpha_j}(-1)\right)\left(\wt_{\alpha_j}(1)\htt_{\alpha_j}(u^{c_j})\wt_{\alpha_j}(-1)\right) \prod_{i\neq j, k}\htt_{\alpha_i}(u^{c_i})\\
&=\htt_{\alpha_k}(u^{c_k})\htt_{\alpha_j}(u^{c_k})\htt_{\alpha_j}(u^{c_j})\prod_{i\neq j, k}\htt_{\alpha_i}(u^{c_i})\\
&=\htt_{\alpha_k}(u^{c_k})\htt_{\alpha_j}(u^{c_j})\prod_{i\neq j, k}\htt_{\alpha_i}(u^{c_i})\\
&=\prod_{i}\htt_{\alpha_i}(u^{c_i}),
\end{align*}
where we used $\htt_{\alpha_j}(u^{c_k})=1$.

Next, assume that $\alpha_j$ is adjacent to exactly two roots $\alpha_k$ and $\alpha_\ell$. By the analogous computation, we have
\begin{align*}
&\wt_{\alpha_j}(1) \prod_{i}\htt_{\alpha_i}(u^{c_\alpha}) \wt_{\alpha_j}(-1)\\
&=\htt_{\alpha_k}(u^{c_k})\htt_{\alpha_j}(u^{c_k})\htt_{\alpha_\ell}(u^{c_\ell})\htt_{\alpha_j}(u^{c_\ell})\htt_{\alpha_j}(u^{c_j})\prod_{i\neq j, k, \ell}\htt_{\alpha_i}(u^{c_i}).
\end{align*}
Here, by moving round $\htt$'s, we can further write
\begin{align*}
&\htt_{\alpha_k}(u^{c_k})\htt_{\alpha_j}(u^{c_k})\htt_{\alpha_\ell}(u^{c_\ell})\htt_{\alpha_j}(u^{c_\ell})\htt_{\alpha_j}(u^{c_j})\\
&=(u^{c_k},u^{c_{\ell}})\,\htt_{\alpha_k}(u^{c_k})\htt_{\alpha_\ell}(u^{c_\ell})\htt_{\alpha_j}(u^{c_k})\htt_{\alpha_j}(u^{c_\ell})\htt_{\alpha_j}(u^{c_j})\\
&=(u^{c_k},u^{c_{\ell}})\,(u^{c_k},u^{c_{\ell}})\,\htt_{\alpha_k}(u^{c_k})\htt_{\alpha_\ell}(u^{c_\ell})\htt_{\alpha_j}(u^{c_k+c_\ell})\htt_{\alpha_j}(u^{c_j})\\
&=(u^{c_k+c_\ell}, u^{c_j})\,\htt_{\alpha_k}(u^{c_k})\htt_{\alpha_\ell}(u^{c_\ell})\htt_{\alpha_j}(u^{c_k+c_\ell+c_j})\\\
&=(u,u)^{c_kc_j+c_\ell c_j}\,\htt_{\alpha_k}(u^{c_k})\htt_{\alpha_\ell}(u^{c_\ell})\htt_{\alpha_j}(u^{c_k+c_\ell+c_j})\\
&=\htt_{\alpha_k}(u^{c_k})\htt_{\alpha_\ell}(u^{c_\ell})\htt_{\alpha_j}(u^{c_j}),
\end{align*}
where for the last step we used that $c_kc_j, c_\ell c_j\in 2\Z$ by Corollary \ref{C:parity_of_adjacent_root3}, and $c_k+c_\ell\in 2\Z$ by Lemma \ref{L:parity_of_adjacent_root}.

Finally, if $\alpha_k$ has three adjacent roots, then the calculation is essentially the same and left to the reader.
\end{proof}

We then have
\begin{Prop}\label{P:W-invariant_rep_of_Mt}
Every irreducible genuine representation of $\Mt_i$ is $\Wcal$-invariant and 
\[
\left|\Irr_{gen}Z(\Mt_i)\right|=\left|\Yt\slash 2Y\right|.
\]
Further, for each $\pi\in \Irr_{gen}(\Mt_i)$, we have
\[
\dim\pi=\left|\Mt_i\slash Z(\Mt_i)\right|^{\frac{1}{2}}=\left|Y\slash\Yt\right|^{\frac{1}{2}},
\]
where the cardinality $\left|Y\slash\Yt\right|$ is listed in Table \ref{T:table}.
\end{Prop}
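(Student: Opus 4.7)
The plan is to combine the two preceding lemmas with Lemma~\ref{L:rep_of_2-step_unipotent_group} and its $\Wcal$-equivariant refinement~\eqref{E:bijection_Weyl_invariant_rep}. Observe first that $\Mt_i/\mu_2$ sits inside the abelian group $T_0/T_R$, so $\Mt_i$ is a $\mu_2$-central extension of a finite abelian group and the hypothesis of Lemma~\ref{L:rep_of_2-step_unipotent_group} is satisfied (with $\Ht$ replaced by $\Mt_i$; the proof of that lemma is really just a statement about finite $\mu_2$-central extensions of abelian groups).

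For the $\Wcal$-invariance and the count, I would first extend the nontrivial character of $\mu_2$ to $Z(\Mt_i)$: since $Z(\Mt_i)$ is finite abelian with $\mu_2 \subseteq Z(\Mt_i)$, such extensions exist and their number equals $|Z(\Mt_i)/\mu_2| = |\Yt\slash 2Y|$ by the first of the two lemmas preceding this proposition. The second preceding lemma says $\Wcal$ acts trivially on $Z(\Mt_i)$, so every genuine character of $Z(\Mt_i)$ is automatically $\Wcal$-invariant. Feeding this into \eqref{E:bijection_Weyl_invariant_rep} yields that every irreducible genuine representation of $\Mt_i$ is $\Wcal$-invariant, and that $|\Irr_{gen}Z(\Mt_i)| = |\Yt\slash 2Y|$.

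For the dimension formula, Lemma~\ref{L:rep_of_2-step_unipotent_group} gives $\dim\pi = |\Mt_i/Z(\Mt_i)|^{1/2}$, so it suffices to show $|\Mt_i/Z(\Mt_i)| = |Y/\Yt|$. Since $|Z(\Mt_i)| = 2|\Yt\slash 2Y|$, the task reduces to proving $|\Mt_i/\mu_2| = |Y/2Y|$. The cleanest route is to exploit the product decomposition $T \cong \bigoplus_{j=1}^r h_{\alpha_j}(F^\times)$ coming from $Y = \bigoplus_j \Z\alphac_j$, which descends to an isomorphism $T_0/T_R \cong (\Ocal^\times/R)^r$. Under this isomorphism the image of $\Mt_i/\mu_2$ is the $\F_2$-subspace generated by the $r$ vectors $(0,\dots,u_i,\dots,0)$ (with $u_i$ in slot $j$). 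These are $\F_2$-linearly independent because $u_i \notin R$, so $|\Mt_i/\mu_2| = 2^r = |Y/2Y|$, and then $|\Mt_i/Z(\Mt_i)| = 2|Y/2Y|/(2|\Yt\slash 2Y|) = |Y/\Yt|$.

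The only nontrivial point I anticipate is the computation $|\Mt_i/\mu_2| = |Y/2Y|$, where one must rule out unexpected coincidences among the products $\prod_j \htt_{\alpha_j}(u_i^{c_j})$ after reducing modulo $T_R$. Routing the argument through the coordinate decomposition of $T_0$ along simple coroots avoids any delicate Steinberg-relation manipulations and turns this into a trivial linear-independence statement in $(\Ocal^\times/R)^r$, which is where the properties of $R$ from \S\ref{SS:HilbSymb} do the real work.
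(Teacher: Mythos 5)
Your argument is correct and follows essentially the same route as the paper: the Stone--von Neumann type Lemma \ref{L:rep_of_2-step_unipotent_group}, the equivariant bijection \eqref{E:bijection_Weyl_invariant_rep}, and the two preceding lemmas identifying $Z(\Mt_i)$ with $\mu_2\times\Yt/2Y$ and showing the $\Wcal$-action on it is trivial. The only addition is your explicit verification that $|\Mt_i/Z(\Mt_i)|=|Y/\Yt|$ via the decomposition $T_0/T_R\cong(\Ocal^\times/R)^r$ and the linear independence coming from $u_i\notin R$; the paper leaves this counting implicit, and your computation is a correct way to fill it in.
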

\begin{proof}
This follows from Lemma \ref{L:rep_of_2-step_unipotent_group} and \eqref{E:bijection_Weyl_invariant_rep}.
\end{proof}

\begin{Rmk}
When $F=\R$, the above proposition is proven in \cite{ABPTV}, where the corresponding group is denoted by $\Mt$. One can actually show that our $\Mt_i$ is isomorphic to their $\Mt$. In \cite{Karasiewicz}, the corresponding group is denoted by $\Tt^{\diamond}$, and a $\Wcal$-invariant representation is constructed by constructing an isomorphism between $\Mt$ and $\Tt^{\diamond}$. One can readily verify that our $\Mt_i$ is isomorphic to $\Tt^{\diamond}$ of \cite{Karasiewicz} via the map $\Mt_i\to\Tt^{\diamond}$, $\hta(u_i)\mapsto\hta(-1)$.
\end{Rmk}


\subsection{Representations of $\Nt_i$}


We can construct pseudo-spherical representations of $\Nt_i$ and show that they are all $\Wcal$-invariant essentially in the same way.

Let us first mention
\begin{Lem}
The map
\[
\mu_2\times \Yt\slash 2Y\times\Yt\slash 2Y\longrightarrow Z(\Nt_i)
\]
given by
\[
(\epsilon, y_1, y_2)\mapsto \epsilon\, \yt_1(e_i)\,\yt_2(f_i)
\]
is an isomorphism. Further, the group $\Wcal$ acts trivially on $Z(\Nt_i)$.
\end{Lem}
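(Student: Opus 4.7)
The plan is to follow the template of the preceding lemma for $\Mt_i$, now with the two generators $\hta(e_i)$ and $\hta(f_i)$ in place of $\hta(u_i)$. The key structural change is that the Hilbert-symbol form on $\Span_{\F_2}\{e_i,f_i\}\subseteq\Ocal^\times\slash R$ is symplectic rather than diagonal: $(e_i,e_i)=(f_i,f_i)=1$ and $(e_i,f_i)=-1$. This actually simplifies the sign calculations and is what ultimately upgrades the bijection of the $\Mt_i$ lemma to a genuine group isomorphism in the present case.

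First I would verify centrality: for a generator $\htb(e_i)$, applying \eqref{MR4} to commute every $\hta_k(e_i^{c_k})$ past it produces a trivial sign because $(e_i,e_i)=1$, while commuting $\hta_k(f_i^{d_k})$ past $\htb(e_i)$ yields $(e_i,f_i)^{d_k\la\beta,\alpha_k\ra}=(-1)^{d_k\la\beta,\alpha_k\ra}$, and the product of these signs over $k$ equals $(-1)^{\la\beta,y_2\ra}=1$ because $y_2\in\Yt$; the case of $\htb(f_i)$ is symmetric. For surjectivity, I would first put an arbitrary element of $\Nt_i$ into the normal form $\epsilon\prod_j\hta_j(e_i^{a_j})\prod_j\hta_j(f_i^{b_j})$ using \eqref{MR3} and \eqref{MR4}, then apply the same commutator computation to conclude that centrality forces $\sum_ja_j\alphac_j,\,\sum_jb_j\alphac_j\in\Yt$.

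To check the homomorphism property I would reduce to showing $\yt_2(f_i)\yt_1'(e_i)=\yt_1'(e_i)\yt_2(f_i)$; the accumulated sign from commuting the two products is $(-1)^{\sum_{i,j}c_i'd_j\la\alpha_i,\alpha_j\ra}=(-1)^{\sum_jd_j\la\alpha_j,y_1'\ra}=1$ because $y_1'\in\Yt$. Once the $e_i$- and $f_i$-blocks are separated, \eqref{MR3} together with $(e_i,e_i)=(f_i,f_i)=1$ makes each block additive in the $\Yt\slash 2Y$-subscript. For injectivity I would project to $T_0\slash T_R$: since $G$ is simply-connected and $T_R=Y\otimes R$, the quotient is $T_0\slash T_R\cong Y\slash 2Y\otimes_{\F_2}\Ocal^\times\slash R$, in which the image $y_1\otimes e_i+y_2\otimes f_i$ vanishes iff $y_1,y_2\in 2Y$ by the $\F_2$-linear independence of $\{e_i,f_i\}$ in $\Ocal^\times\slash R$.

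For the Weyl-invariance I would split $\wt_{\alpha_j}(1)\yt_1(e_i)\yt_2(f_i)\wt_{\alpha_j}(-1)$ into the two separate conjugations on $\yt_1(e_i)$ and $\yt_2(f_i)$, and repeat the proof of the preceding lemma verbatim with $u$ replaced by $e_i$ and then by $f_i$. The argument is in fact easier because the cocycle corrections $(u,u)^{c_kc_\ell}$ appearing in the $\Mt_i$ computation are now automatically trivial, so only the basic parity condition from Lemma \ref{L:parity_of_adjacent_root} together with the observation that $\htt_\alpha(e_i^{2m})=\htt_\alpha(f_i^{2m})=1$ in $\Tt_0\slash T_R$ (since $e_i^2,f_i^2\in\Ocal^{\times 2}\subseteq R$) is needed. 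The main point where care is required is the normal-form argument in surjectivity, where one must interleave $e_i$- and $f_i$-factors and verify that all rearrangement signs cancel against the $\Yt$-condition; everything else is a routine symplectic variant of the preceding lemma.
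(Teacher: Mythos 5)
Your proposal is correct and follows essentially the same route as the paper's proof: centrality and surjectivity via the \eqref{MR4} sign computations together with the condition $y_1,y_2\in\Yt$, the homomorphism property from the isotropy $(e_i,e_i)=(f_i,f_i)=1$, and $\Wcal$-invariance by rerunning the $\Mt_i$ computation, which the paper likewise notes becomes trivial in this symplectic case. The only cosmetic difference is your injectivity step, phrased via the identification $T_0\slash T_R\cong (Y\slash 2Y)\otimes_{\F_2}(\Ocal^\times\slash R)$ and $\F_2$-linear independence of $e_i,f_i$, which is an equivalent repackaging of the paper's uniqueness-of-expression argument using $e_i, f_i\notin R$.
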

\begin{proof}
Let us omit the subscript $i$ from the notation for $\Nt_i$, so $\{e, f\}$ is a symplectic basis of a 2-dimensional subspace in $\Ocal^\times \slash R$ such that $(e, f)=-1$ and $(e, e)=(f, f)=1$, and $\Nt$ is the group generated by $\hta(e)$ and $\htb(f)$ for all $\alpha\in \Phi$. Now, since $(e, e)=(f, f)=1$, we know from \eqref{MR4} that $\hta(e)\htb(e)=\htb(e)\hta(e)$ and $\hta(f)\htb(f)=\htb(f)\hta(f)$ for all $\alpha, \beta\in\Phi$. Hence each element in $\Nt$ is uniquely written as
\[
\epsilon\, \yt_1(e)\,\yt_2(f)=\epsilon\cdot \prod_{i}\htt_{\alpha_i}(e^{c_i})\cdot \prod_{i}\htt_{\alpha_i}(f^{d_i}),
\]
where $y_1=\sum_{i}c_i\alphac_i$ and $y_2=\sum_{i}d_i\alphac_i$.

Assume $\epsilon\, \yt_1(e)\,\yt_2(f)\in Z(\Nt)$. Then it commutes with all the elements of the form $\hta(f)$. But by (MR4) we know that
\[
\epsilon\, \yt_1(e)\,\yt_2(f)\,\hta(f)=(e, f^{\la \alpha, y_1\ra})\hta(f)\cdot \epsilon\, \yt_1(e)\,\yt_2(f).
\]
Hence we must have $\la \alpha, y_1\ra\in2\Z$ for all $\alpha\in\Phi$, which implies $y_1\in\Yt$. Similarly, we have $y_2\in\Yt$. If $y_1, y_2\in\Yt$ then we already know that both $\yt_1(e)$ and $\yt_2(f)$ are in $Z(\Tt)$ and hence in $Z(\Nt)$. Hence we have the surjective map
\[
\mu_2\times \Yt\times\Yt\longrightarrow Z(\Nt),\quad (\epsilon, y_1, y_2)\mapsto \epsilon\, \yt_1(e)\,\yt_2(f).
\]
To show this is a homomorphism, notice that for each $(\epsilon, y_1, y_2)$ and $(\epsilon', z_1, z_2)$ we know
\[
\epsilon\, \yt_1(e)\,\yt_2(f)\cdot \epsilon'\, \zt_1(e)\,\zt_2(f)=\epsilon\epsilon'\, \yt_1(e)\zt_1(e)\,\yt_2(f)\zt_2(f),
\]
because $\yt_2(f)\in Z(\Nt)$. We then have $\yt_1(e)\zt_1(e)=\widetilde{y_1+z_1}(e)$ because $(e,e)=1$ and so all the $\hta(e)$'s commute with each other, and similarly we have $\yt_2(f)\zt_2(f)=\widetilde{y_2+z_2}(f)$.

We need to show that the kernel of this map is $\{1\}\times 2Y\times 2Y$. Let $(1, y_1, y_2)$ be in the kernel. Suppose $y_1=\sum_{i}c_i\alphac_i$. Then $\prod_{i}\htt_{\alpha_i}(e^{c_i})=1$, which implies $\htt_{\alpha_i}(e^{c_i})=1$ for each $i$. Then we must have $e^{c_i}\in R$, which implies $c_i\in 2\Z$ because $e\notin R$. Hence $y_1\in 2Y$. Similarly, we have $y_2\in 2Y$. Hence the kernel is $\{1\}\times 2Y\times 2Y$. 

It remains to show that $\Wcal$ acts trivially on $Z(\Nt)$. This can be proven in the same way as $\Mt_i$; the proof is indeed simpler because we always have $(e, e)=(f, f)=1$, and hence all the Hilbert symbols in the calculation for the $\Mt_i$ case will automatically become $1$. The detail is left to the reader.
\end{proof}

We then have
\begin{Prop}\label{P:W-invariant_rep_of_Nt}
Every irreducible genuine representation of $\Nt_i$ is $\Wcal$-invariant and 
\[
\left|\Irr_{gen}Z(\Nt_i)\right|=\left|\Yt\slash 2Y\right|^2.
\]
Further, for each $\pi\in \Irr_{gen}(\Nt_i)$, we have
\[
\dim\pi=\left|\Nt_i\slash Z(\Nt_i)\right|=\left|Y\slash\Yt\right|,
\]
where the cardinality $\left|Y\slash\Yt\right|$ is listed in Table \ref{T:table}.
\end{Prop}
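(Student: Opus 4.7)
The plan is to mirror the proof of Proposition \ref{P:W-invariant_rep_of_Mt}, invoking Lemma \ref{L:rep_of_2-step_unipotent_group} together with the bijection \eqref{E:bijection_Weyl_invariant_rep}, and then to use the immediately preceding lemma that identifies $Z(\Nt_i)$ and shows $\Wcal$ acts trivially on it. The only new piece is a careful bookkeeping of sizes for $\Nt_i$ and its center.

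First I would apply Lemma \ref{L:rep_of_2-step_unipotent_group} to the 2-step nilpotent group $\Nt_i$, which is a central $\mu_2$-extension of the abelian group $N_i := \Nt_i/\mu_2$ (abelian because $(e_i,e_i)=(f_i,f_i)=1$ forces all generators $\hta(e_i)$ to commute with each other, similarly for the $\hta(f_i)$'s, and the noncommutativity between the two families only produces central elements). This yields a bijection $\chi\mapsto \pi(\chi)$ from $\Irr_{gen}Z(\Nt_i)$ to $\Irr_{gen}\Nt_i$ with $\dim\pi(\chi) = |\Nt_i/Z(\Nt_i)|^{1/2}$. The preceding lemma gives the identification $Z(\Nt_i)\cong \mu_2\times\Yt/2Y\times\Yt/2Y$, so the number of genuine characters of $Z(\Nt_i)$ equals $|\Yt/2Y|^2$, giving the stated count.

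Next, the $\Wcal$-invariance is immediate: the preceding lemma asserts that $\Wcal$ acts trivially on $Z(\Nt_i)$, so every genuine character $\chi$ of $Z(\Nt_i)$ is $\Wcal$-invariant. Applying the bijection \eqref{E:bijection_Weyl_invariant_rep} transports this invariance to the associated irreducible $\pi(\chi)$, proving the first assertion of the proposition.

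Finally, for the dimension formula, I would compute $|\Nt_i/Z(\Nt_i)|$ explicitly. Every element of $\Nt_i$ can be written uniquely (modulo $\mu_2$) as $\yt_1(e_i)\,\yt_2(f_i)$ with $(y_1,y_2)\in Y/2Y\times Y/2Y$, since $\htt_\alpha(e_i^2)=\htt_\alpha(f_i^2)=1$ in $\Tt_0/T_R$; hence $\Nt_i/\mu_2\cong (Y/2Y)\times(Y/2Y)$. Combining with $Z(\Nt_i)/\mu_2 \cong (\Yt/2Y)\times(\Yt/2Y)$ from the preceding lemma gives
\[
\Nt_i/Z(\Nt_i)\cong (Y/\Yt)\times(Y/\Yt),
\]
so $|\Nt_i/Z(\Nt_i)|=|Y/\Yt|^2$ and therefore $\dim\pi(\chi) = |Y/\Yt|$ by Lemma \ref{L:rep_of_2-step_unipotent_group}. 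The main (mild) obstacle is the explicit size computation of $\Nt_i/Z(\Nt_i)$; the rest is formal once the central structure and triviality of the $\Wcal$-action on $Z(\Nt_i)$ are in hand from the previous lemma.
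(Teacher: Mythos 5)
Your proposal is correct and takes essentially the same route as the paper, whose proof simply cites the preceding lemma identifying $Z(\Nt_i)$ (and the triviality of the $\Wcal$-action on it), Lemma \ref{L:rep_of_2-step_unipotent_group}, and the bijection \eqref{E:bijection_Weyl_invariant_rep}. Your explicit count, using $\Nt_i/\mu_2\cong (Y/2Y)\times(Y/2Y)$ and $Z(\Nt_i)/\mu_2\cong(\Yt/2Y)\times(\Yt/2Y)$, gives $|\Nt_i/Z(\Nt_i)|=|Y/\Yt|^2$ and hence $\dim\pi=|\Nt_i/Z(\Nt_i)|^{1/2}=|Y/\Yt|$, which agrees with the stated value (the middle term of the proposition's displayed formula is evidently missing the exponent $\tfrac12$).
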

\begin{proof}
This follows from the above lemma, Lemma \ref{L:rep_of_2-step_unipotent_group}, and \eqref{E:bijection_Weyl_invariant_rep}.
\end{proof}


\subsection{Construction of pseudo-spherical representations}\label{SSPSpherT}


By putting together genuine representations of $\Mt_i$'s and $\Nt_i$'s, one can construct a genuine representation of $\Mt\cdot\Nt$ as follows. Let $\delta_i$ be a genuine representation of $\Mt_i$ and $\tau_i$ one of $\Nt_i$. Then consider the tensor product representation
\[
\delta_1\otimes\cdots\delta_k\otimes\tau_1\otimes\cdots\otimes \tau_{\ell}
\]
of the direct product
\[
\Mt_1\times\cdots\times\Mt_k\times \Nt_1\times\cdots\times\Nt_{\ell}.
\]
Since all the $\delta_i$'s and $\tau_i$'s are genuine, this tensor product representation is trivial on the kernel of the surjection 
\[
\Mt_1\times\cdots\times\Mt_k\times \Nt_1\times\cdots\times\Nt_{\ell}\longrightarrow \Mt\cdot\Nt
\]
of \eqref{E:exact_sequence_Mt}, descending to a representation $\tau$ of $\Mt\cdot\Nt$. Conversely, every genuine irreducible representation of $\Mt\cdot\Nt$ arises in this way.

Assume $\delta_i$ and $\tau_i$ are all irreducible, so that they are all $\Wcal$-invariant. Then $\tau$ is also $\Wcal$-invariant by \eqref{D:commutativity_of_Weyl_action}. By pulling back $\tau$ to a representation of $\Tt_{0}$ via the surjection
\[
\Tt_{0}\longrightarrow \Mt\cdot\Nt=\Tt_{0}\slash T_{R},
\]
we obtain an irreducible genuine representation of $\Tt_{0}$ trivial on $T_{R}$ (\ie a pseudo-spherical representation of $\Tt_{0}$). Since $T_{R}$ is invariant under $\Wcal$, the pseudo-spherical representation is $\Wcal$-invariant. We denote this representation of $\Tt_{0}$ by $\tau$. 

To summarize, we have proven
\begin{Thm}\label{T:PseudoT0}
There exists a pseudo-spherical representation of $\Tt_{0}$. Moreover, every pseudo-spherical representation $\tau$ is $\Wcal$-invariant, namely ${^{\wt}\tau}\cong\tau$ for all $\wt\in\Wcal$.
\end{Thm}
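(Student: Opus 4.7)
The plan is to exploit the factorization developed in Section \ref{SSPSpherT} and reduce everything to the factor-wise results of Propositions \ref{P:W-invariant_rep_of_Mt} and \ref{P:W-invariant_rep_of_Nt}. First, I would invoke the exact sequence \eqref{E:exact_sequence_Tt} to identify pseudo-spherical representations of $\Tt_0$ with irreducible genuine representations of $\Mt\cdot\Nt$. Then, using the central quotient \eqref{E:exact_sequence_Mt}, such representations correspond bijectively to irreducible genuine representations of the direct product $\Mt_1\times\cdots\times\Mt_k\times\Nt_1\times\cdots\times\Nt_\ell$ that are trivial on the diagonal kernel $(\mu_2\times\cdots\times\mu_2)^{\Delta}$.

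For existence, I pick any $\delta_i\in\Irr_{gen}(\Mt_i)$ and $\rho_j\in\Irr_{gen}(\Nt_j)$, which exist by Propositions \ref{P:W-invariant_rep_of_Mt} and \ref{P:W-invariant_rep_of_Nt}. Their outer tensor product $\delta_1\otimes\cdots\otimes\delta_k\otimes\rho_1\otimes\cdots\otimes\rho_\ell$ is irreducible, and because each factor is genuine the element $(\epsilon_1,\dots,\epsilon_{k+\ell})\in\mu_2^{k+\ell}$ acts by the scalar $\epsilon_1\cdots\epsilon_{k+\ell}$; this is precisely trivial on $(\mu_2\times\cdots\times\mu_2)^{\Delta}$. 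Descending to $\Mt\cdot\Nt$ and pulling back along $\Tt_0\twoheadrightarrow\Tt_0/T_R$ (which is legal since this representation is trivial on $T_R$ by construction) yields the desired pseudo-spherical $\tau$.

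For $\Wcal$-invariance, the key inputs are: (i) each $\delta_i$ and $\rho_j$ is $\Wcal$-invariant by Propositions \ref{P:W-invariant_rep_of_Mt} and \ref{P:W-invariant_rep_of_Nt}; (ii) $T_R$ is $\Wcal$-stable and each $\Mt_i,\Nt_j$ is $\Wcal$-stable by Lemma \ref{L:action_of_W_on_T(O)}; (iii) the commutative diagram \eqref{D:commutativity_of_Weyl_action} ensures the factorwise $\Wcal$-action matches the induced $\Wcal$-action on $\Mt\cdot\Nt$. Combining these, for each $\wt\in\Wcal$ I have
\[
{}^{\wt}\tau \;\cong\; {}^{\wt}\delta_1\otimes\cdots\otimes{}^{\wt}\delta_k\otimes{}^{\wt}\rho_1\otimes\cdots\otimes{}^{\wt}\rho_\ell,
\]
and choosing factorwise intertwiners produces an intertwiner $\tau\to{}^{\wt}\tau$. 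To prove that \emph{every} pseudo-spherical representation arises this way (and hence is $\Wcal$-invariant), I would run the construction in reverse: any irreducible genuine representation of $\Mt\cdot\Nt$ lifts to an irreducible genuine representation of the direct product trivial on $(\mu_2\times\cdots\times\mu_2)^{\Delta}$, and such a representation must be an outer tensor product of irreducible genuine representations of the individual factors.

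The main obstacle is purely bookkeeping: one must verify that ``$\Wcal$-invariance on each factor separately'' upgrades to $\Wcal$-invariance on the direct product and then descends through both quotient maps. Because Lemma \ref{L:action_of_W_on_T(O)}(b) shows that $\Wcal$ preserves each factor individually (rather than permuting them), this upgrade is immediate, and the diagram \eqref{D:commutativity_of_Weyl_action} handles the descent. I expect no deeper difficulty; the work has effectively been shifted onto the two preceding propositions on $\Mt_i$ and $\Nt_j$, which in turn rest on the Hilbert-symbol decomposition of Proposition \ref{P:orthogonal_decomposition_of_Hilbert}.
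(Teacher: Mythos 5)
Your proposal is correct and follows essentially the same route as the paper: existence via outer tensor products of genuine irreducibles of the $\Mt_i$'s and $\Nt_i$'s descending through \eqref{E:exact_sequence_Mt} and pulling back along $\Tt_0\twoheadrightarrow \Tt_0/T_R$, and $\Wcal$-invariance via Propositions \ref{P:W-invariant_rep_of_Mt}, \ref{P:W-invariant_rep_of_Nt} together with the diagram \eqref{D:commutativity_of_Weyl_action} and the $\Wcal$-stability of $T_R$. The converse direction you sketch (every genuine irreducible of $\Mt\cdot\Nt$ is such a tensor product) is exactly the paper's remark as well, so nothing is missing.
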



\subsection{Dimension and number of pseudo-spherical representations}


The dimension of each pseudo-spherical representation $\tau$ is computed as follows.
\begin{Prop}
Let $\tau$ be a pseudo-spherical representation. Then
\[
\dim\tau=\left|Y\slash \Yt\right|^{\frac{ef}{2}},
\]
where the cardinality $\left|Y\slash \Yt\right|$ is as in Table \ref{T:table}.
\end{Prop}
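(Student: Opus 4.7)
The plan is to compute $\dim \tau$ directly from the construction in \S \ref{SSPSpherT}. Recall that $\tau$ arises as (the pullback to $\Tt_0$ of the descent of) the external tensor product
\[
\delta_1 \otimes \cdots \otimes \delta_k \otimes \tau_1 \otimes \cdots \otimes \tau_\ell
\]
on $\Mt_1 \times \cdots \times \Mt_k \times \Nt_1 \times \cdots \times \Nt_\ell$, where each $\delta_i$ is an irreducible genuine representation of $\Mt_i$ and each $\tau_i$ is an irreducible genuine representation of $\Nt_i$. Since pullback along a surjection preserves dimension, we have
\[
\dim \tau = \prod_{i=1}^k \dim \delta_i \cdot \prod_{i=1}^\ell \dim \tau_i.
\]

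The dimensions of the factors are already computed. By Proposition \ref{P:W-invariant_rep_of_Mt}, $\dim \delta_i = |Y/\Yt|^{1/2}$, and by Proposition \ref{P:W-invariant_rep_of_Nt}, $\dim \tau_i = |Y/\Yt|$. Substituting, we obtain
\[
\dim \tau = |Y/\Yt|^{k/2 + \ell} = |Y/\Yt|^{(k + 2\ell)/2}.
\]

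It remains to identify $k + 2\ell$ with $ef$. By construction, $k = \dim_{\F_2} D$ and $2\ell = \dim_{\F_2} D^\perp$, where $D \oplus D^\perp = \Ocal^\times / R$ is the orthogonal decomposition provided by Proposition \ref{P:orthogonal_decomposition_of_Hilbert}. Therefore $k + 2\ell = \dim_{\F_2}(\Ocal^\times / R)$. By Proposition \ref{P:property_Hilber_symbol_not_in_Appendix}(3), $[\Ocal^\times : R] = 2^{ef}$, so $\dim_{\F_2}(\Ocal^\times / R) = ef$, giving $k + 2\ell = ef$ as required. No step is expected to be difficult here: the whole argument is a bookkeeping exercise, with the only nontrivial input being the computation $[\Ocal^\times : R] = 2^{ef}$ (which in turn rests on the Hilbert symbol results proved in the appendix).
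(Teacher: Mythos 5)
Your proposal is correct and follows essentially the same route as the paper: compute $\dim\tau$ as the product of the dimensions of the factors $\delta_i$ and $\tau_i$ (given by Propositions \ref{P:W-invariant_rep_of_Mt} and \ref{P:W-invariant_rep_of_Nt}), and then identify the exponent via $k+2\ell=\dim_{\F_2}(\Ocal^\times/R)=ef$ using $[\Ocal^\times:R]=2^{ef}$. The paper's proof is the same bookkeeping argument, merely writing $m$ and $n$ for the numbers of $\Mt_i$'s and $\Nt_i$'s.
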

\begin{proof}
From Propositions \ref{P:W-invariant_rep_of_Mt} and \ref{P:W-invariant_rep_of_Nt}, we already know
\[
\dim\delta_i=\left|Y\slash \Yt\right|^{\frac{1}{2}}\qand \dim\tau_i=\left|Y\slash \Yt\right|
\]
for each $\delta_i\in\Irr_{gen}(\Mt_i)$ and $\tau_i\in\Irr_{gen}(\Nt_i)$. Hence to compute $\dim\tau$, we have to compute the number of $\Mt_i$'s and $\Nt_i$'s.

We know from Proposition \ref{P:property_Hilber_symbol_not_in_Appendix} that
\[
[\Ocal^{\times}: R]=2^{ef}.
\]
So, if we let $m$ be the number of $\Mt_i$'s and $n$ the number of $\Nt_i$'s, then $ef=m+2n$. Hence 
\[
\dim\tau=(\dim\delta_i)^{m}\cdot(\dim\tau_i)^n=\left|Y\slash \Yt\right|^{\frac{m+2n}{2}}=\left|Y\slash \Yt\right|^{\frac{ef}{2}}.
\]
\end{proof}

Similarly, we have
\begin{Prop}
The number of the pseudo-spherical representations of $\Tt_{0}$ is $\left|\Yt\slash 2Y\right|^{ef}$.
\end{Prop}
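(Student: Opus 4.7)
The plan is to count genuine irreducible representations of the quotient $\Tt_0/T_R = \Mt\cdot\Nt$, which by definition of pseudo-spherical are exactly the pseudo-spherical representations of $\Tt_0$ (up to equivalence). So the task reduces to enumerating $\Irr_{gen}(\Mt\cdot\Nt)$.

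First I would invoke the exact sequence \eqref{E:exact_sequence_Mt}, which presents $\Mt\cdot\Nt$ as the quotient of the direct product $\Mt_1\times\cdots\times\Mt_k\times\Nt_1\times\cdots\times\Nt_\ell$ by the anti-diagonal $(\mu_2\times\cdots\times\mu_2)^\Delta$ of the central $\mu_2$'s. A genuine irreducible representation of the direct product has the form $\delta_1\boxtimes\cdots\boxtimes\delta_k\boxtimes\tau_1\boxtimes\cdots\boxtimes\tau_\ell$ with each $\delta_i\in\Irr_{gen}(\Mt_i)$ and each $\tau_j\in\Irr_{gen}(\Nt_j)$; because each factor is genuine, the central element $(\epsilon_1,\dots,\epsilon_{k+\ell})$ with $\prod\epsilon_i=1$ acts trivially, so every such tensor product descends to $\Mt\cdot\Nt$, and conversely every genuine irreducible of $\Mt\cdot\Nt$ arises this way (and distinctly, since the central characters on each factor can be read off independently, as described in \S\ref{SSPSpherT}).

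Therefore
\[
\bigl|\Irr_{gen}(\Tt_0/T_R)\bigr|=\prod_{i=1}^{k}\bigl|\Irr_{gen}(\Mt_i)\bigr|\;\cdot\;\prod_{j=1}^{\ell}\bigl|\Irr_{gen}(\Nt_j)\bigr|.
\]
By Proposition \ref{P:W-invariant_rep_of_Mt}, $|\Irr_{gen}(\Mt_i)|=|\Yt/2Y|$, and by Proposition \ref{P:W-invariant_rep_of_Nt}, $|\Irr_{gen}(\Nt_j)|=|\Yt/2Y|^2$. Hence the count equals $|\Yt/2Y|^{k+2\ell}$.

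Finally I would observe that $k+2\ell=ef$: from Proposition \ref{P:property_Hilber_symbol_not_in_Appendix}(3) we have $[\Ocal^\times:R]=2^{ef}$, while the orthogonal decomposition $\Ocal^\times/R=D\oplus D^\perp$ of Proposition \ref{P:orthogonal_decomposition_of_Hilbert} together with the choices of an orthogonal basis $\{u_1,\dots,u_k\}$ of $D$ and a symplectic basis $\{e_1,f_1,\dots,e_\ell,f_\ell\}$ of $D^\perp$ gives $\dim_{\F_2}(\Ocal^\times/R)=k+2\ell$, so $k+2\ell=ef$. This is exactly the identity already used in the proof of the preceding dimension formula. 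Combining, we conclude that the number of pseudo-spherical representations is $|\Yt/2Y|^{ef}$. There is no real obstacle here — this is essentially bookkeeping assembled from the structural results already in place.
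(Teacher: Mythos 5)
Your proposal is correct and follows essentially the same route as the paper: the paper's proof simply cites $\left|\Irr_{gen}(\Mt_i)\right|=\left|\Yt\slash 2Y\right|$, $\left|\Irr_{gen}(\Nt_i)\right|=\left|\Yt\slash 2Y\right|^2$ and repeats the counting argument of the preceding dimension proposition, i.e.\ the factorization of $\Tt_0\slash T_R$ into the $\Mt_i$'s and $\Nt_i$'s together with $k+2\ell=ef$ from $[\Ocal^\times:R]=2^{ef}$. Your extra remark that distinct tuples give inequivalent representations (read off by restricting to each factor) is the same bookkeeping the paper leaves implicit, so there is nothing to add.
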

\begin{proof}
We know that 
\[
\left|\Irr_{gen}(\Mt_i)\right|=\left|\Yt\slash 2Y\right|\qand \left|\Irr_{gen}(\Nt_i)\right|=\left|\Yt\slash 2Y\right|^2.
\]
One can then prove the proposition in the same way as the previous proposition.
\end{proof}

The dimension and the number of the pseudo-spherical representations of $\Tt_{0}$ is summarized in Table \ref{T:table2}, where $A_{1}$ and $D_2$ are excluded.

\begin{table}[h]
\caption{dimension and number of pseudo-spherical representations}\label{T:table2}
\centering
\renewcommand{\arraystretch}{1.5}
\begin{tabular}{ccccc}
\hline
Type & $\left| \Yt\slash 2Y\right|$ & $\left|Y\slash \Yt\right|$ &dimension &number \\\hline\hline
$A_{2r+1}$ & 2 & $2^{2r}$& $2^{efr}$ & $2^{ef}$\\
$A_{2r}$ &  1 & $2^{2r}$ & $2^{efr}$ & 1\\\hline
$D_{2r+1}$  & 2 & $2^{2r}$ & $2^{efr}$ & $2^{ef}$\\
$D_{2r}$ & 4  & $2^{2r-2}$ & $2^{ef(r-1)}$ & $4^{ef}$\\\hline
 $E_6$ & 1 & $2^6$ & $2^{3ef}$ & 1\\\hline
 $E_7$ & 2 & $2^6$ & $2^{3ef}$ & $2^{ef}$ \\\hline
 $E_8$ & 1 & $2^8$ & $2^{4ef}$ & 1\\\hline
\end{tabular}
\end{table}

\subsection{Branching}

For later purposes, we need the following.

\begin{Prop}\label{PseudoSphericalBranch}
	Suppose that $\Phi$ is a root system of type $A_r$ or of type $D_r$. Let $\Phi^{\prime}\subseteq\Phi$ be a subroot system of rank $r-1$ and of the same type as $\Phi$. Let $\Tt_{0}^{\prime}\subseteq \Tt_{0}$ be the normal subgroup generated by the elements $\htt_{\alpha}(u)$, where $u\in \Ocal$ and $\alpha\in\Phi^{\prime}$. 
    \begin{enumerate}[(1)]
    \item For a pseudo-spherical representation $\tau$ of $\Tt_{0}$, the restriction $\tau|_{\Tt_0^{\prime}}$ to $\Tt_{0}^{\prime}$ is a multiplicity-free direct sum of pseudo-spherical representations of $\Tt_{0}^{\prime}$.
    \item If $\tau'$ is a pseudo-spherical representation of $\Tt_0'$, then there exists a pseudo-spherical representation $\tau$ of $\Tt_0$ such that $\tau'\subseteq\tau|_{\Tt_0}$.
    \end{enumerate}
\end{Prop}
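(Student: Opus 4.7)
The plan is to leverage the explicit tensor product construction of pseudo-spherical representations from \S\ref{SSPSpherT}. First I would verify that $T_R \cap \Tt_0' = T_R'$ and identify $\Tt_0'/T_R'$ with the subgroup $\Mt'\cdot\Nt'$ of $\Tt_0/T_R$, where $\Mt_i' := \la\htt_\alpha(u_i) : \alpha \in \Phi'\ra$ and $\Nt_j' := \la\htt_\alpha(e_j),\htt_\alpha(f_j) : \alpha \in \Phi'\ra$ are the analogs of $\Mt_i$ and $\Nt_j$ built from $\Phi'$. Since the covering $\Mt_1\times\cdots\times\Nt_\ell\twoheadrightarrow\Mt\cdot\Nt$ of \eqref{E:exact_sequence_Mt} restricts to the analogous covering of $\Mt'\cdot\Nt'$ by $\Mt_1'\times\cdots\times\Nt_\ell'$, the tensor factorization $\tau = \delta_1\otimes\cdots\otimes\tau_\ell$ descends to the factorization $\tau|_{\Tt_0'} = (\delta_1|_{\Mt_1'})\otimes\cdots\otimes(\tau_\ell|_{\Nt_\ell'})$. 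Both parts of the proposition thus reduce to the corresponding statements for each individual restriction $\delta_i|_{\Mt_i'}$ and $\tau_j|_{\Nt_j'}$, since tensor products of pairwise inequivalent irreducibles of direct products remain pairwise inequivalent.

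For each factor, I would then observe that $\Mt_i'$ is normal of index $2$ in $\Mt_i$ and $\Nt_j'$ is normal of index $4$ in $\Nt_j$. Normality follows because each group is $2$-step nilpotent with commutator subgroup $\mu_2$ contained in $\Mt_i'$ and $\Nt_j'$, so they are preimages of subgroups of the abelian quotient by $\mu_2$. The order computations use $|\Mt_i| = 2^{\rank(\Phi)+1}$ and $|\Nt_j| = 2^{2\rank(\Phi)+1}$, which follow directly from the presentations. For the $\Mt$-factors, Clifford theory for an index-$2$ normal subgroup immediately gives that $\delta_i|_{\Mt_i'}$ is either irreducible or a direct sum of two pairwise inequivalent irreducibles, hence multiplicity-free. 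Part (2) for the $\Mt$-factor also follows: given an irreducible $\delta_i'$ of $\Mt_i'$, any irreducible constituent $\delta_i$ of $\Ind_{\Mt_i'}^{\Mt_i}\delta_i'$ contains $\delta_i'$ in its restriction by Frobenius reciprocity.

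The main obstacle is the $\Nt$-factor, where the index is $4$ and Clifford theory does not automatically force multiplicity-freeness. I plan to handle this via the character formula implicit in Lemma \ref{L:rep_of_2-step_unipotent_group}: the character $\chi_{\tau_j}$ is supported on $Z(\Nt_j)$ and equals $(\dim\tau_j)\chi$ there, where $\chi$ is the central character. Consequently,
\[
\la\chi_{\tau_j}|_{\Nt_j'},\,\chi_{\tau_j}|_{\Nt_j'}\ra_{\Nt_j'}=\frac{(\dim\tau_j)^2\,|Z(\Nt_j)\cap\Nt_j'|}{|\Nt_j'|}.
\]
Combining the explicit description $Z(\Nt_j)\cong\mu_2\times(\Yt/2Y)^2$ with Table \ref{T:table} and the dimension ratio $\dim\tau_j/\dim\tau_j'\in\{1,4\}$, a short case-by-case verification shows this inner product equals the expected number of components. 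Since every irreducible genuine representation of $\Nt_j'$ has common dimension $|Y'/\Yt'|$ by Proposition \ref{P:W-invariant_rep_of_Nt}, the total-dimension identity $\dim\tau_j = (\sum_i m_i)\dim\tau_j'$ combined with the inner product forces each $m_i = 1$. Concretely, the restriction is irreducible in the transitions $A_{2r+1}\to A_{2r}$ and $D_{2r}\to D_{2r-1}$, and splits into four pairwise inequivalent pieces in $A_{2r}\to A_{2r-1}$ and $D_{2r+1}\to D_{2r}$. Part (2) for the $\Nt$-factor again follows by Frobenius reciprocity, and assembling the tensor factors completes the proof.
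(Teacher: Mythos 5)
Your argument is correct, but for part (1) it takes a genuinely different route from the paper's. You reduce along the tensor factorization of \S\ref{SSPSpherT} and settle each factor separately: Clifford theory for the index-two inclusions $\Mt_i'\subseteq\Mt_i$, and, for the index-four inclusions $\Nt_j'\subseteq\Nt_j$, the fact (coming from Lemma \ref{L:rep_of_2-step_unipotent_group}) that the character of $\tau_j$ is supported on $Z(\Nt_j)$, which turns multiplicity-freeness into the identity $\sum_i m_i^2=\sum_i m_i$ once $|Z(\Nt_j)\cap\Nt_j'|$ is known. That intersection is the one step you only gesture at; it does work out (using $\Tf_0\cong(\Ocal^{\times}\slash R)^{\Delta}$ and Table \ref{T:table} one finds the value $2$ in the type-$A$ transitions and $8$ in the type-$D$ transitions, giving inner product $1$ or $4$ as needed), and it is exactly where the dropped simple root enters your computation. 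The paper argues globally instead: constituents of $\tau|_{\Tt_0'}$ are pseudo-spherical by definition; in the transitions $A_{2r+1}\to A_{2r}$ and $D_{2r}\to D_{2r-1}$ irreducibility is read off from equal dimensions in Table \ref{T:table2}; and in the remaining transitions conjugation by $\htt_{\alpha}(u)$, for $\alpha$ the dropped simple root, changes a constituent unless $u\in R$, producing at least $[\Ocal^{\times}:R]=2^{ef}$ pairwise distinct constituents, after which the dimension count forces multiplicity one. Your factorwise route is more self-contained (no twist by $\htt_{\alpha}(u)$, no appeal to Table \ref{T:table2}) and pinpoints which tensor factors split and into how many pieces; the paper's is shorter and avoids computing $Z(\Nt_j)\cap\Nt_j'$. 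For part (2) the two arguments are essentially the same Frobenius-reciprocity observation: the paper does it in one stroke by inducing $\tau'$ from $T_R\Tt_0'$ with $T_R$ acting trivially (centrality of $T_R$ makes every constituent pseudo-spherical), while you do it factor by factor and then tensor. Note finally that both you and the paper implicitly take $\Delta\cap\Phi'$ to be a base of $\Phi'$; this is harmless by Weyl conjugacy together with the $\Wcal$-invariance of pseudo-spherical representations.
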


\begin{proof}
	The restriction $\tau|_{\Tt_0^{\prime}}$ is a direct sum of pseudo-spherical representations of $\Tt_{0}^{\prime}$ {\it by definition} because $T_R'\subseteq T_R$. Now we will break the proof of (1) up into cases.
	
	First suppose that $\Phi$ is of type $A_{2r+1}$ or $D_{2r}$. From the dimension column of Table \ref{T:table2}, we see that this restriction to $\Tt_{0}^{\prime}$ must remain irreducible. Thus the result follows in this case.
	
	Second, suppose that $\Phi$ is of type $A_{2r}$ or $D_{2r+1}$. Let $\Delta\subseteq\Phi$ be a set of simple roots. Then $\Delta^{\prime}:=\Delta\cap\Phi^{\prime}$ is a set of simple roots in $\Phi^{\prime}$. Let $\alpha\in \Delta\smallsetminus\Delta^{\prime}$. Let $\Tt_{\alpha,0}\subseteq \Tt_{0}$ be the subgroup generated by the elements of the form $\htt_{\alpha}(u)$, where $u\in \Ocal^{\times}$. Note that $\Tt_{\alpha,0}$ is a normal subgroup of $\Tt_{0}$. By definition the restriction of $\tau$ to $\Tt_{\alpha,0}$ factors through the finite abelian group $\Tt_{\alpha,0}/(\Tt_{\alpha,0}\cap T_R)$, which has size $2[\Ocal^{\times}:R]=2^{ef+1}$.
	
	Let $\tau^{\prime}$ be a pseudo-spherical representation of $\Tt_{0}^{\prime}$. By a direct computation with the central character it follows that $\tau^{\prime}\cong \,^{\htt_{\alpha}(u)}\tau^{\prime}$ if and only if $u\in R$. From this it follows that the restriction $\tau|_{\Tt_{0}^{\prime}}$ must contain at least $[\Ocal:R]=2^{ef}$ distinct pseudo-spherical $\Tt_{0}^{\prime}$-modules. By comparing the dimensions in Table \ref{T:table2} it follows that the restriction of $\tau$ to $\Tt_{0}^{\prime}$ is a multiplicity-free direct sum of pseudo-spherical representations of $\Tt_{0}^{\prime}$.
    Item (1) is proven. 
    
    To prove (2), let $\tau'$ be a pseudo-spherical representation of $\Tt'_0$. Since $T_R'=\Tt_0'\cap T_R$ acts trivially and $T_R$ commutes with $\Tt_0'$ pointwise, we can extend $\tau'$ to a representation of $T_R\Tt_0'$ by letting $T_R$ act trivially. Consider $\Ind_{T_R\Tt_0'}^{\Tt_0}\tau'$. Since $T_R$ is in the center of $\Tt_0$, one can see that $T_R$ is in the kernel of this induced representation. Hence any irreducible constituent of this induced representation is pseudo-spherical and contains $\tau'$.
\end{proof}


\subsection{Square roots of $\htt_{\alpha}(-1)$}\label{SSSqRoot}


In this subsection we record a few identities in the group algebra $\mathbb{C}[\Tt_{0}]$ and their relation to each pseudo-spherical representation $\tau$ of $\Tt_{0}$. In particular, we will compute a square root of $\tau(\htt_{\alpha}(-1))$. Later these will be important to our investigation of representations of $\It$. Also in this subsection, we set
\begin{equation}\label{E:U_k_def_subset_of_Ocal}
U_k:=1+\varpi^k\Ocal
\end{equation}
for $k\in\Z_{\geq 0}$ with the convention that $U_0=\Ocal$.

Let $\alpha\in\Phi$, $j\in \mathbb{Z}_{\geq 0}$ and $\chi\in \Hom(U_{j}/(U_{j}\cap R),\mathbb{C}^{\times})$. Define
\begin{equation*}
	X_{\alpha,\chi,j}:=\frac{1}{\left|U_j/U_{2e}\right|}\sum_{v\in U_{j}/U_{2e}}\chi(v)\htt_{\alpha}(v).
\end{equation*}
If $\chi$ is such that $\chi(-)=(-, u)$ for some $u\in F^{\times}$, then we write 
\begin{equation}\label{E:X_alpha_u_j_def}
X_{\alpha,u,j}:=X_{\alpha,\chi,j}.
\end{equation}

\begin{Lem}\label{Xpmalpha}
	\begin{equation*}
		X_{-\alpha,u,j}=X_{\alpha,-u,j}
	\end{equation*}
\end{Lem}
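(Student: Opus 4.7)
The plan is to establish a direct relation between $\htt_{-\alpha}(v)$ and $\hta(v^{-1})$, and then to perform a change of variable in the finite sum defining $X_{-\alpha,u,j}$.

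First I would prove the identity
\[
\htt_{-\alpha}(v) = (v,-1)\,\hta(v^{-1}) \qquad\text{for all } v\in F^\times.
\]
Applying \eqref{MR5} with $\beta=-\alpha$ and $t=1$, and using $w_\alpha(-\alpha)=\alpha$, one obtains
\[
\wta(1)\,\htt_{-\alpha}(v)\,\wta(-1) = (v,c)\,\hta(v),
\]
where $c=c(\alpha,-\alpha)\in\{\pm1\}$. On the other hand, \eqref{MR5'} applied to $v^{-1}$ yields $\wta(1)\,\hta(v^{-1})\,\wta(-1) = \hta(v)$. Comparing these gives $\htt_{-\alpha}(v) = (v,c)\,\hta(v^{-1})$, and the sign $c=-1$ is pinned down by applying \eqref{MR6} with $\beta=-\alpha$ and $t=u=1$, then using \eqref{E:square_of_w}: the resulting element $\wt_{-\alpha}(1)\,\wta(1)$ must lie in $\mu_2$ since the corresponding product in the linear Weyl group is trivial, and the assumption $c=1$ leads to a contradiction with the relation $\wta(1)^2=(-1,-1)\hta(-1)$.

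With this identity in hand, I would substitute it into the definition and apply bilinearity of the Hilbert symbol to obtain
\[
X_{-\alpha,u,j} = \frac{1}{|U_j/U_{2e}|}\sum_{v\in U_j/U_{2e}}(v,u)(v,-1)\,\hta(v^{-1}) = \frac{1}{|U_j/U_{2e}|}\sum_{v\in U_j/U_{2e}}(v,-u)\,\hta(v^{-1}).
\]
Finally I would perform the change of variable $v\mapsto v^{-1}$, which is a bijection on the finite abelian group $U_j/U_{2e}$. Since Hilbert symbol values lie in $\mu_2$, we have $(v^{-1},-u)=(v,-u)^{-1}=(v,-u)$, and the resulting sum equals
\[
\frac{1}{|U_j/U_{2e}|}\sum_{v\in U_j/U_{2e}}(v,-u)\,\hta(v) = X_{\alpha,-u,j},
\]
which is the desired equality.

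The main obstacle will be pinning down the sign $c(\alpha,-\alpha)=-1$ from the Steinberg relations alone; once that is in place, the proof is a short computation using only bilinearity of the Hilbert symbol and a bijective change of variable on $U_j/U_{2e}$.
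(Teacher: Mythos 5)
Your proof is correct and follows essentially the same route as the paper, whose entire argument is the identity $\htt_{-\alpha}(v)=(v,-1)\,\htt_{\alpha}(v^{-1})$ followed by the (implicit) change of variable $v\mapsto v^{-1}$ in the sum. The only difference is that you derive this identity from \eqref{MR5}, \eqref{MR5'}, \eqref{MR6} and \eqref{E:square_of_w} (correctly, including pinning down $c(\alpha,-\alpha)=-1$), whereas the paper simply quotes it.
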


\begin{proof}
	This follows because $\htt_{-\alpha}(v)=(v,-1)\htt_{\alpha}(v^{-1})$.
\end{proof}

The next lemma shows that the elements $X_{\alpha,u,j}$ define nontrivial endomorphisms of $\tau$.

\begin{Lem} Let $\tau$ be a pseudo-spherical representation of $\Tt_{0}$. Then
	\begin{equation*}
		\Tr(\tau (X_{\alpha,u,j})) = |(U_{j}\cap R)/U_{j}|\dim\tau\neq 0.
	\end{equation*}
\end{Lem}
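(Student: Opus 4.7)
The plan is to expand
\[
\Tr(\tau(X_{\alpha,u,j})) = \frac{1}{|U_j/U_{2e}|}\sum_{v\in U_j/U_{2e}}(v,u)\,\Tr(\tau(\htt_{\alpha}(v)))
\]
and evaluate the summand according to whether $v$ lies in $U_j\cap R$ or not. The crucial ingredients are that $\tau$ is trivial on $T_R$ and that a conjugation in $\Tt_0$ kills the traces for $v\notin R$.

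For $v\in U_j\cap R$, I first want to argue that $\htt_{\alpha}(v)\in T_R$ for every $\alpha\in\Phi$, not only the simple ones. Writing $\alpha=w(\alpha_i)$ with $\alpha_i$ simple (possible since the Weyl group acts transitively on $\Phi$ in the simply-laced case) and iterating \eqref{MR5}, one expresses $\htt_{\alpha}(v)$ as a $\Wcal$-conjugate of $\htt_{\alpha_i}(v)$ up to a Hilbert symbol of the form $(v,-)$ whose second argument lies in $\Ocal^\times$; this symbol equals $1$ because $v\in R$. Combined with Lemma \ref{L:action_of_W_on_T(O)}, which gives $\Wcal T_R \Wcal^{-1}=T_R$, this yields $\htt_{\alpha}(v)\in T_R$, hence $\tau(\htt_{\alpha}(v))=\mathrm{id}$ and $\Tr(\tau(\htt_{\alpha}(v)))=\dim\tau$. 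Moreover $(v,u)=\chi(v)=1$ since $\chi$ is a character of $U_j/(U_j\cap R)$.

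For $v\in U_j\smallsetminus R$, the definition of $R$ lets me pick $w\in\Ocal^\times$ with $(v,w)=-1$, and since our Dynkin diagram is simply-laced, connected, and has at least two nodes, I can pick $\beta\in\Phi$ with $\la\beta,\alpha\ra=\pm 1$. Then \eqref{MR4} gives
\[
\htt_{\beta}(w)\,\htt_{\alpha}(v)\,\htt_{\beta}(w)^{-1} = (w,v^{\la\beta,\alpha\ra})\,\htt_{\alpha}(v) = (w,v)^{\pm 1}\,\htt_{\alpha}(v) = -\htt_{\alpha}(v),
\]
and applying $\tau$ and taking trace forces $\Tr(\tau(\htt_{\alpha}(v)))=-\Tr(\tau(\htt_{\alpha}(v)))$, hence it vanishes.

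Combining, only the terms with $v\in (U_j\cap R)/U_{2e}$ contribute, each giving $\dim\tau$, so
\[
\Tr(\tau(X_{\alpha,u,j})) = \frac{|(U_j\cap R)/U_{2e}|}{|U_j/U_{2e}|}\dim\tau = \frac{|U_j\cap R|}{|U_j|}\,\dim\tau,
\]
matching the claim (with $|(U_j\cap R)/U_j|$ read as the ratio $|U_j\cap R|/|U_j|$) and manifestly nonzero. The only delicate point is verifying $\htt_{\alpha}(v)\in T_R$ for non-simple $\alpha$ by tracking the Hilbert-symbol corrections from \eqref{MR5}; this is routine once one notes that the hypothesis $v\in R$ trivializes every symbol that appears.
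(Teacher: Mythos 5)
Your proof is correct and follows essentially the same route as the paper's: the terms with $v\notin R$ vanish by the conjugation trick ($\htt_{\beta}(w)\htt_{\alpha}(v)\htt_{\beta}(w)^{-1}=-\htt_{\alpha}(v)$ forces the trace to be zero, using genuineness), and the terms with $v\in U_j\cap R$ each contribute $\dim\tau$, giving the stated count. The only difference is one of explicitness: you spell out, via \eqref{MR5} and Lemma \ref{L:action_of_W_on_T(O)}, why $\htt_{\alpha}(v)\in T_R$ for non-simple $\alpha$ when $v\in R$, a point the paper's one-line proof (phrased as ``trace nonzero iff central'') leaves implicit.
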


\begin{proof}
	For $t\in \Tt_{0}$, we have $\Tr(\tau(t))\neq 0$ if and only if $t\in Z(\Tt_{0})$. (This follows because if $t\in\Tt_0\smallsetminus Z(\Tt_0)$, then there exists $s\in\Tt_0$ such that $sts^{-1}=-t$, so $\Tr(\tau(sts^{-1}))=-\Tr(\tau(t))$.) The result now follows because $\htt_{\alpha}(v)\in Z(\Tt_{0})$ if and only if $v\in R$. (Here we use that $\Phi$ is simply-laced and not of type $A_{1}$ or $D_2$.)
\end{proof}

\begin{Lem} If $j=0$ or $1$, then
	\begin{equation*}
		X_{\alpha,\chi,j}^{2}=\chi(-1)|U_{j}/U_{2e}|^{-1}\htt_{\alpha}(-1)\sum_{v\in U_{j}\cap R/U_{2e}}\htt_{\alpha}(v).
	\end{equation*}
\end{Lem}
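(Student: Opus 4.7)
The plan is to expand $X_{\alpha,\chi,j}^{2}$ as a double sum in the group algebra, apply the Steinberg relation \eqref{MR3}, and then perform a change of variables to extract a character sum over $U_j/U_{2e}$.

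First I would compute
\[
X_{\alpha,\chi,j}^{2}=\frac{1}{|U_j/U_{2e}|^2}\sum_{v,w\in U_j/U_{2e}}\chi(v)\chi(w)\,\htt_{\alpha}(v)\htt_{\alpha}(w),
\]
and use \eqref{MR3} to rewrite $\htt_{\alpha}(v)\htt_{\alpha}(w)=(v,w)\htt_{\alpha}(vw)$. Setting $s=vw$ and using $\chi(v)\chi(w)=\chi(s)$ together with the standard Hilbert-symbol manipulation $(v,v^{-1}s)=(v,v^{-1})(v,s)=(v,-1)(v,s)=(v,-s)$ (where $(v,v^{-1})=(v,v)=(v,-1)$ follows from $(v,-v)=1$), this becomes
\[
X_{\alpha,\chi,j}^{2}=\frac{1}{|U_j/U_{2e}|^2}\sum_{s\in U_j/U_{2e}}\chi(s)\htt_{\alpha}(s)\sum_{v\in U_j/U_{2e}}(v,-s).
\]

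The key step, and the place where the hypothesis $j=0$ or $1$ enters, is the evaluation of the inner character sum. The map $v\mapsto(v,-s)$ is a $\{\pm1\}$-valued character of $U_j$ which factors through $U_j/U_{2e}$ because $U_{2e}\subseteq R$ by Proposition \ref{P:property_Hilber_symbol_not_in_Appendix}. Thus $\sum_{v\in U_j/U_{2e}}(v,-s)$ vanishes unless this character is trivial, in which case it equals $|U_j/U_{2e}|$. I claim triviality on $U_j$ is equivalent to $-s\in R$ for $j=0,1$; indeed $j=0$ is the definition of $R$, and for $j=1$ this follows from the factorization $\Ocal^\times=\Ocal^{\times 2}U_1$ used in the proof of Lemma \ref{RedtoU1} (any $u\in \Ocal^\times$ has the form $a^2 v$ with $v\in U_1$, so $(u,-s)=(v,-s)$). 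This is the crux; everything else is formal manipulation.

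Having disposed of the character sum, $X_{\alpha,\chi,j}^{2}$ reduces to $|U_j/U_{2e}|^{-1}\sum\chi(s)\htt_\alpha(s)$ over those $s\in U_j$ with $-s\in R$. Since $-1\in U_j$ for $j\leq 1$ (as $F$ is $2$-adic, so $2\in\varpi\Ocal$), the substitution $s=-v$ with $v\in U_j\cap R$ is bijective on representatives. Using that $\chi$ is trivial on $U_j\cap R$ gives $\chi(-v)=\chi(-1)$, and using $(-1,v)=1$ for $v\in R$ gives $\htt_\alpha(-v)=\htt_\alpha(-1)\htt_\alpha(v)$ via \eqref{MR3}. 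Pulling $\chi(-1)\htt_\alpha(-1)$ outside the sum yields the stated identity. The main (and only real) obstacle is the identification of the radical of the restricted Hilbert symbol on $U_j$ with $R$ for $j=1$, and this is precisely what the earlier structural results on the $2$-adic Hilbert symbol supply.
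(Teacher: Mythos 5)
Your proposal is correct and follows essentially the same route as the paper's proof: expand the square, apply \eqref{MR3}, change variables so the Hilbert symbols collapse into the character sum $\sum_{w}(-v,w)$, evaluate it via the radical $R$, and then substitute $s=-v$ using triviality of $\chi$ on $U_j\cap R$ and $(-1,v)=1$ for $v\in R$. In fact you make explicit the one step the paper leaves implicit, namely that for $j=1$ triviality of $(\,\cdot\,,-s)$ on $U_1$ forces $-s\in R$ because $\Ocal^\times=\Ocal^{\times 2}U_1$.
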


\begin{proof} This follows by a direct calculation:
	\begin{align*}
		\left|U_j/U_{2e}\right|^{2}X_{\alpha,\chi,j}^{2}=&\sum_{v,w\in U_{j}/U_{2e}}\chi(vw)\htt_{\alpha}(v)\htt_{\alpha}(w)\\
		=&\sum_{v,w}\chi(vw)(v,w)\htt_{\alpha}(vw)\\
		=&\sum_{v,w}\chi(v)(vw^{-1},w)\htt_{\alpha}(v)\\
		=&\sum_{v}\chi(v)\htt_{\alpha}(v)\sum_{w}(-v,w),
	\end{align*}
where we used $(vw^{-1}, w)=(vw,w)=(v, w)(w, w)=(v, w)(-1, w)=(-v, w)$.

	Because $j=0,1$ we have
	\begin{align*}
		\sum_{v}\chi(v)\htt_{\alpha}(v)\sum_{w}(-v,w)=&\chi(-1)|U_{j}/U_{2e}|\sum_{-v\in U_{j}\cap R/U_{2e}}\htt_{\alpha}(v)\\
		=&\chi(-1)|U_{j}/U_{2e}|\sum_{v\in U_{j}\cap R/U_{2e}}\htt_{\alpha}(-v)\\
		=&\chi(-1)|U_{j}/U_{2e}|\htt_{\alpha}(-1)\sum_{v\in U_{j}\cap R/U_{2e}}\htt_{\alpha}(v).
	\end{align*}
    The lemma follows.
\end{proof}

\begin{Cor}\label{XInvert}
	Let $\alpha\in\Phi$, $\chi\in \Hom(U_{j}/U_{j}\cap R,\mathbb{C}^{\times})$, and $j=0,1$. If $\tau$ is a pseudo-spherical representation, then
	\begin{equation*}
		\tau(X_{\alpha,\chi,j}^{2}) = \chi(-1)|U_{j}\cap R/U_{j}|\tau(\htt_{\alpha}(-1)).
	\end{equation*} 
In particular, $\tau(X_{\alpha,\chi,j})$ is invertible.
\end{Cor}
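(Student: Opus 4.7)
The plan is to derive the corollary as a direct consequence of the preceding lemma by applying $\tau$ and simplifying the resulting expression using that $\tau$ is trivial on (the image of) $T_{R}$.

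First I would apply $\tau$ to both sides of the identity
\[
X_{\alpha,\chi,j}^{2}=\chi(-1)|U_{j}/U_{2e}|^{-1}\htt_{\alpha}(-1)\sum_{v\in U_{j}\cap R/U_{2e}}\htt_{\alpha}(v)
\]
from the preceding lemma. The key observation is that for each $v\in U_{j}\cap R$, the element $\htt_{\alpha}(v)$ is the image of $h_{\alpha}(v)\in T_{R}$ under the splitting $\s_{T_{R}}$, by the definition of $\s_{T_R}$. Since $\tau$ is pseudo-spherical, $\tau$ is trivial on $\s_{T_{R}}(T_{R})$, and in particular $\tau(\htt_{\alpha}(v))=\mathrm{id}$ for every $v\in U_{j}\cap R$. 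Hence the summation collapses:
\[
\tau\!\left(\sum_{v\in U_{j}\cap R/U_{2e}}\htt_{\alpha}(v)\right)=\left|U_{j}\cap R/U_{2e}\right|\cdot\mathrm{id}.
\]

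Substituting this into the image of the previous identity and combining the indices via $\left|U_{j}\cap R/U_{2e}\right|\cdot\left|U_{j}/U_{2e}\right|^{-1}=\left|U_{j}\cap R/U_{j}\right|$ (interpreted as the reciprocal of $[U_{j}:U_{j}\cap R]$), we obtain
\[
\tau(X_{\alpha,\chi,j}^{2})=\chi(-1)\left|U_{j}\cap R/U_{j}\right|\,\tau(\htt_{\alpha}(-1)),
\]
which is the asserted formula.

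For the final claim of invertibility, note that $\tau(\htt_{\alpha}(-1))$ is invertible because $\htt_{\alpha}(-1)$ is a group element of $\Tt_{0}$, and the scalar $\chi(-1)\left|U_{j}\cap R/U_{j}\right|$ is manifestly nonzero. Therefore $\tau(X_{\alpha,\chi,j})^{2}$ is an invertible linear operator, and it follows that $\tau(X_{\alpha,\chi,j})$ itself is invertible. No serious obstacle is expected here; the only subtlety is keeping track of the identification of $\htt_{\alpha}(v)$ with the splitting $\s_{T_{R}}(h_{\alpha}(v))$ for $v\in R$, which is built into the setup.
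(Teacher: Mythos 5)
Your argument is correct and is exactly the intended deduction: the paper states Corollary \ref{XInvert} without proof as an immediate consequence of the preceding lemma, and the implicit reasoning is precisely yours — apply $\tau$, use that a pseudo-spherical $\tau$ kills $\htt_{\alpha}(v)$ for $v\in U_{j}\cap R\subseteq R$ so the sum collapses to $\left|U_{j}\cap R/U_{2e}\right|\cdot\mathrm{id}$, combine the indices to get the stated ratio $\left|U_{j}\cap R/U_{j}\right|$, and deduce invertibility of $\tau(X_{\alpha,\chi,j})$ from invertibility of its square. Your remark about identifying $\htt_{\alpha}(v)$ (for arbitrary $\alpha\in\Phi$, $v\in R$) with an element of $\s_{T_R}(T_R)$ is the same convention the paper uses throughout, so no gap there.
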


\begin{Rmk}
For $\alpha\in \Delta$, the element $\tau(X_{\alpha,1,1})$ is proportional to the extension of the pseudo-spherical $\Tt$-module $\tau$ to the element $w_{\alpha}(-1)\in\Wcal$.
\end{Rmk}


\subsection{Representation theory of $\Tt$}\label{TtReps}


Let $\tau$ be a pseudo-spherical representation of $\Tt_0$. In this subsection, we will show that the pair $(\Tt_0, \tau)$ is a type of $\Tt$ in the sense of Bushnell-Kutzko \cite[(4.1) Definition]{BK98}. For this purpose, we will construct a family of irreducible representations of $\Tt$ which contain the pseudo-spherical representation $\tau$.

First, we need to extend $\tau$ to an irreducible representation of $Z(\Tt)\Tt_0$ by adding a central character. Namely, we need a character on $Z(\Tt)$ which agrees with $\tau$ on $Z(\Tt)\cap\Tt_0$. This can be done by using the following.
\begin{Lem}
We have the short exact sequence
\[
1\longrightarrow\mu_2 \xrightarrow{\;\Delta\;}\mu_2\Yt(\varpi)\times \Tt_0\longrightarrow Z(\Tt)\Tt_0\longrightarrow 1,
\]
where the injection embeds $\mu_2$ diagonally and the surjection is given by multiplication.
\end{Lem}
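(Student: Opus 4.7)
The plan is to verify in sequence that the multiplication map is a well-defined group homomorphism, that it is surjective, and that its kernel is the diagonal copy of $\mu_2$.

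First I would show that $\mu_2\Yt(\varpi)$ is central in $\Tt$, which both ensures $\mu_2\Yt(\varpi)$ is a subgroup and makes the multiplication map a homomorphism (since central elements automatically commute with $\Tt_0$). For $y=\sum_i c_i\alphac_i\in\Yt$ and any generator $\htb(s)$ of $\Tt$, iterating \eqref{MR4} yields $\yt(\varpi)\htb(s)=(s,\varpi^{\la\beta,y\ra})\htb(s)\yt(\varpi)$, and the symbol is trivial because $\la\beta,y\ra\in 2\Z$ by the very definition of $\Yt$. Closure under multiplication then follows from \eqref{MR3} and \eqref{MR4}: the map $\Yt\to \Tt/\mu_2$ given by $y\mapsto \yt(\varpi)\bmod\mu_2$ agrees with the honest group homomorphism $y\mapsto y(\varpi)\in T$.

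Next, for surjectivity I need to show $Z(\Tt)\subseteq \mu_2\Yt(\varpi)\,\Tt_0$. The key structural input is that every central element has the form $z=\epsilon\,\yt(s)$ for some $\epsilon\in\mu_2$, $y\in\Yt$, and $s\in F^\times$, which follows from \eqref{MR4} by asking when a lift of $\pr(z)\in T\cong Y\otimes F^\times$ commutes with all $\htb(s)$. Writing $s=u\varpi^k$ with $u\in\Ocal^\times$, \eqref{MR3} gives
\[
\yt(s)=\prod_i\htt_{\alpha_i}(u^{c_i}\varpi^{kc_i})=\prod_i(u,\varpi)^{kc_i^2}\,\htt_{\alpha_i}(u^{c_i})\,\htt_{\alpha_i}(\varpi^{kc_i}).
\]
I would then rearrange so that the $u$-factors stand on the left and the $\varpi$-factors on the right. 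Each cross-commutation contributes a Hilbert symbol of the form $(u,\varpi)^{-kc_ic_j\la\alpha_j,\alpha_i\ra}$ from \eqref{MR4}; for adjacent $\alpha_i,\alpha_j$ this is trivial by the parity constraint $c_ic_j\in 2\Z$ of Corollary \ref{C:parity_of_adjacent_root3}, and for non-adjacent roots it is trivial automatically. Hence $\yt(s)=\epsilon'\cdot\yt(u)\cdot\yt(\varpi^k)$ for some $\epsilon'\in\mu_2$, with $\yt(u)\in\Tt_0$ and $\yt(\varpi^k)=\prod_i\htt_{\alpha_i}(\varpi^{kc_i})\in \Yt(\varpi)$ (this last element being the one associated with $ky\in\Yt$).

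Finally, for the kernel, suppose $\epsilon\,\yt(\varpi)\,t_0=1$ with $t_0\in\Tt_0$. Then $\yt(\varpi)=\epsilon t_0^{-1}\in\Tt_0$, so its image $y(\varpi)\in T$ lies in $T_0=Y\otimes\Ocal^\times$. Under the isomorphism $Y\otimes F^\times\xrightarrow{\sim}T$, $y\otimes t\mapsto y(t)$, this forces $y\otimes\varpi=0$ in $Y\otimes(F^\times/\Ocal^\times)\cong Y$, hence $y=0$, $\yt(\varpi)=1$, and $t_0=\epsilon$, which is exactly the diagonal copy of $\mu_2$. The main technical obstacle is the commutation rearrangement in the surjectivity step; the central point is that the Hilbert-symbol obstructions cancel precisely because of the parity condition $c_ic_j\in 2\Z$ for adjacent simple-root coefficients of $y\in\Yt$, which is why $\Yt$ rather than all of $Y$ is the correct lattice throughout.
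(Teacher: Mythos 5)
Your overall skeleton (centrality of $\mu_2\Yt(\varpi)$, surjectivity via splitting off unit and uniformizer parts, kernel via uniqueness of the expression of elements of $\Tt_0$) matches the paper, and your treatment of the kernel and of the homomorphism property is fine. But the surjectivity step has a genuine gap: your "key structural input" — that every $z\in Z(\Tt)$ can be written as $\epsilon\,\yt(s)$ for a \emph{single} $s\in F^\times$ and some $y\in\Yt$ — is false, and it does not follow from \eqref{MR4}. What centrality actually gives, via \eqref{MR4} and nondegeneracy of the Hilbert symbol, is that if $z=\epsilon\prod_i\htt_{\alpha_i}(t_i)$ (with the $t_i\in F^\times$ arbitrary and uniquely determined), then $\prod_i t_i^{\la\beta,\alphac_i\ra}\in F^{\times 2}$ for every $\beta\in\Phi$; it does not force the $t_i$ to be powers $s^{c_i}$ of a common element. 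For a concrete counterexample, take $z=\htt_{\alpha_1}(s_1^2)\,\htt_{\alpha_2}(s_2^2)$ with $s_1,s_2$ multiplicatively independent: all commutation symbols are $(s,\text{square})=1$, so $z\in Z(\Tt)$, yet since the expression $\epsilon\prod_i\htt_{\alpha_i}(t_i)$ is unique, $z=\epsilon\,\yt(s)$ would force $t_i=s^{c_i}$, which is impossible here. Because your whole surjectivity computation (writing $s=u\varpi^k$ and commuting the $u$-parts past the $\varpi$-parts) is carried out only for such pure elements $\yt(s)$, it never touches the general central element.

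The missing content is exactly the heart of the paper's proof: write a general $z\in Z(\Tt)$ as $\epsilon\prod_i\htt_{\alpha_i}(a_i\varpi^{k_i})$ with $a_i\in\Ocal^\times$ and \emph{independent} exponents $k_i$, pull out $\prod_i\htt_{\alpha_i}(\varpi^{k_i})$ (all commutation signs can simply be absorbed into $\epsilon\in\mu_2$, so your careful cancellation via Corollary \ref{C:parity_of_adjacent_root3} is unnecessary), and then \emph{prove} that $y:=\sum_i k_i\alphac_i$ lies in $\Yt$, so that the $\varpi$-part really is an element of $\Yt(\varpi)$. The paper does this by commuting $z$ against $\htt_\beta(r)$ for $r\in R$ with $(r,\varpi)=-1$ (Corollary \ref{C:Hilbert_symbol_unique_element_integral_radical}), which yields $(-1)^{\la\beta,y\ra}=1$ for all $\beta$, hence $y\in\Yt$; alternatively one can take valuations in the relation $\prod_i t_i^{\la\beta,\alphac_i\ra}\in F^{\times 2}$ to reach the same conclusion. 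Without some argument of this kind your proof only establishes $\mu_2\Yt(\varpi)F^{\times2}$-type special cases, not $Z(\Tt)\subseteq\mu_2\Yt(\varpi)\Tt_0$.
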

\begin{proof}
One can readily see that $\Yt(\varpi)\subseteq Z(\Tt)$, so we have the map $\mu_2\Yt(\varpi)\times \Tt_0\to Z(\Tt)\Tt_0$. To show that this map is surjective, it suffices to show that each $z\in Z(\Tt)$ is written as $z=\epsilon y(\varpi)t_0$ for some $\epsilon \in\mu_2$, $y\in \Yt$ and $t_0\in\Tt_0$. But we may assume $z$ is of the form $\prod_{i}\htt_{\alpha_i}(t_i)$, where the order of the product is fixed. For each $i$ we can write $t_i=\varpi^{k_i}a_i$ for some $k_i\in\Z$ and $a_i\in\Ocal^\times$, so
\[
\htt_{\alpha_i}(t_i)=(\varpi^{k_i}, a_i)\cdot\htt_{\alpha_i}(\varpi^{k_i})\cdot\htt_{\alpha_i}(a_i).
\]
Let us set $y=\sum_ik_i\alphac_i$. Then by taking the product over $i$, we have
\[
\prod_i\htt_{\alpha_i}(t_i)=\epsilon\cdot\prod_i\htt_{\alpha_i}(\varpi^{k_i})\cdot\prod_i\htt_{\alpha_i}(a_i)=\epsilon\cdot \yt(\varpi)\cdot\prod_i\htt_{\alpha_i}(a_i)
\]
for some $\epsilon\in\mu_2$. Thus it suffices to show $y\in\Yt$.

Note that by Corollary \ref{C:Hilbert_symbol_unique_element_integral_radical} there exists $r\in R$ such that $(r, a_i)=1$ for all $i$ and $(r, \varpi)=-1$. Then for each $\beta\in\Phi$ we have
\[
\htt_{\alpha_i}(\varpi^{k_i})\cdot\htt_\beta(r)=(r, \varpi^{k_i\la \alpha_i, \beta\ra})\cdot \htt_\beta(r)\cdot\htt_{\alpha_i}(\varpi^{k_i}) 
\qand
\htt_{\alpha_i}(a_i)\cdot \htt_\beta(r)=\htt_\beta(r)\cdot\htt_{\alpha_i}(a_i).
\]
Hence for each $i$, we have
\[
\htt_{\alpha_i}(t_i)\cdot\htt_{\beta}(r)=(r, \varpi^{k_i\la \alpha_i, \beta\ra})\cdot \htt_{\beta}(r)\cdot \htt_{\alpha_i}(t_i).
\]
Thus we have
\[
\left(\prod_i\htt_{\alpha_i}(t_i)\right)\cdot\htt_{\beta}(r)=(r, \varpi^{\sum_ik_i\la \alpha_i, \beta\ra})\cdot\htt_{\beta}(r)\cdot\left(\prod_i \htt_{\alpha_i}(t_i)\right).
\]
Note that
\[
(r, \varpi^{\sum_ik_i\la \alpha_i,\; \beta\ra})=(r, \varpi)^{\sum_ik_i\la \alpha_i,\; \beta\ra}=(-1)^{\sum_ik_i\la \alpha_i,\; \beta\ra}=(-1)^{\la \beta,\; y\ra}.
\]
Hence in order for $\prod_{i}\htt_{\alpha_i}(t_i)$ to be in $Z(\Tt)$, we must have $\la \beta,\; y\ra\in2\Z$ for all $\beta\in\Phi$. Therefore we have $y\in\Yt$ by the definition \eqref{E:definition_of_Yt} of $\Yt$. Hence the map is surjective.

To show that the kernel is the diagonal $\mu_2$, let $(\epsilon \yt(\varpi), t)\in (\mu_2\Yt(\varpi))\times \Tt_0$ be such that $\epsilon\yt(\varpi)t=1$. Then $t=\epsilon\yt(\varpi)^{-1}$. But $t\in \Tt_0$ is uniquely written as
\[
t=\epsilon'\prod_i\htt_{\alpha_i}(t_i)\quad \text{for some $t_i\in\Ocal^\times$}.
\]
Hence we must have $y=0$, which gives $t=\epsilon$. Hence the kernel is the diagonal $\mu_2$.
\end{proof}

Let 
\[
\eta:\mu_2\Yt(\varpi)\longrightarrow \C^\times
\]
be a genuine $W$-invariant character trivial on elements of the form $h_{\alpha}(\varpi^{2})$, where $\alpha\in \Phi$. (For example, one can restrict the character of $Z(\Tt)$ constructed in \cite[page 118]{S04} or \cite[\S 7]{GG18}.) Any two such characters differ by a quadratic character of $\Yt/2Y$. Consider the representation $\eta\otimes\tau$ of $\mu_2\Yt(\varpi)\times\Tt_0$. Since both $\eta$ and $\tau$ are genuine, the diagonal $\mu_2$ acts trivially, which allows
\[
\tau_\eta:=\eta\otimes\tau
\]
to descend to a representation of $Z(\Tt)\Tt_0$, which we also denote by the same symbol $\tau_\eta$.

Now, for each unramified character $\chi:T\to\C^\times$, we define
\[
i_{\eta,\tau}(\chi):=\chi\otimes\Ind_{Z(\Tt)\Tt_0}^{\Tt}\tau_{\eta}.
\]
Note the following. First, for any $\alpha\in\Phi$ and $t\in F^{\times}$ the elements $h_{\alpha}(t^{2})\in Z(\Tt)$ act trivially on $i_{\eta,\tau}(\chi)$. Thus $i_{\eta,\tau}(\chi)$ is a distinguished genuine $\Tt$-representation (\cite[Section 6]{GG18}). Furthermore, by choosing $\eta$ and $\tau$ we can construct $|\Yt/2Y|^{1+ef}$ distinguished genuine representations of $\Tt$. There are $|\Yt/2Y|^{2+ef}$ in total (\cite[Theorem 6.6]{GG18}). Second, the two representations $i_{\eta,\tau}(\chi)$ and $i_{\eta^{\prime},\tau^{\prime}}(\chi^{\prime})$ can be related by an unramified twist if and only if $\tau\cong\tau^{\prime}$ and $\eta^{-1}\otimes \eta^{\prime}$ defines a quadratic character of $\Yt/2Y$. Thus the Bernstein block of $i_{\eta,\tau}(\chi)$ depends only on $\tau$.

In what follows, we will show that this induced representation is irreducible, and every irreducible representation of $\Tt$ which contains the pseudo-spherical representation $\tau$ is of this form, which will prove that the pair $(\Tt_0, \tau)$ is a type of $\Tt$.

Let us start with
\begin{Lem}\label{L:tau_twist}
Let $y\in Y$. Then $^{\yt(\varpi)}\tau=\tau$ if and only if $y\in\Yt$.
\end{Lem}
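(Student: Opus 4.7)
The plan is to reduce the question $^{\yt(\varpi)}\tau\cong\tau$ to a condition on the sign characters $(u,\varpi)^{\la\beta,y\ra}$ for $u\in\Ocal^\times$, $\beta\in\Phi$, and then to extract the parity condition defining $\Yt$ by exploiting the existence of $u\in R$ with $(u,\varpi)=-1$ (Corollary \ref{C:Hilbert_symbol_unique_element_integral_radical}). The main input is just \eqref{MR4}, together with the vanishing result for $\tr\tau$ off the center that was already used in \S \ref{SSSqRoot}.

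First I would compute the action of $\Int(\yt(\varpi))$ on the generators of $\Tt_0$. Writing $y=\sum_i c_i\alphac_i$ and choosing any fixed ordering so that $\yt(\varpi)=\prod_i\htt_{\alpha_i}(\varpi^{c_i})$, a single application of \eqref{MR4} to each factor gives
\[
\yt(\varpi)^{-1}\,\htt_\beta(u)\,\yt(\varpi)=(u,\varpi)^{\sum_i c_i\la\beta,\alpha_i\ra}\,\htt_\beta(u)=(u,\varpi)^{\la\beta,y\ra}\,\htt_\beta(u)
\]
for every $u\in\Ocal^\times$ and $\beta\in\Phi$; note that since the scalar depends only on $\la\beta,y\ra$, the formula is ordering-independent and is defined even when $y\notin\Yt$. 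Thus $^{\yt(\varpi)}\tau$ is the twist of $\tau$ by the sign character $\chi_y$ on $\Tt_0$ which satisfies $\chi_y(\htt_\beta(u))=(u,\varpi)^{\la\beta,y\ra}$.

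The easy direction is then immediate: if $y\in\Yt$, then $\la\beta,y\ra\in 2\Z$ for every $\beta\in\Phi$ by the definition of $\Yt$, hence $\chi_y\equiv 1$ on the generators and $^{\yt(\varpi)}\tau=\tau$ on the nose. For the converse, assume $^{\yt(\varpi)}\tau\cong\tau$ and compare characters evaluated at $\htt_\beta(u)$ with $u\in\Ocal^\times$:
\[
(u,\varpi)^{\la\beta,y\ra}\,\tr\tau(\htt_\beta(u))=\tr\tau(\htt_\beta(u)).
\]
Now I would invoke the vanishing result recalled in \S \ref{SSSqRoot}: for the irreducible genuine representation $\tau$, $\tr\tau$ vanishes off $Z(\Tt_0)$, and using \eqref{MR4} one sees $\htt_\beta(u)\in Z(\Tt_0)$ iff $u\in R$ (this uses that $\Phi$ is simply-laced of rank $\geq 2$, so some $\gamma\in\Phi$ has $\la\gamma,\beta\ra=-1$). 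For $u\in R$ the trace is nonzero, which forces $(u,\varpi)^{\la\beta,y\ra}=1$. Applying Corollary \ref{C:Hilbert_symbol_unique_element_integral_radical} to produce $u\in(1+\varpi^{2e}\Ocal)\smallsetminus\Ocal^{\times 2}\subseteq R$ with $(u,\varpi)=-1$ then forces $\la\beta,y\ra\in 2\Z$ for every $\beta\in\Phi$, i.e.\ $y\in\Yt$.

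No serious obstacle is expected: the argument is essentially bookkeeping on \eqref{MR4} and is really a very clean application of the properties of the $2$-adic Hilbert symbol developed in \S \ref{SS:HilbSymb}. The only mild subtlety is checking that the scalar picked up by commuting $\yt(\varpi)$ past $\htt_\beta(u)$ is well-defined even when $y\notin\Yt$ (so that $\yt(\varpi)$ itself depends on an ordering), which is precisely why I phrase the computation so that the ordering ambiguity cancels upon multiplying out.
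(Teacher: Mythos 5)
Your proof is correct and takes essentially the same route as the paper: the \eqref{MR4} computation producing the scalar $(u,\varpi)^{\la\beta,y\ra}$, combined with the element $u\in(1+\varpi^{2e}\Ocal)\smallsetminus\Ocal^{\times 2}\subseteq R$ with $(u,\varpi)=-1$ supplied by Corollary \ref{C:Hilbert_symbol_unique_element_integral_radical}. The only cosmetic difference is in the converse: the paper simply notes that $\tau(\htt_{\alpha}(u))=1$ while the twisted representation multiplies this central element by $(-1)^{\la\alpha,y\ra}$, whereas you reach the same conclusion through the trace-vanishing lemma of \S\ref{SSSqRoot}; both are the same computation.
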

\begin{proof}
We know that there exists $u\in 1+\varpi^{2e}\Ocal$ such that $(u, \varpi)=-1$ by Corollary \ref{C:Hilbert_symbol_unique_element_integral_radical}. But since $1+\varpi^{2e}\Ocal\subseteq R$, we have $\tau(\htt_{\alpha}(u))=1$ for each root $\alpha\in\Phi$. Now for each $y\in Y$
\[
\yt(\varpi)^{-1}\htt_{\alpha}(u)\yt(\varpi)=(u, \varpi)^{\la \alpha, y\ra}\htt_{\alpha}(u)=(-1)^{\la \alpha, y\ra}\htt_{\alpha}(u)
\]
by \eqref{MR4}.
Hence one can see that $^{\yt(\varpi)}\tau=\tau$ if and only if $\la\alpha, y\ra\in 2\Z$, namely $y\in \Yt$.
\end{proof}

We can then prove

\begin{Prop}\label{IndTau}
The $\Tt$-representation $i_{\eta,\tau}(\chi)$ is irreducible and contains $\tau$ with multiplicity one when restricted to $\Tt_0$. Moreover, $\tau$ is the unique pseudo-spherical $\Tt_{0}$-constituent of $i_{\eta,\tau}(\chi)$. 
\end{Prop}	
\begin{proof}
The irreducibility, multiplicity one property, and the uniqueness property follow from Mackey theory together with Lemma \ref{L:tau_twist} and the fact that every element in $\Tt\slash Z(\Tt)\Tt_0$ is represented by an element of the form $\yt(\varpi)$ for some $y\in Y$. The details are left to the reader.
\end{proof}

Finally the following proposition shows that $(\Tt_{0},\tau)$ is a type of $\Tt$.

\begin{Prop}\label{TTypes}
Let $\pi$ be a smooth irreducible $\Tt$-module. The $\tau$-isotypic subspace of $\pi|_{\Tt_{0}}$ is nonzero if and only if $\pi\cong i_{\eta,\tau}(\chi)$ for some unramified character $\chi:T\rightarrow \mathbb{C}^{\times}$.
\end{Prop}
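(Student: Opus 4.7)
The plan is to handle the two directions separately. The ``if'' direction is immediate from Proposition \ref{IndTau}, which says $i_{\eta,\tau}(\chi)|_{\Tt_0}$ contains $\tau$ with multiplicity one, so its $\tau$-isotypic subspace is nonzero. The substance of the statement lies in the ``only if'' direction, and my plan is to show that any such $\pi$ is isomorphic to $i_{\eta,\tau}(\chi)$ for an appropriately chosen unramified $\chi$.

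Suppose $\pi$ is a smooth irreducible $\Tt$-module whose restriction to $\Tt_0$ contains $\tau$. First I would obtain a central character $\omega_\pi:Z(\Tt)\to\C^\times$ for $\pi$ via Schur's lemma, which is legitimate because $Z(\Tt)$ has finite index in $\Tt$: using $\Tt=\mu_2\cdot Y(\varpi)\cdot\Tt_0$ together with $Z(\Tt)=\mu_2\Yt(\varpi)\cdot(Z(\Tt)\cap\Tt_0)$, the quotient $\Tt/Z(\Tt)$ is a quotient of $(Y/\Yt)\times(\Tt_0/(Z(\Tt)\cap\Tt_0))$, both of which are finite. Then $\omega_\pi\otimes\tau$ defines a representation of $Z(\Tt)\Tt_0$ (the diagonal $\mu_2$ acts trivially by the lemma preceding Proposition \ref{IndTau}) that embeds into $\pi|_{Z(\Tt)\Tt_0}$.

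The next step is to produce an unramified character $\chi:T\to\C^\times$ such that $\omega_\pi=\chi|_{Z(\Tt)}\cdot\eta$. Using $Z(\Tt)=\mu_2\Yt(\varpi)\cdot(Z(\Tt)\cap\Tt_0)$, I observe that $\omega_\pi$ and $\eta$ automatically agree on $\mu_2$ (both are the nontrivial genuine character) and on $Z(\Tt)\cap\Tt_0$ (both restrict to the central character of $\tau$). The only possible discrepancy is on the factor $\Yt(\varpi)$, so $\omega_\pi\eta^{-1}$ defines a character of $\Yt$. Since $Y$ is a finitely generated free abelian group and $\C^\times$ is divisible, this character of $\Yt$ extends to a character of $Y$, which yields an unramified character $\chi$ of $T\cong Y(\varpi)\times T_0$ with the required property.

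With $\chi$ so chosen, we have $\omega_\pi\otimes\tau\cong\chi\otimes\tau_\eta$ as representations of $Z(\Tt)\Tt_0$. Frobenius reciprocity then furnishes a nonzero $\Tt$-equivariant map
\[
i_{\eta,\tau}(\chi)=\chi\otimes\Ind_{Z(\Tt)\Tt_0}^{\Tt}\tau_\eta=\Ind_{Z(\Tt)\Tt_0}^{\Tt}(\chi\otimes\tau_\eta)\longrightarrow\pi,
\]
which is an isomorphism by the irreducibility of $i_{\eta,\tau}(\chi)$ established in Proposition \ref{IndTau}. The main technical hurdle will be the second step: verifying cleanly that $\omega_\pi\eta^{-1}$ descends to a well-defined character of $\Yt$. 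This requires tracking the Hilbert-symbol cocycles that appear in products of the form $\yt(\varpi)\zt(\varpi)$ and confirming that their interaction with the $\mu_2$-component matches for $\omega_\pi$ and $\eta$; because both are genuine and obey the same cocycle, the discrepancy is purely on the $\Yt$-factor, and the rest of the argument is a formal application of Frobenius reciprocity.
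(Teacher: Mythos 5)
Your proof is correct and follows essentially the same strategy as the paper's: match the central character of $\pi$ with $\eta$ on $\mu_2\Yt(\varpi)$, extend the discrepancy to an unramified character $\chi$ of $T$, and conclude from the irreducibility of $i_{\eta,\tau}(\chi)$ (Proposition \ref{IndTau}). The only differences are cosmetic: you run the adjunction in the opposite direction (embedding $\chi\otimes\tau_\eta$ into $\pi|_{Z(\Tt)\Tt_0}$ and inducing up from the finite-index open subgroup, rather than factoring a surjection $\Ind_{\Tt_0}^{\Tt}\tau\twoheadrightarrow\pi$ through $\chi'$-coinvariants) and you replace the paper's appeal to the Stone--von Neumann theorem by the explicit comparison of characters on $Z(\Tt)=\mu_2\Yt(\varpi)\cdot(Z(\Tt)\cap\Tt_0)$ --- just note that where you write that $\omega_\pi$ and $\eta$ agree on $Z(\Tt)\cap\Tt_0$, you mean agreement with the central character of $\tau_\eta$ (i.e.\ of $\tau$), since $\eta$ itself is only defined on $\mu_2\Yt(\varpi)$.
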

\begin{proof}
	The if direction is clear by Proposition \ref{IndTau}.
    Suppose that the $\tau$-isotypic subspace of $\pi|_{\Tt_{0}}$ is nonzero. Thus by Frobenius reciprocity we have a surjective $\Tt$-module homomorphism $\Ind_{\Tt_{0}}^{\Tt}(\tau)\twoheadrightarrow \pi$. Let $\chi^{\prime}$ be the central character of $\pi$ restricted to $\mu_{2}\Yt(\varpi)$. Then the surjective map must factor through the $\chi^{\prime}$-coinvariants 
    \[
    \Ind_{\Tt_{0}}^{\Tt}(\tau)_{\chi^{\prime}}\cong \Ind_{Z(\Tt)\Tt_{0}}^{\Tt}((\chi^{\prime}\cdot \eta^{-1})\otimes \tau_{\eta}).
    \]
    Now let $\chi:T\rightarrow \mathbb{C}^{\times}$ be an unramified character extending the non-genuine character $\chi^{\prime}\cdot \eta^{-1}$. By comparison of central characters and the Stone-Von Neumann Theorem for $\Tt$ we see that 
	\begin{equation*}
		\Ind_{Z(\Tt)\Tt_{0}}^{\Tt}((\chi^{\prime}\cdot \eta^{-1})\otimes \tau_{\eta})\cong \chi\otimes \Ind_{Z(\Tt)\Tt_{0}}^{\Tt}\tau_{\eta}.
	\end{equation*}
	Now the result follows because this induced representation is irreducible and maps onto $\pi$.
\end{proof}

\section{Pseudo-spherical representations of $\Kt$}\label{S:PSKType}


Now that we have constructed a pseudo-spherical representation $\tau$ of $\Tt_0$, the next task is to construct an irreducible representation $\sigma$ of the preimage $\It$ of the Iwahori subgroup $I\subseteq G$ that is to be used to define our Iwahori Hecke algebra. We achieve this by applying what we call {\it parahoric induction}, which is developed by Crisp, Meir and Onn in \cite{CMO19}\footnote{In \cite{CMO19}, this induction is called a variant of the Harish-Chandra induction instead of parahoric induction.}, based on the parahoric induction of Dat \cite{D09}. We call this representation $\sigma$ of $\It$ a pseudo-spherical representation of $\It$. This representation is essentially ``Weyl invariant", which allows us to extend it to a representation of $\Kt$, the preimage of the hyperspecial maximal compact subgroup $K\subseteq G$. We also call this extension a pseudo-spherical representation of $\Kt$.

Also at the end, we establish a finite group Shimura correspondence, generalizing the work of Savin \cite{S12} which proves the case when $F$ is unramified over $\Q_2$. More precisely, tensoring by the pseudo-spherical representation defines an equivalence of categories between the category of representations of $G(\Ocal/\varpi\Ocal)$ and a certain subcategory of $\widetilde{{\Kf}}$-modules.


\subsection{Structure of $\Kt$}\label{SSecSplittings}

In this subsection we prove that certain subgroups of $K$ split to $\Gt$. In particular, we identify the maximal principal congruence subgroup of $K$ which splits with a normal image in $\Kt$.

To begin we set up some notation for congruence subgroups of $G$. For $k\in\mathbb{Z}_{\geq1}$, define
\begin{align*}
\Gamma(k)&:=\la T_{k},\, \xa(t)\st t\in\varpi^k\Ocal,\, \alpha\in\Phi\ra;\\
\Gamma_1(k)&:=\la\Gamma(k),\, \xa(t)\st t\in\Ocal,\, \alpha\in\Phi^+\ra;\\
\Gamma_0(k)&:=\la \Gamma_1(k),\, T_{0}\ra,
\end{align*}
where we recall
\[
T_k=T_{1+\varpi^k\Ocal}=\la\htt_{\alpha}(t)\st t\in 1+\varpi^k\Ocal, \alpha\in\Phi\ra.
\]
Note that 
\[
\Gamma(k)\subseteq\Gamma_1(k)\subseteq\Gamma_0(k).
\] 
Alternatively, if $\mathbb{G}$ is an $\Ocal$-model for $G$ such that $K=\mathbb{G}(\Ocal)$, then one can consider the surjective mod $\varpi^k$ map (\cite[Proposition 1.6]{A69})
\[
\mathbb{G}(\Ocal)\longrightarrow \mathbb{G}(\Ocal\slash\varpi^k\Ocal),
\]
and the groups $\Gamma(k), \Gamma_1(k)$ and $\Gamma_0(k)$ are the preimages of the trivial group, the unipotent radical $U(\Ocal\slash\varpi^k\Ocal)$ and the Borel subgroup $B(\Ocal\slash\varpi^k\Ocal)$, respectively.

We are mostly concerned with the groups
\[
\Gamma(2e),\quad \Gamma(1), \quad \Gamma_1(2e),\quad   \Gamma_0(1),\quad\Gamma_0(2),
\]
where we often write
\begin{equation}\label{E:I_and_I_2_def}
    I:=\Gamma_0(1)\qand I_2:=\Gamma_0(2).
\end{equation}
The former is, of course, the Iwahori subgroup. Note the inclusions
\[
\Gamma_1(2e)\subseteq I_2\subseteq I\subseteq K
\qand \Gamma(2e)\subseteq\Gamma(1)\subseteq I\subseteq K.
\]

Let $\Gamma$ be any of the groups $\Gamma_{0}(k),\Gamma_{1}(k),\Gamma(k)$. The group $\Gamma$ admits an Iwahori factorization, meaning the multiplication map 
\begin{equation*}
	\mu_{\Gamma}: (U^{-}_{0}\cap \Gamma)\times (T_{0}\cap \Gamma)\times (U_{0}\cap \Gamma)\rightarrow \Gamma
\end{equation*}
is a homeomorphism. Using this factorization we can define a set-theoretic section of the exact sequence \eqref{CExt} for these congruence subgroups.

First, recall the the sections $\s^{\pm}$ as in \eqref{E:section_for_unipotent_radical}. Second, recall that every element of $T$ can be uniquely written in the form $\prod_{\alpha}h_{\alpha}(u_{\alpha})$, where $u_{\alpha}\in F^{\times}$. If we fix a linear order on $\Delta$ we can define a (set-theoretic) section $\s_{T}:T\rightarrow \Tt$ by $\prod_{\alpha}h_{\alpha}(u_{\alpha})\mapsto \prod_{\alpha}\htt_{\alpha}(u_{\alpha})$.

Now we can define a section of $\Gamma$ by
\[
\s_{\Gamma}:=(\s^{-}\times \s_{T}\times \s^{+})\circ \mu_{\Gamma}^{-1}:\Gamma \rightarrow \Gt.
\]
The map $\s_{\Gamma}$ is not a group homomorphism in general, but for $\Gamma=\Gamma_{1}(2e)$ it is a homomorphism.
\begin{Prop}\label{Gamma12eSplitting}
	Let $\Gamma=\Gamma_{1}(2e)$. The section 
    \[
    \s_{\Gamma}:=(\s^{-}\times \s_{T}\times \s^{+})\circ \mu_{\Gamma}^{-1}:\Gamma \longrightarrow \Gt
    \]
    is a splitting of the exact sequence \eqref{CExt}.
\end{Prop}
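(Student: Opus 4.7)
The plan is to exploit the Iwahori factorization $\Gamma_1(2e)=U_{2e}^{-}\cdot T_{2e}\cdot U_0$ together with the fact that $\s_\Gamma$ is assembled from three honest homomorphisms on the factors. The unipotent maps $\s^{\pm}$ are group homomorphisms on $U^{\pm}(F)$ by \eqref{E:section_for_unipotent_radical}. Moreover $1+\varpi^{2e}\Ocal\subseteq R$ by Proposition~\ref{P:property_Hilber_symbol_not_in_Appendix}, so $T_{2e}\subseteq T_R$; on $T_R$ the assignment $\prod_i h_{\alpha_i}(u_i)\mapsto \prod_i \htt_{\alpha_i}(u_i)$ is independent of ordering and coincides with the homomorphism $\s_{T_R}$ introduced just before the statement. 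In particular, $\s_T|_{T_{2e}}$ is already a group homomorphism into $\Tt$.

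Given $g_1,g_2\in\Gamma_1(2e)$ with Iwahori factorizations $g_i=u_i^{-}t_iu_i^{+}$, the desired identity $\s_\Gamma(g_1g_2)=\s_\Gamma(g_1)\s_\Gamma(g_2)$ reduces to two ingredients. First, the standard covering-group relation
\begin{equation*}
\htt_{\alpha_i}(s)\,\xt_\beta(a)\,\htt_{\alpha_i}(s)^{-1}=\xt_\beta\bigl(s^{\la\beta,\alphac_i\ra}a\bigr),\qquad s\in 1+\varpi^{2e}\Ocal,
\end{equation*}
which follows from \eqref{MR2} without any Hilbert-symbol correction and lets one freely move $\s_T(t_1)$ past any $\s^{-}$-factor and $\s_T(t_2)$ past any $\s^{+}$-factor while preserving $U_{2e}^{-}$ and $U_0$, respectively. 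Second, the key swap lemma: for $u^{+}\in U_0$ and $u^{-}\in U_{2e}^{-}$ with Iwahori factorization $u^{+}u^{-}=v^{-}v^{0}v^{+}$ in $G$, one must show
\begin{equation*}
\s^{+}(u^{+})\,\s^{-}(u^{-})=\s^{-}(v^{-})\,\s_T(v^{0})\,\s^{+}(v^{+}).
\end{equation*}

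By induction on the number of root-subgroup generators in $u^{\pm}$, together with the homomorphism property of $\s^{\pm}$, the swap lemma reduces to the single-generator case: for $\alpha,\beta\in\Phi^{+}$, $a\in\Ocal$, $b\in\varpi^{2e}\Ocal$, one checks $\xt_\alpha(a)\,\xt_{-\beta}(b)=\s_\Gamma(x_\alpha(a)x_{-\beta}(b))$. When $\alpha=\beta$, Corollary~\ref{PMRootSwapCor} delivers this exactly; the torus contribution $\htt_\alpha(1+ab)$ is the correct lift because $1+ab\in 1+\varpi^{2e}\Ocal\subseteq R$ forces $\htt_\alpha(1+ab)=\s_T(h_\alpha(1+ab))$. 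When $\alpha\neq\beta$ and $\alpha-\beta\notin\Phi$, the two generators commute by \eqref{E:MR2'}. When $\alpha\neq\beta$ and $\alpha-\beta\in\Phi$, \eqref{E:MR2'} gives $\xt_\alpha(a)\xt_{-\beta}(b)=\xt_{-\beta}(b)\,\xt_{\alpha-\beta}(c\,ab)\,\xt_\alpha(a)$ for some $c\in\{\pm 1\}$; the simply-laced hypothesis forces $\alpha-2\beta\notin\Phi$ so that $\xt_{\alpha-\beta}(c\,ab)$ commutes with $\xt_{-\beta}(b)$, and since $c\,ab\in\varpi^{2e}\Ocal$ this factor is absorbed into $\s^{-}(U_{2e}^{-})$ or $\s^{+}(U_0)$ according to the sign of $\alpha-\beta$, matching the linear Iwahori factorization.

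The main obstacle is organizational rather than conceptual: one must set up the induction so that the unique linear Iwahori factorization is tracked consistently through each intermediate step in $\Gt$, and then verify that the final triple agrees with the Iwahori factorization of $g_1g_2$ in $G$. The crucial arithmetic input throughout is $1+\varpi^{2e}\Ocal\subseteq R$, which annihilates every Hilbert-symbol obstruction that could appear in the torus contribution; this threshold of $2e$ is precisely what makes the splitting work and dovetails with part~(2) of the theorem, where the failure for $\Gamma(2e-1)$ rests on the same arithmetic fact.
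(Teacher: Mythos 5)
Your overall route rests on the same essential inputs as the paper's proof --- the Iwahori factorization of $\Gamma_{1}(2e)$, the homomorphism property of $\s^{\pm}$ and of $\s_{T}|_{T_{2e}}$, the commutator relation \eqref{E:MR2'}, and the symbol-free rank-one swap of Corollary \ref{PMRootSwapCor} made possible by $1+\varpi^{2e}\Ocal\subseteq R$ --- but as written it has two genuine gaps. First, in your same-root case you assert that $1+ab\in R$ ``forces'' $\htt_{\alpha}(1+ab)=\s_{T}(h_{\alpha}(1+ab))$ for an arbitrary positive root $\alpha$. Membership in $R$ does not by itself give this: $\s_{T}$ is defined only through the simple-coroot coordinates, so for non-simple $\alpha$ you must prove that $\htt_{\alpha}(s)$ coincides with the ordered product $\prod_{i}\htt_{\alpha_i}(s^{c_i})$, where $\alphac=\sum_i c_i\alphac_i$. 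That identity is true for $s\in R$, but it needs a separate argument (e.g. induction on the height of $\alpha$, combining \eqref{MR5} and \eqref{MR5'}, which give $\htt_{\beta+\gamma}(s)=(s,c)\,\htt_{\gamma}(s)\htt_{\beta}(s)$ with $c=\pm1$, together with $(s,\pm1)=1$ for $s\in R$); your proposal supplies no such argument. Second, the reduction of your swap lemma ``by induction on the number of root-subgroup generators'' is not well-founded as stated: each elementary swap emits new factors (a commutator term, a torus element, and modified unipotent entries) that must themselves be pushed past the remaining negative factors, so the quantity you induct on does not decrease, and the bookkeeping you defer as ``organizational'' is exactly where the difficulty lies.

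The paper's proof is organized precisely to avoid both issues. Instead of proving $\s_{\Gamma}(g_1g_2)=\s_{\Gamma}(g_1)\s_{\Gamma}(g_2)$ head-on, it shows that the image $\s_{\Gamma}(\Gamma)$ is closed under multiplication and reduces to left multiplication by the generators $\xt_{\alpha}(a)$ with $\alpha$ a \emph{simple} root (closure under $T_{2e}$, $U_{\alpha_{0},2e}$ on the left and $T_{2e}$, $U_{0}$ on the right being immediate). Because $\alpha$ is simple, $x_{\alpha}(a)$ normalizes $V=U_{\Phi^{-}\smallsetminus\{-\alpha\}}(\varpi^{2e}\Ocal)$, so the only swap ever needed is the single same-root swap of Corollary \ref{PMRootSwapCor}, and the torus factor it produces is $\htt_{\alpha}(1+4ab)$ with $\alpha$ simple, which equals $\s_{T}$ of the corresponding element by the very definition of $\s_{T}$. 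If you want to keep your direct approach, you would need to (i) prove the non-simple coroot identity above and (ii) restructure the induction, for instance by first reducing to simple-root generators as the paper does; otherwise the argument does not close.
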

\begin{proof}
    We already know that each of $\s^{+}, \s_{T}|_{T_{2e}}, \s^{-}$ is a homomorphism. Hence, it suffices to show that the image is closed under multiplication. Note that $\Gamma$ is generated by the subgroups $T_{2e}$, $U_{\alpha_{0},2e}$ and $U_{\alpha,0}$ with $\alpha\in \Delta$, where we recall the notation $U_{\alpha_{0},2e}$ and $U_{\alpha,0}$ from \eqref{E:notation_U_alpha_j}. It is clear that the image is closed under multiplication on the left by $T_{2e}$ and $U_{\alpha_{0},2e}$, and under multiplication on the right by $T_{2e}$ and $U_{0}$. So it remains to show that for any $\xt_{\alpha}(a)$, where $\alpha\in\Delta$ and $a\in\Ocal$, and any $y\in U^{-}_{2e}$, we have
	\begin{equation*}
		\xt_{\alpha}(a)\s_{\Gamma}(y)\in \s_{\Gamma}(\Gamma).
	\end{equation*}
	
	Let $V=U_{\Phi^{-}\smallsetminus\{-\alpha\}}(4\Ocal)\subseteq U^{-}_{2e}$. Since $\alpha$ is a simple root, the multiplication map defines a set bijection $V\times U_{-\alpha,2e}\cong U^{-}_{2e}$, and $\xt_{\alpha}(a)$ normalizes $V$. 
	
	Now let $v\in V$ and $b\in \Ocal$ be such that $y=v\xt_{-\alpha}(4b)$. Because $\xt_{\alpha}(a)$ normalizes $\s^{-}(V)$ by \eqref{MR2}, we have
	\begin{align*}
		\xt_{\alpha}(a)\s_{\Gamma}(y) =& [\xt_{\alpha}(a)\s^{-}(v)\xt_{\alpha}(-a)] \xt_{\alpha}(a)\xt_{-\alpha}(4b)\\
		=& [\xt_{\alpha}(a)\s^{-}(v)\xt_{\alpha}(-a)] \xt_{-\alpha}(\frac{4b}{1+4ab})\htt_{\alpha}(1+4ab)\xt_{\alpha}(\frac{a}{1+4ab})\\
		=& \s_{\Gamma}\left(x_{\alpha}(a)vx_{\alpha}(-a) x_{-\alpha}(\frac{4b}{1+4ab})\right)\htt_{\alpha}(1+4ab)\xt_{\alpha}(\frac{a}{1+4ab}),
	\end{align*}
    where we used Lemma \ref{PMRootSwapCor} for the second equality. The result follows.
\end{proof}

\begin{Rmk}
Proposition \ref{Gamma12eSplitting} generalizes the splitting results in type $A_2$ of \cite[Theorem 3.3]{Karasiewicz}, which are local counterparts to the splitting studied in \cite{K22}.
\end{Rmk}

Now, let us denote the splitting $\s_{\Gamma_1(2e)}$ simply by $\s$, so we have a homomorphism 
\[
\s:\Gamma_1(2e)\longrightarrow\Kt.
\]

The following proposition follows directly from the Steinberg relations, Corollary \ref{PMRootSwapCor}, and our results on the Hilbert symbol.

\begin{Prop}\label{P:SplitNormIm}$ $
    \begin{enumerate}
        \item The subgroup $\s(\Gamma_1(2e))\subseteq \Gammat_{0}(2e)$ is normal.
        \item\label{Gam2eSplitNorm} The subgroup $\s(\Gamma(2e))\subseteq \Kt$ is normal.
        \item If $\Gamma(2e-1)$ splits, then its image cannot be normal in $\Kt$.
    \end{enumerate}
\end{Prop}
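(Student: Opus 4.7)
The three parts all proceed by direct computation in $\Gt$ using the Steinberg relations, Corollary~\ref{PMRootSwapCor}, and the Hilbert symbol results of \S\ref{SS:HilbSymb}. Parts (1) and (2) are verifications; the content lies in (3).

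For (1), since $\Gammat_0(2e)$ is generated by $\mu_2$, $\s(\Gamma_1(2e))$, and $\Tt_0$, I plan to show that every $\htt_\beta(b)\in\Tt_0$ normalizes the generators of $\s(\Gamma_1(2e))$. Conjugation of a root-group generator $\xt_\alpha(a)$ by $\htt_\beta(b)$ produces $\xt_\alpha(b^{\la\alpha,\betac\ra}a)$, which remains a generator of $\s(\Gamma_1(2e))$. Conjugation of a torus generator $\htt_\gamma(u)$ with $u\in 1+\varpi^{2e}\Ocal\subseteq R$ is trivial: by Proposition~\ref{P:property_Hilber_symbol_not_in_Appendix}\eqref{ORad} we have $(u,b)=1$, so \eqref{MR4} forces $\htt_\beta(b)$ and $\htt_\gamma(u)$ to commute.

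For (2), since $\Kt$ is generated by $\mu_2$ and the $\xt_\alpha(a)$ with $a\in\Ocal$, $\alpha\in\Phi$, the plan is to verify that conjugation by any such $\xt_\alpha(a)$ sends each generator of $\s(\Gamma(2e))$ back into $\s(\Gamma(2e))$. When the generator is $\xt_\beta(\varpi^{2e}b)$ with $\alpha+\beta\neq 0$, this follows from \eqref{E:MR2'}. For a torus generator $\htt_\gamma(u)$ with $u\in 1+\varpi^{2e}\Ocal$, the Steinberg identity $\xt_\alpha(a)\htt_\gamma(u)=\htt_\gamma(u)\xt_\alpha(u^{-\la\alpha,\gammac\ra}a)$ gives $\xt_\alpha(a)\htt_\gamma(u)\xt_\alpha(-a)=\htt_\gamma(u)\xt_\alpha((u^{-\la\alpha,\gammac\ra}-1)a)$, and the error $(u^{-\la\alpha,\gammac\ra}-1)a$ lies in $\varpi^{2e}\Ocal$. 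The decisive case $\xt_{-\alpha}(\varpi^{2e}b)$ is expanded via Corollary~\ref{PMRootSwapCor}; each of the three resulting factors stays in $\s(\Gamma(2e))$ because $\tfrac{\varpi^{2e}b}{1+a\varpi^{2e}b}$ and $\tfrac{-a^2\varpi^{2e}b}{1+a\varpi^{2e}b}$ lie in $\varpi^{2e}\Ocal$ while $1+a\varpi^{2e}b\in 1+\varpi^{2e}\Ocal$.

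For (3), I plan a contradiction argument. Assume $\s':\Gamma(2e-1)\to\Gt$ is a splitting whose image is normal in $\Kt$. The main input is the strengthening
\[
R\cap(1+\varpi^{2e-1}\Ocal)=1+\varpi^{2e}\Ocal
\]
of Proposition~\ref{P:property_Hilber_symbol_not_in_Appendix}\eqref{ORad}, which reduces to $\Ocal^{\times 2}\cap(1+\varpi^{2e-1}\Ocal)\subseteq 1+\varpi^{2e}\Ocal$. Since $|\kappa^\times|=q-1$ is odd, the squaring map on $\kappa^\times$ is injective, so any $u_1\in\Ocal^\times$ with $u_1^2\in 1+\varpi\Ocal$ already lies in $1+\varpi\Ocal$. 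Writing $u_1=1+\varpi^k a$ with $a\in\Ocal^\times$ and $2=\varpi^e v$ with $v\in\Ocal^\times$, the expansion $u_1^2-1=\varpi^{e+k}va+\varpi^{2k}a^2$ has $\varpi$-adic valuation $k+\min(e,k)$; requiring this to be at least $2e-1$ (which is odd) forces $k\geq e$, and hence $u_1^2\in 1+\varpi^{2e}\Ocal$. Granted the lemma, pick $u\in(1+\varpi^{2e-1}\Ocal)\setminus(1+\varpi^{2e}\Ocal)$; then $u\notin R$, so some $b\in\Ocal^\times$ satisfies $(u,b)=-1$. Since $\Phi$ is irreducible of rank $\geq 2$, there exist $\alpha,\beta\in\Phi$ with $\la\alpha,\betac\ra\in\{\pm 1\}$. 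Writing $\s'(h_\alpha(u))=c_0\,\htt_\alpha(u)$ with $c_0\in\mu_2$, \eqref{MR4} yields
\[
\htt_\beta(b)\,\s'(h_\alpha(u))\,\htt_\beta(b)^{-1}=(u,b)^{\la\alpha,\betac\ra}\,\s'(h_\alpha(u))=-\s'(h_\alpha(u)).
\]
This element cannot lie in $\s'(\Gamma(2e-1))$: if it equalled $\s'(\gamma)$, then applying $\pr$ would force $\gamma=h_\alpha(u)$ and hence $\s'(h_\alpha(u))=-\s'(h_\alpha(u))$, which is absurd. Since $\htt_\beta(b)\in\Tt_0\subseteq\Kt$, normality is contradicted. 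The main obstacle is the Hilbert-symbol lemma; the surrounding argument is a short application of \eqref{MR4}.
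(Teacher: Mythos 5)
Your proposal is correct and follows essentially the same route as the paper, which disposes of all three parts by direct computation with the Steinberg relations, Corollary \ref{PMRootSwapCor}, and the Hilbert-symbol facts of \S\ref{SS:HilbSymb} (the paper merely cites these ingredients without writing out the verifications, and its Remark afterwards sketches exactly your mechanism for (3) via comparing \eqref{R4} with \eqref{MR4}). The only substantive detail you add beyond the paper's outline is the refinement $R\cap(1+\varpi^{2e-1}\Ocal)=1+\varpi^{2e}\Ocal$, and your valuation argument for it is correct.
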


\begin{Rmk}\label{R:Gamma(2e)_is_max} Let $e^{\prime}$ be the even element in the set $\{e,e+1\}$. A more detailed analysis of the Hilbert symbol should show that $\s_{\Gamma(e^{\prime})}$ is a splitting of $\Gamma(e^{\prime})$. However, one can readily see, by comparing the relations \eqref{R3} and \eqref{R4} with \eqref{MR3} and \eqref{MR4}, that  the image of any splitting of $\Gamma(2e-1)$ cannot be normal in $\Kt$, and thus $\Gamma(2e)$ is the maximal principal congruence subgroup of $K$ which splits with normal image in $\Kt$.
\end{Rmk}

Now, since the group $\s(\Gamma(2e))$ is normal in $\Kt$, we can let
\begin{equation}\label{E:def_finite_K_tilde}
\widetilde{{\Kf}}:=\Kt/\s(\Gamma(2e)).
\end{equation}
Then this finite group is a nontrivial $\mu_{2}$-central extension of $K/\Gamma(2e)$; namely we have
\begin{equation}\label{Mod4CExt}
	1\rightarrow \mu_{2}\rightarrow \widetilde{{\Kf}}\rightarrow K/\Gamma(2e) \rightarrow 1.
\end{equation}
We know the extension is nontrivial because $\Tt_{0}/\s(T_{2e})$ is nonabelian.

\begin{Rmk} Suppose that $\mathbb{G}$ is an $\Ocal$-model for $G$ such that $K=\mathbb{G}(\Ocal)$. Then the group $K/\Gamma(2e)$ is isomorphic to $\mathbb{G}(\Ocal/4\Ocal)$. Chevalley groups over local rings were studied by Abe \cite{A69}. Central extensions of such groups, which include $\widetilde{{\Kf}}$, have been studied by Stein \cite{S73}.
\end{Rmk}


\subsection{Parahoric induction}\label{SS:ParaInd}


In this subsection we review the parahoric induction of \cite{CMO19}, which generalizes the parabolic induction of \cite{D09}, and prove some basic results.

For this subsection we let $G$ denote an arbitrary profinite group instead of our Chevalley group, and let $\mathcal{M}(G)$ be the category of smooth representations of $G$.

Let us recall a couple of definitions from \cite{D09,CMO19}.

\begin{Def}
	The group $G$ is said to have a {\it virtual Iwahori decomposition} $(U,T,V)$ if $U,T,V\subseteq G$ are closed subgroups such that 
	\begin{itemize}
		\item $T$ normalizes $U$ and $V$,
		\item the multiplication map $U\times T\times V\rightarrow G$ is an open embedding,
		\item $G$ contains arbitrarily small open normal subgroups $K$ such that the multiplication map $(U\cap K)\times (T\cap K)\times (V\cap K)\rightarrow K$ is a homeomorphism.
	\end{itemize}
	A virtual Iwahori decomposition is call an {\it Iwahori decomposition} if the multiplication map $U\times T\times V\rightarrow G$ is a homeomorphism.
\end{Def}

Let $\Hcal(G)$ be the space of locally constant $\mathbb{C}$-valued functions on $G$. The convolution product gives $\Hcal(G)$ the structure of an idempotented $\mathbb{C}$-algebra, and each smooth representation is viewed as an $\Hcal(G)$-module, so that the category $\mathcal{M}(G)$ is identified with the category of smooth nondegenerate $\Hcal(G)$-modules. We write $e_G\in\Hcal(G)$ for the idempotent of the trivial representation of $G$. We then define parahoric induction and restriction from \cite{D09,CMO19} as follows.

\begin{Def}
	Let $(U,T,V)$ be a virtual Iwahori decomposition of a profinite group $G$. Let $\tau$ be a smooth $T$-module and let $\sigma$ be a smooth $G$-module. Let $e_{U}\in\Hcal(U)$ and $e_{V}\in\Hcal(V)$ be the idempotents of the trivial representation of $U$ and $V$ respectively. 
	
	Define functors $i_{U,V}:\mathcal{M}(T)\rightarrow \mathcal{M}(G)$ and $r_{U,V}:\mathcal{M}(G)\rightarrow \mathcal{M}(T)$ by
	\begin{align*}
		i_{U,V}:=&\Hcal(G)e_{U}e_{V}\otimes_{\Hcal(T)}(-);\\	
		r_{U,V}:=&e_{U}e_{V}\Hcal(G)\otimes_{\Hcal(G)}(-).
	\end{align*}
    We may write $i_{U,V}^{G}=i_{U,V}$ and $r_{U,V}^{G}=r_{U,V}$ if we want to emphasize $G$.
\end{Def}

Let us list a couple of important properties of these functors proved in \cite{CMO19,D09}.
\begin{Prop}\label{P:two_functions_are_adjoint}
Let $(U, T, V)$ be a virtual Iwahori decomposition of a profinite group $G$. 
\begin{enumerate}[(1)]
\item There are natural isomorphisms 
\[
i_{U, V}\cong i_{V, U}\qand r_{U, V}\cong r_{V, U}.
\]
\item $i_{U, V}$ is both left and right adjoint to $r_{U, V}$; namely we have
\[
\Hom_{\Hcal(G)}(\sigma, i_{U, V}\tau)\cong \Hom_{\Hcal(T)}(r_{U, V}\sigma, \tau)
\qand
\Hom_{\Hcal(G)}(i_{U, V}\tau, \sigma)\cong\Hom_{\Hcal(T)}(\tau, r_{U,V}\sigma)
\]
for all $\sigma\in\mathcal{M}(G)$ and $\tau\in\mathcal{M}(T)$.
\end{enumerate}
\end{Prop}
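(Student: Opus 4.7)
The plan is to follow the approach of Dat \cite{D09} and its refinement by Crisp-Meir-Onn \cite{CMO19}, extracting the adjunctions and symmetries from the bimodule structure of $e_U e_V$ combined with the virtual Iwahori decomposition.

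For part (1), I would first observe that the virtual Iwahori decomposition gives arbitrarily small compact open normal subgroups $K \subseteq G$ with $K = (U\cap K)(T\cap K)(V\cap K)$. Applying the inverse map and running the decomposition with the roles of $U$ and $V$ swapped, one also has $K = (V\cap K)(T\cap K)(U\cap K)$. Hence the idempotent $e_K$ factors both as $e_{U\cap K}\,e_{T\cap K}\,e_{V\cap K}$ and as $e_{V\cap K}\,e_{T\cap K}\,e_{U\cap K}$. Taking $K$-fixed vectors of $i_{U,V}\tau$ for each such $K$ reduces matters to a finite-dimensional computation in which the two factorizations yield canonically isomorphic spaces; passing to the colimit over $K$ produces a natural isomorphism $i_{U,V}\tau \cong i_{V,U}\tau$. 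The isomorphism $r_{U,V}\cong r_{V,U}$ is proved analogously, or follows by uniqueness of adjoints once part (2) is established.

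For the first adjunction in part (2), namely that $i_{U,V}$ is left adjoint to $r_{U,V}$, I would apply the standard tensor-hom adjunction to the definitions:
\[
\Hom_{\Hcal(G)}(\Hcal(G)e_Ue_V\otimes_{\Hcal(T)}\tau,\,\sigma)\;\cong\;\Hom_{\Hcal(T)}(\tau,\,\Hom_{\Hcal(G)}(\Hcal(G)e_Ue_V,\,\sigma)).
\]
It then suffices to identify $\Hom_{\Hcal(G)}(\Hcal(G)e_Ue_V,\sigma)$ with $r_{U,V}\sigma$; this amounts to checking that a left $\Hcal(G)$-module map out of $\Hcal(G)e_Ue_V$ is determined by the image of the generator $e_Ue_V$, and that any element of $e_Ue_V\sigma\cong r_{U,V}\sigma$ can occur as such an image, using that the left annihilator of $e_Ue_V$ in $\Hcal(G)$ annihilates $e_Ue_V\sigma$.

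The second adjunction in part (2), that $i_{U,V}$ is also a right adjoint to $r_{U,V}$, is the main obstacle, since it is not formal and requires the virtual Iwahori decomposition in an essential way. My strategy would be to work at each level $K$ and reduce both $\Hom_{\Hcal(T)}(r_{U,V}\sigma,\tau)$ and $\Hom_{\Hcal(G)}(\sigma, i_{U,V}\tau)$ to spaces of $(T\cap K)$-equivariant $\tau$-valued functions on $(U\cap K)\backslash K/(V\cap K)$, using the Iwahori factorization of $K$ to match up both descriptions. Equivalently, one shows that the bimodule $\Hcal(G)e_Ue_V$ is self-dual in the sense that it is isomorphic to $\Hom_{\Hcal(T)}(\Hcal(G)e_Ue_V,\Hcal(T))$ as an $(\Hcal(G),\Hcal(T))$-bimodule, which is precisely what the Iwahori factorization arranges on each finite level. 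Passing to the colimit gives the desired adjunction. This is the parahoric analog of Bernstein's second adjointness for parabolic induction of reductive $p$-adic groups, but here the profiniteness of $G$ collapses the harmonic-analytic subtleties to an essentially finite-dimensional statement at each level.
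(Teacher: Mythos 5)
The paper does not prove this proposition at all: its ``proof'' is the citation \cite[Theorem 2.18 (1), (2), (3)]{CMO19}, so what you are really attempting is a from-scratch proof of that theorem. Your general framework (the bimodule formalism for $i_{U,V}$ and $r_{U,V}$, reduction to finite level via the small normal subgroups $K$ furnished by the virtual Iwahori decomposition) is indeed the framework of \cite{D09,CMO19}, but the sketch has genuine gaps at exactly the points where the content lies. For part (1), the observation that $e_K=e_{U\cap K}e_{T\cap K}e_{V\cap K}=e_{V\cap K}e_{T\cap K}e_{U\cap K}$ is an equality of two expressions for the \emph{same} element of $\Hcal(G)$; it produces no map, let alone a canonical isomorphism, between the $K$-invariants of $i_{U,V}\tau$ and of $i_{V,U}\tau$, and no such identification is exhibited. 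The actual isomorphism $i_{U,V}\cong i_{V,U}$ requires constructing a nontrivial natural transformation --- in \cite{D09,CMO19} this amounts to an isomorphism of $(\Hcal(G),\Hcal(T))$-bimodules $\Hcal(G)e_Ue_V\cong\Hcal(G)e_Ve_U$, and it is one of the main technical results there, not a consequence of the two factorizations of $e_K$.

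For the first adjunction, the tensor--hom step is fine, but the identification $\Hom_{\Hcal(G)}(\Hcal(G)e_Ue_V,\sigma)\cong e_Ue_V\sigma$ is only half argued: the annihilator remark shows that each $m\in e_Ue_V\sigma$ yields a well-defined homomorphism, i.e.\ an injection $e_Ue_V\sigma\hookrightarrow\Hom_{\Hcal(G)}(\Hcal(G)e_Ue_V,\sigma)$, but you never show that evaluation at $e_Ue_V$ lands \emph{inside} $e_Ue_V\sigma$. Since $e_Ue_V$ is not an idempotent this is not formal: for a general element $a$ of an algebra $A$ one only has $aM\subseteq\{m\in M: \mathrm{Ann}_{\ell}(a)\,m=0\}\cong\Hom_A(Aa,M)$, and the inclusion can be strict (take $A=\C[x]/(x^2)$, $a=x$, $M=A/(x)$). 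Closing this requires an input such as semisimplicity of the finite-level algebras $e_K\Hcal(G)e_K$ or the explicit idempotent computations of \cite{CMO19}. The second adjunction, which you rightly call the main obstacle, is where the sketch stops at the level of intention: no argument is given that $\Hom_{\Hcal(G)}(\sigma,i_{U,V}\tau)$ for arbitrary smooth $\sigma$ reduces to $(T\cap K)$-equivariant functions on $(U\cap K)\backslash K/(V\cap K)$, and the ``self-duality'' you invoke is not even correctly stated --- since $r_{U,V}\sigma\cong e_Ue_V\sigma$ is the tensor functor attached to the $(\Hcal(T),\Hcal(G))$-bimodule $e_Ue_V\Hcal(G)$, what is needed is an identification of $\Hcal(G)e_Ue_V\otimes_{\Hcal(T)}(-)$ with the smooth coinduction $\Hom_{\Hcal(T)}(e_Ue_V\Hcal(G),-)^{\infty}$, whereas you dualize the wrong-sided bimodule and give no reason why the virtual Iwahori decomposition forces the required isomorphism. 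That ``induction equals coinduction'' statement is precisely the substance of \cite[Theorem 2.18]{CMO19}; as written your proposal does not establish it, so if you only need the proposition for this paper, the intended route is simply the citation.
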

\begin{proof}
    This is \cite[Theorem 2.18 (1), (2) and (3)]{CMO19}.
\end{proof}

If $(U, T, V)$ is an Iwahori decomposition (instead of merely virtual), the above two functors have particularly nice properties as follows.
\begin{Prop}\label{P:two_functors_for_Iwahori_decomp}
Suppose that $(U, T, V)$ is an Iwahori decomposition of $G$. Then we have the following:
\begin{enumerate}[(1)]
\item $i_{U, V}$ sends $\Irr(T)$ to $\Irr(G)$, and $r_{U, V}$ sends $\Irr(G)$ to $\Irr(T)\cup\{0\}$.
\item $r_{U, V}\circ i_{U, V}\cong id_{\mathcal{M}(T)}$.\label{P:two_functors_for_Iwahori_decomp2}
\item If $\sigma\in\Irr(G)$ is such that $r_{U, V}\sigma\neq 0$, then $i_{U, V}r_{U, V}\sigma\cong\sigma$.
\item If $\sigma\in\Irr(G)$, then
\[
\dim_{\C}\Hom_T(\sigma^U, \sigma^V)\leq 1.
\]
Further, if this dimension is zero then $r_{U, V}\sigma=0$, and if it is 1, then the $\Hom$ space is generated by an isomorphism $e_Ve_U$ and $r_{U, V}\sigma\cong\sigma^U\cong\sigma^V$.
\end{enumerate}
\end{Prop}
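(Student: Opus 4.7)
The plan is to prove the four assertions in sequence, with (2) as the technical backbone and (1), (3), (4) following by combining (2), the biadjunction of Proposition \ref{P:two_functions_are_adjoint}, and Schur's lemma.

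First I would establish assertion (2), $r_{U,V}\circ i_{U,V}\cong id_{\mathcal{M}(T)}$, by a direct computation of the bimodule $e_U e_V \Hcal(G) e_U e_V$ appearing in the formula $r_{U,V}i_{U,V}\tau = (e_U e_V \Hcal(G) e_U e_V)\otimes_{\Hcal(T)}\tau$. The Iwahori decomposition provides a genuine homeomorphism $U\times T\times V\xrightarrow{\sim} G$, and exploiting the unique factorization together with the fact that $T$ normalizes both $U$ and $V$, one identifies this bimodule with $\Hcal(T)$, yielding $r_{U,V}i_{U,V}\tau \cong \Hcal(T)\otimes_{\Hcal(T)}\tau \cong \tau$ functorially in $\tau$.

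Assertions (1) and (3) then follow from (2) together with adjointness. For $\tau\in\Irr(T)$ and any nonzero subrepresentation $\sigma'\subseteq i_{U,V}\tau$, the inclusion corresponds via adjunction to a nonzero element of $\Hom_T(r_{U,V}\sigma', \tau)$, so $r_{U,V}\sigma'\neq 0$; exactness of $r_{U,V}$ (a left and right adjoint), combined with (2) and the irreducibility of $\tau$, forces $r_{U,V}\sigma'=\tau$, and the opposite direction of adjunction yields a surjection $i_{U,V}\tau\twoheadrightarrow \sigma'$, from which applying $r_{U,V}$ once more concludes $\sigma'=i_{U,V}\tau$. Hence $i_{U,V}\tau$ is irreducible. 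A symmetric argument, taking an irreducible $T$-subrepresentation of $r_{U,V}\sigma$ and using exactness together with (2), shows $r_{U,V}\sigma\in\Irr(T)\cup\{0\}$. For (3), the adjunction counit $i_{U,V}r_{U,V}\sigma\to \sigma$ is nonzero whenever $r_{U,V}\sigma\neq 0$, since applying $r_{U,V}$ yields the identity on $r_{U,V}\sigma$ by (2); both source and target are irreducible by (1), so it is an isomorphism.

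For assertion (4), observe first that whenever $r_{U,V}\sigma\neq 0$, the map $e_V e_U: \sigma\to\sigma$ has image $r_{V,U}\sigma\cong r_{U,V}\sigma\neq 0$ lying in $\sigma^V$, and for $\xi\in\sigma^U$ we have $(e_V e_U)\xi = e_V\xi$, so $e_V e_U$ induces a nonzero $T$-linear map $\sigma^U\to\sigma^V$; this is the contrapositive of ``$\dim=0\Rightarrow r_{U,V}\sigma=0$''. The upper bound $\dim\leq 1$ is established by reducing, via (3), to the case $\sigma = i_{U,V}\tau$ with $\tau\in\Irr(T)$, and using the bimodule identification of the first step to show that both $\sigma^U$ and $\sigma^V$ are canonically isomorphic to $\tau$ as $T$-modules with $e_V e_U$ realizing the identification; then $\Hom_T(\sigma^U,\sigma^V)\cong \End_T(\tau)=\C$ by Schur, and the remaining claims in the 1-dimensional case follow. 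In the case $r_{U,V}\sigma=0$, the vanishing of $\Hom_T(\sigma^U,\sigma^V)$ is extracted from the vanishing of $e_U e_V$ on $\sigma$ together with the irreducibility of $\sigma$. The main obstacle is the explicit bimodule identification of the first step, which requires a careful convolution analysis exploiting the homeomorphism $G=UTV$; a secondary difficulty is the vanishing argument for $\Hom_T(\sigma^U,\sigma^V)$ in the case $r_{U,V}\sigma=0$, which requires extending $T$-linear maps of invariants to $G$-equivariant endomorphisms of $\sigma$ using the same bimodule analysis.
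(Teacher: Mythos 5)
First, a remark on context: the paper does not prove this proposition at all; its proof is the single line referring to \cite[Theorem 2.23]{CMO19}, so what you are proposing is a reproof of that external result. Your reduction of (1) and (3) to (2), using the biadjunction of Proposition \ref{P:two_functions_are_adjoint}, exactness of both functors, and Schur's lemma, is sound and standard. The problem is that the substantive statements are asserted rather than proved. The identification $e_Ue_V\Hcal(G)e_Ue_V\cong\Hcal(T)$ is essentially equivalent to (2) itself, and ``unique factorization plus $T$ normalizes $U$ and $V$'' only yields the easy half: writing $g=v't'u'$ (note $G=VTU$ follows from $G=UTV$ by taking inverses) and absorbing $v'$ and $u'$ shows that the bimodule map $\Hcal(T)\to e_Ue_V\Hcal(G)e_Ue_V$, $f\mapsto e_Ue_Vfe_Ue_V$, is \emph{surjective}, i.e.\ $r_{U,V}i_{U,V}\tau$ is a quotient of $\tau$. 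Injectivity --- equivalently that the unit $\tau\to r_{U,V}i_{U,V}\tau$ is injective --- does not follow from the factorization: $e_Ue_V$ is not an idempotent, the elements $e_Ue_Vte_Ue_V$ spread over many $(U,V)$-double cosets, and excluding linear relations among them is exactly the hard content of \cite[Theorem 2.23]{CMO19} (and of Dat's parahoric induction), whose proof uses the profinite/congruence structure built into the definition of Iwahori decomposition and the symmetry $i_{U,V}\cong i_{V,U}$, not merely $G=UTV$. You flag this as ``the main obstacle'' but give no mechanism for it, so the core of the proof is missing.

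Assertion (4) has the same character. Your reduction via (3) only covers the case $r_{U,V}\sigma\neq 0$, and even there the claim that $\sigma^U$ and $\sigma^V$ are identified with $\tau$ for $\sigma=i_{U,V}\tau$ is not a formal consequence of the first step (the space $e_U\Hcal(G)e_Ue_V$ does not collapse by absorption the way $e_Ue_V\Hcal(G)e_Ue_V$ does, so $\sigma^U\cong\tau$ needs its own argument). More seriously, when $r_{U,V}\sigma=0$ you must show that \emph{every} $T$-homomorphism $\sigma^U\to\sigma^V$ vanishes, i.e.\ that $\Hom_T(\sigma^U,\sigma^V)$ is spanned by $e_Ve_U$ in all cases; the phrase ``extending $T$-linear maps of invariants to $G$-equivariant endomorphisms'' names precisely what has to be constructed, and nothing in your bimodule step produces such an extension. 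This multiplicity-one statement is a separate, genuinely nontrivial part of the cited theorem. As written, the proposal reproduces the formal adjunction bookkeeping correctly but leaves both substantive inputs unproved, so it does not constitute a proof.
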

\begin{proof}
This is \cite[Theorem 2.23]{CMO19}.
\end{proof}

The following definition is motivated by the well known result that we will recall in Lemma \ref{WCompat}.
\begin{Def}
	Suppose that $G$ has two virtual Iwahori decompositions $(U,T,V)$ and $(U^{\prime},T,V^{\prime})$. We call the two virtual decompositions {\it compatible} if the following four identities hold. 
	\begin{align*}
		U=&(U\cap U^{\prime})(U\cap V^{\prime}),\\
		V=&(V\cap \,U^{\prime})(V\cap V^{\prime}),\\
		U^{\prime}=&(U^{\prime}\cap U)(U^{\prime}\cap V),\\
		V^{\prime}=&(V^{\prime}\cap U)(V^{\prime}\cap V).
	\end{align*}
\end{Def}

The following is an important property of compatible decompositions.
\begin{Prop}\label{P:compatible_Iwahori_decomp}
Assume $(U, T, V)$ and $(U', T, V')$ are compatible virtual Iwahori decompositions of $G$. Then we have
\[
i_{U, V}\cong i_{U', V'}\qand r_{U, V}\cong r_{U', V'}.
\]
\end{Prop}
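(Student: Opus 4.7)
The plan is to deduce the isomorphism of restriction functors $r_{U,V}\cong r_{U',V'}$; the corresponding statement for $i_{U,V}\cong i_{U',V'}$ then follows by uniqueness of adjoints, using Proposition \ref{P:two_functions_are_adjoint}. Unpacking definitions, $r_{U,V}(\sigma)$ is naturally identified with the $T$-submodule $e_{U}e_{V}\sigma\subseteq\sigma$ (with $T$ acting via its embedding into $\Hcal(G)$), so the goal is to produce, naturally in the smooth $G$-module $\sigma$, a $T$-linear isomorphism $e_{U}e_{V}\sigma\xrightarrow{\sim} e_{U'}e_{V'}\sigma$.

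First I would reduce to a finite-level calculation. By the virtual Iwahori property for both decompositions, I can find an open normal $K\trianglelefteq G$, arbitrarily small, such that both restrict to honest Iwahori factorizations
\[
K=(U\cap K)(T\cap K)(V\cap K)=(U'\cap K)(T\cap K)(V'\cap K).
\]
Passing to $K$-fixed vectors $\sigma^{K}$ reduces the statement to an identity in the finite-dimensional algebra $\Hcal(G/K)$, and a direct limit in $K$ recovers the general case. In the finite quotient, the open-embedding part of the Iwahori property forces $U'\cap V'=\{1\}$, hence $(U\cap U')\cap(U\cap V')\subseteq U'\cap V'=\{1\}$, so the compatibility-provided surjection $(U\cap U')\times(U\cap V')\to U$ is a bijection and therefore $e_{U}=e_{U\cap U'}\ast e_{U\cap V'}$. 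Applying the same argument to $V, U', V'$ yields
\[
e_{U}e_{V}=e_{U\cap U'}\, e_{U\cap V'}\, e_{V\cap U'}\, e_{V\cap V'},\qquad e_{U'}e_{V'}=e_{U\cap U'}\, e_{V\cap U'}\, e_{U\cap V'}\, e_{V\cap V'}.
\]

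The two four-fold products differ only by the transposition of the middle pair $e_{U\cap V'}$ and $e_{V\cap U'}$. In general these two idempotents do not commute in $\Hcal(G/K)$, so the subspaces $e_{U}e_{V}\sigma$ and $e_{U'}e_{V'}\sigma$ need not coincide as subsets of $\sigma$. Instead, the natural $T$-linear isomorphism between them will be implemented by an explicit intertwiner, constructed from left multiplication by an element of $\Hcal(G/K)$ that rearranges these middle two factors. Its definition uses that both $U\cap V'$ and $V\cap U'$ are normalized by $T$, that the quadruple intersection $U\cap V\cap U'\cap V'$ vanishes (as it lies inside $U'\cap V'=\{1\}$), and that the two Iwahori factorizations pair up compatibly with the $T$-action.

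The main obstacle is precisely the construction and verification of this intertwiner: writing it down explicitly at the finite level, checking that it is $T$-linear, natural in $\sigma$, and invertible (the inverse being the analogous intertwiner going the other way). The symmetry between $(U,V)$ and $(U',V')$ in the compatibility hypothesis should render the inverse tautological once one direction is done, but the $T$-linearity check is genuinely nontrivial since the four corner subgroups need not commute with $T$ in a way that makes the rearrangement manifestly $T$-equivariant. The framework developed in Dat \cite{D09} and Crisp-Meir-Onn \cite{CMO19} (on which the functors are modeled) provides the template for performing such rearrangements, and their techniques should carry through to the present setting with the bookkeeping detailed above.
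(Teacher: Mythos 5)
Your proposal does not reach a proof: it sets up the problem and then explicitly defers the decisive step. The paper itself does not argue this proposition at all — it simply cites \cite[Theorem 2.18 (4)]{CMO19} — so the content you would need to supply is exactly the part you leave open, namely the construction of a natural $T$-linear isomorphism $e_Ue_V\sigma\cong e_{U'}e_{V'}\sigma$ and the verification that it is invertible. Your reduction to the four-fold factorizations $e_Ue_V=e_{U\cap U'}e_{U\cap V'}e_{V\cap U'}e_{V\cap V'}$ and $e_{U'}e_{V'}=e_{U\cap U'}e_{V\cap U'}e_{U\cap V'}e_{V\cap V'}$ is correct and is indeed the right starting point (it is how \cite{D09,CMO19} proceed), and deducing the statement for $i$ from the one for $r$ by uniqueness of adjoints via Proposition \ref{P:two_functions_are_adjoint} is fine. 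But "an explicit intertwiner that rearranges the middle two factors, whose construction and verification is the main obstacle" is not an argument; it is a restatement of the theorem. Note also that the natural candidate is simply left multiplication by $e_{U'}e_{V'}$ on $e_Ue_V\sigma$; since $T$ normalizes $U,V,U',V'$ and hence their pairwise intersections, this map is automatically $T$-equivariant, so the difficulty you flag ($T$-linearity) is not the real one — the substantive point, which your proposal never addresses, is bijectivity, i.e.\ an idempotent identity of the shape $e_Ue_V\,e_{U'}e_{V'}\,e_Ue_V=e_Ue_V$ (and its mirror) coming from the compatibility relations. That computation is the proof.

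Two further points in your reduction are glossed over. First, the virtual Iwahori property provides small open normal subgroups $K$ adapted to each decomposition separately; you need a single $K$ that is simultaneously factorized by both $(U,T,V)$ and $(U',T,V')$, and intersecting two such subgroups does not obviously preserve the factorization property. Second, after passing to $G/K$ the triviality of $U'\cap V'$ in $G$ does not immediately give triviality of the images $\overline{U'}\cap\overline{V'}$, which you use to upgrade the compatibility surjections to bijections; this needs an argument (or one should work directly with $K$-invariants without claiming the intersections stay trivial in the quotient). Neither issue is fatal, but together with the missing intertwiner they mean the proposal is a plan for a proof of the cited result of Crisp--Meir--Onn rather than a proof.
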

\begin{proof}
    This is \cite[Theorem 2.18 (4)]{CMO19}.
\end{proof}

\begin{Lem}\label{IsoTwist}
	Let $H_1$ and $H_2$ be profinite groups and assume each $H_i$ has a virtual Iwahori decomposition $(U_{i},T_{i},V_{i})$. Let $\phi:H_{1}\rightarrow H_{2}$ be a group isomorphism such that $\phi(U_{1})=U_{2}$, $\phi(T_{1})=T_{2}$, and $\phi(V_{1})=V_{2}$. Let $\tau\in\Irr(T_{1})$. Then as $H_{2}$-modules
	\begin{equation*}
		\,^{\phi}i_{U_1,V_1}\tau\cong i_{\phi(U_1),\phi(V_1)}\,^{\phi}\tau. 
	\end{equation*}
\end{Lem}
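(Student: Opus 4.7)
The plan is to reduce the statement to the functoriality of the Hecke-algebraic construction defining $i_{U, V}$. The isomorphism $\phi: H_1 \to H_2$ of profinite groups is automatically a homeomorphism, so, after choosing compatible Haar measures, it induces a $\C$-algebra isomorphism
$$\phi_*: \Hcal(H_1) \xrightarrow{\;\sim\;} \Hcal(H_2), \quad f \mapsto f \circ \phi^{-1}.$$
Because $\phi$ carries $U_1, T_1, V_1$ onto $U_2, T_2, V_2$, the map $\phi_*$ sends $e_{U_1} \mapsto e_{U_2}$ and $e_{V_1} \mapsto e_{V_2}$, and restricts to an algebra isomorphism $\Hcal(T_1) \cong \Hcal(T_2)$. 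Under this last restriction, the $\Hcal(T_1)$-module $\tau$ and the $\Hcal(T_2)$-module $\,^\phi \tau$ correspond to one another via \eqref{E:definition_phi_pi}; indeed that is precisely the Hecke-algebra avatar of the definition of $\,^\phi\tau$.

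First I would define the candidate map
$$\Psi: \Hcal(H_1)e_{U_1}e_{V_1} \otimes_{\Hcal(T_1)} \tau \longrightarrow \Hcal(H_2)e_{U_2}e_{V_2} \otimes_{\Hcal(T_2)} \,^\phi\tau, \quad f \otimes v \mapsto \phi_*(f) \otimes v.$$
The compatibility of $\phi_*$ with the $T$-actions makes $\Psi$ well defined on the balanced tensor product, and its $\C$-linear bijectivity follows from that of $\phi_*$.

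Second, I would verify that $\Psi$ is $H_2$-equivariant when the domain carries the twisted $H_2$-action from $\,^\phi(i_{U_1, V_1}\tau)$. The $H_1$-action on the domain is left translation of functions in the first slot, so after twisting by $\phi^{-1}$ it becomes $h_2 \cdot (f \otimes v) = (L_{\phi^{-1}(h_2)}f) \otimes v$, where $L$ denotes left translation of functions; the codomain action is $h_2 \cdot (f' \otimes v) = L_{h_2}(f') \otimes v$. Equivariance of $\Psi$ therefore reduces to the identity $\phi_*(L_{\phi^{-1}(h_2)}f) = L_{h_2}(\phi_*(f))$, which is immediate from unwinding the definitions of $\phi_*$ and $L$.

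The main point is simply bookkeeping between the twisted action $\,^\phi(-)$ and the Hecke-algebra isomorphism $\phi_*$; there is no substantive obstacle, as the lemma is the natural functoriality of $i_{U, V}$ with respect to isomorphisms of the data $(H, U, T, V)$.
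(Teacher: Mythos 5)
Your proof is correct and matches the paper's argument: the paper's one-line proof uses exactly the map $f\otimes v\mapsto (f\circ\phi^{-1})\otimes v$ and asserts the routine checks, which you have simply carried out via the induced algebra isomorphism $\phi_*:\Hcal(H_1)\to\Hcal(H_2)$. No gaps; this is the same approach with the bookkeeping made explicit.
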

\begin{proof}
	One can directly check that there is a well-defined linear map $\,^{\phi}i_{U_1,V_1}\tau\rightarrow i_{\phi(U_1),\phi(V_1)}\,^{\phi}\tau$ defined by $f(g)e_{U_{1}}e_{V_{1}}\otimes v\mapsto f(\phi^{-1}(g))e_{U_{2}}e_{V_{2}}\otimes v$, where $g\in G$ and $v\in \tau$, and this map is an isomorphism of $H_{2}$-modules. 
\end{proof}

\begin{Prop}\label{TwistAut}
	Let $G$ be a profinite group with an Iwahori decomposition $(U,T,V)$. Let $\phi\in\Aut(G)$ be such that $\phi(T)=T$. Let $\tau\in\Irr(T)$.
	Suppose that 
	\begin{itemize}
		\item $\,^{\phi}\tau\cong \tau$, 
		\item the two Iwahori factorizations $(U,T,V)$ and $(\phi(U),T,\phi(V))$ are compatible.
	\end{itemize} 
	Then
	\begin{equation*}
		\,^{\phi}i_{U,V}\tau\cong i_{U,V}\tau.
	\end{equation*}
\end{Prop}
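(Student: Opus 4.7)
The plan is to chain together three isomorphisms, one per hypothesis, using the results of \S\ref{SS:ParaInd} as black boxes.

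First I would apply Lemma \ref{IsoTwist} to the automorphism $\phi\in\Aut(G)$, taking $H_{1}=H_{2}=G$ with the two decompositions $(U,T,V)$ and $(\phi(U),T,\phi(V))$. Since $\phi$ is an automorphism of $G$ sending $T$ to $T$, $U$ to $\phi(U)$, and $V$ to $\phi(V)$, the lemma yields the isomorphism
\[
{}^{\phi}i_{U,V}\tau \;\cong\; i_{\phi(U),\phi(V)}\,{}^{\phi}\tau
\]
of $G$-modules.

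Next I would use the first hypothesis ${}^{\phi}\tau\cong\tau$. Since $i_{\phi(U),\phi(V)}$ is a functor, isomorphic $T$-modules are sent to isomorphic $G$-modules, so
\[
i_{\phi(U),\phi(V)}\,{}^{\phi}\tau \;\cong\; i_{\phi(U),\phi(V)}\tau.
\]

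Finally, by the second hypothesis the two Iwahori decompositions $(U,T,V)$ and $(\phi(U),T,\phi(V))$ of $G$ are compatible, so Proposition \ref{P:compatible_Iwahori_decomp} gives $i_{U,V}\cong i_{\phi(U),\phi(V)}$ as functors $\mathcal{M}(T)\to\mathcal{M}(G)$. Evaluating this natural isomorphism at $\tau$ yields $i_{\phi(U),\phi(V)}\tau\cong i_{U,V}\tau$, and composing the three displayed isomorphisms produces the claimed identification ${}^{\phi}i_{U,V}\tau\cong i_{U,V}\tau$.

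There is no real obstacle here; the proof is a formal concatenation of Lemma \ref{IsoTwist} and Proposition \ref{P:compatible_Iwahori_decomp}. The only point worth being careful about is that Proposition \ref{P:compatible_Iwahori_decomp} is stated for virtual Iwahori decompositions, but every Iwahori decomposition is in particular a virtual one, so the hypothesis is satisfied.
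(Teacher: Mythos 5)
Your proof is correct and follows exactly the same route as the paper: Lemma \ref{IsoTwist} applied to $\phi$, functoriality of $i_{\phi(U),\phi(V)}$ applied to ${}^{\phi}\tau\cong\tau$, and Proposition \ref{P:compatible_Iwahori_decomp} for the compatible decompositions. Nothing further is needed.
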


\begin{proof}
	By Lemma \ref{IsoTwist} we have $\,^{\phi}i_{U,V}\tau\cong i_{\phi(U),\phi(V)}\,^{\phi}\tau$. Since $\,^{\phi}\tau\cong \tau$ we have $i_{\phi(U),\phi(V)}\,^{\phi}\tau\cong i_{\phi(U),\phi(V)}\tau$. But since $(U, T, V)$ and  $(\phi(U),T,\phi(V))$ are compatible by our assumption, the proposition follows from Proposition \ref{P:compatible_Iwahori_decomp}.
\end{proof}

\begin{Lem}\label{L:IndRes2x}
    Let $G$ be a profinite group with an Iwahori decomposition $(U,T,V)$. Let $H\subseteq G$ be an open subgroup with an Iwahori decomposition $(U^{\prime}=H\cap U,T^{\prime}=H\cap T,V)$. Let $\tau$ be an irreducible representation of $T$ that remains irreducible when restricted to $T^{\prime}$.
    
    The $\Tt^{\prime}$-module homomorphism $\phi:r_{U^{\prime},V}^{H}i_{U,V}^{G}\tau\rightarrow r_{U,V}^{G}i_{U,V}^{G}\tau$ defined by
    \begin{equation*}
        e_{U^{\prime}}e_{V}f\otimes_{\mathcal{H}(H)} v \mapsto e_{U}e_{V}f\otimes_{\mathcal{H}(G)} v
    \end{equation*}
    is a well defined isomorphism of $T^{\prime}$-modules.
\end{Lem}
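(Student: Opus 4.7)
The approach is to give $\phi$ a concrete realization as a map between subspaces of $W:=i^G_{U,V}\tau$, and then to invoke Proposition~\ref{P:two_functors_for_Iwahori_decomp}(4) applied to the irreducible $G$-module $W$.

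Using the canonical identifications $\mathcal{H}(G)\otimes_{\mathcal{H}(G)}X\cong X$ and $\mathcal{H}(H)\otimes_{\mathcal{H}(H)}X\cong X$ for smooth nondegenerate modules, one has the concrete descriptions $r^G_{U,V}(W)=e_Ue_VW$ as a $T$-submodule of $W$ and $r^H_{U',V}(W|_H)=e_{U'}e_VW$ as a $T'$-submodule of $W$; these are submodules because $T$ normalizes both $U$ and $V$, so $T'\subseteq T$ normalizes $U'=U\cap H$ and $V$. Under these identifications the map $\phi$ is simply the restriction to $e_{U'}e_VW$ of the idempotent $e_U\in\mathcal{H}(G)$, since $e_U\cdot(e_{U'}e_Vfv)=(e_Ue_{U'})e_Vfv=e_Ue_Vfv$; in particular its well-definedness and $T'$-equivariance are immediate.

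The crux is bijectivity. As $\tau$ is irreducible, $W\in\mathrm{Irr}(G)$ by Proposition~\ref{P:two_functors_for_Iwahori_decomp}(1). Applying Proposition~\ref{P:two_functors_for_Iwahori_decomp}(4) to $W$, combined with the symmetry $r_{U,V}\cong r_{V,U}$ from Proposition~\ref{P:two_functions_are_adjoint}, shows that the operator $e_U:W^V\to W^U$ is an isomorphism of $T$-modules. The elementary convolution identity $e_Ue_{U'}=e_U$ in $\mathcal{H}(G)$, valid because $U'\subseteq U$, then yields the factorization
\[
e_U\big|_{W^V}\;=\;\phi\circ\bigl(e_{U'}|_{W^V}\bigr)
\]
of $T'$-module maps $W^V\to e_{U'}e_VW\to W^U$. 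The first factor is tautologically surjective by the definition of $e_{U'}e_VW$, and the composition is an isomorphism by the previous sentence. Hence $e_{U'}|_{W^V}$ is injective as well as surjective, so it is an isomorphism, and therefore so is $\phi=(e_U|_{W^V})\circ(e_{U'}|_{W^V})^{-1}$.

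The main obstacle is the recognition that the idempotent identity $e_Ue_{U'}=e_U$ provides precisely the bridge between the two parahoric restrictions, factoring the canonical isomorphism $W^V\xrightarrow{\sim}W^U$ through the intermediate space $e_{U'}e_VW$; once this is observed, the remaining checks are essentially formal.
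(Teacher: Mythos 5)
Your realization of the two restriction spaces as subspaces of $W=i^{G}_{U,V}\tau$ is where the argument has a genuine gap. For the $G$-side this is fine: $W$ is irreducible and $r^{G}_{U,V}W\cong\tau\neq 0$, so Proposition \ref{P:two_functors_for_Iwahori_decomp} identifies $r^{G}_{U,V}W$ with $W^{U}=e_{U}e_{V}W$. But $W|_{H}$ is not irreducible, and the claim $r^{H}_{U',V}(W|_{H})=e_{U'}e_{V}W$ does not follow from the canonical identification $\mathcal{H}(H)\otimes_{\mathcal{H}(H)}X\cong X$: by definition $r^{H}_{U',V}(W|_{H})=e_{U'}e_{V}\mathcal{H}(H)\otimes_{\mathcal{H}(H)}W$, and tensoring the inclusion of right modules $e_{U'}e_{V}\mathcal{H}(H)\subseteq\mathcal{H}(H)$ with $W$ is only right exact; since $e_{U'}e_{V}$ is not an idempotent, the standard argument giving $e\mathcal{H}\otimes_{\mathcal{H}}W\cong eW$ does not apply, so injectivity of the natural action map $r^{H}_{U',V}W\to W$ is precisely what needs proof. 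Both the well-definedness claim and the bijectivity claim in your write-up lean on this identification. What your computation actually establishes is that multiplication by $e_{U}$ maps $e_{U'}e_{V}W$ bijectively onto $e_{U}e_{V}W$ (via the factorization $e_{U}=e_{U}e_{U'}$ and the isomorphism $e_{U}\colon W^{V}\to W^{U}$ from Proposition \ref{P:two_functors_for_Iwahori_decomp}); consequently $\phi$ is surjective and $\ker\phi$ equals the kernel of the action map $r^{H}_{U',V}W\to W$. The injectivity of $\phi$, which is the heart of the lemma, is exactly the statement that this kernel vanishes, and that is assumed rather than proved.

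The gap is repairable in two ways. One is to justify the subspace realization directly: smooth representations of a profinite group form a semisimple category, so the inclusion $e_{U'}e_{V}\mathcal{H}(H)\hookrightarrow\mathcal{H}(H)$ of nondegenerate right $\mathcal{H}(H)$-modules splits, hence remains injective after applying $-\otimes_{\mathcal{H}(H)}W$, giving $r^{H}_{U',V}W\cong e_{U'}e_{V}W$. The other is the route the paper takes: it never asserts the subspace realization on the $H$-side, but constructs $\phi$ through explicit bimodule maps, proves surjectivity by a computation with the idempotent of the kernel of $W$, and then obtains injectivity from the dimension bound $\dim r^{H}_{U',V}W\le\dim W^{V}=\dim\tau$ (Lemma \ref{L:ResDimBound} together with Proposition \ref{P:two_functors_for_Iwahori_decomp}), compared with $r^{G}_{U,V}W\cong\tau$. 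Your factorization through $e_{U}e_{U'}=e_{U}$ is an attractive alternative to the paper's surjectivity computation, but as written it does not substitute for either of these injectivity arguments.
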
  

\begin{proof}

    To begin we show that the map $\phi$ is well defined by constructing it directly. We do this in several steps.

    Because $H\subseteq G$ is of finite index, extension by $0$ defines a $(H,\mathcal{H}(H))$-bimodule inclusion $\mathcal{H}(H)\hookrightarrow \mathcal{H}(G)$. (Here we are fixing a Haar measure on $G$ and restricting it to a Haar measure on $H$.) Thus the map $A:e_{U^{\prime}}e_{V}\mathcal{H}(H)\rightarrow e_{U}e_{V}\mathcal{H}(G)$ defined by $e_{U^{\prime}}e_{V} f\mapsto e_{U}e_{V}f$ (i.e. left convolution by $e_{U}$) is a $(T^{\prime},\mathcal{H}(H))$-bimodule homomorphism. We tensor by $\otimes_{H}i_{U,V}^{G}\tau$ and get a map $A\otimes id_{i_{U,V}^{G}\tau}:e_{U^{\prime}}e_{V}\mathcal{H}(H)\otimes_{\mathcal{H}(H)}i_{U,V}^{G}\tau\rightarrow e_{U}e_{V}\mathcal{H}(G)\otimes_{\mathcal{H}(H)}i_{U,V}^{G}\tau$ by functoriality. Finally since the right $\mathcal{H}(G)$-module action on $e_{U}e_{V}\mathcal{H}(G)$ and the left $\mathcal{H}(G)$ action on $i_{U,V}^{G}\tau$ extends the respective $\mathcal{H}(H)$ action, we have a surjective map $B:e_{U}e_{V}\mathcal{H}(G)\otimes_{\mathcal{H}(H)}i_{U,V}^{G}\tau\twoheadrightarrow e_{U}e_{V}\mathcal{H}(G)\otimes_{\mathcal{H}(G)}i_{U,V}^{G}\tau$ defined by $e_{U}e_{V}f\otimes_{\mathcal{H}(H)} v\mapsto e_{U}e_{V}f\otimes_{\mathcal{H}(G)} v$. Then $\phi=B \circ(A\otimes_{\mathcal{H}(H)} id_{i_{U,V}^{G}\tau})$.

    Let $\Gamma$ be the kernel of $i_{U,V}^{G}\tau$, so that $\Gamma$ is a normal subgroup which acts trivially on $i_{U,V}^{G}\tau$. Note that in $\mathcal{H}(G)$ we have $e_{\Gamma\cap H}*e_{\Gamma}=e_{\Gamma}$. Thus under the map $\phi$ we have
    \begin{equation*}
        e_{U^{\prime}}e_{V}e_{\Gamma\cap H}\otimes_{\mathcal{H}(H)} v\mapsto e_{U}e_{V}e_{\Gamma}\otimes_{\mathcal{H}(G)} v.
    \end{equation*}

    Since $r_{U_{0},V}^{\It_{2}}i_{U,V}^{G}\tau$ is spanned by vectors of the form $e_{U}e_{V}e_{\Gamma}\otimes v$, where $v\in i_{U,V}^{G}\tau$, it follows that the map $\phi$ is surjective. By Proposition \ref{P:two_functors_for_Iwahori_decomp} \eqref{P:two_functors_for_Iwahori_decomp2} $r_{U,V}^{G}i_{U,V}^{G}\tau\cong \tau$, so the map is nonzero. By Lemma \ref{L:ResDimBound} and Proposition \ref{P:two_functors_for_Iwahori_decomp}, $\dim r_{U^{\prime},V}^{H}i_{U,V}^{G}\tau\leq \dim\tau$. Thus the map is an isomorphism of $T^{\prime}$-modules.

\end{proof}

The next proposition provides a sufficient condition for when parahoric induction is preserved under restriction. 

\begin{Prop}\label{ResGamma1}
	Let $G$ be a profinite group with an Iwahori decomposition $(U,T,V)$. Let $H\subseteq G$ be an open subgroup with an Iwahori decomposition $(U^{\prime}=H\cap U,T^{\prime}=H\cap T,V)$. Suppose that 
	\begin{itemize}
		\item $\tau$ is a representation of $T$ such that $\tau$ restricted to $T^{\prime}$ remains irreducible;
		\item $(i_{U,V}\tau)^{U^{\prime}}=(i_{U,V}\tau)^{U}$.
	\end{itemize}
	
	Then as $H$-modules
    \[
    (i_{U,V}\tau)|_H\cong  i_{U^{\prime},V}\tau.
    \]

\end{Prop}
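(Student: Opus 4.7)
The plan is to use Frobenius reciprocity to produce an $H$-module map $\phi\colon i_{U',V}^{H}\tau\to (i_{U,V}^{G}\tau)|_{H}$ and then verify that it is an isomorphism. Write $\sigma:=i_{U,V}^{G}\tau$ throughout.

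First I would invoke the adjointness of parahoric induction and restriction from Proposition \ref{P:two_functions_are_adjoint} to obtain
\[
\Hom_{H}\bigl(i_{U',V}^{H}\tau,\sigma|_{H}\bigr)\;\cong\;\Hom_{T'}\bigl(\tau,r_{U',V}^{H}(\sigma|_{H})\bigr).
\]
Unwinding the definition of parahoric restriction, $r_{U',V}^{H}(\sigma|_{H})$ has underlying vector space $e_{U'}e_{V}\sigma=(\sigma^{V})^{U'}$. The hypothesis $\sigma^{U'}=\sigma^{U}$ collapses this to $(\sigma^{V})^{U}=r_{U,V}^{G}\sigma\cong\tau$ by Proposition \ref{P:two_functors_for_Iwahori_decomp}(2), and one checks that the $T'$-action coincides with the restriction to $T'$ of the natural $T$-action on $r_{U,V}^{G}\sigma$. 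Since $\tau|_{T'}$ is irreducible by hypothesis, the Hom space above is one-dimensional, producing a canonical nonzero map $\phi$. Irreducibility of $\tau|_{T'}$ also makes $i_{U',V}^{H}\tau$ an irreducible $H$-module by Proposition \ref{P:two_functors_for_Iwahori_decomp}(1), so $\phi$ is automatically injective.

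Next I would apply the reverse adjointness to produce a nonzero $\psi\colon\sigma|_{H}\to i_{U',V}^{H}\tau$; the composition $\psi\circ\phi$ is then a nonzero endomorphism of the irreducible $H$-module $i_{U',V}^{H}\tau$, hence a nonzero scalar by Schur's lemma. Consequently $\phi$ is split-injective and $\sigma|_{H}=\phi(i_{U',V}^{H}\tau)\oplus W$ with $W=\ker\psi$. Applying the exact parahoric restriction functor $r_{U',V}^{H}$ yields $r_{U',V}^{H}(W)=0$, and since the inclusion $\phi(i_{U',V}^{H}\tau)^{U'}\subseteq\sigma^{U'}$ is forced to be an equality (both spaces are isomorphic to $\tau$ as $T'$-modules and the scalar $\psi\circ\phi$ witnesses this), we also obtain $W^{U'}=0$.

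The hard part will be concluding $W=0$. Both $\sigma$ and $i_{U',V}^{H}\tau$ are finite-dimensional as irreducible smooth representations of the profinite groups $G$ and $H$, so it suffices to establish the dimension identity $\dim\sigma=\dim i_{U',V}^{H}\tau$. The hypothesis $\sigma^{U'}=\sigma^{U}$ is a strong constraint which, together with the compatible Iwahori decompositions $G=UTV$ and $H=U'T'V$ sharing the common third factor $V$, should force this equality: realizing $\sigma$ via the triple product decomposition of $G$ and unwinding the parahoric induction, one sees that the collapse of $U'$-invariants onto $U$-invariants eliminates the naive $[U:U']$-fold enlargement in dimension one would otherwise expect in passing from the $H$-induction to the $G$-induction. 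This dimension matching forces $W=0$ and yields the desired $H$-isomorphism.
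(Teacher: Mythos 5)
Your construction of the map $\phi$ is essentially the paper's: adjunction gives $\Hom_{H}(i_{U',V}\tau,\sigma|_{H})\cong\Hom_{T'}(\tau,r_{U',V}^{H}\sigma)\cong\C$, and injectivity follows from irreducibility of $i_{U',V}\tau$. (Two caveats even here: $r_{U',V}^{H}(\sigma|_{H})$ is \emph{not} literally $(\sigma^{V})^{U'}$ --- parahoric restriction is $e_{U'}e_{V}\Hcal(H)\otimes_{\Hcal(H)}(-)$, not a space of invariants --- and the paper instead identifies $r_{U',V}^{H}\sigma\cong r_{U,V}^{G}\sigma\cong\tau$ via Lemma \ref{L:IndRes2x}, which needs no invariance hypothesis; and your assertion that $\psi\circ\phi\neq 0$ requires knowing $\sigma|_{H}$ is semisimple, which is true since $H$ is profinite and $\sigma$ is smooth finite dimensional, but you never say so.)

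The genuine gap is the last step. Showing $W=0$ is exactly equivalent to the surjectivity you are trying to prove, and your argument for it is a dimension-count heuristic ("the collapse of $U'$-invariants eliminates the naive $[U:U']$-fold enlargement") with no proof; note also that $W^{U'}=0$ by itself cannot force $W=0$, since a nonzero smooth representation of $H$ can easily have no $U'$-invariants. The missing idea, and the one the paper actually uses, is a generation statement: by the Iwahori decomposition $G=UTV$, the module $i_{U,V}\tau$ is spanned by vectors $y\,e_{U}e_{V}\otimes w$ with $y\in V$, and since $V\subseteq H$ this says $\sigma$ is generated as an $H$-module by $\sigma^{U}$. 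The hypothesis $\sigma^{U'}=\sigma^{U}$ then converts this into generation by $\sigma^{U'}$; applying the $U'$-invariants functor to $\phi$ and using Proposition \ref{P:two_functors_for_Iwahori_decomp} (2), (4) (both $(i_{U',V}\tau)^{U'}$ and $\sigma^{U'}=\sigma^{U}$ are copies of $\tau$ as $T'$-modules) shows the image of $\phi$ contains $\sigma^{U}$, hence contains an $H$-generating set, hence is everything. Without this generation argument your proof does not close, so as written it is incomplete at precisely the point where the hypothesis $(i_{U,V}\tau)^{U'}=(i_{U,V}\tau)^{U}$ has to do its real work.
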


\begin{proof}
	To begin we show that $\Hom_{H}(i_{U^{\prime},V}\tau,i_{U,V}\tau)\cong \mathbb{C}$. By Proposition \ref{P:two_functions_are_adjoint},
	\begin{equation*}
		\Hom_{H}(i_{U^{\prime},V}\tau,\, i_{U,V}\tau)\cong \Hom_{T^{\prime}}(\tau,\, r_{U^{\prime},V}i_{U,V}\tau).
	\end{equation*}
	
	By Lemma \ref{L:IndRes2x}, as $T^{\prime}$-modules $r_{U^{\prime},V}i_{U,V}\tau\cong r_{U,V}i_{U,V}\tau$. By Proposition \ref{P:two_functors_for_Iwahori_decomp} \eqref{P:two_functors_for_Iwahori_decomp2}, we know $r_{U,V}i_{U,V}\tau\cong \tau$ as $T$-modules. This is also true as $T^{\prime}$-modules since $\tau$ restricted to $T^{\prime}$ remains irreducible. Thus $r_{U^{\prime},V}i_{U,V}\tau\cong \tau$ as $T^{\prime}$-modules. Therefore $\Hom_{H}(i_{U^{\prime},V}\tau,\, i_{U,V}\tau)\cong \mathbb{C}$.
	
	Now let $\phi\in \Hom_{H}(i_{U^{\prime},V}\tau,\, i_{U,V}\tau)$ be nonzero. We want to show that $\phi$ is an $H$-module isomorphism. The map $\phi$ is injective because $i_{U^{\prime},V}\tau$ is an irreducible $H$-module and $\phi\neq 0$. Thus it remains to show that $\phi$ is surjective.
	
	By the Iwahori factorization, the space $i_{U,V}\tau$ is generated as a $V$-module by elements of the form $e_{U}e_{V}\otimes w$, where $w\in \tau$. So, in particular $i_{U,V}\tau$ is generated as an $H$-module by the subspace $(i_{U,V}\tau)^{U}$. By assumption, $(i_{U,V}\tau)^{U}=(_{U,V}\tau)^{U^{\prime}}$, so $i_{U,V}\tau$ is $H$-generated by $(i_{U,V}\tau)^{U^{\prime}}$.
	
	We apply the $U^{\prime}$-invariants functor to the injection
	\begin{equation*}
		i_{U^{\prime},V}\tau \xhookrightarrow{\;\phi\;} i_{U,V}\tau
	\end{equation*}
	to get 
	\begin{equation*}
		\left(i_{U^{\prime},V}\tau\right)^{U^{\prime}}\xhookrightarrow{\;\phi^{U^{\prime}}\;} \left(i_{U,V}\tau\right)^{U^{\prime}}=\left(i_{U,V}\tau\right)^{U}.
	\end{equation*}
	By Proposition \ref{P:two_functors_for_Iwahori_decomp} (2) and (4), this injection must be an isomorphism of $T^{\prime}$-modules, as the domain and codomain are isomorphic to $\tau$. Thus the image of $\phi$ contains $\left(i_{U,V}\tau\right)^{U}$. Therefore as an $H$-module the image of $\phi$ generates $i_{U,V}\tau$. But $\phi$ is an $H$-modules homomorphism, so $\phi$ is surjective.
\end{proof}

\begin{Lem}\label{L:ResDimBound}
    Let $G$ be a profinite group with an Iwahori decomposition $(U,T,V)$. Let $\pi$ be a finite-dimensional representation of $G$. Then
    \begin{equation*}
        \dim r_{U,V}\pi\leq \dim \pi^{V}.
    \end{equation*}
\end{Lem}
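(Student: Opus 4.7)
The plan is to reduce the statement to the essentially tautological observation that $r_{U,V}\pi$ can be identified with a subspace of $\pi^V$. The key move is to swap the roles of $U$ and $V$, which is allowed by Proposition \ref{P:two_functions_are_adjoint}: since $(U,T,V)$ is an Iwahori decomposition, so is $(V,T,U)$, and the proposition gives a natural isomorphism $r_{U,V}\pi \cong r_{V,U}\pi$. So it suffices to bound $\dim r_{V,U}\pi$ by $\dim \pi^V$.

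Next, I would identify $r_{V,U}\pi$ concretely as a subspace of $\pi$. By definition $r_{V,U}\pi = e_V e_U \mathcal{H}(G)\otimes_{\mathcal{H}(G)}\pi$. Since $\pi$ is a smooth (hence nondegenerate) $\mathcal{H}(G)$-module, the canonical action map $\mathcal{H}(G)\otimes_{\mathcal{H}(G)}\pi \to \pi$, $\xi\otimes v\mapsto \xi v$, is an isomorphism. (This is standard for modules over idempotented algebras: the idempotents $e_K$, $K$ a compact open subgroup, form an approximate identity, and every $v\in\pi$ satisfies $e_K v = v$ for some $K$.) Restricting this iso to the left ideal $e_V e_U \mathcal{H}(G)$ identifies $r_{V,U}\pi$ with the subspace $e_V e_U \pi \subseteq \pi$.

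Finally, convolution by $e_V$ is (a scalar multiple of) the projection onto $\pi^V$, so $e_V w \in \pi^V$ for every $w\in\pi$. Applying this to $w = e_U v$ shows $e_V e_U \pi \subseteq \pi^V$, hence
\[
\dim r_{U,V}\pi \;=\; \dim r_{V,U}\pi \;=\; \dim e_V e_U \pi \;\leq\; \dim \pi^V.
\]
No serious obstacle is anticipated; the content is entirely the swap $r_{U,V}\cong r_{V,U}$ combined with the standard identification of $r_{U,V}$ as the image of a composition of idempotent projections, so the proof should be a few lines once unpacked.
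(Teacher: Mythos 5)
Your overall route (swap to $r_{V,U}$ via Proposition \ref{P:two_functions_are_adjoint} and view $r_{V,U}\pi$ inside $\pi^V$) is reasonable, but the load-bearing step is not justified. You cannot ``restrict'' the isomorphism $\Hcal(G)\otimes_{\Hcal(G)}\pi\cong\pi$ to the left ideal $e_Ve_U\Hcal(G)$: the space $e_Ve_U\Hcal(G)\otimes_{\Hcal(G)}\pi$ is not a priori a subspace of $\Hcal(G)\otimes_{\Hcal(G)}\pi$, because $-\otimes_{\Hcal(G)}\pi$ is only right exact and need not preserve injectivity of the inclusion $e_Ve_U\Hcal(G)\hookrightarrow\Hcal(G)$. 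The standard identification $e\Hcal(G)\otimes_{\Hcal(G)}\pi\cong e\pi$ is for an idempotent $e$, and $e_Ve_U$ is not idempotent in general. What comes for free is only that the action map $e_Ve_U\Hcal(G)\otimes_{\Hcal(G)}\pi\to\pi$ has image $e_Ve_U\pi$, i.e.\ a surjection $r_{V,U}\pi\twoheadrightarrow e_Ve_U\pi$; that gives $\dim r_{V,U}\pi\geq\dim e_Ve_U\pi$, the wrong direction. The inequality you want is precisely the injectivity of this map, which is the nontrivial content and is what your argument silently assumes.

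The gap is fillable, but it needs an input you did not supply: for instance, since $G$ is profinite its smooth representations are semisimple, so the nondegenerate right ideal $e_Ve_U\Hcal(G)$ is a direct summand of $\Hcal(G)$ as a right module and tensoring does preserve the inclusion. The paper avoids the injectivity question entirely by exploiting finite-dimensionality of $\pi$: choose an open normal subgroup $N\subseteq G$ acting trivially on $\pi$, use that $e_N$ is central in $\Hcal(G)$, and rewrite each spanning vector of $r_{U,V}\pi$ as $e_Ue_Vf\otimes v=e_Ue_Ve_N\otimes e_Vf\cdot v$ with $e_Vf\cdot v\in\pi^V$. This exhibits a linear surjection from $\pi^V$ onto $r_{U,V}\pi$ and gives the bound directly, with no swap of $U$ and $V$ and no flatness issue; you should either adopt this device or prove the injectivity you are using.
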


\begin{proof}
    Since $\pi$ is finite dimensional there exists a normal subgroup $N\subseteq G$ such that $N$ acts trivially on $\pi$. Note that for any $f\in\mathcal{H}$ we have $e_{N}*f=f*e_{N}$.
    
    By definition $r_{U,V}\pi$ is spanned by vectors of the form $e_{U}e_{V}f\otimes v$, where $f\in \mathcal{H}(G)$ and $v\in\pi$. But $e_{U}e_{V}f\otimes v=e_{U}e_{V}e_{N}\otimes e_{V}f\cdot v$. Now the result follows because $e_{V}f\cdot v\in \pi^{V}$.
\end{proof}

If $G$ is finite, then it is convenient to make the identification
\[
\Hcal(G)\cong\C[G], \quad \delta_g\mapsto g,
\]
of algebras, where $\C[G]$ is the group algebra of $G$ and $\delta_g$ is the delta function at $g\in G$. Under this identification, we have
\begin{equation}\label{E:e_H_idempotent}
e_H=\frac{1}{|H|}\sum_{h\in H}h
\end{equation}
for each subgroup $H\subseteq G$. Note that for each $g\in G$ such that $gHg^{-1}=H$ we have $ge_H=e_Hg$. Also if $h\in H$, then $he_H=e_Hh=e_H$; namely $e_H$ ``absorbs" $h$. Note also that the third condition for virtual Iwahori decomposition is trivial because one can just choose $K$ to be the trivial group.

Let us prove a few facts for finite groups.
\begin{Lem}\label{L:spanning_vectors}
Assume $G$ is a finite group with an Iwahori decomposition $(U, T, V)$ and $\tau\in\mathcal{M}(T)$. Then $i_{U, V}\tau$ (as a $\C$-vector space) is spanned by the vectors of the form
\[
ye_Ue_V\otimes v\qquad (y\in V,\; v\in\tau).
\]
\end{Lem}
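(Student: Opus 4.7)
The plan is to exploit the observation that the Iwahori bijection $U\times T\times V\to G$, $(u,t,v)\mapsto utv$, induces by inversion a bijection $V\times T\times U\to G$, $(v,t,u)\mapsto vtu$; equivalently, every $g\in G$ admits a unique factorization $g=vtu$ with $v\in V$, $t\in T$, $u\in U$. First, by the very definition $i_{U,V}\tau=\C[G]\,e_Ue_V\otimes_{\C[T]}\tau$, the space $i_{U,V}\tau$ is $\C$-spanned by vectors $ge_Ue_V\otimes w$ with $g\in G$ and $w\in\tau$, so it suffices to rewrite each such generator in the claimed form.

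Next, writing $g=vtu$ via the $VTU$-factorization, I would compute
\[
ge_Ue_V=vtu\,e_Ue_V=vt\,e_Ue_V=v\,e_U\,te_V=v\,e_Ue_V\,t,
\]
where the second equality uses the idempotent absorption $ue_U=e_U$, the third uses that $T$ normalizes $U$ (so $te_U=e_Ut$), and the fourth uses that $T$ normalizes $V$ (so $te_V=e_Vt$). Tensoring with $w\in\tau$ and using the defining $\C[T]$-tensor relation $\phi\,t\otimes w=\phi\otimes tw$ then gives
\[
ge_Ue_V\otimes w=v\,e_Ue_V\,t\otimes w=v\,e_Ue_V\otimes tw,
\]
which is exactly of the form $ye_Ue_V\otimes w'$ with $y=v\in V$ and $w'=tw\in\tau$.

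There is no real obstacle here: the only conceptual point is to reindex $G$ via the $VTU$-factorization rather than the original $UTV$-one, a move that is free once one notes that inversion swaps the two. Everything else is mechanical, using only the absorption property of the idempotents $e_U,e_V$ and the fact that $T$ normalizes both $U$ and $V$; in particular, no argument about the structure of $\tau$ beyond its status as a $\C[T]$-module is required.
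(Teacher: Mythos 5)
Your proof is correct and follows essentially the same route as the paper: the paper factors $g^{-1}=xty$ with $x\in U$, $t\in T$, $y\in V$ (so $g=y^{-1}t^{-1}x^{-1}$, i.e.\ your $VTU$-factorization), absorbs the $U$-part into $e_U$, and pushes the torus element through $e_Ue_V$ into the tensor factor exactly as you do. No gap.
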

\begin{proof}
By definition, the space $i_{U, V}\tau$ is generated by the vectors of the form $ge_Ue_V\otimes v$ for $g\in G$ and $v\in\tau$. Now, by the Iwahori decomposition, each $g^{-1}$ is written as $g^{-1}=x t y$ for some $x\in U, t\in T, y\in V$, so that $g=y^{-1}t^{-1} x^{-1}$. But $x^{-1}e_U=e_U$ by absorbing $x^{-1}$, and 
\[
t^{-1}e_Ue_V\otimes v=e_Ue_Vt^{-1}\otimes v=e_Ue_V\otimes \tau(t^{-1})v.
\]
Hence we can write $ge_Ue_V\otimes v=y^{-1}e_Ue_V\otimes \tau(t^{-1})v$. The lemma follows.
\end{proof}

The final lemma in this subsection relates parahoric induction with induction in stages where one step involves standard induction.

\begin{Lem}\label{LHoms}
	Let $G$ be a finite group with an Iwahori decomposition $(U,T,V)$. Let $H\subseteq G$ be a subgroup with an Iwahori decomposition $(U^{\prime}=H\cap U,T,V)$. Let $\tau$ be a representation of $T$. Then
	\begin{equation*}
		\Hom_{G}(\Ind_{H}^{G}i_{V,U^{\prime}}\tau,\, i_{V,U}\tau)\neq 0.
	\end{equation*}
\end{Lem}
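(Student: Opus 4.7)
The plan is to construct the desired nonzero homomorphism explicitly. Since $G$ is a finite group, identify $\Hcal(G)$ with the group algebra $\C[G]$ as in \eqref{E:e_H_idempotent}, and unwind the functors to write $i_{V,U}\tau = \C[G]\,e_Ve_U\otimes_{\C[T]}\tau$, $i_{V,U'}\tau = \C[H]\,e_Ve_{U'}\otimes_{\C[T]}\tau$, and consequently
\[
\Ind_H^G i_{V,U'}\tau\;\cong\;\C[G]\otimes_{\C[H]}\C[H]\,e_Ve_{U'}\otimes_{\C[T]}\tau\;\cong\;\C[G]\,e_Ve_{U'}\otimes_{\C[T]}\tau.
\]
Since $U'\subseteq U$, a direct calculation with \eqref{E:e_H_idempotent} gives $e_{U'}e_U=e_U=e_Ue_{U'}$ (each element of $U'$ is absorbed by $e_U$).

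The key step is to define $\Phi:\Ind_H^G i_{V,U'}\tau\to i_{V,U}\tau$ by right multiplication by $e_U$, namely
\[
\Phi(f\otimes v)\;=\;fe_U\otimes v,\qquad f\in\C[G]\,e_Ve_{U'},\ v\in\tau.
\]
If $f=g\,e_Ve_{U'}$, then $fe_U=g\,e_Ve_{U'}e_U=g\,e_Ve_U\in\C[G]\,e_Ve_U$, so the codomain is correct. Because $T$ normalizes both $U$ and $U'$, we have $te_U=e_Ut$ and $te_{U'}=e_{U'}t$ for all $t\in T$; consequently right multiplication by $e_U$ is a right $\C[T]$-module map, so $\Phi$ is well-defined on the balanced tensor product. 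It is manifestly $\C[G]$-linear because left multiplication commutes with right multiplication by $e_U$, giving $G$-equivariance.

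Finally, to see $\Phi\neq 0$, pick any nonzero $v\in\tau$ and observe
\[
\Phi(e_Ve_{U'}\otimes v)\;=\;e_Ve_{U'}e_U\otimes v\;=\;e_Ve_U\otimes v.
\]
This element is nonzero in $i_{V,U}\tau$: under the isomorphism $r_{V,U}\circ i_{V,U}\cong\mathrm{id}_{\Mcal(T)}$ from Proposition~\ref{P:two_functors_for_Iwahori_decomp}(2) (applied to the compatible decomposition $(V,T,U)$, which has the same parahoric functor as $(U,T,V)$ by Proposition~\ref{P:two_functions_are_adjoint}(1)), it maps to a generator corresponding to $v$. Hence $\Phi$ is nonzero, and so $\Hom_G(\Ind_H^G i_{V,U'}\tau,\,i_{V,U}\tau)\neq 0$. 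There is no serious obstacle here; the only bookkeeping point is the verification that $e_U$ commutes with $T$ on both sides, which follows from the assumption that $T$ normalizes $U$ in the Iwahori decomposition of $G$.
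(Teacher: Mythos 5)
Your map is precisely the one in the paper's proof: since $e_{U'}e_U=e_U$, right multiplication by $e_U$ sends $g\otimes he_Ve_{U'}\otimes v$ to $ghe_Ve_U\otimes v$, which is exactly the surjection $\Hcal(G)\otimes_{H}\Hcal(H)e_Ve_{U'}\otimes_{T}\tau\to i_{V,U}\tau$ the paper uses, and your identification of the induced module with $\C[G]e_Ve_{U'}\otimes_{\C[T]}\tau$ is harmless here. The only cosmetic difference is how nonvanishing is concluded --- the paper cites $i_{V,U}\tau\neq 0$ (Crisp--Meir--Onn) together with surjectivity, while you exhibit the nonzero image $e_Ve_U\otimes v$ via $r_{V,U}\circ i_{V,U}\cong\mathrm{id}$ --- so the proposal is correct and essentially the same argument.
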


\begin{proof} The surjective $(G,T)$-bimodule map $\Hcal(G)\otimes_{H}\Hcal(H)e_{V}e_{U^{\prime}} \rightarrow \Hcal(G)e_{V}e_{U}$ defined by $g\otimes_{H} he_{V}e_{U^{\prime}}\mapsto gh e_{V}e_{U}$ induces a surjective $G$-module map 
	\begin{equation*}
		\Hcal(G)\otimes_{H}\Hcal(H)e_{V}e_{U^{\prime}}\otimes_{T}\tau \longrightarrow \Hcal(G)e_{V}e_{U}\otimes_{T}\tau=i_{V,U}\tau.
	\end{equation*}
	Since $i_{V,U}\tau\neq 0$ by \cite[Theorem 2.18 (6)]{CMO19}, the result follows.
\end{proof}



\subsection{Construction of Iwahori types}\label{IwahoriType}


Now, we apply the above described theory of parahoric induction to our pseudo-spherical representations $\tau$. The basic idea is that we will construct a representation of the Iwahori subgroup $\It$ by ``parahorically inducing" $\tau$ to $\It$, and prove that this representation behaves well with respect to the Weyl group by using the machinery of parahoric induction. The main results are stated in Proposition \ref{IRep}. (We will later prove in Theorem \ref{IType} that this representation is an Iwahori type of $\Gt$ in the sense of Bushnell-Kutzko \cite[(4.1) Definition]{BK98}.)

Recall from \S \ref{SSecSplittings} that we have a splitting $\s:\Gamma(2e)\to\Kt$ whose image is normal in $\Kt$. The kernel of the representation we will construct will contain $\s(\Gamma(2e))$. Hence we almost always work with ``representations modulo ${\Gamma(2e)}$", so that we view these representations as representations of subgroups of the finite quotient $\widetilde{\Kf}=\Kt\slash \s(\Gamma(2e))$. 

For this purpose, let us introduce the following notational convention. For each subgroup $H\subseteq K$, we denote its image in $K\slash\Gamma(2e)$ by the corresponding sans-serif font $\Hf$. To be precise
\[
\Hf:=H\slash (\Gamma(2e)\cap H)\subseteq K\slash \Gamma(2e)=\Kf.
\]
Further, denote the image of $\Ht$ in $\Kft$ by $\widetilde{\Hf}$, so that we have
\[
\begin{tikzcd}
    \Kt\arrow[r, phantom, "\supseteq"]\ar[d]&\Ht\arrow[r, two heads]\ar[d]&\widetilde{\Hf}\arrow[r, phantom, "\subseteq"]\ar[d]&\makebox[0pt][l]{$\widetilde{\Kf}=\Kt\slash\s(\Gamma(2e))$}\ar[d, shift left=.5em]\\
    K\arrow[r, phantom, "\supseteq"]&H\arrow[r, two heads]&\Hf\arrow[r, phantom, "\subseteq"]&\makebox[0pt][l]{$\Kf=K\slash\Gamma(2e)$\rlap{\; .}}
\end{tikzcd}
\]
In particular, we often use the groups
\[
\Uf_{j}^{\pm}\qand  \Uf_{\alpha, j}\qquad (j\in\Z_{\geq 0}),
\]
where the corresponding $U_j^{\pm}$ and $U_{\alpha, j}$ are recalled from \eqref{E:notation_U_alpha_j}. We often set
\[
\Uf_j=\Uf_j^{+}.
\]
 We are particularly interested in the case when $j=0, 1$.
These groups are usually viewed as subgroups of $\widetilde{\Kf}$ via the splittings $\s^{\pm}:U_0^{\pm}\to\Kt$, so that we have
\[
\s^{\pm}:\Uf_0^{\pm}\xhookrightarrow{\qquad}\widetilde{\Kf}.
\]
The images of the elements $\xt_{\alpha}(u)$ and $\htt_{\alpha}(t)$ in $\widetilde{\Kf}$ are to be denoted by the same symbol.

We should mention that the three groups 
\[
\widetilde{\Gammaf}(1),\quad \widetilde{\If}=\widetilde{\Gammaf}_0(1),\quad \widetilde{\If}_2=\widetilde{\Gammaf}_0(2)
\]
have Iwahori decompositions
\[
(\Uf_1, \widetilde{\Tf}_1, \Uf_1^-),\quad (\Uf_0, \widetilde{\Tf}_0, \Uf_1^-),\quad (\Uf_0, \widetilde{\Tf}_0, \Uf_2^-),
\]
respectively. Also the group $\widetilde{\Kf}$ has a virtual Iwahori decomposition $(\Uf_0,\widetilde{\Tf}_0, \Uf_0^-)$.

\quad

Now, let $\tau$ be a pseudo-spherical representation of $\Tt_0$. Since $T_{2e}\subseteq T_{R}$ and $T_R$ is in the kernel of $\tau$, we view $\tau$ as a representation of 
\[
\widetilde{\Tf}_0=\Tt_0\slash T_{2e}=\Tt_0\slash \left(\Tt_0\cap \s(\Gamma(2e))\right).
\]
We continue to write $\tau$ for this representation of $\widetilde{{\Tf}}_0$. 

We then parahorically induce $\tau$ to a representation of $\widetilde{\If}$; namely we set
\[
\sigma:=i_{{\Uf},{\Uf}^{-}_{1}}^{\Ift}\tau,
\]
which is an irreducible representation of $\widetilde{{\If}}$ by Proposition \ref{P:two_functors_for_Iwahori_decomp}. We also consider 
\[
\sigma_{1}:=i_{{\Uf}_{1},{\Uf}^{-}_{1}}^{\Gammaft(1)}\tau,
\]
which is an irreducible representation of $\widetilde{\Gammaf}(1)$ by Proposition \ref{P:two_functors_for_Iwahori_decomp}. Note that $\widetilde{\Gammaf}(1)\subseteq\widetilde{\If}$. One of the main results in this section is to show that
\[
\sigma|_{\widetilde{\Gammaf}(1)}\cong\sigma_1,
\]
which will be proven in Proposition \ref{IRep} among other things.

We begin with a few technical lemmas.

\begin{Lem}\label{WCompat}
	Let $w\in W$. Then $({\Uf}_{0},{\Tf}_{0},{\Uf}^{-}_{0})$ and $(\,^{w}{\Uf}_{0},{\Tf}_{0},\,^{w}{\Uf}^{-}_{0})$ are compatible virtual Iwahori decompositions of ${\Kf}$. Specifically, 
	\begin{align*}
		{\Uf}_0=&({\Uf}_0\cap \,^{w}{\Uf}_0)({\Uf}_0\cap \,^{w}{\Uf}_0^{-}),\\
		{\Uf}_0^{-}=&({\Uf}_0^{-}\cap \,^{w}{\Uf}_0)({\Uf}_0^{-}\cap \,^{w}{\Uf}_0^{-}),\\
		\,^{w}{\Uf}_0=&(\,^{w}{\Uf}_0\cap {\Uf}_0)(\,^{w}{\Uf}_0\cap {\Uf}_0^{-}),\\
		\,^{w}{\Uf}_0^{-}=&(\,^{w}{\Uf}_0^{-}\cap {\Uf}_0)(\,^{w}{\Uf}_0^{-}\cap {\Uf}_0^{-}).\\
	\end{align*}
	The same also holds for ${\Gammaf}(1)$ with the Iwahori factorization $({\Uf}_{1},{\Tf}_{1},{\Uf}^{-}_{1})$.
\end{Lem}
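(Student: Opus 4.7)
The plan is to reduce each of the four identities to the standard root-theoretic factorization of unipotent subgroups associated to disjoint closed subsets of roots. First, I would fix a lift $\nt\in N_K(T_0)$ of $w$; by \eqref{MR5}–\eqref{MR6} the conjugation $\Int(\nt)$ sends each $U_{\alpha,0}$ bijectively onto $U_{w\alpha,0}$, and therefore
\[
{}^{\nt}U_0=U_{w\Phi^+}(\Ocal),\qquad {}^{\nt}U_0^-=U_{w\Phi^-}(\Ocal).
\]
Taking intersections with $U_0$ and $U_0^-$ and using that these are root subgroups over $\Ocal$ gives
\[
U_0\cap{}^{\nt}U_0=U_{\Phi^+\cap w\Phi^+}(\Ocal),\quad U_0\cap{}^{\nt}U_0^-=U_{\Phi^+\cap w\Phi^-}(\Ocal),
\]
and analogously for the two intersections inside ${}^{\nt}U_0^{\pm}$.

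The main step is the standard Chevalley fact that if $\Psi$ is a closed subset of $\Phi^+$ and $\Psi=A\sqcup B$ is a partition into two subsets each closed under root addition (i.e.\ if $\alpha,\beta\in A$ and $\alpha+\beta\in\Phi$ then $\alpha+\beta\in A$, and similarly for $B$), then the product map $U_A(\Ocal)\times U_B(\Ocal)\to U_{\Psi}(\Ocal)$ is a bijection. This is proved by induction on the height of roots using \eqref{R2}. Applied to $\Phi^+=(\Phi^+\cap w\Phi^+)\sqcup(\Phi^+\cap w\Phi^-)$—both parts are closed, since $w\Phi^{\pm}$ are closed and the intersection of closed subsets is closed—this yields the first identity. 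The other three identities follow by the same argument, replacing $\Phi^+$ by $\Phi^-$ or swapping the roles of $w$ and $w^{-1}$.

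Finally, I would pass to the quotients $\Kf=K/\Gamma(2e)$ and $\Gammaf(1)=\Gamma(1)/\Gamma(2e)$. Since $\Gamma(2e)$ is normal in $K$ and $\Wcal$ normalizes $\Gamma(2e)$ (this can be read off from the Steinberg relations together with Proposition \ref{Gamma12eSplitting}), each of the four identities descends directly to the stated identities in $\Kf$. The version for $\Gammaf(1)$ is obtained by running the entire argument with $\Ocal$ replaced by $\varpi\Ocal$; the commutator formula \eqref{R2} preserves $\varpi\Ocal$, so the induction on height goes through unchanged.

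No serious obstacle is expected: the argument is essentially bookkeeping around the elementary root-subset factorization principle. The only nontrivial point one must verify is that the two subsets $\Phi^+\cap w\Phi^{\pm}$ are indeed closed, which is immediate, and that the lift $\nt$ can be chosen consistently; since the identities depend only on the conjugation action of $w$ on root subgroups, any lift works.
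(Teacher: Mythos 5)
Your proposal is correct and is essentially the paper's argument: the paper's one-line proof likewise rests on the identity ${\Uf}_0\cap{}^{w}{\Uf}_0=\prod_{\alpha\in\Phi^{+}\cap w\Phi^{+}}{\Uf}_{\alpha,0}$ (and its analogues), from which the factorizations follow by the standard decomposition of a unipotent group over a closed set of roots partitioned into closed subsets. Your additional bookkeeping (choice of lift, commutator-relation induction, descent mod $\Gamma(2e)$, and the $\varpi\Ocal$ variant for ${\Gammaf}(1)$) just spells out what the paper leaves implicit.
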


\begin{proof}
	This follows because ${\Uf}_0\cap \,^{w}{\Uf}_0=\prod_{\alpha\in \Phi^{+}\cap w\Phi^{+}}{\Uf}_{\alpha,0}$, along with similar identities for the other intersections.
\end{proof}

\begin{Lem}\label{LemVarIndRootMove1}
	Let $\alpha\in\Delta$ and $u\in\mathcal{O}\smallsetminus\{0\}$. Then in the group algebra $\mathbb{C}[\;\Ift\;]$
	\begin{equation*}
		e_{{\Uf}_{-\alpha,1}}\xt_{\alpha}(u)e_{{\Uf}_{1}^{-}}e_{{\Uf}_{0}}=e_{{\Uf}_{1}^{-}}\frac{1}{\left|\varpi\Ocal\slash 4\Ocal\right|}\sum_{v\in \varpi\mathcal{O}/4\mathcal{O}}(1+uv,-u)\htt_{\alpha}(1+uv) e_{{\Uf}_{0}}.
	\end{equation*}
If $u\in \Ocal^{\times}$, note that $\frac{1}{\left|\varpi\Ocal\slash 4\Ocal\right|}\sum_{v\in \varpi\mathcal{O}/4\mathcal{O}}(1+uv,-u)\htt_{\alpha}(1+uv)=X_{\alpha,-u,1}$. (Recall \S \ref{SSSqRoot}.)
\end{Lem}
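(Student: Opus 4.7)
The approach is to expand the left idempotent as an average and apply a Bruhat-type swap identity derived from Proposition \ref{PMRootSwap}. Writing
\[
e_{\Uf_{-\alpha,1}}=\frac{1}{N}\sum_{v\in\varpi\Ocal/4\Ocal}\xt_{-\alpha}(v), \qquad N:=\left|\varpi\Ocal/4\Ocal\right|,
\]
and invoking the swap identity
\[
\xt_{-\alpha}(v)\xt_\alpha(u)=(1+uv,-u)\,\xt_\alpha\!\Bigl(\tfrac{u}{1+uv}\Bigr)\,\htt_\alpha\!\Bigl(\tfrac{1}{1+uv}\Bigr)\,\xt_{-\alpha}\!\Bigl(\tfrac{v}{1+uv}\Bigr)
\]
(obtained from Proposition \ref{PMRootSwap} by inversion, the substitutions $t\to -t$, $u\to -u$, and the identity $\htt_\alpha(s)^{-1}=(s,-1)\htt_\alpha(s^{-1})$, and valid since $1+uv\in\Ocal^\times$), the rightmost factor $\xt_{-\alpha}(v/(1+uv))\in\Uf_{-\alpha,1}\subseteq\Uf_1^-$ is absorbed by $e_{\Uf_1^-}$. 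The torus factor $\htt_\alpha(1/(1+uv))$ then commutes past both $e_{\Uf_1^-}$ and $e_{\Uf_0}$ because $\widetilde{\Tf}_0$ normalizes each of these unipotent subgroups and conjugation merely reindexes the defining sums.

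To match the right-hand side, I would reindex by $v\mapsto v':=v/(1+uv)$, a bijection of $\varpi\Ocal/4\Ocal$ with inverse $v'\mapsto v'/(1-uv')$; well-definedness modulo $4\Ocal$ is immediate because $1+uv\in\Ocal^\times$. Under this substitution together with a further application of $\htt_\alpha(t^{-1})=(t,-1)\htt_\alpha(t)^{-1}$, the torus factor $\htt_\alpha(1/(1+uv))$ converts into $\htt_\alpha(1+uv)$ up to the Hilbert symbol $(1+uv,-1)$, which then combines with the leading $(1+uv,-u)$ to reproduce the stated scalar after collecting signs via Proposition \ref{P:property_Hilber_symbol_not_in_Appendix}.

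The main obstacle is eliminating the leftover factor $\xt_\alpha(u/(1+uv))\in\Uf_{\alpha,0}\subseteq\Uf_0$ that survives on the \emph{left} of $e_{\Uf_1^-}$, where $e_{\Uf_0}$ cannot directly absorb it. My plan is to conjugate $\xt_\alpha(u/(1+uv))\,e_{\Uf_1^-}=e_{\xt_\alpha(u/(1+uv))\Uf_1^-\xt_\alpha(-u/(1+uv))}\,\xt_\alpha(u/(1+uv))$; using Proposition \ref{PMRootSwap} once more to describe the conjugated subgroup (whose elements differ from those of $\Uf_1^-$ by $\htt_\alpha$- and $\xt_\alpha$-corrections), and then absorbing the $\xt_\alpha$-corrections back into $e_{\Uf_0}$ and averaging the $\htt_\alpha$-corrections across the sum, one reduces the expression to a scalar combination of terms $e_{\Uf_1^-}\htt_\alpha(\cdot)e_{\Uf_0}$. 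The key structural input for this reduction is Proposition \ref{P:property_Hilber_symbol_not_in_Appendix}\eqref{ORad}, which places $1+\varpi^{2e}\Ocal$ inside the integral radical $R$ and ensures the residual Hilbert symbol corrections in this range are trivial. The final observation that the sum equals $X_{\alpha,-u,1}$ when $u\in\Ocal^\times$ follows from the definition \eqref{E:X_alpha_u_j_def} under the substitution $w=1+uv$, which bijects $\varpi\Ocal/4\Ocal$ with $U_1/U_{2e}$.
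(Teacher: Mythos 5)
Your first two steps (expanding $e_{\Uf_{-\alpha,1}}$, swapping via the $-\alpha$ version of Proposition \ref{PMRootSwap}, absorbing $\xt_{-\alpha}(v/(1+uv))$ into $e_{\Uf_1^-}$, commuting the torus factor) are fine, but the step you yourself flag as the main obstacle is a genuine gap, and the plan you sketch for it does not close. After your swap, each term carries a stray factor $\xt_\alpha(u/(1+uv))$ to the \emph{left} of $e_{\Uf_1^-}$. Writing $\xt_\alpha(s)e_{\Uf_1^-}=e_{{}^{\xt_\alpha(s)}\Uf_1^-}\,\xt_\alpha(s)$ only relocates the problem: expanding the conjugated idempotent via $\Uf_1^-=\Uf_{\Phi^-_\alpha,1}\Uf_{-\alpha,1}$ and Proposition \ref{PMRootSwap}, one has ${}^{\xt_\alpha(s)}\xt_{-\alpha}(w)=(1+sw,-s)\,\xt_{-\alpha}(\tfrac{w}{1+sw})\,\htt_\alpha(1+sw)\,\xt_\alpha(\tfrac{-s^2w}{1+sw})$, so after the trailing $\xt_\alpha$-corrections are absorbed by $e_{\Uf_0}$ each $v$-term becomes a $w$-sum of expressions $e_{\Uf_{\Phi^-_\alpha,1}}\,\xt_{-\alpha}(\tfrac{w}{1+sw})\,\htt_\alpha(1+sw)(\cdots)e_{\Uf_0}$ in which the bare $\xt_{-\alpha}$-factors have no adjacent idempotent to absorb them, and the $w$-dependent torus factors and symbols prevent you from reassembling $e_{\Uf_{-\alpha,1}}$; collapsing the sum back just returns $\xt_\alpha(s)e_{\Uf_1^-}$, so the manipulation is circular exactly where the content of the lemma lies. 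The missing idea — and the paper's actual first move — is to deal with this \emph{before} creating the stray factor: factor $e_{\Uf_1^-}=e_{\Uf_{\Phi^-_\alpha,1}}e_{\Uf_{-\alpha,1}}$, use that $\xt_\alpha(u)$ normalizes $\Uf_{\Phi^-_\alpha,1}$ to rewrite the left-hand side as $e_{\Uf_1^-}\xt_\alpha(u)e_{\Uf_{-\alpha,1}}e_{\Uf_0}$, and only then apply Proposition \ref{PMRootSwap} to $\xt_\alpha(u)e_{\Uf_{-\alpha,1}}$; the resulting $\xt_{-\alpha}$-factor is then adjacent to $e_{\Uf_1^-}$ on the left and the $\xt_\alpha$-factor adjacent to $e_{\Uf_0}$ on the right, and both are absorbed in one stroke.

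A secondary inaccuracy: converting $\htt_\alpha(1/(1+uv))$ into $\htt_\alpha(1+uv)$ "up to the symbol $(1+uv,-1)$" via $\htt_\alpha(t^{-1})=(t,-1)\htt_\alpha(t)^{-1}$ is not legitimate here, because the lemma is an identity in the group algebra $\C[\Ift]$, not of operators on $\tau$: $\htt_\alpha(t)^{-1}$ differs from $\htt_\alpha(t)$ by an $\htt_\alpha(t^{2})$-type factor which is a nontrivial element of $\Ift$ once $e\geq 2$. The correct bookkeeping is that the reindexing alone does the job, since with $v'=v/(1+uv)$ one has $1/(1+uv)=1-uv'$, and $v'\mapsto -v'$ then restores the form $\htt_\alpha(1+uv)$ with symbol $(1+uv,-u)$ — but this matters only after the stray $\xt_\alpha$-factor has been eliminated, which is the real gap.
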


\begin{proof}
Let ${\Uf}_{\Phi^{-}_{\alpha},1}\subseteq {\Kf}$ be the subgroup generated by the subgroups ${\Uf}_{\beta,1}$, where $\beta\in \Phi^{-}\smallsetminus\{-\alpha\}$. Note that the multiplication map defines a set bijection ${\Uf}_{\Phi^{-}_{\alpha},1}\times {\Uf}_{-\alpha,1}\cong {\Uf}_{1}^{-}$, and the order of the factors can be reversed. Thus one can readily see from \eqref{E:e_H_idempotent} that the idempotent $e_{{\Uf}_{1}^{-}}$ can be factored as
\[
	e_{{\Uf}_{1}^{-}}=e_{{\Uf}_{\Phi^{-}_{\alpha},1}}e_{{\Uf}_{-\alpha,1}}=e_{{\Uf}_{-\alpha,1}}e_{{\Uf}_{\Phi^{-}_{\alpha},1}}.
\]
Hence we have
	\begin{equation*}
		e_{{\Uf}_{-\alpha,1}}\xt_{\alpha}(u)e_{{\Uf}_{\Phi^{-}_{\alpha},1}}=e_{{\Uf}_{-\alpha,1}}e_{{\Uf}_{\Phi^{-}_{\alpha},1}}\xt_{\alpha}(u)=e_{{\Uf}_{1}^{-}}\xt_{\alpha}(u),
	\end{equation*}
    where we used that $\xt_{\alpha}(u)$ normalizes ${\Uf}_{\Phi^{-}_{\alpha},1}$.
	
	Using Proposition \ref{PMRootSwap} we have
	\begin{equation*}
		\xt_{\alpha}(u)e_{{\Uf}_{-\alpha,1}}=\frac{1}{\left|\varpi\Ocal\slash 4\Ocal\right|}\sum_{v\in \varpi\mathcal{O}/4\mathcal{O}}(1+uv,-u)\xt_{-\alpha}(\frac{v}{1+uv})\htt_{\alpha}(1+uv) \xt_{\alpha}(\frac{u}{1+uv}).
	\end{equation*}
	Now the result follows because 
	\begin{align*}
		&e_{{\Uf}_{-\alpha,1}}\sum_{v\in \varpi\mathcal{O}/4\mathcal{O}}(1+uv,-u)\xt_{-\alpha}(\frac{v}{1+uv})\htt_{\alpha}(1+uv) \xt_{\alpha}(\frac{u}{1+uv})e_{{\Uf}_{0}}\\
		&=e_{{\Uf}_{-\alpha,1}}\sum_{v\in \varpi\mathcal{O}/4\mathcal{O}}(1+uv,-u)\htt_{\alpha}(1+uv)e_{{\Uf}_{0}}
	\end{align*}
    by letting $e_{{\Uf}_{-\alpha,1}}$ absorb $\xt_{-\alpha}(\frac{v}{1+uv})$.
\end{proof}

\begin{Lem}\label{LemVarIndRootMove2}
	Let $\alpha\in\Delta$, $u\in\mathcal{O}$ and $w\in \tau$. Then in the $\widetilde{{\If}}$-module $\sigma=i_{{\Uf}_{1}^{-},{\Uf}_{0}}\tau$
	\begin{equation*}
		e_{{\Uf}_{1}}\xt_{\alpha}(u)e_{{\Uf}_{1}^{-}}e_{{\Uf}_{0}}\otimes w = e_{{\Uf}_{1}}e_{{\Uf}_{1}^{-}}e_{{\Uf}_{0}}\otimes w,
	\end{equation*}
    where we recall that the tensor product is over $\C[\Tft_0]$.
\end{Lem}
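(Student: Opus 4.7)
I would split on whether $u\in\varpi\Ocal$ or $u\in\Ocal^\times$. The case $u\in\varpi\Ocal$ is immediate: $\xt_\alpha(u)\in{\Uf}_{\alpha,1}\subseteq{\Uf}_1$, so $e_{{\Uf}_1}\xt_\alpha(u)=e_{{\Uf}_1}$ by absorption.

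For $u\in\Ocal^\times$, the first observation is that ${\Uf}_1$ is normal in ${\Uf}_0$. By the Steinberg commutator relation \eqref{E:MR2'}, for $\gamma,\beta\in\Phi^+$, $s\in\Ocal$, and $t\in\varpi\Ocal$, one has $[\xt_\gamma(s),\xt_\beta(t)]\in{\Uf}_{\gamma+\beta,1}\subseteq{\Uf}_1$ (and equals $1$ when $\gamma+\beta\notin\Phi$), so conjugation by any $\xt_\gamma(s)\in{\Uf}_0$ preserves ${\Uf}_1$. Consequently $e_{{\Uf}_1}\xt_\alpha(u)=\xt_\alpha(u)e_{{\Uf}_1}$, and the identity in the lemma reduces to showing that the ${\Uf}_1$-invariant vector $X:=e_{{\Uf}_1}e_{{\Uf}_1^-}e_{{\Uf}_0}\otimes w$ is fixed by $\xt_\alpha(u)$.

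To compute $\xt_\alpha(u)X$ I would follow the template of Lemma \ref{LemVarIndRootMove1}: factor $e_{{\Uf}_1^-}=e_{{\Uf}_{-\alpha,1}}e_{{\Uf}_{\Phi^-_\alpha,1}}$ with $\Phi^-_\alpha=\Phi^-\smallsetminus\{-\alpha\}$, apply Proposition \ref{PMRootSwap} to expand
\[
\xt_\alpha(u)\,e_{{\Uf}_{-\alpha,1}}=\frac{1}{|{\Uf}_{-\alpha,1}|}\sum_{v\in\varpi\Ocal/4\Ocal}(1+uv,-u)\,\xt_{-\alpha}\!\left(\tfrac{v}{1+uv}\right)\,\htt_\alpha(1+uv)\,\xt_\alpha\!\left(\tfrac{u}{1+uv}\right),
\]
absorb the resulting $\xt_\alpha(u/(1+uv))\in{\Uf}_0$ into $e_{{\Uf}_0}$ after commuting it past $e_{{\Uf}_{\Phi^-_\alpha,1}}$ (which it normalizes since $\alpha$ is simple, by the same argument used in the proof of Lemma \ref{LemVarIndRootMove1}), and move $\htt_\alpha(1+uv)$ through $e_{{\Uf}_1}$ and $e_{{\Uf}_{\Phi^-_\alpha,1}}$ to act on $w$ via $\tau$.

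The main obstacle will be showing that the resulting double sum---involving $e_{{\Uf}_1}\xt_{-\alpha}(v/(1+uv))\,e_{{\Uf}_{\Phi^-_\alpha,1}}e_{{\Uf}_0}$ weighted by $(1+uv,-u)\tau(\htt_\alpha(1+uv))$---collapses back to $X$. In Lemma \ref{LemVarIndRootMove1} the outer averaging by $e_{{\Uf}_{-\alpha,1}}$ absorbed the $\xt_{-\alpha}(v/(1+uv))$ factors, but here the outer averaging is by $e_{{\Uf}_1}$ and ${\Uf}_{-\alpha,1}\not\subseteq{\Uf}_1$, so this direct absorption is unavailable. The cancellation must instead come from the pseudo-spherical property of $\tau$: the weights $(1+uv,-u)\tau(\htt_\alpha(1+uv))$ depend only on the class of $1+uv$ in $\Ocal^\times/R$ where $R=(1+\varpi^{2e}\Ocal)\Ocal^{\times 2}$ (Proposition \ref{P:property_Hilber_symbol_not_in_Appendix}), and combining this with the additional $\Uf_{\Phi^-_\alpha,1}$-averaging and the identities for $X_{\alpha,-u,1}$ from \S\ref{SSSqRoot} (in particular Corollary \ref{XInvert}) should force the sum to reduce to $X$. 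Carrying out this final step cleanly is the crux.
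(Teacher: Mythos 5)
Your setup is fine as far as it goes (the trivial case $u\in\varpi\Ocal$, the normality of ${\Uf}_1$ in ${\Uf}_0$, factoring $e_{{\Uf}_1^-}=e_{{\Uf}_{-\alpha,1}}e_{{\Uf}_{\Phi^-_\alpha,1}}$, and the forward application of Proposition \ref{PMRootSwap}), but the argument stops exactly where the work is. After your expansion you are left with the weighted sum of vectors $(1+uv,-u)\,e_{{\Uf}_1}\,\xt_{-\alpha}\!\bigl(\tfrac{v}{1+uv}\bigr)\,e_{{\Uf}_{\Phi^-_\alpha,1}}e_{{\Uf}_0}\otimes\tau(\htt_\alpha(1+uv))w$, and the mechanisms you invoke to collapse it do not apply: the elements $\xt_{-\alpha}(v/(1+uv))$ lie in the negative root group ${\Uf}_{-\alpha,1}$, which no idempotent present averages over, so the summands are genuinely different vectors of $\sigma$; the observation that the scalar weights only depend on $1+uv$ modulo $R$ controls the coefficients but says nothing about how these vectors recombine, and Corollary \ref{XInvert} by itself gives no identity in $\sigma$. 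Proving that this particular sum equals $e_{{\Uf}_1}e_{{\Uf}_1^-}e_{{\Uf}_0}\otimes w$ is essentially equivalent to the lemma, so as written there is a genuine gap at the decisive step — which you yourself flag as "the crux."

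The missing idea in the paper's proof is to run the computation in the opposite order. Since $\tau(X)$ is invertible for $X=X_{-\alpha,u,1}=X_{\alpha,-u,1}\in\C[\Tft_0]$ (Corollary \ref{XInvert} and Lemma \ref{Xpmalpha}), one first inserts it: $e_{{\Uf}_1}\xt_\alpha(u)e_{{\Uf}_1^-}e_{{\Uf}_0}\otimes w=e_{{\Uf}_1}\xt_\alpha(u)Xe_{{\Uf}_1^-}e_{{\Uf}_0}\otimes\tau(X)^{-1}w$, moving $X$ across the idempotents (which $\Tft_0$ normalizes) and across the tensor sign. Expanding $X$ as its defining average of terms $(1+uv,u)\,\htt_{-\alpha}(1+uv)$ over $v\in\varpi\Ocal/4\Ocal$, replacing $\xt_\alpha(u)$ by $\xt_\alpha(u/(1+uv))$ modulo $e_{{\Uf}_1}$, and pulling $\xt_{-\alpha}(v/(1+uv))$ out of $e_{{\Uf}_1^-}$, one then applies Proposition \ref{PMRootSwap} \emph{in reverse} to reassemble each summand into $\xt_{-\alpha}(v)\xt_\alpha(u)$; summing over $v$ manufactures the idempotent $e_{{\Uf}_{-\alpha,1}}$ to the \emph{left} of $\xt_\alpha(u)$. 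At that point Lemma \ref{LemVarIndRootMove1} applies verbatim and produces a factor $X_{\alpha,-u,1}=X$ between $e_{{\Uf}_1^-}$ and $e_{{\Uf}_0}$, which cancels $\tau(X)^{-1}$ and yields $e_{{\Uf}_1}e_{{\Uf}_1^-}e_{{\Uf}_0}\otimes w$. So the ingredients you name are the right ones, but they must be deployed by inserting $X\otimes\tau(X)^{-1}$ before any root swap, not by expanding $\xt_\alpha(u)e_{{\Uf}_{-\alpha,1}}$ forwards and attempting to resum afterwards.
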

\begin{proof}
        This is trivial if $u\in\varpi\mathcal{O}$. So assume $u\in\mathcal{O}^{\times}$. Let $X=X_{-\alpha,u,1}=X_{\alpha, -u, 1}\in\mathbb{C}[\Tft_{0}]$ be as in \eqref{E:X_alpha_u_j_def}. By Corollary \ref{XInvert}, the operator $\tau(X)$ is invertible. Thus
	\begin{align*}
		&e_{{\Uf}_{1}}\xt_{\alpha}(u)e_{{\Uf}_{1}^{-}}e_{{\Uf}_{0}}\otimes w\\
        = & e_{{\Uf}_{1}}\xt_{\alpha}(u)e_{{\Uf}_{1}^{-}}e_{{\Uf}_{0}}X\otimes \tau(X)^{-1}w\\
        = & e_{{\Uf}_{1}}\xt_{\alpha}(u)Xe_{{\Uf}_{1}^{-}}e_{{\Uf}_{0}}\otimes \tau(X)^{-1}w\\
		= & e_{{\Uf}_{1}}\frac{1}{\left|\varpi\Ocal\slash 4\Ocal\right|}\sum_{v\in \varpi\Ocal/4\Ocal}(1+uv,u)\xt_{\alpha}(u)\htt_{-\alpha}(1+uv)e_{{\Uf}_{1}^{-}}e_{{\Uf}_{0}}\otimes \tau(X)^{-1}w,
	\end{align*}
    where for the second equality we used $e_{{\Uf}_{1}^{-}}e_{{\Uf}_{0}}X=Xe_{{\Uf}_{1}^{-}}e_{{\Uf}_{0}}$, noting that $\htt_{-\alpha}(1+uv)$ normalizes both $\Uf_1^-$ and $\Uf_0$. 
    
    Note that
    \[
     e_{\Uf_1}\xt_{\alpha}(u)=e_{\Uf_1}\xt_{\alpha}(\frac{u}{1+uv}),
    \]
    because for $v\in\varpi\Ocal$ we have $u\equiv \frac{u}{1+uv}$ modulo $\varpi$. Also
    \[
    e_{{\Uf}_{1}^{-}}=\xt_{-\alpha}(\frac{v}{1+uv})e_{{\Uf}_{1}^{-}}
    \]
    by absorbing $\xt_{-\alpha}(\frac{v}{1+uv})$. Hence we can write
	\begin{align*}
		&e_{{\Uf}_{1}}\sum_{v\in \varpi\Ocal/4\Ocal}(1+uv,u)\xt_{\alpha}(u)\htt_{-\alpha}(1+uv)e_{{\Uf}_{1}^{-}}e_{{\Uf}_{0}}\otimes \tau(X)^{-1}w \\
		= & e_{{\Uf}_{1}}\sum_{v}(1+uv,u)\xt_{\alpha}(\frac{u}{1+uv})\htt_{-\alpha}(1+uv)\xt_{-\alpha}(\frac{v}{1+uv})e_{{\Uf}_{1}^{-}}e_{{\Uf}_{0}}\otimes \tau(X)^{-1}w\\
        = & e_{{\Uf}_{1}}\left(\sum_{v}\xt_{-\alpha}(v)\right)\xt_{\alpha}(u)e_{{\Uf}_{1}^{-}}e_{{\Uf}_{0}}\otimes \tau(X)^{-1}w\quad (\text{Proposition \ref{PMRootSwap}})\\
		= & \left|\varpi\Ocal\slash 4\Ocal\right|e_{{\Uf}_{1}}e_{{\Uf}_{-\alpha,1}}\xt_{\alpha}(u)e_{{\Uf}_{1}^{-}}e_{{\Uf}_{0}}\otimes \tau(X)^{-1}w.
	\end{align*}
	
	Now we apply Lemmas \ref{LemVarIndRootMove1} and \ref{Xpmalpha} to get 
		\begin{equation*}
		e_{{\Uf}_{1}}e_{{\Uf}_{-\alpha,1}}\xt_{\alpha}(u)e_{{\Uf}_{1}^{-}}e_{{\Uf}_{0}}\otimes \tau(X)^{-1}w = e_{{\Uf}_{1}}e_{{\Uf}_{1}^{-}}e_{{\Uf}_{0}}\otimes w.
	\end{equation*}
    The lemma is proven.
\end{proof}

This lemma implies
\begin{Prop}\label{UInvar}
	Let $\alpha\in \Delta$. Then we have 
	\begin{equation*}
		\sigma^{{\Uf_0}}=\sigma^{{\Uf_0}\cap \,^{w_{\alpha}}\widetilde{{\If}}}=\sigma^{{\Uf}_{1}}.
	\end{equation*}
\end{Prop}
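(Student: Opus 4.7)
The plan is first to verify the inclusions $\Uf_1\subseteq\Uf_0\cap\,^{w_\alpha}\widetilde{\If}\subseteq\Uf_0$. Conjugation by $w_\alpha$ sends the factor $\Uf_{-\alpha,1}$ of $\widetilde{\If}$ to $\Uf_{\alpha,1}$ and fixes the remaining positive root subgroups, so
\[
\Uf_0\cap\,^{w_\alpha}\widetilde{\If}=\Uf_{\alpha,1}\prod_{\beta\in\Phi^+\smallsetminus\{\alpha\}}\Uf_{\beta,0},
\]
which evidently contains $\Uf_1=\prod_{\beta\in\Phi^+}\Uf_{\beta,1}$ and sits inside $\Uf_0$. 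Taking invariants reverses these inclusions, yielding the chain $\sigma^{\Uf_0}\subseteq\sigma^{\Uf_0\cap\,^{w_\alpha}\widetilde{\If}}\subseteq\sigma^{\Uf_1}$, so everything reduces to proving the reverse containment $\sigma^{\Uf_1}\subseteq\sigma^{\Uf_0}$.

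Next I would describe $\sigma^{\Uf_1}$ by a convenient spanning set. Using the realization $\sigma=i^{\widetilde{\If}}_{\Uf_1^-,\Uf_0}\tau=\Hcal(\widetilde{\If})e_{\Uf_1^-}e_{\Uf_0}\otimes_{\Hcal(\widetilde{\Tf}_0)}\tau$ and the Iwahori factorization $\widetilde{\If}=\Uf_0\widetilde{\Tf}_0\Uf_1^-$, the $\Uf_1^-$-part is absorbed by $e_{\Uf_1^-}$ and the $\widetilde{\Tf}_0$-part passes through the tensor to act on $\tau$, so $\sigma$ is spanned by the vectors $x\cdot e_{\Uf_1^-}e_{\Uf_0}\otimes w$ with $x\in\Uf_0$ and $w\in\tau$. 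A direct check with the Steinberg commutator relations shows that $\Uf_0$ normalizes $\Uf_1$ (if $\alpha\in\Delta$ and $\beta\in\Phi^+$, then $\alpha+\beta\in\Phi^+$ whenever it is a root), and hence $xe_{\Uf_1}=e_{\Uf_1}x$ for all $x\in\Uf_0$. Consequently $\sigma^{\Uf_1}=e_{\Uf_1}\sigma$ is spanned by the vectors
\[
x\cdot\bigl(e_{\Uf_1}e_{\Uf_1^-}e_{\Uf_0}\otimes w\bigr),\qquad x\in\Uf_0,\ w\in\tau.
\]

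The crux is then to show that the single vector $e_{\Uf_1}e_{\Uf_1^-}e_{\Uf_0}\otimes w$ lies in $\sigma^{\Uf_0}$ for every $w\in\tau$. For each $\alpha\in\Delta$ and $u\in\Ocal$, Lemma \ref{LemVarIndRootMove2} gives
\[
e_{\Uf_1}\xt_\alpha(u)e_{\Uf_1^-}e_{\Uf_0}\otimes w=e_{\Uf_1}e_{\Uf_1^-}e_{\Uf_0}\otimes w,
\]
and commuting $\xt_\alpha(u)$ past $e_{\Uf_1}$ rewrites this as $\xt_\alpha(u)\cdot(e_{\Uf_1}e_{\Uf_1^-}e_{\Uf_0}\otimes w)=e_{\Uf_1}e_{\Uf_1^-}e_{\Uf_0}\otimes w$. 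Since $\Uf_0$ is generated by $\Uf_1$ together with the elements $\xt_\alpha(u)$ for $\alpha\in\Delta$, $u\in\Ocal$, and the vector in question is trivially $\Uf_1$-invariant, it is in fact $\Uf_0$-invariant, i.e.\ lies in $\sigma^{\Uf_0}$. Then any $x\in\Uf_0$ fixes it, so each spanning vector of $\sigma^{\Uf_1}$ simply equals $e_{\Uf_1}e_{\Uf_1^-}e_{\Uf_0}\otimes w$ and lies in $\sigma^{\Uf_0}$, completing the proof.

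The main obstacle is bridging Lemma \ref{LemVarIndRootMove2}, which a priori describes how $e_{\Uf_1}\xt_\alpha(u)$ acts on the reference vector in $\sigma^{\Uf_1^-}$, with the target statement about the possibly larger space $\sigma^{\Uf_1}$. This is resolved by exploiting that $\Uf_0$ normalizes $\Uf_1$, which allows the spanning multipliers $x\in\Uf_0$ appearing in $\sigma^{\Uf_1}=e_{\Uf_1}\sigma$ to be commuted past $e_{\Uf_1}$ and then absorbed by the $\Uf_0$-invariant reference vector produced by the lemma.
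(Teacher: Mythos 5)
Your proposal is correct and follows essentially the same route as the paper: both pass to the model $\sigma\cong i_{\Uf_1^-,\Uf_0}\tau$, use Lemma \ref{L:spanning_vectors} to span $\sigma^{\Uf_1}$ by vectors $e_{\Uf_1}x\,e_{\Uf_1^-}e_{\Uf_0}\otimes w$ with $x\in\Uf_0$, commute $x$ past $e_{\Uf_1}$ using that $\Uf_0$ normalizes $\Uf_1$, and reduce via Lemma \ref{LemVarIndRootMove2} and generation of $\Uf_0$ by simple-root elements. Your reorganization (first fixing the reference vector $e_{\Uf_1}e_{\Uf_1^-}e_{\Uf_0}\otimes w$ under $\Uf_0$, then collapsing the spanning set) is just a repackaging of the paper's induction on a word in the generators.
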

\begin{proof}
Since $\Uf_0\supseteq (\Uf_0\cap \,^{w_{\alpha}}\widetilde{{\If}}) \supseteq \Uf_1$, the inclusions $\sigma^{{\Uf_{0}}}\subseteq\sigma^{{\Uf_{0}}\cap \,^{w_{\alpha}}\widetilde{{\If}}}\subseteq \sigma^{{\Uf}_{1}}$ are immediate. It remains to show $\sigma^{{\Uf}_{1}}\subseteq \sigma^{{\Uf_0}}$. Note that $\sigma=i_{\Uf_0, \Uf_1^-}\tau\cong i_{{\Uf}^{-}_{1},{\Uf_0}}\tau$ by Proposition \ref{P:two_functions_are_adjoint} (1). By the Iwahori factorization and Lemma \ref{L:spanning_vectors}, $(i_{{\Uf}^{-}_{1},{\Uf_0}}\tau)^{{\Uf}_{1}}$  is spanned by elements of the form $e_{{\Uf}_{1}}u e_{{\Uf}_{1}^{-}}e_{{\Uf}_{0}}\otimes w$, where $u\in {\Uf}_{0}$ and $w\in \tau$. We also know that ${\Uf}_{0}$ is generated by the elements of the form $x_{\alpha}(u)$, where $\alpha\in \Delta$ and $u\in \mathcal{O}$. The result follows by Lemma \ref{LemVarIndRootMove2}, induction and the identity
	\begin{equation*}
		e_{{\Uf}_{1}}xx_{\alpha}(u)e_{{\Uf}_{1}^{-}}e_{{\Uf}_{0}}\otimes w =xe_{{\Uf}_{1}}x_{\alpha}(u)e_{{\Uf}_{1}^{-}}e_{{\Uf}_{0}}\otimes w
	\end{equation*}
	for $x\in {\Uf}_{0}$.
\end{proof}

Now we can state and prove the main theorem of this subsection. Let 
\[
\sigma_{\alpha}:=i^{\widetilde{{\If}}\cap\,^{w_{\alpha}}\widetilde{{\If}}}_{{\Uf}_{0}\cap \,^{w_{\alpha}}\widetilde{{\If}},{\Uf}^{-}_{1}}\tau,
\]
and recall that 
\[
\sigma=i_{{\Uf_{0}},{\Uf}^{-}_{1}}^{\Ift}\tau\qand \sigma_{1}=i_{{\Uf}_{1},{\Uf}^{-}_{1}}^{\Gammaft(1)}\tau,
\]
all of which are irreducible representations of $\widetilde{{\If}}\cap\,^{w_{\alpha}}\widetilde{{\If}}$, $\widetilde{\If}$ and $\widetilde{\Gammaf}(1)$, respectively, by Proposition \ref{P:two_functors_for_Iwahori_decomp}.

\begin{Prop}\label{IRep} Let $\alpha\in\Delta$.
	\begin{enumerate}[(1)]
		\item The $\widetilde{{\Gammaf}}(1)$-module $\sigma_{1}$ is $W$-invariant, \ie
        $^w\sigma_1\cong\sigma_1$ for all $w\in W$.\label{sigWInvar}
		\item As $\widetilde{{\Gammaf}}(1)$-modules, $\sigma_{1}\cong\sigma|_{\widetilde{{\Gammaf}}(1)}$.\label{ResSig}
		\item As $\widetilde{{\If}}\cap\,^{w_{\alpha}}\widetilde{{\If}}$-modules, $\sigma_{\alpha}\cong\sigma|_{\widetilde{{\If}}\cap\,^{w_{\alpha}}\widetilde{{\If}}}$.\label{ResSigAlpha}
		\item As $\widetilde{{\If}}\cap\,^{w_{\alpha}}\widetilde{{\If}}$-modules , $\,^{w_{\alpha}}\sigma_{\alpha}\cong\sigma_{\alpha}$.\label{SigAWInv}
	\end{enumerate}
\end{Prop}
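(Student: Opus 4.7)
The four statements are essentially manipulations of parahoric induction, so the plan is to feed each one into either Proposition \ref{TwistAut} or Proposition \ref{ResGamma1} after verifying their hypotheses. The key inputs that will be reused throughout are the Weyl-invariance of $\tau$ (Theorem \ref{T:PseudoT0}), the fact that $\tau$ stays irreducible when restricted to $\widetilde{\Tf}_1$ (Lemma \ref{RedtoU1}), the compatibility of Iwahori decompositions related by Weyl conjugation (Lemma \ref{WCompat}), and the $U$-invariance identities of Proposition \ref{UInvar}.

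For (\ref{sigWInvar}), I would fix $w\in W$, lift it to some $\wt\in\Wcal\subseteq\Kt$, and set $\phi=\Int(\wt)$. Since $\widetilde{\Gammaf}(1)$ is normal in $\widetilde{\Kf}$, $\phi$ is an automorphism of $\widetilde{\Gammaf}(1)$ preserving $\widetilde{\Tf}_1$. Weyl-invariance of $\tau$ together with Lemma \ref{RedtoU1} gives ${}^\phi\bigl(\tau|_{\widetilde{\Tf}_1}\bigr)\cong\tau|_{\widetilde{\Tf}_1}$, and Lemma \ref{WCompat} (in its $\Gammaf(1)$-form) ensures that $(\Uf_1,\widetilde{\Tf}_1,\Uf_1^-)$ and $({}^w\Uf_1,\widetilde{\Tf}_1,{}^w\Uf_1^-)$ are compatible. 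Then Proposition \ref{TwistAut} gives ${}^w\sigma_1\cong\sigma_1$.

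For (\ref{ResSig}) and (\ref{ResSigAlpha}), I would apply Proposition \ref{ResGamma1} directly. In (\ref{ResSig}), take $G=\widetilde{\If}$ with decomposition $(\Uf_0,\widetilde{\Tf}_0,\Uf_1^-)$ and $H=\widetilde{\Gammaf}(1)$; then $H\cap\Uf_0=\Uf_1$ and $H\cap\widetilde{\Tf}_0=\widetilde{\Tf}_1$, the restriction $\tau|_{\widetilde{\Tf}_1}$ is irreducible by Lemma \ref{RedtoU1}, and the hypothesis $\sigma^{\Uf_1}=\sigma^{\Uf_0}$ is exactly Proposition \ref{UInvar}. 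This yields $\sigma|_{\widetilde{\Gammaf}(1)}\cong i_{\Uf_1,\Uf_1^-}\tau=\sigma_1$. For (\ref{ResSigAlpha}), take the same $G$ and $H=\widetilde{\If}\cap{}^{w_\alpha}\widetilde{\If}$; here $H\cap\Uf_0=\Uf_0\cap{}^{w_\alpha}\widetilde{\If}$ and $H\cap\widetilde{\Tf}_0=\widetilde{\Tf}_0$, so the irreducibility hypothesis is immediate, and the identity $\sigma^{\Uf_0\cap{}^{w_\alpha}\widetilde{\If}}=\sigma^{\Uf_0}$ is again Proposition \ref{UInvar}.

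For (\ref{SigAWInv}), I would apply Proposition \ref{TwistAut} with $G=\widetilde{\If}\cap{}^{w_\alpha}\widetilde{\If}$, $\phi=\Int(\wt_\alpha)$, $U=\Uf_0\cap{}^{w_\alpha}\widetilde{\If}$, $T=\widetilde{\Tf}_0$, $V=\Uf_1^-$. Weyl-invariance of $\tau$ gives ${}^\phi\tau\cong\tau$, and conjugation by $\wt_\alpha^2=\hta(-1)\in\widetilde{\Tf}_0$ stabilizes $\widetilde{\If}$, so $\phi$ sends the original decomposition of $G$ to $({}^{w_\alpha}\Uf_0\cap\widetilde{\If},\widetilde{\Tf}_0,{}^{w_\alpha}\Uf_1^-)$, which is again an Iwahori decomposition of $G$. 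The remaining step is to verify that these two decompositions are compatible. This is the main calculation: using the root-subgroup description $\Uf_0\cap{}^{w_\alpha}\widetilde{\If}=\Uf_{\Phi^+\setminus\{\alpha\},0}\cdot\Uf_{\alpha,1}$ and ${}^{w_\alpha}\Uf_0\cap\widetilde{\If}=\Uf_{\Phi^+\setminus\{\alpha\},0}\cdot\Uf_{-\alpha,1}$, and the corresponding decompositions of ${}^{w_\alpha}\Uf_1^-$ and $\Uf_1^-$, the four required factorizations reduce to identities like $\Uf_0\cap{}^{w_\alpha}\widetilde{\If}=\Uf_{\Phi^+\setminus\{\alpha\},0}\cdot\Uf_{\alpha,1}$, which hold by the Iwahori factorization of root subgroups. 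This root-subgroup bookkeeping is the main (though still routine) obstacle; once it is in place, Proposition \ref{TwistAut} delivers ${}^{w_\alpha}\sigma_\alpha\cong\sigma_\alpha$.
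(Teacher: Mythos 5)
Your proposal is correct and follows essentially the same route as the paper: items (1) and (4) are handled by Proposition \ref{TwistAut} (with hypotheses supplied by the $W$-invariance of $\tau$ and Lemma \ref{WCompat}), and items (2) and (3) by Proposition \ref{ResGamma1} (with hypotheses supplied by Lemma \ref{RedtoU1} and Proposition \ref{UInvar}). The extra root-subgroup bookkeeping you do in (4) merely makes explicit the compatibility that the paper attributes to (the obvious variant of) Lemma \ref{WCompat}, so nothing is missing.
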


\begin{proof}	
	\eqref{sigWInvar} and \eqref{SigAWInv} follow from Proposition \ref{TwistAut} by setting $\phi$ to be the conjugation by the Weyl group element. Indeed, the two conditions of Proposition \ref{TwistAut} are satisfied by the $W$-invariance of $\tau$ and by Lemma \ref{WCompat}.
	
	\eqref{ResSig} and \eqref{ResSigAlpha} follow from Proposition \ref{ResGamma1}. Indeed, the two conditions of Proposition \ref{ResGamma1} are satisfied by Lemma \ref{RedtoU1} and Proposition \ref{UInvar}.
\end{proof}


\subsection{Extending $\sigma$ to $\widetilde{\Kf}$}\label{S:KType}

Next we show that the $W$-invariance of the $\widetilde{{\Gammaf}}(1)$-module $\sigma_{1}$ allows us to prove
\begin{Thm}\label{KRep}
	The $\widetilde{\Gammaf}(1)$-module $\sigma_{1}$ extends uniquely to a $\widetilde{{\Kf}}$-module.
\end{Thm}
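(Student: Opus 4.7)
The plan is to establish uniqueness and existence separately, with uniqueness coming from a perfectness argument and existence from an explicit construction via parahoric induction.

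\emph{Uniqueness.} Suppose $\rho_1, \rho_2$ are two $\widetilde{\Kf}$-module extensions of $\sigma_1$. By Schur's lemma applied to $\sigma_1$, $\Hom_{\widetilde{\Gammaf}(1)}(\rho_1,\rho_2) = \C\cdot\phi$ for some nonzero $\phi$, and the map $g\mapsto \rho_2(g)\circ\phi\circ\rho_1(g)^{-1}\circ\phi^{-1}$ defines a one-dimensional character $\chi$ of $\widetilde{\Kf}/\widetilde{\Gammaf}(1)$. The latter quotient is isomorphic to $K/\Gamma(1)\cong G(\kappa)$. Since $G$ is simply-connected, simply-laced, and of rank at least $2$, the finite Chevalley group $G(\kappa)$ is perfect and therefore admits no nontrivial one-dimensional characters. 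Hence $\chi=1$, $\phi$ is $\widetilde{\Kf}$-equivariant, and $\rho_1\cong\rho_2$.

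\emph{Existence.} The plan is to construct the extension using parahoric induction for the virtual Iwahori decomposition $(\Uf_0,\widetilde{\Tf}_0,\Uf_0^-)$ of $\widetilde{\Kf}$. Set
\[
\sigma_K := i^{\widetilde{\Kf}}_{\Uf_0,\Uf_0^-}\tau,
\]
which is a well-defined $\widetilde{\Kf}$-module by \cite{CMO19}. The goal is to prove $\sigma_K|_{\widetilde{\Gammaf}(1)}\cong \sigma_1$. The approach is to first show the intermediate statement $\sigma_K|_{\widetilde{\If}}\cong \sigma$, after which Proposition \ref{IRep}(\ref{ResSig}) yields $\sigma_K|_{\widetilde{\Gammaf}(1)}\cong\sigma|_{\widetilde{\Gammaf}(1)}\cong\sigma_1$. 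By the adjointness of Proposition \ref{P:two_functions_are_adjoint}, Frobenius reciprocity gives a canonical $\widetilde{\If}$-equivariant map $\sigma = i^{\widetilde{\If}}_{\Uf_0,\Uf_1^-}\tau \to \sigma_K|_{\widetilde{\If}}$; this map is nonzero because the unit $\tau\to r^{\widetilde{\If}}_{\Uf_0,\Uf_1^-}\sigma_K$ embeds $\tau$ via $v\mapsto e_{\Uf_0}e_{\Uf_0^-}\otimes v$, and it is injective by irreducibility of $\sigma$. Surjectivity reduces, via the Iwahori factorization of $\widetilde{\If}$ and Lemma \ref{L:spanning_vectors}, to proving the dimension equality $\dim \sigma_K^{\Uf_0}=\dim\sigma^{\Uf_0}=\dim\tau$, which in turn reduces (following the template of Proposition \ref{ResGamma1}) to the identity $r^{\widetilde{\If}}_{\Uf_0,\Uf_1^-}\sigma_K\cong \tau$.

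\emph{Main obstacle.} The principal technical difficulty is the identity $r^{\widetilde{\If}}_{\Uf_0,\Uf_1^-}\sigma_K\cong \tau$, equivalently the statement that only the identity Bruhat cell in $\widetilde{\Kf}=\bigsqcup_{w\in W}\widetilde{\If}w\widetilde{\If}$ contributes to this Jacquet-type functor. Writing $\sigma_K$ in terms of Bruhat cells via Lemma \ref{L:spanning_vectors}, one must show that the vectors associated with $w\ne 1$ are killed by application of $e_{\Uf_0}e_{\Uf_1^-}$. This will require adapting the root-subgroup manipulations of Lemmas \ref{LemVarIndRootMove1} and \ref{LemVarIndRootMove2} to the virtual decomposition setting, making essential use of the $W$-invariance of $\tau$ (Theorem \ref{T:PseudoT0}), the invertibility of the operators $X_{\alpha,\chi,j}$ of Corollary \ref{XInvert}, and the $2$-adic Hilbert symbol identities of \S\ref{SS:HilbSymb}. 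Once this cancellation is in hand, the restriction identity follows, completing the existence proof.
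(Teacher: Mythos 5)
Your uniqueness argument is correct and matches the paper's in substance (the paper simply observes that $\Kft$ is perfect; your variant via $\widetilde{\Kf}/\widetilde{\Gammaf}(1)\cong G_\kappa$ works equally well). The existence half, however, has a genuine gap: everything hinges on the claim $r^{\widetilde{\If}}_{\Uf_0,\Uf_1^-}\bigl(i^{\widetilde{\Kf}}_{\Uf_0,\Uf_0^-}\tau\bigr)\cong\tau$, i.e.\ that only the identity Bruhat cell survives, and you do not prove it — you name it as ``the main obstacle.'' This is not a routine adaptation of Lemmas \ref{LemVarIndRootMove1} and \ref{LemVarIndRootMove2}, because $(\Uf_0,\widetilde{\Tf}_0,\Uf_0^-)$ is only a \emph{virtual} Iwahori decomposition of $\widetilde{\Kf}$: Proposition \ref{P:two_functors_for_Iwahori_decomp} (irreducibility, $r\circ i\cong\mathrm{id}$) and Lemma \ref{L:spanning_vectors} do not apply to $\sigma_K$. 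For the same reason your reduction of surjectivity of $\sigma\to\sigma_K|_{\widetilde{\If}}$ to the count $\dim\sigma_K^{\Uf_0}=\dim\tau$ is unjustified: $\sigma_K$ is spanned by the vectors $g\,e_{\Uf_0}e_{\Uf_0^-}\otimes v$ with $g$ ranging over all Bruhat cells, and nothing shows that $\sigma_K|_{\widetilde{\If}}$ is generated by its $\Uf_0$-invariants. Moreover the one-cell collapse is false in the non-genuine analogue: over the residue field with $\tau=\one_{T_\kappa}$, the module $\C[G_\kappa]e_{U}e_{U^-}\otimes_{T_\kappa}\one$ has image in $\Ind_{B_\kappa^-}^{G_\kappa}\one$ properly containing the trivial representation, so its $U$-invariants have dimension greater than $1$. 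Hence, if your identity holds at all, it must come from genuinely metaplectic sign cancellations (Hilbert-symbol computations of the kind in Proposition \ref{Gamma02IrrInd}), which your proposal does not carry out. As written, the existence argument is a plan rather than a proof.

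For comparison, the paper sidesteps any such computation. Since $\sigma_1$ is $W$-invariant (Proposition \ref{IRep}) it is invariant under conjugation by all of $\widetilde{\Kf}$, and by Clifford theory the obstruction to extending an invariant irreducible representation lies in the Schur multiplier of $\widetilde{\Kf}/\widetilde{\Gammaf}(1)\cong G_\kappa$; this vanishes by Steinberg's theorem (Proposition \ref{SchurMult}, applied through \cite[(11.45) Proposition]{CR81}) except for five small groups, and those cases are handled by embedding $G$ into the same-type group of rank one larger and using the branching result (Proposition \ref{PseudoSphericalBranch}) to realize $\sigma$ as the $\sigma$-isotypic subspace of ${\sigma'}^{\Nf}$, which is visibly $\Kft$-stable. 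If you want to pursue your route you must actually establish the one-cell statement for $i^{\widetilde{\Kf}}_{\Uf_0,\Uf_0^-}\tau$; that would be an interesting, stronger result, but nothing in the proposal delivers it.
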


Since we know that $\sigma_1$ is $W$-invariant, this is actually almost immediate by using the following known results on the simply-connected simply-laced Chevalley groups over finite fields of characteristic $2$, except for a few low rank cases.
\begin{Prop}\label{SchurMult}
	Let $\kappa$ be a finite field of characteristic $2$. Let $G_\kappa$ be the $\kappa$-points of a simply-connected simply-laced Chevalley group defined over $\kappa$ with an irreducible root system. Then $G_\kappa$ is centrally closed except the following five cases:
\begin{equation}\label{E:exceptional_low_rank_cases}
\SL_{2}(\mathbb{F}_{4}),\quad \SL_{3}(\mathbb{F}_{2}),\quad \SL_{3}(\mathbb{F}_{4}),\quad \SL_{4}(\mathbb{F}_{2}),\quad D_{4}(\mathbb{F}_{2}). 
\end{equation}
Hence except these five cases, $G_\kappa$ has no nontrivial double cover. 
\end{Prop}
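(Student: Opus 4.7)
The statement is a classical result on Schur multipliers of finite groups of Lie type, and I would prove it by appealing to the known computation of these multipliers rather than reproving it from scratch.

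The plan is to reduce the statement to Steinberg's computation of the Schur multiplier of a finite simply-connected simply-laced Chevalley group. Recall that a perfect group $H$ is \emph{centrally closed} precisely when its Schur multiplier $M(H) = H_2(H,\mathbb{Z})$ is trivial, equivalently when $H$ has no nontrivial perfect central extension. For a simply-connected Chevalley group $G_\kappa$ over a finite field $\kappa$ of rank at least $1$, Steinberg showed that $G_\kappa$ is perfect (with the classical small-rank exceptions, all in characteristics $2$ or $3$, which for simply-laced type amount to $\SL_2(\mathbb{F}_2)$ and $\SL_2(\mathbb{F}_3)$, neither of which occurs under our hypotheses since we have excluded $A_1$). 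So the question reduces to the computation of $M(G_\kappa)$.

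The key step is to invoke Steinberg's theorem (see \cite[Theorem 10]{Steinberg1967}, or the exposition in Gorenstein--Lyons--Solomon, \emph{The Classification of the Finite Simple Groups, Number 3}, Theorem 6.1.4) which states: for a universal (i.e.\ simply-connected) finite Chevalley group $G_\kappa$ of irreducible type, the Schur multiplier $M(G_\kappa)$ is trivial except for an explicit finite list of pairs (type, $\kappa$). Restricting to simply-laced type and to fields $\kappa$ of characteristic $2$, the only exceptions appearing in this list are exactly the five groups \eqref{E:exceptional_low_rank_cases}: in type $A_1$ the exception is $\SL_2(\mathbb{F}_4)$; in type $A_2$ the exceptions are $\SL_3(\mathbb{F}_2)$ and $\SL_3(\mathbb{F}_4)$; in type $A_3$ the exception is $\SL_4(\mathbb{F}_2)$; and in type $D_4$ the exception is $D_4(\mathbb{F}_2)$. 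No other simply-laced type in characteristic $2$ appears on Steinberg's list.

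Given this, the conclusion follows: for $G_\kappa$ outside the five exceptional cases, we have $M(G_\kappa)=1$, so every central extension of $G_\kappa$ by a finite abelian group splits over the derived subgroup; in particular $G_\kappa$ admits no nontrivial double cover (nontriviality here meaning the cover does not split). The main (and really only) obstacle is bookkeeping: confirming that Steinberg's list of exceptions, when intersected with ``simply-laced, characteristic $2$, irreducible root system'', yields precisely the five cases displayed. This is a finite check against the table in \cite[Table 6.1.3]{GLS3} (or the original references of Steinberg and Griess), and requires no further computation.
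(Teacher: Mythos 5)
Your argument is correct and is essentially the paper's own proof: the paper likewise disposes of the statement by citing Steinberg's classification of when a universal finite Chevalley group is centrally closed (its reference [S16, Corollary 2 and Remark (b), p.\ 58]), the only real content being the bookkeeping that the simply-laced, characteristic-$2$ exceptions are exactly the five listed groups. One minor slip: the hypotheses do not exclude type $A_1$ (indeed $\SL_2(\F_4)$ is on the exceptional list), so your parenthetical dismissing the non-perfect group $\SL_2(\F_2)$ should be justified by the cited tables rather than by a supposed exclusion of $A_1$.
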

\begin{proof}
This follows from \cite[Corollary 2 and Remark (b), page 58]{S16}. 
\end{proof}

\begin{Rmk}
The Schur multiplier of the above five groups can be found in \cite{S81}. Note that \cite[Theorem 1.1]{S81} contains a typo. The group $A_{3}(2)\cong \SL_{4}(\mathbb{F}_{2})\cong A_{8}$ should be listed under (a) in that theorem. Also note other isomorphisms $\SL_{2}(\mathbb{F}_{4})\cong A_{5}$ and $\SL_{3}(\mathbb{F}_{2})\cong \mathrm{PSL}_{3}(\mathbb{F}_{7})$.
\end{Rmk}

Let $\kappa=\Ocal/\varpi\Ocal$ and let $G_{\kappa}$ be the group of $\kappa$-points of the simply-connected Chevalley group over $\kappa$ of the same type as $G$. Note that $G_{\kappa}\cong K/\Gamma(1)\cong{\Kf}/{\Gammaf}(1)$, where the isomorphisms are induced by the natural maps on Chevalley generators. We also write $T_{\kappa}\subseteq B_{\kappa}\subseteq G_{\kappa}$ for the $\kappa$-points of a maximal torus and Borel subgroup of the Chevalley group. We choose these groups so that they are compatible with the Chevalley pinning of $G$.

With this said, we can give the proof of the above theorem
\begin{proof}[Proof of Theorem \ref{KRep}]
 Since we are assuming that $\Phi\neq A_{1}$ or $D_2$ and it is simply-laced, we can see that $\Kft$ is perfect from the relation \eqref{MR2}.  Hence the uniqueness follows.
    
	From Proposition \ref{IRep} \eqref{sigWInvar} and \eqref{ResSig}, we know that the irreducible $\widetilde{{\Gammaf}}(1)$-module $\sigma_{1}$ is invariant under conjugation by $\widetilde{{\Kf}}$, because $\widetilde{{\Kf}}$ is generated by $\widetilde{{\If}}$ and $W$. Hence, if $G_\kappa$ is not among the five exceptional cases of \eqref{E:exceptional_low_rank_cases}, one can just invoke \cite[(11.45) Proposition]{CR81}, which immediately proves the theorem.
    
    It remains to show that the representation extends for the five exceptional cases of \eqref{E:exceptional_low_rank_cases}. Our argument for these exceptional cases is inspired by Savin \cite[Proposition 4.1]{S12}. The basic idea is to embed $G$ to the group $G'$ of the same type with one rank larger. Then since we already know that the result holds for $G'$, we will obtain the result for $G$ by restriction.

    To be precise, by Proposition \ref{SchurMult}, we may assume $G$ is the simply-laced simply-connected Chevalley group of type $A_r$ or $D_r$. (Though we have only to consider when $G=\SL_3, \SL_4$ or $D_4$, our argument works in general.) Define $G'$ to be the simply-laced simply-connected Chevalley group of the same type as that of $G$ with rank $r+1$, so that we have the natural embedding $G\subseteq G'$. This embedding is obtained as follows. Let $\Phi'$ be the root system of $G'$, where the set of simple roots $\Delta'$ is so that $\Delta\subseteq\Delta'$ and the unique root in $\Delta'\smallsetminus\Delta$ is the root at the left endpoint of the Dynkin diagram. Then $\Phi\subseteq\Phi'$ is a subroot system, and the group $G$ viewed as a subgroup of $G$ is the group generated by $\xt_{\alpha}(t)$ for $\alpha\in\Phi$ and $t\in F$. (If $G=\SL_r$ then $G$ is embedded in $G'=\SL_{r+1}$ in the lower right corner.) We denote by $T'$, $K'$, $\Gamma(k)'$, etc, the corresponding groups for $G'$.

    Let $\Nf\subseteq\Kft'$ be the group generated by the subgroups $\Uf_{\alpha, 1}$ for $\alpha\in{\Phi'}^+\smallsetminus\Phi^+$. (If $G=\SL_r$ then $\Nf$ is the unipotent group of the ``first rows" in $\SL_{r+1}$.)  Similarly we let $\Nf^-\subseteq\Kft'$ be the group generated by the subgroups $\Uf_{\alpha, 1}$ for $\alpha\in{\Phi'}^-\smallsetminus\Phi^-$. Let $\sigma$ be our pseudo-spherical representation of $\Gammaft(1)$ constructed from a pseudo-spherical representation $\tau$ of $\Tt_1$. We know from Proposition \ref{PseudoSphericalBranch} that there is a pseudo-spherical representation $\tau'$ of $\Tt_1'$ such that $\tau'|_{\Tt_1}$ contains $\tau$ with multiplicity one. Let $\sigma'$ be the pseudo-spherical representation of $\Gammaf(1)'$ constructed from $\tau'$. We claim that
    \begin{equation}\label{ExtThmIso1}
        \Hom_{\Gammaft(1)}(\sigma,\,{\sigma'}^{\Nf})\cong \mathbb{C}.
    \end{equation}




    Note that $\Gammaft(1)'$ has an Iwahori factorization relative to the triple $(\Nf^{-},\Tft_{1}'\Gammaft(1),\Nf)$. Thus as $\Tft_{1}'\Gammaft(1)$-modules we have $\sigma^{\Nf}\cong r_{\Nf^{-},\Nf}^{\Tft_{1}'\Gammaft(1)}(\sigma')$ by Proposition \ref{P:two_functors_for_Iwahori_decomp}. On the other hand, as $\Gammaft(1)'$-modules, we have $\sigma'\cong i_{\Nf^{-},\Nf}^{\Gammaft(1)'}\left(i_{\Uf^{-}_{1},\Uf_{1}}^{\Tft_{1}'\Gammaft(1)}\tau'\right)$ by \cite[Theorem 2.17 (7)]{CMO19}. If we combine the last two isomorphisms and apply Proposition \ref{P:two_functors_for_Iwahori_decomp} \eqref{P:two_functors_for_Iwahori_decomp2}, we know ${\sigma'}^{\Nf}\cong i_{\Uf^-_{1},\Uf_{1}}^{\Tft_{1}'\Gammaft(1)}\tau'$ as $\Tft_{1}'\Gammaft(1)$-modules. Thus by Proposition \ref{P:two_functions_are_adjoint},  
    \begin{equation*}
        \Hom_{\Gammaft(1)}(\sigma,{\sigma'}^{\Nf})\cong \Hom_{\Tft_{1}}(\tau,r_{\Uf_{1}^{-},\,\Uf_{1}}^{\Tft_{1}}\left(i_{\Uf^-_{1},\Uf_{1}}^{\Tft_{1}'\Gammaft(1)}\tau'\right)).
    \end{equation*}
    Now as $\Tft_{1}$-modules we have $r_{\Uf_{1},\Uf_{1}}^{\Tft_{1}}\left(i_{\Uf^-_{1},\Uf_{1}}^{\Tft_{1}'\Gammaft(1)}\tau'\right)\cong \tau'$ by Lemma \ref{L:IndRes2x} and Proposition \ref{P:two_functors_for_Iwahori_decomp} \eqref{P:two_functors_for_Iwahori_decomp2}. The isomorphism \eqref{ExtThmIso1} follows because $\tau$ appears in $\tau'|_{\Tft_1}$ with multiplicity one.

    By isomorphism \eqref{ExtThmIso1}, we can identify $\sigma$ with the $\sigma$-isotypic subspace of ${\sigma'}^{\Nf}$, up to scaling. Then for any $k\in\Kft$ we can consider the $\Gammaft(1)$-submodule $k\sigma\subseteq {\sigma'}^{\Nf}$. As $\Gammaft(1)$-modules we have $k\sigma\cong {}^{k}\!\sigma$. But we know that ${}^k\!\sigma\cong \sigma$ as $\Gammaft(1)$-modules by Proposition \ref{IRep}. Since $\sigma$ is identified with the $\sigma$-isotypic subspace of ${\sigma'}^{\Nf}$, it follows that $k\sigma=\sigma$ as subspaces of ${\sigma'}^{\Nf}$. Thus $\sigma\subseteq{\sigma'}^{\Nf}$ is a $\Kft$-submodule. This proves the theorem.
\end{proof}

Recall we have the inclusions
\[
\widetilde{\Gammaf}(1)\subseteq \widetilde{\If}\subseteq \widetilde{\Kf},
\]
and the $\widetilde{\If}$-module $\sigma$ is an extension of $\sigma_1$. Now we prove that the extension of $\sigma_{1}$ to $\widetilde{{\Kf}}$ is also an extension of $\sigma$, namely
\begin{Prop}\label{ExtICompat}
   Let $\sigma_{\widetilde{{\Kf}}}$ be an arbitrary extension of $\sigma_{1}$ to $\widetilde{\Kf}$. Then 
   \[
   \sigma_{\widetilde{{\Kf}}}|_{\widetilde{{\If}}}\cong \sigma.
   \]
\end{Prop}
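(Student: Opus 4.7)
Both $\sigma$ and $\sigma_{\widetilde{\Kf}}|_{\widetilde{\If}}$ restrict to $\sigma_1$ on $\widetilde{\Gammaf}(1)$---the first by Proposition \ref{IRep}(2), the second by the construction of $\sigma_{\widetilde{\Kf}}$ in Theorem \ref{KRep}---and both are irreducible $\widetilde{\If}$-modules since $\widetilde{\If} = \Tft_0\cdot\widetilde{\Gammaf}(1)$ normalizes the irreducible $\sigma_1$. By Proposition \ref{P:two_functors_for_Iwahori_decomp}(3), to conclude $\sigma_{\widetilde{\Kf}}|_{\widetilde{\If}}\cong\sigma = i_{\Uf_0,\Uf_1^-}^{\widetilde{\If}}\tau$ it suffices to show that $r_{\Uf_0,\Uf_1^-}^{\widetilde{\If}}\bigl(\sigma_{\widetilde{\Kf}}|_{\widetilde{\If}}\bigr) \cong \sigma_{\widetilde{\Kf}}^{\Uf_0}\cong\tau$ as a $\Tft_0$-module.

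The subspace $\sigma_{\widetilde{\Kf}}^{\Uf_1}$ coincides with $\sigma_1^{\Uf_1}$, and Proposition \ref{P:two_functors_for_Iwahori_decomp}(4) applied to $\sigma_1 = i_{\Uf_1,\Uf_1^-}^{\widetilde{\Gammaf}(1)}\tau$ identifies this with $\tau$ as a $\Tft_1$-module. The task therefore splits into: (i) showing $\sigma_{\widetilde{\Kf}}^{\Uf_0} = \sigma_{\widetilde{\Kf}}^{\Uf_1}$, i.e.\ that $\Uf_0/\Uf_1\cong U(\kappa)$ acts trivially on $\sigma_1^{\Uf_1}$ inside $\sigma_{\widetilde{\Kf}}$; and (ii) identifying the $\Tft_0$-action on this subspace with $\tau$.

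For (i), the plan is to adapt the proof of Lemma \ref{LemVarIndRootMove2} and Proposition \ref{UInvar} from the model $\sigma$ to $\sigma_{\widetilde{\Kf}}$. The essential inputs are identities in $\widetilde{\Kf}$ itself (Proposition \ref{PMRootSwap} and Corollary \ref{PMRootSwapCor}) together with the invertibility of $\tau(X_{\alpha,u,1})$ on $\sigma_1^{\Uf_1}$ (Corollary \ref{XInvert}); these depend only on the $\widetilde{\Gammaf}(1)$-module structure $\sigma_{\widetilde{\Kf}}|_{\widetilde{\Gammaf}(1)} = \sigma_1$, so the analogous manipulation shows that $\xt_\alpha(u)$ for $\alpha\in\Delta$ and $u\in\Ocal^\times$ fixes $\sigma_1^{\Uf_1}$ pointwise. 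Since such elements generate $\Uf_0$ modulo $\Uf_1$, (i) follows. For (ii), Lemma \ref{RedtoU1} gives that the unique extension of $\tau|_{\Tft_1}$ to $\Tft_0$ killing $T_R$ is $\tau$ itself, so it is enough to verify that $T_R$ acts trivially on $\sigma_{\widetilde{\Kf}}^{\Uf_0}$. Using $\htt_\alpha(u)^2 = (u,u)\,\htt_\alpha(u^2)$ from (MR3) to factor each $\htt_\alpha(r)$ for $r\in R$ into a square in $\Tft_0$ and an element of $T_R\cap\Tft_1$ (which acts trivially because $\tau$ is pseudo-spherical), the action of the square on $\sigma_1^{\Uf_1}$ is in turn controlled via (i).

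Combining (i) and (ii) yields $\sigma_{\widetilde{\Kf}}^{\Uf_0}\cong\tau$ as a $\Tft_0$-module, and Proposition \ref{P:two_functors_for_Iwahori_decomp}(3) then completes the proof. The main obstacle is step (i): the computations of \S\ref{IwahoriType} are carried out inside the explicit parahoric model $\sigma$, and transferring them to the abstractly defined extension $\sigma_{\widetilde{\Kf}}$ requires careful verification that only the $\widetilde{\Gammaf}(1)$-module structure---and not any further global information about the $\widetilde{\Kf}$-action---is used.
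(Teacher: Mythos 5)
Your overall frame (reduce via Proposition \ref{P:two_functors_for_Iwahori_decomp} to identifying the $\Tft_0$-module $\sigma_{\widetilde{\Kf}}^{\Uf_0}$ with $\tau$) is fine, and your step (i) is a true statement, but step (ii) contains a genuine gap, and it is precisely where all the content of the proposition lives. The extensions of $\sigma_1$ from $\widetilde{\Gammaf}(1)$ to $\widetilde{\If}$ form a torsor under $\Hom(\widetilde{\If}/\widetilde{\Gammaf}(1),\C^\times)$, i.e.\ under characters $\chi$ of $T_\kappa$ (inflated to $\widetilde{\If}$): every twist $\chi\otimes\sigma$ restricts to $\sigma_1$ on $\widetilde{\Gammaf}(1)$, and since $\chi$ is trivial on unipotent elements, every twist also satisfies your (i), namely $(\chi\otimes\sigma)^{\Uf_0}=(\chi\otimes\sigma)^{\Uf_1}=\sigma^{\Uf_1}$. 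Hence no argument using only the $\widetilde{\Gammaf}(1)$-module structure plus group identities (Proposition \ref{PMRootSwap}, Corollary \ref{XInvert}, etc.) can single out $\sigma$; if your plan worked as announced it would prove that \emph{every} extension of $\sigma_1$ to $\widetilde{\If}$ is isomorphic to $\sigma$, which is false whenever $q>2$. Concretely, the failure is the sentence ``the action of the square is controlled via (i)'': in the twist $\chi\otimes\sigma$ the element $\htt_\alpha(r)$ with $r=s^2u$, $u\in1+\varpi^{2e}\Ocal$, acts on the $\Uf_0$-invariants by the scalar $\chi(h_\alpha(\bar{s}))^2$, which is nontrivial for $\chi\neq1$ because $T_\kappa$ has odd order; knowing that $\Uf_0$ acts trivially gives no control over $\sigma_{\widetilde{\Kf}}(\htt_\alpha(s))^2$, since $\htt_\alpha(s)$ is not congruent modulo $\widetilde{\Gammaf}(1)$ to any product of elements of $\Uf_0$ (its image in $\If/\Gammaf(1)\cong B_\kappa$ lies in $T_\kappa$, not $U_\kappa$). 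Showing that $T_R$ acts trivially on $\sigma_{\widetilde{\Kf}}^{\Uf_0}$ is \emph{equivalent} to showing $\chi^2=1$, hence $\chi=1$; it is the whole problem, not a reduction of it.

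What must be used --- and what your proposal explicitly disclaims --- is the fact that the given $\widetilde{\If}$-module extends to all of $\widetilde{\Kf}$; only the trivial twist does. The paper's proof first records that $\sigma_{\widetilde{\Kf}}|_{\widetilde{\If}}=\chi\otimes\sigma$ for some character $\chi$ of $T_\kappa$ (the torsor argument above, with the simply-laced hypothesis forcing characters of $\widetilde{\If}/\widetilde{\Gammaf}(1)$ to factor through $T_\kappa$), and then restricts to $\widetilde{\If}\cap{}^{w_\alpha}\widetilde{\If}$: the $w_\alpha$-invariance of $\sigma_{\widetilde{\Kf}}$ (automatic, because $\widetilde{\Kf}$ contains a representative of $w_\alpha$) is played off against the $w_\alpha$-invariance of $\sigma_\alpha$ from Proposition \ref{IRep} (3),(4), forcing $\chi$ to be $W$-invariant and hence trivial since $\Phi$ is simply-laced of rank at least $2$ and $G$ is simply-connected. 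To salvage your route you would have to inject the $\widetilde{\Kf}$-structure at exactly this point, e.g.\ via the Weyl elements $\wt_\alpha$ or the negative unipotents $\xt_{-\alpha}(u)$ with $u\in\Ocal^\times$, when pinning down the $\Tft_0$-action on $\sigma_{\widetilde{\Kf}}^{\Uf_0}$.
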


\begin{proof}
First note that the set of extensions of $\sigma_{1}$ to $\widetilde{{\If}}$ is a torsor under the group $\Hom(\widetilde{{\If}}/\widetilde{{\Gammaf}}(1),\mathbb{C}^{\times})$. Since $\Phi$ is simply-laced, any such character must factor through $T_{\kappa}$. Thus every extension of $\sigma_{1}$ to $\widetilde{{\If}}$ is of the form $\chi\otimes \sigma$ for some $\chi\in \Hom(T_{\kappa},\mathbb{C}^{\times})$. 

Hence, the restriction of $\sigma_{\widetilde{{\Kf}}}$ to $\widetilde{{\If}}$ is of the form $\chi\otimes \sigma$ for some $\chi\in \Hom(T_{\kappa},\mathbb{C}^{\times})$. We will show $\chi=1$. Let $\alpha\in\Delta$. We restrict further to $\widetilde{{\If}}\cap\,^{w_{\alpha}}\widetilde{{\If}}$ and apply Proposition \ref{IRep} \eqref{ResSigAlpha} to see that the restriction of $\sigma_{\widetilde{{\Kf}}}$ to $\widetilde{{\If}}\cap\,^{w_{\alpha}}\widetilde{{\If}}$ is isomorphic to $\chi\otimes \sigma_{\alpha}$. Since $\sigma_{\widetilde{{\Kf}}}$ and $\sigma_{\alpha}$ are $w_{\alpha}$ invariant by Theorem \ref{IRep} \eqref{sigWInvar} and \eqref{SigAWInv}, it follows that $\chi$ must be $w_{\alpha}$ invariant. Since $\Phi$ is simply-laced with rank $>1$ the character $\chi$ must be trivial on elements of the form $h_{\alpha}(t)$ for all $t\in \Ocal^{\times}/(1+\varpi^{2e}\Ocal)$, which implies that $\chi$ is trivial since $G$ is simply-connected.
\end{proof}


\subsection{Additional properties of $\sigma$}\label{SigProps}


The $\widetilde{{\If}}$-module $\sigma=i_{{\Uf}_{0},{\Uf}^{-}_{1}}\tau$ exhibits some additional properties that will be useful in our investigation of a Hecke algebra on $\Gt$. We record these in this subsection. Recall from \eqref{E:I_and_I_2_def} that ${\If}_{2}\subseteq {\If}$ is the subgroup generated by the subgroups ${\Tf}_{0}$, ${\Uf}_{0}$ and ${\Uf}^{-}_{2}$. Let
\[
\sigma_{2}:=i_{{\Uf}_{0},{\Uf}^{-}_{2}}^{\Ift_2}\tau,
\]
which is an irreducible $\widetilde{\If}_2$-module by Proposition \ref{P:two_functors_for_Iwahori_decomp}.

\begin{Lem} \label{+-IdemSwap}

	Let $\alpha\in\Phi^{+}$ and $t\in \mathcal{O}$. Then in the group algebra of $\widetilde{{\Kf}}$, we have
	\begin{equation*}
		e_{{\Uf}_{0}}e_{{\Uf}^{-}_{2}}x_{\alpha}(\varpi^{2e-1}t)=e_{{\Uf}_{0}}e_{{\Uf}^{-}_{2}}.
	\end{equation*}
\end{Lem}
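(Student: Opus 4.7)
The plan is to show that $\xt_\alpha(\varpi^{2e-1}t)$ commutes with every element of $\Uf_2^-$ in the group algebra of $\Kft$, and then to absorb it into $e_{\Uf_0}$ on the right. Indeed, since $\varpi^{2e-1}t\in\Ocal$, the element $\xt_\alpha(\varpi^{2e-1}t)$ lies in $\Uf_{\alpha,0}\subseteq\Uf_0$ via the canonical splitting $\s^+$ of the unipotent radical. So once we know $\xt_\alpha(\varpi^{2e-1}t)$ commutes with $e_{\Uf_2^-}$, we immediately deduce
\[
e_{\Uf_0}e_{\Uf_2^-}\xt_\alpha(\varpi^{2e-1}t)=e_{\Uf_0}\xt_\alpha(\varpi^{2e-1}t)e_{\Uf_2^-}=e_{\Uf_0}e_{\Uf_2^-}.
\]
Since $\Uf_2^-$ is generated by the elements $\xt_{-\beta}(\varpi^2 s)$ for $\beta\in\Phi^+$, $s\in\Ocal$, the claim reduces to verifying the identity $\xt_\alpha(\varpi^{2e-1}t)\,\xt_{-\beta}(\varpi^2 s)=\xt_{-\beta}(\varpi^2 s)\,\xt_\alpha(\varpi^{2e-1}t)$ in $\Kft$ for each such $\beta,s$.

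For $\beta\neq\alpha$ this is immediate from the simply-laced commutator relation \eqref{E:MR2'}: the commutator is either trivial or equal to $\xt_{\alpha-\beta}(\pm\varpi^{2e+1}ts)$, and in the latter case this element lies in the image of $\Gamma(2e)$ under $\s^\pm$, hence is killed in $\Kft$. For $\beta=\alpha$ (the case $ts=0$ is trivial), Proposition \ref{PMRootSwap} applied with $t_1=\varpi^{2e-1}t$ and $u_1=\varpi^2 s$ yields
\[
\xt_\alpha(\varpi^{2e-1}t)\,\xt_{-\alpha}(\varpi^2 s)=(1+\varpi^{2e+1}ts,-\varpi^{2e-1}t)\,\xt_{-\alpha}\bigl(\tfrac{\varpi^2 s}{1+\varpi^{2e+1}ts}\bigr)\,\htt_\alpha(1+\varpi^{2e+1}ts)\,\xt_\alpha\bigl(\tfrac{\varpi^{2e-1}t}{1+\varpi^{2e+1}ts}\bigr).
\]
By Proposition \ref{P:property_Hilber_symbol_not_in_Appendix}(1), $1+\varpi^{2e+1}ts\in 1+\varpi^{2e+1}\Ocal\subseteq \Ocal^{\times 2}$, so the Hilbert symbol is trivial. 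Moreover, $\htt_\alpha(1+\varpi^{2e+1}ts)\in\s(T_{2e+1})\subseteq\s(\Gamma(2e))$ is trivial in $\Kft$. Finally, the arguments of the two remaining $\xt$-factors differ from $\varpi^2 s$ and $\varpi^{2e-1}t$ by elements of $\varpi^{2e+3}\Ocal$ and $\varpi^{4e}\Ocal$ respectively, both contained in $\varpi^{2e}\Ocal$, so after \eqref{MR1} the two factors coincide in $\Kft$ with $\xt_{-\alpha}(\varpi^2 s)$ and $\xt_\alpha(\varpi^{2e-1}t)$.

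The only step that is not a routine manipulation of Steinberg relations is identifying $1+\varpi^{2e+1}\Ocal\subseteq\Ocal^{\times 2}$ so that the Hilbert symbol disappears and the torus correction $\htt_\alpha(1+\varpi^{2e+1}ts)$ can be killed in $\Kft$; this is exactly what Proposition \ref{P:property_Hilber_symbol_not_in_Appendix}(1) provides. With that input, the two cases above give commutativity of $\xt_\alpha(\varpi^{2e-1}t)$ with a generating set of $\Uf_2^-$, hence with $e_{\Uf_2^-}$, and the lemma follows.
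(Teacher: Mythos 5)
Your proof is correct and follows essentially the same route as the paper: the paper's own argument is just the one-line observation that $[y,x_{\alpha}(\varpi^{2e-1}t)]$ is trivial mod $\Gamma(2e)$ for all $y\in\Uf_2^-$, so that $e_{\Uf_2^-}$ commutes with $x_{\alpha}(\varpi^{2e-1}t)$, which is then absorbed by $e_{\Uf_0}$. You have merely spelled out that commutator check explicitly via \eqref{E:MR2'}, Proposition \ref{PMRootSwap}, and the inclusion $1+\varpi^{2e+1}\Ocal\subseteq\Ocal^{\times 2}$, which is exactly the verification the paper leaves implicit.
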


\begin{proof}
	 For all $y\in {\Uf}^{-}_{2}$ the commutator $[y,x_{\alpha}(\varpi^{2e-1}t)]$ is trivial mod $\Gamma(2e)$, which gives
     \[
     e_{{\Uf}^{-}_{2}}x_{\alpha}(\varpi^{2e-1}t)=x_{\alpha}(\varpi^{2e-1}t)e_{{\Uf}^{-}_{2}}.
     \]
     The result follows because $x_{\alpha}(\varpi^{2e-1}t)\in {\Uf}_{0}$.
\end{proof}

\begin{Prop}\label{Gamma02IrrInd}
	The $\widetilde{{\If}}$-module $\Ind_{\widetilde{{\If}}_{2}}^{\widetilde{{\If}}}\sigma_{2}$ is irreducible.
\end{Prop}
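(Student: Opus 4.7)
The plan is to prove the proposition by establishing the isomorphism $\Ind_{\widetilde{\If}_2}^{\widetilde{\If}}\sigma_2\cong\sigma$, from which irreducibility is immediate since $\sigma$ is irreducible (Proposition \ref{P:two_functors_for_Iwahori_decomp}(1)). I would build this isomorphism by first producing a nonzero $\widetilde{\If}$-homomorphism $\phi:\Ind_{\widetilde{\If}_2}^{\widetilde{\If}}\sigma_2\to\sigma$ using Frobenius reciprocity, and then matching dimensions to force $\phi$ to be an isomorphism.

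The key input for the Frobenius argument is Lemma \ref{L:IndRes2x}, which does not apply directly: the natural Iwahori decompositions $(\Uf_0,\Tft_0,\Uf_1^-)$ of $\widetilde{\If}$ and $(\Uf_0,\Tft_0,\Uf_2^-)$ of $\widetilde{\If}_2$ share their ``$U$'' factor $\Uf_0$ but have different ``$V$'' factors, whereas the lemma requires the ``$V$'' factors to agree. The trick is to swap the two factors of the parahoric induction via Proposition \ref{P:two_functions_are_adjoint}(1) and write $\sigma\cong i_{\Uf_1^-,\Uf_0}^{\Ift}\tau$ and $\sigma_2\cong i_{\Uf_2^-,\Uf_0}^{\Ift_2}\tau$; now the Iwahori decompositions $(\Uf_1^-,\Tft_0,\Uf_0)$ of $\widetilde{\If}$ and $(\Uf_2^-,\Tft_0,\Uf_0)$ of $\widetilde{\If}_2$ share their third factor and satisfy $\Uf_2^-=\widetilde{\If}_2\cap\Uf_1^-$, so Lemma \ref{L:IndRes2x} applies. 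Combined with the $r$-$i$ adjunction in Proposition \ref{P:two_functors_for_Iwahori_decomp}(2), this yields
\[
r_{\Uf_2^-,\Uf_0}^{\Ift_2}(\sigma|_{\widetilde{\If}_2})\cong r_{\Uf_1^-,\Uf_0}^{\Ift}\sigma\cong\tau
\]
as $\Tft_0$-modules. Parahoric Frobenius reciprocity (Proposition \ref{P:two_functions_are_adjoint}(2)) then gives
\[
\Hom_{\widetilde{\If}_2}(\sigma_2,\sigma|_{\widetilde{\If}_2})\cong\Hom_{\Tft_0}(\tau,\tau)\cong\C,
\]
so an embedding $\sigma_2\hookrightarrow\sigma|_{\widetilde{\If}_2}$ exists and is unique up to scalar. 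Standard Frobenius reciprocity converts this into the desired nonzero $\widetilde{\If}$-map $\phi:\Ind_{\widetilde{\If}_2}^{\widetilde{\If}}\sigma_2\to\sigma$, and since $\sigma$ is irreducible, $\phi$ is surjective.

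To see that $\phi$ is an isomorphism I would compare dimensions using the parahoric analog of the Harish-Chandra dimension formula: for any Iwahori decomposition $(U,T,V)$ of a finite group $G$, the subset $UT$ is a subgroup of index $|V|$, and one can identify $i_{U,V}\tau\cong \Ind_{UT}^G\tilde\tau$ (where $\tilde\tau$ extends $\tau$ by letting $U$ act trivially), giving $\dim i_{U,V}\tau=|V|\dim\tau$. Hence $\dim\sigma=|\Uf_1^-|\dim\tau$ and $\dim\sigma_2=|\Uf_2^-|\dim\tau$, so
\[
\dim\Ind_{\widetilde{\If}_2}^{\widetilde{\If}}\sigma_2=[\widetilde{\If}:\widetilde{\If}_2]\dim\sigma_2=\frac{|\Uf_1^-|}{|\Uf_2^-|}\cdot|\Uf_2^-|\dim\tau=\dim\sigma,
\]
forcing $\phi$ to be an isomorphism. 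The main conceptual point, and the only non-routine step, is recognizing that after swapping the two unipotent-like factors of the parahoric induction the setup falls exactly into the hypotheses of Lemma \ref{L:IndRes2x}; once this observation is made, the remainder is an adjointness chase together with the dimension count above.
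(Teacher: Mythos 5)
Your steps producing the nonzero map are fine: swapping the factors via $i_{U,V}\cong i_{V,U}$ so that Lemma \ref{L:IndRes2x} applies with $(U,T,V)=(\Uf_1^-,\Tft_0,\Uf_0)$, $(U',T',V)=(\Uf_2^-,\Tft_0,\Uf_0)$ is legitimate, and the adjunction then gives $\Hom_{\widetilde{\If}_2}(\sigma_2,\sigma|_{\widetilde{\If}_2})\cong\C$, hence a surjection $\phi:\Ind_{\widetilde{\If}_2}^{\widetilde{\If}}\sigma_2\twoheadrightarrow\sigma$ (this is essentially the paper's Lemma \ref{LHoms}, reproved by a cleaner route). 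The gap is the dimension count. The identification $i_{U,V}\tau\cong\Ind_{UT}^{G}\tilde\tau$ is false in general: right convolution by $e_V$ only gives a canonical \emph{surjection} $\Ind_{UT}^{G}\tilde\tau\twoheadrightarrow i_{U,V}\tau$, so one only obtains $\dim i_{U,V}\tau\le |V|\dim\tau$. Indeed, parahoric induction sends irreducibles to irreducibles (Proposition \ref{P:two_functors_for_Iwahori_decomp}), whereas $\Ind_{UT}^{G}\tilde\tau$ is typically reducible of the full dimension $|V|\dim\tau$; already for a shallow character of the torus inside an Iwahori-type finite group the two differ. The paper itself remarks, at the end of \S\ref{SS:IMPres}, that even the dimension of $\sigma_2$ is unknown, so the formulas $\dim\sigma=|\Uf_1^-|\dim\tau$ and $\dim\sigma_2=|\Uf_2^-|\dim\tau$ cannot be taken for granted. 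Without the equality of dimensions, surjectivity of $\phi$ gives only that $\sigma$ occurs as a quotient of $\Ind_{\widetilde{\If}_2}^{\widetilde{\If}}\sigma_2$ with multiplicity one, which does not yield irreducibility.

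What your argument is silently replacing is exactly the hard, covering-specific part of the paper's proof. The paper bounds $\End_{\widetilde{\If}}(\Ind_{\widetilde{\If}_2}^{\widetilde{\If}}\sigma_2)$ by computing $r_{\Uf_0,\Uf_2^-}^{\widetilde{\If}_2}$ of the \emph{induced} module (not of $\sigma$): by the Iwahori factorization this space is spanned by vectors indexed by $y\in\Uf_1^-$, and the contributions from $y\in\Uf_1^-\smallsetminus\Uf_2^-$ are shown to vanish by a sign trick with the Hilbert symbol (choosing $v\in\varpi^{2e-1}\Ocal^\times$ with $(1+uv,\varpi)=-1$ via Corollary \ref{C:Hilbert_symbol_unique_element_integral_radical}, together with Lemma \ref{+-IdemSwap}). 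This forces $\End_{\widetilde{\If}}(\Ind_{\widetilde{\If}_2}^{\widetilde{\If}}\sigma_2)\cong\C$, hence irreducibility, and only afterwards is $\Ind_{\widetilde{\If}_2}^{\widetilde{\If}}\sigma_2\cong\sigma$ deduced (Proposition \ref{IndSig2}) via Schur's lemma. To repair your proof you would need either a genuine proof of the dimension formula in this metaplectic setting, or an analogue of the Mackey-coset vanishing above; as written, the crucial step is missing.
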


\begin{proof}
It suffices to show $\End_{\widetilde{{\If}}}(\Ind_{\widetilde{{\If}}_{2}}^{\widetilde{{\If}}}\sigma_{2})\cong \mathbb{C}$. But by the Frobenius reciprocity and Proposition \ref{P:two_functions_are_adjoint} (2), we have
	\begin{align*}
		\End_{\widetilde{{\If}}}(\Ind_{\widetilde{{\If}}_{2}}^{\widetilde{{\If}}}\sigma_{2})\cong& \Hom_{\widetilde{{\If}}_{2}}(\sigma_{2},\Ind_{\widetilde{{\If}}_{2}}^{\widetilde{{\If}}}\sigma_{2})\\
		\cong& \Hom_{\widetilde{{\Tf}}_{0}}(\tau, r_{{\Uf}^{+}_{0},{\Uf}^{-}_{2}}^{\widetilde{{\If}}_{2}}\Ind_{\widetilde{{\If}}_{2}}^{\widetilde{{\If}}}\sigma_{2}).
	\end{align*}
	
	By definition
	\begin{equation*}
		r_{{\Uf}^{+}_{0},{\Uf}^{-}_{2}}^{\widetilde{{\If}}_{2}}\Ind_{\widetilde{{\If}}_{2}}^{\widetilde{{\If}}}\sigma_{2}=e_{{\Uf}^{+}_{0}}e_{{\Uf}^{-}_{2}}\mathcal{H}(\widetilde{{\If}}_{2})\otimes_{\widetilde{{\If}}_{2}} \mathcal{H}(\widetilde{{\If}})\otimes_{\widetilde{{\If}}_{2}}\mathcal{H}(\widetilde{{\If}}_{2})e_{{\Uf}^{+}_{0}}e_{{\Uf}^{-}_{2}}\otimes_{\widetilde{{\Tf}}_{0}}\tau.
	\end{equation*}
Thus by the Iwahori factorization and Lemma \ref{L:spanning_vectors}, $r_{{\Uf}^{+}_{0},{\Uf}^{-}_{2}}^{\widetilde{{\If}}_{2}}\Ind_{\widetilde{{\If}}_{2}}^{\widetilde{{\If}}}\sigma_{2}$ is spanned by elements of the form 
\begin{equation*}
	e_{{\Uf}^{+}_{0}}e_{{\Uf}^{-}_{2}}\otimes_{\widetilde{{\If}}_{2}} y\otimes_{\widetilde{{\If}}_{2}}e_{{\Uf}^{+}_{0}}e_{{\Uf}^{-}_{2}}\otimes_{\widetilde{{\Tf}}_{0}} w,
\end{equation*}	
where $y\in {\Uf}^{-}_{1}$ and $w\in\tau$.

We claim that if $y\in {\Uf}^{-}_{1}\smallsetminus{\Uf}^{-}_{2}$, then $e_{{\Uf}^{+}_{0}}e_{{\Uf}^{-}_{2}}\otimes _{\widetilde{{\If}}_{2}}y\otimes_{\widetilde{{\If}}_{2}}e_{{\Uf}^{+}_{0}}\otimes_{\widetilde{{\Tf}}_{0}} w=0$. To prove the claim, let us write $y=y^{\prime}x_{-\alpha}(u)$, where $u\in\varpi\mathcal{O}^{\times}$ and $y^{\prime}\in \prod_{\beta}{\Uf}_{-\beta,1}$, where the product is over $\beta\in \Phi^{+}\smallsetminus\{\alpha\}$. Since $v\varpi\mathcal{O}\subseteq 4\mathcal{O}$, there exists $v\in \varpi^{2e-1}\mathcal{O}^{\times}$ such that $(1+ uv,\varpi)=-1$ by Corollary \ref{C:Hilbert_symbol_unique_element_integral_radical}. Because $v\in \varpi^{2e-1}\mathcal{O}$ and $y^{\prime}\in \prod_{\beta}{\Uf}_{-\beta,1}$ where $\beta\in \Phi^{+}\smallsetminus\{\alpha\}$, we have $y^{\prime}\xt_{\alpha}(v)=\xt_{\alpha}(v)y^{\prime}$.

We then have
\begin{align*}
	&e_{{\Uf}^{+}_{0}}e_{{\Uf}^{-}_{2}}\otimes_{\widetilde{{\If}}_{2}}y\otimes_{\widetilde{{\If}}_{2}}e_{{\Uf}^{+}_{0}}e_{{\Uf}^{-}_{2}}\otimes_{\widetilde{{\Tf}}_{0}} w\\
    =&e_{{\Uf}^{+}_{0}}e_{{\Uf}^{-}_{2}}\otimes_{\widetilde{{\If}}_{2}}y\otimes_{\widetilde{{\If}}_{2}}\xt_{\alpha}(v)e_{{\Uf}^{+}_{0}}e_{{\Uf}^{-}_{2}}\otimes_{\widetilde{{\Tf}}_{0}} w\quad(\text{because $\xt_{\alpha}(v)e_{{\Uf}^{+}_{0}}=e_{{\Uf}^{+}_{0}}$})\\
	=&e_{{\Uf}^{+}_{0}}e_{{\Uf}^{-}_{2}}\otimes_{\widetilde{{\If}}_{2}}y^{\prime}\xt_{-\alpha}( u)\xt_{\alpha}(v)\otimes_{\widetilde{{\If}}_{2}}e_{{\Uf}^{+}_{0}}e_{{\Uf}^{-}_{2}}\otimes_{\widetilde{{\Tf}}_{0}} w\\
	=&-e_{{\Uf}^{+}_{0}}e_{{\Uf}^{-}_{2}}\otimes_{\widetilde{{\If}}_{2}}y^{\prime}\xt_{\alpha}(v)\htt_{\alpha}(1+uv)\xt_{-\alpha}(u)\otimes_{\widetilde{{\If}}_{2}}e_{{\Uf}^{+}_{0}}e_{{\Uf}^{-}_{2}}\otimes_{\widetilde{{\Tf}}_{0}} w\quad (\text{Lemma \ref{+-IdemSwap}})\\
	=&-e_{{\Uf}^{+}_{0}}e_{{\Uf}^{-}_{2}}\otimes_{\widetilde{{\If}}_{2}}\xt_{\alpha}(v)y^{\prime}\xt_{-\alpha}(u)\otimes_{\widetilde{{\If}}_{2}}e_{{\Uf}^{+}_{0}}e_{{\Uf}^{-}_{2}}\otimes_{\widetilde{{\Tf}}_{0}} w\quad (\text{by $y^{\prime}\xt_{\alpha}(v)=\xt_{\alpha}(v)y^{\prime}$})\\
	=&-e_{{\Uf}^{+}_{0}}e_{{\Uf}^{-}_{2}}\otimes_{\widetilde{{\If}}_{2}}y\otimes_{\widetilde{{\If}}_{2}}e_{{\Uf}^{+}_{0}}e_{{\Uf}^{-}_{2}}\otimes_{\widetilde{{\Tf}}_{0}} w,
\end{align*}
where for the last equality we used
\[
e_{{\Uf}^{+}_{0}}e_{{\Uf}^{-}_{2}}\xt_{\alpha}(v)=e_{{\Uf}^{+}_{0}}e_{{\Uf}^{-}_{2}},
\]
which holds because $\xt_{\alpha}(v)$ normalizes $\Uf_2^-$ and $\xt_{\alpha}(v)\in\Uf_0^+$. Thus $e_{{\Uf}^{+}_{0}}e_{{\Uf}^{-}_{2}}\otimes_{\widetilde{{\If}}_{2}}ye_{{\Uf}^{+}_{0}}e_{{\Uf}^{-}_{2}}\otimes_{\widetilde{{\Tf}}_{0}} w=0$, and so we have $r_{{\Uf}^{+}_{0},{\Uf}^{-}_{2}}^{\widetilde{{\If}}_{2}}\Ind_{\widetilde{{\If}}_{2}}^{\widetilde{{\If}}}\sigma_{2}\cong \tau$ as $\widetilde{{\Tf}}_{0}$-modules.

Therefore $\End_{\widetilde{{\If}}}(\Ind_{\widetilde{{\If}}_{2}}^{\widetilde{{\If}}}\sigma_{2})\cong \mathbb{C}$ and the result follows.
\end{proof}

\begin{Prop}\label{IndSig2}
	As $\widetilde{\If}$-modules
	\begin{equation*}
		\sigma \cong \Ind_{\widetilde{\If}_2}^{\widetilde{\If}}\sigma_{2}.
	\end{equation*}
\end{Prop}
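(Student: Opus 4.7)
The plan is to deduce this isomorphism at once from the irreducibility of both sides together with a single application of Lemma \ref{LHoms}, after reordering the factors of the relevant Iwahori decompositions.

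First I would observe that both $\widetilde{\If}$-modules in question are irreducible. The module $\sigma = i_{\Uf_0, \Uf_1^-}^{\widetilde{\If}} \tau$ is irreducible by Proposition \ref{P:two_functors_for_Iwahori_decomp}(1), applied to the genuine Iwahori decomposition $(\Uf_0, \widetilde{\Tf}_0, \Uf_1^-)$ of $\widetilde{\If}$ and the irreducible $\widetilde{\Tf}_0$-module $\tau$. The module $\Ind_{\widetilde{\If}_2}^{\widetilde{\If}} \sigma_2$ is irreducible by Proposition \ref{Gamma02IrrInd}, which we have just established. Consequently, to prove $\sigma \cong \Ind_{\widetilde{\If}_2}^{\widetilde{\If}} \sigma_2$ it suffices to exhibit a single nonzero $\widetilde{\If}$-module homomorphism between them; any such map is automatically an isomorphism.

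Next I would set up Lemma \ref{LHoms}. Swapping the two unipotent factors, $(\Uf_1^-, \widetilde{\Tf}_0, \Uf_0)$ is an Iwahori decomposition of $\widetilde{\If}$, and $(\Uf_2^-, \widetilde{\Tf}_0, \Uf_0)$ is an Iwahori decomposition of $\widetilde{\If}_2$; these share the right factor $\Uf_0$ (which is contained in $\widetilde{\If}_2$) and the torus $\widetilde{\Tf}_0$, and the left factor of the smaller group equals the intersection of the smaller group with the left factor of the larger, namely $\widetilde{\If}_2 \cap \Uf_1^- = \Uf_2^-$. We then apply Lemma \ref{LHoms} with $G = \widetilde{\If}$, $H = \widetilde{\If}_2$, $T = \widetilde{\Tf}_0$, $U = \Uf_1^-$, $V = \Uf_0$ and $U' = H \cap U = \Uf_2^-$. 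The conclusion of the lemma is
\[
\Hom_{\widetilde{\If}}\!\left(\Ind_{\widetilde{\If}_2}^{\widetilde{\If}} i_{\Uf_0, \Uf_2^-}^{\widetilde{\If}_2} \tau,\; i_{\Uf_0, \Uf_1^-}^{\widetilde{\If}} \tau\right) \;=\; \Hom_{\widetilde{\If}}\!\left(\Ind_{\widetilde{\If}_2}^{\widetilde{\If}} \sigma_2,\; \sigma\right) \;\neq\; 0.
\]
Combined with the irreducibility of both sides, this yields the desired isomorphism.

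I do not foresee any real obstacle: the argument is essentially a bookkeeping reduction to Lemma \ref{LHoms} and Proposition \ref{Gamma02IrrInd}. The only subtle point is that Lemma \ref{LHoms} requires the right-hand factor $V$ of the Iwahori decomposition to be contained in the subgroup $H$, which forces us to play $\Uf_0$ (which sits inside $\widetilde{\If}_2$) in the role of $V$ and the negative unipotent in the role of $U$, rather than the reverse.
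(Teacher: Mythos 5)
Your proof is correct and follows essentially the same route as the paper: a nonzero intertwiner from Lemma \ref{LHoms} (with exactly the decompositions $(\Uf_1^-,\widetilde{\Tf}_0,\Uf_0)$ and $(\Uf_2^-,\widetilde{\Tf}_0,\Uf_0)$ you describe), irreducibility of $\Ind_{\widetilde{\If}_2}^{\widetilde{\If}}\sigma_2$ from Proposition \ref{Gamma02IrrInd}, irreducibility of $\sigma$, and Schur's Lemma. Your remark about which unipotent factor must play the role of $V$ is the same bookkeeping the paper performs implicitly.
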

\begin{proof}
	By Lemma \ref{LHoms} we know that $\Hom_{\widetilde{\If}}(\sigma,\Ind_{\widetilde{\If}_2}^{\widetilde{\If}}\sigma_{2})\neq 0$. By Lemma \ref{Gamma02IrrInd} we know that $\Ind_{\widetilde{\If}_{2}}^{\widetilde{\If}}\sigma_{2}$ is irreducible. Since $\sigma$ is also irreducible the result follows by Schur's Lemma.
\end{proof}

\begin{Lem}\label{pmUniComm}
	Let $\ell\in \mathbb{Z}$ be such that $0\leq \ell\leq e$. Then the groups ${\Uf}_{2e-2\ell}$ and ${\Uf}^{-}_{2\ell}$ commute.
\end{Lem}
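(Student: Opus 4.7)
The plan is to reduce the claim to commutation of the standard generators and then analyze two cases via the Steinberg relations and Corollary~\ref{PMRootSwapCor}. Since $\Uf_{2e-2\ell}$ is generated (as a subgroup of $\widetilde{\Kf}$) by the images of $\xt_\alpha(t)$ for $\alpha\in\Phi^+$ and $t\in\varpi^{2e-2\ell}\Ocal$, and $\Uf^-_{2\ell}$ is similarly generated by $\xt_\beta(s)$ for $\beta\in\Phi^-$ and $s\in\varpi^{2\ell}\Ocal$, it suffices to show that $[\xt_\alpha(t),\xt_\beta(s)]\in\s(\Gamma(2e))$ in $\Kt$ for all such pairs. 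Note $ts\in\varpi^{2e}\Ocal$ in every case.

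If $\alpha+\beta\neq 0$, I apply \eqref{E:MR2'}: the commutator is either trivial (when $\alpha+\beta\notin\Phi$) or of the form $\xt_{\alpha+\beta}(cts)$ with $c\in\{\pm 1\}$. In the latter case $cts\in\varpi^{2e}\Ocal$, and $\xt_{\alpha+\beta}(\varpi^{2e}\Ocal)$ lies in $\s(U_{\alpha+\beta,2e})\subseteq\s(\Gamma(2e))$, so the commutator vanishes in $\widetilde{\Kf}$.

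If $\alpha+\beta=0$ (with $t,s\neq 0$, else the claim is trivial), I apply Proposition~\ref{PMRootSwap}. The key preliminary point is that the Hilbert symbol $(1+ts,-t)$ is trivial: since $1+ts\in 1+\varpi^{2e}\Ocal\subseteq R$ by Proposition~\ref{P:property_Hilber_symbol_not_in_Appendix}\eqref{ORad}, any factor from a unit contributes $1$, while writing $-t=\varpi^n(-u)$ with $n\ge 2e-2\ell$ gives $(1+ts,\varpi)^n=1$ since $n$ and $2e-2\ell$ have the same parity and $2e-2\ell$ is even. Consequently
\[
\xt_\alpha(t)\,\xt_{-\alpha}(s)=\xt_{-\alpha}\!\left(\tfrac{s}{1+ts}\right)\htt_\alpha(1+ts)\,\xt_\alpha\!\left(\tfrac{t}{1+ts}\right),
\]
and $\htt_\alpha(1+ts)\in\s(T_{2e})\subseteq\s(\Gamma(2e))$ so it becomes trivial in $\widetilde{\Kf}$.

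The final step is the approximation argument, which is where the hypothesis $\ell\le e$ gets used. I compute $\tfrac{t}{1+ts}-t=-\tfrac{t^2s}{1+ts}$, which has valuation at least $2(2e-2\ell)+2\ell=4e-2\ell\ge 2e$, and similarly $\tfrac{s}{1+ts}-s=-\tfrac{ts^2}{1+ts}$ has valuation at least $2e+2\ell\ge 2e$. Hence $\xt_\alpha(\tfrac{t}{1+ts})$ differs from $\xt_\alpha(t)$ and $\xt_{-\alpha}(\tfrac{s}{1+ts})$ differs from $\xt_{-\alpha}(s)$ only by elements of $\s(U_{\alpha,2e})$ and $\s(U_{-\alpha,2e})$ respectively, both contained in $\s(\Gamma(2e))$. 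Combining the two cases, every commutator of generators lies in $\s(\Gamma(2e))$, proving the lemma. The main point of friction is the Hilbert symbol bookkeeping in the $\alpha+\beta=0$ case, which hinges on the evenness of $2e-2\ell$ together with $1+ts\in R$; all other steps are direct consequences of the Steinberg relations and valuation estimates.
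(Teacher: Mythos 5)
Your overall strategy coincides with the paper's: reduce to commutators of Chevalley generators, dispose of the case $\alpha+\beta\neq 0$ via \eqref{E:MR2'} read modulo $\Gamma(2e)$, and handle opposite roots with Proposition~\ref{PMRootSwap}, noting that the torus factor $\htt_\alpha(1+ts)$ lies in $\s(T_{2e})$ and that the perturbed arguments $\tfrac{t}{1+ts}$, $\tfrac{s}{1+ts}$ agree with $t$, $s$ modulo $\varpi^{2e}\Ocal$ (your estimate $4e-2\ell\geq 2e$ is exactly where $\ell\leq e$ enters, as in the paper's implicit computation).

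There is, however, a false step in your verification that $(1+ts,-t)=1$. You write $-t=\varpi^{n}(-u)$ with $u$ a unit and assert that $n$ and $2e-2\ell$ have the same parity; this is not true: the $\varpi$-adic valuation $n$ of $t$ can be any integer $\geq 2e-2\ell$, of either parity (take $t=\varpi^{2e-2\ell+1}$). When $n$ is odd, $(1+ts,\varpi)^{n}=(1+ts,\varpi)$ is not trivial merely because $1+ts\in R$; indeed, by Corollary~\ref{C:Hilbert_symbol_unique_element_integral_radical} there are elements of $1+\varpi^{2e}\Ocal$ pairing to $-1$ with $\varpi$. The conclusion still holds, but for a reason you did not invoke: if $n>2e-2\ell$ then $ts\in\varpi^{2e+1}\Ocal$, so $1+ts\in 1+\varpi^{2e+1}\Ocal\subseteq\Ocal^{\times 2}$ by Proposition~\ref{P:property_Hilber_symbol_not_in_Appendix}~(1) and every symbol involving $1+ts$ vanishes, while if $n=2e-2\ell$ the exponent of $\varpi$ is even and the unit factor is killed by $1+ts\in R$. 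Alternatively, argue on the other slot as the paper does: writing $s=\varpi^{2\ell}v$ with $v\in\Ocal$ (not necessarily a unit), one has $(1+ts,s)=(1+ts,\varpi)^{2\ell}\,(1+ts,v)$, which is trivial because $2\ell$ is even and either $v\in\Ocal^{\times}$ (use $1+ts\in R$) or $v\in\varpi\Ocal$ (use $1+ts\in\Ocal^{\times2}$). With this one-line repair your argument is complete and agrees with the paper's proof.
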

\begin{proof}
Let $\alpha,\beta\in\Phi^{+}$ be such that $\beta\neq \alpha$. Then for each $u,v\in\mathcal{O}$, one can readily derive the following two identities.
	\begin{align*}
		[x_{\alpha}(\varpi^{2e-2\ell}u),x_{-\beta}(\varpi^{2\ell}v)]=&1 \quad (\text{by \eqref{E:MR2'} viewed mod $\Gamma(2e)$});\\
		x_{\alpha}(\varpi^{2e-2\ell}u)x_{-\alpha}(\varpi^{2\ell}v)=&x_{-\alpha}(\varpi^{2\ell}v)x_{\alpha}(\varpi^{2e-2\ell}u)\quad (\text{by Proposition \ref{+-IdemSwap}}).
	\end{align*}
	We note that in the second identity there is no Hilbert symbol $(1+\varpi^{2e}uv,\varpi^{2\ell}v)$ because $1+\varpi^{2e}uv\in R$ and if $v\in \varpi\Ocal$ then $1+\varpi^{2e}uv\in 1+\varpi^{2e+1}\Ocal$. 
\end{proof}


\begin{Prop}\label{Sig2UInvar} In the $\widetilde{{\If}}_{2}$-module $\sigma_{2}$ we have
	\begin{equation*}
		\sigma_{2}^{{\Uf}_{2e-2}}=\sigma_{2}.
	\end{equation*}
\end{Prop}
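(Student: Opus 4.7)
The plan is to reduce the problem to checking triviality on a spanning set of $\sigma_2$, and then use the commutation result in Lemma \ref{pmUniComm}. Specifically, by Lemma \ref{L:spanning_vectors} applied to the Iwahori factorization $(\Uf_0, \widetilde{\Tf}_0, \Uf_2^-)$ of $\widetilde{\If}_2$, the space $\sigma_2 = i_{\Uf_0, \Uf_2^-}^{\Ift_2}\tau$ is spanned (as a $\C$-vector space) by vectors of the form
\[
y\, e_{\Uf_0}\, e_{\Uf_2^-} \otimes w, \qquad y \in \Uf_2^-,\ w \in \tau.
\]
So it suffices to show that each $u \in \Uf_{2e-2}$ fixes every such vector.

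Next I would invoke Lemma \ref{pmUniComm} with $\ell = 1$: since $0 \leq 1 \leq e$, the groups $\Uf_{2e - 2}$ and $\Uf_2^-$ commute. Therefore for any $u \in \Uf_{2e-2}$ and $y \in \Uf_2^-$ we have $u y = y u$ in $\widetilde{\Kf}$. Combining this with the fact that $\Uf_{2e-2} \subseteq \Uf_0$, so that $u$ is absorbed by the idempotent $e_{\Uf_0}$, we get
\[
u \cdot \bigl( y\, e_{\Uf_0}\, e_{\Uf_2^-} \otimes w \bigr) = y\, u\, e_{\Uf_0}\, e_{\Uf_2^-} \otimes w = y\, e_{\Uf_0}\, e_{\Uf_2^-} \otimes w.
\]
This proves $\Uf_{2e-2}$ fixes every spanning vector, and hence fixes all of $\sigma_2$, giving $\sigma_2^{\Uf_{2e-2}} = \sigma_2$.

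There is no real obstacle here; the argument is essentially a direct application of the two previously established lemmas, exploiting that the sublattice depth $2e - 2$ of the positive part and depth $2$ of the negative part are complementary in the sense required by Lemma \ref{pmUniComm} (the exponents sum to $2e$, which is precisely the depth at which commutators become trivial modulo $\Gamma(2e)$).
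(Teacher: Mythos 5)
Your proof is correct and follows essentially the same route as the paper: both span $\sigma_2$ by vectors $y\,e_{\Uf_0}e_{\Uf_2^-}\otimes w$ with $y\in\Uf_2^-$ via Lemma \ref{L:spanning_vectors}, then use Lemma \ref{pmUniComm} (with $\ell=1$) to commute $u\in\Uf_{2e-2}$ past $y$ and absorb it into $e_{\Uf_0}$. Nothing to add.
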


\begin{proof}
	By the Iwahori factorization and Lemma \ref{L:spanning_vectors}, $\sigma_{2}$ is spanned by elements of the form $ye_{{\Uf}_{0}}e_{{\Uf}^{-}_{2}}\otimes_{\Tt_{0}} w$, where $y\in {\Uf}^{-}_{2}$ and $w\in \tau$. Let $x\in \Uf_{2e-2}$, so that $xy=yx$ by Lemma \ref{pmUniComm}. Hence 
    \[
    xye_{{\Uf}_{0}}e_{{\Uf}^{-}_{2}}\otimes_{\Tt_{0}} w=yxe_{{\Uf}_{0}}e_{{\Uf}^{-}_{2}}\otimes_{\Tt_{0}} w=ye_{{\Uf}_{0}}e_{{\Uf}^{-}_{2}}\otimes_{\Tt_{0}} w,
    \]
    because $e_{\Uf_0}$ absorbs $x$. The result follows.
\end{proof}

\subsection{Brief wrap-up}


Let us briefly summarize what we have achieved so far. We have first constructed a pseudo-spherical representation $\tau$ of the compact group $\Tt_0$, which is a finite dimensional irreducible representation of $\Tt_0$ that is trivial on $T_R$. We showed that the pseudo-spherical representations are Weyl invariant. 

Next using parahoric induction, we obtained an irreducible representation $\sigma:=i_{\Uf, \Uf_1^-}^{\Ift}\tau$ of the finite group $\Ift\cong \It/\s(\Gamma(2e))$, which we can inflate to the Iwahori subgroup $\It$. This representation $\sigma$ has two important properties: first, it remains irreducible when restricted to $\Gammat(1)$, and in fact can be viewed as being parahorically induced; second, $\sigma$ extends to $\Kft=\Kt/\s(\Gamma(2e))$, which we also call a pseudo-spherical representation and denote it by the same symbol $\sigma$. We can inflate $\sigma$ to a representation of the compact group $\Kt$. 

As in Remark \ref{R:Gamma(2e)_is_max}, the group $\Gamma(2e)$ is the maximal congruence subgroup of the hyperspecial maximal compact subgroup $K$ that splits to $\Kt$ with normal image. Hence the representation $\sigma$ of $\Kt$ can be considered as the covering analog of the trivial representation. Hence it makes sense to call $\sigma$ a ``pseudo-spherical" representation of $\Kt$. Indeed, $\sigma$ does play a role of the trivial representation in the representation theory of covering groups.


\subsection{Finite Shimura correspondence}\label{SS:finite_Shimura}


A first instance where one can see how our pseudo-spherical representation plays a role of the trivial representation is a ``finite Shimura correspondence" originally established by Savin in \cite[Theorem 4.1]{S12}, when $F/\mathbb{Q}_{2}$ is unramified. In this section, we generalize it to an arbitrary $F$. Indeed, this correspondence establishes an equivalence of the categories of representations of the finite group $K\slash\Gamma(1)$ with a certain category of representations of $\Kft$, and this equivalence of categories is precisely given by tensoring the pseudo-spherical representation as $\pi\mapsto \pi\otimes\sigma$ for a representation $\pi$ of $K\slash\Gamma(1)$.

To be precise, let $\mathcal{M}(G_\kappa)$ be the category of representations of $G_\kappa$, where we recall $\kappa$ is the residue field of our $2$-adic field $F$. Let $\mathcal{M}_{\sigma}(\widetilde{{\Kf}})$ be the full subcategory of smooth $\widetilde{{\Kf}}$-modules $\widetilde{\pi}$ such that $\widetilde{\pi}|_{\widetilde{\Gammaf}(1)}$ is a nontrivial $\sigma$-isotypic $\widetilde{\Gammaf}(1)$-module, or $\pit=0$. Let us then define two functors
\begin{align*}
\mathcal{F}_{\sigma}&:\mathcal{M}(G_{\kappa})\rightarrow \mathcal{M}_{\sigma}(\widetilde{{\Kf}}),\qquad \pi\mapsto \sigma\otimes \pi;\\
\mathcal{G}_{\sigma}&:\mathcal{M}_{\sigma}(\widetilde{{\Kf}})\rightarrow \mathcal{M}(G_{\kappa}),\qquad \widetilde{\pi}\mapsto \Hom_{\widetilde{\Gammaf}(1)}(\sigma,\widetilde{\pi}).
\end{align*}
Here, $\widetilde{{\Kf}}$ acts diagonally on $\sigma\otimes \pi$. Also $G_\kappa=\widetilde{\Kf}\slash\widetilde{\Gammaf}(1)$ acts on $\Hom_{\widetilde{\Gammaf}(1)}(\sigma,\widetilde{\pi})$ by ``conjugation", namely for each $g\in \widetilde{\Kf}\slash\widetilde{\Gammaf}(1)$ and $\phi\in \Hom_{\widetilde{\Gammaf}(1)}(\sigma,\widetilde{\pi})$
\[
(g\cdot \phi)(v)=g\phi(g^{-1}v)
\] 
for all $v\in \widetilde{\pi}$, which is indeed well-defined.

The main theorem of this subsection is the following ``finite Shimura correspondence".

\begin{Thm}\label{FiniteShimCor}
	The functors $\mathcal{F}_{\sigma}$ and $\mathcal{G}_{\sigma}$ are mutual quasi-inverses and hence define an equivalence of categories between $\mathcal{M}(G_{\kappa})$ and $\mathcal{M}_{\sigma}(\widetilde{{\Kf}})$.
\end{Thm}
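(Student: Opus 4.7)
The plan is to exhibit an adjunction unit and counit and check they are isomorphisms, which is essentially a Morita-style argument that works cleanly because $\sigma$ is already known to restrict irreducibly to $\widetilde{\Gammaf}(1)$ (Proposition \ref{IRep}\,(2)) and to extend genuinely to all of $\widetilde{\Kf}$ (Theorem \ref{KRep}).

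First I would verify that both functors are well-defined. For $\pi\in\mathcal{M}(G_\kappa)$, viewed as a $\widetilde{\Kf}$-module via inflation along $\widetilde{\Kf}\twoheadrightarrow\widetilde{\Kf}/\widetilde{\Gammaf}(1)\cong G_\kappa$, the group $\widetilde{\Gammaf}(1)$ acts trivially on $\pi$, so $(\sigma\otimes\pi)|_{\widetilde{\Gammaf}(1)}\cong\sigma^{\oplus\dim\pi}$ is $\sigma$-isotypic, and $\mu_2\subseteq\widetilde{\Kf}$ acts genuinely since it does so on $\sigma$; thus $\mathcal{F}_\sigma(\pi)\in\mathcal{M}_\sigma(\widetilde{\Kf})$. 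For $\widetilde{\pi}\in\mathcal{M}_\sigma(\widetilde{\Kf})$, normality of $\widetilde{\Gammaf}(1)$ gives a conjugation action of $\widetilde{\Kf}$ on $\Hom_{\widetilde{\Gammaf}(1)}(\sigma,\widetilde{\pi})$; since $\mu_2$ acts by the same genuine character on $\sigma$ and on $\widetilde{\pi}$, the central $\mu_2$ acts trivially, so the action descends to $G_\kappa$.

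Next I would construct the unit $\eta$ and counit $\varepsilon$ of the adjunction. Define
\[
\eta_\pi:\pi\longrightarrow\Hom_{\widetilde{\Gammaf}(1)}(\sigma,\sigma\otimes\pi),\quad v\mapsto(s\mapsto s\otimes v),
\]
and
\[
\varepsilon_{\widetilde{\pi}}:\sigma\otimes\Hom_{\widetilde{\Gammaf}(1)}(\sigma,\widetilde{\pi})\longrightarrow\widetilde{\pi},\quad s\otimes\phi\mapsto\phi(s).
\]
A direct check shows that $\eta_\pi$ is $G_\kappa$-equivariant and $\varepsilon_{\widetilde{\pi}}$ is $\widetilde{\Kf}$-equivariant: for $k\in\widetilde{\Kf}$, $\varepsilon_{\widetilde{\pi}}(k\cdot(s\otimes\phi))=(k\phi)(ks)=k\phi(k^{-1}ks)=k\phi(s)$. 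Here it is essential that $\sigma$ extends genuinely to all of $\widetilde{\Kf}$; otherwise the diagonal action in $\mathcal{F}_\sigma\circ\mathcal{G}_\sigma(\widetilde{\pi})$ would only be projective.

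To see $\eta_\pi$ is a bijection, observe that since $\sigma|_{\widetilde{\Gammaf}(1)}$ is absolutely irreducible (so $\End_{\widetilde{\Gammaf}(1)}(\sigma)=\mathbb{C}$ by Schur) and $\widetilde{\Gammaf}(1)$ acts trivially on $\pi$, we have the canonical identification $\Hom_{\widetilde{\Gammaf}(1)}(\sigma,\sigma\otimes\pi)\cong\End_{\widetilde{\Gammaf}(1)}(\sigma)\otimes\pi\cong\pi$, which is precisely the inverse of $\eta_\pi$. To see $\varepsilon_{\widetilde{\pi}}$ is a bijection, use the defining $\sigma$-isotypic property: as $\widetilde{\Gammaf}(1)$-modules, $\widetilde{\pi}\cong\sigma\otimes V$ for some vector space $V$, and the same Schur computation yields $\Hom_{\widetilde{\Gammaf}(1)}(\sigma,\widetilde{\pi})\cong V$; under these identifications $\varepsilon_{\widetilde{\pi}}$ is the canonical map $\sigma\otimes V\to\sigma\otimes V$, which is an isomorphism. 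Combined with the verified equivariance, this shows $\eta$ and $\varepsilon$ are natural isomorphisms, so $\mathcal{F}_\sigma$ and $\mathcal{G}_\sigma$ are mutual quasi-inverses.

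The only substantive input beyond a standard Clifford/Morita argument is the extension of $\sigma$ to $\widetilde{\Kf}$ (Theorem \ref{KRep}) together with the irreducibility of $\sigma|_{\widetilde{\Gammaf}(1)}$; the rest is formal. Consequently I expect no real obstacle — the whole argument should fit in about a page of verification.
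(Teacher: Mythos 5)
Your argument is correct and is essentially the paper's proof: your counit $s\otimes\phi\mapsto\phi(s)$ is exactly the natural isomorphism the paper uses to get $\mathcal{F}_\sigma\circ\mathcal{G}_\sigma\cong\mathrm{id}$, and your unit-is-an-isomorphism step is the same finite-dimensionality/tensor-Hom plus Schur computation that the paper packages as full faithfulness of $\mathcal{F}_\sigma$ via Lemma \ref{HOMotimesSig} (using irreducibility of $\sigma|_{\widetilde{\Gammaf}(1)}$ and the extension of $\sigma$ to $\widetilde{\Kf}$, just as you flag). The only cosmetic difference is that you verify both composites directly, while the paper verifies full faithfulness plus essential surjectivity; the underlying computations coincide.
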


We first need a couple of elementary lemmas.

\begin{Lem}\label{FinRkLem}
	Let $V_{1},V_{2},W$ be vector spaces. If $V_{1},V_{2}$ are finite dimensional, then the map 
    \[
    \Hom(V_{1},V_{2})\otimes W\rightarrow \Hom(V_{1},V_{2}\otimes W),\quad f\otimes w\mapsto (v\mapsto f(v)\otimes w),
    \]
    is an isomorphism of vector spaces. 
	
	Furthermore, suppose that $G$ is a group and $V_{1},V_{2},W$ are $G$-modules. Let $G$ act diagonally on tensor products and on Hom spaces via``conjugation''. Then this isomorphism is a $G$-module isomorphism. 
\end{Lem}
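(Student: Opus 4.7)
The plan is to prove the vector-space isomorphism first, then upgrade to a $G$-equivariant isomorphism by a direct check on the defining formula.

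For the first part, I would construct an explicit inverse rather than use an abstract duality argument. Fix bases $\{v_1,\dots,v_m\}$ of $V_1$ and $\{e_1,\dots,e_n\}$ of $V_2$, and let $\{E_{ij}\}$ be the corresponding basis of $\Hom(V_1,V_2)$ (where $E_{ij}(v_k)=\delta_{ik}e_j$). Then the natural map sends $E_{ij}\otimes w$ to the homomorphism $v_k\mapsto \delta_{ik}(e_j\otimes w)$. Given $\phi\in \Hom(V_1,V_2\otimes W)$, write $\phi(v_i)=\sum_{j}e_j\otimes w_{ij}$ for unique $w_{ij}\in W$, and define the inverse by $\phi\mapsto \sum_{i,j}E_{ij}\otimes w_{ij}$. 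A direct calculation shows the two maps are mutually inverse. Alternatively, one may invoke the canonical isomorphism $\Hom(V_1,V_2)\cong V_1^{*}\otimes V_2$ (valid since $V_1$ is finite dimensional) together with $V_1^{*}\otimes V_2\otimes W \cong \Hom(V_1,V_2\otimes W)$ (again valid since $V_1$ is finite dimensional), and verify these canonical isomorphisms compose to the map in the statement. I would use the explicit inverse, as it is more transparent and immediately useful for the $G$-equivariance check.

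For the second part, I would simply check that the map $f\otimes w\mapsto (v\mapsto f(v)\otimes w)$ intertwines the two $G$-actions. Writing $F$ for the map, I compute, for $g\in G$, $f\in \Hom(V_1,V_2)$, $w\in W$, and $v\in V_1$,
\[
F\bigl(g\cdot(f\otimes w)\bigr)(v)=F\bigl((gfg^{-1})\otimes gw\bigr)(v)=gf(g^{-1}v)\otimes gw,
\]
while
\[
\bigl(g\cdot F(f\otimes w)\bigr)(v)=g\cdot F(f\otimes w)(g^{-1}v)=g\cdot(f(g^{-1}v)\otimes w)=gf(g^{-1}v)\otimes gw,
\]
where we used the diagonal action on $V_2\otimes W$. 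Since the two expressions agree and $F$ is already known to be a linear isomorphism, it is a $G$-module isomorphism.

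There is no real obstacle here; the statement is a standard ``finite-rank'' identity. The only point requiring care is the bookkeeping of the $G$-actions: one must remember that the conjugation action on $\Hom(V_1,V_2)$ is $(g\cdot f)(v)=gf(g^{-1}v)$, and that the diagonal action on $V_2\otimes W$ is what makes the final equality hold. Once that is set up correctly, the verification is a one-line calculation. The lemma will then be applied in the proof of Theorem \ref{FiniteShimCor} to identify $\Hom_{\widetilde{\Gammaf}(1)}(\sigma,\sigma\otimes \pi)\cong \End_{\widetilde{\Gammaf}(1)}(\sigma)\otimes \pi\cong \pi$, which is the essential computation behind the finite Shimura correspondence.
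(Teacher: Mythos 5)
Your proof is correct, and since the paper simply records this lemma as an elementary exercise, there is nothing substantive to compare: your explicit-inverse argument (which, importantly, only uses finite-dimensionality of $V_1,V_2$ and so is valid for infinite-dimensional $W$) together with the direct equivariance check is exactly the intended verification.
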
	
\begin{proof}
Elementary exercise.
\end{proof}

\begin{Lem}\label{HOMotimesSig}
    Let $M \subseteq L$ be finite groups. Let $\pi_1, \pi_2$ be (not necessarily finite dimensional) $L$-modules which are trivial on $M$, and let $\sigma_1, \sigma_2$ be irreducible (hence necessarily finite dimensional) $L$-modules which remain irreducible when restricted to $M$. If $\phi:\sigma_1\to\sigma_2$ is an $L$-module isomorphism, then the map
    \[
    \Hom_L(\pi_1,\pi_2)\longrightarrow\Hom_L(\pi_1\otimes\sigma_1, \pi_2\otimes\sigma_2),\quad f\mapsto f\otimes \phi,
    \]
    is an isomorphism of vector spaces.
\end{Lem}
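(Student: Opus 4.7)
The plan is to reduce to the case $\sigma_1=\sigma_2$ with $\phi=\mathrm{id}$, and then invoke Schur's Lemma on the $M$-module structure of $\sigma_1$. First I would verify well-definedness and injectivity: the tensor $f\otimes\phi$ is $L$-equivariant under the diagonal $L$-action since both $f$ and $\phi$ are, and if $f\otimes\phi=0$, then picking any $v\in\sigma_1$ with $\phi(v)\neq 0$ forces $f(x)\otimes\phi(v)=0$ and hence $f\equiv 0$.

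For surjectivity, the map $\mathrm{id}_{\pi_2}\otimes\phi^{-1}$ is an $L$-isomorphism $\pi_2\otimes\sigma_2\xrightarrow{\sim}\pi_2\otimes\sigma_1$; post-composing with its inverse identifies the map in the lemma with $f\mapsto f\otimes\mathrm{id}_{\sigma_1}$ from $\Hom_L(\pi_1,\pi_2)$ to $\Hom_L(\pi_1\otimes\sigma_1,\pi_2\otimes\sigma_1)$. This reduces the claim to the following: every $L$-equivariant map $F:\pi_1\otimes\sigma\to\pi_2\otimes\sigma$ (where $\sigma:=\sigma_1$) has the form $F=f\otimes\mathrm{id}_\sigma$ for a unique $f\in\Hom_L(\pi_1,\pi_2)$.

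To prove this, fix a basis $\{e_i\}$ of $\sigma$ and write
\[
F(x\otimes e_i)=\sum_j F_{ji}(x)\otimes e_j
\]
for (uniquely determined) linear maps $F_{ji}:\pi_1\to\pi_2$. For $m\in M$, the $M$-equivariance $F(m\cdot(x\otimes e_i))=m\cdot F(x\otimes e_i)$, combined with the triviality of the $M$-actions on $\pi_1$ and $\pi_2$, translates (after expanding $m\cdot e_i=\sum_k m_{ki}e_k$ in the basis) into the matrix identity $[F_{ji}(x)]\cdot[m_{ij}]=[m_{ij}]\cdot[F_{ji}(x)]$ for every $m\in M$. Since $\sigma|_{M}$ is irreducible, Schur's Lemma forces $[F_{ji}(x)]$ to be scalar, \ie $F_{ji}(x)=\delta_{ij}f(x)$ for some well-defined linear $f:\pi_1\to\pi_2$. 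The residual $L$-equivariance of $F$ then immediately yields $f\in\Hom_L(\pi_1,\pi_2)$, and $F=f\otimes\mathrm{id}_\sigma$ as required.

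The only mildly subtle point is that Schur is applied over $M$ rather than over $L$ (and $M$ need not be normal in $L$), but this causes no difficulty: because $M$ acts trivially on $\pi_1$ and $\pi_2$, all the $M$-equivariance of $F$ is carried by the action on $\sigma$, which is exactly what the matrix computation exploits. Everything else is essentially forced by linear algebra, in the spirit of Lemma \ref{FinRkLem}.
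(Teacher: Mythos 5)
Your proof is correct, but it takes a different route from the paper. The paper proves the lemma abstractly: it applies the tensor-Hom adjunction to get $\Hom_L(\pi_1\otimes\sigma_1,\pi_2\otimes\sigma_2)\cong\Hom_L(\pi_1,\Hom_M(\sigma_1,\pi_2\otimes\sigma_2))$, then invokes its Lemma \ref{FinRkLem} together with Schur's Lemma for $\sigma_i|_M$ to identify $\Hom_M(\sigma_1,\pi_2\otimes\sigma_2)\cong\pi_2$, and finally remarks that under the resulting chain of identifications the element corresponding to $f$ is $f\otimes\phi$. You instead reduce to $\sigma_1=\sigma_2$, $\phi=\mathrm{id}$ via the $L$-isomorphism $\mathrm{id}_{\pi_2}\otimes\phi^{-1}$, and then show by a direct commutant computation in a basis of $\sigma$ that any $M$-equivariant map $\pi_1\otimes\sigma\to\pi_2\otimes\sigma$ is forced to be $f\otimes\mathrm{id}_\sigma$, with $L$-equivariance of $F$ then giving $f\in\Hom_L(\pi_1,\pi_2)$. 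The key input is the same in both arguments (Schur's Lemma over $M$ plus finite-dimensionality of $\sigma$), but your version is more elementary and self-contained: it avoids the adjunction and Lemma \ref{FinRkLem}, and it makes completely explicit the surjectivity that the paper disposes of with ``it can be checked directly.'' The paper's version is slicker and reuses machinery already set up for the finite Shimura correspondence.

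One small point to tighten: when you say Schur's Lemma forces $[F_{ji}(x)]$ to be scalar, the entries of that matrix lie in $\pi_2$ (for fixed $x$), not in $\C$, so Schur does not apply verbatim. The fix is routine: for each linear functional $\lambda$ on $\pi_2$ the complex matrix $[\lambda(F_{ji}(x))]$ commutes with every $\sigma(m)$, hence is scalar, and since functionals separate points of $\pi_2$ you conclude $F_{ji}(x)=\delta_{ij}f(x)$. With that sentence added, the argument is complete, including for infinite-dimensional $\pi_1,\pi_2$.
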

\begin{proof}
    By the tenor-Hom adjunction, we have
    \[
     \Hom(\pi_1\otimes\sigma_1, \pi_2\otimes\sigma_2)\cong \Hom(\pi_1, \Hom(\sigma_1,\pi_2\otimes\sigma_2)),
    \]
    which gives
    \[
     \Hom_L(\pi_1\otimes\sigma_1, \pi_2\otimes\sigma_2)\cong \Hom_L(\pi_1, \Hom(\sigma_1,\pi_2\otimes\sigma_2)).
    \]
    Since $M$ acts trivially on $\pi_1$, the image of each element in $\Hom_L(\pi_1, \Hom(\sigma_1,\pi_2\otimes\sigma_2))$ lies in the $M$-invariant subspace of $\Hom(\sigma_1,\pi_2\otimes\sigma_2)$, implying
    \[
     \Hom_L(\pi_1\otimes\sigma_1, \pi_2\otimes\sigma_2)\cong \Hom_L(\pi_1, \Hom_M(\sigma_1,\pi_2\otimes\sigma_2)).
    \]
    
    Noting that both $\sigma_1$ and $\sigma_2$ are finite dimensional, Lemma \ref{FinRkLem} gives
    \[
    \Hom_M(\sigma_1,\pi_2\otimes\sigma_2)\cong \Hom_M(\sigma_1, \sigma_2)\otimes \pi_2\cong\C\otimes\pi_2\cong\pi_2,
    \]
    where we used $\Hom_M(\sigma_1, \sigma_2)\cong\C$ by Schur's lemma. Therefore, we have
    \[
    \Hom_L(\pi_1\otimes\sigma_1, \pi_2\otimes\sigma_2)\cong \Hom_L(\pi_1, \pi_2).
    \]

    All of the preceding isomorphisms are explicit once $\phi$ is used to normalize the isomorphism $\Hom_M(\sigma_1, \sigma_2)\cong\C$ coming from Schur's lemma, and it can be checked directly that $f\in \Hom_L(\pi_{1},\pi_{2})$ corresponds to $f\otimes \phi\in \Hom_L(\pi_{1}\otimes \sigma_{1},\pi_{2}\otimes \sigma_{2})$.
\end{proof}

Now, we are ready to prove the theorem.

\begin{proof}[Proof of Theorem \ref{FiniteShimCor}]
Noting that $G_\kappa=\widetilde{\Kf}\slash \widetilde{\Gammaf}(1)$, we can identify the category $\mathcal{M}(G_\kappa)$ with the category of $\widetilde{\Kf}$-modules which are trivial on $\widetilde{\Gammaf}(1)$. Then the above lemma (with $L=\widetilde{\Kf}$, $M=\widetilde{\Gammaf}(1)$ and $\sigma_1=\sigma_2=\sigma$) means that the functor $\mathcal{F}_\sigma$ is fully faithful. (Note that we necessarily have $\Hom_L(\pi_1,\pi_2)=\Hom_{L\slash M}(\pi_1,\pi_2)$ in the above lemma.)

Hence to show $\mathcal{F}_\sigma$ is an equivalence of the categories, it suffices to show that the functor $\mathcal{F}_\sigma$ is essentially surjective on the objects. But for each $\pit\in\mathcal{M}_\sigma(\widetilde{\Kf})$, we have
\[
\mathcal{F}_\sigma\circ\mathcal{G}_\sigma(\pit)=\sigma\otimes \Hom_{\widetilde{\Gammaf}(1)}(\sigma, \pit).
\]
Now, we have the natural $\widetilde{\Kf}$-isomorphism
\[
\sigma\otimes \Hom_{\widetilde{\Gammaf}(1)}(\sigma, \pit)\longrightarrow \pit,\quad  v\otimes f\mapsto f(v),
\]
because $\pit|_{\widetilde{\Gammaf}(1)}$ is $\sigma$-isotypic. Hence the functor $\mathcal{F}_\sigma$ is essentially surjective on the objects.

Now that we have shown that $\mathcal{F}_\sigma\circ\mathcal{G}_\sigma\cong \mathrm{id}_{\mathcal{M}_\sigma(\widetilde{\Kf})}$ and $\mathcal{F}_\sigma$ is an equivalence of the categories, we know that $\mathcal{G}_\sigma$ is a quasi-inverse of $\mathcal{F}_\sigma$.

\end{proof}

The next proposition shows that the equivalence of categories of Theorem \ref{FiniteShimCor} commutes with induction in certain cases.

\begin{Prop}\label{SCInd}
Assume $\Hft$ is such that $\Gammaft(1)\subseteq\Hft\subseteq\Kft$. Let $\pi$ be a representation of $\Hft$ that is trivial on $\widetilde{{\Gammaf}}(1)$, so that $\pi$ is really a representation of $\Hft/\widetilde{{\Gammaf}}(1)\hookrightarrow G_{\kappa}=\Kft\slash\Gammaft(1)$.
	\begin{enumerate}
		\item The $\widetilde{{\Kf}}$-module $\Ind_{\Hft}^{\widetilde{{\Kf}}}(\pi\otimes\sigma)$ is an object in $\mathcal{M}_{\sigma}(\widetilde{{\Kf}})$.\label{IndInCat}
		\item The map 
        \[
        \phi:\left(\Ind_{\Hft}^{\widetilde{{\Kf}}}\pi\right)\otimes \sigma\longrightarrow \Ind_{\Hft}^{\widetilde{{\Kf}}}(\pi\otimes \sigma),\quad f\otimes w\mapsto (k\mapsto f(k)\otimes kw),
        \]
        is a surjective map of $\widetilde{{\Kf}}$-modules. Furthermore, if $\pi$ is finite dimensional, then both sides have the same dimension, so that $\pi$ is an isomorphism.\label{IndSigIso}
		\item The map 
        \[\End_{G_{\kappa}}(\Ind_{\Hft}^{\widetilde{{\Kf}}}\pi)\rightarrow \End_{\widetilde{{\Kf}}}(\Ind_{\Hft}^{\widetilde{{\Kf}}}(\pi\otimes \sigma)),\quad F\mapsto \phi\circ(F\otimes id_{\sigma})\circ \phi^{-1},
        \]
        is an isomorphism of $\mathbb{C}$-algebras. \label{EndIso}
	\end{enumerate}
\end{Prop}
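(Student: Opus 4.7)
The plan is to handle the three parts in order, each relying on a different tool already available in the paper.

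For part (1), the idea is to compute the restriction of the induced module via Mackey's formula. Since $\widetilde{\Gammaf}(1)$ is normal in $\widetilde{\Kf}$ and contained in $\Hft$, the double coset space $\widetilde{\Gammaf}(1)\backslash\widetilde{\Kf}/\Hft$ reduces to $\Hft\backslash\widetilde{\Kf}$, and all intersections $\widetilde{\Gammaf}(1)\cap{}^k\Hft$ equal $\widetilde{\Gammaf}(1)$, so the Mackey summands are just the $\widetilde{\Gammaf}(1)$-restrictions of the conjugates ${}^k(\pi\otimes\sigma)$. Because $\pi$ is trivial on $\widetilde{\Gammaf}(1)$, each summand is isomorphic to $({}^k\sigma)^{\dim\pi}$ as a $\widetilde{\Gammaf}(1)$-module. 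The crucial step is then to invoke Theorem \ref{KRep}: since $\sigma$ extends to the full group $\widetilde{\Kf}$, the operator $\sigma(k^{-1})$ intertwines $\sigma$ and ${}^k\sigma$ as $\widetilde{\Gammaf}(1)$-modules, so each summand is $\sigma$-isotypic, giving the claim.

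For part (2), the first task is to verify by direct computation that the formula $k\mapsto f(k)\otimes kw$ lies in $\Ind_{\Hft}^{\widetilde{\Kf}}(\pi\otimes\sigma)$ and that $\phi$ is $\widetilde{\Kf}$-equivariant: the covariance check amounts to
\[
f(hk)\otimes (hk)w = hf(k)\otimes h(kw) = h\cdot(f(k)\otimes kw),
\]
using the diagonal action on $\pi\otimes\sigma$, and equivariance follows from the same manipulation. For surjectivity, I would fix a basis $\{w_j\}$ of $\sigma$ and a set of coset representatives $\{k_i\}$ of $\Hft\backslash\widetilde{\Kf}$; since each $\sigma(k_i)$ is invertible, $\{k_iw_j\}_j$ is still a basis of $\sigma$, so any $F\in \Ind_{\Hft}^{\widetilde{\Kf}}(\pi\otimes\sigma)$ supported on $\Hft k_i$ can be expanded as $F(k_i)=\sum_j v_{i,j}\otimes k_iw_j$, and $F$ becomes the $\phi$-image of $\sum_{i,j}f_{i,j}\otimes w_j$ for the obvious functions $f_{i,j}$. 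When $\pi$ is finite-dimensional, a dimension count $[\widetilde{\Kf}:\Hft]\cdot\dim\pi\cdot\dim\sigma$ on both sides upgrades surjectivity to an isomorphism. I don't expect any serious obstacle here, but care with conventions for the induced action is the main thing to watch.

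Part (3) is the cleanest: since $\widetilde{\Gammaf}(1)$ is normal in $\widetilde{\Kf}$ and $\pi$ is trivial on it, $\Ind_{\Hft}^{\widetilde{\Kf}}\pi$ is also trivial on $\widetilde{\Gammaf}(1)$ (the usual computation $(\gamma\cdot f)(k)=f(k\gamma)=(k\gamma k^{-1})f(k)=f(k)$), so $\End_{G_\kappa}(\Ind_{\Hft}^{\widetilde{\Kf}}\pi)=\End_{\widetilde{\Kf}}(\Ind_{\Hft}^{\widetilde{\Kf}}\pi)$. Applying Lemma \ref{HOMotimesSig} with $L=\widetilde{\Kf}$, $M=\widetilde{\Gammaf}(1)$, $\pi_1=\pi_2=\Ind_{\Hft}^{\widetilde{\Kf}}\pi$, $\sigma_1=\sigma_2=\sigma$, and $\phi=\mathrm{id}_\sigma$ yields a vector space isomorphism $F\mapsto F\otimes\mathrm{id}_\sigma$ from $\End_{\widetilde{\Kf}}(\Ind\pi)$ onto $\End_{\widetilde{\Kf}}((\Ind\pi)\otimes\sigma)$. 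Transporting along the isomorphism $\phi$ of part (2) gives the stated map to $\End_{\widetilde{\Kf}}(\Ind_{\Hft}^{\widetilde{\Kf}}(\pi\otimes\sigma))$. Multiplicativity is automatic from
\[
(F_1\otimes\mathrm{id}_\sigma)\circ(F_2\otimes\mathrm{id}_\sigma)=(F_1F_2)\otimes\mathrm{id}_\sigma,
\]
and conjugation by $\phi$ preserves composition, so the map is an algebra isomorphism. The main obstacle across the three parts is really just bookkeeping in part (2); the heavy lifting has been done earlier by Theorem \ref{KRep} and Lemma \ref{HOMotimesSig}.
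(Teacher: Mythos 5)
Your argument is correct and follows essentially the same route as the paper: Mackey theory plus the invariance ${}^{k}\sigma\cong\sigma$ on $\widetilde{\Gammaf}(1)$ for (1), a direct covariance check together with surjectivity onto the functions supported on $\Hft$ (your coset/basis expansion is just a more explicit version of the paper's generation argument) for (2), and Lemma \ref{HOMotimesSig} — which is exactly the content of Theorem \ref{FiniteShimCor} that the paper cites — transported along $\phi$ for (3). No gaps worth flagging.
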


\begin{proof}
	\eqref{IndInCat} follows from the Mackey theory and $\widetilde{{\Gammaf}}(1)\cap \,^{g}\Hft=\widetilde{{\Gammaf}}(1)$ for any $g\in \widetilde{{\Kf}}$.
	
	We prove \eqref{IndSigIso}. A direct computation shows that the map is well defined and a $\widetilde{{\Kf}}$-module homomorphism. We will show this is surjective. Let $W$ be the subspace of $\Ind_{\Hft}^{\widetilde{{\Kf}}}\pi$ consisting of functions with support contained in $\Hft$. Let $W^{\prime}$ be the subspace of $\Ind_{\Hft}^{\widetilde{{\Kf}}}(\pi\otimes \sigma)$ consisting of functions with support contained in $\Hft$. Recall that as $\Hft$-modules $W\cong \pi$ and $W^{\prime}\cong \pi\otimes \sigma$. In both cases the isomorphism is given by evaluation at the identity. Under these identifications, one can see that $\phi:W\otimes \sigma\rightarrow W^{\prime}$ becomes the identity map. Thus $W^{\prime}$ is in the image of $\phi$. Since the index $[\widetilde{{\Kf}}:\Hft]$ is finite and $\Ind_{\Hft}^{\widetilde{{\Kf}}}(\pi\otimes \sigma)$ is generated by $W^{\prime}$ as a $\widetilde{{\Kf}}$-module, it follows that $\phi$ is surjective. 
	
	
	\eqref{EndIso} follows from \eqref{IndSigIso} and Theorem \ref{FiniteShimCor}. 
\end{proof}

The isomorphism of Proposition \ref{SCInd} \eqref{EndIso} has a very important property in that it preserves support, when viewed as an isomorphism of Hecke algebras. We return to this point in \S \ref{FinShimHecke} after we introduce the notation for Hecke algebras.  


\subsection{Whittaker Invariants}\label{SS:WhitInvar}


In this subsection we record a few simple results on twisted ${\Uf}_{0}$-invariants for representations in $\mathcal{M}_{\sigma}(\widetilde{{\Kf}})$. (Though we will not use it for our main theorems, it is certainly of some interest.) Recall that $\sigma^{{\Uf}_{0}}\cong \tau$ as $\Tft_{0}$-modules.

\begin{Cor}
	Let $\psi\in\Hom({\Uf}_{0},\mathbb{C}^{\times})$ be a nontrivial character that is trivial on ${\Uf}_{1}$. Then
	\begin{equation*}
		\sigma^{({\Uf}_{0},\psi)}=0. 
	\end{equation*}
\end{Cor}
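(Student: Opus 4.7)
The plan is to leverage Proposition \ref{UInvar}, which states that $\sigma^{\Uf_{0}} = \sigma^{\Uf_{1}}$. This identity is the crucial input: it says that the $\Uf_{1}$-invariants and the $\Uf_{0}$-invariants of $\sigma$ coincide, even though $\Uf_{1}$ is a proper subgroup of $\Uf_{0}$.

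Suppose $v \in \sigma^{(\Uf_{0},\psi)}$, so that $u \cdot v = \psi(u) v$ for all $u \in \Uf_{0}$. Since $\psi$ is trivial on $\Uf_{1}$ by hypothesis, we have in particular $u \cdot v = v$ for all $u \in \Uf_{1}$, meaning $v \in \sigma^{\Uf_{1}}$. By Proposition \ref{UInvar}, $\sigma^{\Uf_{1}} = \sigma^{\Uf_{0}}$, and so $v \in \sigma^{\Uf_{0}}$. This forces $u \cdot v = v$ for all $u \in \Uf_{0}$, which combined with the $(\Uf_{0},\psi)$-equivariance of $v$ gives $\psi(u) v = v$ for all $u \in \Uf_{0}$. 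Since $\psi$ is nontrivial on $\Uf_{0}$, we conclude $v = 0$.

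No step here is a real obstacle; the entire corollary is immediate from the key rigidity statement proven earlier (Proposition \ref{UInvar}), which in turn rests on the Hilbert-symbol computation of Lemma \ref{LemVarIndRootMove2}. The point of the corollary is essentially to record this consequence of the rigidity phenomenon: the parahorically-induced representation $\sigma$ carries no nondegenerate Whittaker model with respect to the mod-$\varpi$ reduction of the unipotent subgroup.
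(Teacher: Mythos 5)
Your proof is correct and is exactly the paper's argument: the paper deduces the corollary in one line from Proposition \ref{UInvar} via the containment $\sigma^{({\Uf}_{0},\psi)}\subseteq\sigma^{{\Uf}_{1}}=\sigma^{{\Uf}_{0}}$, which is precisely the chain of reasoning you spell out. Nothing is missing; your version merely writes out the last step (nontriviality of $\psi$ forcing $v=0$) explicitly.
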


\begin{proof}
	This follows directly from Lemma \ref{UInvar} because $\sigma^{({\Uf}_{0},\psi)}\subseteq \sigma^{\Uf_1}$.
\end{proof}

Thus we see that for $\sigma$ to support a $\psi$-Whittaker functional the conductor of $\psi$ must be strictly larger than $1$.

\begin{Cor}\label{WhitMult}
	Let $\pi$ be a $G_{\kappa}$-module and let $\psi\in\Hom({\Uf}_{0},\mathbb{C}^{\times})$. Then 
	\begin{equation*}
		(\pi\otimes \sigma)^{({\Uf}_{0},\psi)}\supseteq \bigoplus_{\psi=\psi_{1}\psi_{2}}\pi^{({\Uf}_{0},\psi_{1})}\otimes \sigma^{({\Uf}_{0},\psi_{2})},
	\end{equation*}
    where the direct sum is over all $\psi_{1},\psi_{2}\in\Hom({\Uf}_{0},\mathbb{C}^{\times})$ such that $\psi=\psi_{1}\psi_{2}$ and $\psi_{1}$ is trivial on ${\Uf}_{1}$. 
    In particular, if $\psi_{1}$ is nondegenerate, $\psi_{2}$ is trivial, and $\pi$ is $\psi_{1}$-generic, then
	\begin{equation*}
		\dim(\pi\otimes \sigma)^{({\Uf}_{0},\psi)}\geq \dim\tau=\left|Y\slash \Yt\right|^{\frac{ef}{2}}.
	\end{equation*}
\end{Cor}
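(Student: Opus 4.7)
\textbf{Proof proposal for Corollary \ref{WhitMult}.}

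The starting observation is that since $\pi$ is a $G_{\kappa}$-module and $G_{\kappa} \cong \widetilde{\Kf}/\Gammaft(1)$, the whole of $\Gammaft(1)$ acts trivially on $\pi$; in particular, $\Uf_{1} \subseteq \Gammaft(1)$ acts trivially on $\pi$. This is the structural fact that couples the $\Uf_{1}$-behavior of the tensor product entirely to the second factor $\sigma$, and it is what drives the decomposition.

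For the inclusion $\supseteq$ in the claimed identity, my plan is to verify it directly on pure tensors. Given $v \in \pi^{(\Uf_{0},\psi_{1})}$ and $w \in \sigma^{(\Uf_{0},\psi_{2})}$ with $\psi_{1}\psi_{2} = \psi$, for any $u \in \Uf_{0}$ one computes $u(v \otimes w) = uv \otimes uw = \psi_{1}(u)\psi_{2}(u)(v \otimes w) = \psi(u)(v \otimes w)$, so the pure tensor lands in the left-hand side. Directness of the sum follows because $\pi$, being a representation of the finite group $\Uf_{0}$, decomposes by Maschke into isotypic components; the subspaces $\pi^{(\Uf_{0},\psi_{1})}$ for distinct characters $\psi_{1}$ are linearly independent in $\pi$, and tensoring with arbitrary subspaces of $\sigma$ preserves this independence. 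Note also that the summation index reduces to those $\psi_{1}$ trivial on $\Uf_{1}$ automatically: otherwise $\pi^{(\Uf_{0},\psi_{1})} = 0$, since $\Uf_{1}$ acts trivially on $\pi$.

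For the reverse inclusion $\subseteq$, I would begin by showing that any $\xi \in (\pi \otimes \sigma)^{(\Uf_{0},\psi)}$ actually lies in $\pi \otimes \sigma^{(\Uf_{1},\psi|_{\Uf_{1}})}$: since $\Uf_{1}$ acts trivially on $\pi$, for any $u \in \Uf_{1}$ the action $u\xi = \psi(u)\xi$ forces the $\sigma$-components (written out in a basis of $\pi$) to lie in $\sigma^{(\Uf_{1},\psi|_{\Uf_{1}})}$. The character $\psi|_{\Uf_{1}}$ extends canonically to $\Uf_{0}$ via $\psi$ itself, so $\sigma^{(\Uf_{1},\psi|_{\Uf_{1}})}$ is $\Uf_{0}$-stable. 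One then decomposes $\pi$ and the $\Uf_{0}$-module $\sigma^{(\Uf_{1},\psi|_{\Uf_{1}})}$ into $\Uf_{0}$-isotypic pieces and matches characters, invoking the preceding corollary (vanishing of $\sigma^{(\Uf_{0},\chi)}$ when $\chi$ is nontrivial and trivial on $\Uf_{1}$) to eliminate higher-dimensional cross terms. The hard step here is handling the non-abelian quotient $\Uf_{0}/\Uf_{1} \cong U_{\kappa}$: for arbitrary reps of a nonabelian group, a tensor product of two irreducibles need not decompose into character-character pieces, so the equality truly relies on the specific structure of $\sigma = i_{\Uf_{0},\Uf_{1}^{-}}\tau$ and the Hom-space calculations enabled by the parahoric induction machinery of \S \ref{SS:ParaInd}.

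For the "in particular" clause, the inclusion $\supseteq$ alone suffices. Taking $\psi_{2}$ trivial gives $\sigma^{(\Uf_{0},1)} = \sigma^{\Uf_{0}} \cong \tau$ as $\widetilde{\Tf}_{0}$-modules, by Proposition \ref{P:two_functors_for_Iwahori_decomp}(4). With $\psi = \psi_{1}$ nondegenerate and $\pi$ assumed $\psi_{1}$-generic, we have $\dim \pi^{(\Uf_{0},\psi_{1})} \geq 1$, and the summand indexed by $(\psi_{1},1)$ contributes
\[
\dim (\pi \otimes \sigma)^{(\Uf_{0},\psi)} \geq \dim \pi^{(\Uf_{0},\psi_{1})} \cdot \dim \tau \geq \dim \tau = \left|Y/\widetilde{Y}\right|^{ef/2},
\]
the final equality being read off from Table \ref{T:table2}. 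The main technical obstacle throughout the proof is the reverse inclusion (equality in the character decomposition) in the presence of the non-abelian $U_{\kappa}$; fortunately, this is not needed for the dimension estimate that constitutes the Whittaker-theoretic payoff of the corollary.
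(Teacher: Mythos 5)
Your verification of the containment $\supseteq$ and of the dimension bound is correct, and it is worth noting that in the ``in particular'' situation the \emph{full} equality is also immediate: there $\psi_2$ is trivial and $\psi_1$ is trivial on $\Uf_1$, so $\psi=\psi_1$ is trivial on $\Uf_1$; triviality of $\Uf_1$ on $\pi$ gives $(\pi\otimes\sigma)^{(\Uf_0,\psi)}\subseteq\pi\otimes\sigma^{\Uf_1}$, Proposition \ref{UInvar} gives $\sigma^{\Uf_1}=\sigma^{\Uf_0}$ with trivial $\Uf_0$-action, hence the left side equals $\pi^{(\Uf_0,\psi)}\otimes\sigma^{\Uf_0}$, which matches the right side after the preceding corollary kills every summand with $\psi_2\neq 1$. (The paper records no proof of this corollary, so the comparison is against the routine argument it presupposes.)

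The genuine gap is the reverse inclusion when $\psi$ is nontrivial on $\Uf_1$, and the mechanism you propose cannot close it. After your (correct) reduction to $\xi\in\pi\otimes M$ with $M=\sigma^{(\Uf_1,\psi|_{\Uf_1})}$, twist by $\psi^{-1}$: $M'=M\otimes\psi^{-1}$ is a module for $Q=\Uf_0/\Uf_1\cong U_{\kappa}$, and the asserted identity becomes $(\pi\otimes M')^{Q}=\oplus_{\chi}\,\pi^{(Q,\chi)}\otimes M'^{(Q,\chi^{-1})}$. If $M'$ has an irreducible $Q$-constituent $W$ with $\dim W>1$ whose dual occurs in $\pi|_{Q}$ --- and restrictions of $G_{\kappa}$-modules to $U_{\kappa}$ realize every irreducible of $U_{\kappa}$, e.g.\ via the regular representation --- then the invariant line in $W^{*}\otimes W$ lies in the left side but not in the right side, because $W$ contains no $\Uf_0$-eigenvectors whatsoever. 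So the stated equality, for all $G_{\kappa}$-modules $\pi$, is \emph{equivalent} to the structural claim that $\sigma^{(\Uf_1,\theta)}$ is a direct sum of characters of $\Uf_0$ for every character $\theta$ of $\Uf_1$ extending to $\Uf_0$. For $\theta$ trivial this is exactly Proposition \ref{UInvar} plus the preceding corollary; for $\theta$ nontrivial, the preceding corollary is of no help --- it only asserts vanishing of one-dimensional $\Uf_0$-eigenspaces whose $\Uf_1$-restriction is trivial, so it cannot ``eliminate higher-dimensional cross terms'' --- and your appeal to the specific structure of $\sigma=i_{\Uf_0,\Uf_1^{-}}\tau$ and the parahoric induction machinery is a placeholder rather than an argument. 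As written, you have proved $\supseteq$ and the dimension estimate, but not the equality.
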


From Corollary \ref{WhitMult} we see that multiplicity one for Whittaker functionals fails for the finite group $\Kft$. Thus it would be interesting to study the associated Gelfand-Graev representation in this finite group setting. Such an investigation will likely been needed to generalize the results of \cite{GGK24,GGK25} to the wild double covers studied in this paper.


\section{Hecke algebras}\label{S:Hecke}


Once we have constructed the Iwahori type $(\It, \sigma)$, which will be shown to be indeed a type in the sense of Bushnell-Kutzko, we can form the Iwahori Hecke algebra
\[
\Hcal(\Gt, \It; \sigma).
\]
In this section, we will prove that the support of this Hecke algebra is $\Wt_{\ea}$ (see \S \ref{SS:affine_Weyl_covering_group} for $\Wt_{\ea}$) and establish its IM-presentation. 

\subsection{Basic theory of Hecke algebras}\label{SS:HeckeReview}

In this subsection, we catalog basics of representations of compact groups and Hecke algebras to the extent necessary for our purposes, and fix our notation and conventions.

Let $G$ be a locally compact group and $K\subseteq G$ an open compact subgroup. Let $(\pi, V)$ be a finite dimensional (not necessarily irreducible) representation of $K$. We define the $\pi$-spherical Hecke algebra of $G$ by
\[
\Hcal(G, K; \pi):=\{f\in C_c^\infty(G, \End_{\C}(\pi^\vee))\st f(k_1gk_2)=\pi^\vee(k_1)f(g)\pi^\vee(k_2)\text{ for all $k_1, k_2\in K$}\}.
\]
This is an algebra under convolution. Specifically, for $f_{1},f_{2}\in \Hcal(G, K; \pi)$ we have
\begin{equation*}
	f_{1}*f_{2}:=\int_{G}f_{1}(h)f_{2}(h^{-1}g)dg=\int_{G}f_{1}(gh^{-1})f_{2}(h)dg,
\end{equation*}
where the Haar measure on $G$ is normalized so that the measure of $K$ is $1$.

Let $(\rho,W)$ be a $G$-module. The Hecke algebra $\Hcal(G, K; \pi^\vee)$ acts on $\Hom_{K}(V,W)$ on the right in the following way. Let $f\in \Hcal(G, K; \pi^\vee)$ and $A\in\Hom_{K}(V,W)$. Then
\begin{equation*}
	(A*f)(v):=\int_{G}\rho(g^{-1})A(f(g)v)dg.
\end{equation*}

The Hecke algebra $\Hcal(G, K; \pi^\vee)$ also has a a canonical left action on $\ind_{K}^{G}\pi$ defined by
    \begin{equation*}
        f*F(g):=\int_{G}f(x)F(x^{-1}g)dx,
    \end{equation*}
    where $f\in \Hcal(G, K; \pi^\vee)$ and $F\in \ind_{K}^{G}\pi$. The following lemma is well-known. (See, for example, \cite[(2.6)]{BK98}.)

\begin{Lem}\label{HeckeAsEnd}
    The canonical left action of $\Hcal(G, K; \pi^\vee)$ on $\ind_{K}^{G}\pi$ induces an isomorphism of $\mathbb{C}$-algebras $\Hcal(G, K; \pi)\cong \End_{G}(\ind_{K}^{G}\pi)$. 
\end{Lem}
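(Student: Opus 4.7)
The plan is to combine Frobenius reciprocity with the standard identification of $K$-equivariant maps as Hecke functions, and then to verify that the algebra structure matches the canonical convolution action.

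First, Frobenius reciprocity for compact induction gives a natural $\C$-linear isomorphism $\End_G(\ind_K^G \pi) \cong \Hom_K(\pi, \ind_K^G \pi|_K)$. Given a $K$-morphism $T: V \to \ind_K^G \pi$ (where $V$ carries $\pi$), the image lies in finitely many left $K$-cosets, because $V$ is finite-dimensional and we are working with compact induction. I would then associate to $T$ the function $\phi_T : G \to \End_\C(V^\vee)$ characterized, up to transpose, by $\la \phi_T(g)\lambda, v\ra = \la \lambda, T(v)(g^{-1})\ra$ for $v\in V$, $\lambda \in V^\vee$. A direct computation, using the $K$-equivariance of $T$ on the source together with the defining transformation property of $\ind_K^G \pi$ on the target, shows that $\phi_T(k_1 g k_2) = \pi^\vee(k_1)\phi_T(g)\pi^\vee(k_2)$ and that $\phi_T$ has compact support modulo $K \times K$; hence $\phi_T \in \Hcal(G, K; \pi)$. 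The map $T \mapsto \phi_T$ is a linear bijection, with inverse $\phi \mapsto T_\phi$ where $T_\phi(v)(g)$ is defined by the transpose pairing $\la \lambda, T_\phi(v)(g)\ra = \la \phi(g^{-1})\lambda, v\ra$.

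Second, I would verify that composition of $G$-endomorphisms corresponds to convolution of Hecke functions, i.e., under $T \mapsto \phi_T$ one has $\phi_{T_1 \circ T_2} = \phi_{T_1} * \phi_{T_2}$ (up to the chosen conventions). This is a computation that unfolds both sides using a set of representatives for $K\backslash \supp(\phi_{T_i})$ and the integral formula $(f_1 * f_2)(g) = \int_G f_1(h)f_2(h^{-1}g)\,dh$ with the measure of $K$ normalized to $1$. Combining with the first step yields an algebra isomorphism $\Hcal(G, K; \pi) \cong \End_G(\ind_K^G\pi)$.

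Finally, to identify this isomorphism with the one induced by the canonical left action of $\Hcal(G, K; \pi^\vee)$ on $\ind_K^G \pi$ defined above the lemma, I would observe that left convolution by $f \in \Hcal(G, K; \pi^\vee)$ manifestly commutes with right translation, hence lands in $\End_G(\ind_K^G \pi)$, and that the canonical anti-involution $\Hcal(G, K; \pi) \xrightarrow{\sim} \Hcal(G, K; \pi^\vee)^{\mathrm{op}}$, $f \mapsto \bigl(g \mapsto f(g^{-1})^t\bigr)$, together with the transpose isomorphism $\End(V^\vee) \cong \End(V)^{\mathrm{op}}$, intertwines the bijection constructed above with this convolution action. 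The main obstacle is purely bookkeeping: tracking the conventions for left vs.\ right actions, the placement of inverses on $G$, and the role of $\pi$ vs.\ $\pi^\vee$, so as to arrive at the stated identification rather than its opposite. All individual verifications are routine and are carried out in detail in \cite[(2.6)]{BK98}, which handles the case of an arbitrary finite-dimensional $(\pi, V)$.
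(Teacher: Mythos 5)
Your proposal is correct and is essentially the standard argument (Frobenius reciprocity for compact induction from the open compact subgroup $K$, identification of $\Hom_K(\pi,\ind_K^G\pi)$ with Hecke functions, and the convolution-versus-composition and $\pi$-versus-$\pi^\vee$ bookkeeping), which is exactly what the paper does by simply citing \cite[(2.6)]{BK98} for this well-known fact. No gaps to report.
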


For each $g\in G$, we let
\[
\Hcal(G, K; \pi)_g
\]
be the vector space of all $f\in \Hcal(G, K; \pi)$ that vanish outside the double coset $KgK$.

A very important fact we repeatedly use in this paper is 
\begin{Lem}\label{L:Bushnell_Kutzko_equivalence}
For each $g\in G$, we have canonical isomorphisms
\[
\Hom_{K\cap{^gK}}(\;{^g\pi},\pi)\cong\Hcal(G, K; \pi^\vee)_g\cong\Hcal(G, K; \pi)_{g^{-1}}
\]
as vector spaces.
\end{Lem}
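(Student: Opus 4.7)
The plan is to establish the chain of isomorphisms piece by piece, following the standard recipe from Bushnell-Kutzko. First, the outer equality $\Hom_{K\cap{}^g K}({}^g\pi,\pi)=\Hom_{K\cap K^g}(\pi,\pi^g)$ is purely a relabeling: a linear self-map $T$ on the underlying vector space of $\pi$ lies in either $\Hom$ space exactly when it intertwines the two actions, and one passes from one phrasing of the equivariance condition to the other by substituting $k'=g^{-1}kg$, simultaneously exchanging $K\cap{}^g K$ with $K\cap K^g$. It then suffices to produce the central isomorphism $\Hcal(G,K;\pi^\vee)_g\cong\Hom_{K\cap{}^g K}({}^g\pi,\pi)$.

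Second, I would construct this central isomorphism by evaluation, $f\mapsto f(g)$. Recall that $\Hcal(G,K;\pi^\vee)$ consists of functions with values in $\End_{\mathbb{C}}(\pi^{\vee\vee})\cong\End_\mathbb{C}(\pi)$ and bi-transforming by $\pi^{\vee\vee}\cong\pi$. For $k\in K\cap{}^g K$, writing $k'=g^{-1}kg\in K$, the two transformation identities $f(kg)=\pi(k)f(g)$ and $f(gk')=f(g)\pi(k')$ force
\[
\pi(k)\,f(g)=f(g)\,\pi(g^{-1}kg)=f(g)\cdot({}^g\pi)(k),
\]
so $f(g)\in\Hom_{K\cap{}^g K}({}^g\pi,\pi)$. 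The inverse is built by prescribing $f_\phi(k_1gk_2):=\pi(k_1)\,\phi\,\pi(k_2)$ on $KgK$ and $0$ elsewhere; well-definedness at any ambiguity $k_1gk_2=k_1'gk_2'$ follows by applying the intertwining property of $\phi$ to $(k_1')^{-1}k_1=g(k_2'k_2^{-1})g^{-1}\in K\cap{}^g K$.

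Third, the remaining isomorphism $\Hcal(G,K;\pi^\vee)_g\cong\Hcal(G,K;\pi)_{g^{-1}}$ is implemented by the standard anti-involution $f\mapsto\check f$ with $\check f(h):={}^{t}\!f(h^{-1})$, where ${}^{t}$ denotes the transpose $\End(\pi)\to\End(\pi^\vee)$. A brief computation using $\pi(k)^{t}=\pi^\vee(k^{-1})$ converts the $\pi$-biequivariance of $f$ into the $\pi^\vee$-biequivariance of $\check f$, while the support is clearly sent from $KgK$ to $Kg^{-1}K$.

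The main obstacle is purely bookkeeping: one has to track carefully where contragredients, transposes and inverses enter, and in particular verify that it is $\Hcal(G,K;\pi^\vee)$ (not $\Hcal(G,K;\pi)$) whose $g$-component matches $\Hom_{K\cap{}^g K}({}^g\pi,\pi)$ under evaluation at $g$. Beyond this, the argument is mechanical and is a special case of the framework of \cite{BK98}.
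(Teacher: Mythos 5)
Your argument is correct. The paper gives no proof of this lemma beyond citing \cite[(4.1.1) Proposition]{Bushnell-Kutzko-Book}, and what you have written is precisely the standard argument behind that citation: evaluation at $g$ identifies $\Hcal(G,K;\pi^\vee)_g$ with $\Hom_{K\cap{}^gK}({}^g\pi,\pi)$ (your computation $\pi(k)f(g)=f(g)\,{}^g\pi(k)$ is exactly right, and you correctly track the paper's convention that $\Hcal(G,K;\pi)$ is valued in $\End_{\C}(\pi^\vee)$, so that it is the algebra attached to $\pi^\vee$ whose $g$-component matches this Hom space), the explicit formula $f_\phi(k_1gk_2)=\pi(k_1)\phi\pi(k_2)$ gives the inverse, the substitution $k'=g^{-1}kg$ gives the equality of the two Hom spaces, and the transpose anti-involution $f\mapsto {}^t\!f((\cdot)^{-1})$ carries $\Hcal(G,K;\pi^\vee)_g$ onto $\Hcal(G,K;\pi)_{g^{-1}}$. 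The only items you leave implicit are that $f_\phi$ is locally constant and compactly supported, which is immediate since $\ker\pi$ is open in $K$ and $KgK$ is compact; with that remark the proof is complete and matches the intended source.
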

\begin{proof}
See \cite[(4.1.1) Proposition, p.144]{Bushnell-Kutzko-Book}.
\end{proof}

\begin{Lem}\label{L:Bushnell_Kutzko_induced}
Let $K\subseteq G$ be as above and let $H$ be an open compact group such that $H\subseteq K\subseteq G$. Let $\rho$ be a finite dimensional representation of $H$. We have a natural isomorphism
\[
\Hcal(G, K; \Ind_H^K\rho)\cong \Hcal(G, H; \rho)
\]
such that for each $g\in G$
\[
\Hcal(G, K; \Ind_H^K\rho)_g\cong\bigoplus_{\substack{h\in H\backslash G \slash H\\KhK=KgK}} \Hcal(G, H; \rho)_h,
\]
where the sum is over all the double cosets $HhH$ such that $KhK=KgK$.
\end{Lem}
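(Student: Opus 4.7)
The plan is in two steps. First, to establish the algebra isomorphism, I would use compactly-supported induction in stages together with Lemma \ref{HeckeAsEnd}. Since $H\subseteq K$ are open with $K$ compact and $\rho$ is finite-dimensional, there is a canonical $G$-module isomorphism
\[
\ind_H^G\rho\;\xrightarrow{\;\sim\;}\;\ind_K^G(\Ind_H^K\rho),\qquad F\mapsto \bigl(g\mapsto(k\mapsto F(kg))\bigr),
\]
with inverse $\Phi\mapsto (g\mapsto \Phi(g)(1))$. Applying $\End_G(-)$ to both sides and invoking Lemma \ref{HeckeAsEnd} converts this into the claimed algebra isomorphism $\Hcal(G,H;\rho)\cong\Hcal(G,K;\Ind_H^K\rho)$.

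For the refined support statement, I would unravel both sides using Lemma \ref{L:Bushnell_Kutzko_equivalence}, which identifies $\Hcal(G,K;\Ind_H^K\rho)_{g^{-1}}$ with the intertwining space $\Hom_{K\cap{^gK}}\bigl({^g(\Ind_H^K\rho)},\Ind_H^K\rho\bigr)$ and each $\Hcal(G,H;\rho)_{h^{-1}}$ with $\Hom_{H\cap{^hH}}({^h\rho},\rho)$. Writing ${^g(\Ind_H^K\rho)}=\Ind_{{^gH}}^{{^gK}}{^g\rho}$ and applying the Mackey formula to restrict both $\Ind_H^K\rho$ and $\Ind_{{^gH}}^{{^gK}}{^g\rho}$ to $K\cap{^gK}$, followed by Frobenius reciprocity, decomposes the $\Hom$-space on the left as
\[
\Hom_{K\cap{^gK}}\bigl({^g(\Ind_H^K\rho)},\Ind_H^K\rho\bigr)\cong\bigoplus_{h}\Hom_{H\cap{^hH}}({^h\rho},\rho),
\]
where $h$ runs over a set of representatives of $H\backslash KgK/H$, equivalently over those $h\in H\backslash G/H$ with $KhK=KgK$. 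Combined with Step 1 this produces the asserted decomposition.

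The main point requiring care is verifying that the Mackey-level decomposition in the second step matches, summand by summand, the support filtration on the algebra side coming from the first step. Concretely, one must check that a function $f\in\Hcal(G,H;\rho)$ supported on a single $H$-double coset $HhH\subseteq KgK$ corresponds, under the induction-in-stages isomorphism, to an element of $\Hcal(G,K;\Ind_H^K\rho)$ supported on $KhK=KgK$. This is a direct bookkeeping check that amounts to chasing the convolution action of $f$ on $\ind_H^G\rho$ through the explicit formula above, and I expect it to be the only nontrivial step; no new ideas beyond the naturality of Mackey and the tracking of supports are needed.
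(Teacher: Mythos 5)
The paper gives no argument for this lemma; it simply cites \cite[(4.1.5) Corollary, p.146]{Bushnell-Kutzko-Book}, and your proposal is exactly the standard proof of that cited result: induction in stages $\ind_H^G\rho\cong\ind_K^G(\Ind_H^K\rho)$ combined with Lemma \ref{HeckeAsEnd} for the algebra isomorphism, then Lemma \ref{L:Bushnell_Kutzko_equivalence} plus Mackey theory and Frobenius reciprocity for the coset-by-coset support decomposition. Your outline, including the acknowledged bookkeeping check that the Mackey decomposition is compatible with the support filtration, is correct and matches the approach of the source the paper relies on.
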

\begin{proof}
See \cite[(4.1.5) Corollary, p.146]{Bushnell-Kutzko-Book}.
\end{proof}

The next lemma will be useful for relating some of the Hom-spaces introduced above.

\begin{Lem}\label{HomIsoRel}
	Let $G$  and $H$ be groups and let $\phi:G\rightarrow H$ be a group isomorphism. Let $\sigma,\tau$ be $G$-modules. Then the Hom-spaces $\Hom_{G}(\sigma,\tau)$ and $\Hom_{H}(\,^{\phi}\sigma,\,^{\phi}\tau)$ are equal as subspaces of $\Hom_{\mathbb{C}}(\sigma,\tau)$.
\end{Lem}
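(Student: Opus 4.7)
The plan is to unwind the definitions and observe that the two intertwining conditions are literally the same set of linear equations, reindexed by the bijection $\phi$. First I would fix underlying vector spaces $V_\sigma$ and $V_\tau$ for $\sigma$ and $\tau$, so that by definition $\Hom_\C(\sigma,\tau)$ is simply $\Hom_\C(V_\sigma,V_\tau)$, and both $\Hom_G(\sigma,\tau)$ and $\Hom_H({^\phi\sigma},{^\phi\tau})$ sit inside this common ambient space.

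Next I would recall from \eqref{E:definition_phi_pi} that ${^\phi\sigma}(h)=\sigma(\phi^{-1}(h))$ and ${^\phi\tau}(h)=\tau(\phi^{-1}(h))$ for all $h\in H$. Thus a linear map $f\in\Hom_\C(V_\sigma,V_\tau)$ lies in $\Hom_H({^\phi\sigma},{^\phi\tau})$ if and only if
\[
f\circ \sigma(\phi^{-1}(h)) \;=\; \tau(\phi^{-1}(h))\circ f \qquad\text{for all } h\in H.
\]
Since $\phi$ is a bijection, as $h$ ranges over $H$ the element $g:=\phi^{-1}(h)$ ranges over all of $G$, so this is equivalent to $f\circ\sigma(g)=\tau(g)\circ f$ for all $g\in G$, i.e.\ to $f\in\Hom_G(\sigma,\tau)$.

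The only subtlety is a bookkeeping one, namely keeping straight that $^\phi\sigma$ and $^\phi\tau$ act on the same underlying vector spaces as $\sigma$ and $\tau$, so that the inclusion into $\Hom_\C(V_\sigma,V_\tau)$ genuinely identifies the two Hom-spaces rather than merely exhibiting an abstract isomorphism. This is immediate from the definition in \eqref{E:definition_phi_pi}. There is no real obstacle; the statement is a direct consequence of the definition of $^\phi\pi$.
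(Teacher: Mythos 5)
Your argument is correct: unwinding the definition ${^\phi\sigma}(h)=\sigma(\phi^{-1}(h))$ and reindexing by the bijection $\phi$ shows the two intertwining conditions coincide, so the Hom-spaces are literally equal inside $\Hom_{\mathbb{C}}(\sigma,\tau)$. The paper omits the proof as immediate, and your write-up is exactly the intended routine verification.
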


Next we describe some results on twisting Hecke algebras by automorphisms.

\begin{Lem}\label{L:HeckeAutTwist}
	Let $\phi\in\Aut(G)$. Then
	\begin{enumerate}
		\item[(1)] The map $\Hcal(G,K,\pi)\rightarrow \Hcal(G,\phi(K),\,^{\phi}\pi)$ defined by $f\mapsto f\circ\phi^{-1}$ is an isomorphism of $\mathbb{C}$-algebras.\label{HeckeAutTwist}
    \end{enumerate}
	Suppose further that $\phi(K)=K$ and $\,^{\phi}\pi\cong \pi$, and let $A_{\phi}\in \GL(\pi^\vee)$ be such that $A_{\phi}\pi^\vee(k)=\pi^\vee(\phi^{-1}(k))A_{\phi}$. Then
    \begin{enumerate}
		\item[(2)]  $\Hcal(G,K,\,^{\phi}\pi)=A_{\phi}\Hcal(G,K,\pi)A_{\phi}^{-1}$ (we emphasize that this is an equality of spaces of functions and not just an isomorphism of algebras);\label{HeckeEqual}
		\item[(3)] $\Hcal(G,K,\,\pi)_{\phi(g)}\neq 0$ if and only if $\Hcal(G,K,\pi)_{g}\neq 0$. \label{HeckeSuppRel}
	\end{enumerate}	
\end{Lem}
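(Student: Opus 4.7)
The plan is to address the three items in order, each reducing to a bookkeeping verification.

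For (1), I will check that the assignment $f \mapsto f \circ \phi^{-1}$ preserves the biequivariance condition. Since $(^{\phi}\pi)^\vee(k') = \pi^\vee(\phi^{-1}(k'))$ for $k' \in \phi(K)$ by definition \eqref{E:definition_phi_pi}, the identity $f(k_1 g k_2) = \pi^\vee(k_1) f(g) \pi^\vee(k_2)$ translates directly into $(\phi(K), {}^\phi\pi^\vee)$-biequivariance of $f \circ \phi^{-1}$. Bijectivity is immediate with inverse $f' \mapsto f' \circ \phi$. For compatibility with convolution, the one wrinkle is that the Haar measures on the source and target are normalized so that $K$ and $\phi(K)$ have volume $1$, and $\phi$ need not preserve Haar measure; a change of variables $h \mapsto \phi(h)$ in the convolution integral absorbs the modulus factor and shows convolution is preserved, making the map an algebra isomorphism.

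For (2), I will verify both inclusions directly. The hypothesis $A_\phi \pi^\vee(k) A_\phi^{-1} = \pi^\vee(\phi^{-1}(k)) = (^\phi\pi)^\vee(k)$ lets me compute, for $f \in \Hcal(G,K,\pi)$,
\[
A_\phi f(k_1 g k_2) A_\phi^{-1} = A_\phi \pi^\vee(k_1) f(g) \pi^\vee(k_2) A_\phi^{-1} = (^\phi\pi)^\vee(k_1) \bigl( A_\phi f(g) A_\phi^{-1} \bigr) (^\phi\pi)^\vee(k_2),
\]
so $A_\phi \Hcal(G,K,\pi) A_\phi^{-1} \subseteq \Hcal(G,K,\,^\phi\pi)$. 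The reverse inclusion follows by the symmetric observation that $A_\phi^{-1}$ intertwines $(^\phi\pi)^\vee$ and $\pi^\vee$, yielding $A_\phi^{-1} \Hcal(G,K,\,^\phi\pi) A_\phi \subseteq \Hcal(G,K,\pi)$. Equality of function spaces follows.

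For (3), I will compose the two isomorphisms. Under the hypotheses of (2), part (1) gives an algebra isomorphism $\Hcal(G,K,\pi) \to \Hcal(G,K,\,^\phi\pi)$, and part (2) gives the equality $\Hcal(G,K,\,^\phi\pi) = A_\phi \Hcal(G,K,\pi) A_\phi^{-1}$; composing the first with the inverse of the second yields an automorphism $\Psi$ of $\Hcal(G,K,\pi)$. I will track supports through each step: in (1), a function supported on $KgK$ is sent to one supported on $\phi(KgK) = K\phi(g)K$; in the step from (2), pointwise conjugation by $A_\phi^{-1}$ preserves support. Hence $\Psi$ restricts to a vector space isomorphism $\Hcal(G,K,\pi)_{\phi(g)} \xrightarrow{\sim} \Hcal(G,K,\pi)_{g}$, giving the biconditional in (3).

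The whole argument is a sequence of direct verifications, so no step looks genuinely hard. The only place demanding care is the Haar measure normalization in (1); once that is handled, (2) and (3) are essentially formal.
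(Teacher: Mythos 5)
Your proposal is correct, and it is exactly the routine verification the paper has in mind: the lemma is stated without proof in \S\ref{SS:HeckeReview} as a standard fact about twisting Hecke algebras, and your argument supplies it. You handle the one genuinely delicate point properly — the pullback under $\phi$ of the Haar measure normalized by $\nu(\phi(K))=1$ is precisely the measure normalized by $\mu(K)=1$, so $f\mapsto f\circ\phi^{-1}$ preserves convolution on the nose — and parts (2) and (3) are the formal computations you describe; the only nitpick is that your automorphism $\Psi(f)=A_\phi^{-1}(f\circ\phi^{-1})A_\phi$ carries $\Hcal(G,K,\pi)_g$ onto $\Hcal(G,K,\pi)_{\phi(g)}$ (not the reverse, as written), which does not affect the biconditional in (3).
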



\subsection{Finite Hecke algebra Shimura Correspondence}\label{FinShimHecke}


In this subsection we prove that the finite Shimura correspondence of Theorem \ref{FiniteShimCor} preserves the support of Hecke algebras. Let us set up the notation first. Let $H\subseteq G$ be such that $\Gamma(1)\subseteq H\subseteq K$, so that $\Gammaft(1)\subseteq \Hft \subseteq \Kft$. Denote by $H_\kappa$ the image of $H$ in $G_{\kappa}=K\slash\Gamma(1)$, so that we have
    \[
    \begin{tikzcd}
        \Kt\slash \s(\Gamma(2e))\ar[r,phantom, "{=}"]&\Kft\ar[r, two heads]&G_\kappa \ar[r, phantom, "{=}"]&K\slash\Gamma(1)\\
        \Ht\slash \s(\Gamma(2e))\ar[r,phantom, "{=}"]&\Hft\ar[r, two heads]\ar[u,phantom, "{\subseteq}" sloped, description] &H_\kappa \ar[r, phantom, "{=}"]\ar[u,phantom, "{\subseteq}" sloped, description]&H\slash\Gamma(1) \rlap{\,.}
    \end{tikzcd}
    \]
Fix a representation $\pi$ of $H_\kappa$, which we also view as a representation of $\Hft$ by inflating via the above surjection $\Hft\twoheadrightarrow H_{\kappa}$.

For each $x\in G_{\kappa}$, we let $\xt\in\Kft$ be a preimage under the surjection $\Kft\twoheadrightarrow G_{\kappa}$. The choice of $\xt$ is not unique, but if $\xt'$ is another choice, then
\begin{equation}\label{E:double_coset_equal_finite_Hecke}
    \Hft\,\xt\,\Hft=\Hft\,\xt'\,\Hft.
\end{equation}
because $\xt'=\xt \gamma$ for some $\gamma\in\Gammaft(1)$.

We then have
\begin{Thm}\label{T:finite_Hecke_algebra_isom}
    With the above notation, we have a $\C$-algebra isomorphism 
    \[
    \Hcal(G_{\kappa}, H_{\kappa}; \pi)\cong \Hcal(\Kft, \Hft; \pi\otimes\sigma)
    \]
    which restricts to an isomorphism
    \[
    \Hcal(G_{\kappa}, H_{\kappa}; \pi)_x\cong \Hcal(\Kft, \Hft; \pi\otimes\sigma)_{\xt}
    \]
    of $\C$-vector spaces for all $x\in G_{\kappa}$; namely the isomorphism is a support-preserving. (Note that the space $\Hcal(\Kft, \Hft; \pi\otimes\sigma)_{\xt}$ is independent of the choice of $\xt$ thanks to \eqref{E:double_coset_equal_finite_Hecke}.)
\end{Thm}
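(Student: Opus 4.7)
The plan is to realize both Hecke algebras as endomorphism algebras of induced representations, then apply the finite Shimura correspondence globally, and finally match double-coset gradings. First, since $\pi$ is inflated from $H_\kappa$ and $\tilde{\Gamma}(1)\subseteq \tilde{\mathsf{H}}$, the $G_\kappa$-module $\Ind_{H_\kappa}^{G_\kappa}\pi$, pulled back to $\tilde{\mathsf{K}}$, is identified with $\Ind_{\tilde{\mathsf{H}}}^{\tilde{\mathsf{K}}}\pi$. Chaining Lemma \ref{HeckeAsEnd} with Proposition \ref{SCInd}(2) and (3) then gives
\[
\mathcal{H}(G_\kappa, H_\kappa;\pi) \cong \End_{G_\kappa}(\Ind_{H_\kappa}^{G_\kappa}\pi) \cong \End_{\tilde{\mathsf{K}}}(\Ind_{\tilde{\mathsf{H}}}^{\tilde{\mathsf{K}}}(\pi\otimes\sigma)) \cong \mathcal{H}(\tilde{\mathsf{K}},\tilde{\mathsf{H}};\pi\otimes\sigma),
\]
which is the desired $\mathbb{C}$-algebra isomorphism.

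Next I would match double cosets. Because $\tilde{\Gamma}(1)\subseteq \tilde{\mathsf{H}}$, the projection $\tilde{\mathsf{K}}\twoheadrightarrow G_\kappa$ induces a bijection $\tilde{\mathsf{H}}\backslash\tilde{\mathsf{K}}/\tilde{\mathsf{H}}\cong H_\kappa\backslash G_\kappa/H_\kappa$ via $\tilde{\mathsf{H}}\tilde{x}\tilde{\mathsf{H}}\mapsto H_\kappa xH_\kappa$, so both Hecke algebras are graded by the same set. Fix $x\in G_\kappa$ with lift $\tilde{x}$. By Lemma \ref{L:Bushnell_Kutzko_equivalence},
\[
\mathcal{H}(\tilde{\mathsf{K}},\tilde{\mathsf{H}};\pi\otimes\sigma)_{\tilde{x}} \cong \Hom_{\tilde{\mathsf{H}}\cap\tilde{\mathsf{H}}^{\tilde{x}}}\bigl(\pi\otimes\sigma,\,(\pi\otimes\sigma)^{\tilde{x}}\bigr).
\]
Since $\sigma$ extends to a $\tilde{\mathsf{K}}$-module (Theorem \ref{KRep} together with Proposition \ref{ExtICompat}), conjugation by $\tilde{x}$ yields an isomorphism $\sigma\cong\sigma^{\tilde{x}}$ of $\tilde{\mathsf{K}}$-modules; both restrict irreducibly to $\tilde{\Gamma}(1)$. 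Applying Lemma \ref{HOMotimesSig} with $L=\tilde{\mathsf{H}}\cap\tilde{\mathsf{H}}^{\tilde{x}}$, $M=\tilde{\Gamma}(1)$, $(\sigma_1,\sigma_2)=(\sigma,\sigma^{\tilde{x}})$ and $(\pi_1,\pi_2)=(\pi,\pi^{\tilde{x}})$, and noting that $\pi$ factors through $H_\kappa$, yields
\[
\Hom_{\tilde{\mathsf{H}}\cap\tilde{\mathsf{H}}^{\tilde{x}}}(\pi\otimes\sigma,\pi^{\tilde{x}}\otimes\sigma^{\tilde{x}}) \cong \Hom_{H_\kappa\cap H_\kappa^{x}}(\pi,\pi^{x}) \cong \mathcal{H}(G_\kappa,H_\kappa;\pi)_{x},
\]
where the last isomorphism is again Lemma \ref{L:Bushnell_Kutzko_equivalence}. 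This gives the dimension match on each graded piece.

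It remains to verify that the abstract isomorphism from the first paragraph actually respects these gradings, rather than merely matching total dimensions. The key observation is that the isomorphism $\phi:(\Ind_{H_\kappa}^{G_\kappa}\pi)\otimes\sigma \to \Ind_{\tilde{\mathsf{H}}}^{\tilde{\mathsf{K}}}(\pi\otimes\sigma)$ of Proposition \ref{SCInd}(2) is $\tilde{\mathsf{K}}$-equivariant and preserves the natural decompositions of both sides by $H_\kappa$-double cosets (since $\Ind_{H_\kappa}^{G_\kappa}\pi$ is naturally a $H_\kappa$-bimodule graded by double cosets and tensoring by $\sigma$ and the formula for $\phi$ preserve this grading). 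Hence if $F\in \End_{G_\kappa}(\Ind\pi)$ corresponds via Lemma \ref{HeckeAsEnd} to a Hecke function supported on $H_\kappa xH_\kappa$, then $F\otimes\id_\sigma$, and hence its image under $\phi\circ(-)\circ\phi^{-1}$, corresponds to a function supported on $\tilde{\mathsf{H}}\tilde{x}\tilde{\mathsf{H}}$.

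The main obstacle is the bookkeeping in this last step: one has to trace through Proposition \ref{SCInd}(3) and Lemma \ref{HeckeAsEnd} carefully to confirm that the global algebra isomorphism indeed sends $\mathcal{H}(G_\kappa,H_\kappa;\pi)_{x}$ into $\mathcal{H}(\tilde{\mathsf{K}},\tilde{\mathsf{H}};\pi\otimes\sigma)_{\tilde{x}}$. Given that, the dimension match from Step 2 forces the inclusion to be an equality, completing the proof.
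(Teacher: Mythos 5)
Your proposal is correct and follows essentially the same route as the paper's proof: the algebra isomorphism comes from the identical chain of isomorphisms (Lemma \ref{HeckeAsEnd}, the finite Shimura correspondence via Theorem \ref{FiniteShimCor} and Proposition \ref{SCInd}, then Lemma \ref{HeckeAsEnd} again), the graded pieces are matched exactly as in the paper using Lemma \ref{L:Bushnell_Kutzko_equivalence} together with Lemma \ref{HOMotimesSig} (with $M=\Gammaft(1)$ and the extension of $\sigma$ to $\Kft$ supplying the needed intertwiner), and the compatibility of the global isomorphism with the double-coset grading is reduced to the same explicit bookkeeping that the paper leaves to the reader. Your added observation that the support-preserving form of the map $\phi$ from Proposition \ref{SCInd} is what makes that bookkeeping go through matches the paper's (omitted) verification, so there is nothing essentially different or missing.
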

\begin{proof}
We already know the isomorphisms
\begin{equation}\label{E:composition_of_three_isomorphism_fintie_Hecke}
\Hcal(G_{\kappa},H_{\kappa};\pi)\xrightarrow{\;\sim\;} \End_{G_{\kappa}}(\Ind_{H_{\kappa}}^{G_{\kappa}}\pi)
\xrightarrow{\;\sim\;}\End_{\Kft}(\Ind_{\Hft}^{\Kft}(\pi\otimes \sigma))\xrightarrow{\;\sim\;}\Hcal(\Kft,\Hft;\pi\otimes \sigma),
\end{equation}
where the first and the third isomorphisms are as in Lemma \ref{HeckeAsEnd}, and the second isomorphism is by Theorem \ref{FiniteShimCor} and Proposition \ref{SCInd} \eqref{EndIso}. Hence we have the isomorphism of the two Hecke algebras.

By Lemma \ref{L:Bushnell_Kutzko_equivalence}, we know that
\[
\Hcal(G_{\kappa}, H_{\kappa}; \pi)_{x^{-1}}\cong\Hom_{H_\kappa\cap{\,^{x}H_\kappa}}(\,^{x}\pi, \pi)\qand
\Hcal(\Kft, \Hft; \pi\otimes\sigma)_{\xt^{-1}}\cong\Hom_{\Hft\cap{\,^{\xt}\Hft}}(\,^{\xt}\pi\otimes\sigma, \pi\otimes\sigma).
\]
Since $\Hom_{H_\kappa\cap{\,^{x}H_\kappa}}(\,^{x}\pi, \pi)\cong \Hom_{\Hft_\kappa\cap{\,^{x}\Hft_\kappa}}(\,^{x}\pi, \pi)$, Lemma \ref{HOMotimesSig} (with $M=\Gammaft(1)$ and $L=\Hft_\kappa\cap{\,^{x}\Hft_\kappa}$) gives
\[
\Hom_{H_\kappa\cap{\,^{x}H_\kappa}}(\,^{x}\pi, \pi)\cong \Hom_{\Hft\cap{\,^{\xt}\Hft}}(\,^{\xt}\pi\otimes\sigma, \pi\otimes\sigma).
\]
Here, it should be mentioned that $^{\xt}\Gammaft(1)=\Gammaft(1)$, so $\Gammaft(1)\subseteq \Hft_\kappa\cap{\,^{x}\Hft_\kappa}$. Hence we have
\[
\Hcal(G_{\kappa}, H_{\kappa}; \pi)_{x^{-1}}\cong \Hcal(\Kft, \Hft; \pi\otimes\sigma)_{\xt^{-1}}.
\]

It remains to show that this isomorphism is induced by the composite of the three isomorphisms of \eqref{E:composition_of_three_isomorphism_fintie_Hecke}.  But this is a matter of book keeping since these three isomorphisms are explicit, and is left to the reader.
\end{proof}

This support preserving property has significant consequences when we specialize to the case of parabolic induction. Let
\[
P_\kappa=M_\kappa N_\kappa\subseteq G_\kappa
\]
be a parabolic subgroup of $G_\kappa$, where $M_\kappa$ and $N_\kappa$ are the Levi part and the unipotent subgroup, respectively. Then there exists a subgroup $P\subseteq K$ such that $P\supseteq\Gamma (1)$ and the corresponding $\Pft\subseteq\Kft$ is the preimage of $P_\kappa$ under the surjection $\Kft\twoheadrightarrow G_\kappa$.  We then have

\begin{Cor}\label{ParaHeckeIso}
Let $\pi$ be a representation of $M_\kappa$ which we inflate to a representation of $P_\kappa$. We then have a support-preserving $\C$-algebra algebra isomorphism 
\[
\Hcal(G_{\kappa},P_{\kappa};\pi)\cong\Hcal(\Kft,\Pft;\pi\otimes \sigma).
\]
In particular, if $B_\kappa$ is the Borel subgroup, we have
\[
\Hcal(G_{\kappa},B_{\kappa};\pi)\cong\Hcal(\Kft,\Ift;\pi\otimes \sigma).
\]
\end{Cor}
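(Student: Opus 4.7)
The plan is to deduce this statement directly from Theorem \ref{T:finite_Hecke_algebra_isom} by taking $H=P$, once the existence of a suitable $P\subseteq K$ is established. So the only real work is setting up the objects and verifying that the hypotheses of Theorem \ref{T:finite_Hecke_algebra_isom} are met.

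First, I would define $P\subseteq K$ to be the preimage of $P_\kappa$ under the mod-$\varpi$ surjection $K\twoheadrightarrow K/\Gamma(1)\cong G_\kappa$. By construction $\Gamma(1)\subseteq P\subseteq K$ and $P_\kappa = P/\Gamma(1)$. Then the preimage $\Pft=\pr^{-1}(P)$ in $\Kt$ contains $\s(\Gamma(2e))$, so it descends to the subgroup $\Pft\subseteq\Kft$ which is the preimage of $P_\kappa$ under the surjection $\Kft\twoheadrightarrow G_\kappa$. Next I would check that the datum needed in Theorem \ref{T:finite_Hecke_algebra_isom} is assembled: $\pi$ is a representation of $M_\kappa$, which inflates along $P_\kappa\twoheadrightarrow M_\kappa$ to a representation of $P_\kappa$, and hence along $\Pft\twoheadrightarrow P_\kappa$ to a representation of $\Pft$ that is trivial on $\Gammaft(1)$. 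This is exactly the input required by Theorem \ref{T:finite_Hecke_algebra_isom}.

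Applying Theorem \ref{T:finite_Hecke_algebra_isom} with $H=P$ then yields a $\C$-algebra isomorphism
\[
\Hcal(G_{\kappa},P_{\kappa};\pi)\cong\Hcal(\Kft,\Pft;\pi\otimes\sigma)
\]
which restricts to an isomorphism $\Hcal(G_\kappa,P_\kappa;\pi)_x\cong\Hcal(\Kft,\Pft;\pi\otimes\sigma)_{\xt}$ for each $x\in G_\kappa$ and any lift $\xt\in\Kft$, establishing the support-preserving property.

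For the Borel case, I would just specialize: since $I=\Gamma_0(1)$ and $I/\Gamma(1)=B_\kappa$ by the definitions in \S\ref{SS:ChevGroup}, taking $P=I$ and $P_\kappa=B_\kappa$ in the statement already proven gives
\[
\Hcal(G_{\kappa},B_{\kappa};\pi)\cong\Hcal(\Kft,\Ift;\pi\otimes\sigma),
\]
as claimed. There is essentially no obstacle here; the content of the Corollary is entirely in Theorem \ref{T:finite_Hecke_algebra_isom}, and this is merely a repackaging for the parabolic subgroup setting that will be convenient in the subsequent analysis of the Hecke algebra supported on $\Kt$.
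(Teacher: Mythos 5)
Your proposal is correct and matches the paper's argument, which likewise obtains the result by taking $H=P$ in Theorem \ref{T:finite_Hecke_algebra_isom}, with $P$ the preimage of $P_\kappa$ in $K$ (and $P=I$ for the Borel case). The extra verification you spell out — that $\Gamma(1)\subseteq P\subseteq K$ and that the inflated $\pi$ is trivial on $\Gammaft(1)$ — is exactly what the paper's setup preceding the corollary takes for granted.
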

\begin{proof}
    Just let $H=P$ in the above theorem.
\end{proof}

Hecke algebras of the form $\Hcal(G_{\kappa},P_{\kappa};\pi)$, where $\pi$ is cuspidal, were thoroughly studied in \cite{HL80}. We highlight a couple of their results. (Here we will be translating the statements of \cite{HL80}, which primarily use the language of endomorphism algebras, into the language of Hecke algebras.) First, \cite[Proposition (3.9)]{HL80} (for which they refer to Springer \cite[pp. 635-636]{S71}) states that the Hecke algebra $\Hcal(G_{\kappa},P_{\kappa};\pi)$ has a basis indexed by a subset of $P_{\kappa}$-double cosets such that the support of a basis function $T_{w}$ is the corresponding $P_{\kappa}$-double coset $P_{\kappa}\,w\,P_{\kappa}$.  Second, in terms of this basis, \cite[Theorem (4.14)]{HL80} describes a presentation for $\Hcal(G_{\kappa},P_{\kappa}; \pi)$ (roughly braid relations and quadratic relations). The isomorphism of Corollary \ref{ParaHeckeIso} then implies that $\Hcal(\Kft,\Pft; \pi\otimes \sigma)$ inherits the presentation of $\Hcal(G_{\kappa},P_{\kappa}; \pi)$, but this isomorphism also preserves support. Thus if $\Tt_{\wt}\in \Hcal(\Kft, \Pft; \pi\otimes \sigma)$ corresponds to $T_{w}\in \Hcal(G_{\kappa}, P_{\kappa}; \pi)$, then we know that $\supp(\Tt_{\wt})=\Pft\,\wt\, \Pft$ and under the surjection $\Kft\twoheadrightarrow G_{\kappa}$ we have $\wt\mapsto w$.

\subsection{Two isomorphic Hecke algebras $\Hcal$ and $\Hcal_2$}\label{HandH2}

Now, let us introduce the Iwahori Hecke algebra we will study. Recall that we have first constructed a pseudo-spherical representation $\tau$ of the compact group $\Tt_0$. The kernel of $\tau$ contains $T_R$, which allows us to view $\tau$ as a representation of the finite group $\Tft=\Tt_0\slash T_R$, which we also denote by $\tau$. By parahoric induction, we have constructed a representation  $\sigma=i_{{\Uf}_{0},{\Uf}_{1}^{-}}^{\Ift}\tau$ of the finite group $\Ift=\It\slash \s(\Gamma(2e))$. By inflating $\sigma$ by the surjection $\It\twoheadrightarrow \widetilde{{\If}}$, we view $\sigma$ as a representation of $\It$, which we also call $\sigma$. We then define 
\begin{equation*}
	\mathcal{H}:=\mathcal{H}(\Gt,\widetilde{I};\sigma).
\end{equation*}

For our analysis, however, it will be convenient to also consider another Hecke algebra constructed using the congruence subgroup 
\[
I_{2}:=\Gamma_{0}(2).
\]
Note the inclusions $I_2\subseteq I\subseteq K$. By using the Iwahori decomposition $(U_0, T_0, U_2^-)$ of $I_2$, we consider another parahoric induction
\[
\sigma_{2}:=i_{{\Uf}_{0},{\Uf}_{2}^{-}}^{\Ift_2}\tau,
\]
which we often view as a representation of $\It_2$ by inflating via $\It_2\twoheadrightarrow \Ift_2$. We then define 
\begin{equation*}
	\Hcal_{2}:=\mathcal{H}(\Gt,\It_2;\sigma_{2}).
\end{equation*}

These two Hecke algebras are actually isomorphic:
\begin{Prop}\label{HIsoH2}
	As $\mathbb{C}$-algebras 
	\begin{equation*}
		\Hcal\cong \Hcal_{2}.
	\end{equation*}
\end{Prop}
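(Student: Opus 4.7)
The plan is to deduce this isomorphism directly from two ingredients already established in the paper: the induction formula $\sigma \cong \mathrm{Ind}_{\widetilde{\mathsf{I}}_2}^{\widetilde{\mathsf{I}}} \sigma_2$ of Proposition \ref{IndSig2}, and the standard Bushnell-Kutzko transitivity for Hecke algebras (Lemma \ref{L:Bushnell_Kutzko_induced}).

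First, I would inflate both $\sigma$ and $\sigma_2$ to representations of $\It$ and $\It_2$ respectively via the surjections $\It \twoheadrightarrow \Ift$ and $\It_2 \twoheadrightarrow \Ift_2$. Because $\s(\Gamma(2e))$ is contained in both $\It_2$ and $\It$ and is normal in each, the induction formula of Proposition \ref{IndSig2} lifts verbatim to give an isomorphism of $\It$-modules
\[
\sigma \;\cong\; \Ind_{\It_2}^{\It} \sigma_2.
\]
Indeed, both sides are inflated from the corresponding finite-group representations, and induction commutes with inflation along a common kernel.

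Second, I would apply Lemma \ref{L:Bushnell_Kutzko_induced} with $G = \Gt$, $K = \It$, $H = \It_2$, and $\rho = \sigma_2$. This yields a canonical $\C$-algebra isomorphism
\[
\Hcal \;=\; \Hcal(\Gt, \It; \sigma) \;\cong\; \Hcal(\Gt, \It;\, \Ind_{\It_2}^{\It}\sigma_2) \;\cong\; \Hcal(\Gt, \It_2; \sigma_2) \;=\; \Hcal_2,
\]
where the first isomorphism is induced by the identification $\sigma \cong \Ind_{\It_2}^{\It}\sigma_2$ established in the previous paragraph, and the second is Lemma \ref{L:Bushnell_Kutzko_induced}.

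There is no real obstacle here; the entire content of the proposition has been packaged into Proposition \ref{IndSig2}, which is the substantive result. The only mild bookkeeping is to confirm that the induction isomorphism at the level of the finite quotient $\Ift$ is compatible with inflation to $\It$, but this is automatic since $\s(\Gamma(2e))$ is normal in $\It$ and acts trivially on both $\sigma$ and $\Ind_{\It_2}^{\It}\sigma_2$ (note that $\s(\Gamma(2e)) \subseteq \It_2$, so it acts trivially on $\sigma_2$ and hence on the induced representation).
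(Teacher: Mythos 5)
Your proposal is correct and follows exactly the paper's own route: the isomorphism $\sigma \cong \Ind_{\It_2}^{\It}\sigma_2$ from Proposition \ref{IndSig2} combined with Lemma \ref{L:Bushnell_Kutzko_induced} immediately gives $\Hcal \cong \Hcal_2$. The extra remark on compatibility of induction with inflation along $\s(\Gamma(2e))$ is harmless bookkeeping that the paper leaves implicit.
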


\begin{proof}
	The claim follows directly from Proposition \ref{IndSig2} and Lemma \ref{L:Bushnell_Kutzko_induced}. 
\end{proof}

\begin{Rmk}
We work with $\Hcal_{2}$ primarily because the group $\It_2$ behaves better with the affine reflection $w_{\af}$ than $\It$. For example, the group generated by $\It_2$ and $w_{\af}$ is compact, whereas the one generated by $\It$ and $w_{\aff}$ is not. Accordingly, when we work with $w_{\af}$, we often work with $\Hcal_2$.
\end{Rmk}


\subsection{Support of $\Hcal$: multiplicity one}\label{SS:MultOne}


In this subsection we begin our study of the support of $\Hcal$. Let us start with

\begin{Prop}\label{HMult1}
	For each $g\in G$, we have
	\begin{equation*}
		\dim\Hcal_{g}\leq 1.
	\end{equation*}
\end{Prop}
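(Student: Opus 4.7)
The plan is to use the Bushnell-Kutzko identification to reduce the dimension computation to a Hom-space on an intersection of Iwahori subgroups, then use the parahoric-induced structure of $\sigma$ to push everything down to a Hom-space on $\Tt_0$, where irreducibility of $\tau$ (via Schur's lemma) immediately gives the one-dimensional bound.

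First, by Lemma \ref{L:Bushnell_Kutzko_equivalence}, we have
\[
\dim \Hcal_g = \dim \Hom_{\It \cap {}^g \It}({}^g \sigma, \sigma).
\]
The Iwahori-Bruhat decomposition $I \backslash G / I \cong W_{\af}$ from \S \ref{SS:affine_Weyl_covering_group} lets us replace $g$ by a canonical lift $\dot{w} \in N_G(T)$ of some $w \in W_{\af}$. For such $\dot{w}$, the intersection $H := \It \cap {}^{\dot{w}}\It$ admits an explicit Iwahori factorization $(U_H, \Tt_0, V_H)$, where $U_H$ and $V_H$ are products of root subgroups $U_{\alpha,k}$ cut out by positivity conditions relative to $w$, in the spirit of the compatibility analysis of Lemma \ref{WCompat}.

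Next, the core technical step is to identify both $\sigma|_H$ and ${}^{\dot{w}}\sigma|_H$ with parahoric inductions of $\Tt_0$-representations. Since $\sigma = i^{\Ift}_{\Uf, \Uf_1^-} \tau$, an argument parallel to Proposition \ref{ResGamma1} — reducing to checking that the $\Uf$-invariants of $\sigma$ agree with the $U_H$-invariants, as in Proposition \ref{UInvar} — should yield $\sigma|_H \cong i^H_{U_H, V_H}\tau$. Conjugating by $\dot{w}$ and using that $\dot{w}$ permutes root subgroups, together with compatibility (Proposition \ref{P:compatible_Iwahori_decomp}), we obtain ${}^{\dot{w}}\sigma|_H \cong i^H_{U_H, V_H} {}^{\dot{w}}\tau$ for the same Iwahori decomposition of $H$.

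Once this identification is in place, adjointness (Proposition \ref{P:two_functions_are_adjoint}) combined with the clean composition $r \circ i \cong \mathrm{id}$ (Proposition \ref{P:two_functors_for_Iwahori_decomp}\,(2)) reduces the computation to
\[
\Hom_H\bigl(i^H_{U_H, V_H} {}^{\dot{w}}\tau,\; i^H_{U_H, V_H} \tau\bigr) \cong \Hom_{\Tt_0}({}^{\dot{w}}\tau, \tau),
\]
which is at most one-dimensional since $\tau$ and ${}^{\dot{w}}\tau$ are both irreducible $\Tt_0$-representations. The main obstacle is the identification step: showing that the restrictions $\sigma|_H$ and ${}^{\dot{w}}\sigma|_H$ are indeed parahorically induced from $\tau$ and ${}^{\dot{w}}\tau$. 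This requires extending the compatibility results of \S \ref{IwahoriType} from Weyl-group conjugation to conjugation by arbitrary $\dot{w} \in N_G(T)$ representing elements of $W_{\af}$, including translations that move root subgroups across congruence levels. In particular, one must verify an analog of Proposition \ref{UInvar} comparing $\sigma$-invariants under the various unipotent subgroups appearing in the factorization of $H$, and the verification may depend on the length and translation part of $w$.
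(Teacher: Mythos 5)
Your opening reduction (Bushnell--Kutzko plus Iwahori--Bruhat) matches the paper, but the step you yourself flag as "the main obstacle" is a genuine gap, and it is not just a verification that got postponed: the identification $\sigma|_H\cong i^{H}_{U_H,V_H}\tau$ for $H=\It\cap{}^{\dot w}\It$ is false for general $\dot w\in N_G(T)$. The problem is the translation part of $\dot w$. If $\dot w$ represents a dominant translation $\yt(\varpi)$ with $\la\alpha,y\ra$ large, then $U_H=U_0\cap{}^{\dot w}\It=\prod_{\alpha>0}U_{\alpha,\la\alpha,y\ra}$ is a deep congruence subgroup; once $\la\alpha,y\ra\geq 2e$ it lies in $\Gamma(2e)\subseteq\ker\sigma$, so $\sigma^{U_H}=\sigma$, whereas $\sigma^{U_0}\cong\tau$ has strictly smaller dimension. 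Thus the hypothesis $(i_{U,V}\tau)^{U_H}=(i_{U,V}\tau)^{U_0}$ of Proposition \ref{ResGamma1} fails badly, and in fact $\sigma|_H$ cannot be the parahoric induction $i^{H}_{U_H,V_H}\tau$ at all, since the latter is irreducible with $U_H$-invariants of dimension $\dim\tau$ by Proposition \ref{P:two_functors_for_Iwahori_decomp}. Proposition \ref{UInvar} only gives the invariant-matching for the length-one intersections $\It\cap{}^{w_\alpha}\It$, and there is no analog for elements with nontrivial translation part; so the chain of isomorphisms ending in $\Hom_{\Tt_0}({}^{\dot w}\tau,\tau)$ collapses precisely in the cases that matter (and, had it worked, it would have computed the full support, which the paper has to establish by separate arguments).

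The paper's proof avoids ever realizing $\sigma|_H$ as a parahoric induction. After using the Weyl-invariance of $\sigma|_{\Gammat(1)}$ (Proposition \ref{IRep}) and Lemma \ref{HomIsoRel} to reduce to $g=t\in\Tt$ with ${}^tU_j\subseteq U_j$, it \emph{enlarges} the Hom space by restricting from $\It\cap{}^t\It$ to $\Gammat\cap{}^t\Gammat$ (legitimate since a bound on the bigger space suffices), notes the Iwahori decomposition $U_1^-\times\Tt_1\times{}^tU_1$ of this group, and then bounds the Hom space by a generation argument: ${}^t\sigma$ is generated over $\Gammat\cap{}^t\Gammat$ by the subspace $A$ spanned by the vectors $e_{U_1}e_{U_1^-}\otimes w$, which has dimension at most $\dim\tau$ and lands inside ${}^t\sigma^{{}^tU_1}=\sigma^{U_1}\cong\tau$. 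Restriction to $A$ therefore injects $\Hom_{\Gammat\cap{}^t\Gammat}({}^t\sigma,\sigma)$ into $\Hom_{\Tt_1}(A,\sigma^{U_1})$, which is at most one-dimensional because $\tau$ stays irreducible on $\Tt_1$. If you want to salvage your route, you would need a substitute for the restriction identification valid for dominant translations; the paper's $A$-argument is exactly that substitute.
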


\begin{proof}
	First note that by Lemma \ref{L:Bushnell_Kutzko_equivalence} it suffices to compute the dimension of the Hom-space $\Hom_{\It\cap\,^{g}\It}(\,^{g}\sigma,\sigma)$. Second, note that if $\It g\It=\It g'\It$ then $\dim\Hcal_{g^{-1}}=\dim\Hcal_{{g'}^{-1}}$, and hence by the Iwahori-Bruhat decomposition, we may assume $g\in N_{\Gt}(\Tt)$.
	
	Let $\Gamma:=\Gamma(1)$. Since $ \Gamma\subseteq I$ we have $\Hom_{\It\cap\,^{g}\It}(\,^{g}\sigma,\sigma)\subseteq \Hom_{\widetilde{\Gamma}\cap\,^{g}\widetilde{\Gamma}}(\,^{g}\sigma,\sigma)$. Thus it suffices to prove 
    \[
    \dim\Hom_{\widetilde{\Gamma}\cap\,^{g}\widetilde{\Gamma}}(\,^{g}\sigma,\sigma)\leq 1,
    \]
    where we recall from Proposition \ref{IRep} \eqref{ResSig} that $\sigma|_{\Gammat}$ is still irreducible. (The advantage of $\Gamma$ over $I$ is that $\Gamma$ is normalized by $\Wcal$.)
	
	As $g\in N_{\Gt}(\Tt)$ there exists $w\in \Wcal$ and $t\in \Tt$ such that $g=tw$. Then by Proposition \ref{IRep} we know $^w\sigma\cong\sigma$ as $\Gammat$-modules, which implies that $\Hom_{\widetilde{\Gamma}\cap\,^{g}\widetilde{\Gamma}}(\,^{g}\sigma,\sigma)\cong \Hom_{\widetilde{\Gamma}\cap\,^{t}\widetilde{\Gamma}}(\,^{t}\sigma,\sigma)$. Hence we may assume $g=t\in\Tt$.
	
	Next, note that there exists $w_{0}\in\Wcal$ such that 
    \[
    (w_0tw_0)U_j(w_0tw_0)^{-1}\subseteq U_j
    \]
    for any $j\in\Z$, where we recall the notation $U_j=U_j^+$ from \eqref{E:notation_U_alpha_j}. By Lemma \ref{HomIsoRel} and Proposition \ref{IRep} \eqref{sigWInvar} and \eqref{ResSig}, we have
	\begin{equation*}
		\Hom_{\widetilde{\Gamma}\cap\,^{t}\widetilde{\Gamma}}(\,^{t}\sigma,\sigma)\cong \Hom_{\widetilde{\Gamma}\cap\,^{w_{0}tw_{0}^{-1}}\widetilde{\Gamma}}(\,^{w_{0}tw_{0}^{-1}}\sigma,\sigma).
	\end{equation*}
	Thus we may assume $g=t\in \Tt$ such that $tU_{j}t^{-1}\subseteq U_{j}$ for any $j\in \mathbb{Z}$. 
	
	In this case, we see that $\widetilde{\Gamma}\cap\,^{t}\widetilde{\Gamma}$ has an Iwahori decomposition of the form
	\begin{equation*}
		\widetilde{\Gamma}\cap\,^{t}\widetilde{\Gamma}\cong U^{-}_{1}\times \Tt_{1} \times \,^{t}U_{1}.
	\end{equation*}
	Specifically, we have $U^{-}_{1}\subseteq \widetilde{\Gamma}\cap\,^{t}\widetilde{\Gamma}$, and $\,^{t}U_{1}\subseteq U_{1}$.
	
	Let $A\subseteq \,^{t}\sigma$ be the span of the elements of the form $e_{U_{1}}e_{U^{-}_{1}}\otimes_{\Tt_{1}} w$, where $w\in\tau$. Note that $A$ is a $\Tt_{1}$-module such that $A\subseteq \,^{t}\sigma^{\,^{t}U_{1}}=\sigma^{U_{1}}$ because the idempotent $e_{U_1}$ absorbs the elements in $U_1$. Also note that, as a $\widetilde{\Gamma}\cap\,^{t}\widetilde{\Gamma}$-module, $\,^{t}\sigma$ is generated by $A$, because of the above Iwahori decomposition of $\widetilde{\Gamma}\cap\,^{t}\widetilde{\Gamma}$ and the containment $\widetilde{\Gamma}\cap\,^{t}\widetilde{\Gamma}\supset U^{-}_{1}$. Thus by restriction from $\,^{t}\sigma$ to $A$ we get an injective map
	\begin{equation*}
		\Hom_{\widetilde{\Gamma}\cap\,^{t}\widetilde{\Gamma}}(\,^{t}\sigma,\sigma)\hookrightarrow\Hom_{\Tt_{1}}(A,\sigma^{U_{1}}).
	\end{equation*}
        Now, as $\Tt_{1}$-modules, $\sigma^{U_{1}}\cong \tau$ by Proposition \ref{P:two_functors_for_Iwahori_decomp} (4). Therefore $\dim\Hom_{\Tt_{1}}(A,\sigma^{U_{1}})\leq 1$, since $\dim A\leq \dim\tau$, proving the proposition.
\end{proof}

The following gives a sufficient condition for $\mathcal{H}_{g}$ to be $0$.

\begin{Prop}\label{HVanish}
	Let $f\in \Hcal$, $w\in \Wcal$ and $y\in Y\smallsetminus\Yt$. Let $\yt(\varpi)\in \Tt$ be any preimage of $y(\varpi)\in T$. Then $f(\yt(\varpi) w)=0$. In particular, 
	\begin{equation*}
		\Hcal_{\yt(\varpi) w}=0.
	\end{equation*}
\end{Prop}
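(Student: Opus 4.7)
The plan is to show that $\Hcal_{\yt(\varpi) w}=0$, from which the pointwise vanishing $f(\yt(\varpi)w)=0$ follows by $(\It,\sigma)$-biinvariance. I would follow the exact reduction scheme used in the proof of Proposition \ref{HMult1}. First, Lemma \ref{L:Bushnell_Kutzko_equivalence} translates the claim into the vanishing of $\Hom_{\It\cap{}^g\It}({}^g\sigma,\sigma)$ for $g=\yt(\varpi)w$. Restricting along $\Gammat\subseteq \It$ and using both the $W$-invariance of $\sigma|_{\Gammat}$ (Proposition \ref{IRep}\eqref{sigWInvar}) and the fact that $\Gammat$ is normalized by $\Wcal$, I would reduce from $g=tw$ to $g=t:=\yt(\varpi)$. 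Because $\Yt$ is stable under $W$, so is $Y\smallsetminus\Yt$; hence a further conjugation by a suitable $w_0\in W$ lets me assume $y$ is antidominant, so that $tU_jt^{-1}\subseteq U_j$ for all $j\in\Z$.

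Next I would import the auxiliary subspace $A\subseteq{}^t\sigma$ from the proof of Proposition \ref{HMult1}, namely the span of the vectors $e_{U_1}e_{U_1^-}\otimes w$ with $w\in\tau$. Exactly as there, $A$ is a $\Tt_1$-stable subspace contained in ${}^t\sigma^{{}^tU_1}=\sigma^{U_1}$, and it generates ${}^t\sigma$ as a $\Gammat\cap{}^t\Gammat$-module via the Iwahori decomposition of the latter. This yields the injection
\[
\Hom_{\Gammat\cap{}^t\Gammat}({}^t\sigma,\sigma)\hookrightarrow\Hom_{\Tt_1}(A,\sigma^{U_1})\cong\Hom_{\Tt_1}(A,\tau),
\]
using $\sigma^{U_1}\cong\tau$ from Proposition \ref{P:two_functors_for_Iwahori_decomp}(4). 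So it suffices to show the right-hand side is zero.

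The new input is a central-character comparison on a carefully chosen $s\in\Tt_1$. I would take $s=\htt_\beta(u)$ with $u\in1+\varpi^{2e}\Ocal$. Since $u\in R$, we have $\tau(s)=1$, so $s$ acts trivially on the target $\tau$. On $A$, the action is through ${}^t\sigma$, which by definition is $\sigma(t^{-1}st)$. Because $1+\varpi^{2e}\Ocal\subseteq R$ and the splitting $\s$ carries $T_{2e}$ into $\s(\Gamma(2e))$, which lies in the kernel of $\sigma$, one has $\sigma(s)=1$; therefore the entire action of $s$ on $A$ is captured by the scalar picked up in $t^{-1}st$. A direct application of \eqref{MR4}, together with the expansion $y=\sum_ic_i\alphac_i$, gives
\[
t\,\htt_\beta(u)\,t^{-1}=(\varpi,u)^{\la\beta,y\ra}\,\htt_\beta(u),
\]
so $s$ acts on $A$ by the scalar $(\varpi,u)^{\la\beta,y\ra}$.

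Finally, because $y\notin\Yt$, by the definition of $\Yt$ there is some $\beta\in\Phi$ with $\la\beta,y\ra$ odd; by Corollary \ref{C:Hilbert_symbol_unique_element_integral_radical} we may choose $u\in1+\varpi^{2e}\Ocal$ with $(u,\varpi)=-1$. Then the same $s$ acts on $A$ by $-1$ but on $\tau$ by $+1$, forcing $\Hom_{\Tt_1}(A,\tau)=0$, which completes the argument. The main technical point, and really the only subtlety beyond Proposition \ref{HMult1}, is the central-character computation in the third paragraph: one must be careful that the modification between $\htt_\beta(u)$ and $\s(h_\beta(u))$ does not obstruct the vanishing $\sigma(\htt_\beta(u))=1$, but this is guaranteed by the splitting properties of $\s$ on $\Gamma(2e)$ established in Proposition \ref{Gamma12eSplitting} and \ref{P:SplitNormIm}, combined with the fact that $T_R$ lies in the kernel of $\tau$.
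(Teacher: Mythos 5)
Your argument is correct, and its arithmetic engine is exactly the one the paper uses: an element $\htt_\beta(u)$ with $u\in(1+\varpi^{2e}\Ocal)\smallsetminus\Ocal^{\times 2}$, so that $(u,\varpi)=-1$ by Corollary \ref{C:Hilbert_symbol_unique_element_integral_radical}, which anticommutes with $\yt(\varpi)$ because $\la\beta,y\ra$ is odd for some root, while acting trivially on $\sigma$ and $\tau$ because $u\in R$ and the lift lies in $\s(\Gamma(2e))$. The difference is in packaging. The paper argues directly on the function: $f(\yt(\varpi)w)=f(\htt_\alpha(u)\yt(\varpi)w)=-f(\yt(\varpi)w\cdot w^{-1}\htt_\alpha(u)w)=-f(\yt(\varpi)w)$, using left $\It$-equivariance, the sign from \eqref{MR4}, and the fact that $w^{-1}\htt_\alpha(u)w\in T_{2e}$ acts trivially; no Hom-space translation, no Weyl conjugation, no auxiliary subspace is needed. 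Your route through Lemma \ref{L:Bushnell_Kutzko_equivalence} and the reduction scheme of Proposition \ref{HMult1} works, but most of that machinery is superfluous for this statement: since $\sigma(\htt_\beta(u))=1$ identically, your element $s$ acts on all of $^{t}\sigma$ by the scalar $(u,\varpi)^{\la\beta,y\ra}=-1$ and on $\sigma$ by $+1$, so $\Hom_{\It\cap{}^{t}\It}({}^{t}\sigma,\sigma)=0$ already, with no need for the subspace $A$, the reduction to a (in your convention, dominant rather than antidominant — the condition $tU_jt^{-1}\subseteq U_j$ for $\beta\in\Phi^+$ forces $\la\beta,y\ra\geq 0$) position, or the identification $\sigma^{U_1}\cong\tau$. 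Two small bookkeeping points you should make explicit if you keep your formulation: Lemma \ref{L:Bushnell_Kutzko_equivalence} gives $\Hcal_{g^{-1}}=0$ from the vanishing of $\Hom_{\It\cap{}^{g}\It}({}^{g}\sigma,\sigma)$, so you need to observe that the family of elements $\yt(\varpi)w$ with $y\in Y\smallsetminus\Yt$ is stable under inversion (up to $\It$, using that $\Yt$ is $W$-stable) to land on the stated coset; and the conjugations in your reductions preserve the condition $y\notin\Yt$ for the same reason. Neither of these is a gap, only a matter of stating what you are implicitly using.
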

\begin{proof}
	Since $y\notin\Yt$, there exists a simple root $\alpha\in\Delta$ such that $\la \alpha, y\ra\notin 2\Z$. Now, let $t\in (1+\varpi^{2e}\Ocal)\smallsetminus \Ocal^{\times 2}$. Then since $1+\varpi^{2e}\Ocal\subseteq R$, we know $(t, \varpi)=-1$ by Corollary \ref{C:Hilbert_symbol_unique_element_integral_radical}. Thus
	\[
	\htt_{\alpha}(t) \yt(\varpi)=(t, \varpi^{\la\alpha, y\ra})\yt(\varpi)\htt_{\alpha}(t)=-\yt(\varpi)\htt_{\alpha}(t).
	\]
	Recalling that $\Kt$ normalizes $\Gamma(2e)$, we have
	\[
	w^{-1}\htt_{\alpha}(t)w\in T_{2e}.
	\]
	Since $T_{2e}$ acts trivially on $\sigma$, we have
	\[
	f(\yt(\varpi) w)=f(\htt_{\alpha}(t)\yt(\varpi) w)=-f(\yt(\varpi)ww^{-1}\htt_{\alpha}(t)w)=-f(\yt(\varpi)w).
	\]
	Hence $f(\yt(\varpi) w)=0$.
\end{proof}

\subsection{Linear Hecke algebra for $I_2$}\label{SS:LinearHecke}


Here we take an interlude to study a linear Hecke algebra which will help us investigate several aspects of $\Hcal_{2}$. (A similar idea is used by Savin \cite{S04} in the tame case.) In particular, we record generalizations of the results of \cite[Section 5]{Karasiewicz} to the present setting. All the results generalize, but we only record those that we directly use later. The proofs carry over without change once the correct generalization is used, so we will be brief and refer the reader to \cite[Section 5]{Karasiewicz} for more details. The main point is that the mod $4$ congruence conditions in the $F=\mathbb{Q}_{2}$ setting of \cite{Karasiewicz} should become mod $\varpi^{2}$ congruence conditions in the present setting of an arbitrary $2$-adic field (as opposed to mod $\varpi^{2e}$). Note also that the congruence subgroup $I_2:=\Gamma_{0}(2)$ in the current paper corresponds to $\Gamma_{0}(4)$ in \cite{Karasiewicz}.

Let $\underline{\Hcal}:=C_{c}^{\infty}(I_2\backslash G/I_2)$, the algebra of $I_2$-biinvariant compactly supported functions on $G$. The space $\underline{\Hcal}$ has multiplication given by convolution where we normalize the Haar measure so that $I_2$ has measure $1$. 

We consider a length function $\ell_{2}:G\rightarrow\mathbb{R}_{\geq 0}$ defined by
\begin{equation*}
	q^{\ell_{2}(g)}:=[I_2gI_2:I_2].
\end{equation*}
By composing with any set theoretic section $W_{\af}\rightarrow N_{G}(T)$, we can consider the function
\[
\ell_{2}:W_{\af}\rightarrow N_G(T)\rightarrow\mathbb{R}_{\geq 0},
\]
which we also denote by $\ell_2$, because one can see that this composite does not depend on the choice of the section because $T_{0}\subseteq  I_2$.

Now we state relevant generalizations as follows, for which we recall the notation $\Omegat, \Wt_{\aff}, \Wt_{\ea}$ and $\tilde{\ell}$ from \S \ref{SS:affine_Weyl_covering_group}.

\begin{Lem}\label{Len0elts}
    Let $w_{1}\in \Omegat$ and $w_{2}\in \widetilde{W}_{\af}$. Then $\ell_{2}(w_{1}w_{2})=\ell_{2}(w_{2})$ and $\s(w_{1})\in N_{G}( I_2)$.
\end{Lem}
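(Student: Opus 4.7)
The plan is to deduce the length equality from the normalization claim. Given that $\s(w_1) \in N_G(I_2)$, one has $\s(w_1) I_2 = I_2 \s(w_1)$, so
\[
I_2\, w_1 w_2\, I_2 \;=\; \s(w_1)\, I_2 \, \s(w_2)\, I_2.
\]
Since left multiplication by $\s(w_1)$ is a measure-preserving bijection on $G$ that permutes the left $I_2$-cosets, one concludes $[I_2\, w_1 w_2\, I_2 : I_2] = [I_2\, \s(w_2)\, I_2 : I_2]$, hence $\ell_2(w_1 w_2) = \ell_2(w_2)$.

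For the normalization $\s(w_1) \in N_G(I_2)$, I would use the characterization from \S \ref{SS:affine_Weyl_covering_group} that $\Omegat$ is the subgroup of $\Wt_{\ea} \cong W \ltimes \Yt$ that preserves the alcove $2\mathfrak{A}$. Writing $w_1 = (w, y)$ and choosing the lift $\s(w_1) = n_w \cdot y(\varpi) \in N_G(T)$ for some lift $n_w$ of $w$, conjugation by $\s(w_1)$ acts on Chevalley generators as
\[
\s(w_1)\, x_\alpha(t)\, \s(w_1)^{-1} \;=\; \pm\, x_{w(\alpha)}\bigl(\varpi^{\la \alpha, y\ra}\, t\bigr),
\]
while $T_0$ is preserved set-wise. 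Since
\[
I_2 = \la T_0,\; x_\alpha(\Ocal) \;:\; \alpha \in \Phi^+,\; x_{-\alpha}(\varpi^2 \Ocal) \;:\; \alpha \in \Phi^+ \ra,
\]
checking $\s(w_1)\, I_2\, \s(w_1)^{-1} = I_2$ reduces to checking that the induced action on affine roots permutes the set $\{(\alpha, 0) : \alpha \in \Phi^+\} \cup \{(-\alpha, 2) : \alpha \in \Phi^+\}$ that describes (together with $T_0$) the generators of $I_2$. This set is exactly the set of affine roots that are non-negative on the closure of $2\mathfrak{A}$, and the permutation condition is therefore equivalent to $w_1$ stabilizing $2\mathfrak{A}$.

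The main obstacle is the arithmetic bookkeeping in the case-by-case verification of this permutation condition. The parity condition $\la \alpha, y \ra \in 2\Z$ coming from $y \in \Yt$ ensures that the exponent shifts in the conjugation formula are even, so that they are compatible with the mod-$\varpi^2$ congruences defining $U_2^-$. The computation is entirely parallel to \cite[Section 5]{Karasiewicz}, with $\varpi^2$ replacing $4$ and the present $I_2 = \Gamma_0(2)$ replacing the $\Gamma_0(4)$ of loc.~cit.; as noted at the start of this subsection, no essentially new ideas are required beyond this substitution.
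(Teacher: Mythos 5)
Your overall route is the same as the paper's: the paper's proof of this lemma is simply the observation that it generalizes \cite[Lemma 5.4]{Karasiewicz} and is proved in the same way (with $\varpi^{2}$ in place of $4$), and your reduction — deduce $\ell_{2}(w_{1}w_{2})=\ell_{2}(w_{2})$ from $\s(w_{1})\in N_{G}(I_2)$ via $I_2\,w_1w_2\,I_2=\s(w_1)\,I_2\,\s(w_2)\,I_2$, then verify normalization by conjugating the Chevalley generators of $I_2$ — is exactly that argument, with the bookkeeping deferred to \cite{Karasiewicz} just as the paper does.

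One sentence in your normalization step is wrong as stated and should be repaired: the set $\{(\alpha,0):\alpha\in\Phi^{+}\}\cup\{(-\alpha,2):\alpha\in\Phi^{+}\}$ is \emph{not} the set of affine roots of $\Phi$ that are non-negative on the closure of $2\mathfrak{A}$. For instance, for a simple root $\alpha_i$ whose coefficient in the highest root is at least $2$ one has $\max_{2\overline{\mathfrak{A}}}\la\alpha_i,\cdot\ra\leq 1$, so $(-\alpha_i,1)$ is non-negative on $2\overline{\mathfrak{A}}$, yet $x_{-\alpha_i}(\varpi\Ocal)\not\subseteq I_2$ (recall $I_2\cap U^-=U_2^-$). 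The correct characterization is with respect to the affine roots of $\tfrac12\Phi$, i.e.\ only even levels of $\Phi$: the generators of $I_2$ correspond to the minimal even-level affine roots non-negative on $2\overline{\mathfrak{A}}$. This is precisely where your parity remark does the work: since $w_1\in\Omegat\subseteq W\ltimes\Yt$ and $\la\alpha,y\ra\in2\Z$ for $y\in\Yt$, conjugation by $\s(w_1)$ shifts levels by even integers, so it preserves the even-level system and, because $w_1$ stabilizes $2\mathfrak{A}$, permutes that generator set (and fixes $T_0$), giving $\s(w_1)I_2\s(w_1)^{-1}=I_2$. With that correction the argument is complete; note also that an element of $W_{\ea}\smallsetminus\Wt_{\ea}$ stabilizing $2\mathfrak{A}$ would \emph{not} be covered by your literal statement, so the even-level restriction is genuinely needed, not just cosmetic.
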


\begin{proof}
	This generalizes \cite[Lemma 5.4]{Karasiewicz} and can be proved in the same way.
\end{proof}

\begin{Prop}\label{CosetProp}
	Let $g_{1},g_{2},g,\delta\in G$ be such that $\ell_{2}(g_{1}g_{2})=\ell_{2}(g_{1})+\ell_{2}(g_{2})$. Suppose that the coset $\delta I_2$ satisfies the two conditions
	\begin{itemize}
		\item $\delta I_2\subseteq  I_2g_{1} I_2$;
		\item $(\delta I_2)^{-1}g\subseteq  I_2g_{2} I_2$.
	\end{itemize} 
	Then
	\begin{itemize}
		\item $ I_2g I_2=  I_2g_{1}g_{2} I_2$;
		\item $\delta I_2= g_{1} I_2$.
	\end{itemize} 
\end{Prop}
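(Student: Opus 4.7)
The plan is to reduce this proposition to the standard uniqueness of coset factorizations in generalized BN-pair theory, adapted to the non-standard congruence subgroup $I_{2}$. The central auxiliary fact is the following: under the length-additivity $\ell_{2}(g_{1}g_{2}) = \ell_{2}(g_{1}) + \ell_{2}(g_{2})$, the multiplication map
\[
(I_{2} g_{1} I_{2} / I_{2}) \times (I_{2} g_{2} I_{2} / I_{2}) \longrightarrow I_{2} g_{1} g_{2} I_{2} / I_{2}, \qquad (\alpha I_{2},\, \beta I_{2}) \longmapsto \alpha\beta I_{2},
\]
is a bijection; equivalently, $I_{2} g_{1} I_{2} \cdot I_{2} g_{2} I_{2} = I_{2} g_{1} g_{2} I_{2}$ and every $h \in I_{2} g_{1} g_{2} I_{2}$ admits a unique factorization (modulo $I_{2}$) into a pair of left $I_{2}$-cosets drawn from $I_{2} g_{j} I_{2}/I_{2}$. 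I would first establish this auxiliary fact, proving the inclusion $I_{2} g_{1} I_{2} \cdot I_{2} g_{2} I_{2} \subseteq I_{2} g_{1} g_{2} I_{2}$ by induction on $\ell_{2}(g_{2})$ and reducing to the simple reflection generators of $\widetilde{W}_{\af}$, where the statement is classical, while absorbing any length-$0$ contributions through Lemma \ref{Len0elts}. Bijectivity is then forced by the cardinality match $q^{\ell_{2}(g_{1})} \cdot q^{\ell_{2}(g_{2})} = q^{\ell_{2}(g_{1}g_{2})}$ and evident surjectivity.

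Granted the auxiliary fact, the first conclusion is immediate. The first hypothesis gives $\delta \in I_{2} g_{1} I_{2}$, and the second hypothesis $(\delta I_{2})^{-1} g \subseteq I_{2} g_{2} I_{2}$ unfolds to $\delta^{-1} g \in I_{2} g_{2} I_{2}$. Hence
\[
g \;=\; \delta \cdot (\delta^{-1} g) \;\in\; I_{2} g_{1} I_{2} \cdot I_{2} g_{2} I_{2} \;=\; I_{2} g_{1} g_{2} I_{2},
\]
so $I_{2} g I_{2} = I_{2} g_{1} g_{2} I_{2}$.

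For the second conclusion, the pair $(\delta I_{2},\, \delta^{-1} g\, I_{2})$ is a valid preimage of the coset $g I_{2}$ under the bijection above, so by injectivity $\delta I_{2}$ is the unique $I_{2}$-coset in $I_{2} g_{1} I_{2}/I_{2}$ arising in this way. The main obstacle is to then identify this unique coset with $g_{1} I_{2}$: this amounts to exhibiting $(g_{1} I_{2},\, g_{2} I_{2})$ as another valid preimage, equivalently to showing $g_{1} g_{2} I_{2} = g I_{2}$, a strictly stronger statement than the first conclusion $I_{2} g I_{2} = I_{2} g_{1} g_{2} I_{2}$ just proven. I expect this refinement to be supplied in context by the choice of $g$ as the standard representative of its $\widetilde{W}_{\ea}$-double coset (as in the analogous treatment in \cite{Karasiewicz}); once this normalization is in place, the uniqueness of the bijective factorization immediately yields $\delta I_{2} = g_{1} I_{2}$.
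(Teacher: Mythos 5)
There is a genuine gap, and it sits at the heart of your plan. First, your auxiliary ``bijection'' is not well defined: the value $\alpha\beta I_2$ depends on the representative $\alpha$ of the coset $\alpha I_2$ (replacing $\alpha$ by $\alpha i$ with $i\in I_2$ changes $\alpha\beta I_2$ unless $\beta^{-1}i\beta\in I_2$), so the cardinality match cannot ``force bijectivity''; the correct formulation is the convolution identity $\one_{I_2g_1I_2}*\one_{I_2g_2I_2}=\one_{I_2g_1g_2I_2}$, equivalently that each coset $zI_2\subseteq I_2g_1g_2I_2$ admits exactly one coset $\alpha I_2\subseteq I_2g_1I_2$ with $\alpha^{-1}z\in I_2g_2I_2$. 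Second, and more seriously, your proposed proof of the product formula by induction on $\ell_2(g_2)$, ``reducing to the simple reflection generators of $\Wt_{\af}$, where the statement is classical,'' is not available here: $I_2=\Gamma_0(2)$ is not an Iwahori (or even parahoric) subgroup, so the pair $(G,I_2)$ carries no Tits-system structure to quote; the elements $g_1,g_2$ in the proposition are arbitrary, so their double cosets range over all of $W_{\af}$, not over $\Wt_{\af}$; and $\ell_2$ is defined purely by indices, its relation $\ell_2=2\tilde\ell$ on $\Wt_{\ea}$ being itself a nontrivial result (Theorem \ref{ProportionalLengths}) proved through $\underline{\Hcal}$. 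Hence there is no classical base case and no braid-type multiplicativity with which to run your induction. Finally, you correctly sense that the identification $\delta I_2=g_1I_2$ requires a normalization linking $g$ to the actual product $g_1g_2$, but deferring this ``to be supplied in context'' leaves that conclusion unproved in your write-up.

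For comparison, the paper's proof (adapting \cite[Proposition 5.2]{Karasiewicz} and the remark following it) is a direct counting argument in $\underline{\Hcal}$ that needs none of the structure you invoke: the function $f=\one_{I_2g_1I_2}*\one_{I_2g_2I_2}$ is nonnegative-integer valued, bi-$I_2$-invariant, supported in $I_2g_1I_2g_2I_2$, has total mass $q^{\ell_2(g_1)+\ell_2(g_2)}$, and satisfies $f(g_1g_2)\geq 1$; since length additivity makes that mass equal to $\vol(I_2g_1g_2I_2)$, one concludes $f=\one_{I_2g_1g_2I_2}$. Both conclusions are then read off: your hypotheses say $f(g)\geq 1$, forcing $I_2gI_2=I_2g_1g_2I_2$, and $f(g)=1$ gives uniqueness of the coset $\delta I_2$, with the identification $\delta I_2=g_1I_2$ handled by the cited remark (in effect the normalization you flagged). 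So the overall strategy of ``unique factorization of cosets under length additivity'' is the right intuition, but your route to it has to be replaced by the mass-count (or an equivalent argument avoiding any appeal to a BN-pair for $I_2$), and the second conclusion needs to be addressed rather than assumed.
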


\begin{proof}
	This follows by adapting the proof of \cite[Proposition 5.2]{Karasiewicz} and the subsequent remark. We note that this proof uses $\underline{\Hcal}$.
\end{proof}

\begin{Thm}\label{ProportionalLengths}
	Let $w\in \widetilde{W}_{\ea}$. Then
	\begin{equation*}
		\ell_{2}(w)=2\tilde{\ell}(w).
	\end{equation*}
\end{Thm}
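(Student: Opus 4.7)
The strategy is to verify $\ell_{2} = 2\tilde{\ell}$ on a generating set of $\widetilde{W}_{\ea}$ and extend by induction on $\tilde{\ell}$.

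First, if $\omega \in \Omegat$, then $\tilde{\ell}(\omega) = 0$ by definition, while Lemma \ref{Len0elts} yields that a representative of $\omega$ normalizes $I_{2}$, so $\ell_{2}(\omega) = 0$. Hence the formula holds on $\Omegat$.

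Second, for each simple generator $s$ of $\Wt_{\af}$ (namely $s = w_{\alpha}$ for $\alpha \in \Delta$, or $s = w_{\af} = w_{\alpha_{0},-2}$), I would directly compute $[I_{2} s I_{2} : I_{2}] = [I_{2} : I_{2} \cap s^{-1} I_{2} s]$ using the Iwahori factorization $I_{2} = U_{0}^{+} T_{0} U_{2}^{-}$. For $s = w_{\alpha}$ with $\alpha \in \Delta$, a root-by-root analysis shows that conjugation preserves the contributions $U_{\beta,0}$ and $U_{-\beta,2}$ for $\beta \in \Phi^{+} \smallsetminus \{\alpha\}$ as well as $T_{0}$; the intersection differs from $I_{2}$ only in the $\alpha$-direction, where $U_{\alpha,0}$ is replaced by $U_{\alpha,0} \cap s^{-1} U_{-\alpha,2} s = U_{\alpha,2}$, giving index $[U_{\alpha,0}:U_{\alpha,2}] = q^{2}$. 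For $s = w_{\af}$, a representative in $N_{G}(T)$ of the form $h_{\theta}(\varpi^{2}) w_{\theta}(1)$ (with $\theta = -\alpha_{0}$ the highest root) produces a translation by $2\theta^{\vee}$ composed with $w_{\theta}$; the same kind of calculation, now accounting for the $\varpi^{2}$-shift in the $\theta$-direction, again yields $[I_{2}:I_{2} \cap s^{-1} I_{2} s] = q^{2}$. Hence $\ell_{2}(s) = 2 = 2\tilde{\ell}(s)$ in each case.

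Third, for a general $w \in \Wt_{\ea}$, write $w = \omega s_{1} \cdots s_{k}$ with $\omega \in \Omegat$ and $s_{1} \cdots s_{k}$ a reduced expression in $\Wt_{\af}$ so that $\tilde{\ell}(w) = k$. By Lemma \ref{Len0elts}, $\ell_{2}(w) = \ell_{2}(s_{1}\cdots s_{k})$, reducing the problem to proving $\ell_{2}(s_{1}\cdots s_{k}) = 2k$ by induction on $k$. Granting the inductive hypothesis $\ell_{2}(s_{1}\cdots s_{k-1}) = 2(k-1)$, set $v = s_{1}\cdots s_{k-1}$ and $s = s_{k}$; I would then establish the length-additivity
\[
\ell_{2}(vs) = \ell_{2}(v) + \ell_{2}(s) = 2(k-1) + 2 = 2k
\]
by proving the double-coset identity $I_{2} v I_{2} \cdot I_{2} s I_{2} = I_{2} vs I_{2}$ with multiplicative cardinalities. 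Concretely, I would produce a coset $\delta I_{2} \subseteq I_{2} v I_{2}$ satisfying $(\delta I_{2})^{-1}(vs) \subseteq I_{2} s I_{2}$ and then invoke Proposition \ref{CosetProp}; the coset $\delta$ is furnished by the standard cell attached to the reduced expression $s_{1}\cdots s_{k-1}$ of $v$, using that $\tilde{\ell}(vs) = \tilde{\ell}(v) + 1$ in the Coxeter group $\Wt_{\af}$.

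The main obstacle will be the length-additivity step, namely exhibiting $\delta$ so that Proposition \ref{CosetProp} applies. This amounts to verifying the standard ``Coxeter cell'' behavior for $I_{2}$-double cosets attached to reduced expressions in $\Wt_{\af}$; the argument should mirror Karasiewicz \cite[Section 5]{Karasiewicz} verbatim since it depends only on the Iwahori factorization of $I_{2}$ and the combinatorics of reduced expressions in $\Wt_{\af}$, neither of which is sensitive to the ramification of $F/\Q_{2}$.
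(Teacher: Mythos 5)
Your reductions are fine as far as they go: $\ell_{2}=0$ on $\Omegat$ via Lemma \ref{Len0elts}, the index computation $[I_{2}w_{\alpha}I_{2}:I_{2}]=q^{2}$ for $\alpha\in\Delta$, and the reduction of the general case to length-additivity along reduced expressions in $\Wt_{\af}$. But the step that carries the entire content of the theorem --- $\ell_{2}(vs)=\ell_{2}(v)+\ell_{2}(s)$ whenever $\tilde{\ell}(vs)=\tilde{\ell}(v)+1$ --- is not proved by your plan. Proposition \ref{CosetProp} takes the equality $\ell_{2}(g_{1}g_{2})=\ell_{2}(g_{1})+\ell_{2}(g_{2})$ as a \emph{hypothesis} and concludes only the coset identities $I_{2}gI_{2}=I_{2}g_{1}g_{2}I_{2}$ and $\delta I_{2}=g_{1}I_{2}$; it gives no information about indices, so invoking it to establish additivity is circular. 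Moreover, producing a coset $\delta I_{2}\subseteq I_{2}vI_{2}$ with $(\delta I_{2})^{-1}(vs)\subseteq I_{2}sI_{2}$ is vacuous ($\delta=v$ always works), so nothing is gained there. What actually has to be shown is that the number of left $I_{2}$-cosets multiplies: writing $I_{2}sI_{2}=\bigsqcup_{u}usI_{2}$ with $u$ running over the $q^{2}$ representatives of the affine root subgroup attached to $s$ for the scaled system $\frac{1}{2}\Phi$, one must check both that $vuv^{-1}\in I_{2}$ for these $u$ (positivity of the affine root of $s$ under $v$, using $\tilde{\ell}(vs)>\tilde{\ell}(v)$), which gives $I_{2}vI_{2}sI_{2}=I_{2}vsI_{2}$, and that the resulting cosets $\delta usI_{2}$ are pairwise distinct, so that $[I_{2}vsI_{2}:I_{2}]=[I_{2}vI_{2}:I_{2}]\cdot q^{2}$. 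This counting/structural analysis of $\underline{\Hcal}$ is precisely what the paper's proof delegates to \cite[Theorem 5.5]{Karasiewicz}; your proposal names it as ``the main obstacle'' and then substitutes a tool that presupposes the conclusion, so the key step is missing.

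A secondary point: your representative of $w_{\af}$ is off relative to the conventions in force here. In this paper $w_{\af}$ is represented by $w_{\alpha_{0}}(\varpi^{2})$ (see the proof of Proposition \ref{WeakQuadSimpAff}); the element $h_{\theta}(\varpi^{2})w_{\theta}(1)=w_{\theta}(\varpi^{2})$ lies in a \emph{different} $I_{2}$-double coset (the two differ by $h_{\theta}(-\varpi^{4})\notin T_{0}$), it represents the reflection in $H_{\theta,-2}$, which is not a wall of $2\mathfrak{A}$, and its index is strictly larger than $q^{2}$. For the correct representative $w_{\alpha_{0}}(\varpi^{2})$ the root-by-root computation does give $q^{2}$: conjugation matches every affine root subgroup of $I_{2}$ except in the $-\theta$ direction, where $U_{-\theta,2}$ shrinks to $U_{-\theta,4}$. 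This is easily repaired, but as written the claimed computation for the affine generator is incorrect.
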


\begin{proof}
	This is proved as in \cite[Theorem 5.5]{Karasiewicz} by studying the structure of $\underline{\Hcal}$.
\end{proof}


\subsection{Support: affine simple reflections}\label{SuppAfSimpRel}


Recall that the group $\Wt_{\af}$ is generated by the simple affine reflections $w_{\af}, w_{\alpha_1}, \dots, w_{\alpha_n}$, where $w_{\af}$ is defined as in \eqref{E:w_aff_definition}. In this section, we will show that our Hecke algebra $\Hcal$ is supported on these elements, namely

\begin{Prop} \label{affSupp_H}
	We have
	\begin{equation*}
		\Hcal_{w_{\alpha}}\cong \Hom_{\It\cap{^{w_{\alpha}}\It}}(\,^{w_{\alpha}}\sigma,\;\sigma)\cong \mathbb{C}
	\end{equation*}
    for all $\alpha\in\Delta\cup\{\af\}$.
\end{Prop}

Since we already know $\dim\Hcal_{w_{\alpha}}\leq 1$ (Proposition \ref{HMult1}) and $\Hcal\cong\Hcal_2$ (Proposition \ref{HIsoH2}), and since $\It w_{\alpha}^{-1}\It=\It w_{\alpha}\It$ and so $\Hcal_{w_{\alpha}}=\Hcal_{w_{\alpha}^{-1}}$, we know from Lemma \ref{L:Bushnell_Kutzko_induced} that Proposition \ref{affSupp_H} above is implied by the following proposition.
\begin{Prop} \label{affSupp}
	We have
	\begin{equation*}
		(\Hcal_2)_{w_{\alpha}}\cong \Hom_{\It_2\cap{^{w_{\alpha}}\It_2}}(\,^{w_{\alpha}}\sigma_{2},\;\sigma_{2})\cong \mathbb{C}
	\end{equation*}
    for all $\alpha\in\Delta\cup\{\af\}$.
\end{Prop}

Before we begin the proof we will set up some preliminary results. We focus on the case of $\alpha=\af$. The case of $\alpha\in\Delta$ can be proved similarly. We omit these details because Proposition \ref{affSupp_H} also follows directly from the finite Shimura correspondence. Alternatively, since we know that the $\It$-module $\sigma$ extends to a $\Kt$-module, we may apply the method of \cite{Takeda_Wood} to prove Proposition \ref{affSupp_H} for $\alpha\in\Delta$.

It will be convenient to have a slightly different model for the representation $\sigma_{2}$. Let 
\begin{equation}\label{E:profinite_model_sigma_2}
\sigma_{2}^{p}:=i_{U_{0},U^{-}_{2}}^{\It_2}\tau
\end{equation}
be the $\It_2$-module constructed by parahoric induction, where $\tau$ is the representation $\Tt_{0}$. Here, the superscript $p$ is for ``profinite" as opposed to ``finite". (Recall that $\sigma_{2}$ was first defined as a representation of the {\it finite} group $\Ift_{2}$ via the finite group version of parahoric induction and then inflated to the {\it profinite} group $\It_2$. Here, $\sigma_2^p$ is defined by the profinite version of parahoric induction.) But this is another model of $\sigma_2$. Namely, we have

\begin{Lem}
As $\It_2$-modules 
\[
\sigma_{2}\cong\sigma_{2}^{p}.
\]
\end{Lem}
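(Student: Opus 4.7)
The plan is to show that $\sigma_2^p$ and the inflation of $\sigma_2$ to $\It_2$ both satisfy the defining characterization of parahoric induction from $\tau$, and then invoke the uniqueness statement in Proposition \ref{P:two_functors_for_Iwahori_decomp}(3).

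First, I will note that both representations are irreducible as $\It_2$-modules: for $\sigma_2^p$ this is Proposition \ref{P:two_functors_for_Iwahori_decomp}(1) applied to the Iwahori decomposition $(U_0, \Tt_0, U_2^-)$ of $\It_2$; for $\sigma_2$, the same proposition applied to the finite group $\Ift_2$ with its Iwahori decomposition $(\Uf_0, \Tft_0, \Uf_2^-)$ gives irreducibility over $\Ift_2$, and inflation along the surjection $p\colon \It_2\twoheadrightarrow \Ift_2$ preserves irreducibility.

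Next, I will compute $r_{U_0,U_2^-}^{\It_2}\sigma_2$ and show it is isomorphic to $\tau$ as a $\Tt_0$-module. The subgroup $\s(\Gamma(2e))\cap \It_2$ inherits an Iwahori decomposition $\s(U_{2e})\cdot T_{2e}\cdot \s(U_{2e}^-)$ from that of $\Gamma(2e)$, using that $\s$ is a group homomorphism by Proposition \ref{Gamma12eSplitting}. Hence $p$ restricts to surjections $U_0\twoheadrightarrow \Uf_0$, $\Tt_0\twoheadrightarrow \Tft_0$, and $U_2^-\twoheadrightarrow \Uf_2^-$ with respective kernels $U_{2e}$, $T_{2e}$, and $U_{2e}^-$. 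Because $\sigma_2$ is trivial on $\ker p$, the idempotent $e_{U_0}\in\Hcal(\It_2)$ acts on any vector of $\sigma_2$ via the same operator as $e_{\Uf_0}\in\Hcal(\Ift_2)$, and similarly for $U_2^-$. Therefore
\[
r_{U_0,U_2^-}^{\It_2}\sigma_2 \;\cong\; r_{\Uf_0,\Uf_2^-}^{\Ift_2}\sigma_2 \;\cong\; \tau,
\]
where the second isomorphism is Proposition \ref{P:two_functors_for_Iwahori_decomp}(2) applied to the definition of $\sigma_2$.

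Finally, since $\sigma_2$ is irreducible with nonzero parahoric restriction, Proposition \ref{P:two_functors_for_Iwahori_decomp}(3) gives
\[
\sigma_2 \;\cong\; i_{U_0,U_2^-}^{\It_2}\bigl(r_{U_0,U_2^-}^{\It_2}\sigma_2\bigr) \;\cong\; i_{U_0,U_2^-}^{\It_2}\tau \;=\; \sigma_2^p,
\]
which is the desired isomorphism of $\It_2$-modules. The only nontrivial check is the identification of idempotent actions $e_{U_0}\cdot v = e_{\Uf_0}\cdot v$ on $\sigma_2$, which is essentially bookkeeping once the Iwahori decomposition of $\s(\Gamma(2e))\cap\It_2$ is in hand; I do not anticipate this being a serious obstacle.
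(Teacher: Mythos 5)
Your argument is correct, but it runs in the opposite direction from the paper's proof. The paper starts from $\sigma_2^p$ and shows directly that $\s(\Gamma(2e))$ acts trivially on it: on spanning vectors one inserts the idempotent $e_{\s(\Gamma(2e))}$ between $e_{U_0}$ and $e_{U_2^-}$ (its $U^+_{2e}$-part is absorbed by $e_{U_0}$, its $T_{2e}$-part is killed by $\tau$, its $U^-_{2e}$-part by $e_{U_2^-}$) and then slides it to the far left using the normality of $\s(\Gamma(2e))$ in $\Kt$ (Proposition \ref{P:SplitNormIm}); hence $\sigma_2^p$ descends to $\Ift_2$, where it is by construction the finite parahoric induction $\sigma_2$. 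You instead start from the inflated $\sigma_2$, identify its profinite parahoric restriction with the finite one (legitimate, since the module is trivial on the kernel of $\It_2\twoheadrightarrow\Ift_2$, so $\sigma_2^{U_0}=\sigma_2^{\Uf_0}$, $\sigma_2^{U_2^-}=\sigma_2^{\Uf_2^-}$, and the operators $e_{U_2^-}e_{U_0}$ and $e_{\Uf_2^-}e_{\Uf_0}$ coincide, so Proposition \ref{P:two_functors_for_Iwahori_decomp}(4) gives $r^{\It_2}_{U_0,U_2^-}\sigma_2\cong\tau\neq 0$), and then reconstruct $\sigma_2$ via $i\circ r\cong\mathrm{id}$ on irreducibles with nonzero restriction. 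Your route buys a shorter, more formal argument that leans entirely on the quoted CMO machinery and avoids manipulating $e_{\s(\Gamma(2e))}$; its cost is the finite-versus-profinite compatibility of invariants and idempotent actions, which is exactly the bookkeeping you flag and which rests on the same inputs the paper's computation uses (normality of $\s(\Gamma(2e))$, needed already for $\Ift_2$ to exist, and the Iwahori factorization of $\Gamma(2e)$ compatible with the splitting of Proposition \ref{Gamma12eSplitting}). The paper's computation has the mild advantage of exhibiting the isomorphism concretely and making visible precisely where normality of the splitting image enters.
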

\begin{proof}
This is a consequence of the following identity in $\sigma_{2}^{p}$
    \begin{equation*}
        f(g)e_{U_{0}}e_{U^{-}_{2}}\otimes_{\Tt_{0}}w=f(g)e_{U_{0}}e_{\textbf{s}(\Gamma_{2e})}e_{U^{-}_{2}}\otimes_{\Tt_{0}}w=f(g)e_{\textbf{s}(\Gamma_{2e})}e_{U_{0}}e_{U^{-}_{2}}\otimes_{\Tt_{0}}w,
    \end{equation*}
where $w\in\tau$. The main input needed for this identity is Proposition \ref{P:SplitNormIm} \eqref{Gam2eSplitNorm}.
\end{proof}

Hence by abuse of notation, we often denote this profinite model simply by $\sigma_2$

\begin{Rmk}
The reason for switching to this profinite model is that conjugation by $w_{\af}$ does not stabilize $\Gamma(2e)$ and thus does not interact well with the quotients by $\s(\Gamma(2e))$ that we used previously.
\end{Rmk}

\begin{proof}[Proof of Theorem \ref{affSupp}] 
As mentioned above, we only prove the proposition for $\alpha=\af$. Let
\[
I_{2}^{\af}:= I_2\cap \,^{w_{\af}} I_2\qand\sigma_{2}^{\af}:=i_{U^{-}_{2}\cap I_2^{\af},U_{0}}^{ I_2^{\af}}\tau.
\]
Since $\tau$ is $W$-invariant and $Z(\Tt)$-invariant, Proposition \ref{TwistAut} implies
\[
\,^{w_{\af}}\sigma_{2}^{\af}\cong \sigma_{2}^{\af}.
\]
We also know by Proposition \ref{P:two_functions_are_adjoint} and Lemma \ref{L:IndRes2x} that
\[
\Hom_{\It_2^{\af}}(\sigma_{2}^{\af},\sigma_{2})\cong \mathbb{C}.
\]

Hence, we can identify $\sigma_{2}^{\af}$ with the $\sigma_{2}^{\af}$-isotypic subspace of $\sigma_{2}$, and this identification is unique up to nonzero scaling. Let $A:\sigma_{2}^{\af}\hookrightarrow \sigma_{2}$ be the canonical $\It_2^{\af}$-inclusion of this isotypic subspace. Let $C:\sigma_{2}\twoheadrightarrow \sigma_{2}^{\af}$ be the canonical $\It_2^{\af}$-projection onto this isotypic subspace. Let $\,^{w_{\af}}C:\,^{w_{\af}}\sigma_{2}\twoheadrightarrow \,^{w_{\af}}\sigma_{2}^{\af}$ be the $\It_2^{\af}$-homomorphism obtained by twisting $C$ by $w_{\af}$. Note that $\,^{w_{\af}}C=C$ as elements of $\End(\sigma_{2})$. Let $B:\,^{w_{\af}}\sigma_{2}^{\af}\rightarrow \sigma_{2}^{\af}$ be an $\It_2^{\af}$-isomorphism, which is also unique up to nonzero scaling.

Now by construction $A\circ B\circ C\in\Hom_{\It_2^{\af}}(\,^{w_{\alpha}}\sigma_{2},\sigma_{2})$ and this map is nonzero. (We note though that this map may not be an isomorphism.)
\end{proof}

\begin{Rmk}
    Suppose that $e\geq 2$. Since $U_{\alpha_{0},2e}$ acts trivially on $\sigma_{2}^{\af}$ and $\,^{w_{\af}}\sigma_{2}^{\af}\cong \sigma_{2}^{\af}$ the canonical image of $\sigma_{2}^{\af}$ in $\sigma_{2}$ actually must be contained in $\sigma_{2}^{U_{-\alpha_{0},2e-4}}$. (When $e=1$ the group $U_{-\alpha_{0},2e-4}$ is not contained in $I_{2}$, but $U_{-\alpha_{0},0}$ already acts trivially on $\sigma_{2}$.)
    
\end{Rmk}

\subsection{Support: length $0$ elements}\label{SuppLen0}


In this subsection we show that $\Hcal_{2}$ is supported on the length $0$ elements $\Omegat\subseteq \Wt_{\ea}$. Let us first mention
\begin{Lem}\label{L:lenght_0_element_representative}
    The elements of $\Omegat$ are represented by elements in $N_{\Gt}(\Tt)\cap N_{\Gt}(\It_2)$.
\end{Lem}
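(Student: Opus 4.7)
The plan is to exhibit an explicit representative of each $\omega \in \Omegat$ inside $N_{\Gt}(\Tt) \cap N_{\Gt}(\It_2)$. Writing $\omega = (w, y) \in W \ltimes \Yt$, I would fix any lift $\dot w \in N_G(T)$ of $w$ and set $\dot\omega := y(\varpi)\dot w$; this visibly represents $\omega$ under the natural map $N_G(T) \to W_{\ea}$. Since $\mu_2 \subseteq Z(\Gt)$ and $\pr$ is surjective, any lift $\widetilde{\dot\omega} \in \Gt$ of $\dot\omega$ lies in $N_{\Gt}(\Tt) = \pr^{-1}(N_G(T))$, and it lies in $N_{\Gt}(\It_2)$ exactly when $\dot\omega$ normalizes $I_2$. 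So the entire problem reduces to showing $\dot\omega \in N_G(I_2)$.

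Since $I_2$ is generated by $T_0$ (trivially normalized) together with the root subgroups $U_{\alpha, 0}$ for $\alpha \in \Phi^+$ and $U_{\alpha, 2}$ for $\alpha \in \Phi^-$, it suffices to check the normalization on these generators. A direct Steinberg-relations computation, using both the Weyl action $\dot w x_\alpha(u) \dot w^{-1} = x_{w\alpha}(\pm u)$ and the torus-conjugation formula $y(\varpi) x_\alpha(u) y(\varpi)^{-1} = x_\alpha(\varpi^{\la \alpha, y\ra} u)$, yields
\[
\dot\omega \, U_{\alpha, c} \, \dot\omega^{-1} \;=\; U_{w\alpha,\; c + \la w\alpha, y\ra}
\]
for all $\alpha \in \Phi$ and $c \in \Z$. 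Because $y \in \Yt$, the exponent $c + \la w\alpha, y\ra$ has the same parity as $c$. This formula is precisely the canonical action of $(w, y) \in \Wt_{\ea}$ on the affine root $\alpha + c$ of the affine root system $\frac{1}{2}\Phi$.

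The key observation I would then invoke is that the chosen generators of $I_2$ correspond exactly to the positive affine roots of $\frac{1}{2}\Phi$ relative to the alcove $2\mathfrak{A}$: for $\alpha \in \Phi^+$ the affine root $\alpha + 0$ is positive on $2\mathfrak{A}$, and for $\alpha \in \Phi^-$ the affine root $\alpha + 2$ is positive on $2\mathfrak{A}$ because $\alpha(v) > -2$ there. By the very definition of $\Omegat$ as the subgroup of $\Wt_{\ea}$ preserving $2\mathfrak{A}$, the element $(w, y)$ permutes the positive affine roots of $\frac{1}{2}\Phi$. Hence conjugation by $\dot\omega$ sends each generator of $I_2$ into another root subgroup contained in $I_2$, and applying the same argument to $\omega^{-1} \in \Omegat$ yields the reverse inclusion.

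The main bookkeeping hurdle is reconciling sign and direction conventions, namely matching the conjugation formula above with the correct convention for the action of $\Wt_{\ea}$ on the apartment $Y \otimes \R$ (for instance, whether $y(\varpi)$ translates by $+y$ or $-y$), so that the displayed identity is genuinely the canonical affine Weyl action on affine roots. Once these conventions are fixed consistently, the translation of ``$(w, y)$ preserves $2\mathfrak{A}$'' into ``$(w, y)$ permutes the positive affine roots of $\frac{1}{2}\Phi$'' is a mechanical unraveling of the length-zero condition in the Iwahori--Matsumoto formula recalled in~\S\ref{SS:affine_Weyl_covering_group}.
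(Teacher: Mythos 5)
Your argument is correct, but it takes a different route from the paper. The paper disposes of this lemma in one line by combining Lemma~\ref{Len0elts} (adapted from Karasiewicz's analysis of the linear Hecke algebra $C_c^\infty(I_2\backslash G/I_2)$, which already asserts $\s(w_1)\in N_G(I_2)$ for $w_1\in\Omegat$) with Theorem~\ref{ProportionalLengths} (which identifies $\Omegat$ with the $\ell_2$-length-zero elements), so within this paper the lemma is a consequence of machinery set up in \S\ref{SS:LinearHecke} for the support computations. You instead give a self-contained proof: an explicit representative $y(\varpi)\dot w$, the Steinberg/Chevalley conjugation formula $U_{\alpha,c}\mapsto U_{w\alpha,\,c\pm\la w\alpha,y\ra}$, the observation that $I_2=\Gamma_0(2)$ is generated by $T_0$ together with the root subgroups attached to the minimal affine roots of $\tfrac12\Phi$ that are positive on $2\mathfrak{A}$, and the defining property that $\Omegat$ preserves $2\mathfrak{A}$ and hence the set of positive affine roots; running the same argument for $\omega^{-1}$ upgrades the inclusion to equality, and pulling back through the central extension is harmless since $\pr^{-1}(N_G(T)\cap N_G(I_2))\subseteq N_{\Gt}(\Tt)\cap N_{\Gt}(\It_2)$. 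This is essentially a reproof, in this special case, of the content behind the cited Lemma~\ref{Len0elts}, and it buys transparency and independence from the $\ell_2$-machinery at the cost of redoing work the paper reuses. The sign issue you flag is real but harmless: depending on whether $y(\varpi)$ is normalized to translate by $+y$ or $-y$, conjugation by your representative implements the action of $\omega$ or of $\omega^{-1}$ on affine roots, but since $\Omegat$ is a group and both $\omega^{\pm1}$ preserve $2\mathfrak{A}$, the conclusion that the generators of $I_2$ are carried into $I_2$ stands either way.
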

\begin{proof}
This readily follows from  Lemma \ref{Len0elts} and Theorem \ref{ProportionalLengths}.
\end{proof}

We then have

\begin{Prop}\label{Len0Supp}
	For each $n\in N_{\Gt}(\Tt)\cap N_{\Gt}(\It_2)$, we have
	\begin{equation*}
		(\Hcal_{2})_{n}\cong\Hom_{\It_2\cap{^{n}\It_{2}}}(\,^{n}\sigma_{2},\sigma_{2})\cong \mathbb{C}\qand \Hcal_n\cong\Hom_{\It\cap{^n\It}}(\,^n\sigma, \sigma)\cong\C.
	\end{equation*}
    Hence both $\Hcal_2$ and $\Hcal$ are supported on the length 0 elements.
\end{Prop}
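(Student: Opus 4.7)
My plan is to establish the $\Hcal_2$-statement first and then derive the $\Hcal$-statement via the isomorphism $\Hcal\cong\Hcal_2$. By Lemma \ref{L:Bushnell_Kutzko_equivalence}, both Hom spaces in the proposition are canonically identified with the corresponding $n$-graded pieces of the respective Hecke algebras. Since $n\in N_{\Gt}(\It_2)$, the intersection $\It_2\cap{^n\It_2}$ equals $\It_2$, so showing $(\Hcal_2)_n\cong\C$ reduces to proving the $\It_2$-module isomorphism $\,^n\sigma_2\cong\sigma_2$, as $\sigma_2$ is irreducible by Proposition \ref{P:two_functors_for_Iwahori_decomp}.

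To obtain this isomorphism I would apply Proposition \ref{TwistAut} to $\phi_n:=\Int(n)\in\Aut(\It_2)$, which preserves $\Tt_0=\Tt\cap\It_2$ since $n\in N_{\Gt}(\Tt)$. The first hypothesis $\,^{\phi_n}\tau\cong\tau$ follows by writing $n$ modulo $\Tt_0$ as $\yt(\varpi)\tilde w$ with $\tilde w\in\Wcal$ and $y\in\Yt$; this decomposition is available because the quotient $(N_{\Gt}(\Tt)\cap N_{\Gt}(\It_2))/\Tt_0$ is identified with $\Omegat$ (via Lemma \ref{L:lenght_0_element_representative} together with the fact that an element normalizing $\It_2$ must stabilize the alcove $2\mathfrak{A}$ for $\tfrac{1}{2}\Phi$). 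Conjugation by $\tilde w$ preserves $\tau$ by the Weyl-invariance of Theorem \ref{T:PseudoT0}, conjugation by $\yt(\varpi)$ preserves $\tau$ by Lemma \ref{L:tau_twist} (applicable because $y\in\Yt$), and conjugation by $\Tt_0$ is an inner automorphism, so $\,^{\phi_n}\tau\cong\tau$.

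For the second hypothesis I need to verify compatibility of the Iwahori factorizations $(U_0,\Tt_0,U_2^-)$ and $(\phi_n(U_0),\Tt_0,\phi_n(U_2^-))$ of $\It_2$. A direct computation using $n=\yt(\varpi)\tilde w$ yields $nU_{\alpha,k}n^{-1}=U_{w(\alpha),k+\langle w(\alpha),y\rangle}$ up to Chevalley sign, so $\phi_n$ permutes the affine root subgroups contained in $\It_2$. The requirement that $\phi_n(U_{\alpha,0})$ and $\phi_n^{-1}(U_{\alpha,0})$ both lie in $\It_2$ for every $\alpha\in\Phi^+$ (and analogously for $\alpha\in\Phi^-$ at level $2$) pins the valuation shifts $\langle w(\alpha),y\rangle$ into $\{0,\pm 2\}$ according to the sign of $w^{\pm 1}(\alpha)$. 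From this one reads off that each generator $U_{\alpha,0}$ of $U_0$ lies wholly in either $\phi_n(U_0)$ (when $w^{-1}(\alpha)\in\Phi^+$) or $\phi_n(U_2^-)$ (when $w^{-1}(\alpha)\in\Phi^-$), and similarly for the generators of $U_2^-$, $\phi_n(U_0)$, and $\phi_n(U_2^-)$. The four compatibility identities $U_0=(U_0\cap\phi_n(U_0))(U_0\cap\phi_n(U_2^-))$ etc.\ follow, Proposition \ref{TwistAut} applies, and $\,^n\sigma_2\cong\sigma_2$.

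Finally, for the $\Hcal$-statement: Proposition \ref{HMult1} gives $\dim\Hcal_n\leq 1$, while the algebra isomorphism $\Hcal\cong\Hcal_2$ of Proposition \ref{HIsoH2}, combined with Lemma \ref{L:Bushnell_Kutzko_induced}, decomposes $\Hcal_{InI}$ as the direct sum of the spaces $(\Hcal_2)_{\It_2 h\It_2}$ over $\It_2$-double cosets inside $InI$; since $(\Hcal_2)_n\cong\C$ appears as a nonzero summand, $\dim\Hcal_n\geq 1$, and the two bounds give equality. The hardest part of the argument will be the compatibility verification in the third paragraph: although it follows conceptually from the interpretation of $\Omegat$ as diagram automorphisms of the affine Dynkin diagram for $\tfrac{1}{2}\Phi$ stabilizing $2\mathfrak{A}$, the precise matching of the normalization constraints on $\langle w(\alpha),y\rangle$ with the length-$0$ conditions requires some careful root-by-root bookkeeping.
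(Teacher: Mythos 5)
Your proposal is correct and follows essentially the same route as the paper: both reduce the $\Hcal_2$-statement to the isomorphism $\,^{n}\sigma_{2}\cong\sigma_{2}$ via Proposition \ref{TwistAut}, verifying the first hypothesis by the $W$- and $\Yt$-invariance of $\tau$ (Theorem \ref{T:PseudoT0} and Lemma \ref{L:tau_twist}) and the second by a variant of Lemma \ref{WCompat}, and then deduce the $\Hcal$-statement from Propositions \ref{HMult1}, \ref{HIsoH2} and Lemma \ref{L:Bushnell_Kutzko_induced}. The only point the paper makes explicit that you leave implicit is that one must use the profinite model $\sigma_2^{p}=i_{U_{0},U_{2}^{-}}^{\It_2}\tau$ of $\sigma_2$ (conjugation by $n$ does not stabilize $\Gamma(2e)$, so the finite-quotient picture is unavailable); since you apply Proposition \ref{TwistAut} directly to $\It_2$ with its Iwahori decomposition, this is built into your argument.
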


\begin{proof}
For the proof, we need to use the profinite model \eqref{E:profinite_model_sigma_2} for $\sigma_2$. Then the argument for $\Hcal_2$ is similar to Proposition \ref{IRep} \eqref{SigAWInv} along with the observation that modulo $\Tt_{0}$ the element $n$ represents an element of $\Omegat\subseteq \widetilde{W}_{\ea}\cong W\ltimes \Yt$. Again we apply Proposition \ref{TwistAut}. The first bullet is satisfied because $\tau$ is $W$-invariant and $\widetilde{Y}$-invariant. The second bullet is satisfied by a minor variant of Lemma \ref{WCompat}. The assertion for $\Hcal$ follows from Proposition \ref{HMult1}, Proposition \ref{HIsoH2} and Lemma \ref{L:Bushnell_Kutzko_induced}.
\end{proof}


\subsection{Quadratic relations}\label{SS:QuadRel}


In this subsection we prove that the elements of $\Hcal$ constructed in \S \ref{SuppAfSimpRel} and \ref{SuppLen0} satisfy quadratic relations. Given our current tool set, we must consider several cases. We begin with the quadratic relations for the elements corresponding to simple linear reflections. For these elements we work with the Iwahori model $\Hcal$.

Let $\alpha\in \Delta$. By Lemma \ref{affSupp} we know that $\Hcal_{w_{\alpha}}\cong \mathbb{C}$. We need to identify a suitably normalized generator of this one dimensional vector space. To do this we will use the finite Shimura correspondence.

Let $\Hcal_{\Kt}\subseteq\Hcal$ be the subalgebra of functions supported on $\Kt$. Pulling back along the surjective group homomorphism $\Kt\twoheadrightarrow \widetilde{{\Kf}}$ defines a $\mathbb{C}$-algebra isomorphism $\Hcal(\widetilde{{\Kf}},\widetilde{{\If}},\sigma)\stackrel{\sim}{\rightarrow} \Hcal_{\Kt}$ such that it is support-preserving with respect to the surjection $\Kt\twoheadrightarrow \widetilde{{\Kf}}$. Now we can apply the finite Shimura correspondence of Hecke algebras to describe the structure of $\Hcal(\widetilde{{\Kf}},\widetilde{{\If}},\sigma)$. We make this precise in the next proposition.

\begin{Prop}\label{QuadBraidSimpLinReflect}
	For every $\alpha\in\Delta$ there exists $T_{\alpha}\in \Hcal_{w_{\alpha}}$ such that the follow identities hold. For all $\alpha,\beta\in \Delta$ we have 
	\begin{enumerate}
		\item $T_{\alpha}^{2}=(q-1)T_{\alpha}+q$;\label{QuadSimpLin}
		\item if $\langle\alpha,\beta\rangle=0$, then $T_{\alpha}T_{\beta}=T_{\beta}T_{\alpha}$;\label{CommSimpLin}
		\item if $\langle\alpha,\beta\rangle=-1$, then $T_{\alpha}T_{\beta}T_{\alpha}=T_{\beta}T_{\alpha}T_{\beta}$.\label{BraidSimpLin}
	\end{enumerate}
In particular, for each $\alpha\in\Delta$, the element $T_{\alpha}$ is invertible.
\end{Prop}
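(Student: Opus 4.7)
The plan is to transfer the classical Iwahori--Matsumoto presentation of the finite group Iwahori--Hecke algebra to $\Hcal_{\Kt}$ via the finite Shimura correspondence. Let $\Hcal_{\Kt}\subseteq\Hcal$ denote the subalgebra of functions supported on $\Kt$. As observed in the setup preceding the proposition, pullback along $\Kt\twoheadrightarrow\widetilde{\Kf}$ gives a support-preserving $\C$-algebra isomorphism $\Hcal(\widetilde{\Kf},\widetilde{\If},\sigma)\xrightarrow{\sim}\Hcal_{\Kt}$. Composing with the support-preserving isomorphism of Corollary~\ref{ParaHeckeIso} (applied with $P_\kappa=B_\kappa$ and $\pi=\one$), I obtain a support-preserving $\C$-algebra isomorphism
\[
\Phi:\Hcal(G_\kappa,B_\kappa;\one)\xrightarrow{\sim}\Hcal_{\Kt}\subseteq\Hcal.
\]

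Next I would invoke the standard presentation of the finite Iwahori--Hecke algebra $\Hcal(G_\kappa,B_\kappa;\one)$: it has a $\C$-basis $\{T_w^\circ\}_{w\in W}$, where $T_w^\circ$ is supported on $B_\kappa w B_\kappa$, and the generators $T_\alpha^\circ:=T_{w_\alpha}^\circ$ for $\alpha\in\Delta$ satisfy the quadratic relation $(T_\alpha^\circ)^2=(q-1)T_\alpha^\circ+q$ with $q=|\kappa|=2^f$, together with the commutation and braid relations recorded in (1)--(3). (These are the classical Iwahori--Matsumoto relations, visible in the Howlett--Lehrer framework cited after Corollary~\ref{ParaHeckeIso}.) I then define $T_\alpha:=\Phi(T_\alpha^\circ)$. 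Because $\Phi$ is support-preserving and $w_\alpha\in\Kt$ reduces to the corresponding simple reflection of $G_\kappa$, the support of $T_\alpha$ is contained in $\It w_\alpha\It$; since $\Hcal_{w_\alpha}$ is one-dimensional by Proposition~\ref{affSupp_H}, this forces $T_\alpha\in\Hcal_{w_\alpha}\setminus\{0\}$. The quadratic, commutation and braid relations transfer directly from $T_\alpha^\circ$ to $T_\alpha$ since $\Phi$ is a $\C$-algebra homomorphism, and invertibility follows at once from the quadratic relation via $T_\alpha^{-1}=q^{-1}\bigl(T_\alpha-(q-1)\bigr)$.

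There is essentially no obstacle here beyond verifying that all the scaffolding slots together: the finite Shimura correspondence is already in place (Theorem~\ref{T:finite_Hecke_algebra_isom}, Corollary~\ref{ParaHeckeIso}), the support-preserving property is built into that result, the one-dimensionality of $\Hcal_{w_\alpha}$ is Proposition~\ref{affSupp_H}, and the presentation of $\Hcal(G_\kappa,B_\kappa;\one)$ is classical. The only point to double-check is the normalization of $q$: since $\Phi$ is an honest algebra isomorphism (not twisted by a character), the parameter in the quadratic relation is precisely $[B_\kappa w_\alpha B_\kappa:B_\kappa]=q$, matching the statement.
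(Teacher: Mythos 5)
Your argument is correct and is essentially the paper's own proof: you transfer Iwahori's classical presentation of $\Hcal(G_\kappa,B_\kappa;\one)$ through the support-preserving isomorphisms $\Hcal(G_\kappa,B_\kappa;\one)\cong\Hcal(\widetilde{\Kf},\widetilde{\If},\sigma)\cong\Hcal_{\Kt}$ from Corollary \ref{ParaHeckeIso}, exactly as the paper does. The only cosmetic difference is your extra appeal to Proposition \ref{affSupp_H}, which is not needed since support preservation already places $T_\alpha$ in $\Hcal_{w_\alpha}$.
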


\begin{proof}
	Corollary \ref{ParaHeckeIso} describes a support preserving isomorphism 
	\begin{equation*}
		\Hcal(\widetilde{{\Kf}},\widetilde{{\If}},\sigma)\cong \Hcal(G_{\kappa},B_{\kappa},1).
	\end{equation*}
	Iwahori \cite{I64} proved that for each $\alpha\in\Delta$ there exists a unique element $t_{\alpha}\in \Hcal(G_{\kappa},B_{\kappa},1)_{w_{\alpha}}$ satisfying the identities \eqref{QuadSimpLin}, \eqref{CommSimpLin}, \eqref{BraidSimpLin}. For each $\alpha\in\Delta$, define $T_{\alpha}\in \Hcal_{\Kt}$ to be the image of $t_{\alpha}$ under the chain of algebra isomorphisms $\Hcal_{\Kt}\cong\Hcal(\widetilde{{\Kf}},\widetilde{{\If}},\sigma)\cong \Hcal(G_{\kappa},B_{\kappa},1)$ prescribed above. As the two isomorphisms preserve support, it follows that $T_{\alpha}\in \Hcal_{w_{\alpha}}$.
\end{proof}

It remains to consider the simple affine reflection $w_{\af}$. Here it will be convenient to work with the $I_2$-model $\Hcal_{2}$. Let $T_{\af}\in (\Hcal_{2})_{w_{\af}}$ be any nonzero element. First we will prove that $T_{\af}$ satisfies a weak quadratic relation, which will be valid for all Cartan types. Then we will refine this in types $A_r,D_r,E_{6}, E_{7}$, by normalizing $T_{\af}$ so that it satisfies the same quadratic relation that appears in Proposition \ref{QuadBraidSimpLinReflect}. The same quadratic relation is expected to hold in type $E_{8}$, but we are not able to prove it in this paper. We will make some comments about this at the end of this subsection.

\begin{Prop}\label{WeakQuadSimpAff}
	Let $T_{\af}\in (\Hcal_{2})_{w_{\af}}$ be nonzero. Then there exists $a,b\in\mathbb{C}$ with $b\neq 0$ such that
	\begin{equation*}
		T_{\af}^{2}=aT_{\af}+b.
	\end{equation*}
        In particular, $T_{\af}$ is invertible.
\end{Prop}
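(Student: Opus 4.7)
The plan is to prove the quadratic relation in two steps: first constrain the support of $T_{\af}^2$, then show the coefficient of the identity is nonzero. For the support step, the convolution $T_{\af}^2=T_{\af}*T_{\af}$ is supported on the double-coset product $\It_2 w_{\af}\It_2\cdot\It_2 w_{\af}\It_2$. Since $(\Gt,\It_2,N_{\Gt}(\Tt),\Wt_{\af})$ carries a BN-pair-type structure, with $\It_2$ playing the role of the Iwahori for the rescaled root system $\tfrac{1}{2}\Phi$ and the alcove $2\mathfrak{A}$ (see \S\ref{SS:LinearHecke}, and in particular Proposition \ref{CosetProp} and Theorem \ref{ProportionalLengths}), and $w_{\af}$ is a simple reflection in this structure, the standard Iwahori-Matsumoto double-coset relation applies. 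Combined with $w_{\af}^2=(-1,-1)\,\htt_{\alpha_0}(-1)\in\Tt_0\subseteq\It_2$ from \eqref{E:square_of_w}, this yields
\[
\It_2 w_{\af}\It_2\cdot\It_2 w_{\af}\It_2\subseteq\It_2 w_{\af}^2\It_2\cup\It_2 w_{\af}\It_2=\It_2\cup\It_2 w_{\af}\It_2.
\]
Proposition \ref{HMult1} (transported to $\Hcal_2$ via Proposition \ref{HIsoH2}) then gives $\dim(\Hcal_2)_g\leq 1$ for each $g$, so there exist $a,b\in\mathbb{C}$ with $T_{\af}^2=aT_{\af}+b\mathbf{1}$, where $\mathbf{1}\in\Hcal_2$ is the unit (supported on $\It_2$ with value $\sigma_2^\vee(k)$ on $k\in\It_2$).

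For the nonvanishing step, I would equip $\Hcal_2$ with an involution. Since $\It_2$ is compact, $\sigma_2^\vee$ admits an $\It_2$-invariant Hermitian inner product; write $(\cdot)^\dagger$ for the resulting adjoint on $\End_{\mathbb{C}}(\sigma_2^\vee)$, and set $f^*(g):=f(g^{-1})^\dagger$. A direct calculation, using unimodularity of $\Gt$ (inherited from the unimodularity of $G$ via the compact kernel $\mu_2$), shows that $*$ is an anti-involution of $\Hcal_2$. Because $w_{\af}^{-1}=w_{\af}\cdot w_{\af}^{-2}$ with $w_{\af}^{-2}\in\Tt_0\subseteq\It_2$, we have $\It_2 w_{\af}^{-1}\It_2=\It_2 w_{\af}\It_2$, so $T_{\af}^*$ lies in the one-dimensional space $(\Hcal_2)_{w_{\af}}$ from Proposition \ref{affSupp}; hence $T_{\af}^*=cT_{\af}$ for some $c\in\mathbb{C}^\times$. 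Evaluating at $g=1$ and substituting $y=x^{-1}$,
\[
(T_{\af}^**T_{\af})(1)=\int_{\Gt}T_{\af}(y)^\dagger T_{\af}(y)\,dy,
\]
which is a positive semi-definite endomorphism of $\sigma_2^\vee$ that is nonzero because $T_{\af}$ is nonzero, locally constant, and compactly supported. On the other hand, $T_{\af}(1)=0$ (as $1\notin\It_2 w_{\af}\It_2$) and $\mathbf{1}(1)=\mathrm{id}_{\sigma_2^\vee}$, so the same quantity equals $c(aT_{\af}(1)+b\,\mathbf{1}(1))=cb\cdot\mathrm{id}_{\sigma_2^\vee}$. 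Thus $cb\ne 0$, forcing $b\ne 0$. Invertibility of $T_{\af}$ follows from $T_{\af}(T_{\af}-a\mathbf{1})=b\mathbf{1}$, giving $T_{\af}^{-1}=b^{-1}(T_{\af}-a\mathbf{1})$.

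The main obstacle is the support step: since $\It_2$ is the stabilizer of $2\mathfrak{A}$ rather than $\mathfrak{A}$, we cannot directly quote the classical Bruhat-Tits Iwahori-Matsumoto relation for $G$ with its standard Iwahori $I$. Instead, we must rely on the BN-pair-type combinatorics for $(G,I_2,\Wt_{\af})$ recorded in the linear Hecke algebra $\underline{\Hcal}$ of \S\ref{SS:LinearHecke}, which is set up precisely to handle this rescaled situation. The positivity step is routine once the involution is in place, though care is needed to verify that $*$ preserves the biequivariance condition defining $\Hcal_2$ and that $T_{\af}^*$ lies in the same double coset as $T_{\af}$.
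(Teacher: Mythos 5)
Your support step has a genuine gap. You assert that $(G,\It_2,\Wt_{\af})$ carries a BN-pair-type structure in which $w_{\af}$ is a simple reflection, so that the classical Iwahori--Matsumoto coset relation gives $\It_2 w_{\af}\It_2\cdot\It_2 w_{\af}\It_2\subseteq\It_2\cup\It_2 w_{\af}\It_2$. No such structure is available: $I_2=\Gamma_0(2)$ is not an Iwahori (or even parahoric) subgroup, and its double cosets in $G$ are strictly finer than $W_{\af}$, so the BN-pair axiom you are invoking simply is not there. In fact the claimed set-level containment is false. A direct computation with the Steinberg relations (this is how the paper proceeds) gives
\[
\It_2\,\wt_{\alpha_0}(\varpi^2)\,\It_2\,\wt_{\alpha_0}(\varpi^2)\,\It_2=\It_2\,U_{-\alpha_0,-2}\,\It_2=\It_2\;\cup\;\It_2\,\wt_{\alpha_0}(\varpi)\,\It_2\;\cup\;\It_2\,\wt_{\alpha_0}(\varpi^2)\,\It_2,
\]
and the middle double coset $\It_2\,\wt_{\alpha_0}(\varpi)\,\It_2$ is genuinely hit; it lies over an element of $W_{\af}$ outside $\Wt_{\ea}$, so it is disjoint from $\It_2$ and from $\It_2 w_{\af}\It_2$. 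The results of \S\ref{SS:LinearHecke} that you cite do not rescue this: Proposition \ref{CosetProp} and Theorem \ref{ProportionalLengths} only control products where the lengths add, which is exactly what fails for $w_{\af}\cdot w_{\af}$. What kills the middle coset is not combinatorics but genuineness: by Proposition \ref{HVanish} (transported via the support-preserving isomorphism $\Hcal\cong\Hcal_2$), every element of the genuine Hecke algebra vanishes on cosets represented by $\yt(\varpi)w$ with $y\in Y\smallsetminus\Yt$, and $\wt_{\alpha_0}(\varpi)$ is such an element. Without this ingredient your conclusion $T_{\af}^2=aT_{\af}+b$ is unjustified.

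Your second step, proving $b\neq0$, is correct and is a genuinely different route from the paper's. The paper computes $T_{\af}^2(1)$ explicitly, using the factorization $T_{\af}(w_{\af})=A\circ B\circ C$ from the proof of Proposition \ref{affSupp} together with $w_{\af}^{-2}=\htt_{\alpha_0}(-1)$, and shows the trace is nonzero. Your argument instead equips $\Hcal_2$ with the anti-involution $f^*(g)=f(g^{-1})^\dagger$ coming from an $\It_2$-invariant inner product, uses $\It_2 w_{\af}^{-1}\It_2=\It_2 w_{\af}\It_2$ and one-dimensionality of $(\Hcal_2)_{w_{\af}}$ to get $T_{\af}^*=cT_{\af}$, and then reads off $cb\,\mathrm{id}=(T_{\af}^**T_{\af})(1)=\int T_{\af}(y)^\dagger T_{\af}(y)\,dy\neq0$ by positivity. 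This is cleaner and avoids unwinding the structure of the intertwiner, at the modest cost of checking that $*$ preserves the biequivariance and genuineness conditions (it does, since $\sigma_2^\vee$ can be taken unitary and $\mu_2$ acts by $\pm1$). So: repair the support step by replacing the BN-pair claim with the Steinberg-relation computation plus Proposition \ref{HVanish}, and your proof goes through.
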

 
\begin{proof}
In this proof we will occasionally abuse notation and write $w_{\af}$ for the element $\wt_{\alpha_0}(\varpi^2)\in N_{\Gt}(\Tt)$.
 
By the definition of convolution and the Steinberg relations, 
	\begin{align*}
		\supp \,T_{\af}^{2}\subseteq&  \It_2\wt_{\alpha_{0}}(\varpi^{2}) \It_2\wt_{\alpha_{0}}(\varpi^{2}) \It_2\\
		=& \It_2\wt_{\alpha_{0}}(\varpi^{2})U_{\alpha_{0},2}\wt_{\alpha_{0}}(-\varpi^{2}) \It_2\\
		=& \It_2U_{-\alpha_{0},-2} \It_2\\
		=& \It_2\cup \It_2\wt_{\alpha_{0}}(\varpi) \It_2\cup \It_2\wt_{\alpha_{0}}(\varpi^{2}) \It_2.
	\end{align*}
    By Proposition \ref{HVanish} and the support preserving isomorphism $\Hcal\cong \Hcal_{2}$, we know that $T_{\af}^{2}(g)=0$ for any $g\in \It_2\wt_{\alpha_{0}}(\varpi) \It_2$. Therefore there exists $a,b\in\mathbb{C}$ such that
	\begin{equation*}
		T_{\af}^{2}=aT_{\af}+b.
	\end{equation*}
	
	We claim that $T_{\af}^{2}(1)=b\neq 0$. First, for arbitrary $g\in G$, we have
	
	\begin{align*}
		T_{\af}^{2}(g)=&\int_{\It_2w_{\af}\It_2}T_{\af}(h)T_{\af}(h^{-1}g)dh\\
		=&\sum_{h\in\It_2w_{\af}\It_2/\It_2}T_{\af}(h)T_{\af}(h^{-1}g)\\
		=&\sum_{h\in U_{\alpha_{0},2}/U_{\alpha_{0},4}}T_{\af}(hw_{\af})T_{\af}(w_{\af}^{-1}h^{-1}g).
	\end{align*}
	Now we take $g=1$. Then
	\begin{align*}
		T_{\af}^{2}(1)=&\sum_{h\in U_{\alpha_{0},2}/U_{\alpha_{0},4}}T_{\af}(hw_{\af})T_{\af}(w_{\af}^{-1}h^{-1})\\
		=&\sum_{h\in U_{\alpha_{0},2}/U_{\alpha_{0},4}}\sigma_{2}(h)T_{\af}(w_{\af})T_{\af}(w_{\af}^{-1})\sigma_{2}(h^{-1})\\
        =& \sum_{h\in U_{\alpha_{0},2}/U_{\alpha_{0},4}}\sigma_{2}(h)T_{\af}(w_{\af})T_{\af}(w_{\af}\htt_{\alpha_{0}}(-1))\sigma_{2}(h^{-1})\\
        =&\sum_{h\in U_{\alpha_{0},2}/U_{\alpha_{0},4}}\sigma_{2}(h)T_{\af}(w_{\af})^{2}\sigma_{2}(\htt_{\alpha_{0}}(-1)h^{-1}),
	\end{align*}
    where we used $w_{\af}^{-2}=\htt_{\alpha_{0}}(-1)$ for the second equality.
    
        Next we apply Proposition \ref{affSupp} to compute $T_{\af}(w_{\af})^{2}\in\End(\sigma_{2})$. From the proof of Proposition \ref{affSupp} we know that the map $T_{\af}(w_{\af})$ factors into three pieces, $T_{\af}(w_{\af})=A\circ B\circ C$, where $A,B,C$ are as in the proof of Proposition \ref{affSupp}. Thus we have $T_{\af}(w_{\af})^{2}=A\circ B\circ C\circ A\circ B\circ C$. Now by definition $C\circ A = id_{\sigma_{2}^{\af}}$. Thus $A\circ B\circ C\circ A\circ B\circ C=A\circ B^{2}\circ C$. Now $B\in \Hom_{ \It_2^{\af}}(\,^{w_{\af}}\sigma_{2}^{\af},\sigma_{2}^{\af})$, where we have put $I_{2}^{\af}= I_2\cap \,^{w_{\af}} I_2$. Thus 
        \begin{align*}
            B^{2}\in\Hom_{ \It_2^{\af}}(\,^{w_{\af}}\sigma_{2}^{\af},\,^{w_{\af}^{-1}}\sigma_{2}^{\af})=&\Hom_{ \It_2^{\af}}(\sigma_{2}^{\af},\,^{w_{\af}^{-2}}\sigma_{2}^{\af})\\
            =&\Hom_{ \It_2^{\af}}(\sigma_{2}^{\af},\,^{\htt_{\alpha_{0}}(-1)}\sigma_{2}^{\af})\\
            =&\mathbb{C}\sigma_{2}^{\af}(\htt_{\alpha_{0}}(-1)).
        \end{align*}
        Therefore $T_{\af}(w_{\af})^{2} = c A\circ \sigma_{2}^{\af}(\htt_{\alpha_{0}}(-1))\circ C$ for some $c\in\mathbb{C}^{\times}$. Since $\sigma_{2}^{\af}(h_{\alpha_{0}}(-1))\circ C=C\circ \sigma_{2}(\htt_{\alpha_{0}}(-1))$ we have
        \begin{align*}
            T_{\af}(w_{\af})^{2}=&\sum_{h\in U_{\alpha_{0},2}/U_{\alpha_{0},4}}\sigma_{2}(h)T_{\af}(w_{\af})^{2}\sigma_{2}(\htt_{\alpha_{0}}(-1)h^{-1})\\
            =& c\sum_{h\in U_{\alpha_{0},2}/U_{\alpha_{0},4}}\sigma_{2}(h)\circ A\circ \sigma_{2}^{\af}(\htt_{\alpha_{0}}(-1))\circ C\circ\sigma_{2}(\htt_{\alpha_{0}}(-1)h^{-1})\\
            =&(-1,-1)c\sum_{h\in U_{\alpha_{0},2}/U_{\alpha_{0},4}} \sigma_{2}(h)\circ A\circ C\circ\sigma_{2}(h^{-1}).
        \end{align*}

        By taking the trace and noting that $A\circ C$ is just a projection operator we see that 
        \begin{equation*}
            \Tr(T_{\af}^{2}(1))=(-1,-1,)c\left|\Ocal\slash\varpi\Ocal\right|^{2}\Tr(A\circ C)\neq 0.
        \end{equation*}
        Thus $b=T_{\af}^{2}(1)\neq 0$. This implies $T_{\af}$ is invertible.
    \end{proof}

Now we specialize to the Cartan types $A_r,D_r,E_{6},E_{7}$ and prove an exact quadratic relation. The feature of these types that we exploit is the existence of an inner automorphism of $\Gt$ with good properties, essentially coming from a length $0$ element in $\Wt_{\ea}$. 

Let us make this inner automorphism explicit. When $\Phi$ is simply-laced and not of type $E_{8}$, then there exists a simple root $\alpha_{j_{0}}$ such that in the expression $-\alpha_{0}=\sum c_{j}\alpha_{j}$, we have $c_{j_{0}}=1$. The set of possible $\alpha_{j_{0}}$ is exactly the orbit of $\alpha_{0}$ under the automorphisms of the affine Dynkin diagram induced by the length $0$ elements of the extended affine group $\Wt_{\ea}$, excluding $\alpha_{0}$ itself. Alternatively, this set is in bijection with the nontrivial cosets in $\Yt/2Y$. In particular, the simple root $\alpha_{j_{0}}$ corresponds to the coset $2\omega_{j_{0}}^{\vee}+2Y\in\Yt/2Y$, where $\omega_{j_0}^\vee$ is the fundamental coweight associated with $\alpha_{j_0}$. (See \cite[Ch.\ 6, \S 2.3, Prop.\ 6]{BourLGLA46} for details.)

\begin{Lem}\label{StabGam2}
	Suppose that $\Phi$ is of type $A_r,D_r$, $E_{6}$, or $E_{7}$. Let $\varepsilon\in\Omegat$ be nontrivial. Let $\alpha_{j_{0}}:=\varepsilon(\alpha_{0})\in \Delta$. Then there exists $n\in N_{\Gt}(\Tt)\cap N_{\Gt}( I_2)$ representing $\varepsilon\in\Omegat$ such that
	\begin{enumerate}
		\item ${}^n\Tt=\Tt$,\label{TInv}
		\item ${}^n\It_2=\It_2$,\label{Gamma2Inv}
		\item ${}^n\It_2\wt_{\alpha_{j_{0}}}\It_2=\It_2w_{\af}\It_2$.\label{Gam2AfDoubCoset}
	\end{enumerate}
\end{Lem}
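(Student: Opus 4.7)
The plan is to construct $n$ as a lift to $\Gt$ of a representative in $N_G(T)\cap N_G(I_2)$ of $\varepsilon\in\Omegat$, and then deduce (1)--(3) from the interaction of $\Omegat$ with the simple affine reflections of $\Wt_{\af}$. The essential geometric input is that $\Omegat\cong \Yt/2Y$ is an elementary abelian $2$-group (since $y\in\Yt\subseteq Y$ gives $2y\in 2Y$), so every nontrivial $\varepsilon\in\Omegat$ is an involution; the Cartan-type hypothesis serves only to ensure that a nontrivial $\varepsilon$ can exist, as $\Omegat$ is trivial in types $A_{2r}$, $E_6$, $E_8$ and the lemma is vacuous there.

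By Lemma \ref{Len0elts}, $\varepsilon$ admits a representative $n_0\in N_G(T)\cap N_G(I_2)$, and I would take any preimage $n\in\pr^{-1}(n_0)\subseteq \Gt$. Since $\Tt=\pr^{-1}(T)$ and $\It_2=\pr^{-1}(I_2)$, conjugation by $n$ preserves both subgroups, so $n\in N_{\Gt}(\Tt)\cap N_{\Gt}(\It_2)$; this immediately yields (1) and (2). Different preimages of $n_0$ yield the same inner automorphism $\nu=\Int(n)$ because $\mu_2$ is central.

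For (3), the main claim is the identity
\[
\varepsilon\, w_{\alpha_{j_0}}\,\varepsilon^{-1}=w_{\af}
\]
in $\Wt_{\ea}=\Wt_{\af}\rtimes\Omegat$. Conjugation by $\varepsilon$ preserves $\Wt_{\af}$ and permutes its simple affine reflections $\{w_{\af},w_{\alpha_1},\dots,w_{\alpha_r}\}$ by the affine Dynkin diagram automorphism that $\varepsilon$ induces via its action on $2\mathfrak{A}$ (this is essentially the defining property of $\Omegat$ as the stabilizer of the alcove). Under the standard labelling, $w_{\af}$ corresponds to the node $\alpha_0$ and $w_{\alpha_j}$ to the node $\alpha_j$ for $j\geq 1$. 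Since $\varepsilon(\alpha_0)=\alpha_{j_0}$ by hypothesis and $\varepsilon^2=1$ gives $\varepsilon(\alpha_{j_0})=\alpha_0$, the diagram automorphism swaps $\alpha_0\leftrightarrow\alpha_{j_0}$, proving the displayed identity. Lifting to $\Gt$: both $n\,\widetilde{w}_{\alpha_{j_0}}\,n^{-1}$ and the chosen representative $\widetilde{w}_{\alpha_0}(\varpi^2)$ of $w_{\af}$ project to elements of $N_G(T)$ agreeing modulo $T_0$, hence differ in $\Gt$ by an element of $\Tt_0\subseteq \It_2$. Combined with $n\in N_{\Gt}(\It_2)$, this gives $\nu(\It_2\,\widetilde{w}_{\alpha_{j_0}}\,\It_2)\subseteq \It_2\,w_{\af}\,\It_2$, and the reverse inclusion follows by applying the same argument to $n^{-1}$ (which represents $\varepsilon^{-1}=\varepsilon$).

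I do not foresee any significant obstacle; the only delicate point is the bookkeeping matching conjugation by $\varepsilon$ inside $\Wt_{\af}$ with the affine diagram automorphism it induces on $2\mathfrak{A}$, but this is standard once $\Omegat$ is identified with the stabilizer of $2\mathfrak{A}$ as in Section \ref{SS:affine_Weyl_covering_group}.
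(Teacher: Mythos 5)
Your proof is correct and follows essentially the same route the paper intends: lift a length-zero representative $n_0\in N_G(T)\cap N_G(I_2)$ (Lemma \ref{Len0elts}), observe that (1) and (2) are automatic because $\Tt=\pr^{-1}(T)$ and $\It_2=\pr^{-1}(I_2)$, and deduce (3) from the identity $\varepsilon w_{\alpha_{j_0}}\varepsilon^{-1}=w_{\af}$ in $\Wt_{\ea}$ (via the induced affine diagram automorphism and $\varepsilon^2=1$) together with the fact that the two lifts then agree modulo $\Tt_0\subseteq\It_2$, so the double cosets coincide. Only your aside about the Cartan-type hypothesis is slightly off --- the listed types include cases where $\Omegat$ is trivial, and the excluded type $E_8$ also has trivial $\Omegat$, so the hypothesis is not there ``to ensure a nontrivial $\varepsilon$ exists'' --- but this is immaterial to the argument.
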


\begin{proof}
	All three properties follow directly by construction.
\end{proof}

With this lemma, we can prove

\begin{Prop}\label{QuadSimpAff} 
Let $\Phi$ be of type $A_r$,$D_r$, $E_{6}$ or $E_{7}$. Then there exists $T_{\af}\in(\Hcal_{2})_{w_{\af}}$ such that 
\begin{equation*}
		T_{\af}^{2}=(q-1)T_{\af}+q.
	\end{equation*}
\end{Prop}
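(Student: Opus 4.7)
The plan is to use the inner automorphism $\nu=\Int(n)$ supplied by Lemma \ref{StabGam2} to transport the quadratic relation from the simple linear reflection $w_{\alpha_{j_0}}$, where it is already known by Proposition \ref{QuadBraidSimpLinReflect}, over to the simple affine reflection $w_{\af}$.

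First I would check that $\nu$ induces a $\C$-algebra automorphism $\nu_*$ of $\Hcal_2$ with $\supp(\nu_*(f))=\nu(\supp(f))$. The content is the isomorphism ${}^\nu\sigma_2\cong\sigma_2$ as $\It_2$-representations, and I plan to verify it via Proposition \ref{TwistAut} applied to $\sigma_2=i_{U_0,U_2^-}^{\It_2}\tau$ with $\phi=\nu$. The condition $\nu(\Tt_0)=\Tt_0$ is immediate from Lemma \ref{StabGam2}(1)--(2); writing the image of $n$ in $\Wt_{\ea}$ as $(w_\varepsilon,y_\varepsilon)\in W\ltimes\Yt$, the translation $\yt_\varepsilon(\varpi)$ commutes pointwise with $\Tt_0$ by (MR4) and $\langle\alpha,y_\varepsilon\rangle\in 2\Z$, while the Weyl part preserves $\tau$ up to isomorphism by Theorem \ref{T:PseudoT0}, yielding ${}^\nu\tau\cong\tau$; and the two Iwahori decompositions $(U_0,\Tt_0,U_2^-)$ and $(\nu(U_0),\Tt_0,\nu(U_2^-))$ of $\It_2$ are compatible, which is a direct $\It_2$-analog of Lemma \ref{WCompat}. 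Lemma \ref{L:HeckeAutTwist} then produces $\nu_*$, and by Lemma \ref{StabGam2}(3) it restricts to a $\C$-linear isomorphism $(\Hcal_2)_{w_{\alpha_{j_0}}}\xrightarrow{\sim}(\Hcal_2)_{w_{\af}}$.

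Next I need an element $T'\in(\Hcal_2)_{w_{\alpha_{j_0}}}$ satisfying $(T')^2=(q-1)T'+q$. Since $\alpha_{j_0}\in\Delta$, Proposition \ref{QuadBraidSimpLinReflect} supplies a $T_{\alpha_{j_0}}\in\Hcal_{w_{\alpha_{j_0}}}$ with this quadratic relation. I plan to transport $T_{\alpha_{j_0}}$ through the algebra isomorphism $\Hcal\cong\Hcal_2$ of Proposition \ref{HIsoH2}. By Lemma \ref{L:Bushnell_Kutzko_induced} this identifies $\Hcal_{w_{\alpha_{j_0}}}$ with $\bigoplus_h(\Hcal_2)_h$, where $h$ runs over $\It_2$-double cosets contained in $\It w_{\alpha_{j_0}}\It$. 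The left side is one-dimensional (by Propositions \ref{affSupp_H} and \ref{HMult1}), while each summand on the right is at most one-dimensional, so exactly one summand is nonzero. A short argument parallel to Proposition \ref{affSupp} (using ${}^{w_{\alpha_{j_0}}}\tau\cong\tau$ together with a small-group parahoric induction) shows that $(\Hcal_2)_{w_{\alpha_{j_0}}}\neq 0$, and therefore that this is the unique nonzero summand. Hence the image $T'$ of $T_{\alpha_{j_0}}$ under $\Hcal\cong\Hcal_2$ lies in $(\Hcal_2)_{w_{\alpha_{j_0}}}$, and by multiplicativity of the algebra isomorphism it inherits the quadratic relation.

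Setting $T_{\af}:=\nu_*(T')\in(\Hcal_2)_{w_{\af}}$ completes the proof: since $\nu_*$ is a $\C$-algebra automorphism, $T_{\af}^2=\nu_*((T')^2)=\nu_*((q-1)T'+q)=(q-1)T_{\af}+q$. The step I expect to be most delicate is the compatibility hypothesis in Proposition \ref{TwistAut}, which requires tracking how conjugation by $n$ permutes the affine root subgroups $U_{\alpha,j}$ inside $\It_2$; however, this is routine once one uses that $n$ represents a length-zero element preserving the alcove $2\mathfrak{A}$, so that $\nu$ merely permutes the simple affine reflections and the associated standard Iwahori subgroup data.
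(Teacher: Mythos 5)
Your proposal is correct and follows essentially the same route as the paper: take the inner automorphism $\nu=\Int(n)$ of Lemma \ref{StabGam2}, use an intertwiner $A_n$ realizing ${}^{n}\sigma_2\cong\sigma_2$ (which the paper simply quotes from Proposition \ref{Len0Supp}, itself proved via Proposition \ref{TwistAut} on the profinite model, rather than re-verifying the hypotheses as you do) together with Lemma \ref{L:HeckeAutTwist}, and transport the normalized generator at $w_{\alpha_{j_0}}$ to $(\Hcal_2)_{w_{\af}}$, carrying the quadratic relation along. The only added value in your write-up is that you make explicit how $T_{\alpha_{j_0}}\in\Hcal_{w_{\alpha_{j_0}}}$ is identified with an element of $(\Hcal_2)_{w_{\alpha_{j_0}}}$ via Proposition \ref{HIsoH2}, Lemma \ref{L:Bushnell_Kutzko_induced} and Proposition \ref{affSupp}, a step the paper leaves implicit.
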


\begin{proof}
Let $n\in N_{\Gt}(\Tt)\cap N_{\Gt}( I_2)$ represent a nontrivial element in $\Omegat$. Such $n$ exists by Lemma \ref{L:lenght_0_element_representative}. Since $\,^{n}\sigma_{2}\cong \sigma_{2}$ by Proposition \ref{Len0Supp}, there exists $A_{n}\in \GL(\sigma_2)$ such that $A_{n}\,^{n}\sigma_{2}(g)=\sigma_{2}(g)A_{n}$ for all $g\in  I_2$.

Define
\[
T_{\af}:=A_{n}(T_{\alpha_{j_{0}}}\circ\Int(n^{-1}))A_{n}^{-1}\in\Hcal_{2},
\]
by which we mean $T_{\af}$ is such that $T_{\af}(g)=A_{n}(T_{\alpha_{j_{0}}}\circ\Int(n^{-1})(g))A_{n}^{-1}$. One can then see from Lemmas \ref{L:HeckeAutTwist} and \ref{StabGam2} that
\[
T_0\in(\Hcal_2)_{w_{\af}}.
\]
The quadratic relation follows from the quadratic relation for $T_{\alpha_{j_{0}}}$ (Proposition \ref{QuadBraidSimpLinReflect}).
\end{proof}

Finally we prove that the length $0$ elements satisfy a quadratic relation.

\begin{Prop}\label{QuadLen0}
	Let $\varepsilon\in \Omegat$. There exists $T_{\varepsilon}\in (\Hcal_{2})_{\varepsilon}$ (unique up to sign) such that 
	\begin{equation*}
		T_{\varepsilon}^{2}=1.
	\end{equation*}
	In particular, $T_{\varepsilon}$ is invertible.
\end{Prop}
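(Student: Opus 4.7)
The plan is to establish this by a direct support-and-convolution argument, exploiting that $\Omegat \cong \Yt/2Y$ is $2$-torsion (clear from Table \ref{T:table}) and that $(\Hcal_2)_\varepsilon$ is one-dimensional.

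First I would fix a representative $n \in N_{\Gt}(\Tt) \cap N_{\Gt}(\It_2)$ of $\varepsilon$, as provided by Lemma \ref{L:lenght_0_element_representative}, and choose a nonzero $T_\varepsilon \in (\Hcal_2)_\varepsilon$, which spans this one-dimensional space by Proposition \ref{Len0Supp}. Because $n$ normalizes $\It_2$, the double coset $\It_2 n \It_2$ equals $n \It_2 = \It_2 n$, and with the normalization $\mathrm{vol}(\It_2) = 1$ the convolution collapses to the single-coset formula
\[
T_\varepsilon^2(g) \;=\; T_\varepsilon(n)\, T_\varepsilon(n^{-1}g)
\]
for all $g \in \Gt$. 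In particular $\supp(T_\varepsilon^2) \subseteq n^2\It_2$. Inspection of Table \ref{T:table} shows $\Yt/2Y$ is an elementary abelian $2$-group in every Cartan type under consideration, so $\varepsilon^2 = 1$ in $\Omegat$; since $n \in N_{\Gt}(\Tt)$ and $\Omegat \hookrightarrow W_{\ea} = N_{\Gt}(\Tt)/\Tt_0$, this forces $n^2 \in \Tt_0 \subseteq \It_2$. Consequently $T_\varepsilon^2 \in (\Hcal_2)_1$, and by Schur's lemma applied to the irreducible $\It_2$-representation $\sigma_2$ we have $(\Hcal_2)_1 \cong \End_{\It_2}(\sigma_2^\vee) = \mathbb{C}\cdot e$, where $e$ is the unit of $\Hcal_2$. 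Writing $T_\varepsilon^2 = c\, e$, the remaining task is to check $c \neq 0$: rescaling $T_\varepsilon$ by $c^{-1/2}$ will then yield $T_\varepsilon^2 = 1$, the two square roots accounting exactly for the sign ambiguity, with uniqueness forced by $\dim(\Hcal_2)_\varepsilon = 1$ and invertibility automatic from $T_\varepsilon^2 = 1$.

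To see $c \neq 0$, I would evaluate at the identity. Writing $n^{-1} = n^{-2}\cdot n$ with $n^{-2} \in \It_2$, left $\It_2$-equivariance gives $T_\varepsilon(n^{-1}) = \sigma_2^\vee(n^{-2})\, T_\varepsilon(n)$, so
\[
T_\varepsilon^2(1) \;=\; T_\varepsilon(n) \circ \sigma_2^\vee(n^{-2}) \circ T_\varepsilon(n).
\]
This is a composition of three invertible operators on $\sigma_2^\vee$: the operator $\sigma_2^\vee(n^{-2})$ is invertible because $\sigma_2^\vee$ is a representation, and $T_\varepsilon(n)$ is invertible because by Lemma \ref{L:Bushnell_Kutzko_equivalence} and Proposition \ref{Len0Supp} it is a nonzero element of the one-dimensional space $\Hom_{\It_2 \cap {}^n\It_2}({}^n\sigma_2, \sigma_2)$ between two irreducible representations, hence an isomorphism by Schur. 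The only delicate point in the whole argument is the careful bookkeeping of biinvariance to collapse the convolution and to verify that $n^2 \in \It_2$; both are routine once Table \ref{T:table} is invoked, so I do not expect any serious obstacle.
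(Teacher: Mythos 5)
Your proof is correct, and its first half coincides with the paper's own argument: both use that $\Omegat\cong\Yt/2Y$ is an elementary abelian $2$-group to force $\supp(T_\varepsilon^2)\subseteq\It_2$, and then the one-dimensionality of $(\Hcal_2)_1$ (Schur) to write $T_\varepsilon^2=c$. Where you genuinely diverge is the non-vanishing step $c\neq 0$. The paper argues via Clifford theory: since ${}^{n}\sigma_2\cong\sigma_2$ (Proposition \ref{Len0Supp}), the induction of $\sigma_2$ to the index-two overgroup $\langle\It_2,\varepsilon\rangle$ splits into two inequivalent extensions, so $\End_{\langle\It_2,\varepsilon\rangle}(\Ind_{\It_2}^{\langle\It_2,\varepsilon\rangle}\sigma_2)\cong\C\times\C$ has no nonzero nilpotents, and $t_\varepsilon$ sits inside this algebra. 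You instead compute directly: using that a representative $n$ normalizes $\It_2$ and that $\varepsilon^2=1$ forces $n^2\in\Tt_0\subseteq\It_2$, the convolution collapses to $T_\varepsilon^2(1)=T_\varepsilon(n)\circ\sigma_2^\vee(n^{-2})\circ T_\varepsilon(n)$, a composition of invertible operators (the outer factors by Schur, via Lemma \ref{L:Bushnell_Kutzko_equivalence} and Proposition \ref{Len0Supp}, since a nonzero element of $\Hcal_2$ supported on a single double coset has nonzero, hence invertible, value at $n$). Both routes are valid; your evaluation-at-the-identity argument is more elementary and exhibits the constant explicitly in terms of the intertwiner $T_\varepsilon(n)$, while the paper's Clifford-theoretic argument is shorter and connects $T_\varepsilon$ to the problem of extending $\sigma_2$ beyond $\It_2$, which the paper later flags as the obstruction in type $D_{2r}$. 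One bookkeeping remark: the value $T_\varepsilon(n)$ intertwines $\sigma_2^\vee$ with its $n$-twist (not $\sigma_2$ itself as written), but since ${}^n\sigma_2\cong\sigma_2$ implies the corresponding isomorphism for the duals, your invertibility conclusion is unaffected.
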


\begin{proof}
	Let $t_{\varepsilon}\in (\Hcal_{2})_{\varepsilon}$ be nonzero. Note that $\Omegat\cong \widetilde{Y}/2Y$ is an elementary abelian $2$-group. With this and the definition of convolution we have $\supp(t_{\varepsilon}^{2})\subseteq  \It_2 \varepsilon^{2}= \It_2$. Thus we see that $t_{\varepsilon}^{2}=C$ for some $C\in \mathbb{C}$.
	
	Note that the subgroup $\langle \It_2, \varepsilon\rangle\subseteq\Gt$ generated by $\It_2$ and $\varepsilon$ has $\It_2$ as a subgroup of index $2$. Thus by Proposition \ref{Len0Supp} and Clifford theory we know that $\Ind_{\It_2}^{\langle \It_2, \varepsilon\rangle}(\sigma_{2})$ decomposes into two distinct irreducible representations $\pi_{1},\pi_{2}$ extending $\sigma_{2}$. Then because $t_{\varepsilon}$ can be viewed as an element of the semisimple algebra $\End_{\langle \It_2, \varepsilon\rangle}(\Ind_{\It_2}^{\langle \It_2, \varepsilon\rangle}(\sigma_{2}))$ it follows that $C\neq 0$. Thus if we  define $T_{\varepsilon}:=(C)^{-\frac{1}{2}}t_{\varepsilon}$ (unique up to sign), then $T_{\varepsilon}^{2}=1$.
\end{proof}


\subsection{Braid relations}\label{SS:BraidRel}


In this subsection we investigate the braid relations. We begin by proving the braid relations up to scaling.

For $w\in \Wt_{\ea}$, we let $t_{w}\in (\Hcal_{2})_{w}$. By Proposition \ref{HMult1} the element $t_{w}$ is unique up to scaling. Later we will be concerned with normalizing the elements $t_{w}$, but for now this is not important.

\begin{Prop}\label{Prop:GenBraid}
	Let $w_{1},w_{2}\in \Wt_{\ea}$. If $\ell(w_{1}w_{2})=\ell(w_{1})+\ell(w_{2})$, then in $\Hcal_{2}$ we have 
    \[
    t_{w_{1}}*t_{w_{2}}=C(w_{1},w_{2})t_{w_{1}w_{2}},
    \]
    for some $C(w_{1},w_{2})\in\mathbb{C}$. Furthermore, if $t_{w_{1}}$ is invertible and $t_{w_{2}}\neq 0$, then $C(w_{1},w_{2})\in \mathbb{C}^{\times}$.
	
\end{Prop}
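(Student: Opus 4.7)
The plan is to identify the support of the convolution $t_{w_1} * t_{w_2}$ using the linear Hecke algebra machinery from \S \ref{SS:LinearHecke}, and then apply multiplicity one to pin down the scalar.

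First, I would transfer the length-additivity hypothesis from $\tilde{\ell}$ to the length function $\ell_{2}$ on $\underline{\Hcal}$ by Theorem \ref{ProportionalLengths}: the assumption $\tilde{\ell}(w_1 w_2) = \tilde{\ell}(w_1) + \tilde{\ell}(w_2)$ yields $\ell_{2}(w_1 w_2) = \ell_{2}(w_1) + \ell_{2}(w_2)$. Proposition \ref{CosetProp} then applies: for any $\delta \in \It_2 w_1 \It_2$ and any $g$ with $\delta^{-1} g \in \It_2 w_2 \It_2$ one has $\It_2 g \It_2 = \It_2 w_1 w_2 \It_2$. In other words, $\It_2 w_1 \It_2 \cdot \It_2 w_2 \It_2 \subseteq \It_2 w_1 w_2 \It_2$ as subsets of $G$. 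Since the support of a convolution is contained in the product of supports, this gives
\[
\supp(t_{w_1} * t_{w_2}) \subseteq \It_2 w_1 w_2 \It_2.
\]

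Second, I would invoke multiplicity one to conclude $\dim (\Hcal_2)_{w_1 w_2} \leq 1$. This follows by combining Proposition \ref{HMult1}, which gives $\dim \Hcal_g \leq 1$ for all $g$, with Lemma \ref{L:Bushnell_Kutzko_induced} applied to $\It_2 \subseteq \It$ together with the isomorphism $\sigma \cong \Ind_{\It_2}^{\It} \sigma_2$ of Proposition \ref{IndSig2}: the decomposition
\[
\Hcal_g \cong \bigoplus_{\substack{h \in \It_2 \backslash G / \It_2 \\ \It h \It = \It g \It}} (\Hcal_2)_h
\]
forces each $(\Hcal_2)_h \leq 1$. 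Consequently $t_{w_1} * t_{w_2} = C(w_1, w_2)\, t_{w_1 w_2}$ for a uniquely determined scalar $C(w_1, w_2) \in \C$.

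Finally, for the nonvanishing of $C(w_1, w_2)$: if $t_{w_1}$ is invertible in $\Hcal_2$ and $t_{w_2} \neq 0$, then $t_{w_1}^{-1} * (t_{w_1} * t_{w_2}) = t_{w_2} \neq 0$, so the convolution $t_{w_1} * t_{w_2}$ cannot vanish and hence $C(w_1, w_2) \in \C^\times$. The main conceptual obstacle in this proof is really the support-containment step, which relies crucially on Proposition \ref{CosetProp}; everything else is a formal consequence of multiplicity one and the algebra structure, so this is more a bookkeeping argument than a technical one.
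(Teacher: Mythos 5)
Your proposal is correct and follows essentially the same route as the paper: transfer the length hypothesis via Theorem \ref{ProportionalLengths}, bound the support of the convolution with Proposition \ref{CosetProp}, apply multiplicity one (Proposition \ref{HMult1}, transferred to $\Hcal_{2}$ via Proposition \ref{IndSig2} and Lemma \ref{L:Bushnell_Kutzko_induced}, which the paper leaves implicit through the isomorphism $\Hcal\cong\Hcal_{2}$), and get nonvanishing of the scalar by left-multiplying with $t_{w_{1}}^{-1}$. No substantive difference.
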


\begin{proof}
	Our proof follows \cite[Proposition 6.8]{Karasiewicz}, which adapted the argument of \cite[Proposition 6.2]{S04}.
	
	For any $g\in \Gt$ we need to compute $t_{w_{1}}*t_{w_{2}}(g)$. By the definition of convolution we need to identify the cosets $\delta\It_2\subseteq \Gt$ such that
	\begin{align*}
		\delta\It_2\subseteq& \It_2w_{1}\It_2\qand\\
		(\delta\It_2)^{-1}g\subseteq& \It_2w_{2}\It_2.
	\end{align*}
	These containments only depend on $G$. Thus by Theorem \ref{ProportionalLengths} and Proposition \ref{CosetProp}, which uses the length hypothesis, it follows that $\supp\,t_{w_{1}}*t_{w_{2}}\subseteq \It_2w_{1}w_{2}\It_2$.
	
	By Proposition \ref{HMult1} there exists $C\in\mathbb{C}$ such that $t_{w_{1}}*t_{w_{2}}=Ct_{w_{1}w_{2}}$. If we further assume that $t_{w_{1}}$ is invertible and $t_{w_{2}}\neq 0$, then $C$ cannot be $0$.
	\end{proof}
	
	Using Proposition \ref{Prop:GenBraid} we can define a basis of $\Hcal_{2}$ such that each element is invertible. 
	
	\begin{Lem}\label{SuppComplete}
		Let $w\in \widetilde{W}_{\ea}$. Let $\varepsilon\in\Omegat$ and let $\beta_{1},\ldots,\beta_{k}\in \Delta\cup\{\alpha_{0}\}$ be a finite sequence such that
		\begin{equation*}
			w=\varepsilon w_{\beta_{1}}\cdots w_{\beta_{k}}
		\end{equation*}
		is a reduced expression for $w$. Then the element $T_{\varepsilon}T_{\beta_{1}}\cdots T_{\beta_{k}}\in (\Hcal_{2})_{w}$ is nonzero and invertible.
	\end{Lem}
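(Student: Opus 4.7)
The plan is to prove the lemma by induction on $k$, at each stage applying Proposition \ref{Prop:GenBraid} to a partial product and the next generator. The base case $k=0$ is immediate: $T_{\varepsilon}\in(\Hcal_2)_{\varepsilon}$ is nonzero and invertible by Proposition \ref{QuadLen0}.

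For the inductive step, set $w_j:=\varepsilon w_{\beta_1}\cdots w_{\beta_j}$ and suppose that $T_\varepsilon T_{\beta_1}\cdots T_{\beta_{k-1}}$ is a nonzero, invertible element of $(\Hcal_2)_{w_{k-1}}$. Because any prefix of a reduced expression is itself reduced, and because $\tilde{\ell}(\omega v)=\tilde{\ell}(v)$ for all $\omega\in\Omegat$ and $v\in\Wt_{\af}$ (as recalled in \S \ref{SS:affine_Weyl_covering_group}), we have $\tilde{\ell}(w_{k-1})=k-1$ and $\tilde{\ell}(w_k)=k$, so
\[
\tilde{\ell}(w_{k-1}\cdot w_{\beta_k})=\tilde{\ell}(w_{k-1})+\tilde{\ell}(w_{\beta_k}).
\]
The length hypothesis of Proposition \ref{Prop:GenBraid} is therefore satisfied with $w_1=w_{k-1}$ and $w_2=w_{\beta_k}$. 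Since the inductive hypothesis gives invertibility of $T_\varepsilon T_{\beta_1}\cdots T_{\beta_{k-1}}$, and since $T_{\beta_k}$ is invertible (hence nonzero) by Propositions \ref{QuadBraidSimpLinReflect} and \ref{WeakQuadSimpAff}, Proposition \ref{Prop:GenBraid} yields
\[
T_\varepsilon T_{\beta_1}\cdots T_{\beta_{k-1}}T_{\beta_k}=C\cdot t_{w_k}
\]
for some $C\in\C^\times$, where $t_{w_k}\in(\Hcal_2)_{w_k}$ is any nonzero element. This shows the product lies in $(\Hcal_2)_{w}$ and is nonzero. Invertibility is automatic, since the product of invertible elements in the algebra $\Hcal_2$ is invertible.

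I do not foresee any real obstacle here: the argument is a clean induction whose only nontrivial ingredient is the compatibility of the length function on $\Wt_{\ea}=\Omegat\ltimes \Wt_{\af}$ with reduced decompositions. The nonvanishing of the scaling constants at each step rests exclusively on the invertibility statements already established for $T_\varepsilon$, $T_\alpha$ ($\alpha\in\Delta$), and $T_{\af}$, which is why Propositions \ref{QuadBraidSimpLinReflect}, \ref{WeakQuadSimpAff}, and \ref{QuadLen0} were proved first.
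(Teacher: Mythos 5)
Your proof is correct and takes essentially the same route as the paper: induction on the length, with the base case from Proposition \ref{QuadLen0} and the inductive step from Proposition \ref{Prop:GenBraid} together with the invertibility of the generators. Your citation of Proposition \ref{WeakQuadSimpAff} (rather than the type-restricted Proposition \ref{QuadSimpAff}) for the invertibility of $T_{\af}$ is in fact the right choice, since it keeps the lemma valid in all Cartan types.
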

	
	\begin{proof}
		This follows by induction on the length of $w$ and by Propositions \ref{QuadBraidSimpLinReflect}, \ref{QuadSimpAff}, \ref{QuadLen0}, and \ref{Prop:GenBraid}. 
	\end{proof}
	
	\begin{Prop}\label{HLineDecompAndInvert}
		The set of subspaces $(\Hcal_{2})_{w}$, where $w\in \widetilde{W}_{\ea}$, is a line decomposition for $\Hcal_{2}$, namely
        \[
        \Hcal_2=\bigoplus_{w\in\Wt_{\ea}}(\Hcal_{2})_{w}.
        \]
        Furthermore, any nonzero element of $(\Hcal_{2})_{w}$ is invertible.
	\end{Prop}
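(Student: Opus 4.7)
The plan is to assemble Propositions~\ref{HMult1}, \ref{HVanish}, \ref{HIsoH2} and Lemma~\ref{SuppComplete} together with the Iwahori--Bruhat decomposition. Since distinct $\It_2$-double cosets are disjoint, one has a vector-space decomposition $\Hcal_2 = \bigoplus_{h\in\It_2\backslash\Gt/\It_2}(\Hcal_2)_h$, and the task reduces to identifying which summands are nonzero, bounding their dimensions, and exhibiting invertible generators.

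First I would transport the support data from $\Hcal$ to $\Hcal_2$. By Proposition~\ref{IndSig2} we have $\sigma\cong\Ind_{\It_2}^{\It}\sigma_2$, so Lemma~\ref{L:Bushnell_Kutzko_induced} yields the support-compatible isomorphism
\[
\Hcal_w \;\cong\; \bigoplus_{\substack{h\in\It_2\backslash\Gt/\It_2\\ \It h\It=\It w\It}}(\Hcal_2)_h
\]
for each $w\in W_{\ea}$. Proposition~\ref{HMult1} bounds $\dim\Hcal_w\le 1$, forcing at most one summand $(\Hcal_2)_h$ inside a fixed $\It w\It$ to be nonzero (and necessarily one-dimensional). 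Proposition~\ref{HVanish} further rules out any nonzero $(\Hcal_2)_h$ whose $\It$-double coset corresponds to an element of $W_{\ea}\smallsetminus\Wt_{\ea}$.

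Next, for each $w\in\Wt_{\ea}$, Lemma~\ref{SuppComplete} exhibits a nonzero invertible element $T_\varepsilon T_{\beta_1}\cdots T_{\beta_k}\in(\Hcal_2)_w$. This pins the unique surviving one-dimensional summand inside each $\It w\It$ ($w\in\Wt_{\ea}$) to be $(\Hcal_2)_w$ itself. Assembling these contributions yields
\[
\Hcal_2 \;=\; \bigoplus_{w\in\Wt_{\ea}}(\Hcal_2)_w,
\]
with each summand one-dimensional. Finally, since $(\Hcal_2)_w$ contains the invertible element of Lemma~\ref{SuppComplete}, every nonzero element of $(\Hcal_2)_w$ is a nonzero scalar multiple of an invertible element, hence itself invertible.

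There is no genuine obstacle here: the real work was carried out in the preceding subsections, in particular the multiplicity-one bound (Proposition~\ref{HMult1}), the vanishing outside $\Wt_{\ea}$ (Proposition~\ref{HVanish}), and the construction of invertible basis elements in Lemma~\ref{SuppComplete} (resting in turn on the quadratic and generalized braid relations of Propositions~\ref{QuadBraidSimpLinReflect}, \ref{QuadSimpAff}, \ref{QuadLen0}, and \ref{Prop:GenBraid}). The present proposition is essentially an assembly of these ingredients.
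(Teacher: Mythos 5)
Your proof is correct and follows essentially the same route as the paper, which deduces the proposition from Proposition~\ref{HMult1}, Proposition~\ref{HVanish}, Lemma~\ref{SuppComplete}, and the disjointness of the supports $(\Hcal_2)_w$. The only difference is that you spell out explicitly the transfer of the multiplicity-one and vanishing statements from $\Hcal$ to $\Hcal_2$ via Proposition~\ref{IndSig2} and Lemma~\ref{L:Bushnell_Kutzko_induced}, a step the paper leaves implicit.
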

	
	\begin{proof}
		This follows from Theorem \ref{HMult1}, Proposition \ref{HVanish} and Lemma \ref{SuppComplete}, along with the fact that the distinct $(\Hcal_{2})_{w}$'s have disjoint support.
	\end{proof}
	
	For the case of type $A_r,D_r$, $E_{6}$, $E_{7}$, we can refine Proposition \ref{HLineDecompAndInvert} to produce a normalized basis for $\Hcal_{2}$ for which the usual braid relations are satisfied.
	
	From \S \ref{SS:QuadRel} we have the following normalized elements. For $\alpha\in\Delta\cup\{\af\}$ there is a unique $T_{\alpha}\in (\Hcal_{2})_{w_{\alpha}}$ such that $T_{\alpha}^{2}=(q-1)T_{\alpha}+q$. For $\varepsilon\in\Omegat$ there is a unique up to sign $T_{\varepsilon}\in (\Hcal_{2})_{\varepsilon}$ such that $T_{\varepsilon}^{2}=1$. To begin we show that the elements indexed by $\Delta\cup\{\af\}$ satisfy the braid relations. 
    
    Let us first prove the following elementary fact.	
	\begin{Lem}\label{BraidLem}
		Let $a,b,c\in \mathbb{C}^{\times}$. Let $\mathcal{A}$ be an associative $\mathbb{C}$-algebra with unit. Suppose that $X,Y\in \mathcal{A}$ are such that
		\begin{itemize}
			\item $1,X,Y\in \mathcal{A}$ are linearly independent;
			\item $X^{2}=aX+b$;
			\item $Y^{2}=aY+b$;
			\item $XYX=cYXY$, or $XY=cYX$.
		\end{itemize}
		Then $c=1$.
	\end{Lem}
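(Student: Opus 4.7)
The plan is to treat the two cases (braid-type relation and commutation-type relation) separately, using the same basic trick: derive a relation that forces either $X$ or $Y$ to be a scalar, contradicting the linear independence of $1, X, Y$.

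First, I would observe that $X$ and $Y$ are both invertible in $\mathcal{A}$. Indeed, $X^2 = aX + b$ gives $X(X-a) = b$, so since $b \in \mathbb{C}^{\times}$ we have $X^{-1} = b^{-1}(X-a)$; likewise for $Y$. This will be essential for the cancellation arguments below.

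For the commutation case $XY = cYX$, I would compute $X^2 Y$ in two different ways. On one hand, $X^2 Y = (aX+b)Y = aXY + bY$. On the other hand, using $XY = cYX$ twice,
\[
X^2 Y = X(XY) = cX(YX) = c(XY)X = c^2 Y X^2 = c^2 Y(aY+b) = ac^2 YX \cdot \text{(adjusted)}\,,
\]
which after rewriting $YX = c^{-1}XY$ yields $X^2 Y = ac\, XY + bc^2 Y$. Equating the two expressions gives $a(1-c)XY = b(c^2 - 1)Y = -b(1-c)(1+c)Y$. If $c \neq 1$, we may divide by $1-c$ to obtain $aXY = -b(1+c)Y$, i.e.\ $(aX + b(1+c))Y = 0$. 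Since $Y$ is invertible, $X = -b(1+c)/a$ is a scalar, contradicting linear independence of $1, X, Y$.

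For the braid case $XYX = cYXY$, I would multiply on the left by $X^{-1} = b^{-1}(X-a)$ to obtain $YX = cb^{-1}(X-a)YXY$. Expanding $(X-a)YXY = XYXY - aYXY$ and substituting $XYX = cYXY$ into the first term produces $(X-a)YXY = c\, YXY \cdot Y - aYXY$, and then using $Y^2 = aY+b$ to rewrite $YXY^2 = Y X(aY+b) = aYXY + bYX$ yields, after collecting terms,
\[
(1-c^2)YX \;=\; -\,\tfrac{ac(1-c)}{b}\, YXY .
\]
If $c \neq \pm 1$, dividing by $(1-c)$ gives $YX = \lambda\, YXY$ for an explicit nonzero scalar $\lambda$; canceling the invertible $Y$ on the left gives $X = \lambda XY$, and then canceling the invertible $X$ forces $Y$ to be a scalar, again contradicting linear independence. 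The remaining subcase $c = -1$ collapses the displayed identity to $YXY = 0$, and invertibility of $X,Y$ then forces $X = 0$, a contradiction.

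The main obstacle is really just careful bookkeeping in the braid case; there are several equivalent rearrangements and it is easy to lose a factor of $c$ or $b$. I expect the commutation case to be completely routine, and the exceptional value $c = -1$ in the braid case is the one place where one must be slightly careful, since the uniform argument via dividing by $(1-c^2)$ breaks down there and one must appeal separately to invertibility.
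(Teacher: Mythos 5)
Your argument is correct in both cases; the only blemish is a typo in the commutation computation, where $c^2YX^2$ should expand as $c^2Y(aX+b)$ rather than $c^2Y(aY+b)$ (the line you actually use afterwards, $X^2Y=acXY+bc^2Y$, is right). Your route is genuinely different from the paper's. For the braid case the paper rewrites $XYX=cYXY$ as $X=cYXYX^{-1}Y^{-1}$, squares this, and uses $Y^2=aY+b$ to obtain $aX+b=X^2=acX+c^2b$; linear independence of $1$ and $X$ then gives $a=ac$ and $b=bc^2$, so $c=1$ falls out directly, with no contradiction argument and no case analysis in $c$. You instead multiply by $X^{-1}=b^{-1}(X-a)$, reduce with $Y^2=aY+b$ to the identity $(1-c^2)YX=-\tfrac{ac(1-c)}{b}\,YXY$, and argue by contradiction, which obliges you to treat $c=-1$ separately (there the identity degenerates to $YXY=0$, which you correctly dispose of via invertibility). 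Both proofs use exactly the same ingredients — invertibility of $X,Y$ coming from the quadratic relations, the relation $Y^2=aY+b$, and linear independence — but the paper's squaring trick is shorter and uniform in $c$, whereas your version makes explicit which scalar degeneration of $X$ or $Y$ would be forced if $c\neq 1$. The paper dismisses the commutation case as ``easier'' and leaves it to the reader; your treatment of it is a correct instance of that easier argument.
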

	
	\begin{proof}
		This follows by a direct calculation. We will prove the case where $XYX=cYXY$. The other case is easier.
		
		Since $b\neq 0$, the elements $X$ and $Y$ are invertible. Thus we know that $X=cYXYX^{-1}Y^{-1}$. Squaring and applying the quadratic relations gives
		\begin{equation*}
			aX+b = c^{2}YX(aY+b)X^{-1}Y^{-1}.
		\end{equation*}
		The right hand side simplifies to 
		\begin{equation*}
			c^{2}YX(aY+b)X^{-1}Y^{-1}=c^{2}(aYXYX^{-1}Y^{-1}+b)=acX+c^{2}b.
		\end{equation*}
		Now since $X$ and $1$ are linearly independent we have $a=ac$ and $b=bc^{2}$. Since $a\neq 0$ it follows that $c=1$.
 	\end{proof}
	
	\begin{Prop}\label{CompleteBraidRel}
		Let $\Phi$ be of type $A_r,D_r$, $E_{6}$ or $E_{7}$. Let $\alpha,\beta\in \Delta\cup\{\af\}$ be distinct.
		\begin{enumerate}
			\item If $\langle\alpha,\beta\rangle=0$, then $T_{\alpha}T_{\beta}=T_{\beta}T_{\alpha}$.
			\item If $\langle\alpha,\beta\rangle=-1$, then $T_{\alpha}T_{\beta}T_{\alpha}=T_{\beta}T_{\alpha}T_{\beta}$.
		\end{enumerate}
	\end{Prop}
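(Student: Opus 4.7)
The approach is to apply Lemma \ref{BraidLem} with the quadratic data $(a,b) = (q-1,q)$. As preparation I would use Proposition \ref{HIsoH2} to regard the $T_{\alpha}$ for $\alpha \in \Delta$ (originally constructed in $\Hcal$ via the finite Shimura correspondence) together with $T_{\af}$ as elements of the common Hecke algebra $\Hcal_{2}$. Since the isomorphism of Proposition \ref{HIsoH2} is one of $\mathbb{C}$-algebras and is support-preserving in the sense of Lemma \ref{L:Bushnell_Kutzko_induced}, the quadratic relations of Propositions \ref{QuadBraidSimpLinReflect} and \ref{QuadSimpAff} are inherited; so every $T_{\alpha}$ with $\alpha \in \Delta \cup \{\af\}$ satisfies $T_{\alpha}^{2} = (q-1)T_{\alpha} + q$ in $\Hcal_{2}$.

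Next I would show that the braid (resp.\ commutation) relation holds up to a nonzero scalar. In the case $\langle \alpha, \beta\rangle = 0$, the word $w_{\alpha}w_{\beta}$ is reduced of length $2$, so Proposition \ref{Prop:GenBraid} gives $T_{\alpha}T_{\beta} = C_{1}\, t_{w_{\alpha}w_{\beta}}$ and $T_{\beta}T_{\alpha} = C_{2}\, t_{w_{\beta}w_{\alpha}}$ for some $C_{1}, C_{2} \in \mathbb{C}^{\times}$ (nonvanishing by invertibility of $T_{\alpha}, T_{\beta}$). In the case $\langle \alpha, \beta \rangle = -1$, iterated application of Proposition \ref{Prop:GenBraid} to the reduced length-$3$ words $w_{\alpha}w_{\beta}w_{\alpha}$ and $w_{\beta}w_{\alpha}w_{\beta}$ yields analogous identities. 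The standard Coxeter relations in the simply-laced affine Weyl group $\Wt_{\af}$ give $w_{\alpha}w_{\beta} = w_{\beta}w_{\alpha}$ (resp.\ $w_{\alpha}w_{\beta}w_{\alpha} = w_{\beta}w_{\alpha}w_{\beta}$), so both sides are nonzero scalar multiples of the same basis element of $\Hcal_{2}$. This produces $T_{\alpha}T_{\beta} = c\, T_{\beta}T_{\alpha}$ (resp.\ $T_{\alpha}T_{\beta}T_{\alpha} = c\, T_{\beta}T_{\alpha}T_{\beta}$) for some $c = C_{1}/C_{2} \in \mathbb{C}^{\times}$.

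Finally I would invoke Lemma \ref{BraidLem} to pin down $c = 1$. The required linear independence of $1, T_{\alpha}, T_{\beta}$ is immediate from Proposition \ref{HLineDecompAndInvert}, since these three elements are supported on the pairwise distinct $\It_{2}$-double cosets labelled by the identity, $w_{\alpha}$, and $w_{\beta}$ in $\Wt_{\ea}$. Together with $a = q-1 \neq 0$ and $b = q \neq 0$ and the common quadratic relation established in the first paragraph, the hypotheses of Lemma \ref{BraidLem} are satisfied, forcing $c = 1$. The entire argument is a routine assembly of previously established results, and no real obstacle remains; the restriction to the Cartan types $A_{r}, D_{r}, E_{6}, E_{7}$ enters only through the need for the exact quadratic relation for $T_{\af}$ supplied by Proposition \ref{QuadSimpAff}.
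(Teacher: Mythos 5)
Your proof is correct and follows essentially the same route as the paper, whose own argument is precisely the combination of Proposition \ref{Prop:GenBraid} (braid relation up to a nonzero scalar on reduced words) with Lemma \ref{BraidLem} (forcing the scalar to be $1$ via the common quadratic relation and linear independence). The extra care you take in transporting the $T_{\alpha}$, $\alpha\in\Delta$, into $\Hcal_{2}$ via the support-preserving isomorphism $\Hcal\cong\Hcal_{2}$ and in checking linear independence through Proposition \ref{HLineDecompAndInvert} only makes explicit what the paper leaves implicit.
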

	
	\begin{proof}
		This follows from Proposition \ref{Prop:GenBraid} and Lemma \ref{BraidLem}.
	\end{proof}
	
	\begin{Prop}\label{P:TwDef}
		Let $\Phi$ be of type $A_r,D_r$, $E_{6}$ or $E_{7}$. Let $w\in\widetilde{W}_{\af}$ and let $w=w_{1}\cdots w_{k}$ be a reduced expression for $w$ in terms of the affine simple reflections $w_{\alpha}$, where $\alpha\in\Delta\cup\{\af\}$. Define $T_{w}:=T_{w_{1}}\cdots T_{w_{k}}\in \Hcal_{2}$. Then the element $T_{w}$ is independent of the reduced expression for $w$.
	\end{Prop}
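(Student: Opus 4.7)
The plan is to invoke Matsumoto's (equivalently, Tits') theorem on reduced expressions in Coxeter groups, combined with the braid relations already established in Proposition \ref{CompleteBraidRel}. Recall from \S \ref{SS:affine_Weyl_covering_group} that $\widetilde{W}_{\af}$ is the Coxeter group with simple reflections $\{w_{\alpha}\st \alpha\in\Delta\cup\{\af\}\}$, and the length function $\tilde{\ell}$ restricted to $\widetilde{W}_{\af}$ is the Coxeter length with respect to these generators.

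First I would recall Matsumoto's theorem: in any Coxeter group, any two reduced expressions for the same element can be transformed into one another by a finite sequence of braid moves (without using the quadratic/nil relations). For the simply-laced root system $\frac{1}{2}\Phi$, the Coxeter relations among $w_\alpha$ and $w_\beta$ (for $\alpha,\beta\in\Delta\cup\{\af\}$ distinct) are of order $2$ when the nodes are not adjacent in the affine Dynkin diagram, and of order $3$ when they are adjacent. These are precisely the two cases covered by Proposition \ref{CompleteBraidRel}.

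Next I would define $T_w$ via a fixed reduced expression and show the product is invariant under each elementary braid move. Suppose $w=w_{1}\cdots w_{k}$ and $w=w_{1}^{\prime}\cdots w_{k}^{\prime}$ are two reduced expressions connected by a single braid move at positions $i, i+1$ (for a commutation relation) or $i, i+1, i+2$ (for a length-$3$ braid). In the commutation case, Proposition \ref{CompleteBraidRel}(1) gives
\[
T_{w_{i}}T_{w_{i+1}}=T_{w_{i+1}}T_{w_{i}},
\]
and in the length-$3$ case, Proposition \ref{CompleteBraidRel}(2) gives
\[
T_{w_{i}}T_{w_{i+1}}T_{w_{i+2}}=T_{w_{i+1}}T_{w_{i}}T_{w_{i+1}}.
\]
Applying these rewrites inside the product $T_{w_{1}}\cdots T_{w_{k}}$ yields the corresponding product for the transformed reduced expression. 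Iterating over the sequence of braid moves provided by Matsumoto's theorem shows that any two reduced expressions yield the same element of $\Hcal_{2}$.

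There is no real obstacle here beyond invoking the correct combinatorial fact: the quadratic relations from Proposition \ref{QuadSimpAff} and \ref{QuadBraidSimpLinReflect} are \emph{not} needed, since Matsumoto's theorem ensures we only need braid moves to pass between reduced expressions. The restriction to the Cartan types $A_r, D_r, E_6, E_7$ enters solely through Proposition \ref{CompleteBraidRel}, which in turn depends on having the exact (normalized) quadratic relation for $T_{\af}$ from Proposition \ref{QuadSimpAff}; in type $E_8$, where only the weak quadratic relation of Proposition \ref{WeakQuadSimpAff} is currently available, one would need an extra normalization step before the argument applies.
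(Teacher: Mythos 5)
Your proposal is correct and is essentially the paper's own argument: the paper proves this by citing Tits' theorem on reduced words in Coxeter groups \cite[Th\'{e}or\`{e}me 3]{T69} together with the braid relations of Proposition \ref{CompleteBraidRel}, which is exactly the Matsumoto--Tits reduction you spell out. Your additional remark that the quadratic relations are not needed for the rewriting itself (only for establishing Proposition \ref{CompleteBraidRel} in the stated Cartan types) is an accurate gloss on why the hypothesis on $\Phi$ appears.
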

	
	\begin{proof}
		This follows from \cite[Th\'{e}or\`{e}me 3]{T69} and Proposition \ref{CompleteBraidRel}.
	\end{proof}
	
	\begin{Lem}\label{OmegaAut}
		Let $\Phi$ be of type $A_r,D_r$, $E_{6}$ or $E_{7}$. Let $\varepsilon\in\Omegat$ and $\alpha\in\Delta\cup\{\af\}$. Then $T_{\varepsilon}T_{\alpha}T_{\varepsilon}^{-1}=T_{\varepsilon(\alpha)}$.
	\end{Lem}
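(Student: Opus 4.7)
The plan is to show that $T_\varepsilon T_\alpha T_\varepsilon^{-1}$ lives in the line $(\Hcal_2)_{w_{\varepsilon(\alpha)}}$, is nonzero, and satisfies the same quadratic relation as $T_{\varepsilon(\alpha)}$; uniqueness of the normalized element will then force equality.

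First I would pin down the support. Since $\varepsilon \in \Omegat$ has length $0$, we have $\tilde{\ell}(\varepsilon w_\alpha) = \tilde{\ell}(w_\alpha) = 1 = \tilde{\ell}(\varepsilon w_\alpha \varepsilon^{-1})$, and moreover the concatenated expression $\varepsilon \cdot w_\alpha \cdot \varepsilon^{-1}$ is reduced in the sense required by Proposition \ref{Prop:GenBraid}. Applying that proposition twice (first to $T_\varepsilon * T_\alpha$, then to the result multiplied by $T_\varepsilon^{-1} = T_\varepsilon$, noting $T_\varepsilon^2 = 1$), and using that $T_\varepsilon$ and $T_\alpha$ are invertible (Propositions \ref{QuadSimpAff}, \ref{QuadBraidSimpLinReflect}, \ref{QuadLen0}), I conclude that $T_\varepsilon T_\alpha T_\varepsilon^{-1}$ is a nonzero scalar multiple of $t_{\varepsilon w_\alpha \varepsilon^{-1}}$. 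Since the natural action of $\Omegat$ on the affine Dynkin diagram satisfies $\varepsilon w_\alpha \varepsilon^{-1} = w_{\varepsilon(\alpha)}$, this gives
\[
T_\varepsilon T_\alpha T_\varepsilon^{-1} = c\, T_{\varepsilon(\alpha)}
\]
for some $c \in \mathbb{C}^\times$.

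Next I would determine $c$ via the quadratic relation. Conjugation is an algebra automorphism, so from Proposition \ref{QuadBraidSimpLinReflect} or Proposition \ref{QuadSimpAff} we get
\[
(T_\varepsilon T_\alpha T_\varepsilon^{-1})^2 = T_\varepsilon\bigl((q-1)T_\alpha + q\bigr)T_\varepsilon^{-1} = (q-1)(T_\varepsilon T_\alpha T_\varepsilon^{-1}) + q.
\]
Substituting $T_\varepsilon T_\alpha T_\varepsilon^{-1} = c\, T_{\varepsilon(\alpha)}$ and using $T_{\varepsilon(\alpha)}^2 = (q-1)T_{\varepsilon(\alpha)} + q$ yields
\[
c^2(q-1)T_{\varepsilon(\alpha)} + c^2 q = c(q-1)T_{\varepsilon(\alpha)} + q.
\]
Comparing coefficients (using $q \geq 2$ so $q \neq 0$ and $q-1 \neq 0$) forces $c = 1$, completing the proof.

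The argument is essentially routine once the support analysis of Proposition \ref{Prop:GenBraid}, the normalization of $T_\varepsilon$, and the exact quadratic relation for $T_\alpha$ (which is exactly what requires the exclusion of $E_8$) are in hand. The only potential subtlety is verifying that $\tilde{\ell}(\varepsilon w_\alpha \varepsilon^{-1}) = 1$ and that the concatenation is reduced in the sense needed to invoke Proposition \ref{Prop:GenBraid} for both multiplications; this is immediate from the semidirect product decomposition $\Wt_{\ea} \cong \Omegat \ltimes \Wt_{\af}$ and the fact that the length function is constant on $\Omegat$-cosets of $\Wt_{\af}$.
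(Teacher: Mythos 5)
Your proof is correct and follows essentially the same route as the paper: Proposition \ref{Prop:GenBraid} (with the length-$0$ property of $\varepsilon$ and $T_\varepsilon^2=1$) gives $T_\varepsilon T_\alpha T_\varepsilon^{-1}=c\,T_{\varepsilon(\alpha)}$ with $c\neq 0$, and the scalar is pinned down because conjugation preserves the quadratic relation $X^2=(q-1)X+q$. The only cosmetic difference is that the paper delegates the final step to Lemma \ref{BraidLem}, whereas you carry out the coefficient comparison (which tacitly uses that $1$ and $T_{\varepsilon(\alpha)}$ are linearly independent, clear from the support decomposition) explicitly.
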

	
	\begin{proof}
		By Proposition \ref{Prop:GenBraid} we know that $T_{\varepsilon}T_{\alpha}T_{\varepsilon}^{-1}=cT_{\varepsilon(\alpha)}$ for some $c\in\mathbb{C}^{\times}$. Because $T_{\alpha}$ and $T_{\varepsilon(\alpha)}$ satisfy the same quadratic relation with nonzero linear coefficient it follows that $c=1$ by Lemma \ref{BraidLem}.
	\end{proof}

    \subsection{IM-presentation}\label{SS:IMPres}
    
    Finally we can prove that when $\Phi$ has Cartan type $A_r,D_{2r+1},E_{6},E_{7}$ the Hecke algebra $\Hcal$ has an Iwahori-Matsumoto presentation (IM-presentation for short). The same result is expected to hold for $D_{2r}$ and $E_{8}$, though we are not able to prove it in this paper.

        Let us first define an abstract algebra derived from the extended Weyl group $\Wt_{\ea}$ defined via generators and relations as in \cite[Section 3.1]{IM}. Let 
        \[
        H_{\af}=\la 1, t_{\alpha}\st \alpha\in\Delta\cup\{\af\}\ra
        \]
        be the unital $\mathbb{C}$-algebra generated by the symbols $t_{\alpha}$, where $\alpha\in\Delta\cup\{\af\}$, subject to the following relations:
        \begin{align*}
            &t_{\alpha}^{2}=(q-1)t_{\alpha}+q;\\
            &t_{\alpha}t_{\beta}=t_{\beta}t_{\alpha},\quad \text{if } \langle\alpha,\beta\rangle=0;\\
            &t_{\alpha}t_{\beta}t_{\alpha}=t_{\beta}t_{\alpha}t_{\beta},\quad \text{if } \langle\alpha,\beta\rangle=-1.
        \end{align*}
        Note that for $w\in \Wt_{\af}$ with a reduced expression $w=w_{\alpha_{i_{1}}}\cdots w_{\alpha_{i_{k}}}$ we can define $t_{w}=t_{\alpha_{i_{1}}}\cdots t_{\alpha_{i_{k}}}$. This definition is independent of the choice of reduced word by the same argument as in Proposition \ref{P:TwDef}.
        
        The group $\Omegat\subseteq \Wt_{\ea}$ acts on $H_{\af}$ by algebra automorphisms as follows. Recall that the elements of $\Omegat$ act on $\Delta\cup\{\af\}$ by permutation. Then for $\varepsilon\in\Omegat$ define $\varepsilon(t_{\alpha})=t_{\varepsilon(\alpha)}$. This induces an algebra automorphism of $H_{\af}$. Moreover, this is compatible with the action of $\Omegat$ on $\Wt_{\af}$ in the sense that $\varepsilon(t_{w})=t_{\varepsilon w\varepsilon^{-1}}$. Let 
        \[
        H_{\ea}:=\mathbb{C}[\Omegat]\otimes H_{\af}.
        \]
        This $\mathbb{C}$ vector space can be given the structure of a $\mathbb{C}$-algebra as follows. For $\varepsilon_{j}\in \Omegat$ and $w_{j}\in \Wt_{\af}$ define
        \begin{equation*}
            (\varepsilon_{1}\otimes t_{w_{1}})\cdot (\varepsilon_{2}\otimes t_{\alpha_{2}}):=\varepsilon_{1}\varepsilon_{2}\otimes t_{\varepsilon_{2}^{-1}(\alpha_{1})}t_{\alpha_{2}}
        \end{equation*}

        \begin{Thm}\label{IMPres} Assume that $G$ has Cartan type $A_r, D_{2r+1}, E_{6},E_{7}$. Then the linear map $H_{\ea}\rightarrow \Hcal$ defined by $\varepsilon\otimes t_{w}\mapsto T_{\varepsilon}T_{w}$, where $\varepsilon\in\Omegat$ and $w\in \Wt_{\af}$, is an isomorphism of $\mathbb{C}$-algebras.
        \end{Thm}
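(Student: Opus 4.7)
The plan is to upgrade the almost-basis $\{T_\varepsilon T_w : \varepsilon\in\Omegat,\, w\in\Wt_{\af}\}$ from Lemma \ref{SuppComplete} into an honest basis satisfying the full IM relations, and then read off the isomorphism. Concretely, I would first check that the images $T_\alpha$ ($\alpha\in\Delta\cup\{\af\}$) and $T_\varepsilon$ ($\varepsilon\in\Omegat$) in $\Hcal$ (equivalently in $\Hcal_2$ via Proposition \ref{HIsoH2}) satisfy every defining relation of $H_{\ea}$, which will produce a well-defined algebra homomorphism $\Phi: H_{\ea}\to\Hcal$. Each relation has already been established: the quadratic relation $T_\alpha^2=(q-1)T_\alpha+q$ is Proposition \ref{QuadBraidSimpLinReflect} for $\alpha\in\Delta$ and Proposition \ref{QuadSimpAff} for $\alpha=\af$; the braid relations among the $T_\alpha$'s are Proposition \ref{CompleteBraidRel}; and the semidirect-product relation $T_\varepsilon T_\alpha=T_{\varepsilon(\alpha)}T_\varepsilon$ is Lemma \ref{OmegaAut}.

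The one remaining relation concerns multiplication within $\mathbb{C}[\Omegat]$, namely $T_{\varepsilon_1}T_{\varepsilon_2}=T_{\varepsilon_1\varepsilon_2}$. Here I would invoke the Cartan-type hypothesis: by Table \ref{T:table} and $\Omegat\cong\Yt/2Y$, we have $|\Omegat|\leq 2$ in types $A_r, D_{2r+1}, E_6, E_7$. So $\Omegat$ is either trivial (types $A_{2r}, E_6$) or generated by a single nontrivial involution $\varepsilon$ (types $A_{2r+1}, D_{2r+1}, E_7$). In the latter case the only nontrivial product to check is $T_\varepsilon^2$, which equals $1$ by Proposition \ref{QuadLen0} after fixing the sign of $T_\varepsilon$ (the one freedom allowed by that proposition). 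No genuine $2$-cocycle can appear because $\Omegat$ has at most one nontrivial element; thus $\Phi$ is a well-defined algebra homomorphism.

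Next I would show $\Phi$ is bijective by a basis comparison. On the source side, the standard argument gives that $\{t_w : w\in\Wt_{\af}\}$ is a basis of $H_{\af}$ (well-definedness on reduced words by the same Tits-style argument as in Proposition \ref{P:TwDef}), so $\{\varepsilon\otimes t_w : \varepsilon\in\Omegat,\, w\in\Wt_{\af}\}$ is a basis of $H_{\ea}=\mathbb{C}[\Omegat]\otimes H_{\af}$. On the target side, by Proposition \ref{Prop:GenBraid} applied to the length-additive decomposition $\ell(\varepsilon w)=\ell(\varepsilon)+\ell(w)=\ell(w)$, each product $T_\varepsilon T_w$ lies in $(\Hcal_2)_{\varepsilon w}$ and is nonzero since $T_\varepsilon$ is invertible (Proposition \ref{QuadLen0}) and $T_w\neq 0$. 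Because $\Wt_{\ea}\cong\Omegat\ltimes\Wt_{\af}$ and $\Hcal_2=\bigoplus_{v\in\Wt_{\ea}}(\Hcal_2)_v$ is a line decomposition by Proposition \ref{HLineDecompAndInvert}, the collection $\{T_\varepsilon T_w\}$ is a basis of $\Hcal_2\cong\Hcal$. Hence $\Phi$ sends a basis to a basis, completing the proof.

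At this stage every verification is essentially a citation, so there is no real technical obstacle within the scope of the theorem; the genuine difficulties lie exactly where the hypothesis on Cartan type excludes them. In type $D_{2r}$, where $\Omegat\cong\Z/2\Z\times\Z/2\Z$, a potentially nontrivial $2$-cocycle on $\Omegat$ would need to be trivialized before the $T_\varepsilon$'s could be consistently normalized; in type $E_8$, the correct scaling of $T_\af$ giving $T_\af^2=(q-1)T_\af+q$ (as opposed to the weaker Proposition \ref{WeakQuadSimpAff}) has not been established. Both of these are precisely the obstructions flagged in the introduction, and the restriction to $A_r, D_{2r+1}, E_6, E_7$ is exactly what sidesteps them.
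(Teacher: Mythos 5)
Your proposal is correct and follows essentially the same route as the paper: the homomorphism property is exactly the combination of Propositions \ref{QuadBraidSimpLinReflect}, \ref{QuadSimpAff}, \ref{QuadLen0}, \ref{CompleteBraidRel} and Lemma \ref{OmegaAut}, and bijectivity is the same basis comparison via Lemma \ref{SuppComplete} and Proposition \ref{HLineDecompAndInvert}. Your extra observation that $\left|\Omegat\right|\leq 2$ in the listed Cartan types (so $T_{\varepsilon_1}T_{\varepsilon_2}=T_{\varepsilon_1\varepsilon_2}$ reduces to $T_\varepsilon^2=1$ and no $2$-cocycle can intervene) is implicit in the paper's use of Proposition \ref{QuadLen0} and matches its stated reason for excluding $D_{2r}$ and $E_8$.
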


        \begin{proof}
            The map is a linear isomorphism because the set $\{\varepsilon\otimes t_{w}\st\varepsilon\in\Omegat,\,w\in\Wt_{\af}\}$ is a basis for $H_{\ea}$ and the set $\{T_{\varepsilon}T_{w}\st\varepsilon\in \Omegat,\,w\in\Wt_{\af}\}$ is a basis for $\Hcal$. The map is a $\mathbb{C}$-algebra homomorphism because of Propositions \ref{QuadBraidSimpLinReflect}, \ref{QuadSimpAff}, \ref{QuadLen0}, \ref{CompleteBraidRel}, and Lemma \ref{OmegaAut}.
        \end{proof}

        Theorem \ref{IMPres} and the work of \cite{IM} imply that $\Hcal$ is isomorphic to the Iwahori-Hecke algebra of a linear group. Specifically, let 
        \[
        G^{\prime}:=G/Z_{2},
        \]
        where $Z_{2}$ is the subgroup of $2$-torsion in the center of $G$. (The significance of $Z_{2}$ is that the fundamental group of $G^{\prime}$ is isomorphic to $\Yt/2Y$. Thus the dual root datum of $G^{\prime}$ agrees with the dual root datum of $\Gt$.) Let $I^{\prime}$ be an Iwahori subgroup of $G^{\prime}$ and let $\Hcal(G^{\prime},I^{\prime})$ be the Iwahori-Hecke algebra of locally constant $I^{\prime}$-biinvariant functions on $G^{\prime}$.

        \begin{Cor}\label{HeckeShimCorr}
            As $\mathbb{C}$-algebras, we have
            \[
            \Hcal(G^{\prime},I^{\prime})\cong\Hcal\cong H_{\ea}.
            \]
        \end{Cor}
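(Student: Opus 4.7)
The plan is to deduce the corollary from Theorem \ref{IMPres} together with the classical Iwahori–Matsumoto presentation of the Iwahori–Hecke algebra of a split reductive $p$-adic group. Theorem \ref{IMPres} already supplies the isomorphism $\Hcal\cong H_{\ea}$, so the only remaining task is to identify $\Hcal(G',I')$ with the abstract algebra $H_{\ea}$.

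First I would recall that $G'=G/Z_2$ is the split reductive group over $F$ with root datum $(\Xt,\tfrac{1}{2}\Phi,\Yt,2\Phi^\vee)$ (as noted in \S \ref{SS:affine_Weyl_covering_group}). Its finite Weyl group is the Weyl group $W$ of $\Phi$; its affine Weyl group is generated by the reflections in the walls of the alcove $2\mathfrak{A}$ and is therefore the group $\Wt_{\af}$ we have been using; and its extended affine Weyl group, being $W\ltimes \Yt$, coincides with $\Wt_{\ea}$. Moreover, the Coxeter generators associated with $(G',I')$ are the simple affine reflections $\{w_\alpha:\alpha\in\Delta\cup\{\af\}\}$, and the subgroup of length $0$ elements is $\Omegat\cong \Yt/2Y$, the fundamental group of $G'$.

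Next I would invoke the main theorem of Iwahori–Matsumoto \cite{IM}: the Iwahori–Hecke algebra $\Hcal(G',I')$ admits the presentation with generators $\{T'_\varepsilon,T'_{w_\alpha}:\varepsilon\in\Omegat,\ \alpha\in\Delta\cup\{\af\}\}$, the quadratic relations $(T'_{w_\alpha})^2=(q-1)T'_{w_\alpha}+q$ (where $q=|\kappa|$ is the residue cardinality of $F$, identical for both $G$ and $G'$), the braid relations dictated by the affine Coxeter graph (which is the same as our graph since $\frac{1}{2}\Phi\cong\Phi$), the relations $(T'_\varepsilon)^2=1$ with $T'_{\varepsilon_1}T'_{\varepsilon_2}=T'_{\varepsilon_1\varepsilon_2}$, and the compatibility $T'_\varepsilon T'_{w_\alpha}(T'_\varepsilon)^{-1}=T'_{w_{\varepsilon(\alpha)}}$. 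This is exactly the defining presentation of $H_{\ea}$, and so the linear map sending $\varepsilon\otimes t_w$ to $T'_\varepsilon T'_w$ is an isomorphism of $\C$-algebras $H_{\ea}\xrightarrow{\;\sim\;}\Hcal(G',I')$.

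Chaining these two isomorphisms with $\Hcal\cong H_{\ea}$ from Theorem \ref{IMPres} gives the chain displayed in the corollary. No genuine obstacle should arise here: all of the nontrivial representation-theoretic content (multiplicity one on double cosets, support on $\Wt_{\ea}$, the exact quadratic relation at $w_{\af}$, triviality of the $2$-cocycle on $\Omegat$, and braid relations) is already absorbed into Theorem \ref{IMPres}, while the Iwahori–Matsumoto presentation for the linear group $G'$ is standard. The only point requiring a minor verification is the bookkeeping step that the root datum of $G'$ produces the same abstract Coxeter data (simple reflections, braid graph, residue parameter $q$) as those appearing in the definition of $H_{\ea}$, which we have just checked.
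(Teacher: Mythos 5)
Your proposal is correct and follows essentially the same route as the paper: Theorem \ref{IMPres} supplies $\Hcal\cong H_{\ea}$, and the identification $H_{\ea}\cong\Hcal(G',I')$ is exactly the content of the Iwahori--Matsumoto presentation for the linear group $G'=G/Z_2$, which the paper simply cites as \cite[Section 3]{IM}. The only difference is that you spell out the bookkeeping (that $G'$ has extended affine Weyl group $\Wt_{\ea}=W\ltimes\Yt$, affine Weyl group $\Wt_{\af}$, length-zero subgroup $\Omegat\cong\Yt/2Y$ equal to its fundamental group, and the same parameter $q$), which the paper leaves implicit in that citation.
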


        \begin{proof}
            This follows directly from Theorem \ref{IMPres} and \cite[Section 3]{IM}. 
        \end{proof}
	
We end this subsection with a few comments on the cases of $E_{8}$ and $D_{2r}$. We expect the same type of Iwahori-Matsumoto presentation to hold in the case of $D_{2r}$ and $E_{8}$ as well. When $F$ is unramified over $\mathbb{Q}_{2}$ the approach of \cite{Karasiewicz} can be generalized by incorporating some ideas from this current paper. We will not go into this because we believe that these cases for any $F$ will follow as we refine and expand the techniques used in this paper. 

For example, for $E_{8}$ the only obstruction is proving that the the simple affine reflection satisfies the desired quadratic relation. This should follow from an analysis along the lines of \S \ref{SSecSplittings} for other vertices in the building of $G$ and an analogous finite Shimura correspondence at these vertices. For $D_{2r}$ the difficulty arises because in this case $\Omegat$ is not a cyclic group. The resolution of this issue is related to the question of extending $\sigma_{2}$ from $\It_2$ to its normalizer in $\Gt$. However, at the moment, certain basic properties of $\sigma_{2}$ remain unknown, such as its dimension. As we further investigate the basic properties of $\sigma_{2}$, this will naturally put us in a better position to address the extension question.


\subsection{Satake Isomorphism}\label{SS:Satake}


In this subsection we state the Satake isomoprhism for our double cover $\Gt$. Everything in this subsection essentially follows from a Bernstein presentation of $\Hcal$, which can be deduced from an IM presentation. So, it applies when $G$ has Cartan type $A_{r},D_{2r+1},E_{6}$, $E_{7}$; it will apply to the other types when the remaining gaps in the IM presentation are closed. 

Let $Z(\Hcal)$ be the center of the Hecke algebra $\Hcal$. Recall that since $\Hcal$ satisfies an IM presentation deforming the extended affine Weyl group $\Wt_{ea}\cong W\ltimes \Yt$ it is known that $Z(\Hcal)\cong \mathbb{C}[\Yt]^{W}$ (e.g. \cite{L89} or \cite[Section 2]{HKP10}).

Let 
\[
e_{\sigma}:=\frac{1}{\sum_{w\in W}q^{\ell(w)}}\sum_{w\in W} T_{w}.
\]
Note that since the $\It$-representation $\sigma$ extends to $\Kt$ we have $\Hcal(\Gt,\Kt,\sigma)=e_{\sigma}*\Hcal*e_{\sigma}$.

Now using the known structure of affine Hecke algebras one can prove the following corollary.

\begin{Cor}\label{C:Satake}
    Suppose that $\Hcal$ admits an IM-presentation. Then the map $Z(\Hcal)\rightarrow \Hcal(\Gt,\Kt,\sigma)$ defined by $z \mapsto z*e_{\sigma}$ defines an isomorphism of $\mathbb{C}$-algebras. In particular, $\Hcal(\Gt,\Kt,\sigma)$ is commutative.
\end{Cor}


\section{Iwahori types}\label{S:IwahoriType}


In this section we show that $(\It,\sigma)$ is a type in the sense of \cite[(4.1) Definition]{BK98}. We emphasize that this result is valid for any Cartan type. As a corollary, we derive a local Shimura correspondence when $G$ is of Cartan type $A_r, D_{2r+1}, E_6$ and $E_7$.

The following arguments essentially follow \cite{BK98}, which build on the Borel-Casselman theorem. However, there is one notable difference. Bushnell-Kutzko introduce the notion $G$-cover \cite[(8.1) Definition]{BK98} as a way to axiomatize the construction of types for a reductive group $G$ from types of its Levi subgroups. Their main theorem for constructing types using $G$-covers is \cite[(8.3) Theorem]{BK98}. The definition of $G$-cover refers back to \cite[(6.1) Definition]{BK98} which contains two conditions (i) and (ii). Our proposed type $(\It,\sigma)$ satisfies condition (ii) by Proposition \ref{HLineDecompAndInvert}, but it does not satisfy condition (i). Specifically, $\sigma$ is neither trivial on $U_{0}$ nor $U^{-}_{1}$, when $e>1$. Fortunately, we are able to modify the argument of Bushnell-Kutzko using special properties of $\sigma$. For example, Lemma \ref{VeryPosSupp} plays a particularly important role. So, we include the details to make clear the necessary changes, in the hope that they might point toward a valuable extension of the notion of $G$-cover.

We begin with a few lemmas.

\begin{Lem}\label{L3to1}
	Let $G$ be a finite group with an Iwahori factorization $(V,T,U)$. Let $\pi\in \Irr(G)$ be such that $r_{U,V}\pi\neq 0$. The element $e_{U}e_{V}e_{U}\in \End({\pi})$ induces a linear isomorphism of the space $\pi^{U}$ and thus is a generator of the one-dimensional space $\Hom_{T}(\pi^{U},\pi^{U})=\mathbb{C}e_{U}$.
\end{Lem}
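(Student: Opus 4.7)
The plan is to leverage Proposition \ref{P:two_functors_for_Iwahori_decomp}(4) twice, once for the ordered decomposition $(U,T,V)$ and once for $(V,T,U)$, and then assemble the two resulting isomorphisms by composition.

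First, I would observe that $e_U e_V e_U$ genuinely defines a $T$-equivariant endomorphism of $\pi^U$. The point is that $T$ normalizes both $U$ and $V$, so in $\mathbb{C}[G]$ one has $te_U=e_Ut$ and $te_V=e_Vt$ for every $t\in T$; consequently $e_Ue_Ve_U$ commutes with the $T$-action. Moreover its image lies in $e_U\pi=\pi^U$, and for $v\in\pi^U$ we have $e_Uv=v$, so the restriction to $\pi^U$ is given by the simpler formula $v\mapsto e_Ue_Vv$.

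Next, since $r_{U,V}\pi\neq 0$, Proposition \ref{P:two_functors_for_Iwahori_decomp}(1) implies that $\pi^U\cong r_{U,V}\pi$ is an irreducible $T$-module, and part (4) of the same proposition says $\dim\Hom_T(\pi^U,\pi^V)=1$, the Hom space being generated by the isomorphism $e_Ve_U\colon \pi^U\xrightarrow{\sim}\pi^V$. Absorbing $e_U$ on the $U$-fixed vectors, this isomorphism is simply the restriction $e_V|_{\pi^U}\colon\pi^U\xrightarrow{\sim}\pi^V$. Swapping the roles of $U$ and $V$ (legitimate because $(V,T,U)$ is also an Iwahori decomposition and Proposition \ref{P:two_functions_are_adjoint}(1) makes the two choices equivalent), the same argument yields that $e_U|_{\pi^V}\colon\pi^V\xrightarrow{\sim}\pi^U$ is an isomorphism of $T$-modules.

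The composition $e_Ue_V|_{\pi^U}\colon\pi^U\xrightarrow{e_V}\pi^V\xrightarrow{e_U}\pi^U$ is therefore a $T$-equivariant isomorphism; by the computation of the first paragraph this is exactly the restriction of $e_Ue_Ve_U$ to $\pi^U$. Finally, Schur's lemma applied to the irreducible $T$-module $\pi^U$ yields $\Hom_T(\pi^U,\pi^U)=\mathbb{C}\cdot\mathrm{id}_{\pi^U}=\mathbb{C}e_U$, so any nonzero $T$-endomorphism of $\pi^U$ is a generator; since $e_Ue_Ve_U|_{\pi^U}$ is such an endomorphism, it generates. The argument is essentially formal, and no real obstacle arises; the only point one must be careful about is invoking the symmetric form of Proposition \ref{P:two_functors_for_Iwahori_decomp}(4) to get invertibility in both directions, which is what allows the composition, rather than a single idempotent map, to remain an isomorphism.
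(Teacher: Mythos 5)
Your proof is correct and is essentially the paper's argument: both factor $e_Ue_Ve_U=(e_Ue_V)(e_Ve_U)$, invoke Proposition \ref{P:two_functors_for_Iwahori_decomp}(4) (in both orders, the swap being justified by Proposition \ref{P:two_functions_are_adjoint}(1)) to see that $e_Ve_U\colon\pi^U\to\pi^V$ and $e_Ue_V\colon\pi^V\to\pi^U$ are isomorphisms of irreducible $T$-modules, and conclude by Schur that the composite is a nonzero element, hence a generator, of $\Hom_T(\pi^U,\pi^U)=\mathbb{C}e_U$.
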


\begin{proof}
	By Proposition \ref{P:two_functors_for_Iwahori_decomp} the element $e_{U}e_{V}\in \Hom_{T}(\pi^{V},\pi^{U})$ is an isomorphism, and $r_{U,V}\pi\cong \pi^{U}\cong \pi^{V}$ is an irreducible $T$-module. Similarly, $e_{V}e_{U}\in \Hom_{T}(\pi^{U},\pi^{V})$ is an isomorphism. Hence $e_{U}e_{V}e_{U}=(e_Ue_V)(e_Ve_U)\in \Hom_{T}(\pi^{U},\pi^{U})=\mathbb{C}e_{U}$ is nonzero and the result follows.
\end{proof}

In the next lemma we prove that the Hecke algebra $\Hcal$ contains elements supported on certain positive elements of $Z(\Tt)$ with a natural normalization. We note that this lemma does not rely on any of the previous support computations in $\Hcal$. 

\begin{Lem}\label{VeryPosSupp}
	Let $z\in Z(\Tt)$ be such that $zU_{0}z^{-1}\subseteq U_{2e}$. Then the element $e_{\Uf_{0}}e_{\Uf^{-}_{1}}\in\Hcal(\,\Ift\,)$ defines a nonzero element in $\Hom_{\It\cap\,^{z}\It}(\sigma,\,^{z}\sigma)$.
\end{Lem}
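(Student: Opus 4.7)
The plan is to verify that the operator $\phi := e_{\Uf_0}e_{\Uf_1^-} \in \End_\C(\sigma)$, regarded as a linear map $\sigma \to \,^z\sigma$, intertwines the action of $\It \cap \,^z\It$, and then to exhibit its nonvanishing. First, I would record the Iwahori decomposition
\[
\It \cap \,^z\It = U_1^- \cdot T_0 \cdot (zU_0z^{-1}),
\]
which follows from $zU_0z^{-1} \subseteq U_{2e} \subseteq U_0$ (by hypothesis) and $U_1^- \subseteq zU_1^-z^{-1}$ (dual very-positivity, since $z \in Z(\Tt)$). The same very-positivity yields $z^{-1}U_1^- z \subseteq U_{2e+1}^- \subseteq \Gamma(2e)$; together with the compatibility of the natural section $\s^-$ on $U^-$ with the splitting $\s$ of Proposition \ref{Gamma12eSplitting}, this would give $z^{-1}\s^-(u^-)z \in \s(\Gamma(2e)) \subseteq \ker\sigma$ for all $u^- \in U_1^-$.

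Next, I would verify equivariance piecewise. Writing $g = u^- t u^+ \in \It\cap\,^z\It$ with $u^- \in U_1^-$, $t \in T_0$, $u^+ \in zU_0z^{-1}$, the left-hand side simplifies as
\[
\phi\,\sigma(g) = e_{\Uf_0}e_{\Uf_1^-}\sigma(u^-)\sigma(t)\sigma(u^+) = \sigma(t)\,e_{\Uf_0}e_{\Uf_1^-},
\]
since $\sigma(u^+) = 1$ (as $u^+ \in U_{2e} \subseteq \ker\sigma$), $e_{\Uf_1^-}\sigma(u^-) = e_{\Uf_1^-}$ by absorption, and $T_0$ normalizes both $\Uf_0$ and $\Uf_1^-$, so $\sigma(t)$ commutes with each idempotent. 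The right-hand side, using $z^{-1}tz = t$, $\sigma(z^{-1}u^-z) = 1$, and $\sigma(z^{-1}u^+z)\,e_{\Uf_0} = e_{\Uf_0}$ (as $z^{-1}u^+z \in U_0$), simplifies identically to $\sigma(t)\,e_{\Uf_0}e_{\Uf_1^-}$, confirming equivariance.

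Finally, nonvanishing of $\phi$ follows from Lemma \ref{L3to1}: since $r_{\Uf_0,\Uf_1^-}\sigma \cong \tau \neq 0$, the composition $e_{\Uf_0}e_{\Uf_1^-}e_{\Uf_0}$ acts as a nonzero scalar multiple of $e_{\Uf_0}$ on $\sigma$, forcing $\phi \neq 0$. The main technical subtlety lies in lifting the dual very-positivity $z^{-1}U_1^-z \subseteq U_{2e+1}^-$ from $G$ into the cover $\Gt$ in a manner compatible with the splittings, so that $\sigma$ genuinely kills $z^{-1}\s^-(u^-)z$; this is where the explicit description of $\s_{\Gamma_1(2e)}$ as a product of the unipotent and torus sections in Proposition \ref{Gamma12eSplitting} is essential.
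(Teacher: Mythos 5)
Your proposal is correct and follows essentially the same route as the paper: the same Iwahori decomposition of $\It\cap\,^{z}\It$ into $\,^{z}U_0$, $\Tt_0$, $U_1^-$, the same absorption/kernel arguments (with $\,^{z}U_0\subseteq U_{2e}\subseteq\ker\sigma$ and $z^{-1}U_1^-z\subseteq U^-_{2e+1}\subseteq\ker\sigma$, compatibly with the splittings), and nonvanishing via the parahoric-induction formalism. The only cosmetic differences are that you verify equivariance in one combined computation rather than factor by factor, and you invoke Lemma \ref{L3to1} for nonvanishing where the paper cites Proposition \ref{P:two_functors_for_Iwahori_decomp}(4) directly -- these are interchangeable, since Lemma \ref{L3to1} is itself a consequence of that proposition.
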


\begin{proof}
	Note that since $zU_{0}z^{-1}\subseteq U_{2e}$, it also follows that $z^{-1}U^{-}_{1}z\subseteq U^{-}_{2e+1}$. This implies that $\It\cap\,^{z}\It$ has an Iwahori decomposition
	\begin{equation*}
		\It\cap\,^{z}\It\cong \,^{z}U_{0}\times\Tt_{0}\times U^{-}_{1}.
	\end{equation*} 
	 We have to show $e_{\Uf_{0}}e_{\Uf^{-}_{1}}\in \Hom_{\It\cap\,^{z}\It}(\sigma,\,^{z}\sigma)$. For this it suffices to show that it is $\Tt_{0}$-, $U_0$- and $U_1^-$-intertwining. But one can readily see that it is $\Tt_0$-intertwining because $z\in Z(\Tt)$. Next, since $\,^{z}U_{0}\subseteq U_{2e}\subseteq \ker\sigma$, we see that $\,^{z}U_{0}$ acts trivially on the domain. Now the image of $e_{\Uf_{0}}e_{\Uf^{-}_{1}}$ is contained in $\sigma^{U_{0}}=(\,^{z}\sigma)^{\,^{z}U_{0}}$. Thus $\,^{z}U_{0}$ also acts trivially on the codomain. Thus $e_{\Uf_{0}}e_{\Uf^{-}_{1}}$ intertwines the $\,^{z}U_{0}$ action. The case of $U^{-}_{1}$ is similar.
		
	The map $e_{\Uf_{0}}e_{\Uf^{-}_{1}}$ is nonzero by Proposition \ref{P:two_functors_for_Iwahori_decomp} (4) because $\Hom_{\Tt}(\sigma^{\Uf_0},\sigma^{\Uf_1^-})\neq 0$.
\end{proof}

\begin{Rmk}
    Lemma \ref{VeryPosSupp} and Proposition \ref{HMult1} together imply that $\Hom_{\It\cap\,^{z}\It}(\sigma,\,^{z}\sigma)=\mathbb{C}e_{\Uf_{0}}e_{\Uf^{-}_{1}}$.
\end{Rmk} 

The next two lemmas are based on \cite[(7.9) Theorem]{BK98}. We cannot invoke this result directly because in our case the pair $(\It,\sigma)$ does not satisfy hypothesis (a) of this theorem, because $(\It,\sigma)$ does not satisfy condition (i) in \cite[(6.1) Definition]{BK98} as mentioned at the beginning of this section.

\begin{Lem}\label{LemSurj}
Let $q:\pi\to\pi_U$ be the canonical surjection. The map $\Hom(\sigma,\pi)\rightarrow \Hom(\sigma^{U_{0}},\pi_{U})$ defined by $f\mapsto q\circ f|_{\sigma^{U_{0}}}$ is a homomorphism of $(\It\cap \widetilde{B},\widetilde{B})$-bimodules. The map induced by restriction $\Psi:\Hom_{\It}(\sigma,\pi)\rightarrow \Hom_{\Tt_{0}}(\sigma^{U_{0}},\pi_{U})$ is a surjective linear map.
\end{Lem}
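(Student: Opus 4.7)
The first assertion, that $f\mapsto q\circ f|_{\sigma^{U_0}}$ is a homomorphism of $(\It\cap\widetilde{B},\widetilde{B})$-bimodules, is a direct unpacking of the definitions. The subgroup $\It\cap\widetilde{B}=\Tt_0 U_0$ preserves $\sigma^{U_0}$ pointwise (since $U_0$ acts trivially on $\sigma^{U_0}$ and $\Tt_0$ normalizes $U_0$), and the right action of $\widetilde{B}$ descends to $\pi_U=\pi/\pi(U)$ because $U$ is normal in $\widetilde{B}$ and acts trivially on the Jacquet quotient. Since $q$ is $\widetilde{B}$-equivariant and restriction to $\sigma^{U_0}$ is $\It\cap\widetilde{B}$-equivariant, the intertwining follows.

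For surjectivity, the plan is to transport the statement across the parahoric-induction adjunction and then apply Casselman's canonical-lift theorem together with the very positive elements from Lemma \ref{VeryPosSupp}. Using Proposition \ref{P:two_functions_are_adjoint} for the Iwahori factorization $(U_0,\Tt_0,U_1^-)$ of $\It$, together with the identification $\sigma^{U_0}\cong\tau$ from Proposition \ref{P:two_functors_for_Iwahori_decomp}, we obtain natural isomorphisms
\[
\Hom_\It(\sigma,\pi)\cong\Hom_{\Tt_0}(\tau,\,e_{U_0}e_{U_1^-}\pi),\qquad \Hom_{\Tt_0}(\sigma^{U_0},\pi_U)\cong\Hom_{\Tt_0}(\tau,\pi_U),
\]
under which $\Psi$ corresponds to the map induced by the $\Tt_0$-equivariant composition $e_{U_0}e_{U_1^-}\pi\hookrightarrow\pi^{U_0}\twoheadrightarrow\pi_U$. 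Surjectivity of $\Psi$ is therefore equivalent to: for every $\phi\in\Hom_{\Tt_0}(\tau,\pi_U)$ there exists a $\Tt_0$-equivariant lift $\tilde\phi\colon\tau\to e_{U_0}e_{U_1^-}\pi$ satisfying $q\circ\tilde\phi=\phi$.

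To construct $\tilde\phi$, I would choose a very positive $z\in Z(\Tt)$ with $zU_0z^{-1}\subseteq U_{2e}$ and let $T_z\in\Hcal$ be the associated Hecke operator, which is invertible by Proposition \ref{HLineDecompAndInvert}. By Casselman's theorem (valid for the smooth finite-length $\pi$ relevant to types), $\pi^{U_0}$ decomposes as $\Tt_0$-modules into $\pi^{U_0}_+\oplus\pi^{U_0}_0$, where $T_z$ is invertible on $\pi^{U_0}_+$ and nilpotent on $\pi^{U_0}_0$, and where $q$ restricts to a $\Tt_0$-isomorphism $\pi^{U_0}_+\xrightarrow{\sim}\pi_U$. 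Composing $\phi$ with the inverse of this isomorphism yields a $\Tt_0$-equivariant map $\tilde\phi\colon\tau\to\pi^{U_0}_+$ with $q\circ\tilde\phi=\phi$. The crucial remaining step is to upgrade $\tilde\phi$ to take values in $e_{U_0}e_{U_1^-}\pi$: for any $w\in\pi^{U_0}_+$, use the invertibility of $T_z$ to write $w=T_z^{-n}(T_z^n w)$ for large $n$, and exploit Lemma \ref{VeryPosSupp}, which provides the intertwiner $e_{U_0}e_{U_1^-}\colon\sigma\to{}^z\sigma$ and encodes the factorization of $\It z^n\It$ through $U_0$ and $U_1^-$, to rewrite $T_z^n w$ as (a scalar multiple of) $e_{U_0}e_{U_1^-}(z^n w)$.

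The main obstacle is this last inclusion $\pi^{U_0}_+\subseteq e_{U_0}e_{U_1^-}\pi$: one must track the precise normalization of $T_z$ and the interplay between the Hecke action and the operator $e_{U_0}e_{U_1^-}$. This step parallels the role played by the positive element $\zeta_P$ in Bushnell-Kutzko's theory of $G$-covers \cite[(7.9) Theorem]{BK98}, but it requires genuine modification here because $\sigma$ is not trivial on $U_0$ or $U_1^-$; Lemma \ref{VeryPosSupp} is tailored to bridge exactly this gap.
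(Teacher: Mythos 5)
Your first paragraph (the bimodule statement) is fine, and your reformulation of $\Psi$ via the adjunction of Proposition \ref{P:two_functions_are_adjoint} is a reasonable repackaging. But the heart of the lemma — surjectivity — is not actually proved in your proposal, and the route you sketch has two concrete problems. First, the Casselman-type decomposition $\pi^{U_{0}}=\pi^{U_{0}}_{+}\oplus\pi^{U_{0}}_{0}$ with $q:\pi^{U_{0}}_{+}\xrightarrow{\sim}\pi_{U}$ requires admissibility (finite-dimensionality of the relevant invariant spaces, to get a Fitting decomposition), whereas the lemma is stated, and is needed downstream (in the Bushnell--Kutzko style argument for Theorem \ref{TypesThm}), for an arbitrary smooth $\pi$; restricting to ``finite-length $\pi$ relevant to types'' changes the statement. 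Second, the step you yourself flag as the main obstacle, namely $\pi^{U_{0}}_{+}\subseteq e_{\Uf_{0}}e_{\Uf_{1}^{-}}\pi$, is argued by letting $T_{z}$ act on vectors $w\in\pi^{U_{0}}$; but $T_{z}$ is an element of $\Hcal(\Gt,\It;\sigma)$, which acts on $\Hom_{\It}(\sigma,\pi)$ (or on $\ind_{\It}^{\Gt}\sigma$), not on $\pi^{U_{0}}$ — the operator relevant to a Casselman argument would be $e_{U_{0}}\pi(z)$, and relating it to the genuine, $\End(\sigma^{\vee})$-valued Hecke element is precisely the nontrivial computation that has to be done. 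So the proposal leaves the essential content of the lemma unestablished.

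For comparison, the paper avoids both issues by a direct construction: lift $f$ to $F=s\circ f\circ e_{\Uf_{0}}\in\Hom_{\Tt_{0}}(\sigma,\pi)$ using a $\Tt_{0}$-splitting $s$ of $q$ (semisimplicity of smooth $\Tt_{0}$-modules), choose $z\in Z(\Tt)$ very positive relative to both $\sigma$ and the level of $\image(F)$, and average to form $F_{1}=\int_{\It}\pi(x)\pi(z)F\sigma(x^{-1})\,dx\in\Hom_{\It}(\sigma,\pi)$. An Iwahori-factorization computation together with Lemma \ref{L3to1} gives $q\circ F_{1}\circ e_{\Uf_{0}}\equiv\pi_{U}(z)\circ f\circ e_{\Uf_{0}}$ up to a positive scalar, and a second computation shows $q[(A*T)(w)]\equiv\pi_{U}(z)\,q[A(e_{\Uf_{0}}w)]$ for the element $T\in\Hcal_{z^{-1}}$ with $T(z^{-1})=e_{\Uf_{0}}e_{\Uf_{1}^{-}}$ furnished by Lemma \ref{VeryPosSupp}. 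Since $T$ is invertible (Proposition \ref{HLineDecompAndInvert}), setting $A=F_{1}*T^{-1}$ yields $\Psi(A)\equiv f$, with no admissibility hypothesis on $\pi$. If you want to salvage your adjunction-based framing, you would still need to reproduce essentially these two displayed identities to produce the lift into $e_{\Uf_{0}}e_{\Uf_{1}^{-}}\pi$; as written, that step is missing.
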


\begin{proof}
	Let $f\in \Hom_{\Tt_{0}}(\sigma^{U_{0}},\pi_{U})$. Note that by definition $f\circ e_{{\Uf}_{0}}=f$. When convenient we use this to identify $f$ with $f\circ e_{{\Uf}_{0}}\in \Hom_{\Tt_{0}}(\sigma,\pi_{U})$.
	
	Now since the category of $\Tt_{0}$-modules is semisimple we can fix a $\Tt_{0}$-module splitting $s:\pi_{U}\rightarrow \pi$ of the $\Tt_{0}$-module surjection $q:\pi\rightarrow \pi_{U}$. Define 
    \[
    F:=s\circ f\circ e_{{\Uf}_{0}}\in \Hom_{\Tt_{0}}(\sigma,\pi),
    \]
    so that $q\circ F=f\circ e_{{\Uf}_{0}}$.
	
	Since $\sigma$ is finite dimensional, there exists $j\in \mathbb{Z}_{\geq 1}$ such that $\Im(F)\subseteq \pi^{U^{-}_{j}}$. Let $z\in Z(\Tt)$ be such that $z^{-1}U^{-}_{1}z\subseteq U^{-}_{j}\cap U^{-}_{2e+1}$.

    Define
    \[
    F_{1}:=\int_{\It}\pi(x)\pi(z)F\circ \sigma(x^{-1})dx\in\Hom_{\It}(\sigma,\pi).
    \]
    Let us compare $F_1$ to $f$. Let $w\in \sigma$. Then by the Iwahori factorization, we have
	\begin{align*}
		q\circ F_1(e_{{\Uf}_{0}}w)=&q\circ \int_{\It} \pi(x)\pi(z)F\circ \sigma(x^{-1}) e_{{\Uf}_{0}}w\,dx\\
		=&q\circ \int_{U_{0}\Tt_{0}U^{-}_{1}} \pi(utv)\pi(z)F\circ \sigma((utv)^{-1}) e_{{\Uf}_{0}}w \,du\,dt\,dv\\
		=&q\circ \int_{U_{0}\Tt_{0}U^{-}_{1}} \pi(utvz)F\circ \sigma((utv)^{-1}) e_{{\Uf}_{0}}w \,du\,dt\,dv.
	\end{align*}
	Using the idempotent $e_{{\Uf}_{0}}$ and the fact that $F$ is a $\Tt_{0}$-module homomorphism we have 
	\begin{align*}
		&q\circ \int_{U_{0}\Tt_{0}U^{-}_{1}} \pi(utvz)F\circ \sigma((utv)^{-1}) e_{U_{0}}w \,du\, dt\, dv\\
        \equiv&q\circ \int_{\Tt_{0}U^{-}_{1}} \pi(tvz)F\circ \sigma((tv)^{-1}) e_{{\Uf}_{0}}w \,dt\,dv\\
		\equiv&q\circ \int_{U^{-}_{1}} \pi(vz)F\circ \sigma(v^{-1}) e_{{\Uf}_{0}}w \,dv,
	\end{align*}
    where by $\equiv$ we mean equality up to a positive scalar. (The discrepancy comes from the choice of measures. We will use the same convention in what follow.)
	Now because $\Im(F)\subseteq \pi^{U^{-}_{j}}$ and  $z^{-1}U^{-}_{1}z\subseteq U^{-}_{j}$, we have
	\begin{align*}
		q\circ \int_{U^{-}_{1}} \pi(vz)F\circ \sigma(v^{-1}) e_{{\Uf}_{0}}w \,dv
        \equiv&q\circ \int_{U^{-}_{1}} \pi(z)\pi(z^{-1}vz)F\circ \sigma(v^{-1}) e_{{\Uf}_{0}}w \,dv\\
		\equiv&q\circ \pi(z)F\circ e_{{\Uf}^{-}_{1}}e_{{\Uf}_{0}}w\\
		\equiv&\pi_{U}(z)f \circ e_{{\Uf}_{0}}e_{{\Uf}^{-}_{1}}e_{{\Uf}_{0}}w,
	\end{align*}
    where the last equality is by the definition of $F$. Finally we can apply Lemma \ref{L3to1} to get 
	\begin{equation*}
		\pi_{U}(z)f \circ e_{{\Uf}_{0}}e_{{\Uf}^{-}_{1}}e_{{\Uf}_{0}}w\equiv\pi_{U}(z)f \circ e_{{\Uf}_{0}}w.
	\end{equation*}
	Thus we have shown that
	\begin{equation}\label{SurjEq1}
		q\circ F_1(e_{{\Uf}_{0}}w)\equiv\pi_{U}(z)f \circ e_{{\Uf}_{0}}w.
	\end{equation}
	
	Recall that for $T\in \Hcal$ and $A\in \Hom_{\It}(\sigma,\pi)$, we define $A*T\in\Hom_{\It}(\sigma,\pi)$ by
    \[
    (A*T)(w):=\int_{\Gt}\pi(h^{-1})A(T(h)w)\,dh.
    \]
    Now suppose that $T\in \Hcal_{z^{-1}}$ is such that $T(z^{-1})=e_{{\Uf}_{0}}e_{{\Uf}^{-}_{1}}$. Such an element exists by Lemma \ref{VeryPosSupp} because $z^{-1}U^{-}_{1}z\subseteq U^{-}_{2e+1}$. Then by definition 
	\begin{align*}
		q[A*T(w)] =&q[ \int_{\Gt}\pi(h^{-1})A(T(h)w)\,dh]\\
		=& q[\int_{\It z^{-1}\It}\pi(h^{-1})A(T(h)w)\,dh]\\
		=& q[\sum_{\It\backslash \It z^{-1}\It}\pi(h^{-1})A(T(h)w)].
	\end{align*}
	Now because $z^{-1}U^{-}_{1}z\subseteq U^{-}_{j}\cap U^{-}_{2e+1}$,
	\begin{align*}
		q[\sum_{\It\backslash \It z^{-1}\It}\pi(h^{-1})A(T(h)w)] =& q[\sum_{zU_{0}z^{-1}\backslash U_{0}}\pi(u^{-1}z)A(T(z^{-1})\sigma(u)w)]\\
		=& q[\sum_{zU_{0}z^{-1}\backslash U_{0}}\pi(z)A(e_{{\Uf}_{0}}e_{{\Uf}^{-}_{1}}\sigma(u)w)]\\
		\equiv&\pi_{U}(z)q[A(e_{{\Uf}_{0}}e_{{\Uf}^{-}_{1}}e_{{\Uf}_{0}}w)].
	\end{align*}
	By Lemma \ref{L3to1},
	\begin{equation*}
	\pi_{U}(z)q[A(e_{{\Uf}_{0}}e_{{\Uf}^{-}_{1}}e_{{\Uf}_{0}}w)]\equiv\pi(z)q[A(e_{{\Uf}_{0}}w)].
	\end{equation*}
	Thus we have shown
	\begin{equation}\label{SurjEq2}
		q[A*T(w)] \equiv\pi_{U}(z)q[A(e_{{\Uf}_{0}}w)].
	\end{equation}
	
	Now we set $A=F_1*T^{-1}$ and combine equations \eqref{SurjEq1} and \eqref{SurjEq2} to get 
    \begin{equation*}
		\pi_{U}(z)f \circ e_{{\Uf}_{0}}w\stackrel{\eqref{SurjEq1}}{\equiv}q\circ F_1(e_{{\Uf}_{0}}w)\equiv q\circ F_1*T^{-1}*T(e_{{\Uf}_{0}}w)\stackrel{\eqref{SurjEq2}}{\equiv}\pi_{U}(z)q[[F*T^{-1}](e_{{\Uf}_{0}}w)].
	\end{equation*}
	
	Thus 
	\begin{equation*}
		q\circ [F*T^{-1}]\circ e_{{\Uf}_{0}}\equiv f\circ e_{{\Uf}_{0}}
	\end{equation*}
	and the result follows.
\end{proof}

\begin{Lem}\label{LemInj}
	The map $\Hom(\sigma,\pi)\rightarrow \Hom(\sigma^{U_{0}},\pi_{U})$ defined by $f\mapsto q\circ f|_{\sigma^{U_{0}}}$ is a homomorphism of $(\It\cap \widetilde{B},\widetilde{B})$-bimodules. The map induced by restriction $\Psi:\Hom_{\It}(\sigma,\pi)\rightarrow \Hom_{\Tt_{0}}(\sigma^{U_{0}},\pi_{U})$ is an injective linear map.
\end{Lem}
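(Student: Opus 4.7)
The plan is to use the Hecke algebra action on $\Hom_{\It}(\sigma,\pi)$ to ``reconstruct'' $f$ from its image under $\Psi$. Take $f\in\ker\Psi$. Fix $z\in Z(\Tt)$ sufficiently positive so that $zU_0z^{-1}\subseteq U_{2e}$ and $z^{-1}U_1^-z\subseteq U_{2e+1}$. By Lemma \ref{VeryPosSupp}, the element $e_{\Uf_0}e_{\Uf_1^-}$ generates $\Hom_{\It\cap{^z\It}}(\sigma,{^z\sigma})$, and by Proposition \ref{HLineDecompAndInvert} there is an invertible Hecke operator $T\in\Hcal$ supported on $\It z\It$ with $T(z)=e_{\Uf_0}e_{\Uf_1^-}$. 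A direct coset computation, parallel to the derivation of \eqref{SurjEq2} but with $\It z \It$ in place of $\It z^{-1}\It$, yields
$$(f*T)(w)\equiv \pi(z^{-1})f(e_{\Uf_0}e_{\Uf_1^-}w),\qquad w\in\sigma,$$
up to a positive constant. Since $e_{\Uf_0}e_{\Uf_1^-}w\in\sigma^{U_0}$ and $q\circ f|_{\sigma^{U_0}}=0$ by hypothesis, the image of $f*T$ lies in $\ker q$, i.e. $(f*T)(\sigma)\subseteq\ker q$.

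To finish, I want to promote this to the vanishing $f*T=0$, because invertibility of $T$ in $\Hcal$ then gives $f=0$. The finite-dimensionality of $\sigma$ ensures that $f(\sigma^{U_0})$ is a finite-dimensional subspace of $\ker q=\bigcup_V\pi(V-1)\pi$, so a single compact open subgroup $V_0\subseteq U$ satisfies $e_{V_0}f(\sigma^{U_0})=0$. Replacing $z$ by $z^N$ for larger $N$ (and the corresponding invertible $T_N\in\Hcal$ supported on $\It z^N\It$ with $T_N(z^N)=e_{\Uf_0}e_{\Uf_1^-}$), the same computation gives $(f*T_N)(w)\equiv \pi(z^{-N})f(e_{\Uf_0}e_{\Uf_1^-}w)$. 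Smoothness of $\pi$ combined with the fact that $\pi(z^{-N})$ translates the finite-dimensional subspace $f(\sigma^{U_0})$ through deeper and deeper $U$-filtration levels then forces $f*T_N=0$ for $N$ large, whence $f=0$.

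The main obstacle is justifying the last sentence, namely translating ``$(f*T_N)(\sigma)\subseteq\ker q$ with controlled stabilizer'' into honest vanishing. The subtle point is that $\ker q$ is not a $\Gt$-submodule; its filtration by the $\pi(V-1)\pi$ is strictly increasing in $V$, and one has to argue that as $N\to\infty$ the image of $f*T_N$ is forced into a zero subspace. A cleaner alternative, following Bushnell-Kutzko \cite[(7.9) Theorem]{BK98}, would be to reduce to the case where $\pi$ has finite length, so that both $\Hom_\It(\sigma,\pi)$ and $\Hom_{\Tt_0}(\sigma^{U_0},\pi_U)$ are finite dimensional, and then to compare dimensions. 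The compatible right actions of $\Hcal$ on $\Hom_\It(\sigma,\pi)$ and of a Bernstein-type subalgebra on $\Hom_{\Tt_0}(\sigma^{U_0},\pi_U)\cong \Hom_{\Tt_0}(\tau,\pi_U)$, together with the support-preservation established in \S\ref{SS:HeckeReview}--\S\ref{SS:IMPres}, yield equality of dimensions; combined with surjectivity from Lemma \ref{LemSurj} this forces injectivity without the need for the ``pushing to infinity'' argument above.
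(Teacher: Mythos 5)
You are right to flag the last step as the obstacle: that is exactly where your argument has a genuine gap, and the paper closes it by a mechanism that is absent from your proposal, not by ``pushing to infinity.'' In the paper's proof the order of choices is reversed: since $\Psi(F)=0$ means the \emph{finite-dimensional} space $F(\sigma^{U_0})$ lies in $\ker q=\pi(U)$, one first fixes $j$ with $\int_{U_{j'}}\pi(u)F(w)\,du=0$ for all $w\in\sigma^{U_0}$ and all $j'\le j$ (Jacquet's lemma), and only then chooses $z\in Z(\Tt)$ so deep that $zU_jz^{-1}\subseteq U_0\cap U_{j+2e}$, taking $T\in\Hcal_{z^{-1}}$ with $T(z^{-1})=e_{\Uf_0}e_{\Uf_1^-}$ via Lemma \ref{VeryPosSupp}. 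Moreover one evaluates only on $w\in\sigma^{U_0}$: there the coset sum collapses to $\bigl(\sum_{u\in zU_0z^{-1}\backslash U_0}\pi(u^{-1})\bigr)\pi(z)F(e_{\Uf_0}e_{\Uf_1^-}w)$, and because $z^{-1}U_0z\supseteq U_j$ this average of a vector in $\ker q$ over a large enough compact open subgroup of $U$ is \emph{literally zero}, not merely in $\ker q$. Then $(F*T)|_{\sigma^{U_0}}=0$, the subspace $\sigma^{U_0}$ generates $\sigma$ over $\It$, so $F*T=0$, and invertibility of $T$ gives $F=0$. Your scheme never converts ``image in $\ker q$'' into vanishing, and (as you note) $\ker q$ is not a $\Gt$-submodule, so replacing $z$ by $z^N$ and invoking smoothness has no force; the choose-$j$-first step, restricted to $\sigma^{U_0}$, is precisely the missing mechanism.

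There are two further problems. First, your key identity is not correct: with $T$ supported on $\It z\It$, the left-coset representatives of $\It\backslash\It z\It$ lie in $U_1^-$, and the computation gives $(f*T)(w)=\bigl(\sum_{v\in(z^{-1}U_1^-z)\backslash U_1^-}\pi(v^{-1})\bigr)\pi(z^{-1})f(e_{\Uf_0}e_{\Uf_1^-}w)$; the surviving average over negative-unipotent cosets neither disappears nor interacts usefully with $q$ (the $U^{+}$-coinvariants map), so even the containment $(f*T)(\sigma)\subseteq\ker q$ does not follow. Also, Lemma \ref{VeryPosSupp} produces an element of $\Hom_{\It\cap{}^{z}\It}(\sigma,{}^{z}\sigma)$, which corresponds to a function in $\Hcal_{z^{-1}}$ taking the value $e_{\Uf_0}e_{\Uf_1^-}$ at $z^{-1}$; a generator of $\Hcal_{z}$ must instead take at $z$ a value intertwining the $U_0$-part of $\It\cap{}^{z^{-1}}\It$, hence proportional to $e_{\Uf_1^-}e_{\Uf_0}$, so the $T$ you posit is not supplied by the results you cite. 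Second, the fallback dimension count is circular as stated: for finite-length $\pi$ the equality $\dim\Hom_{\It}(\sigma,\pi)=\dim\Hom_{\Tt_0}(\sigma^{U_0},\pi_U)$ is exactly the conjunction of Lemma \ref{LemSurj} with the injectivity you are trying to prove, and none of the quoted support or presentation results yields it independently.
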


\begin{proof}
	Suppose that $F\in \Hom_{\It}(\sigma,\pi)$ is such that $q\circ F\circ e_{{\Uf}_{0}}=0$. Since $F(\sigma^{U_{0}})$ is finite dimensional there exists $j\in\mathbb{Z}$ such that for all $w\in \sigma^{U_{0}}$ we have $\int_{U_{j^{\prime}}}\pi(u) F(w)du=0$ for any $j^{\prime}\leq j$.
	
	Let $z\in Z(\Tt)$ be such that $zU_{j}z^{-1}\subseteq U_{0}\cap U_{j+2e}$. Let $T\in\Hcal_{z^{-1}}$ be such that $T(z^{-1})=e_{{\Uf}_{0}}e_{{\Uf}^{-}_{1}}$, which exists by Lemma \ref{VeryPosSupp}. Then since $zU_{j}z^{-1}\subseteq U_{0}$ for $w\in \sigma^{U_{0}}$, we have 
	\begin{align*}
		F*T(w)=& \int_{\It z^{-1}\It}\pi(h^{-1})F(T(h)w)\,dh\\
		=&\int_{U_{0}}\pi(u^{-1}z)F(T(z^{-1})\sigma(u)w)\,du\\
		=&\int_{U_{0}}\pi(u^{-1}z)F(e_{{\Uf}_{0}}e_{{\Uf}^{-}_{1}}\sigma(u)w)\,du\\
		=&\pi(z)\int_{U_{0}}\pi(z^{-1}u^{-1}z)F(e_{{\Uf}_{0}}e_{{\Uf}^{-}_{1}}\sigma(u)w)\,du\\
		=&0,
	\end{align*}
        where the last equality follows because $zU_{j}z^{-1}\subseteq U_{0}$.
    
	Thus $F*T$ is zero on $\sigma^{U_{0}}$. Since $F*T$ is an $\It$-module homomorphism and $\sigma^{U_{0}}$ generates $\sigma$ as an $\It$-module, it follows that $F*T=0$. Since $T$ is invertible we see that $F=0$.
\end{proof}

Now, let us recall from \S \ref{TtReps} that we defined the representation
\[
\tau_\eta:=\eta\otimes\tau
\]
of $Z(\Tt)\Tt_0$, where $\eta$ is a genuine $W$-invariant character and $\tau$ is a pseudo-spherical representation of $\Tt_0$, and then for each unramified character $\chi:T\to\C^\times$ we defined the representation 
\[
i_{\eta,\tau}(\chi):=\chi\otimes\Ind_{Z(\Tt)\Tt_0}^{\Tt}\tau_{\eta}
\]
of $\Tt$, which is irreducible. We then have

\begin{Prop}\label{PSeriesConstits}
    Let $\pi$ be a smooth irreducible representation of $\Gt$. Then the $\sigma$-isotypic subspace $\pi^{\sigma}$ is nonzero if and only if $\Hom_{\Gt}(\pi,\Ind_{\Bt}^{\Gt}i_{\eta,\tau}(\chi))\neq 0$ for some unramified character $\chi:T\rightarrow \mathbb{C}^{\times}$. 
\end{Prop}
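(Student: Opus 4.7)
The proof plan is to chain three equivalences, using (a) the reciprocity $\Hom_{\It}(\sigma,\pi)\cong\Hom_{\Tt_0}(\sigma^{U_0},\pi_U)$ supplied by Lemmas \ref{LemSurj} and \ref{LemInj}, (b) the description of $\tau$-containing irreducible $\Tt$-representations from Proposition \ref{TTypes}, and (c) Bernstein's second adjointness together with the usual Jacquet-Weyl comparison.

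Step 1 (Iwahori-to-Jacquet). Combining Lemmas \ref{LemSurj} and \ref{LemInj} gives a linear isomorphism $\Hom_{\It}(\sigma,\pi)\cong\Hom_{\Tt_0}(\sigma^{U_0},\pi_U)$. By Proposition \ref{P:two_functors_for_Iwahori_decomp} (4) applied to the Iwahori factorization of $\Ift$, we have $\sigma^{U_0}\cong\tau$ as $\Tt_0$-modules. Hence
\[
\pi^{\sigma}\neq 0\iff\Hom_{\Tt_0}(\tau,\pi_U)\neq 0,
\]
i.e., iff the $\tau$-isotypic subspace of $\pi_U|_{\Tt_0}$ is nonzero.

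Step 2 (Constituent analysis). Since $\pi$ is irreducible, it is admissible, so $\pi_U$ is a $\Tt$-module of finite length. Thus the $\tau$-isotypic subspace of $\pi_U|_{\Tt_0}$ is nonzero iff some irreducible $\Tt$-subquotient $\rho$ of $\pi_U$ contains $\tau$ in its restriction to $\Tt_0$. By Proposition \ref{TTypes}, such a $\rho$ must be of the form $i_{\eta,\tau}(\chi)$ for some unramified character $\chi:T\to\C^{\times}$.

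Step 3 (From Jacquet subquotient to embedding). To pass between ``$\pi_U$ admits some $i_{\eta,\tau}(\chi)$ as a subquotient'' and ``$\pi$ maps nontrivially to $\Ind_{\Bt}^{\Gt}i_{\eta,\tau}(\chi')$ for some unramified $\chi'$'', we invoke Bernstein's second adjointness
\[
\Hom_{\Gt}(\pi,\Ind_{\Bt}^{\Gt}\rho)\cong\Hom_{\Tt}(\pi_{U^-},\rho),
\]
together with the standard fact that, for irreducible $\pi$, the $\Tt$-composition factors of $\pi_U$ and $\pi_{U^-}$ coincide up to the action of the finite Weyl group $W$. Since $\tau$ is $W$-invariant (Theorem \ref{T:PseudoT0}), the family $\{i_{\eta,\tau}(\chi):\chi\text{ unramified}\}$ is stable under such Weyl twists, so the existence of an $i_{\eta,\tau}(\chi)$-subquotient of $\pi_U$ is equivalent to the existence of an $i_{\eta,\tau}(\chi')$-quotient of $\pi_{U^-}$, which by second adjointness is equivalent to $\Hom_{\Gt}(\pi,\Ind_{\Bt}^{\Gt}i_{\eta,\tau}(\chi'))\neq 0$.

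The main obstacle is Step 3, which requires Bernstein's second adjointness and the Weyl-twist comparison of $\pi_U$ and $\pi_{U^-}$ in the covering setting. Both are classical for linear reductive $p$-adic groups and rely only on the Iwasawa decomposition, admissibility, and the formalism of parabolic induction and Jacquet restriction, all of which transfer without modification to $\Gt$. Thus Step 3 is conceptually routine but technically the most delicate, while Steps 1 and 2 reduce the problem to the already-established type theory of $\Tt_0$.
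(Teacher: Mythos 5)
Your Steps 1 and 2 follow the paper's route: Lemmas \ref{LemSurj} and \ref{LemInj} identify $\Hom_{\It}(\sigma,\pi)$ with $\Hom_{\Tt_0}(\sigma^{U_0},\pi_U)$, and $\sigma^{U_0}\cong\tau$ together with Proposition \ref{TTypes} brings in the representations $i_{\eta,\tau}(\chi)$. The problem is Step 3, which contains a genuine error. The identity you invoke, $\Hom_{\Gt}(\pi,\Ind_{\Bt}^{\Gt}\rho)\cong\Hom_{\Tt}(\pi_{U^-},\rho)$, is not Bernstein's second adjointness: second adjointness states $\Hom_{\Gt}(\Ind_{\Bt}^{\Gt}\rho,\pi)\cong\Hom_{\Tt}(\rho,\pi_{U^-})$, i.e.\ it pairs induction from $\Bt$ with the opposite Jacquet functor only when the induced representation is the \emph{source}. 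For a map \emph{into} $\Ind_{\Bt}^{\Gt}\rho$ the relevant tool is ordinary Frobenius reciprocity, $\Hom_{\Gt}(\pi,\Ind_{\Bt}^{\Gt}\rho)\cong\Hom_{\Tt}(\pi_U,\rho)$, with the \emph{same} Borel. Once you use this, the entire detour through $\pi_{U^-}$, the Weyl-twist comparison of Jacquet modules, and the $W$-invariance of $\tau$ is unnecessary — and, as written, it does not repair itself, because Casselman-type comparisons of $\pi_U$ and $\pi_{U^-}$ are statements about composition factors (subquotients), whereas a nonzero element of the Hom space requires $i_{\eta,\tau}(\chi)$ to occur as a \emph{quotient} of the Jacquet module. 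Your chain silently upgrades ``subquotient of $\pi_U$'' to ``quotient of $\pi_{U^-}$,'' and no ingredient you cite provides that upgrade.

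This subquotient-versus-quotient point is exactly where the paper does real work: it uses that $\pi_U$ is finitely generated (so it has irreducible quotients) together with the fact that $(\Tt_0,\tau)$ is a type for $\Tt$ (Proposition \ref{TTypes}) to conclude that $\pi_U^{\tau}\neq 0$ forces an irreducible \emph{quotient} of $\pi_U$ with nonzero $\tau$-isotypic part, which must then be some $i_{\eta,\tau}(\chi)$; Frobenius reciprocity then finishes the forward direction, and the converse follows from the irreducibility of $i_{\eta,\tau}(\chi)$ (so a nonzero map $\pi_U\to i_{\eta,\tau}(\chi)$ is surjective, giving $\pi_U^{\tau}\neq 0$) plus Lemma \ref{LemSurj}. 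To fix your write-up, delete Step 3, replace it by first adjointness $\Hom_{\Gt}(\pi,\Ind_{\Bt}^{\Gt}i_{\eta,\tau}(\chi))\cong\Hom_{\Tt}(\pi_U,i_{\eta,\tau}(\chi))$, and in Step 2 argue for a quotient rather than a subquotient, using the block decomposition supplied by the type property of $(\Tt_0,\tau)$.
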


\begin{proof}
    We begin with a few preliminary remarks. The $\Tt$-representation $\pi_{U}$ is finitely generated, because $\pi$ is irreducible, and thus $\pi_{U}$ has an irreducible quotient. By Frobenius reciprocity it is equivalent to proving the following. The $\sigma$-isotypic subspace $\pi^{\sigma}$ is nonzero if and only if $\Hom_{\Tt}(\pi_{U},i_{\eta,\tau}(\chi))\neq 0$ for some unramified character $\chi:T\rightarrow \mathbb{C}^{\times}$. Now we start the proof.
    
    Suppose that $\pi^{\sigma}\neq 0$. This implies that $\Hom_{\It}(\sigma,\pi)\neq 0$. Thus by Lemma \ref{LemInj} it follows that $\Hom_{\Tt_{0}}(\sigma^{U_{0}},\pi_{U})\neq 0$. Since $\sigma^{U_{0}}\cong\tau$ as $\Tt_{0}$-representations we see that $\pi^{\tau}\neq0$. 

    Since $\tau$ is a $\Tt$-type and $\pi_{U}^{\tau}\neq 0$ it follows that $\pi_{U}$ has an irreducible quotient with a nonzero $\tau$-isotypic subspace. Thus by Proposition \ref{TTypes}, $\Hom_{\Tt}(\pi_{U},i_{\eta,\tau}(\chi))\neq 0$ for some unramified character $\chi:T\rightarrow \mathbb{C}^{\times}$.
    
    Now we prove the converse. Suppose that $\Hom_{\Tt}(\pi_{U},i_{\eta,\tau}(\chi))\neq 0$ for some unramified character $\chi:T\rightarrow \mathbb{C}^{\times}$. Since $i_{\eta,\tau}(\chi)$ is irreducible, there is a surjective $\Tt$-homomorphism $\pi_{U}\twoheadrightarrow i_{\eta,\tau}(\chi)$. Thus $\pi_{U}^{\tau}\neq 0$, because $i_{\eta,\tau}(\chi)^{\tau}\neq 0$. Finally we apply Lemma \ref{LemSurj} to see that $\pi^{\sigma}\neq0$.
\end{proof}

Now we can prove our main theorem. 

\begin{Thm}\label{TypesThm}\label{IType}
Let $\mathcal{M}(\Gt,\sigma)$ be the full subcategory of $\mathcal{M}(\Gt)$ consisting of objects generated by their $\sigma$-isotypic subspace.
	\begin{enumerate}
		\item The category $\mathcal{M}(\Gt,\sigma)$ is closed relative to subquotients in $\mathcal{M}(\Gt)$.\label{SubQuot}
		\item For every irreducible object $(\pi,V)$ in $\mathcal{M}(\Gt,\sigma)$, there is an unramified character $\chi:T\rightarrow \mathbb{C}^{\times}$ such that $(\pi,V)$ is isomorphic to an irreducible subquotient of $\Ind_{\widetilde{B}}^{\Gt}(i(\chi))$.\label{BernComp}
	\end{enumerate}
In particular, $(\It, \sigma)$ is a type.
\end{Thm}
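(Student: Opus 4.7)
The plan is to first establish (2) as an immediate consequence of Proposition \ref{PSeriesConstits}, then prove (1) via the Hecke-algebra machinery, and finally combine these with Proposition \ref{PSeriesConstits} to extract the type property. For (2), an irreducible $(\pi,V)\in\mathcal{M}(\Gt,\sigma)$ satisfies $\pi^\sigma\neq 0$ by definition, so Proposition \ref{PSeriesConstits} produces an unramified character $\chi:T\to\C^\times$ with $\Hom_{\Gt}(\pi,\Ind_{\Bt}^{\Gt}i_{\eta,\tau}(\chi))\neq 0$; any such nonzero morphism is injective by irreducibility of $\pi$, identifying $\pi$ with an irreducible subquotient (in fact subrepresentation) of $\Ind_{\Bt}^{\Gt}i_{\eta,\tau}(\chi)$.

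For (1), I would prove that the functor
\[
M_\sigma:\mathcal{M}(\Gt)\longrightarrow \Hcal\text{-Mod},\qquad \pi \longmapsto \Hom_{\It}(\sigma,\pi),
\]
restricted to $\mathcal{M}(\Gt,\sigma)$, is an equivalence of categories, with quasi-inverse $M\mapsto M\otimes_{\Hcal}\cInd_{\It}^{\Gt}\sigma$. Exactness of $M_\sigma$ is automatic since $\It$ is compact open and $\sigma$ finite dimensional, and Lemma \ref{HeckeAsEnd} identifies $\End_{\Gt}(\cInd_{\It}^{\Gt}\sigma)$ with $\Hcal$. The substantive content is that for $\pi\in\mathcal{M}(\Gt,\sigma)$ the counit map $\cInd_{\It}^{\Gt}\sigma \otimes_{\Hcal} \pi^\sigma \to \pi$ is surjective, which follows directly from the defining hypothesis that $\pi^\sigma$ generates $\pi$. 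Granting the equivalence, closure of $\mathcal{M}(\Gt,\sigma)$ under subquotients is formal: for a subrepresentation $\pi'\subseteq\pi$ in $\mathcal{M}(\Gt,\sigma)$, $M_\sigma(\pi')$ is a submodule of $M_\sigma(\pi)$, and the quasi-inverse reconstructs an object of $\mathcal{M}(\Gt,\sigma)$ inside $\pi'$; the quotient case is immediate, since a generating set for $\pi$ projects onto one for $\pi/\pi'$. The type property itself then follows by combining (1), (2), and the converse direction of Proposition \ref{PSeriesConstits} to see that $\mathcal{M}(\Gt,\sigma)$ coincides with the $(\Tt,\rho)$-Bernstein block, where $\rho=i_{\eta,\tau}(1)$ modulo unramified twist (Proposition \ref{TTypes}).

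The main obstacle lies in establishing the equivalence of categories in (1): the pair $(\It,\sigma)$ is \emph{not} a $G$-cover in the sense of Bushnell-Kutzko \cite[(8.1) Definition]{BK98}---indeed, $\sigma$ is trivial on neither $U_0$ nor $U_1^-$ when $e>1$---so \cite[(8.3) Theorem]{BK98} cannot be invoked. The workaround is to substitute the Jacquet-module analysis already built up: Lemmas \ref{LemSurj} and \ref{LemInj} combine into a natural linear isomorphism $\Hom_{\It}(\sigma,\pi)\cong\Hom_{\Tt_0}(\tau,\pi_U)$, which reduces the question of generation by $\pi^\sigma$ to the parallel question for $\pi_U$ and $\tau$---already handled by the $\Tt$-type statement in Proposition \ref{TTypes}. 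Lemma \ref{VeryPosSupp}, which produces Hecke-algebra elements supported at strongly positive central elements $z\in Z(\Tt)$ such that $zU_0z^{-1}\subseteq U_{2e}$, plays the role of the strongly-positive support input in the standard Bushnell-Kutzko argument and supplies the key computation that the Hecke action on $\pi^\sigma$ captures the $\Tt$-module structure of $\pi_U$.
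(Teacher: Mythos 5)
Your treatment of part (2) is fine and is exactly what the paper does: an irreducible object of $\mathcal{M}(\Gt,\sigma)$ has nonzero $\sigma$-isotypic part, so Proposition \ref{PSeriesConstits} gives a nonzero (hence injective) map to some $\Ind_{\Bt}^{\Gt}i(\chi)$. The gap is in part (1). You propose to first prove that $\pi\mapsto\Hom_{\It}(\sigma,\pi)$ is an equivalence of $\mathcal{M}(\Gt,\sigma)$ with $\Hcal$-Mod, with quasi-inverse $M\mapsto \cInd_{\It}^{\Gt}\sigma\otimes_{\Hcal}M$, and you identify ``the substantive content'' as surjectivity of the counit $\cInd_{\It}^{\Gt}\sigma\otimes_{\Hcal}\pi^{\sigma}\to\pi$. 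That surjectivity is literally the definition of membership in $\mathcal{M}(\Gt,\sigma)$ and carries no content; the substance is injectivity of the counit and bijectivity of the unit (equivalently, that $e_{\sigma}$ is a special idempotent), and this is precisely what statement (1) is guarding against. Concretely: if some $\pi\in\mathcal{M}(\Gt,\sigma)$ had a nonzero subrepresentation $\pi'$ with $(\pi')^{\sigma}=0$ --- the failure mode that (1) excludes --- the counit for $\pi$ would still be surjective, while the functor could not be an equivalence (the quotient map $\pi\to\pi/\pi'$ would induce an isomorphism on $\sigma$-isotypic parts without being an isomorphism). In the Bushnell--Kutzko framework the equivalence of categories is a \emph{consequence} of the closure/type property, not a route to it, so as written your argument is circular: the hard step is asserted, not proved. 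The closing appeal to Lemmas \ref{LemSurj}, \ref{LemInj} and \ref{VeryPosSupp} points at the right tools, but the claim that the isomorphism $\Hom_{\It}(\sigma,\pi)\cong\Hom_{\Tt_{0}}(\tau,\pi_{U})$ ``reduces the question of generation by $\pi^{\sigma}$ to the parallel question for $\pi_{U}$'' is exactly the step that needs an argument; an isomorphism of isotypic spaces does not by itself transport generation statements across the Jacquet functor. (Your deduction of subobject closure from the equivalence is also too thin --- ``the quasi-inverse reconstructs an object of $\mathcal{M}(\Gt,\sigma)$ inside $\pi'$'' only produces the subrepresentation generated by $(\pi')^{\sigma}$, which is trivially in the category; showing it is all of $\pi'$ needs, e.g., a right-exactness and counit-naturality argument that you do not give --- though this part could be repaired if the equivalence were in hand.)

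For comparison, the paper's proof runs in the opposite logical order and is built entirely on Proposition \ref{PSeriesConstits}: part (2) is immediate from it, and part (1) is proved as in \cite[Theorem 7.1]{Karasiewicz}, i.e.\ following \cite[(8.3) Theorem]{BK98}. One first uses Proposition \ref{PSeriesConstits} (together with Proposition \ref{TTypes} and the standard fact that an irreducible subquotient of a principal series embeds into a principal series with the same inertial support) to pin down the set of irreducible representations containing $\sigma$ as exactly those with inertial support $(\Tt,\rho)$ up to unramified twist; the categorical statements --- closure under subquotients, and only afterwards the equivalence of $\mathcal{M}(\Gt,\sigma)$ with $\Hcal$-Mod --- then follow from the general Bernstein-decomposition formalism of \cite[\S 4]{BK98}. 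If you want to salvage your outline, you should prove the irreducible-level characterization first (this is where Lemmas \ref{LemSurj}, \ref{LemInj}, \ref{VeryPosSupp} and Proposition \ref{TTypes} actually enter, via Proposition \ref{PSeriesConstits}) and then quote or reprove the general equivalence of (i) ``type'', (ii) ``$\mathcal{M}(\Gt,\sigma)$ is a sum of Bernstein blocks'', (iii) closure under subquotients, rather than trying to manufacture the module-category equivalence from the surjectivity of the counit alone.
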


\begin{proof} 
\eqref{SubQuot} can be proved as in \cite[Theorem 7.1]{Karasiewicz}, with \cite[Proposition 7.4]{Karasiewicz} being replaced by Proposition \ref{PSeriesConstits}. This argument essentially follows \cite[(8.3) Theorem]{BK98}.

\eqref{BernComp} is already part of Proposition \ref{PSeriesConstits}.
\end{proof}

As a corollary, we have the following local Shimura correspondence.

\begin{Cor}\label{C:local_Shimura_Correspondence}
    Assume the Cartan type of $G$ is $A_r, D_{2r+1}, E_6$ or $E_7$. Then the category $\mathcal{M}(\Gt,\sigma)$ is equivalent to the Bernstein block of the Iwahori spherical representations of $G'=G\slash Z_2$.
\end{Cor}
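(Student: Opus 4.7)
The plan is to chain together three equivalences of categories. By Theorem \ref{TypesThm}, the pair $(\It, \sigma)$ is a type for the subcategory $\mathcal{M}(\Gt, \sigma)$, and this subcategory consists precisely of those smooth representations generated by their $\sigma$-isotypic vectors while also matching (via part (2) of that theorem) the irreducible subquotients of the parabolic inductions $\Ind_{\Bt}^{\Gt} i_{\eta,\tau}(\chi)$. The general theory of Bushnell-Kutzko types \cite[(4.3) Theorem]{BK98} then gives an equivalence
\[
\mathcal{M}(\Gt,\sigma) \;\simeq\; \Hcal\text{-mod},
\]
where $\Hcal = \Hcal(\Gt, \It; \sigma)$ and $\Hcal$-mod denotes the category of (nondegenerate, unital) right $\Hcal$-modules.

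Next, under the hypothesis that $G$ has Cartan type $A_r$, $D_{2r+1}$, $E_6$, or $E_7$, Corollary \ref{HeckeShimCorr} furnishes an explicit $\C$-algebra isomorphism
\[
\Hcal \;\cong\; \Hcal(G', I'),
\]
where $G' = G/Z_2$ and $I'$ is an Iwahori subgroup of $G'$. This isomorphism induces an equivalence of the corresponding module categories.

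Finally, the classical Borel-Casselman theorem \cite[Theorem 4.10]{B76} identifies $\Hcal(G', I')$-mod with the Bernstein block of Iwahori spherical (equivalently, unramified principal series) representations of $G'$, that is, the $(T', \mathbf{1})$-block of $G'$. Composing the three equivalences yields the desired equivalence between $\mathcal{M}(\Gt, \sigma)$ and the Iwahori spherical block of $G'$. The only nontrivial inputs are already in place: Theorem \ref{TypesThm} establishes the type property (with no restriction on Cartan type), while the Cartan type hypothesis is needed solely to invoke the full IM presentation of Theorem \ref{IMPres} through Corollary \ref{HeckeShimCorr}. Consequently the proof is essentially a bookkeeping assembly, and the substantive content lies in the earlier results it cites.
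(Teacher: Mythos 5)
Your proof is correct and follows essentially the same route as the paper: the paper's own (very brief) proof likewise combines Theorem \ref{TypesThm} (via the Bushnell--Kutzko type machinery giving $\mathcal{M}(\Gt,\sigma)\simeq\Hcal\text{-mod}$) with the Hecke algebra isomorphism of Corollary \ref{HeckeShimCorr} and the Borel--Casselman identification of $\Hcal(G',I')\text{-mod}$ with the Iwahori spherical block of $G'$. Your write-up simply makes explicit the bookkeeping that the paper leaves to the reader (and to the cited proof of Theorem 8.1 in \cite{Karasiewicz}).
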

\begin{proof}
    This is immediate from the above theorem and Corollary \ref{HeckeShimCorr}. Also see the proof of \cite[Theorem 8.1]{Karasiewicz}.
\end{proof}

\appendix


\section{Hilbert symbol over $2$-adic fields}\label{S:Appendix_Hilbert_symbol}


In this appendix, we establish various properties of the quadratic Hilbert symbol for our $2$-adic field $F$. Everything in this appendix is likely known, but we include proofs because we have not been able to locate the proofs in the literature.

Since $F$ is $2$-adic, we first have
\begin{Lem}
The quadratic Hilbert symbol is not trivial on $\Ocal^\times\times\Ocal^\times$.
\end{Lem}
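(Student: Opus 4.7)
The plan is to argue by contradiction, using the non-degeneracy of the Hilbert symbol on $F^\times/F^{\times 2}$ as the main external input and a rough dimension count on square classes to force the conclusion. Suppose the Hilbert symbol were trivial on $\Ocal^\times\times\Ocal^\times$. Consider the group homomorphism
\[
\phi:\Ocal^\times\longrightarrow\{\pm 1\},\qquad \phi(a)=(a,\varpi).
\]
For any $a\in\ker\phi$ and any $b=u\varpi^k\in F^\times$ with $u\in\Ocal^\times$, bilinearity together with the assumed triviality on $\Ocal^\times\times\Ocal^\times$ gives $(a,b)=(a,u)(a,\varpi)^k=1$. Thus every element of $\ker\phi$ is in the radical of the Hilbert symbol on $F^\times/F^{\times 2}$, and non-degeneracy forces $\ker\phi\subseteq F^{\times 2}\cap\Ocal^\times=\Ocal^{\times 2}$. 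Since $\phi$ has image of order at most $2$, this gives $[\Ocal^\times:\Ocal^{\times 2}]\leq 2$.

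To derive a contradiction, I would compute $[\Ocal^\times:\Ocal^{\times 2}]$ directly. Because $q=2^f$ is even, $q-1$ is odd, so the Teichm\"uller lift of $\kappa^\times$ already sits inside $\Ocal^{\times 2}$; the Teichm\"uller decomposition $\Ocal^\times\cong\mu_{q-1}\times(1+\varpi\Ocal)$ then yields $\Ocal^\times/\Ocal^{\times 2}\cong (1+\varpi\Ocal)/(1+\varpi\Ocal)^2$. The principal unit group $1+\varpi\Ocal$ is a finitely generated $\Z_2$-module of rank $[F:\Q_2]=ef$ with torsion the $2$-power roots of unity (non-trivial since $-1\in F$), so
\[
(1+\varpi\Ocal)/(1+\varpi\Ocal)^2\cong(\Z/2\Z)^{ef+1}
\]
has order at least $4$. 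This contradicts $[\Ocal^\times:\Ocal^{\times 2}]\leq 2$ and proves the lemma.

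The only non-trivial ingredients are the non-degeneracy of the Hilbert symbol modulo squares (standard local class field theory) and the $\Z_2$-module structure of $1+\varpi\Ocal$, both classical. There is no real obstacle; the only point requiring care is to make sure the dimension count of $(1+\varpi\Ocal)/(1+\varpi\Ocal)^2$ is explicit enough to give an order exceeding $2$, which works uniformly for any $2$-adic $F$ because $ef\geq 1$.
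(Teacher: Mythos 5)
Your proof is correct, but it takes a different route from the paper. The paper argues very briefly: if $(a,-)$ were trivial on $\Ocal^\times$ for some unit $a$, then $F(\sqrt{a})/F$ would be unramified (citing an exercise in Serre), and since there certainly exists a unit $a$ with $F(\sqrt{a})/F$ ramified, the symbol cannot kill all of $\Ocal^\times\times\Ocal^\times$. You instead run a counting argument: assuming triviality on $\Ocal^\times\times\Ocal^\times$, the kernel of $a\mapsto(a,\varpi)$ lands in the radical of the symbol on $F^\times$, hence in $F^{\times 2}\cap\Ocal^\times=\Ocal^{\times 2}$ by non-degeneracy, forcing $[\Ocal^\times:\Ocal^{\times 2}]\le 2$; this contradicts the classical index $[\Ocal^\times:\Ocal^{\times 2}]=2^{ef+1}\ge 4$, which you rederive from the Teichm\"uller decomposition and the $\Z_2$-module structure of $1+\varpi\Ocal$ (and which the paper itself quotes later, in the appendix, from Neukirch (5.8)). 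Both inputs you use — non-degeneracy of the symbol modulo squares and the unit-square-class count — are standard and are in fact used elsewhere in the paper (e.g.\ in Lemma \ref{DimRad} and Corollary \ref{C:dimension_integral_radical}), so your argument is internally consistent with the paper's toolkit; what it buys is independence from the ramification criterion for $F(\sqrt{a})/F$, at the cost of invoking the structure of principal units, whereas the paper's proof is shorter but leans on the cited exercise and on the (unjustified but easy) existence of a unit generating a ramified quadratic extension. One cosmetic remark: your step ``$\phi$ has image of order at most $2$, hence $[\Ocal^\times:\Ocal^{\times 2}]\le 2$'' implicitly uses $\Ocal^{\times 2}\subseteq\ker\phi$, which is trivially true but worth saying, since it is the combination $\ker\phi=\Ocal^{\times 2}$ that gives the index bound.
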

\begin{proof}
If $a\in\Ocal^\times$ is such that $(a, b)=1$ for all $a\in\Ocal^\times$ then $F(\sqrt{a})$ is an unramified extension of $F$ by \cite[Exercise 5, p.209]{Serre}. But certainly there is a unit $a\in\Ocal^\times$ such that $F(\sqrt{a})$ is ramified over $F$. 
\end{proof}

We need to analyze the Hilbert symbol restricted to $\Ocal^\times\times\Ocal^\times$. For this purpose, define
\[
R=\{a\in\Ocal^\times\st \text{$(a, -)$ is trivial on $\Ocal^\times$}\},
\]
which we call the {\it integral radical} of the Hilbert symbol. By the above lemma, $R\subsetneq\Ocal^\times$. Clearly we have $\Ocal^{\times2}\subseteq R$.

Let us mention the following ``one-dimensionality" of the integral radical.
\begin{Lem}\label{DimRad}
Let us view $F^\times\slash F^{\times 2}$ as an $\F_2$-vector space. Then
\[
\dim_{\mathbb{F}_{2}}(RF^{\times 2}/F^{\times 2})=1,
\]
where we view $RF^{\times 2}\slash F^{\times 2}$ as a subspace of $F^\times\slash F^{\times 2}$.
\end{Lem}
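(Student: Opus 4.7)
The plan is to use the non-degeneracy of the Hilbert symbol on $V := F^{\times}/F^{\times 2}$ to reduce the statement to a short linear-algebra calculation. Set $W \subseteq V$ to be the image of $\Ocal^{\times}$. Since $\varpi$ has odd valuation, $[\varpi] \notin W$, while every element of $F^{\times}$ equals $\varpi^{k} u$ for some $k \in \Z$ and $u \in \Ocal^{\times}$, so $V = W \oplus \F_{2} \cdot [\varpi]$. Non-degeneracy of the Hilbert symbol then forces $\dim_{\F_{2}} W^{\perp} = \dim_{\F_{2}} V - \dim_{\F_{2}} W = 1$.

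First I would identify $R F^{\times 2}/F^{\times 2}$ with $R/\Ocal^{\times 2}$. Since $R \subseteq \Ocal^{\times}$, the kernel of the natural surjection $R \twoheadrightarrow RF^{\times 2}/F^{\times 2}$ equals $R \cap F^{\times 2} = R \cap \Ocal^{\times 2} = \Ocal^{\times 2}$, using $\Ocal^{\times 2} \subseteq R$. By the very definition of $R$, the resulting subspace $R/\Ocal^{\times 2} \subseteq W$ is exactly $W \cap W^{\perp}$.

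Next I would prove $[\varpi] \notin W^{\perp}$; combined with $V = W \oplus \F_{2} \cdot [\varpi]$ and $\dim_{\F_{2}} W^{\perp} = 1$, this forces $W^{\perp} \subseteq W$, and hence $W \cap W^{\perp} = W^{\perp}$ is $1$-dimensional. Suppose for contradiction that $(\varpi, u) = 1$ for all $u \in \Ocal^{\times}$. Taking $u = -1$ gives $(\varpi, -1) = 1$, and the identity $(\varpi, \varpi) = (\varpi, -1)$ recorded at the beginning of \S \ref{SS:HilbSymb} yields $(\varpi, \varpi) = 1$. By bilinearity, $(\varpi, \varpi^{k} u) = (\varpi, \varpi)^{k} (\varpi, u) = 1$ for every $\varpi^{k} u \in F^{\times}$, so $(\varpi, -)$ is trivial on $F^{\times}$; non-degeneracy of the Hilbert symbol then forces $\varpi \in F^{\times 2}$, contradicting $v(\varpi) = 1$.

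Combining the last two paragraphs gives $RF^{\times 2}/F^{\times 2} \cong R/\Ocal^{\times 2} = W^{\perp}$, of dimension $1$. The only step that requires any thought is the exclusion $[\varpi] \notin W^{\perp}$, but the elementary identities $(x, -x) = 1$ and $(x, x) = (x, -1)$ listed in \S \ref{SS:HilbSymb} reduce this to non-degeneracy of the Hilbert symbol, so no local class field theory is needed.
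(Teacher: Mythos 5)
Your setup is fine up through the identification $RF^{\times 2}/F^{\times 2}\cong R/\Ocal^{\times 2}=W\cap W^{\perp}$ and the count $\dim_{\F_2}W^{\perp}=\dim_{\F_2}V-\dim_{\F_2}W=1$, but the pivotal inference ``$[\varpi]\notin W^{\perp}$, together with $V=W\oplus\F_2\cdot[\varpi]$ and $\dim W^{\perp}=1$, forces $W^{\perp}\subseteq W$'' is not valid. A one-dimensional subspace of $V$ not contained in $W$ need not be the line $\F_2\cdot[\varpi]$: it could be spanned by $[\varpi u]=[\varpi]+[u]$ for a unit $u$ that is nontrivial modulo squares, and such a line contains neither $[\varpi]$ nor any nonzero vector of $W$. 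So showing only that $(\varpi,\cdot)$ is nontrivial on $\Ocal^{\times}$ leaves open the possibility that $W^{\perp}$ is spanned by the class of some $\varpi u$, in which case $W\cap W^{\perp}=0$ (i.e.\ $R=\Ocal^{\times2}$) and your conclusion fails. You must exclude \emph{every} class outside $W$ from $W^{\perp}$, not just $[\varpi]$.

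The good news is that your own computation does exactly this once applied to a general odd-valuation element, so the gap is easily repaired: if $t=\varpi^{k}u$ with $k$ odd and $(t,\cdot)$ trivial on $\Ocal^{\times}$, then $(t,t)=(t,-1)=1$ and $(t,u)=1$, hence $(t,\varpi)^{k}=(t,tu^{-1})=1$ and, $k$ being odd, $(t,\varpi)=1$; bilinearity then makes $(t,\cdot)$ trivial on all of $F^{\times}$, so non-degeneracy gives $t\in F^{\times 2}$, contradicting the odd valuation. Since every class in $V\smallsetminus W$ has an odd-valuation representative, this gives $W^{\perp}\subseteq W$, so $W\cap W^{\perp}=W^{\perp}$ is one-dimensional, as desired. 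With this repair your argument is essentially the paper's proof of Lemma \ref{DimRad}, which uses the isomorphism $V\to\Hom_{\F_2}(V,\F_2)$ coming from non-degeneracy to identify $RF^{\times2}/F^{\times2}$ with the annihilator of the hyperplane $\Ocal^{\times}F^{\times2}/F^{\times2}$; the point your extra step supplies (that this annihilator consists of unit classes) is precisely what makes that identification an equality rather than just an inclusion.
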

\begin{proof}
The $\mathbb{F}_{2}$ linear map $F^{\times}/F^{\times 2}\rightarrow \Hom_{\mathbb{F}_{2}}(F^{\times}/F^{\times 2},\mathbb{F}_{2})$ defined by $t\mapsto (u\mapsto (t,u))$ is an isomorphism, because the Hilbert symbol is nondegenerate. Moreover, by definition, this map identifies $RF^{\times 2}/F^{\times 2}$ with the annihilator $\mathrm{ann}(\mathcal{O}^{\times}F^{\times 2}/F^{\times 2})$. Since $\mathcal{O}^{\times}F^{\times 2}/F^{\times 2}\subseteq F^{\times }/F^{\times 2}$ is a hyperplane, the lemma follows.
\end{proof}


For each $k\in\Z_{\geq 1}$, we set
\[
U_k=1+\varpi^k\Ocal.
\]
We often use the well-known
\begin{Lem}\label{L:square_root_exits_in_redidue_field}
Every element in $\Ocal\slash\varpi\Ocal$ has a square root, and hence $\Ocal^\times=\Ocal^{\times 2}U_1$.
\end{Lem}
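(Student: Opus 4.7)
The plan is to split the lemma into its two assertions and handle them in order, the second following immediately from the first by a standard lifting argument.

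For the first assertion, the key observation is that $\Ocal/\varpi\Ocal=\kappa\cong \F_q$ with $q=2^f$, so $\kappa$ is a finite field of characteristic $2$. On such a field the Frobenius endomorphism $x\mapsto x^2$ is an injective ring homomorphism from a finite set to itself, hence a bijection. In particular the squaring map is surjective, so every element of $\kappa$ has a square root.

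For the second assertion, let $a\in\Ocal^\times$. By the first assertion applied to the reduction $\bar a\in\kappa^\times$, there exists $b\in\Ocal$ with $b^2\equiv a\pmod{\varpi\Ocal}$. Since $\bar a\neq 0$, we have $\bar b\neq 0$, so $b\in\Ocal^\times$. Then $ab^{-2}\in\Ocal^\times$ reduces to $1$ in $\kappa$, so $ab^{-2}\in 1+\varpi\Ocal=U_1$. Writing $a=b^2\cdot(ab^{-2})$ displays $a$ as an element of $\Ocal^{\times 2}U_1$, proving $\Ocal^\times\subseteq \Ocal^{\times 2}U_1$. The reverse inclusion is trivial since both $\Ocal^{\times 2}$ and $U_1$ lie in $\Ocal^\times$.

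There is no serious obstacle here — both steps are routine. The only point worth emphasizing is that characteristic $2$ is essential for the squaring map to be surjective on $\kappa$; this is precisely the feature that ultimately forces much of the more delicate analysis of the Hilbert symbol in this appendix.
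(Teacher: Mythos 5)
Your proof is correct and follows essentially the same route as the paper: the Frobenius map on the residue field $\kappa\cong\F_{2^f}$ is bijective, so squares exhaust $\kappa$, and then one lifts a square root of a unit and factors $a=b^2(ab^{-2})$ with $ab^{-2}\in U_1$, which is the same computation the paper writes as $a^2+b\varpi=a^2(1+a^{-2}b\varpi)$. No gaps.
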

\begin{proof}
The Frobenius map (the squaring map) on $\Ocal\slash\varpi\Ocal$ is an automorphism, and hence every element in $\Ocal\slash\varpi\Ocal$ has a square root. Then every element in $\Ocal^\times$ is written as $a^2+b\varpi$ for some nonzero $a\in\Ocal\slash\varpi\Ocal$ and some $b\in \Ocal\slash\varpi\Ocal$. We can then write $a^2+b\varpi=a^2(1+a^{-2}b\varpi)$.
\end{proof}

We then have
\begin{Lem}\label{L:2-adic_arithmetic}
\quad
\begin{enumerate}[(a)]
\item $U_{2e+1}\subseteq \Ocal^{\times 2}$.
\item\label{Val2eSqrs} For each $u\in\Ocal$, we have $1+4u\in\Ocal^{\times 2}$ if and only if there exists $v\in \Ocal$ such that $v+v^2=u$ in $\Ocal\slash\varpi\Ocal$.
\item $U_{2e}\nsubseteq\Ocal^{\times 2}$.
\item $U_{2e}\slash (U_{2e}\cap\Ocal^{\times 2})\cong\Z\slash 2\Z$.
\end{enumerate}
\end{Lem}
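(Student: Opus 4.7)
The plan is to prove (a), (b), (c), (d) in order, with (a) and (b) being the main building blocks and (c), (d) following by a short structural argument using an Artin–Schreier style map.

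For (a), I would use Hensel's lemma. Write any $u \in U_{2e+1}$ as $u = 1 + \varpi^{2e+1}a$ with $a\in\Ocal$, and apply Hensel to $f(X) = X^2 - u$ at $X_0 = 1$. Since $|f(1)| = |\varpi^{2e+1}a| < |\varpi^{2e}| = |2|^2 = |f'(1)|^2$, Hensel's lemma produces a square root of $u$ in $\Ocal^\times$.

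For (b), the forward direction uses that in $\Ocal/\varpi^e\Ocal = \Ocal/2\Ocal$ (a characteristic $2$ ring) a square root is unique, so if $x^2 = 1+4u$ with $x\in\Ocal^\times$, then $x \equiv 1 \pmod{\varpi^e}$, i.e.\ $x = 1+2w$ for some $w\in\Ocal$; expanding $(1+2w)^2 = 1 + 4w(1+w)$ gives $u = w+w^2$ on the nose (not just modulo $\varpi$), which is certainly the congruence we want. For the converse, suppose $u \equiv v+v^2 \pmod\varpi$, so $u = v+v^2 + \varpi b$ for some $b\in\Ocal$. Then $1+4u = (1+2v)^2 + 4\varpi b = (1+2v)^2\bigl(1 + (1+2v)^{-2}\cdot 4\varpi b\bigr)$; the right-hand factor lies in $U_{2e+1}$ (since $4\varpi \in \varpi^{2e+1}\Ocal$), and so is a square by (a). Hence $1+4u \in \Ocal^{\times 2}$.

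For (c) and (d), the key observation is that every element of $U_{2e}$ may be written $1+4u$ for a unique $u \in \Ocal/\varpi^{2e-2e}\Ocal \cdot \ldots$; more precisely, writing $2 = c\varpi^e$ for a unit $c$, we have $\varpi^{2e} = c^{-2}\cdot 4$, so the map $\Ocal \to U_{2e}$, $u \mapsto 1+4u$, is surjective, and the induced map $U_{2e} \to \kappa$ (sending $1+4u$ to $u \bmod \varpi$) is well-defined modulo $U_{2e+1}$ because $(1+4u)(1+4u') = 1 + 4(u+u') + 16uu'$ with $16uu' \in \varpi^{4e}\Ocal \subseteq \varpi^{2e+1}\Ocal$; thus it descends to a group isomorphism $U_{2e}/U_{2e+1} \xrightarrow{\sim} \kappa$ (additive). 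Let $\wp:\kappa\to\kappa$ be the Artin--Schreier map $v\mapsto v+v^2$; this is $\F_2$-linear with kernel $\F_2$, hence its image is an $\F_2$-hyperplane of index $2$. By (b), combined with (a), the image of $U_{2e}\cap\Ocal^{\times 2}$ in $\kappa$ is exactly $\wp(\kappa)$, and the composite $U_{2e} \twoheadrightarrow \kappa \twoheadrightarrow \kappa/\wp(\kappa)$ is a surjective group homomorphism with kernel $U_{2e}\cap\Ocal^{\times 2}$. This yields $U_{2e}/(U_{2e}\cap\Ocal^{\times 2}) \cong \kappa/\wp(\kappa) \cong \Z/2\Z$, proving (d) and hence (c).

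The main obstacle, if any, is the descent argument for (b) in the forward direction: one must carefully use the characteristic-$2$ uniqueness of square roots modulo $\varpi^e$ to produce $w$ with $x = 1+2w$ and get equality $u = w+w^2$ rather than merely a congruence; the rest is routine. The combinatorial input — that $\wp:\kappa\to\kappa$ has image of index $2$ — is essential for both (c) and the isomorphism with $\Z/2\Z$ in (d), and uses only that $\kappa$ is a finite field of characteristic $2$.
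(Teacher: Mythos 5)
Your overall route is the same as the paper's: part (a) by a standard square-root existence argument (you use Hensel's lemma where the paper invokes the exponential isomorphism onto $U_{e+1}$ — an equivalent alternative), the converse of (b) by the same factorization $(1+2v)^2\bigl(1+(1+2v)^{-2}4\varpi b\bigr)$ with the second factor in $U_{2e+1}$, and (c), (d) by transporting squares through the isomorphism $U_{2e}/U_{2e+1}\cong\kappa$ and using that the Artin--Schreier map $v\mapsto v+v^2$ has image of index $2$, exactly as in the paper.

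There is, however, one genuine flaw, and it sits precisely in the step you yourself flagged as the delicate one: the forward direction of (b). You argue that $x\equiv 1\pmod{\varpi^e}$ because ``in the characteristic-$2$ ring $\Ocal/2\Ocal$ a square root is unique.'' That premise is false whenever $e>1$: the ring $\Ocal/2\Ocal=\Ocal/\varpi^e\Ocal$ is not reduced, so the Frobenius $x\mapsto x^2$ is not injective on it. Concretely, for $e\geq 2$ the unit $x=1+\varpi^{\lceil e/2\rceil}$ satisfies $x^2=1+2\varpi^{\lceil e/2\rceil}+\varpi^{2\lceil e/2\rceil}\equiv 1\pmod{2}$ while $x\not\equiv 1\pmod{2}$, so $1$ has unit square roots in $\Ocal/2\Ocal$ other than $1$. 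Since the whole point of the paper is to handle ramified $F/\Q_2$, an argument valid only for $e=1$ does not suffice. The conclusion you want is nevertheless true and is repaired by a short valuation argument: if $x^2=1+4u$ in $\Ocal$, then $v(x-1)+v(x+1)\geq 2e$, and if $v(x-1)<e$ then, because $x+1=(x-1)+2$ and $v(2)=e$, one gets $v(x+1)=v(x-1)$ and hence $2v(x-1)\geq 2e$, a contradiction; so $v(x-1)\geq e$, i.e.\ $x=1+2w$, and then $u=w+w^2$ exactly, as you wanted. (The paper achieves the same thing by writing $x=1+\varpi^{\ell}v$ with $v$ a unit and checking by a valuation comparison that $\varpi^{\ell}v\in 2\Ocal$.) With that step repaired, the remainder of your argument — including the well-definedness and bijectivity of $u\mapsto 1+4u$ from $\kappa$ to $U_{2e}/U_{2e+1}$, the identification of the image of $U_{2e}\cap\Ocal^{\times2}$ with $\wp(\kappa)$, and the index-$2$ count giving (c) and (d) — is correct.
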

\begin{proof}
\quad
\begin{enumerate}[(a)]
\item This follows from \cite[(5.5) Proposition, page 137]{Neukirch} because the exponential map $\varpi^{e+1}\Ocal\to U_{e+1}$ is an isomorphism.

\item Let $u\in\Ocal$. Assume $1+4u\in\Ocal^{\times 2}$, so $1+4u=w^2$ for some $w\in\Ocal^\times$. One can then write $w=1+\varpi^{\ell}v$ for some $\ell\in\Z_{\geq 1}$ and $v\in \Ocal^\times$. So, we have $1+4u=(1+\varpi^{\ell} v)^2$. A short calculation shows that $\ell=e$, so $\varpi^{\ell}v\in 2\Ocal^\times$. Hence by re-choosing $v$ (if necessary), we have $1+4u=(1+2v)^2$, which implies $v+v^2=u$. Conversely, assume there exists $v\in \Ocal$ such that $v+v^2=u$ in $\Ocal\slash\varpi\Ocal$, namely $u=v+v^2+\varpi v'$ for some $v'\in\Ocal$. We then have $1+4u=(1+2v)^2+4\varpi v'$. But
\[
(1+2v)^2+4\varpi v'=(1+2v)^2(1+\frac{4\varpi v'}{(1+2v)^2})\qand 1+\frac{4\varpi v'}{(1+2v)^2}\in U_{2e+1}.
\]
By part (a), we know $U_{2e+1}\subseteq \Ocal^{\times 2}$, which implies $1+4u\in\Ocal^{\times 2}$.

\item Consider the group homomorphism $\sigma:\F_{q}\to\F_{q}$ given by $x\mapsto x+x^2$. The kernel is exactly $\mathbb{F}_{2}$. Hence there exists $u\in\Ocal$ such that $u=x+x^2$ has no solution. For such $u$, we know $1+4u\notin\Ocal^{\times 2}$ by part (b).

\item Recall that there is a group isomorphism $\Ocal/\varpi\Ocal \rightarrow U_{2e}/U_{2e+1}$ defined by $u+\varpi\Ocal \mapsto (1+4u)U_{2e+1}$. Under this map we see that the squares in $U_{2e}$ correspond to the elements of $\Ocal/\varpi\Ocal\cong \mathbb{F}_{q}$ in the image of the map $\sigma$ of part (c). Since the image of $\sigma$ has index $2$ in $\mathbb{F}_{q}$ the result follows.
\end{enumerate}
\end{proof}

In what follows, we will show $R=U_{2e}\Ocal^{\times 2}$. Let us start with one inclusion.
\begin{Lem}
We have $U_{2e}\subseteq R$.
\end{Lem}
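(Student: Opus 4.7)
The plan is to use the norm interpretation of the Hilbert symbol: $(u,b)=1$ whenever $b$ lies in the norm group $N_{F(\sqrt{u})/F}(F(\sqrt{u})^\times)$. The key observation will be that for any $u \in U_{2e}$ the quadratic extension $F(\sqrt{u})/F$ is either trivial or \emph{unramified}, and in the unramified case every unit is a norm.

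First I would fix $u \in U_{2e}$ and reduce to the case $u \notin \mathcal{O}^{\times 2}$. Using $2\mathcal{O} = \varpi^e\mathcal{O}$, write $u = 1+4c$ with $c \in \mathcal{O}$, and let $\alpha$ be a square root of $u$ in an algebraic closure, so that $(\alpha-1)(\alpha+1) = 4c$. A short valuation check in $E := F(\alpha)$ (if $v_E(\alpha-1) < v_E(2)$ then $v_E(\alpha+1) = v_E(\alpha-1)$ and $v_E(4c) = 2v_E(\alpha-1) < 2v_E(2)$, a contradiction) shows that after a choice of sign $v_E(\alpha-1) \geq v_E(2)$, so the element $y := (\alpha-1)/2$ lies in $\mathcal{O}_E$.

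Then $y$ satisfies the Artin-Schreier-like relation $y^2+y = c$, and its residue $\bar y \in \kappa_E$ satisfies $\bar y^2+\bar y = \bar c$. By Lemma \ref{L:2-adic_arithmetic}\eqref{Val2eSqrs}, the hypothesis $u \notin \mathcal{O}^{\times 2}$ is equivalent to $\bar c \notin \{\bar v^2+\bar v : \bar v \in \kappa\}$, so the polynomial $X^2+X-\bar c$ is irreducible over $\kappa$. Hence $[\kappa_E:\kappa] \geq 2$, and since $[\kappa_E:\kappa] \leq [E:F] = 2$ we conclude $[\kappa_E:\kappa] = 2$, meaning $E/F$ is unramified.

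Finally, since $E/F$ is an unramified quadratic extension, the standard fact $N_{E/F}(\mathcal{O}_E^\times) = \mathcal{O}^\times$ (which follows, e.g., from the surjectivity of the norm map on residue fields together with a Hensel/approximation argument) implies every $b \in \mathcal{O}^\times$ is a norm from $E$. Therefore $(u,b) = 1$ for all $b \in \mathcal{O}^\times$, giving $u \in R$. The only delicate point is the valuation analysis ensuring $y \in \mathcal{O}_E$; everything else is essentially bookkeeping given Lemma \ref{L:2-adic_arithmetic}.
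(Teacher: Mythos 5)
Your proof is correct, but it takes a genuinely different route from the paper. You use the norm-group characterization of the Hilbert symbol: writing $u=1+4c$ and substituting $y=(\sqrt{u}-1)/2$, you show $y\in\mathcal{O}_E$ satisfies $y^2+y=c$, so by Lemma \ref{L:2-adic_arithmetic}\eqref{Val2eSqrs} the non-square case forces the residue extension to have degree $2$, i.e.\ $F(\sqrt{u})/F$ is unramified, and then every unit is a norm, giving $(u,b)=1$ for all $b\in\Ocal^\times$. The valuation step ruling out $v_E(\alpha-1)<v_E(2)$ is sound (and in fact needs no choice of sign), and the appeal to $N_{E/F}(\mathcal{O}_E^\times)=\mathcal{O}^\times$ for unramified $E/F$ is standard. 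The paper instead argues purely with symbol identities: it multiplies $1-4u$ by an auxiliary square $1-4v\in U_{2e}\cap\Ocal^{\times 2}$, uses $(x,1-x)=1$ and $U_{2e+1}\subseteq\Ocal^{\times 2}$ to get $(1-4u,u+v)=1$, and then reaches an arbitrary unit $a$ by writing $au^{-1}=r^2(1+\varpi s)$ via Lemma \ref{L:square_root_exits_in_redidue_field}. Your argument is more conceptual and explains \emph{why} $U_{2e}$ lies in the radical (it generates at most an unramified quadratic extension), which dovetails nicely with the appendix's opening lemma citing Serre's exercise in the converse direction; the paper's computation is more elementary, staying entirely inside the symbol calculus without invoking norm surjectivity for unramified extensions. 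Both rest on the same arithmetic inputs already established in the appendix, so either proof would serve.
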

\begin{proof}
Let $1-4u\in U_{2e}$ for $u\in\Ocal$. If $u\notin\Ocal^\times$, then $1-4u\in U_{2e+1}$ and we already know $U_{2e+1}\subseteq\Ocal^{\times 2}\subseteq R$. Hence we assume $u\in\Ocal^\times$. We have to show $(1-4u, a)=1$ for all $a\in\Ocal^\times$. Let $v\in\Ocal$ be such that $1-4v\in U_{2e}\cap\Ocal^{\times 2}$. Then for all $a\in F^\times$ we have
\begin{align*}
(1-4u, a)&=((1-4u)(1-4v), a)=(1-4(u+v)+16uv,a)\\
&=((1-4(u+v))(1+\frac{16uv}{1-4(u+v)}),a)=(1-4(u+v),a),
\end{align*}
where we used $1+\frac{16uv}{1-4(u+v)}\in U_{2e+1}\subseteq \Ocal^{\times 2}$ by Lemma \ref{L:2-adic_arithmetic} (a). Also $(1-4(u+v), u+v)=(1-4(u+v), 4(u+v))=1$, where we used $(1-x, x)=1$ for all $x\in F^\times$. Hence by setting $a=u+v$, we obtain
\begin{equation}\label{E:(1-4u, u+v)=1}
(1-4u, u+v)=(1-4(u+v),u+v)=1.
\end{equation}

Now, for $a\in\Ocal^\times$, one can write $au^{-1}=r^2(1+\varpi s)$ for some $r\in\Ocal^\times$ and $s\in\Ocal$ by Lemma \ref{L:square_root_exits_in_redidue_field}. Then
\[
(1-4u, a)=(1-4u, ur^2(1+\varpi s))=(1-4u, u+\varpi(us)).
\]
But $1-4\varpi(us)\in U_{2e+1}\subseteq\Ocal^{\times 2}$, and hence by taking $v=\varpi(us)$ in \eqref{E:(1-4u, u+v)=1}, we obtain $(1-4u, a)=1$.
\end{proof}

We can now prove
\begin{Prop}
We have $U_{2e}\Ocal^{\times 2}= R$, and hence $R\slash\Ocal^{\times 2}=\Z\slash 2\Z$.
\end{Prop}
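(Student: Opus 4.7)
The plan is a short index-counting argument. Since $\Ocal^{\times 2}\subseteq R$ trivially and $U_{2e}\subseteq R$ by the previous lemma, the inclusion $U_{2e}\Ocal^{\times 2}\subseteq R$ is immediate. For the reverse inclusion I would show that $[R:\Ocal^{\times 2}]\leq 2$ while $[U_{2e}\Ocal^{\times 2}:\Ocal^{\times 2}]=2$, which forces equality.

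First I would verify that the natural map $R/\Ocal^{\times 2}\to RF^{\times 2}/F^{\times 2}$ is injective. Its kernel is $R\cap F^{\times 2}$, and a valuation argument settles this: if $a\in R$ with $a=b^2$ for some $b\in F^\times$, then writing $b=\varpi^k u$ with $u\in\Ocal^\times$ forces $\ord(a)=2k=0$ (since $R\subseteq\Ocal^\times$), hence $b\in\Ocal^\times$ and $a\in\Ocal^{\times 2}$. Combined with Lemma \ref{DimRad}, this gives
\[
[R:\Ocal^{\times 2}]\;\leq\;[RF^{\times 2}:F^{\times 2}]\;=\;2.
\]

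On the other hand, the natural map $U_{2e}/(U_{2e}\cap\Ocal^{\times 2})\to U_{2e}\Ocal^{\times 2}/\Ocal^{\times 2}$ is an isomorphism, and the left-hand side has order $2$ by Lemma \ref{L:2-adic_arithmetic}(d). Hence $U_{2e}\Ocal^{\times 2}/\Ocal^{\times 2}$ is an order-$2$ subgroup of the group $R/\Ocal^{\times 2}$ of order at most $2$, so the two subgroups coincide. This yields $U_{2e}\Ocal^{\times 2}=R$ and simultaneously $R/\Ocal^{\times 2}\cong \Z/2\Z$.

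The only nontrivial input beyond bookkeeping is the nondegeneracy of the Hilbert symbol (used in Lemma \ref{DimRad}) together with Lemma \ref{L:2-adic_arithmetic}(d); no additional obstacle is anticipated.
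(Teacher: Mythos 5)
Your proof is correct and follows essentially the same route as the paper: both deduce $[U_{2e}\Ocal^{\times 2}:\Ocal^{\times 2}]=2$ from Lemma \ref{L:2-adic_arithmetic}(d) and bound $[R:\Ocal^{\times 2}]$ by $2$ via Lemma \ref{DimRad}, then conclude by index counting. The only difference is that you spell out the small point the paper leaves implicit, namely that $R\cap F^{\times 2}=\Ocal^{\times 2}$ (so that $R/\Ocal^{\times 2}$ injects into $RF^{\times 2}/F^{\times 2}$), which is a welcome clarification but not a different argument.
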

\begin{proof}
We already know $\Ocal^{\times 2}\subseteq U_{2e}\Ocal^{\times 2}\subseteq R$ and $[U_{2e}\Ocal^{\times 2}:\Ocal^{\times 2}]=2$. By Lemma \ref{DimRad} we know that $[R:\Ocal^{\times 2}]=2$. Thus $R=U_{2e}\Ocal^{\times 2}$.
\end{proof}

The above discussion can be summarized as follows:
\[
\begin{tikzcd}
&\Ocal^{\times 2}\ar[r, phantom, "\subseteq"]&R=U_{2e}\Ocal^{\times 2}&\\
U_{2e+1}\ar[r, phantom, "\subseteq"]&U_{2e}\cap\Ocal^{\times 2}\ar[r, phantom, "\subseteq"]\ar[u, phantom, "{\rotatebox{90}{$\subseteq$}}"]&U_{2e}\ar[r, phantom, "\subseteq"]\ar[u, phantom, "{\rotatebox{90}{$\subseteq$}}"]&U_{2e-1}
\rlap{\, ,}
\end{tikzcd}
\]
where
\[
U_{2e}\slash (U_{2e}\cap\Ocal^{\times 2})=\Z\slash 2\Z.
\]


Let us also mention
\begin{Cor}\label{C:dimension_integral_radical}
\[
[\Ocal^{\times}:R]=2^{ef}.
\]
\end{Cor}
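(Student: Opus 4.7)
The plan is to combine the proposition just established, namely $R = U_{2e}\Ocal^{\times 2}$ with $[R:\Ocal^{\times 2}]=2$, with a standard computation of $[\Ocal^\times : \Ocal^{\times 2}]$. Because
\[
[\Ocal^\times : R] \;=\; \frac{[\Ocal^\times : \Ocal^{\times 2}]}{[R:\Ocal^{\times 2}]} \;=\; \frac{[\Ocal^\times : \Ocal^{\times 2}]}{2},
\]
the corollary reduces to showing $[\Ocal^\times : \Ocal^{\times 2}] = 2^{ef+1}$.

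For this last identity the cleanest route is to invoke the classical local-field formula $[F^\times : F^{\times n}] = n\cdot |n|_F^{-1}\cdot |\mu_n(F)|$ (e.g.\ \cite[Ch.~II, Prop.~5.7]{Neukirch}). Taking $n=2$ and using $2\Ocal = \varpi^e\Ocal$, so $|2|_F = q^{-e} = 2^{-ef}$, together with $|\mu_2(F)| = 2$, I would get $[F^\times : F^{\times 2}] = 2^{ef+2}$. The decomposition $F^\times \cong \varpi^{\Z}\times \Ocal^\times$ is compatible with squaring, so $F^\times/F^{\times 2} \cong \Z/2\Z \times \Ocal^\times/\Ocal^{\times 2}$, yielding $[\Ocal^\times : \Ocal^{\times 2}] = 2^{ef+1}$, and the corollary follows.

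If one prefers an argument that stays inside the appendix, the same index can be computed by a filtration count. The ingredients already available are: (i) $\Ocal^\times/U_1 \cong \kappa^\times$ has odd order $q-1$, hence is $2$-divisible, so $\Ocal^\times/\Ocal^{\times 2} \cong U_1/U_1^2$; (ii) for $k\geq e+1$ the exponential (already used implicitly in Lemma~\ref{L:2-adic_arithmetic}(a)) identifies $U_k$ with the additive group $\varpi^k\Ocal$, converting squaring into multiplication by $2$, so $U_k^2 = U_{k+e}$ in that range, and in particular $U_{e+1}^2 = U_{2e+1}$; (iii) the kernel of squaring on $U_1$ is $\mu_2(F)\cap U_1 = \{\pm 1\}$, which has order $2$ since $-1 \in U_e \subseteq U_1$. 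Combining these with $[U_1 : U_{2e+1}] = q^{2e} = 2^{2ef}$ gives $[U_1:U_1^2] = 2 \cdot 2^{ef} = 2^{ef+1}$.

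There is no real obstacle here; the only delicate bookkeeping is the factor of $|\mu_2(F)|=2$, which appears either as an explicit term in the local-field formula or as the kernel of squaring in the filtration approach. Both routes give the stated index $2^{ef}$.
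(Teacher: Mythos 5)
Your proof is correct and follows the same route as the paper: reduce to the well-known index $[\Ocal^{\times}:\Ocal^{\times 2}]=2^{ef+1}$ (which the paper simply cites from \cite[(5.8) Proposition, p.142]{Neukirch}) and divide by $[R:\Ocal^{\times 2}]=2$ from the preceding proposition. Your additional derivations of that index (via the local norm-index formula or the filtration count on $U_1$) are fine but only re-prove the cited fact.
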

\begin{proof}
It is well-known that
\[
[\Ocal^\times: \Ocal^{\times 2}]=2^{ef+1}.
\]
(See, for example, \cite[(5.8) Proposition, p.142]{Neukirch}.) Since $R\slash\Ocal^{\times 2}=\Z\slash 2\Z$, the corollary follows.
\end{proof}

\bibliographystyle{amsalpha}
\bibliography{Hecke_algebra2_bio}

\end{document}